\newtheorem{theointro}{Théorème}
\newtheorem{conjectureintro}{Conjecture}}
\newtheorem{theo}{Théorème}[section]}
\newtheorem{corollaire}[theo]{Corollaire}}
\newtheorem*{problem2}{Problème}}
\newtheorem{conjecture}[theo]{Conjecture}}
\newtheorem{question}[theo]{Question}}
\theoremstyle{definition} \newtheorem{defin}[theo]{Définition}
							\newtheorem{prop}[theo]{Proposition}
							\newtheorem{lemme}[theo]{Lemme}}
\theoremstyle{remark} \newtheorem{remarque}[theo]{Remarque}
						\newtheorem{exemple}[theo]{Exemple}}
\theoremstyle{proof}\newtheorem{reduction}{Réduction}}
	\newcommand{\Id}{Id}	
\newcommand{\Set}{\text{Set}}
\newcommand{\op}{op}
\newcommand{\sS}{\text{sSet}}
\newcommand{\Hom}{\text{Hom}}
\newcommand{\Top}{\text{Top}}
\newcommand{\Real}[1]{{}||#1||}
\newcommand{\RealP}[1]{{}\Real{#1}_P}
\newcommand{\Sing}{\text{Sing}}
\newcommand{\Sk}{Sk}
\newcommand{\pr}{pr}
\newcommand{\colim}{\operatornamewithlimits{colim}}
\newcommand{\Ex}{\text{Ex}}
\newcommand{\sd}{\text{sd}}
\newcommand{\Exi}{\text{Ex}^{\infty}}
\newcommand{\lv}{\text{l.v}}
\newcommand{\Map}{\text{Map}}
\newcommand{\Fun}{\text{Fun}}
\newcommand{\nd}{\text{n.d.}}
\newcommand{\holim}{\text{holim}}
\newcommand{\ev}{\text{ev}}
\newcommand{\Hol}{\text{Holink}}
\newcommand{\OSk}{\text{OSk}}
\newcommand{\Z}{\mathbb{Z}}
\newcommand{\R}{\mathbb{R}}
\newcommand{\fil}[1]{(#1,\varphi_{#1})}
\newcommand{\A}{\mathcal{A}}
\newcommand{\B}{\mathcal{B}}
\newcommand{\C}{\mathcal{C}}
\newcommand{\D}{\mathcal{D}}
\newcommand{\Sat}{\text{Sat}}
\newcommand{\sk}{\text{Sk}}
\newcommand{\N}{\mathbb{N}}
\newcommand{\Diag}{\text{Diag}}
\newcommand{\Colim}{\text{Colim}}
\newcommand{\An}{\text{An}}
\newcommand{\diag}{\text{diag}}
\newcommand{\HOM}{\underline{\Hom}}
\newcommand{\bisS}{\text{bi}{-}\sS}
\newcommand{\coker}{\text{coker}}
\newcommand{\DiagR}{\Diag^{\text{réd}}}
\newcommand{\RealNP}[1]{\Real{#1}_{N(P)}}
\newcommand{\Int}{\text{Int}}
\newcommand{\Coprod}{\operatornamewithlimits{\coprod}}
\newcommand{\tr}{\text{tr}}
\newcommand{\sSU}{\sS^{\mathcal{U}}}
\newcommand{\sSTop}{\sS^{\Top}}
\newcommand{\cof}{\text{cof}}
\newcommand{\Ho}{\text{Ho}}
\newcommand{\U}{\mathcal{U}}
\newcommand{\Strat}{\text{Strat}}
\newcommand{\FStrat}{\text{FStrat}}
\newcommand{\sStrat}{\text{sStrat}}
\newcommand{\E}{\mathcal{E}}
\newcommand{\filstrat}[2]{(#1,#2,\varphi_{#1})}
\newcommand{\strat}[1]{\filstrat{#1}{P_{#1}}}
\newcommand{\Poset}{\text{EnsOrd}}
\newcommand{\RealFStrat}[1]{\Real{#1}_{\FStrat}}
\newcommand{\sSJoyal}{\sS^{\text{Joyal}}}
\newcommand{\sSJK}{\sS^{\text{Joyal-Kan}}}
\newcommand{\Pvar}{\text{PVar}}
\newcommand{\codim}{\text{codim}}
\newcommand{\naif}{\text{Naïf}}
\pgfplotsset{compat=1.14}
\def\thickhrulefill{\leavevmode \leaders \hrule height 1ex \hfill \kern \z@}
\def\@makechapterhead#1{%
  {\parindent \z@ \raggedright
    \reset@font
    \hrule
    \vspace*{10\p@}%
    \par
    \Large \scshape \@chapapp{} \Huge\bfseries \thechapter
    \par\nobreak
    \vspace*{10\p@}%
    \par
    \hrule
    \vspace*{20\p@}
    \Huge \bfseries #1\par\nobreak
    \vskip 40\p@
  }}
\def\sectionEtoile#1{%
  \par\bigskip\bigskip
  \par\nobreak\noindent
  \refstepcounter{section}%
  \addcontentsline{toc}{section}{#1}%
  \reset@font 
  { \Large \bfseries
    #1}%
    \vspace*{7\p@}%
    \hrule 
    \vspace*{5\p@}%
  \par
  \medskip
}
\begin{document}
\begin{titlepage}



\vskip-3.5cm

\begin{center}
\includegraphics[width=2cm]{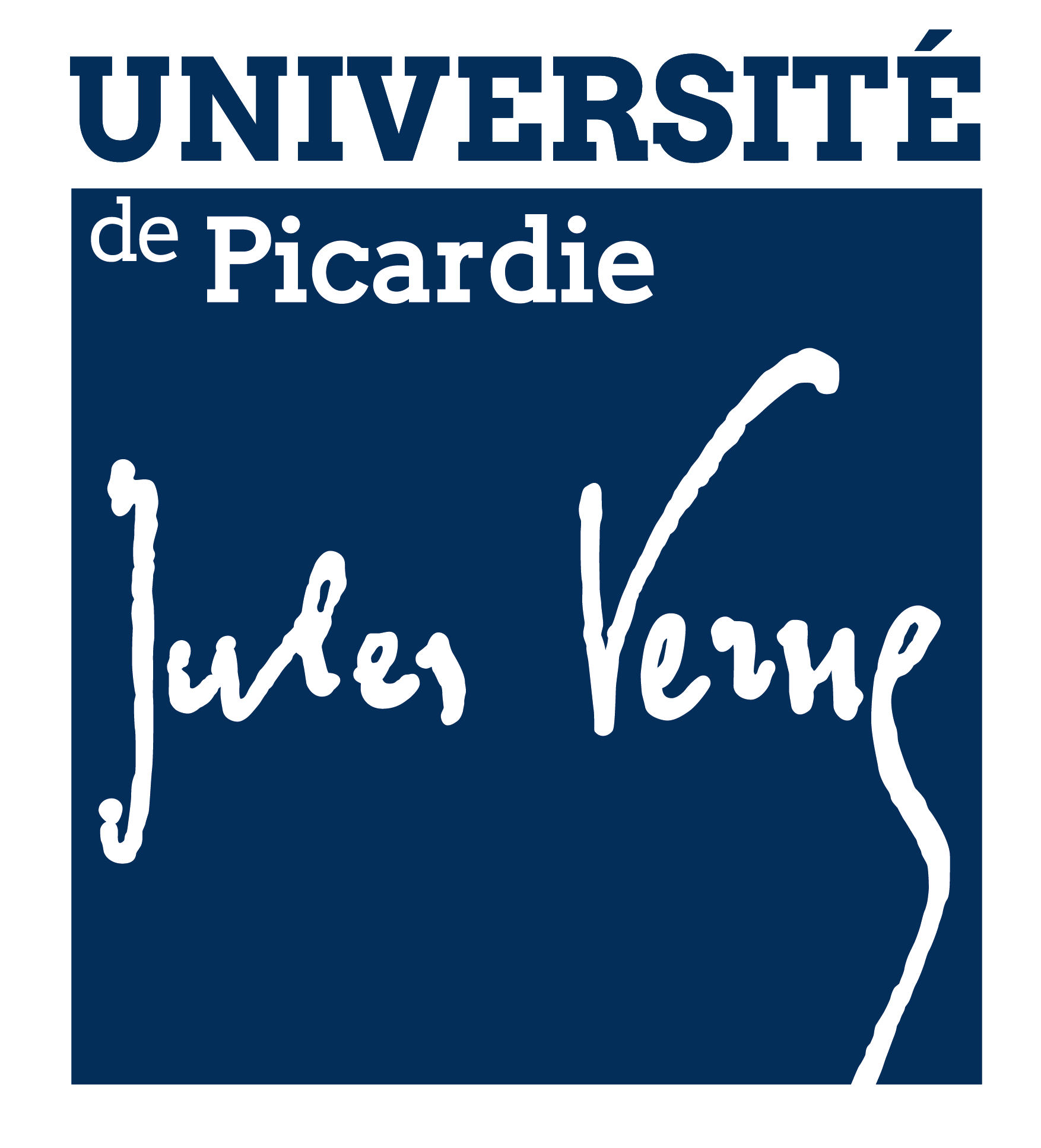}  
\end{center}

\centerline{\Large \bfseries Thèse de Doctorat}
\vskip 0.8cm
\centerline{ \it \bfseries Mention: \bfseries Math\'ematiques}
\vskip 0.8cm
\centerline{ présentée à  \it \bfseries l'Ecole Doctorale en Sciences, Technologie, Santé (ED 585)}
\vskip 0.8cm
\centerline{ \large \bf de l'Université de Picardie Jules Verne}
\vskip 0.8cm
\centerline{ par }
\vskip 0.8cm
\centerline{\Large \bf Sylvain Douteau}
\vskip 0.8cm
\centerline{ pour obtenir le grade de Docteur de l'Université de Picardie Jules Verne}
\vskip 0.8cm
\begin{center}
\fbox{
  \centerline{  \Large \it \bfseries  Étude homotopique des espaces stratifiés}
}
\end{center}
\vskip 0.5cm
\centerline{\bf Soutenue le 4 juillet 2019, apr\`es avis des rapporteurs, devant le jury d'examen}
  \vskip 0.5 cm
\hskip-2cm {\small 
\begin{center}
\begin{tabular}{lll}
M$^{\rm me}$ Kathryn Hess & Professeur, EPFL & Rapporteur \\
M. Bertrand Toën	&  DR, Univ. de Toulouse & Rapporteur \\
M. Serge Bouc &	DR, UPJV  & Examinateur \\
M$^{\rm me}$ Muriel Livernet \hspace{0.5cm} & Professeur, Univ. Paris Diderot\hspace{2cm} & Examinatrice \\
M. Ivan Marin  & Professeur, UPJV   & Examinateur \\
M. Geoffrey Powell  & DR, Univ. d'Angers & Examinateur \\
M. Jon Woolf  & Professor, Univ. of Liverpool  & Examinateur \\
M. David Chataur  & Professeur, UPJV & Directeur de thèse \\
\end{tabular}
\end{center}
}

\vskip 0.5cm
\hskip 0.5cm  
\hfill\includegraphics[width=2.5cm]{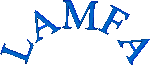}\hfill
\thispagestyle{empty}
\end{titlepage}
\newpage
~
\thispagestyle{empty}
\clearpage
\newpage
\pagenumbering{roman}
\section*{Remerciements}

Je tiens avant toute chose à remercier David - la coutume veut que l'on commence par remercier son directeur de thèse, et le mien s'est efforcé de me transmettre les bonnes pratiques du mathématicien. Cette thèse a été une expérience profondément enrichissante, en grande partie grâce à nos échanges. Je te remercie pour ta disponibilité, la patience dont tu as fait preuve face à mon entêtement et la confiance que tu m'as accordée tout au long de cette thèse. Elle a été un moteur puissant. Merci aussi à toi et à Stéphanie de m'avoir accueilli chez vous à plusieurs reprises. Tu ne seras bientôt plus mon directeur, mais j'espère que tu resteras un ami et un collaborateur.
\\~\\

Je tiens aussi à remercier mes rapporteurs, Kathryn Hess et Bertrand Toën. Chacun sait le travail que représente la relecture minutieuse d'une thèse, et je les remercie de m'avoir fait l'honneur d'accepter ce rôle.
I would also like to thank the examiners Muriel Livernet, Geoffrey Powell, Jon Woolf, Serge Bouc, and Ivan Marin for accepting to be part of the comittee. Jon, your talk at the fields institute sparked my interest for stratified homotopy theory, and the conversations I had with you and Stephen were all enlightening. Serge, tu as assisté à presque tous les exposés que j'ai eu l'occasion de donner à Amiens, merci d'avoir accepté de m'écouter parler une fois de plus.
Ivan, merci d'avoir accepté cette responsabilité supplémentaire.
\\~\\

Merci aussi à tous les membres du LAMFA. Merci aux habitués de la bistouille, et à ceux de la place du don, pour les moments de partages informels, parfois même mathématiques.

Merci aussi à Christelle et Isabelle. Vous m'avez l'une et l'autre plus d'une fois aidé dans des démarches que je ne comprenais pas, et ainsi rendu ma vie de doctorant beaucoup plus facile.

Merci aux gens avec qui j'ai partagé des mathématiques. C'est en donnant des exposés que j'ai appris ce qu'étaient les ensembles simpliciaux et les catégories modèles, et je suis redevable envers tous ceux qui m'ont donné cette opportunité d'apprendre.

Merci en particulier à Ivan, pour m'avoir appris à voir au delà des statuts.
J'espère qu'on continuera à profiter de la place du Don, en débattant des avantages relatifs des complexes simpliciaux par rapport aux ensembles simpliciaux, et que tu finiras par retenir le nombre de satellites géostationnaires.

Merci à Radu pour m'avoir aidé à trouver un tableau pour ma soutenance, et pour sa grande disponibilité en général.

Merci aussi à tous les doctorants du LAMFA. L'ambiance en BC 01 a été un des plaisirs de ma thèse, et plus d'une fois c'est la perspective du tarot de la mi-journée qui m'a motivé à venir jusqu'à la fac. Merci à tout ceux qui ont accepté le rôle du canard en plastique, c'est quand même vraiment plus efficace quand il répond.
I would also like to thank in particular those of you who will not be able to read the last sentence because you don't speak french. Thanks to you, my English is a bit better than it would have been.
Merci en particulier à Alexandre et Anne-Sophie, pour les déjeuner et les soirées en trio. Je n'ai même pas eu le temps de me sentir seul en arrivant à Amiens. Merci Alexandre d'avoir supporté mes flots de paroles ininterrompus, et d'avoir toujours été partant pour aller boire une bière.
\\~\\
\newpage
Merci à Louis, grâce à qui je ne me sentirai seul nul part. Merci à Joran, Axelle et Oscar dont le canapé est devenu une étape obligée en retour de conférence. Merci à Sevan, pour les sessions jeux hors du temps. Merci à Matthieu pour les discussions philosophiques au bout desquels on arrive parfois à se mettre d'accord sur une définition.
Merci aussi à la bande de la K-Fêt, et à tout ceux que je suis toujours heureux de recroiser, à Fermanville ou lors de mes passages à Paris.
\\~\\

Merci à toute ma famille. Collectivement, d'abord, parce c'est une tribu qui m'a entouré, pendant ma thèse et depuis toujours.

Merci à mon père pour son pragmatisme contagieux, et à ma mère pour son affection débordante et ses efforts pour la contrôler. Vous m'avez poussé à faire ce que j'aimais même lorsque vous ne "compreniez pas tout", et vous avez fait en sorte que je me sente toujours soutenu. Merci aussi de m'avoir aidé à organiser le pot, pour le plus grand plaisir de tous.

Merci à Vincent et Laura pour les apéros à rallonge, les soirées jeux et les suggestions séries pertinentes. C'est toujours un moment de vacances de passer vous voir. 

Merci à Yveline et Sebastien pour leur relecture de ma thèse. J'espère que vous ne m'en voudrez pas pour les erreurs restantes. Merci aussi de m'avoir écouté parler de maths, pâte à modeler en mains. Vous pourriez presque faire mes exposés à ma place.

Merci à Magali pour les café-phones et à Pascal pour les dégustations de Whisky. Deux sources de réconfort liquide qui ont toujours su tomber à point nommé. Comment ne pas mentionner aussi les talents de Magali pour la tartiflette exotique.

Merci à Corinne de toujours me payer un café lorsque je vais la voir.  

Merci aussi à Juliette, Marc, Mathieu, Arthur, Baptiste, Elsa, Marie, Émilie, Valentine et Sarah de toujours m'accueillir comme si je n'étais jamais parti.
\\~\\

Merci enfin à Anne-Sophie. Nous sommes arrivés à Amiens en même temps et nous étions voisins de bureaux. Aujourd'hui nous partageons un tableau. Cette thèse, c'est aussi notre rencontre, dont ce texte porte des traces. Je suis heureux qu'on ait pu partager ce chapitre de nos vies, et j'ai hâte d'écrire la suite avec toi.
\clearpage
\newpage
~
\clearpage
\newpage

\tableofcontents 
\chapter*{Introduction}
\chaptermark{Introduction}
\pagenumbering{arabic}
\addcontentsline{toc}{chapter}{Introduction}
\section*{Espaces singuliers et stratifications}
\phantomsection
\addcontentsline{toc}{section}{Espaces singuliers et stratifications}
L'étude des variétés, de leurs symétries et de leurs relations constitue l'essence de la géométrie. Très souvent, cette étude est
restreinte à des objets "réguliers", ou à des situations "génériques", cependant cette restriction au contexte régulier n'est en rien naturelle. Il suffit en effet de manipulations élémentaires pour sortir du cadre régulier. 

Par exemple, considérons une sphère privée de ces deux pôles. 
Il s'agit d'une variété différentiable ouverte. Son compactifié d'Alexandroff est obtenu en identifiant les deux pôles.
On obtient ainsi le tore pincé qui est un objet presque régulier : il possède un point singulier au niveau du pincement, mais c'est une variété en dehors de ce point. C'est cette classe d'objets que l'on étudie dans ce texte.

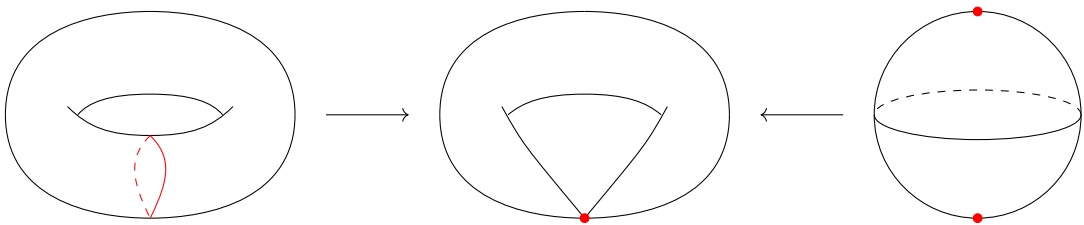
\begin{figure}\label{TorePince}
\begin{tikzpicture}[scale=0.55]
\draw (-3.5,0) .. controls (-3.5,2) and (-1.5,2.5) .. (0,2.5);
\draw[xscale=-1] (-3.5,0) .. controls (-3.5,2) and (-1.5,2.5) .. (0,2.5);
\draw[rotate=180] (-3.5,0) .. controls (-3.5,2) and (-1.5,2.5) .. (0,2.5);
\draw[yscale=-1] (-3.5,0) .. controls (-3.5,2) and (-1.5,2.5) .. (0,2.5);

\draw (-2,.2) .. controls (-1.5,-0.3) and (-1,-0.5) .. (0,-.5) .. controls (1,-0.5) and (1.5,-0.3) .. (2,0.2);

\draw (-1.75,0) .. controls (-1.5,0.3) and (-1,0.5) .. (0,.5) .. controls (1,0.5) and (1.5,0.3) .. (1.75,0);
\draw[->] (4.25,0)-- (6.25,0);

\draw[red](0,-0.5)..controls (0.5,-1) and (0.5,-1.5)..(0,-2.5);
\draw[red, dashed](0,-0.5)..controls (-0.5,-1) and (-0.5,-1.5)..(0,-2.5);

\draw[shift= {(10.5 cm, 0 cm)}] (-3.5,0) .. controls (-3.5,2) and (-1.5,2.5) .. (0,2.5);
\draw[shift= {(10.5 cm, 0 cm)}, xscale=-1] (-3.5,0) .. controls (-3.5,2) and (-1.5,2.5) .. (0,2.5);
\draw[shift= {(10.5 cm, 0 cm)},rotate=180] (-3.5,0) .. controls (-3.5,2) and (-1.5,2.5) .. (0,2.5);
\draw[shift= {(10.5 cm, 0 cm)},yscale=-1] (-3.5,0) .. controls (-3.5,2) and (-1.5,2.5) .. (0,2.5);

\draw[shift= {(10.5 cm, 0 cm)}] (-2,.2) .. controls (-1.5,-0.7) .. (0,-2.5);
\draw[shift= {(10.5 cm, 0 cm)}] (2,.2) .. controls (1.5,-0.7) .. (0,-2.5);
\draw[shift= {(10.5 cm, 0 cm)}] (-1.85,0) .. controls (-1.5,0.3) and (-1,0.5) .. (0,.5) .. controls (1,0.5) and (1.5,0.3) .. (1.85,0);
\filldraw[shift= {(10.5 cm, 0 cm)},red] (0,-2.5) circle (3pt);

\draw[<-] (14.75,0)--(16.75,0);

  \draw[shift= {(20 cm, 0cm)}] (0,0) circle (2.5);
  \draw[shift= {(20 cm, 0cm)}] (-2.5,0) arc (180:360:2.5 and 0.6);
  \draw[shift= {(20 cm, 0cm)},dashed] (2.5,0) arc (0:180:2.5 and 0.6);
  \filldraw[shift= {(20 cm, 0 cm)},red] (0,-2.5) circle (3pt);
  \filldraw[shift= {(20 cm, 0 cm)},red] (0,2.5) circle (3pt);

\end{tikzpicture}
\caption{Le tore pincé, comme compactifié de la sphère épointée et comme contraction du tore le long d'un méridien.} \end{figure}
Les espaces singuliers apparaissent naturellement lors de l'étude des espaces réguliers, leur ajout au cadre géométrique est donc crucial.

La géométrie algébrique est féconde en exemples d'espaces singuliers. Dans ce domaine, on étudie les solutions de systèmes d'équations polynomiales à coefficients dans un anneau. Les solutions d'un tel système s'incarnent géométriquement comme un sous-espace d'un espace affine - ou projectif quand les équations sont homogènes.
Et si l'anneau de base est le corps des nombres réels ou complexes, ces sous-espaces portent une topologie induite par celle de l'espace ambiant. On est dans une situation fascinante où sont subtilement liées les propriétés algébriques des idéaux définissant les équations d'une part, et les propriétés topologiques et géométriques de l'espace des solutions d'autre part.
Lorsque le système satisfait une certaine condition de régularité - si la matrice Jacobienne du système est de rang maximal - l'espace des solutions est une variété algébrique lisse, et une variété différentiable. Cette condition est générique, et quand elle n'est pas vérifiée, on obtient des variétés algébriques singulières. Par exemple, dans le plan projectif complexe $\mathbb{C}P^2$, la courbe algébrique plane donnée par l'équation 
\begin{equation*}
y^3-x^3-x^2z=0
\end{equation*}
est singulière en le point $[0:0:1]$. Cette cubique est par ailleurs homéomorphe au tore pincé que nous avons déjà rencontré plus tôt. 

Considérons un système d'équations polynomiales. Il définit une certaine variété algébrique $V$. Le lieu des points où la matrice Jacobienne n'est pas de rang maximal définit la sous variété algébrique de $V$ des points singuliers, notée $\Sing(V)$. Celle-ci vérifie $\dim(\Sing(V))<\dim(V)$. Le complémentaire des points singuliers $V\setminus\Sing(V)$ est une variété lisse. En itérant ce procédé, on obtient une filtration finie de la variété $V$ :
\begin{equation*}
\dots\subset\Sing(\Sing(V))\subset \Sing(V)\subset V
\end{equation*}
En raffinant cette filtration, H. Whitney \cite{WhitneyStratification} obtient une filtration de $V$ :
\begin{equation*}
V_0\subseteq V_1\subseteq\dots\subseteq V.
\end{equation*}

Cette filtration vérifie les propriétés suivantes.
\begin{itemize}
\item Chaque sous espace $V_i\setminus V_{i-1}$ est une union disjointe de sous-variétés algébriques lisses de $V$, les \textbf{strates}.
\item Ces strates satisfont des conditions de recollements.
\end{itemize}
Un tel découpage est une \textbf{stratification} de $V$.
Puisque les strates sont régulières on peut les étudier comme des variétés lisses et en déduire des propriétés globales grâce aux conditions de recollement.

On peut abstraire le procédé précédent pour l'appliquer à d'autres contextes, en conservant le découpage en strates régulières. Les conditions de recollements deviennent des conditions de trivialité locale (voir aussi \cite{ThomEnsemblesMorphismesStratifies}). C'est la notion de \textbf{pseudo-variété}.

Ces objets dont les singularités sont contrôlées apparaissent naturellement lors de manipulations géométriques élémentaires sur les variétés. On en présente quelques exemples ici.

\begin{itemize}
\item En géométrie, on considère souvent des familles de variétés. Si une telle famille dépend d'un paramètre, il est naturel de déterminer les limites géométriques au bord de l'espace des paramètres. Considérons par exemple la famille de cubiques projectives complexes :
\begin{equation*}
y^3-x^3-x^2z -\epsilon(x^2z+xz^2)=0,
\end{equation*}
où $\epsilon$ varie dans $\mathbb{C}^*$. Alors pour toute valeur de $\epsilon\not=0$, l'espace des solutions est lisse. Mais comme on l'a vu précédemment, quand $\epsilon=0$, on obtient une cubique singulière, le tore pincé.
\item  Le quotient d'une variété lisse $M$ par un groupe de symétries $G$ n'a aucune raison d'être régulier dès que l'action n'est pas libre. Cependant, si $G$ est un groupe de Lie compact qui agit sur une variété différentiable $M$, $M/G$ est une pseudo-variété. Une stratification adaptée est obtenue en découpant $M/G$ selon les stabilisateurs de l'action de $G$.
\item Un autre procédé standard en géométrie est celui d'éclatement d'une variété suivant une sous-variété. Le procédé inverse est celui de contraction. Si $M$ est une variété différentiable et $N\subseteq M$ est une sous variété différentiable, l'espace quotient $M/N$ - l'espace obtenu en contractant $N$ en un seul point - est généralement singulier. De nouveau, il s'agit d'une pseudo-variété. Elle possède deux strates : un point singulier, correspondant à l'image de $N$ dans le quotient $M/N$, et le complémentaire de ce point singulier, homéomorphe à $M\setminus N$. Notre fil conducteur, le tore pincé, est un exemple d'une telle contraction. C'est le résultat de la contraction de deux pôles opposés sur la sphère. On peut aussi l'obtenir comme la contraction d'un tore le long d'un cercle méridien.
\end{itemize}
\section*{De l'usage des espaces singuliers en géométrie et en topologie}
\phantomsection
\addcontentsline{toc}{section}{De l'usage des espaces singuliers en géométrie et en topologie}

Ainsi, les espaces singuliers sont incontournables - même lorsqu'on s'attache à étudier les variétés - et la notion de stratification apparait comme un moyen de ramener l'étude des objets singuliers à des considérations sur les strates, qui, elles, restent régulières. Mais les pseudo-variétés apparaissent aussi comme la généralisation appropriée pour répondre à des questions naturelles sur les variétés. Illustrons ceci par quelques exemples.
\begin{itemize}
\item 
Le produit d'intersection en homologie, comme défini par H. Poincaré en 1895 \cite{AnalysisSitus} est un outil extrêmement puissant. Il permet de construire des invariants d'une grande portée, tels que la signature et la forme d'entrelacement, et sa définition en termes d'intersections de sous-variétés triangulées permet de le calculer de façon élémentaire.
Cependant, pour pouvoir calculer le produit d'intersection de deux classes d'homologie de cette façon, celles-ci doivent pouvoir être représentées par des sous-variétés triangulées. S.Lefschetz \cite{Lefschetz} contourne ce problème en définissant le produit d'intersection de certains sous-objets, qu'il appelle circuits. Ces derniers sont en fait des pseudo-variétés triangulées, et permettent de représenter toutes les classes d'homologies d'une variété triangulée. 
Néanmoins, N. Steenrod pose la question suivante. 

Est-ce que toutes les classes d'homologies d'une variété triangulée sont réalisables par des sous-variétés triangulées? Autrement dit, peut-on se débarrasser des pseudo-variétés en les désingularisant?

R. Thom répond négativement à cette question \cite{ThomProprietesGlobalesVarietesDifferentiables}. Si $M$ est une variété triangulée et $\alpha\in H_n(M,\mathbb{Z})$ est une classe d'homologie alors il existe toujours un multiple de $\alpha$ qui est réalisable par une sous-variété. Cependant, la classe $\alpha$ elle-même n'est pas toujours réalisable par une sous-variété. Il est à noter qu'une approche de l'homologie singulière purement géométrique, en terme de pseudo-variétés, a été développée par M. Kreck \cite{Kreck}\cite{KreckBook}.
\item La signature d'une variété topologique est un invariant central en topologie géométrique. Dans les années 1960-1970, à la recherche d'invariants complets pour classifier les variétés topologiques à homéomorphismes près, J. Morgan et D. Sullivan ont étendu la signature à une classe d'objets singuliers appelés $\Z/n$-variétés. L'enjeu de ces travaux étaient d'étudier les propriétés arithmétiques des L-classes, des invariants intimement liés à la signature (voir \cite{MorganSullivan}).

L'extension de la signature à des objets singuliers a conduit D. Sullivan à conjecturer l'existence d'une théorie homologique adaptée aux espaces singuliers. Cette théorie devait permettre  d'obtenir la dualité de Poincaré pour les pseudo-variétés, et devait fournir un contexte dans lequel interpréter la signature. 

C'est ce programme qui a été réalisé par M. Goresky et  R. MacPherson, lorsqu'ils ont construit la cohomologie d'intersection des pseudo-variétés et montré un théorème de dualité de Poincaré (voir \cite{IntersectionHomologyI}). Ceci leur a permis d'étendre la signature à une large classe de pseudo-variétés. P. Siegel \cite{SiegelWittSpaces} a ensuite poursuivi ce programme en montrant que la signature des espaces de Witt - une sous classe des pseudo-variétés - est un invariant de bordisme.  
Ce qui lui a permis notamment d'obtenir une
démonstration élégante du théorème de Novikov d'additivité de la signature.

Les travaux de P. Siegel ont aussi permis de définir de nouvelles théories cohomologiques généralisées telles que les théories de bordismes d'espaces de Witt, d'IP-bordisme et de L-bordisme (voir par exemple \cite{BanaglMcClure}).
\item La cohomologie des variétés Grassmannienne est le point de départ pour la définition des classes caractéristiques de fibrés vectoriels \cite{CharacteristicClasses}. Dans le cas complexe, le calcul de sa structure d'algèbre peut se mener de façon élégante à l'aide de la dualité de Poincaré en calculant le produit d'intersection en homologie. Une base naturelle de l'homologie singulière des Grassmanniennes complexes est donnée par la décomposition cellulaire de Schubert. Chacune de ces cellules a pour adhérence une sous-variété algébrique, potentiellement singulière. Le calcul du produit d'intersection se fait en utilsant ces cycles, à la manière de S. Lefschetz. En fait, ce calcul s'effectue dans l'anneau de Chow de $G(k,n)$ (voir \cite{3264}).
\end{itemize}
Les exemples précédents illustrent l'utilité d'étendre des théories aux objets singuliers. Les exemples suivants montrent que la notion de stratification peut s'avérer utile même en l'absence de singularités.
\begin{itemize}
\item En théorie de Morse, les propriétés d'une variété lisse $M$ sont étudiées à travers une application lisse
\begin{equation*}
f\colon M\to \R.
\end{equation*}
Si l'application $f$ est de Morse Smale, elle permet de définir une stratification de $M$. Les strates sont obtenues en considérant le flot du gradient de $f$. La stratification ainsi obtenue vérifie les conditions de Whitney, c'est à dire qu'elle fait de $M$ une pseudo-variété. Cette stratification est un outil pour calculer l'homologie de $M$ (voir \cite{Nicolaescu}).
\item La notion de stabilité différentiable a été introduite et étudiée par R. Thom et J. Mather. Si $M$ et $N$ sont deux variétés différentiables et $f\colon M\to N$ est une application lisse entre elles, on dit que $f$ est différentiellement stable si il existe $U$, un voisinage de $f$ dans $C^{\infty}(M,N)$ tel que pour tout $g\in U$, on peut trouver deux difféomorphismes $\phi, \psi$ tels que
\begin{equation*}
f=\phi\circ g\circ \psi.
\end{equation*}
Une conséquence de la théorie de Morse est que le sous-ensemble des applications différentiellement 
stables est ouvert et dense
quand $\dim(N)=1$, et H. Whitney a démontré le même résultat dans le cas $\dim(M)=\dim(N)=2$ \cite{WhitneyMappingsEuclideanSpaces}. Cependant, R. Thom a fourni un 
contre-exemple lorsque $\dim(M)=\dim(N)=16$ (voir \cite{Levine}). Il est naturel de se demander sous quelles conditions ce résultat est vrai. J. Mather a montré que la validité de ce résultat ne dépendait que des dimensions de $M$ et de $N$ et que le résultat était vrai en toute dimension si on suppose seulement que $\phi$ et $\psi$ sont des homéomorphismes (On renvoie à l'article introductif \cite{GoreskyIntro} pour plus de détails sur le sujet). La résolution de ce problème passe par une stratification de l'espace $C^{\infty}(M,N)$. Dans le cas particulier où $N=\R$, la strate régulière correspond aux fonctions de Morse. 
En travaillant sur cette stratification, J. Cerf a montré des résultats profonds sur la topologie des espaces de difféomorphismes des variétés différentiables \cite{Cerf}.
\end{itemize}
\pagebreak
\section*{À propos de la cohomologie d'intersection}
\phantomsection
\addcontentsline{toc}{section}{A propos de la cohomologie d'intersection}

Les espaces stratifiés apparaissent d'abord comme un outil. Leur ubiquité nous amène naturellement à rechercher une classification. La cohomologie d'intersection apparait alors comme un invariant approprié. Avant tout, c'est l'invariant qui permet d'étendre aux pseudo-variétés les propriétés cohomologiques des variétés. Elle est, par construction, dépendante de la stratification - et donc contient de l'information par rapport à cette structure supplémentaire - mais dans le cas des pseudo-variétés, c'est un invariant topologique. Au moins pour cette sous-classe d'objets, c'est un analogue stratifié de la cohomologie singulière. 

Algébriquement, la cohomologie singulière peut être décrite comme la cohomologie d'un faisceau localement constant (dans le cas de coefficients constants, il s'agit du faisceau constant). On sait que de tels faisceaux correspondent aux représentations du groupe fondamental. 
La cohomologie d'intersection, elle, est obtenue comme la cohomologie d'un faisceau constructible. On peut alors se demander quel est le bon analogue du groupe fondamental dans ce contexte. D. Treumann \cite{Treumann} a montré que c'est la catégorie des chemins sortants qui joue ce rôle pour les pseudo-variétés (le résultat montré par D. Treumann est en fait une généralisation 2-catégorique d'un résultat non publié de R. MacPherson), et J. Woolf a étendu ce résultat aux espaces homotopiquement stratifiés \cite{WoolfFundamentalCategory}. Depuis, l'étude de cette catégorie, ainsi que de ces généralisations supérieures, est devenue un volet important de la théorie des espaces stratifiés (voir par exemple \cite[Appendix A]{HigherAlgebra}). Ces travaux visent à comprendre la théorie des faisceaux pervers de façon catégorique, voire homotopique.

Géométriquement, tout comme la cohomologie singulière, la cohomologie d'intersection peut être caractérisée par une courte liste de propriétés \cite{King}.
\begin{itemize}
\item Elle coïncide avec la cohomologie singulière sur les variétés topologiques.
\item Elle vérifie la propriété de Mayer-Vietoris. C'est à dire que étant donnée une décomposition d'un espace stratifié en deux ouverts $X=U\cup V$, il existe une suite exacte longue reliant les cohomologies d'intersections de $X$, $U$, $V$ et $U\cap V$.
\item Elle vérifie la formule du cône. Si $X$ est un espace stratifié compact, de dimension $n$, son cône ouvert $c(X)$ est l'espace stratifié de dimension $n+1$ obtenu comme le quotient 
\begin{equation*}
c(X)=\frac{X\times [0,1[}{X\times \{0\}}.
\end{equation*}
Pour toute perversité $p$ (une fonction annexe faisant partie de la définition de la cohomologie d'intersection), on a un isomorphisme
\begin{equation*}
I_pH^k(c(X),\R)=\left\{\begin{array}{cl}
0 &\text{ si $k\geq n-p$}\\
I_pH^k(X,\R) &\text{ si $k\leq n-p$}
\end{array}
\right.
\end{equation*}
\item Elle est naturelle par rapport aux applications continues qui respectent la stratification, et invariante par homotopie stratifiée. Ici, on dit qu'une application $f\colon X\to Y$ respecte la stratification si $X$ et $Y$ sont stratifiés et si pour tout strate $S\subseteq Y$, la pré-image $f^{-1}(S)$ est une union de strates de $X$. Une homotopie stratifiée est une homotopie qui est aussi une application stratifiée.
\end{itemize}
En exploitant le formalisme géométrique de H. King, G. Friedman et J. McClure \cite{FriedmanMcClure} étendent les définitions classiques de cap et cup produit à la cohomologie d'intersection. Ils obtiennent ainsi une description géométrique de la dualité de Poincaré, comme cap produit avec une classe fondamentale. Ces constructions cohomologiques ont été relevées au niveau des complexes de chaines et de cochaines par D. Chataur M. Saralegui et D. Tanré dans \cite{DavidMemoire} et \cite{DavidPoincare}.

\section*{Sur l'étude des espaces stratifiés}
\phantomsection
\addcontentsline{toc}{section}{Sur l'étude des espaces stratifiés}

On remarque que la formule du cône implique directement que la cohomologie d'intersection n'est pas un invariant du type d'homotopie de l'espace sous-jacent. En effet, un cône est toujours contractile, mais la cohomologie d'intersection d'un cône n'est pas toujours triviale. La cohomologie d'intersection est donc un invariant du type d'homotopie stratifié, et non du type d'homotopie. Motivé par ce constat, M. Goresky et R. MacPherson \cite{Borel} formulent plusieurs problèmes. Il est naturel de commencer par chercher une catégorie adaptée à l'étude de la cohomologie d'intersection \cite[Problem $\# 4$]{Borel}.

\begin{problem2}
Find the most general category of spaces and maps (perhaps with additional data) on which intersection cohomology is functorial.
\end{problem2}

Un certain nombre d'approches ont été suivies pour répondre à cette question.

En particulier, D. Chataur, M. Saralegui et D. Tanré introduisent la catégorie des ensembles de faces filtrés (Filtered face sets) \cite{DavidMemoire}. Les objets de cette catégorie portent une filtration, et sont des ensembles simpliciaux sans dégénérescence. Le foncteur de cohomologie d'intersection qu'ils définissent permet ensuite d'étudier la théorie de l'homotopie rationnelle des espaces stratifiés. 

Plus généralement, on s'intéresse à un contexte catégorique pour les espaces stratifiés, adapté à la notion d'homotopie stratifiée.

Poursuivant une notion d'espace stratifié régulier se prêtant plus naturellement à l'homotopie, F. Quinn propose la classe des espaces homotopiquement stratifiés \cite{Quinn}. Les strates d'un tel espace ne sont plus supposées être des variétés, et les conditions de trivialités locales sont remplacées par des conditions homotopiques. Si $X$ est un espace stratifié et $S,T$ sont deux strates de $X$ on appelle entrelacs homotopique de $S$ et $T$ l'espace des chemins $\gamma\colon [0,1]\to X$ tels que $\gamma(0)\in S$ et $\gamma(t)\in T$ pour tout $t>0$. Alors, $X$ est homotopiquement stratifié si la restriction en $0$ induit une fibration depuis l'entrelacs homotopique vers $S$, pour toute paire de strates $S$ et $T$. L'étude des propriétés homotopiques de ces objets a été poursuivie par D. Miller \cite{Miller}, qui a notamment prouvé qu'une équivalence d'homotopie stratifiée entre de tels espaces homotopiquement stratifiés était exactement une application stratifiée induisant des équivalences faibles sur toutes les strates et tous les entrelacs homotopiques. 

Une approche différente pour étudier l'homotopie des espaces stratifiés a été proposée par \cite{HigherAlgebra}, \cite{WoolfFundamentalCategory}, \cite{TamakiTanaka}. Plutôt que de se concentrer sur les objets "réguliers" de la théorie des espaces stratifiés, on considère une catégorie de "tous" les espaces stratifiés, pour laquelle les notions de morphismes et d'homotopies respectant la stratification sont plus élémentaires. Dans ce contexte, un espace stratifié est simplement défini comme un espace topologique $X$ muni d'une application continue $\varphi_X\colon X\to P_X$ vers un ensemble ordonné de strates, $P_X$, muni de la topologie induite par l'ordre. Les strates y sont données par les pré-images des éléments de $P$, et un morphisme stratifié est simplement un diagramme commutatif
\begin{equation*}
\begin{tikzcd}
X
\arrow[swap]{d}{\varphi_X}
\arrow{r}{f}
&Y
\arrow{d}{\varphi_Y}
\\
P_X
\arrow{r}{\bar{f}}
&P_Y
\end{tikzcd}
\end{equation*}
On retrouve les espaces stratifiés "réguliers" dans ce contexte en considérant les espaces coniquement stratifiés. C'est dans ce contexte que la notion de $\infty$-catégorie des chemins sortants est définie.

\section*{Vers une catégorie homotopique d'espaces stratifiés}
\phantomsection
\addcontentsline{toc}{section}{Vers une catégorie homotopique d'espaces stratifiés}

On dispose de catégories d'espaces stratifiés dans lesquels on sait interpréter la notion d'homotopie stratifiée. Dans ce contexte, on peut réexaminer le problème \cite[Problem $\# 11$]{Borel} de M. Goresky et R. MacPherson.
 
\begin{problem2}
Is there a category of spaces, maps and homotopies and a "classifying space" $B$ so that $IH_i(X)$ can be interpreted as homotopy classes of maps from $X$ to $B$?
\end{problem2}
 
La notion de catégorie modèle se prête particulièrement bien aux problèmes de représentabilités. En effet, on dispose de résultats généraux garantissant l'existence de classifiants \cite{JardineRepresentability}. Alternativement, on peut travailler avec des $\infty$-catégories.

Par exemple, des travaux de D. Ayala, J. Francis et N. Rozenblyum fournissent une $\infty$-catégorie pour la théorie de l'homotopie de certains espaces stratifiés réguliers \cite{AyalaFrancisTanaka}. 
 
Plus récemment, S. Nand-Lal, dans sa thèse \cite{NandLal}, a transporté la structure de modèle de Joyal sur les ensembles simpliciaux vers la catégorie des espaces stratifiés. Il a montré qu'une certaine sous-catégorie d'objets fibrants, contenant notamment les espaces homotopiquement stratifiés de F. Quinn ainsi que les pseudo-variétés, portait une structure de modèle.

Indépendamment, P. Haine a construit une localisation de la structure de Joyal sur la catégorie des ensembles simpliciaux au dessus d'un ensemble ordonné \cite{Haine}. Il a ainsi obtenu la catégorie de modèle simpliciale sous-jacente à la $\infty$-catégorie considérée dans \cite{Exodromy} dont les objets encodent les $\infty$-catégorie de chemins sortants des objets stratifiés. 

C'est aussi à ce problème qu'on s'attaque dans cette thèse, en proposant une structure de modèle sur la catégorie des espaces stratifiés.

Dans le cadre simplicial, un résultat de représentabilité de la cohomologie d'intersection prolongerait une série de travaux par D. Chataur, M.Saralegui et D.Tanré.

 Au delà de la question de la représentabilité, il est naturel de se demander si des théories cohomologiques d'intersections généralisées existent, et en quel sens elles se comparent à la cohomologie d'intersection. C. Debord et J-M. Lescure \cite{DebordLescure} ont défini un analogue de la K-théorie pour les pseudo-variétés, et M.Banagl a proposé une construction générale qui fournit des candidats de théories homologiques d'intersection généralisés rationnelles \cite{BanaglRationalGeneralizedHomology}.
Il s'agit de réponses partielles à la question \cite[Problem $\# 1$]{Borel}

\begin{problem2}
Is there an "intersection homology version" of cobordism theory or K theory (or homotopy theory)? \texttt{[...]}
\end{problem2}

\section*{Résultats principaux}
\phantomsection
\addcontentsline{toc}{section}{Résultats principaux}

Dans le contexte non-stratifié, les invariants homotopiques des espaces jouent un rôle important dans la classification des variétés de grande dimension ($>4$).
Ces invariants sont définis sur les classes d'équivalences d'espaces à homotopie près. Pour les calculer, on choisit des représentants appropriés. Ce procédé est analogue au remplacement par une résolution projective ou injective en algèbre homologique. La théorie des catégories modèles, construite par D. Quillen \cite{QuillenHomotopicalAlgebra} donne corps à cette analogie. Elle fournit un contexte catégorique dans lequel traiter des problèmes de nature homotopique; c'est ce point de vue que l'on adopte dans cette thèse.

Nous allons construire des structures de modèles pour les espaces stratifiés et étudier certains de leurs invariants.
Dans un souci de généralité, on ne restreint pas cette étude aux espaces stratifiés "réguliers" mais on considère plutôt la catégorie des espaces stratifiés au dessus d'un ensemble ordonné. Cette définition, bien que présente implicitement dans la littérature depuis H. Whitney, a été formalisée par J. Lurie \cite{HigherAlgebra}. 

Dans ce contexte, deux applications stratifiées $f,g\colon (X,\varphi_X\colon X\to P_X)\to(Y,\varphi_Y\colon Y\to P_Y)$ sont homotopes au sens stratifiés si il existe une homotopie stratifiée $H\colon (X\times [0,1],\varphi_X\circ\pr_X)\to \fil{Y}$ entre $f$ et $g$.
On observe que la définition d'homotopie stratifiée a une conséquence immédiate. Si deux espaces stratifiés sont homotopiquement équivalents au sens stratifié, leurs ensembles ordonnés de strates sont isomorphes. On peut donc ramener l'étude au cas d'un ensemble ordonné fixé, $P$, au moins dans un premier temps. Lorsque l'ensemble ordonné est fixé, on parlera d'objets filtrés plutôt que d'objets stratifiés. 

En suivant l'approche classique, qui est de travailler avec des ensembles simpliciaux pour étudier le type d'homotopie des espaces, on est amené à considérer une catégorie d'ensembles simpliciaux filtrés $\sS_P$. Les objets y sont des ensembles simpliciaux $X$, muni d'un morphisme $X\to N(P)$ vers le nerf de l'ensemble ordonné $P$, et les morphismes sont les morphismes $X\to Y$ faisant commuter le diagramme
\begin{equation*}
\begin{tikzcd}
X
\arrow{rr}
\arrow{dr}
&&
Y
\arrow{dl}
\\
&N(P)
\end{tikzcd}
\end{equation*}
Les chapitres \ref{ConstructionCMFSSetP} et \ref{ChapitreStructureSimplicialeSSetP} ainsi que les annexes \ref{ChapitreCaracterisationFibrationsAnnexe} et \ref{ChapitreCaracterisationMorphismeXExXAnnexe} sont entièrement consacrés à construire et à caractériser une structure de modèle sur $\sS_P$. On y construit notamment un foncteur de remplacement fibrant, $\Exi_P$, ainsi que des invariants du type d'homotopie filtré, les groupes d'homotopie filtrés $s\pi_n$. Le théorème suivant est ainsi une synthèse des théorèmes \ref{ExistenceCMFCisinski}, \ref{TheoDescriptionExpliciteSSetP} et \ref{EquivalenceFaibleIsoGroupeHomotopie}.
\begin{theointro}
La catégorie $\sS_P$ munie des classes de flèches suivantes, est une catégorie modèle simpliciale, à engendrement cofibrant, et propre.
\begin{itemize}
\item Les cofibrations sont les monomorphismes.
\item Les fibrations sont les morphismes ayant la propriété de relèvement à droite par rapport aux inclusions de cornets admissibles.
\item Les équivalences faibles entre objets fibrants sont les morphismes induisant des isomorphismes entre tout les groupes d'homotopie filtrés.
\end{itemize}
\end{theointro}
Muni de cette catégorie modèle, on exploite une adjonction simpliciale 
\begin{equation*}
\RealP{-}\colon \sS_P\leftrightarrow \Top_P\colon \Sing_P
\end{equation*}
pour déduire des résultats sur la théorie homotopique des espaces stratifiés. Par un théorème de J. Lurie \cite[Théorème A.6.4]{HigherAlgebra}, si $\fil{X}$ est un espace coniquement stratifié, alors $\Sing_P\fil{X}$ est un objet fibrant de $\sS_P$. D'autre part, par un résultat de S. Nand-Lal \cite{NandLal}, si $\fil{X}$ est un espace métrique homotopiquement stratifié, alors $\Sing_P\fil{X}$ est un objet fibrant de $\sS_P$. On en déduit alors une version stratifiée du théorème de Whitehead. (voir les théorèmes \ref{PremierTheoremeWhitehead} et \ref{DeuxiemeTheoremeWhitehead} et les remarques \ref{PremierTheoremeWhiteheadAHomotopiePres} et \ref{RemarqueNandLalWhitehead})
\begin{theointro}
Soient $\fil{X}$ et $\fil{Y}$ deux espaces filtrés, et $f\colon \fil{X}\to\fil{Y}$ un morphisme entre eux. Si 
\begin{itemize}
\item il existe des ensembles simpliciaux filtrés $\fil{A}$ et $\fil{B}$ et des équivalences d'homotopie filtrées $g\colon \RealP{\fil{A}}\to\fil{X}$ et $h\colon \RealP{\fil{B}}\to\fil{Y}$,
\item les espaces filtrés $\fil{X}$ et $\fil{Y}$ sont coniquement stratifiés, ou métriques homotopiquement stratifiés,
\end{itemize}
alors, les assertions suivantes sont équivalentes.
\begin{itemize}
\item L'application $f$ est une équivalence d'homotopie filtrée.
\item L'application $f$ induit des isomorphismes sur tous les groupes d'homotopie filtrés.
\item L'application $f$ induit des équivalences faibles sur toutes les strates et tous les entrelacs.
\end{itemize}
\end{theointro}

Au chapitre \ref{ChapitreExemples}, on examine une série d'exemples de nature géométriques. On étudie d'une part des espaces stratifiés construit à partir de fibrés localement triviaux, et d'autre part des stratifications induites par des plongements de sous-variétés. Ces exemples illustrent le potentiel d'application des groupes d'homotopie stratifiés à des situations géométriques.

On se tourne ensuite vers la définition d'une catégorie modèle pour les espaces filtrés. Dans le chapitre \ref{ChapitreCMFTopP}, on exploite une catégorie modèle à engendrement cofibrant sur une catégorie de diagrammes simpliciaux , $\DiagR_P$, pour définir une structure de modèle sur la catégorie $\Top_P$. Ces deux catégories sont reliées par une adjonction
\begin{equation*}
\Colim\colon \DiagR_P\leftrightarrow \Top_P\colon D.
\end{equation*}
Par construction, les foncteurs de groupes d'homotopie filtrés se factorisent par le foncteur $D$. Par une application du théorème de transport \cite[Corollary 3.3.4]{Hess}, on obtient la catégorie modèle suivante (Théorème \ref{CategorieModeleTopP})
\begin{theointro}
La catégorie $\Top_P$, munie des classes de flèches suivantes est une catégorie modèle à engendrement cofibrant.
\begin{itemize}
\item Les fibrations sont les morphismes $f$ tels que $D(f)$ est une fibration de $\DiagR_P$.
\item Les équivalences faibles sont les morphismes induisant des isomorphismes sur tout les groupes d'homotopie filtrés
\item Les cofibrations sont définies par propriétés de relèvement par rapport aux fibrations triviales.
\end{itemize}
\end{theointro}

La comparaison avec le cas non-filtrée pousse à comparer les catégories modèles $\sS_P$ et $\Top_P$. Cependant, on observe que la paire de foncteurs $(\RealP{-},\Sing_P)$ ne forme par une paire de Quillen. Néanmoins, au chapitre \ref{ChapitreKanQuillen}, à l'aide d'une catégorie modèle intermédiaire, $\sSTop_P$, on arrive au résultat de comparaison suivant (Corollaire \ref{CorollaireAdjonctionKanQuillen} et Théorème \ref{RealisationSingPreserventEquivalencesFaibles}).

\begin{theointro}\label{TheoIntroKanQuillen}
Il existe une adjonction de Quillen entre $\sS_P$ et $\Top_P$ donnée par
\begin{equation*}
\RealP{\sd_P(-)}\colon \sS_P\leftrightarrow \Top_P\colon \Ex_P\Sing_P.
\end{equation*}
De plus, les adjonctions $(\RealP{\sd_P(-)},\Ex_P\Sing_P)$ et $(\RealP{-},\Sing_P)$ préservent les équivalences faibles.
\end{theointro}
Finalement, au chapitre \ref{ChapitreCMFStrat}, on revient au problème initial qui était de comprendre la théorie de l'homotopie des espaces stratifiés. On exploite l'existence d'une bi-fibration de Grothendieck $\Strat\to \Poset$ pour construire des catégories modèles des espaces stratifiés et des ensembles simpliciaux stratifiés. On obtient ainsi une version du théorème \ref{TheoIntroKanQuillen} pour les objets stratifiés (Théorèmes \ref{TheoremeCMFStrat} et \ref{TheoremeRealStratSingStratPreserventEquivalenceFaible})
\begin{theointro}
Il existe une catégorie modèle pour les espaces stratifiés, $\Strat$ ainsi qu'une catégorie modèle pour les ensembles simpliciaux stratifiés $\sStrat$. De plus, ces catégories sont liées par une adjonction qui préserve les équivalences faibles.
\begin{equation*}
\Real{-}_{\Strat}\colon\sStrat\leftrightarrow \Strat\colon \Sing_{\Strat}
\end{equation*}
\end{theointro}

\section*{Quelques conjectures}
\phantomsection
\addcontentsline{toc}{section}{Quelques conjectures}
A l'issue de l'écriture de ce texte, plusieurs questions naturelles restent ouvertes. On a construit ici des catégories modèles pour les espaces filtrés $\Top_P$ ainsi que pour les ensembles simpliciaux filtrés $\sS_P$ - ainsi que pour leur contreparties stratifiées $\Strat$ et $\sStrat$ - et on a montré qu'elles étaient liées par une adjonction de Quillen. Dans le cas où $P=\{*\}$ cette adjonction coïncide avec l'adjonction de Kan-Quillen classique entre $\sS$ et $\Top$, et est en particulier une équivalence de Quillen. Il est alors naturel de se demander si l'adjonction entre $\Top_P$ et $\sS_P$ est une équivalence de Quillen en général. Dans cette direction, dans le chapitre \ref{ChapitreKanQuillen}, on décompose cette adjonction en deux adjonctions de Quillen :
\begin{equation*}
\sd_P\colon \sSU_P\leftrightarrow \sSTop_P\colon \Ex_P
\end{equation*} 
et
\begin{equation*}
\RealP{-}\colon \sSTop_P\leftrightarrow \Top_P\colon \Sing_P,
\end{equation*}
qu'on étudie séparément. Après avoir ramené à deux propriétés atomiques l'affirmation que chacune de ces adjonctions est une équivalences de Quillen, on pose la conjecture suivante (Voir les conjectures \ref{ConjectureRealNPSingNP} et \ref{ConjectureSdPExP}).
\begin{conjectureintro}
Les catégories modèles $\Strat$ et $\sStrat$ sont Quillen équivalentes.
\end{conjectureintro}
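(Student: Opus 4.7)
Notre plan suit la décomposition déjà esquissée à la fin du chapitre \ref{ChapitreKanQuillen}. La première étape consiste à ramener le problème stratifié au problème filtré. Les catégories modèles $\Strat$ et $\sStrat$ étant construites par transfert le long des bi-fibrations de Grothendieck $\Strat\to\Poset$ et $\sStrat\to\Poset$, et l'adjonction stratifiée se restreignant, fibre à fibre au-dessus de chaque $P\in\Poset$, à l'adjonction filtrée correspondante, il suffit d'établir que celle-ci est une équivalence de Quillen pour chaque $P$ fixé. L'assemblage du résultat stratifié à partir des équivalences filtrées se fera en utilisant la caractérisation strate par strate des équivalences faibles dans $\Strat$ et $\sStrat$, et la compatibilité des foncteurs $\Real{-}_{\Strat}$ et $\Sing_{\Strat}$ à la structure de bi-fibration.

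La deuxième étape est la décomposition de l'adjonction filtrée. D'après le théorème \ref{TheoIntroKanQuillen}, l'adjonction de Quillen entre $\sS_P$ et $\Top_P$ se factorise via $\sSTop_P$ en
\begin{equation*}
\sd_P\colon \sSU_P\leftrightarrow \sSTop_P\colon \Ex_P\quad\text{et}\quad\RealP{-}\colon \sSTop_P\leftrightarrow \Top_P\colon \Sing_P,
\end{equation*}
qui sont toutes deux des adjonctions de Quillen préservant les équivalences faibles. Montrer que leur composée est une équivalence de Quillen se réduit alors aux deux conjectures atomiques \ref{ConjectureSdPExP} et \ref{ConjectureRealNPSingNP}, qu'il suffira d'établir indépendamment.

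La troisième étape vise à démontrer chacune de ces conjectures. Pour l'adjonction $(\sd_P,\Ex_P)$, l'approche naturelle est d'adapter au cadre filtré l'argument classique de Kan selon lequel $X\to\Exi X$ est une équivalence d'homotopie faible. Le foncteur $\Exi_P$ ayant déjà été construit et étudié au chapitre \ref{ConstructionCMFSSetP}, le travail résiduel consiste à contrôler l'interaction entre la subdivision et les morphismes vers $N(P)$ : la subdivision d'un simplexe non-dégénéré modifie en effet les incidences de ses faces avec les strates, ce qui complique la vérification de la propriété d'admissibilité des cornets. Pour $(\RealP{-},\Sing_P)$, c'est le contenu géométrique de l'énoncé et la partie la plus délicate. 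On suivrait l'approche classique de Milnor--Quillen : établir d'abord que, pour tout objet fibrant suffisamment régulier $\fil{X}$ (au moins coniquement stratifié), la counit $\RealP{\Sing_P\fil{X}}\to\fil{X}$ est une équivalence d'homotopie filtrée, puis, par un argument cellulaire sur les cofibrations génératrices de $\sS_P$, en déduire que l'unité $X\to\Sing_P\RealP{X}$ est une équivalence faible pour tout ensemble simplicial filtré $X$.

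Le principal obstacle semble être l'absence, à ce stade, d'une théorie satisfaisante des approximations CW filtrées : contrairement au cas classique, l'attachement d'une cellule à un espace filtré impose des contraintes de compatibilité avec la filtration, et il n'est pas évident qu'une inclusion de cornet admissible se réalise géométriquement par une équivalence faible filtrée au sens où la caractérisation du théorème \ref{EquivalenceFaibleIsoGroupeHomotopie} puisse s'appliquer. Un point de départ raisonnable serait de traiter en priorité les espaces coniquement stratifiés, classe pour laquelle le théorème de J. Lurie garantit déjà la fibrance de $\Sing_P\fil{X}$, avant d'étendre le résultat à une classe plus large d'espaces filtrés par un argument de colimite filtrée ou de recollement.
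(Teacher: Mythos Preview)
Il faut d'abord noter que l'énoncé en question est une \emph{conjecture} dans le papier, pas un théorème : le papier ne la démontre pas. Votre proposition n'est donc pas à comparer à une preuve existante, mais à la réduction que le papier effectue lui-même dans la section~\ref{SectionKanQuillen}.

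Vos deux premières étapes --- réduction au cas filtré via la structure de bi-fibration, puis factorisation par $\sSTop_P$ en les deux adjonctions $(\sd_P,\Ex_P)$ et $(\RealNP{-},\Sing_{N(P)})$ --- reproduisent exactement la décomposition du papier. Il n'y a rien à y ajouter.

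C'est à la troisième étape que votre proposition diverge, et devient moins précise que le papier. Pour l'adjonction $(\sd_P,\Ex_P)$, vous proposez d'adapter l'argument de Kan montrant que $X\to\Exi_P X$ est une équivalence faible ; or le papier établit déjà ce résultat (théorème~\ref{TheoremeExXFaiblementEquivalentaX} et corollaire~\ref{CorollaireBetaInfiniEquivalenceFaible}), et montre même que l'unité et la counité de $(\sd_P,\Ex_P)$ sont des équivalences faibles objet par objet (proposition~\ref{PropositionUniteCouniteEquivalenceFaible}). L'obstacle réel, que le papier isole explicitement dans la Réduction suivant la conjecture~\ref{ConjectureSdPExP}, est de montrer que toute cofibration triviale de $\sSTop_P$ est une cofibration triviale de $\sSU_P$ --- c'est-à-dire de comparer les classes de cofibrations triviales des deux structures, pas de contrôler $\Exi_P$. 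Votre plan rate donc la cible.

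Pour l'adjonction $(\RealNP{-},\Sing_{N(P)})$, votre approche à la Milnor--Quillen est raisonnable mais reste vague. Le papier donne une réduction beaucoup plus concrète (section~\ref{SectionRealNPSingNP}) : il suffit de construire deux homotopies non filtrées $H_X$ et $H_P$ vérifiant les conditions de compatibilité \ref{EnumerateHXHomotopie}--\ref{EnumerateHomotopiesCommute}, la difficulté étant précisément la condition~\ref{EnumerateHomotopiesCommute} de commutation. L'obstacle que vous identifiez --- absence d'une théorie d'approximation CW filtrée --- n'est pas celui que le papier met en avant, et votre suggestion de traiter d'abord les espaces coniquement stratifiés ne résout pas le problème puisque la conjecture porte sur tous les espaces fortement filtrés cofibrants.

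En résumé : votre plan reproduit correctement la décomposition connue, mais votre identification des points bloquants est moins fine que celle du papier, et ne constitue pas une avancée vers une démonstration.
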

Toujours au chapitre \ref{ChapitreKanQuillen}, on étudie aussi les objets cofibrants de $\Top_P$. Contrairement au cas non filtré, la réalisation d'un ensemble simplicial filtré n'est pas nécessairement fibrante; en particulier, les pseudo-variétés PL ne sont pas cofibrantes en général. Cependant, la subdivision filtrée fournit un remplacement cofibrant. En étudiant la topologie de ce remplacement cofibrant dans le cas élémentaire où $P=\{0<1\}$, on arrive à montrer que la catégorie homotopique naïve des pseudo-variétés PL filtrées au dessus de $P$ est une sous catégorie pleine de $\Ho(\Top_P)$. Bien que faisant intervenir des constructions topologiques fines, les méthodes employées pour prouver ce résultat semblent s'adapter au cas d'un ensemble ordonné arbitraire. En notant $\Pvar\subset \Strat$ la sous catégorie pleine des pseudo-variétés PL, et $\Pvar/{\sim}$ sa localisation par rapport aux équivalences d'homotopie stratifiées, on formule donc la conjecture suivante. (Voir la conjecture \ref{ConjectureSousCategorieFibre})
\begin{conjectureintro}
L'inclusion de la sous catégorie pleine $\Pvar\hookrightarrow \Strat$, induit un foncteur pleinement fidèle
\begin{equation*}
\Pvar/{\sim}\to \Ho(\Strat)
\end{equation*}
\end{conjectureintro}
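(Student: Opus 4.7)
The strategy is to combine explicit cofibrant and fibrant replacements in $\Strat$ for PL pseudo\-variétés with the fibrancy criteria already established in the thesis. Since the stratified homotopy category and stratified homotopy equivalences only relate objects with isomorphic posets of strates, and since $\Strat\to \Poset$ is a bifibration, I would first reduce the claim to a statement over each fixed $P\in \Poset$: it suffices to show that $\Pvar_P\hookrightarrow \Top_P$ induces a fully faithful functor $\Pvar_P/{\sim}\to \Ho(\Top_P)$. In that filtered setting, for $X,Y\in\Pvar_P$, the set $\Hom_{\Ho(\Top_P)}(X,Y)$ is computed as $[QX,RY]/{\sim}$ for any cofibrant replacement $QX$ and fibrant replacement $RY$, and I will show that both replacements can be chosen so that the natural maps $QX\to X$ and $Y\to RY$ are stratified homotopy equivalences; full faithfulness follows at once.

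For the fibrant replacement, I would use that PL pseudo-variétés are conically stratified, so by the theorem of J. Lurie cited in the introduction, $\Sing_P Y$ is fibrant in $\sS_P$. The Quillen adjunction $\bigl(\RealP{\sd_P(-)},\Ex_P\Sing_P\bigr)$ from Theorem~D then provides a fibrant object $\RealP{\Ex_P\Sing_P Y}$ in $\Top_P$, and the theorem's second half, which asserts that $(\RealP{-},\Sing_P)$ preserves weak equivalences, combined with the stratified Whitehead theorem, shows that the natural map $Y\to \RealP{\Ex_P\Sing_P Y}$ is a stratified homotopy equivalence. Alternatively, Nand-Lal's result for metric homotopically stratified spaces can be used.

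For the cofibrant replacement, I would triangulate $X$ compatibly with its stratification, producing a filtered simplicial set $T_X$ with $\RealP{T_X}\cong X$ as filtered spaces. By construction of the model structure on $\Top_P$ and of the adjunction $\bigl(\RealP{\sd_P(-)},\Ex_P\Sing_P\bigr)$, the filtered subdivision $QX := \RealP{\sd_P T_X}$ is cofibrant in $\Top_P$ and comes equipped with a canonical map $QX\to X$. The remaining step is to show that this map is a stratified homotopy equivalence; granting this, stratified homotopy classes of stratified maps $QX\to Y$ are in bijection with stratified homotopy classes $X\to Y$, since $Y$ itself is already stratified-homotopy equivalent to $RY$, and the claim is complete.

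\textbf{Principal obstacle.} The hard part will be controlling the topology of $\RealP{\sd_P T_X}$ near strates of arbitrary codimension in the poset $P$. The excerpt reports that the $P=\{0<1\}$ case is handled by a fine topological analysis; in that case there is a single specialization relation and the cone structure of a PL pseudo-variété along its singular stratum can be combed directly. For general $P$, strates can be approached by several incomparable specializations, and the filtered subdivision redistributes simplices near those intersections in a way that does not, a priori, retract back to the original. My approach would be an induction on the height of $P$, using the iterated conical local structure of PL pseudo-variétés: for each stratum $S$ with link $L_S$, one obtains by induction a stratified homotopy equivalence on the link, and one must glue these equivalences along the cone neighborhoods using a filtered analogue of the mapping cylinder. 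The delicate point is the compatibility of these cone-by-cone equivalences at the corners, which is where the difference between the naive homotopy category $\Pvar/{\sim}$ and $\Ho(\Strat)$ could in principle arise, and which must be ruled out.
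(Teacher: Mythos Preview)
This statement is a \emph{conjecture} in the thesis, not a theorem; the paper proves only the special case of spaces built from a single fibre bundle over $P=\{0<1\}$ (the material of \S8.1.4, culminating in Proposition~\ref{PropositionClasseHomotopieNaiveFibre}). Your proposal should therefore be read against that partial argument and the paper's suggested extension (Remarque~\ref{RemarqueWhiteheadFibre}).

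Your fibrant-replacement step is both unnecessary and confused. In the transferred model structure on $\Top_P$ (Th\'eor\`eme~\ref{CategorieModeleTopP}), fibrations are detected by $D$, and $D(X)(\Delta^\varphi)\simeq\Sing\bigl(C^0_P(\RealP{\Delta^\varphi},X)\bigr)$ is always a Kan complex; hence every object of $\Top_P$ is fibrant and one may take $RY=Y$. Separately, the object $\RealP{\Ex_P\Sing_P Y}$ you build is the image under a \emph{left} adjoint, so the Quillen adjunction gives no fibrancy information about it, and there is no natural map $Y\to\RealP{\Ex_P\Sing_P Y}$ in the direction you claim (the counit of $(\RealP{-},\Sing_P)$ goes the other way).

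The substantive divergence from the paper is in the cofibrant side. You aim to show that $QX=\RealP{\sd_P T_X}\to X$ is a stratified \emph{homotopy equivalence}; that would indeed give the conjecture, but beware that $QX$ is typically \emph{not} conically stratified---already for $T_X=[p_0,p_1]$, the subdivision realises as an interval whose $p_0$-stratum is a half-closed segment---so the stratified Whitehead theorem cannot be invoked to promote the known weak equivalence $\RealP{\lv_P}$ to a homotopy equivalence. The paper's proven case does \emph{not} go this way: working in $\Top_{N(P)}$, it identifies the cofibrant replacement of $X(\pi,f)$ with the \emph{same underlying space} equipped with a thickened strong filtration $\tilde\varphi$, so that every $\varphi$-filtered map is automatically $\tilde\varphi$-filtered; a \emph{straightening lemma} (Lemme~\ref{LemmeRedresserApplicationsFibre}) then deforms any $\tilde\varphi$-filtered map, and any $\tilde\varphi$-filtered homotopy, to a $\varphi$-filtered one. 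This yields the bijection on homotopy classes directly, never constructing a homotopy inverse to $QX\to X$. The paper's suggested extension to general $P$ is to iterate this bundle-level straightening via the construction of \S\ref{SectionFibresFiltres}, rather than the link-wise induction on the height of $P$ that you sketch; your approach is not unreasonable, but it is a different route and you should be explicit that it is not the one the paper points to.
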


\section*{Guide du lecteur}
\phantomsection
\addcontentsline{toc}{section}{Guide du lecteur}

Le sujet de cette thèse est d'étudier l'homotopie des espaces stratifiés. Aussi, ce texte porte sur la définition et l'étude de plusieurs catégories modèles pour les espaces stratifiés.

Dans le chapitre \ref{ChapterEspacesStratifies}, on rappelle les diverses incarnations de la notion classique de pseudo-variété ainsi qu'une définition de l'homologie d'intersection. On présente ensuite la notion d'espace stratifié au dessus d'un ensemble ordonné et on donne un dictionnaire permettant de traduire les diverses notions de pseudo-variétés en ces termes. 

Dans le chapitre \ref{ChapterRappelHomotopie}, on aborde la notion de catégorie modèle à travers une étude du théorème de Whitehead. On présente la catégorie modèle des ensembles simpliciaux ainsi que celle des espaces topologiques, et on rappelle que l'adjonction de Kan-Quillen induit une équivalence entre leurs catégories homotopiques.

Dans le chapitre \ref{ConstructionCMFSSetP}, on étudie la catégorie des ensembles simpliciaux filtrés, $\sS_P$. Lorsque $P$ est un ensemble ordonné, les objets de $\sS_P$ sont des ensembles simpliciaux $X$ munis d'un morphisme $\varphi_X\colon X\to N(P)$, où $N(P)$ est le nerf de $P$. Cette catégorie joue un rôle analogue à celui que la catégorie des ensembles simpliciaux joue pour l'homotopie des espaces non-filtrés. On montre que la construction générale de D-C. Cisinski \cite{Cisinski} s'applique pour fournir une structure de modèle sur $\sS_P$. Cette catégorie modèle est propre et à engendrement cofibrant, et la dernière section du chapitre est consacrée à prouver qu'un ensemble générateur des cofibrations triviales est donné par les inclusions de cornets admissibles. Cette caractérisation des fibrations "à la Kan" est clé pour l'étude de cette catégorie modèle, et repose sur les foncteurs de subdivision filtrée $\sd_P$ et d'extension filtrée $\Ex_P$ ainsi que sur des résultats techniques prouvés en annexes \ref{ChapitreCaracterisationFibrationsAnnexe} et \ref{ChapitreCaracterisationMorphismeXExXAnnexe}.

Dans le chapitre \ref{ChapitreStructureSimplicialeSSetP}, on montre que la catégorie modèle $\sS_P$ est simpliciale. Ceci permet notamment de définir des invariants algébriques du types d'homotopie filtré : les groupes d'homotopies filtrés $s\pi_n$. On montre que ces invariants caractérisent les équivalences faibles entre objets fibrants.

Dans le chapitre \ref{ChapitreGroupesHomotopiesFiltresEspaces}, on exploite une adjonction entre la catégorie des espaces filtrés et celle des ensembles simpliciaux filtrés 
\begin{equation*}
\RealP{-}\colon \sS_P\leftrightarrow \Top_P\colon \Sing_P,
\end{equation*}
pour extraire de l'information sur les espaces filtrés. On étend la définition des groupes d'homotopie filtrés aux espaces filtrés de façon compatible au foncteur $\Sing_P$. On en déduit une version stratifiée du théorème de Whitehead (Théorèmes \ref{PremierTheoremeWhitehead} et \ref{DeuxiemeTheoremeWhitehead}).

Le chapitre \ref{ChapitreExemples} est une collection d'exemples d'espaces filtrés et de calculs de groupes d'homotopie filtrés. On y examine notamment des espaces filtrés provenant de fibrés localement triviaux et de plongements. En s'intéressant au cas des noeuds, on illustre le contenu des groupes d'homotopie filtrés à travers un invariant complet des noeuds.

Au chapitre \ref{ChapitreCMFTopP}, on définit une catégorie modèle des espaces filtrés. La structure de modèle est obtenue par transport depuis une catégorie de diagrammes simpliciaux intervenant dans la définition des groupes d'homotopie filtrés. On introduit aussi la catégorie modèle des espaces fortement filtrés et on montre qu'elle est Quillen équivalente à celle des espaces filtrés. Les espaces fortement filtrés sont des espaces topologiques munis d'une application continue vers la réalisation du nerf de l'ensemble ordonné $P$. On montre ensuite que la catégorie des espaces filtrés est Quillen équivalente à celle des diagrammes simpliciaux. 

Au chapitre \ref{ChapitreKanQuillen}, on exhibe une adjonction de Kan-Quillen entre la catégorie modèle des ensembles simpliciaux filtrés et celle des espaces fortement filtrés
\begin{equation*}
\RealNP{\sd_P(-)}\colon \sS_P\leftrightarrow\Top_{N(P)}\colon \Ex_P\Sing_{N(P)}.
\end{equation*}
Pour montrer qu'il s'agit d'une adjonction de Quillen, on exhibe une catégorie modèle intermédiaire, $\sSTop_P$. Celle-ci est obtenue par transport sur $\sS_P$ de la structure de modèle de $\Top_{N(P)}$, et le transport repose sur une construction explicite de remplacements cofibrants dans la catégorie $\Top_{N(P)}$. On examine finalement la possibilité que l'adjonction $(\RealNP{\sd_P(-)},\Ex_P\Sing_{N(P)})$ soit une équivalence de Quillen.

Finalement, au chapitre \ref{ChapitreCMFStrat}, on exploite la notion de bifibration de Quillen pour construire des structures de modèles sur des catégories d'espaces stratifiés $\Strat$ et d'ensembles simpliciaux stratifiés $\sStrat$. On discute aussi des liens avec d'autres travaux sur le sujet.

\chapter{Espaces stratifiés}
\label{ChapterEspacesStratifies}

Ce premier chapitre est une introduction à la notion d'espace stratifié. Dans la première section, on présente les différentes définitions classiques de pseudo-variétés. Ce sont les objets réguliers de la théorie des espaces stratifiés, et les premier exemples historiques pour lesquels la notion de stratification a été considérée.

Dans la section \ref{SectionHomologieIntersection}, on aborde l'homologie d'intersection. On choisit - parmi les nombreuses définitions possibles - de la définir via la notion de simplexe filtré en suivant \cite{DavidInvarianceTopologique}. Ce formalisme a pour vertu de s'interpréter facilement en terme d'ensembles simpliciaux filtrés (voir le chapitre \ref{ConstructionCMFSSetP}). On rappelle aussi plusieurs propriétés importantes de l'homologie d'intersection, notamment sa naturalité par rapport aux morphismes stratifiés ainsi que son invariance par homotopie stratifiée.

Finalement, dans la section \ref{SectionChap1TopP} on aborde la notion d'espace stratifié au dessus d'un ensemble ordonné. Cette notion est motivée par la condition de frontière que vérifient tout les exemples d'espaces stratifiés considérés dans la section \ref{SectionPseudoVar}. On étudie la définition d'équivalence d'homotopie stratifiée entre de tels espaces et on remarque que pour comprendre les équivalences d'homotopie stratifiées, il suffit de s'intéresser aux paires d'espaces ayant les mêmes ensembles de strates. Ceci nous amène dans la section \ref{SectionHomotopieStratifieesEtTopP} à définir la catégorie des espaces filtrés au dessus d'un ensemble ordonné fixé.
Finalement, dans la section \ref{SectionPseudoVarTopP}, on montre que les notions d'espace localement conique et de pseudo-variété se traduisent naturellement dans la catégorie des espaces stratifiés.

\section{Pseudo-variétés}
\label{SectionPseudoVar}
Dans cette section on introduit la notion de pseudo-variété et quelques une de ces variantes. Les pseudo-variétés apparaissent naturellement dans diverses contexte géométrique, comme limites ou quotients. Elles sont construites suivant un processus itératif, on découpe l'espace suivant une strate régulière - qui est une variété - et une partie singulière. On raffine ensuite ce découpage en décomposant la partie singulière. La décomposition ainsi obtenue est une partition en variétés (différentiables, topologiques ou PL suivant le contexte), avec des conditions de recollements. On commence cette section par introduire la notion la plus générale de pseudo-variété topologique différentiable avant de passer aux définitions plus classiques, mais plus restrictives, de pseudo-variété PL, de Whitney et de Thom-Mather. On rappellera aussi la notion d'espaces homotopiquement stratifiés de F. Quinn.

\subsection{Pseudo-variétés topologiques}
\label{SectionPseudoVarTop}
\begin{defin}[{\cite[Definition 2.2.1]{Friedman}}]\label{DefinitionEspaceFiltreClassique}
Un espace filtré de dimension formelle $n$ est un espace topologique séparé, $X$, muni d'une suite croissante de sous-espaces fermés
\begin{equation*}
\emptyset=X^{-1}\subseteq X^0\subseteq X^1\subseteq\dots\subseteq X^{n-1}\subseteq X^n=X
\end{equation*}
\end{defin}

\begin{remarque}\label{RemarqueFiltreClassiqueEgalFiltre}
On note que la définition classique \ref{DefinitionEspaceFiltreClassique} correspond exactement à la donnée d'un espace filtré $(X,\varphi_X\colon X\to \{0<1<\dots<n-1<n\})$ au sens de la définition \ref{DefinitionTopP} (voir aussi la définition \ref{DefinitionChap1TopP}) avec l'hypothèse supplémentaire que $X$ est séparé.
\end{remarque}

\begin{defin}
Si $X$ est un espace filtré de dimension formelle $n$, on appelle strate de dimension $i$ de $X$ les composantes connexes de $X^i\setminus X^{i-1}$. Les strates de dimensions $i<n$ sont dites singulières, et celles de dimension $n$ sont dites régulières.
\end{defin}

\begin{defin}\label{DefinitionConeOuvertFiltreClassique}
Si $X$ est un espace topologique, le cône ouvert de $X$, $c(X)$, est l'espace topologique défini comme
\begin{equation*}
c(X)=\frac{X\times [0,1[}{X\times\{0\}}.
\end{equation*}
Par convention, on pose $c(\emptyset)=\{*\}$.
Si $X$ est muni d'une filtration, le cône ouvert de $X$ est muni de la filtration définie par
\begin{equation*}
c(X)^{k+1}=c(X^{k})
\end{equation*}
\end{defin}

\begin{defin}[{\cite[Definition 2.3.1]{Friedman}}]
\label{DefinitionLocalementConique}
Un espace filtré $X$, de dimension formelle $n$ est localement conique si pour tout $0\leq i\leq n$ et pour tout $x\in X_i$, il existe un espace filtré compact $L$, des voisinages de $x$, $U\subset X_i$ et $N\subset X$, et un homéomorphisme
\begin{equation*}
h\colon U\times c(L)\to N,
\end{equation*}
tel que pour tout $0\leq k\leq n-i-1$, 
\begin{equation*}
h(U\times c(L^k))\simeq X^{i+k+1}\cap N.
\end{equation*}
Dans ce cas, $L$ est un entrelacs de $x$.
On dira que $X$ est un CS espace si pour tout $x\in X^i\setminus X^{i-1}$, on peut choisir le voisinage $U\subseteq X^i$ tel que $U\simeq \R^i$.
\end{defin}

\begin{remarque}
Si $X$ est un CS espace de dimension formelle $n$, alors pour tout $0\leq i\leq n$, les strates de dimensions $i$ de $X$ sont des variétés topologiques de dimension $i$.
\end{remarque}

On peut maintenant définir les pseudo-variétés topologiques
\begin{defin}[{\cite[Definition 2.3.10]{Friedman}}]\label{DefinPseudoVarTopologique1}
Un CS espace récursif est un CS espace tel que tout point admet un entrelacs qui est lui même un CS espace récursif (éventuellement vide).
Un CS espace récursif de dimension $n$, $X$, est une pseudo-variété si $X^{n}\setminus X^{n-1}$ est dense dans $X$.
\end{defin}
Alternativement, on peut donner une définition équivalente et plus directe des pseudo-variétés topologiques.

\begin{defin}\label{DefinitionAlternativePseudoVarTop}
Soit $X$ un espace filtré de dimension formelle $n$. L'espace $X$ est une pseudo-variété topologique de dimension $n$ si pour tout $x\in X^i\setminus X^{i-1}$, il existe un voisinage ouvert de $x$, $N\subseteq X$, une pseudo-variété $L$ de dimension $n-i-1$ (vide si et seulement si $i=n$) et un homéomorphisme
\begin{equation*}
h\colon \R^i\times c(L)\to N
\end{equation*}
tel que pour tout $0\leq k\leq n-i$,
\begin{equation*}
h(\R^i\times c(L^{k-1}))\simeq X^{i+k}.
\end{equation*}
\end{defin}

\begin{remarque}
On voit que en particulier, toutes les strates d'une pseudo-variété de dimension $n$ sont des variétés topologiques de dimension $\leq n$. 
\end{remarque}
\begin{remarque}
Il est standard dans la littérature d'ajouter l'hypothèse $X^{n-2}=X^{n-1}$ à la définition de pseudo-variété. Cette hypothèse est équivalente à demander que $X$ ne possède pas de strate de codimension $1$, et est nécessaire pour que l'homologie et la cohomologie d'intersection vérifient de bonnes propriétés. Cependant, cette hypothèse n'est pas nécessaire pour développer la théorie des pseudo-variétés (voir \cite{Friedman}), et on ne l'inclut donc pas. Une pseudo-variété qui vérifiera $X^{n-1}=X^{n-2}$ sera dite classique.
\end{remarque}

\begin{exemple}
Soit $M$ une variété topologique à bord de dimension $n$. On muni $M$ de la filtration 
\begin{equation*}
\emptyset\subseteq\dots\subseteq\emptyset\subseteq M^{n-1}\subseteq M^n=M
\end{equation*}
avec $M^{n-1}=\partial(M)$. Alors, $M$ muni de cette filtration est une pseudo-variété. Ce n'est cependant pas une pseudo-variété classique si $\partial(M)\not=\emptyset$ puisque dans ce cas $M$ possède une strate de codimension $1$.
\end{exemple}

\begin{exemple}
Soient $C=S^1\times [0,1]$ un cylindre, et $M=S^1\times [-1,1]/{\sim}$, avec $(x,t)\sim(-x,-t)$ pour tout $x\in S^1$, $t\in [-1,1]$, un ruban de Möbius. On définit des filtrations sur $C$ et $M$ par
\begin{equation*}
\emptyset=\emptyset\subset \begin{red}S^1\times\{0\}\end{red}\subseteq S^1\times [0,1]=C,
\end{equation*}
et
\begin{equation*}
\emptyset=\emptyset\subset\begin{red}S^1\times\{0\}/{\sim}\end{red}\subseteq S^1\times [-1,1]/{\sim}=M.
\end{equation*}
Alors, $M$ et $C$ sont des pseudo-variétés de dimension $2$. Voir la figure \ref{FigureMobiusCylindreIntro}
\end{exemple}

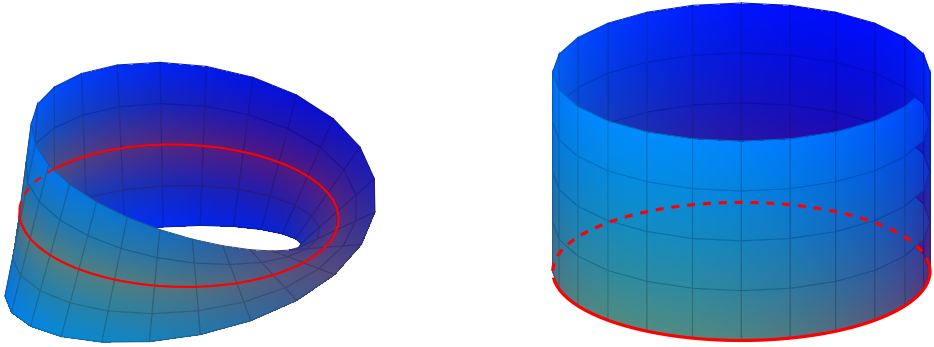
\begin{figure}
\centering
\begin{tikzpicture}
\begin{axis}[
    hide axis,
    view={40}{40}
]
\addplot3 [
    surf, shader=faceted interp,
    mesh/color input=explicit mathparse,
    samples y=5,
    domain=0:360,
    y domain=-0.5:0.5,
    point meta={symbolic={
    		1.3*(0.5-2*abs(z))*(0.5-2*abs(z)), 
	   		0.3-0.2*y, 
    		0.5+2*abs(z) 
    	}
    }
] (
    {(1+0.5*y*cos(x/2)))*cos(x)},
    {(1+0.5*y*cos(x/2)))*sin(x)},
    {0.5*y*sin(x/2)});
\addplot3 [
    samples=50,
    domain=-145:180, 
    samples y=0,
    thick, red
] (
    {cos(x)},
    {sin(x)},
    {0});
\addplot3 [
	dashed,
    samples=20,
    domain=-180:-145, 
    samples y=0,
    thick, red
] (
    {cos(x)},
    {sin(x)},
    {0});
\end{axis}

\end{tikzpicture}
\hspace{60 pt}
\begin{tikzpicture}
\begin{axis}[
    hide axis,
    view={30}{30}
]
\addplot3 [
    surf, shader=faceted interp,
    mesh/color input=explicit mathparse,
    samples y=5,
    domain=0:360,
    y domain=0:0.5,
    point meta={symbolic={
    		1.3*(0.5-2*abs(z))*(0.5-2*abs(z)), 
	   		0.3-0.2*y, 
    		0.5+2*abs(z) 
    	}
    }
] (
    {1.25*cos(x)},
    {1.25*sin(x)},
    {0.5*y});
\addplot3 [
    samples=50,
    domain=-145:35, 
    samples y=0,
    very thick, red
] (
    {1.25*cos(x)},
    {1.25*sin(x)},
    {0});
\addplot3 [
	dashed,
    samples=50,
    domain=35:215, 
    samples y=0,
    very thick, red
] (
    {1.25*cos(x)},
    {1.25*sin(x)},
    {0});
\end{axis}
\end{tikzpicture}
\caption{Les pseudo-variétés $M$ et $C$. La strate singulière est représenté en rouge.
}\label{FigureMobiusCylindreIntro}
\end{figure}

\begin{figure}[h!]
\centering
\includegraphics[width= 100pt]{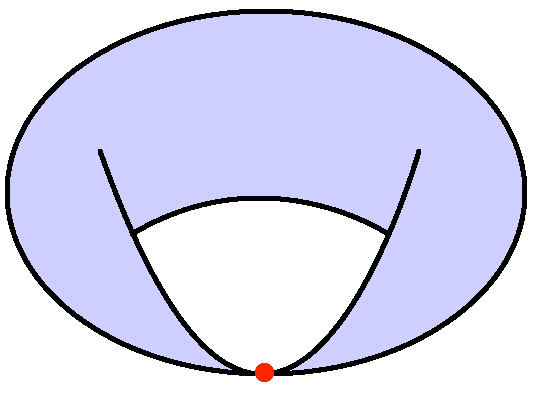}
\caption{Le tore pincé muni de sa filtration usuelle. Le point singulier est représenté en rouge.}
\label{FigureTorePince}
\end{figure}

\begin{exemple}\label{ExemplePseudoVarSingularitesIsolees}
Les pseudo-variétés à singularités isolées forment une classe d'exemples distinguée parmi les pseudo-variétés. Ce sont des pseudo-variétés $X$ ne possédant que des strates singulières de dimension $0$, les points singuliers isolés. La filtration d'un tel espace est donc donnée par
\begin{equation*}
\emptyset\subset X^0=X^1=\dots=X^{n-1}\subset X^n.
\end{equation*}
Ces objets sont les exemples les plus simples de pseudo-variétés et la théorie pour les étudier est très développée. On obtient de tels exemples en considérant une variété à bord $(M,\partial(M))$ et en prenant le quotient $M/\partial(M)$. L'intérieur de $M$ fournit donc la strate régulière, et le point singulier est l'image dans le quotient de $\partial(M)$. Réciproquement, si $X$ est une pseudo-variété à singularités isolées, on peut la construire à partir d'une variété à bord comme suit. Pour tout point singulier $x\in X^0$, on choisit un voisinage distingué $N_x\simeq c(L_x)$ de façon à ce que ces voisinages soient deux à deux disjoints. Comme $X$ est une pseudo-variété, les entrelacs $L_x$ sont aussi des pseudo-variétés. De plus, comme $X$ n'a que des singularités en dimension $0$, on en déduit que les $L_x$ n'ont pas de singularité, ce sont donc des variétés. Notons maintenant $c_{<1/2}(L_x)$ le sous-cône ouvert de $c(L_x)$ induit par l'inclusion 
\begin{equation*}
L_x\times [0,1/2[\subset L_x\times [0,1[.
\end{equation*}
On considère alors l'espace
\begin{equation*}
M=X\setminus \left(\coprod_{x\in X^0}c_{<1/2}(L_x)\right),
\end{equation*}
et on note
\begin{equation*}
\partial(M)=\coprod_{x\in X^0 }L_x.
\end{equation*}
Les inclusions $L_x\simeq L_x\times \{1/2\}\hookrightarrow L_x\times [1/2,1[$ induise une inclusion 
\begin{equation*}
\partial(M)\hookrightarrow M,
\end{equation*}
et $M\setminus \partial(M)$ est un ouvert de $X$ ne contenant pas les points singuliers. C'est donc un ouvert de la variété $X\setminus X^0$, c'est à dire une variété. Par construction, $\partial(M)$ est une variété et elle admet un collier dans $M$. La paire $(M,\partial(M))$ est donc une variété à bord. Finalement, on retrouve $X$ en contractant séparement chacune des composantes de bord $L_x$. On note que les $L_x$ ne sont a priori pas  connexes.
\end{exemple}

\begin{exemple}
Soit $TP=S^1\times S^1/{\sim}$ où $(x,p)\sim (y,p)$ pour tout $x,y\in S^1$ et où $p$ est un point fixé de $S^1$. L'espace $TP$ est communément appelé le tore pincé et est représenté figure \ref{FigureTorePince}. C'est une pseudo-variété classique si on le munit de la filtration
\begin{equation*}
\emptyset\subset \begin{red}S^1\times\{p\}/{\sim}\end{red}=S^1\times\{p\}/{\sim}\subset S^1\times S^1/{\sim}=TP
\end{equation*}
En effet, pour voir que $TP$ est une pseudo-variété, il suffit de montrer que tout point admet un voisinage conique. Notons $s\in TP$ le point singulier, correspondant au quotient $S^1\times \{p\}/{\sim}$. On a $TP_0=TP_1=\{s\}$. D'autre part, $TP\setminus \{s\}\simeq S^1\times S^1\setminus S^1\times \{p\}$ est une variété topologique de dimension $2$. Ainsi, tout point $x\in TP_2\setminus TP_1$ admet un voisinage homéomorphe à $\R^2\simeq \R^2\times c(\emptyset)$. Par ailleurs, un voisinage de $s$ est homéomorphe à l'union de deux copies de $c(S^1)$ le long de leur sommet. Mais un tel espace est homéomorphe à $c(S^1\coprod S^1)$, et donc $S^1\coprod S^1$ (muni de la filtration triviale) est un entrelacs de $s$. Finalement, $TP$ est une pseudo-variété, et comme $TP_1=TP_0$, c'est une pseudo-variété classique. C'est aussi un exemple de pseudo-variété à singularités isolées, et on peut l'obtenir comme quotient du cylindre $S^1\times [0,1]$ par son bord $S^1\times \{0,1\}$ (voir l'exemple \ref{ExemplePseudoVarSingularitesIsolees}).
\end{exemple}

\begin{figure}[h]
\centering
\begin{tikzpicture}[scale = 1.5]
\filldraw[blue,blue,opacity=0.5](-2,0)--(-2,2)--(2,2)--(2,0)--(-2,0);
\draw[green,thick](-2,0)--(-0.005,0);
\draw[green,thick](0.005,0)--(2,0);
\draw[green,thick](0,0.005)--(0,2);
\draw[green,thick](0,-0.005)--(0,-2);
\node at (0,0) [circle,fill,inner sep=1.5pt,red]{};
\end{tikzpicture}
\caption{L'espace filtré localement conique $X$ muni de la filtration $\textcolor{red}{X^0}\subset \textcolor{green}{X^1}\subset \textcolor{blue}{X^2}$.}
\label{FigureExempleLocalementConiqueIntro}
\end{figure}

\begin{exemple}
On considère $X$ le sous espace de $\R^2$, défini comme
\begin{equation*}
X=\{(x,y)\ |\ y\geq 0 \text{ ou } x=0\}.
\end{equation*}
On se demande si $X$ est une pseudo-variété. Pour répondre à cette question, on doit exhiber une filtration de $X$ pour laquelle toutes les strates sont des variétés. On constate que $X$ est défini comme l'union de deux sous espaces de $\R^2$, la droite $\{x=0\}$ et le demi plan $\{y\geq 0\}$. Comme les strates de $X$ devront être des variétés, on est amené à décomposer le demi plan fermé en deux : le demi plan ouvert $\{y>0\}$ et la droite $\{y=0\}$. Ceci suggère de considérer la filtration
\begin{equation*}
\emptyset=X^0\subset \{x=0\}\cup \{y=0\}\subset X^2=X
\end{equation*}
Cependant, la strate de dimension $1$, $X^1\setminus X^0= \{x=0\}\cup \{y=0\}$ n'est pas une variété car c'est l'union de deux droites qui s'intersectent en $(0,0)$. On est alors amené à considérer la filtration
\begin{equation*}
\emptyset\subset \{(0,0)\}\subset \{x=0\}\cup \{y=0\}\subset X^2=X.
\end{equation*}
L'espace $X$ muni de cette filtration est représenté Figure \ref{FigureExempleLocalementConiqueIntro}.
Montrons que $X$ muni de cette filtration est localement conique. On examine d'abord les strates de $X$ :
\begin{itemize}
\item Les strates de dimension $2$ sont les ouverts $\{(x,y)\ | x<0, y>0\}$ et $\{(x,y)\ |\ x>0, y>0\}$.
\item Les strates de dimension $1$ sont les sous espaces 
\begin{equation*}
\{(0,y)\ |\ y>0\}, \{(0,y)\ |\ y<0\}, \{(x,0)\ |\ x<0\}\text{ et } \{(x,0)\ |\ x>0\}.
\end{equation*}
\item La strate de dimension $0$ est $\{(0,0)\}$.
\end{itemize}  Soit $(x,y)\in X$. Si $(x,y)\in X^2\setminus X^1$, comme $X^2\setminus X^1$ est une union d'ouverts de $\R^2$, $(x,y)$ admet un voisinage homéomorphe à $\R^2\simeq \R^2\times c(\emptyset)$. Si $(x,y)\in X^2\setminus X^1$, plusieurs cas se présentent.
\begin{itemize}
\item  Si $y=0$ alors $(x,0)\in \{(x,0)\ |\ x>0\}$ ou $\{(x,0)\ |\ x<0\}$. Par symétrie, on suppose $x>0$. Alors, $N=]x/2,2x[\times [0,1[$ est un voisinage ouvert de $(x,0)$, et porte la filtration
\begin{equation*}
\emptyset=\emptyset \subset ]x/2,2x[\times\{0\}\subset N^2=N.
\end{equation*}
En particulier, on a un homéomorphisme :
\begin{equation*}
N\simeq ]x/2,2x[\times c(\{*\})\simeq \R\times c(\{*\})
\end{equation*}
et
\begin{equation*}
N\cap \{y=0\}=]x/2,2x[\times \{0\}\simeq \R\times c(\emptyset).
\end{equation*}
L'espace filtré $X$ est donc localement conique en $(x,0)$ et $\{*\}$ est un entrelacs.
\item Si $y<0$, alors $x=0$, et $N=\{0\}\times]2y,y/2[$ est un voisinage ouvert de $(0,y)$ dans $X$. On a
\begin{equation*}
N\simeq c(\emptyset)\times ]2y,y/2[.
\end{equation*}
L'espace filtré $X$ est donc localement conique en $(0,y)$, et $\emptyset$ est un entrelacs.
\item Si $y>0$, alors $x=0$ et un voisinage de $(0,y)$ dans $X$ est donné par $N=]-1,1[\times ]y/2,2y[$. Le voisinage $N$ porte la filtration
\begin{equation*}
\emptyset=\emptyset\subset \{0\}\times ]y/2,2y[\subset N^2=N.
\end{equation*}
Comme précédemment, on a un homéomorphisme qui préserve la filtration :
\begin{equation*}
N\simeq c\left(\{*\}\coprod\{*\}\right)]y/2,2y[\times .
\end{equation*}
et $X$ est donc localement conique en $(0,y)$, et $\{*\}\coprod\{*\}$ est un entrelacs.
\end{itemize}
finalement, si $(x,y)=(0,0)\in X^0$, un voisinage de $(0,0)$ est donné par
\begin{equation*}
N=\{(x,y)\ |\ y\geq 0,\ x^2+y^2\leq 1\}\cup \{(0,y)\ |\ -1\leq y\leq 0\}
\end{equation*}
On remarque que $N$ est homéomorphe au cône de $L$ où $L$ est donné par
\begin{equation*}
L=\{(x,y)\ |\ y\geq 0\ x^2+y^2=1\}\coprod \{(0,-1)\}.
\end{equation*}
De plus, si on muni $L$ de la stratification donnée par
\begin{equation*}
\emptyset\subset \{(0,-1),(0,1),(-1,0),(1,0)\}\subset L^1=L
\end{equation*}
alors l'homéomorphisme entre $N$ et $L$ préserve la filtration. Ainsi, $X$ est localement conique en $(0,0)$, et $X$ est donc un espace localement conique. En examinant les entrelacs, on peut en fait montrer que $X$ est un CS espace récursif. De plus, toutes les strates de $X$ sont des variétés. Cependant $X$ n'est pas une pseudo-variété. En effet, la clôture de $X^2\setminus X^1$ dans $X$ n'est pas $X$ tout entier, mais seulement le demi-plan supérieur. Alternativement, on remarque que l'entrelacs d'un point $(0,y)\in X$ avec $y<0$ est vide, alors que la strate contenant $(0,y)$ n'est pas de dimension maximale. Ceci montre que la notion de pseudo-variété est strictement plus forte que la notion d'espace localement conique.
\end{exemple}

\subsection{Pseudo-variétés PL}

Il existe une définition classique et très élégante des pseudo-variétés dans le cadre de la topologie linéaire par morceaux (PL). On renvoie à \cite{RourkeSanderson} pour une introduction à la géométrie PL.

\begin{defin}\label{DefinPseudoVarPLClassique}
Un complexe simplicial est une pseudo-variété PL classique de dimension $n$ si
\begin{itemize}
\item tout simplexe est une face d'un simplexe de dimension $n$,
\item tout simplexe de dimension $n-1$ est une face d'exactement deux simplexes de dimension $n$.
\end{itemize}
\end{defin}

Éclaircissons le lien entre pseudo-variété PL et pseudo-variété topologique. La définition de pseudo-variété topologique (Définition \ref{DefinPseudoVarTopologique1}) peut aisément être généralisée au cas PL. Un espace filtré PL de dimension $n$ est un espace PL, $X$, munie d'une filtration par des sous espaces PL
\begin{equation*}
\emptyset\subseteq X^0\subseteq \dots\subseteq X^{n-1}\subseteq X^n=X.
\end{equation*}
De même, on généralise la notion d'espaces localement coniques et de CS espaces au cas PL en demandant que les homéomorphismes soient tous des homéomorphismes PL. Finalement, on arrive à une définition moderne de pseudo-variété PL \cite[Definition 2.5.13]{Friedman}.

\begin{defin}\label{DefinPseudoVarPL}
Une pseudo-variété PL est une pseudo-variété topologique, $X$, telle que
\begin{itemize}
\item l'espace $X$ est un espace filtré PL,
\item les strates de $X$ sont des variétés PL,
\item tout point $x\in X$ admet un voisinage distingué
\begin{equation*}
N\simeq \mathbb{R}^i\times c(L)
\end{equation*}
tel que l'entrelacs $L$ est un CS espace récursif PL et l'homéomorphisme filtré 
\begin{equation*}
N\to \mathbb{R}^i\times  c(L)
\end{equation*}
est un homéomorphisme PL.
\end{itemize}
\end{defin}

On obtient une définition équivalente en adaptant au cas PL la définition \ref{DefinitionAlternativePseudoVarTop}.
On a finalement le résultat suivant \cite[Corollary 2.5.21]{Friedman}.

\begin{prop}
Une pseudo-variété PL classique (au sens de la définition \ref{DefinPseudoVarPLClassique}) est une pseudo-variété PL (au sens de la définition \ref{DefinPseudoVarPL}).
\end{prop}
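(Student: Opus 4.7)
\emph{Esquisse de preuve.} Le plan est de raisonner par récurrence forte sur la dimension formelle $n$ et d'équiper une pseudo-variété PL classique $X$ de dimension $n$ de sa filtration par les squelettes simpliciaux, $X^i := \sk_i(X)$. Le cas $n=0$ est immédiat : $X$ est un ensemble discret de points, qui muni de la filtration triviale $\emptyset \subset X$ est manifestement une pseudo-variété PL (de dimension $0$) au sens de la définition \ref{DefinPseudoVarPL}. Pour $n \geq 1$, la filtration par les squelettes est une filtration PL dont les strates $X^i \setminus X^{i-1}$ sont des unions disjointes d'intérieurs de $i$-simplexes, donc des variétés PL de dimension $i$. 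Il reste à établir la structure conique locale.

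L'ingrédient clé est le fait que, pour tout $k$-simplexe $\sigma$ de $X$, l'entrelacs simplicial $L_\sigma$ est une pseudo-variété PL classique de dimension $n-k-1$. On le vérifie en utilisant la bijection entre simplexes $\tau$ de $L_\sigma$ et simplexes $\sigma * \tau$ de $X$ contenant $\sigma$ comme face stricte, qui décale les dimensions de $k+1$ : tout simplexe de $L_\sigma$ est ainsi face d'un $(n-k-1)$-simplexe de $L_\sigma$ (car $\sigma * \tau$ est face d'un $n$-simplexe de $X$ contenant $\sigma$), et la condition sur les $(n-1)$-simplexes de $X$ se traduit en la condition analogue sur les $(n-k-2)$-simplexes de $L_\sigma$. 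Comme $n-k-1 < n$, l'hypothèse de récurrence munit $L_\sigma$ d'une structure de pseudo-variété PL au sens moderne.

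Pour tout point $x$ dans l'intérieur d'un $k$-simplexe $\sigma$, l'étoile ouverte de $\sigma$ est PL-homéomorphe à $\R^k \times c(L_\sigma)$ via la décomposition de jointure $\sigma * L_\sigma$. L'étape principale, et le point technique le plus délicat, consiste à montrer que cet homéomorphisme respecte les filtrations, c'est-à-dire que l'image de $\R^k \times c((L_\sigma)^{k'})$ coïncide avec l'intersection de l'étoile ouverte avec $X^{k+k'+1}$, pour tout $0 \leq k' \leq n-k-1$. Ceci se déduit de la compatibilité entre la filtration par les squelettes sur $X$, la filtration conique sur $c(L_\sigma)$ donnée par la définition \ref{DefinitionConeOuvertFiltreClassique}, et la bijection des simplexes évoquée plus haut (un simplexe $\sigma * \tau$ de $X$ est de dimension $\leq k+k'+1$ si et seulement si $\tau$ est de dimension $\leq k'$ dans $L_\sigma$). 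Une fois ce point établi, les conditions de la définition \ref{DefinPseudoVarPL} sont toutes vérifiées, ce qui achève la récurrence.
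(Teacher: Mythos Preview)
The paper does not supply its own proof of this proposition; it simply cites \cite[Corollary 2.5.21]{Friedman}. Your sketch is therefore not competing against an argument in the paper but rather reconstructing the standard proof from the literature, and it does so correctly: the skeletal filtration, the induction on dimension via simplicial links, and the join decomposition of the open star are exactly the ingredients used in Friedman's treatment. The two conditions defining a classical PL pseudo-variété translate, via the simplex/join correspondence $\tau \leftrightarrow \sigma * \tau$, into the same two conditions on $L_\sigma$, which is what makes the induction go through; you identify this clearly.

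One small point worth making explicit: the edge cases $k=n$ (where $L_\sigma = \emptyset$ and $c(\emptyset)$ is a point) and $k=n-1$ (where the second axiom forces $L_\sigma$ to be exactly two points) should be mentioned, since they are where the two axioms of Definition~\ref{DefinPseudoVarPLClassique} are used most directly and where one verifies that the link is non-empty precisely when $k<n$, as required by the alternative Definition~\ref{DefinitionAlternativePseudoVarTop}. You also need, for the density of the top stratum in the underlying topological pseudo-variété, the observation that every simplex being a face of an $n$-simplex forces $X^n \setminus X^{n-1}$ to be dense; this is implicit in your setup but not stated. With these additions the sketch would be complete.
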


\subsection{Stratifications de Whitney}

H. Whitney s'est intéressé à la topologie des variétés algébriques réelles et complexes. Celles ci ne sont pas toujours des variétés différentiables, mais elle peuvent toujours être considérée plongée dans une variété algébrique lisse de plus grande dimension. Ainsi, la notion de stratification qu'il a introduit est une notion qui fait intervenir la topologie d'un espace ambiant. On présente ici la définition de stratification qu'il a introduit dans \cite{WhitneyStratification}, en suivant le texte introductif de B. Kloeckner \cite{Kloeckner}. 

On rappelle les conditions de Whitney \ref{ConditionWhitneyA} et \ref{ConditionWhitneyB}

\begin{defin}
Soient $M$ une variété différentiable de dimension $n$ et $R$ et $S$ deux sous variétés de $M$ (non fermées en général). Le couple $(R,S)$ satisfait la condition \ref{ConditionWhitneyA} de Whitney si pour tout $x\in R$ et pour toute suite $(y_k)$ de $S$ tels que 
\begin{itemize}
\item la suite des $y_k$ converge vers $x$,
\item la suite d'espaces tangents $T_{y_k}S$ converge vers un sous-espace vectoriel $\tau\subseteq T_xM$,
\end{itemize}
la condition suivante est vérifiée.
\begin{equation}\label{ConditionWhitneyA}
T_xR\subseteq \tau \tag{A}.
\end{equation}
Le couple $(R,S)$ satisfait la condition \ref{ConditionWhitneyB} de Whitney si pour tout point $x\in R$ et pour toute suite $(y_k)$ de $S$ vérifiant les conditions précédentes, et tels qu'il existe une carte de $M$, $\phi\colon U\to \mathbb{R}^n$, définie sur un voisinage de $x$, telle que les droites $\phi(x)\phi(y_k)$ convergent vers une droite $l\in \mathbb{R}^n$,
la condition suivante est vérifiée.
\begin{equation}\label{ConditionWhitneyB}
l\subseteq d_x\phi(\tau) \tag{B}.
\end{equation}
\end{defin}

\begin{defin}
Soit $M$ une variété différentiable et $X\subseteq M$ un sous-espace. Une stratification de Whitney de $X$ est une partition localement finie $\mathcal{S}=(X^{s})_{s\in S}$ de $X$ en strates, telle que chaque strate est une sous-variété de $M$, et telle que toute paire de strates $(X^r,X^s)$ vérifie les conditions de Whitney \ref{ConditionWhitneyA} et \ref{ConditionWhitneyB}.
\end{defin}

\begin{remarque}
C'est un résultat classique que la condition \ref{ConditionWhitneyB} implique la condition \ref{ConditionWhitneyA}. Ainsi, il suffit de vérifier la condition \ref{ConditionWhitneyB} pour montrer qu'une stratification est de Whiney. De plus, on constate que cette condition est non-vide sur la paire $(X^r,X^s)$ seulement si $X^r\cap \bar{X^s}\not=\emptyset$. On reviendra sur cette condition plus loin dans ce chapitre.
\end{remarque}

Le résultat clé de Whitney, qui motive l'étude des espaces stratifiés, est le suivant.

\begin{theo}[{\cite{WhitneyStratification}}]
Soit $V$ une variété algébrique réelle ou complexe, alors $V$ admet une stratification de Whitney.
\end{theo}

\begin{remarque}
Si $V$ est une variété algébrique, elle peut admettre plusieurs stratifications de Whitney distinctes. Néanmoins si $\mathcal{S}$ et $\mathcal{S'}$ sont deux stratifications de Whitney de $V$, il existe une stratification de Whitney $\mathcal{S}''$ qui raffine les deux.
\end{remarque}
\subsection{Pseudo-variétés de Thom et Mather}
R. Thom et J. Mather se sont inspirés de la définition de H. Whitney pour définir une stratification indépendamment d'un plongement. On renvoie au texte introductif \cite{Kloeckner} pour plus de précisions.

\begin{defin}
Soit $X$ un espace topologique muni d'une partition localement finie $\mathcal{S}=(X^s)_{s\in S}$ telle que chaque strate $X^s$ est une variété différentiable. Un système de tubes est une famille de triplets $T=(T_s,\pi_s,\rho_s)_{s\in S}$, où
\begin{itemize}
\item $T_s$ est un voisinage ouvert de $X^s$ dans $X$,
\item $\pi_s\colon T_s\to X^s$ est une rétraction de l'inclusion $X^s\hookrightarrow T_s$,
\item $\rho_s\colon T_s\to \mathbb{R}^{+}$ est une fonction continue appelée fonction distance,
\end{itemize}
et vérifiant 
\begin{itemize}
\item $\rho_s^{-1}(0)=X^s$,
\item pour toute paire de strates $(X^r,X^s)$ telle que $X^r\subset \bar{X^s}$, la restriction de $(\pi_r,\rho_r)$
\begin{equation*}
T_r\cap X^s\to X^r\times \mathbb{R}^{+},
\end{equation*}
est une submersion lisse,
\item pour tout $x\in T_s\cap T_r$, $\pi_s(x)\in T_r$ et 
\begin{equation*}
\pi_r\pi_s(x)=\pi_r(x) \text{ et } \rho_r(\pi_s(x))=\rho_s(x).
\end{equation*}
\end{itemize}
Le couple $(\mathcal{S},T)$ est appelé stratification de Mather.
\end{defin}

\begin{defin}
Une pseudo-variété de Thom-Mather est un espace topologique muni d'une stratification de Mather.
\end{defin}

La notion de stratification de Mather est effectivement une généralisation de la notion de stratification de Whitney, comme le montre le théorème suivant \cite{Mather}.

\begin{theo}\label{TheoremeWhitneyImpliqueThomMather}
Soit $M$ une variété différentiable et $X\subseteq M$ un sous-espace. Si le sous-espace $X$ admet une stratification de Whitney, il admet une stratification de Mather.
\end{theo}

Ce théorème, ainsi que la proposition \ref{PropositionThomMatherPseudoVar} (voir \cite{Mather}) permet de garantir que toutes les notions de pseudo-variétés considérées jusqu'ici sont des cas particuliers de pseudo-variétés topologiques.

\begin{prop}\label{PropositionThomMatherPseudoVar}
Une pseudo-variété de Thom-Mather est une pseudo-variété topologique.
\end{prop}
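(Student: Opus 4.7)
Le plan est de procéder par récurrence sur la dimension formelle $n$ de l'espace, en construisant explicitement des voisinages distingués à partir du système de tubes. On commence par définir la filtration naturelle. Étant donnée une pseudo-variété de Thom-Mather $(X,\mathcal{S},T)$, on pose
\begin{equation*}
X^i = \bigcup_{\dim X^s \leq i} X^s,
\end{equation*}
où $n$ désigne la dimension maximale des strates. La locale finitude de $\mathcal{S}$, combinée à la troisième condition de compatibilité des tubes (qui contrôle l'incidence des strates entre elles), entraîne que chaque $X^i$ est fermé dans $X$. On doit aussi vérifier au passage la condition de frontière usuelle $X^r \subseteq \overline{X^s}$ implique $\dim X^r < \dim X^s$, qui découle de la condition de submersion.

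Fixons maintenant $x \in X^s$ avec $\dim X^s = i$, et construisons un voisinage distingué. L'idée est d'exploiter le tube $(T_s, \pi_s, \rho_s)$ pour identifier un petit voisinage de $x$ dans $X$ à un produit $U \times c(L_x)$, où $U \subseteq X^s$ est un voisinage de $x$ difféomorphe à $\mathbb{R}^i$, et où le lien $L_x$ est défini par
\begin{equation*}
L_x = \pi_s^{-1}(x) \cap \rho_s^{-1}(\epsilon),
\end{equation*}
pour $\epsilon > 0$ suffisamment petit, muni de la stratification induite par les $X^r \cap L_x$. Les conditions de submersion sur les restrictions $(\pi_r, \rho_r) : T_r \cap X^s \to X^r \times \mathbb{R}^{+}$ fournissent la structure différentiable des strates de $L_x$ et, par restriction, un système de tubes induit sur $L_x$. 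On en déduit que $L_x$ est lui-même une pseudo-variété de Thom-Mather de dimension $n-i-1$ ; par hypothèse de récurrence, c'est une pseudo-variété topologique.

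Il reste à construire l'homéomorphisme filtré $h \colon U \times c(L_x) \to N$, où $N$ est un voisinage ouvert de $x$ dans $X$. C'est ici que se concentre la difficulté technique, et c'est l'obstacle principal de la démonstration. L'idée due à Thom et Mather est d'intégrer des champs de vecteurs contrôlés, définis strate par strate, puis recollés grâce aux identités
\begin{equation*}
\pi_r \circ \pi_s = \pi_r \quad \text{et} \quad \rho_r \circ \pi_s = \rho_s
\end{equation*}
valables sur $T_r \cap T_s$. Ces champs engendrent des isotopies qui, combinées, permettent de trivialiser simultanément le tube $T_s$ au-dessus de $U$ (fournissant le facteur $U$) et l'intérieur conique via le flot radial associé à $\rho_s$ (fournissant le facteur $c(L_x)$). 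La construction explicite de ces champs contrôlés et la vérification qu'ils préservent la filtration est exactement le contenu du théorème d'isotopie de Thom-Mather.

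Une fois cet homéomorphisme obtenu, on vérifie directement qu'il respecte la filtration : pour $0 \leq k \leq n-i-1$,
\begin{equation*}
h\bigl(U \times c(L_x^{k-1})\bigr) = X^{i+k} \cap N,
\end{equation*}
où $L_x^{k-1}$ est la filtration induite sur le lien. Enfin, la densité de $X^n \setminus X^{n-1}$ dans $X$ se lit immédiatement sur la description conique locale, pourvu qu'aucun lien ne soit vide, ce qui est assuré par la définition même des pseudo-variétés de Thom-Mather (toute strate est adhérente à une strate de dimension supérieure tant qu'elle n'est pas de dimension maximale). Ceci achève l'identification de $X$ avec une pseudo-variété topologique au sens de la définition \ref{DefinPseudoVarTopologique1}.
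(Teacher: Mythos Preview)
The paper does not actually give a proof of this proposition: it simply states the result and refers the reader to \cite{Mather}. Your sketch is therefore not competing with any argument in the paper, but rather reconstructing the classical Thom--Mather argument, and it does so correctly in outline: filtration by strata dimension, induction on that dimension, definition of the link as a level set $\pi_s^{-1}(x)\cap\rho_s^{-1}(\epsilon)$, and the crucial appeal to controlled vector fields to produce the local trivialisation $U\times c(L_x)\simeq N$. You rightly identify the isotopy lemma as the hard technical core and do not pretend otherwise.

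One small caveat: your final density argument is slightly optimistic. The paper's definition of a Thom--Mather pseudo-variety is minimal (an espace topologique muni d'une stratification de Mather), and does not by itself force every non-maximal stratum to be in the closure of a higher-dimensional one. In practice the literature often adds a connectedness or ``no superfluous strata'' hypothesis to guarantee this, or one restricts to the Whitney case where it follows from the conditions (A) and (B). As written, the density of $X^n\setminus X^{n-1}$ is not quite automatic from the tube axioms alone, so you should either add this as a standing hypothesis or be explicit about invoking it from the stronger definitions used in \cite{Mather}.
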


On note que le théorème \ref{TheoremeWhitneyImpliqueThomMather} admet une réciproque. Ainsi, la classe des pseudo-variétés de Thom-Mather coïncide avec celle des pseudo-variétés de Whitney.

\begin{theo}[{\cite{Teufel}}]
Pour toute pseudo-variété de Thom-Mather $X$, il existe $n\geq 0$ et un plongement $f\colon X\to \mathbb{R}^n$ tels que $f(X)$ admet une stratification de Whitney.
\end{theo}

\subsection{Espaces homotopiquement stratifiés de Quinn}

F. Quinn a introduit dans \cite{Quinn} la notion d'espaces homotopiquement stratifiés. Cette notion - plus faible que les notions de pseudo-variétés vues précédemment - a de meilleures propriétés vis à vis des homotopies stratifiées. Les strates n'y sont plus supposées être des variétés, et les conditions de trivialités locales sont remplacées par des conditions homotopiques. L'énoncé de la définition nécessite quelques définitions intermédiaires.

\begin{defin}
Soient $X$ un espace topologique et $Y\subset X$ un sous-espace. Le sous-espace $Y$ est dit docile dans $X$ si il existe un voisinage $U$ de $Y$ dans $X$ et une homotopie $H\colon U\times [0,1]\to X$ tels que
\begin{itemize}
\item la restriction $H_{|U\times\{0\}}$ est égale à l'inclusion $U\hookrightarrow X$,
\item l'image de $U$ par la restriction $H_{|U\times\{1 \}}$ est égale à $Y$,
\item Pour tout $t\in [0,1]$, la restriction $H_{|Y\times\{t\}}$ est égale à l'inclusion $Y\hookrightarrow X$,
\item Pour tout $t<1$, on a $
H_{|U\times\{t\}}\left(U\setminus Y\right)\cap Y=\emptyset $
\end{itemize}
\end{defin}

\begin{defin}
Soient $X$ un espace topologique et $Y\subset X$ un sous espace. L'entrelacs homotopique de $Y$ dans $X$ est l'espace de chemin donné par
\begin{equation*}
\Hol(X,Y)=\{\gamma\colon [0,1]\to X\ |\ \gamma(0)\in Y,\ \gamma(t)\in X\setminus Y,\ \forall t>0\}\subset C^{0}([0,1],X).
\end{equation*}
L'évaluation en $0$ fournit une application continue
\begin{equation*}
q\colon \Hol(X,Y)\to Y.
\end{equation*} 
\end{defin}

Muni de ces deux définitions, on arrive alors à la définition de F. Quinn \cite[Definition 3.1]{Quinn}

\begin{defin}
Un espace métrique, filtré de dimension $n$ (voir définition \ref{DefinitionEspaceFiltreClassique}) est homotopiquement stratifié si pour tout $i<k\leq n$, 
\begin{itemize}
\item l'union de strates $X^i\setminus X^{i-1}$ est docile dans $X^i\setminus X^{i-1}\cup X^k\setminus X^{k-1}$,
\item l'évaluation en $0$,
\begin{equation*}
\Hol(X^i\setminus X^{i-1}\cup X^k\setminus X^{k-1},X^i\setminus X^{i-1})\to X^i\setminus X^{i-1}
\end{equation*}
est une fibration.
\end{itemize}
\end{defin}

\section{Homologie d'intersection}
\label{SectionHomologieIntersection}
Pour plus de détails sur l'homologie d'intersection, on pourra consulter les ouvrages \cite{Borel}, \cite{KirwanWoolf} ou \cite{Friedman}. L'approche présentée ici est basée sur \cite{DavidInvarianceTopologique}
\subsection{Simplexes singuliers filtrés}

\begin{defin}
Soit $X$ un espace topologique. Un simplexe singulier de $X$ est une application continue
\begin{equation*}
\sigma\colon \Delta^n\to X,
\end{equation*}
où $\Delta^n$ est le $n$-simplexe standard,
\begin{equation*}
\Delta^n=\{(t_0,\dots,t_n)\ |\ \sum_{i=0}^nt_i=1, \ t_i\geq 0\ \forall 0\leq i \leq n\}\subset \mathbb{R}^{n+1}.
\end{equation*}
\end{defin}

\begin{defin}\label{DefinitionSimplexeSingulierFiltre}
Soit $X$ un espace filtré de dimension $d$ (voir la définition \ref{DefinitionEspaceFiltreClassique}). Un simplexe singulier filtré de $X$ est un simplexe singulier 
\begin{equation*}
\sigma\colon\Delta^n\to X,
\end{equation*}
tel que pour tout $0\leq k\leq d$, 
\begin{equation*}
\sigma^{-1}(X^k)=\Delta^{n_k}\subseteq \Delta^n,
\end{equation*}
où $n_k\leq n$ est un entier et l'inclusion $\Delta^{n_k}\to \Delta^n$ est donnée par
\begin{align*}
\Delta^{n_k}&\to\Delta^{n}\\
(t_0,\dots,t_{n_k})&\mapsto (t_0,\dots,t_{n_k},0,\dots,0)
\end{align*}
On autorise aussi le cas $\sigma^{-1}(X^k)=\emptyset$. Dans ce cas, $\Delta^{n_k}=\emptyset$ et on pose la convention $n_k=-\infty$.
\end{defin}

\begin{exemple}
On considère l'espace filtré représenté figure \ref{FigureSimplexesFiltres}. Cet espace est de dimension $2$ et possède $4$ strates. Une strate régulière, de dimension $2$, en gris, deux strates singulières de dimension $1$ en bleu et vert, et un point singulier, en rouge. Considérons la première paire de simplexes au centre de la figure \ref{FigureSimplexesFiltres}. Le simplexe $[a,b]$ n'est pas filtré car la préimage de la strate verte n'est pas une face de $\Delta^1$. Le simplexe $[d,e,f]$ quant à lui n'est pas filtré car la préimage de la strate bleue est $[d,f]$ qui n'est pas la face initiale de $\Delta^2$. Plus explicitement, l'inclusion $[d,f]\to [d,e,f]$ correspond à l'inclusion
\begin{align*}
\Delta^1&\to \Delta^2\\
(t_0,t_1)&\mapsto (t_0,0,t_1)
\end{align*}
et ne vérifie donc pas la condition de la définition \ref{DefinitionSimplexeSingulierFiltre}. Considérons maintenant la deuxième paire de simplexes de la figure \ref{FigureSimplexesFiltres}, à droite. Ce sont bien des simplexes filtrés. On explicite leur filtration. Le $1$-simplexe $[a,b]$ est filtré comme suit
\begin{equation*}
\emptyset=\emptyset\subset \Delta^0\subset \Delta^1,
\end{equation*}
et le $2$-simplexe $[c,d,e]$ est filtré comme suit
\begin{equation*}
\emptyset\subset\Delta^0\subset\Delta^1\subset \Delta^2.
\end{equation*}
\end{exemple}

\begin{figure}[h!]
\centering
\includegraphics[height=80 pt]{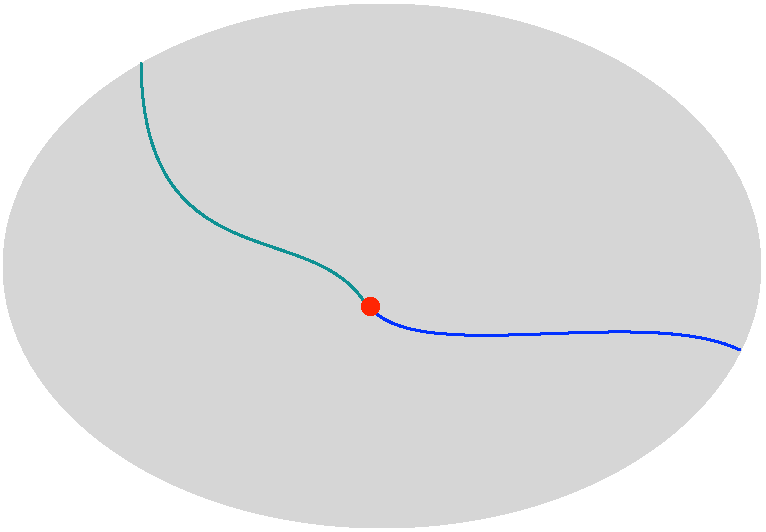}
\hspace{20pt}
\includegraphics[height=80 pt]{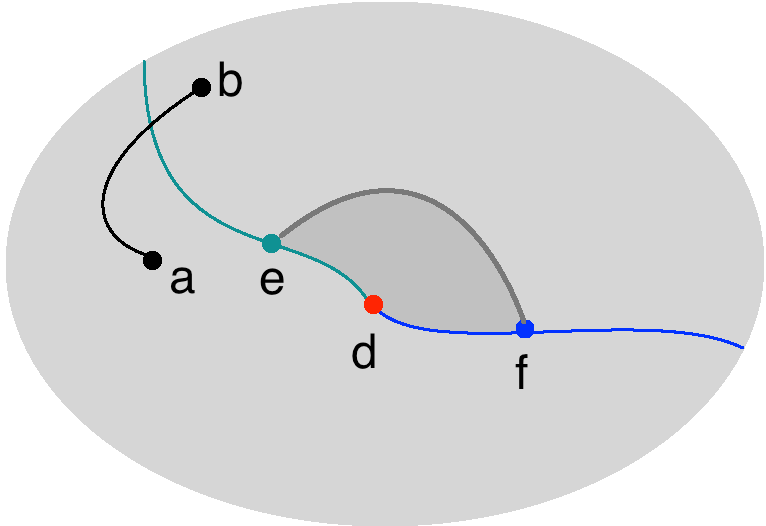}
\hspace{20pt}
\includegraphics[height=80 pt]{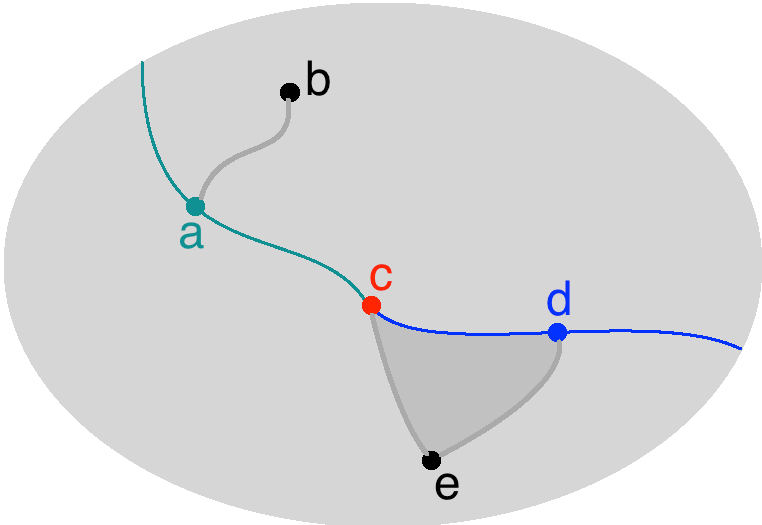}
\caption{Un espace filtré et des simplexes non-filtrés (au centre) et filtrés (à droite).}
\label{FigureSimplexesFiltres}
\end{figure}

\begin{defin}
Soient $G$ un groupe abélien et $X$ un espace filtré. On définit le complexe des chaines singulières filtrées, $C_*(X,G)$, comme suit. Pour tout $n\geq 0$, le groupe $C_n(X,G)$ est défini comme un ensemble de combinaisons linéaires formelles :
\begin{equation*}
C_n(X,G)=\left\{\sum\lambda_i\sigma_i\ |\ \forall i\ \lambda_i\in G, \ \sigma_i\colon \Delta^n\to X \right\}
\end{equation*}
où les sommes $\sum\lambda_i\sigma_i$ sont finies et les simplexes singuliers $\sigma_i\colon \Delta^n\to X\}$ sont filtrés. Pour $n\geq 0$, on définit les applications faces $\partial^{n}\colon C_{n+1}(X,G)\to C_n(X,G)$ sur les simplexes singuliers filtrés par la formule
\begin{equation*}
\partial(\sigma\colon \Delta^n\to X)=\sum_{i=0}^n(-1)^i\sigma\circ D^i,
\end{equation*}
et on étend ensuite par linéarité. Ici, pour $0\leq i\leq n$, l'application $D^i$ correspond à l'inclusion de $\Delta^{n-1}$ dans $\Delta^n$ comme la $i$-ème face.
\end{defin}

\subsection{Perversités et degré pervers}
L'idée clé pour définir l'homologie d'intersection est d'interdire les intersections "trop grandes" entre les simplexes singuliers et les strates singulières. Il y a a priori une ambiguité dans le choix de ce que signifie "trop grande". Lever cette ambiguité c'est choisir une fonction annexe - la perversité - pour contrôler cette intersection.

On rappelle que si $\emptyset\subseteq X^0\subseteq\dots\subseteq X^{d}=X$ est un espace filtré, les strates de $X$ sont les composantes connexes de $X^i\setminus X^{i-1}$ pour $0\leq i\leq d$.

\begin{defin}
Soit $X$ un espace filtré, notons $\mathcal{S}$ son ensemble de strates. Une perversité sur $X$ est une fonction
\begin{equation*}
\bar{p}\colon\mathcal{S}\to \mathbb{Z}.
\end{equation*}
telle que $\bar{p}(S)=0$ dès que $S$ est une strate régulière de $X$ (c'est à dire une composante connexe de $X^d\setminus X^{d-1}$).
\end{defin}

On voudra ensuite comparer la dimension de l'intersection entre un simplexe singulier et une strate de $X$ avec la valeur de la perversité en cette strate. Pour le faire de façon systématique, il est utile d'introduire la notion de degré pervers.

\begin{defin}
Soient $X$ un espace filtré de dimension $d$ et $\sigma\colon \Delta^n\to X$ un simplexe singulier filtré. Le simplexe $\sigma$ induit une filtration sur $\Delta^n$ de la forme suivante (voir la définition \ref{DefinitionSimplexeSingulierFiltre}).
\begin{equation*}
\emptyset\subseteq \Delta^{n_0}\subseteq\dots\subseteq \Delta^{n_{d-1}}\subseteq \Delta^{n_{d}}=\Delta^n.
\end{equation*}
Soit $S$ une strate de $X$ de dimension (formelle) $k$ (c'est à dire une composante connexe de $X^k\setminus X^{k-1}$, le degré pervers de $\sigma$ le long de $S$ est défini comme
\begin{equation*}
\Real{\sigma}_{S}=\left\{
\begin{array}{cl}
-\infty &\text{ si $\sigma(\Delta^n)\cap S=\emptyset$}\\
n_k &\text{ sinon}
\end{array}
\right.
\end{equation*}
Ainsi, le degré pervers d'un simplexe définit une application
\begin{equation*}
\Real{\sigma}\colon \mathcal{S}\to\mathbb{Z}.
\end{equation*}
\end{defin}

On remarque que la filtration de $X$ induit une troisième application $\mathcal{S}\to\mathbb{Z}$ donnée par la codimension.
\begin{align*}
\codim\colon \mathcal{S}&\to\mathbb{Z}\\
S&\mapsto d-\dim(S)
\end{align*}

On peut maintenant formuler la définition d'un simplexe admissible par rapport à une perversité $\bar{p}$. Cette notion correspond à l'intuition que le simplexe intersecte les parties singulières de $X$ de façon contrôlée par $\bar{p}$.

\begin{defin}
Soit $\sigma\colon \Delta^n\to X$ un simplexe singulier filtré. Le simplexe $\sigma$ est admissible par rapport à la perversité $\bar{p}$ (on dit aussi $\bar{p}$-admissible), si la somme des fonctions
\begin{equation*}
n-\Real{\sigma}-\codim+\bar{p}\colon \mathcal{S}\to \mathbb{Z}
\end{equation*}
est positive (ici $n$ est la fonction constante égale à $n=\dim(\sigma)$). Autrement dit, si pour toute strate de $X$, $S$, on a
\begin{equation}\label{EquationSimplexeAdmissible}
\Real{\sigma}_S\leq n-\codim(S)+\bar{p}(S)
\end{equation}
\end{defin}

\begin{remarque}
La définition d'un simplexe singulier filtré $\bar{p}$-admissible fait intervenir les dimensions formelles de l'espace filtré $X$ et des strates $S$. Ainsi, cette définition fait sens dans le contexte très général des espaces filtrés au sens de la définition \ref{DefinitionEspaceFiltreClassique}. Cependant, l'intuition géométrique derrière cette définition se comprend mieux dans le cas où $X$ est une pseudo-variété. Dans ce cas les dimensions formelles correspondent à des dimensions topologiques, et on peut réinterpréter la condition d'admissibilité comme suit. La condition la plus naturelle à imposer aux simplexes singuliers est de demander qu'ils soient transverses aux strates singulières. Au niveau des dimensions, ceci impose pour toute strate singulière $S$ que
\begin{equation*}
\dim(\sigma\cap S)\leq \dim(\sigma)+\dim(S)-\dim(X)=\dim(\sigma)-\codim(S).
\end{equation*}
Dans la définition précédente $\Real{\sigma}_S$ mesure la dimension  $\dim(\sigma\cap S)$, et la perversité apparait alors comme un paramètre qui permet de relâcher ($\bar{p}(S)\geq 0$) ou de renforcer ($\bar{p}(S)\leq 0$) la condition de transversalité.
\end{remarque}

\subsection{Définition de l'homologie d'intersection}

Fixons un espace filtré $X$, $\bar{p}\colon \mathcal{S}\to\mathbb{Z}$ une perversité sur $X$ et $G$ un groupe abélien. On va définir un sous-complexe de chaine $C^{\bar{p}}_*(X,G)\subset C_*(X,G)$ en considérant une condition d'admissibilité adaptée aux chaines.

\begin{defin}
Soit $\xi=\sum_i\lambda_i\sigma_i\in C_n(X,G)$ une chaine de simplexes singuliers filtrés. La chaine $\xi$ est $\bar{p}$-admissible si tout les simplexes singuliers filtrés apparaissant dans sa décomposition (les $\sigma_i$) sont $\bar{p}$-admissibles. Elle est de $\bar{p}$-intersection si
\begin{itemize}
\item elle est $\bar{p}$-admissible,
\item son bord $\partial(\xi)$ est $\bar{p}$-admissible.
\end{itemize}
\end{defin}

\begin{defin}\label{DefinitionHomologieIntersection}
On définit le complexe des chaines singulières de $\bar{p}$-intersection, $C^{\bar{p}}_*(X,G)$ comme le sous complexe de $C_*(X,G)$ engendré par les chaines de $\bar{p}$-intersections. L'homologie d'intersection de $X$ à coefficients dans $G$ par rapport à la perversité $\bar{p}$ est définie comme l'homologie de $C^{\bar{p}}_*(X,G)$
\begin{equation*}
I^{\bar{p}}H_k(X,G)=H_k(C^{\bar{p}}_*(X,G))=\frac{\ker\left(\partial\colon C^{\bar{p}}_k(X,G)\to C^{\bar{p}}_{k-1}(X,G)\right)}{\text{Im}\left(\partial\colon C^{\bar{p}}_{k+1}(X,G)\to C^{\bar{p}}_{k}(X,G)\right)}
\end{equation*}
\end{defin}

\begin{remarque}\label{RemarquePerversiteGenerales}
La définition \ref{DefinitionHomologieIntersection} n'est pas la seule définition possible de l'homologie d'intersection. Le fait que la définition donnée ici coïncide avec la définition originale de M. Goresky et R. MacPherson \cite{IntersectionHomologyI} lorsque cette dernière est définie est un théorème de D. Chataur M. Saralegui et D. Tanré \cite[Theorem A]{DavidInvarianceTopologique}. Lorsque les perversités deviennent trop grandes (lorsque $\bar{p}(S)>\codim(S)-2$ pour une strate $S$), il devient nécessaire de modifier la définition \ref{DefinitionHomologieIntersection} pour que l'homologie d'intersection conserve des propriétés satisfaisantes (notamment la dualité de Poincaré). En effet, dans le cas $\bar{p}(S)\geq codim(S)-1$, la condition \ref{EquationSimplexeAdmissible} autorise un simplexe admissible à avoir une de ces faces entièrement incluse dans une strate singulière. On renvoie à \cite{DavidInvarianceTopologique} pour plus de détails sur le sujet.
\end{remarque}

\subsection{Propriétés de l'homologie d'intersection}
Pour éviter les complications entrainées par l'utilisation de perversités trop générales (voir la remarque \ref{RemarquePerversiteGenerales}), on suppose que $\bar{p}(S)\leq \codim(S)-2$ pour toute strate singulière $S$. 

La première propriété à laquelle on s'intéresse est celle de la naturalité de l'homologie d'intersection. Il existe de nombreuses variantes pour définir une classe de morphismes ayant de bonnes prorpiétés vis à vis de l'homologie d'intersection. On reprend ici la définition  \cite[Definition 1.6]{DavidInvarianceTopologique}.

\begin{defin}\label{DefinitionCodimStratifiee}
Soient $X$ et $Y$ deux espaces filtrés. Une application continue $f\colon X\to Y$ est codim-stratifiée si pour toute strate $S$ de $X$, il existe $S^{f}$ une strate de $Y$, telle que $f(S)\subset S^{f}$, et $\codim_Y(S^f)\leq \codim_X(S)$.
\end{defin}

\begin{remarque}
On parle ici d'application codim-stratifiée et non d'application stratifiée comme le font les auteurs dans \cite{DavidInvarianceTopologique}. La raison de cette distinction est qu'on définira plus tard des applications stratifiées en omettant la condition sur la codimension.
\end{remarque}

\begin{prop}[Naturalité par rapport aux applications stratifiées]
\label{PropositionNaturaliteHomologieIntersection}
Soient $X$ et $Y$ deux espaces filtrés, $f\colon X\to Y$ une application codim-stratifiée et $\bar{p}$, $\bar{q}$ des perversités sur $X$ et $Y$ respectivement. Si pour toute strate de $X$, $S$ on a 
\begin{equation*}
\bar{p}(S)\leq \bar{q}(S^f),
\end{equation*}
alors l'application $f$ induit un morphisme de complexes de chaines
\begin{align*}
C^{\bar{p}}_*(X,G)&\xrightarrow{f_*} C^{\bar{q}}_*(X,G)\\
\sigma\colon \Delta^n\to X&\mapsto f\circ\sigma\colon \Delta^n\to Y
\end{align*}
En particulier, on obtient une application en homologie 
\begin{equation*}
f_*\colon I^{\bar{p}}H_*(X,G)\to I^{\bar{q}}H_*(Y,G)
\end{equation*}
\end{prop}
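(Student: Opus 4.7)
Le plan consiste à définir $f_*$ sur les générateurs de $C^{\bar{p}}_*(X,G)$ par la formule $\sigma\mapsto f\circ\sigma$, puis à étendre par linéarité. Il faudra alors vérifier trois points : (i) pour tout simplexe singulier filtré $\sigma\colon\Delta^n\to X$, la composition $f\circ\sigma$ est un simplexe singulier filtré de $Y$ ; (ii) si $\sigma$ est $\bar{p}$-admissible, alors $f\circ\sigma$ est $\bar{q}$-admissible ; (iii) la composition commute avec l'opérateur de bord, ce qui entraîne que les chaines de $\bar{p}$-intersection sont envoyées sur des chaines de $\bar{q}$-intersection.

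Pour (i), on exploite la filtration induite par $\sigma$ sur $\Delta^n$, qui décompose ce dernier en faces ouvertes $\Delta^{n_j}\setminus\Delta^{n_{j-1}}$. Chacune, étant connexe, est envoyée par $\sigma$ dans une unique strate $S_j$ de $X$ de dimension $j$, puis par $f$ dans une strate $S_j^f$ de $Y$ avec $\codim_Y(S_j^f)\leq\codim_X(S_j)$ par l'hypothèse codim-stratifiée. L'ingrédient clé est la monotonie $\dim(S_{j_1}^f)\leq\dim(S_{j_2}^f)$ pour $j_1<j_2$ correspondant à des faces ouvertes non vides : elle résulte de ce que la clôture de la face ouverte d'indice $j_2$ dans $\Delta^n$ contient celle d'indice $j_1$, et donc, par continuité de $f\circ\sigma$, $S_{j_1}^f$ rencontre $\overline{S_{j_2}^f}$, laquelle est contenue dans le fermé $Y^{\dim(S_{j_2}^f)}$. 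Cette monotonie impose alors que $(f\circ\sigma)^{-1}(Y^l)=\Delta^{m_l}$ avec $m_l=\max\{n_j : \dim(S_j^f)\leq l\}$, ce qui établit que $f\circ\sigma$ est bien filtré.

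Pour (ii), fixons une strate $T$ de $Y$ de dimension $l$ rencontrée par $f\circ\sigma$. Par définition du degré pervers, $\Real{f\circ\sigma}_T = m_l = n_{j^*}$ où $j^*=\max\{j:\dim(S_j^f)\leq l\}$. L'admissibilité de $\sigma$ appliquée à la strate $S_{j^*}$ donne $n_{j^*}\leq n-\codim_X(S_{j^*})+\bar{p}(S_{j^*})$. La condition codim-stratifiée pour $S_{j^*}$, combinée à l'inégalité $\dim(S_{j^*}^f)\leq l$, fournit $\codim_Y(T)\leq\codim_X(S_{j^*})$. Enfin, l'hypothèse $\bar{p}(S_{j^*})\leq\bar{q}(S_{j^*}^f)$ sur les perversités contrôle les termes correctifs, et la combinaison de ces inégalités donne $n_{j^*}\leq n-\codim_Y(T)+\bar{q}(T)$, ce qui établit la $\bar{q}$-admissibilité de $f\circ\sigma$.

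Pour (iii), la formule $\partial(f\circ\sigma)=\sum_i(-1)^i f\circ(\sigma\circ D^i) = f\circ\partial\sigma$ fournit immédiatement la commutation $f_*\partial=\partial f_*$. Ainsi, si $\sigma$ est de $\bar{p}$-intersection (c'est-à-dire $\sigma$ et $\partial\sigma$ sont $\bar{p}$-admissibles), alors $f\circ\sigma$ et $\partial(f\circ\sigma)=f_*(\partial\sigma)$ sont $\bar{q}$-admissibles par (ii), et donc $f\circ\sigma$ est de $\bar{q}$-intersection. L'application $f_*$ se restreint alors en un morphisme de complexes de chaines $C^{\bar{p}}_*(X,G)\to C^{\bar{q}}_*(Y,G)$, induisant l'application annoncée en homologie. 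L'obstacle principal est la démonstration de la monotonie à l'étape (i), qui requiert d'articuler la structure rigide des faces initiales imposée par la définition des simplexes filtrés avec les contraintes topologiques de frontière issues de la continuité de $f$.
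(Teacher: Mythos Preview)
Your plan is correct and follows exactly the key idea the paper states (the paper itself does not give a proof but refers to \cite[Proposition 3.11]{DavidInvarianceTopologique}, noting only that $f$ sends $\bar{p}$-admissible simplices to $\bar{q}$-admissible ones). Your treatment of (i) and (iii) is sound, and you correctly identify the monotonicity in (i) as the crux.

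There is, however, a subtle gap in step (ii). Combining your three inequalities yields
\[
n_{j^*} \;\leq\; n - \codim_Y(T) + \bar{q}(S_{j^*}^f),
\]
not $\bar{q}(T)$. Since in this paper perversities are functions on strata rather than on codimensions, you must justify $T = S_{j^*}^f$. This does hold: since $T$ meets $f\circ\sigma$, one has $T = S_{j_0}^f$ for some $j_0 \leq j^*$, and your monotonicity argument forces $\dim(S_{j^*}^f) = l$ as well. The same closure argument as in (i), applied with $j_0$ in place of $j_1$ and $j^*$ in place of $j_2$, shows that $T$ meets $\overline{S_{j^*}^f}$. But connected components of $Y^l\setminus Y^{l-1}$ are closed in that subspace, so two distinct strata of dimension $l$ cannot meet each other's closures; hence $T = S_{j^*}^f$ and the inequality closes. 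You should make this identification explicit in the proof.
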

On renvoie à \cite[Proposition 3.11]{DavidInvarianceTopologique} pour une preuve complète de ce résultat. L'idée clé est que si $f$, $\bar{p}$ et $\bar{q}$ sont comme dans l'énoncé, alors pour tout simplexe $\bar{p}$-admissible de $X$, $\sigma$, $f\circ\sigma$ est un simplexe $\bar{q}$-admissible de $Y$.

\begin{defin}
Soient $X$ un espace filtré, $\bar{p}$ une perversité sur $X$ et $U\subset X$ un sous espace. Alors on définit la filtration induite sur $U$ par 
\begin{equation*}
\emptyset\subseteq U\cap X^0\subseteq \dots\subseteq U\cap X^{d-1}\subseteq U\cap X^{d}=U.
\end{equation*}
De plus, les strates de $U$ sont obtenues comme les composantes connexes des intersections $S\cap U$ où $S$ est une strate de $X$. Ainsi, on étend la définition de $\bar{p}$ à $U$ en posant $\bar{p}(S\cap U)=\bar{p}(S)$.
\end{defin}

\begin{prop}[Suite exacte longue de Mayer-Vietoris]
Soient $X$ un espace filtré, $U,V\subset X$ deux ouverts tels que $X=U\cup V$ et $\bar{p}$ une perversité sur $X$. Alors on a une suite exacte longue en homologie d'intersection
\begin{equation*}
\dots\to I^{\bar{p}}H_i(U\cap V,G)\to I^{\bar{p}}H_i(U,G)\oplus I^{\bar{p}}H_i(V,G)\to I^{\bar{p}}H_i(X,G)\to I^{\bar{p}}H_{i-1}(U\cap V,G)\to\dots
\end{equation*}
\end{prop}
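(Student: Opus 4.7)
The plan is to adapt the classical argument for Mayer-Vietoris in singular homology, paying attention to the extra constraint imposed by $\bar{p}$-admissibility. Let $\mathcal{U}=\{U,V\}$ and let $C^{\bar{p},\mathcal{U}}_*(X,G)\subseteq C^{\bar{p}}_*(X,G)$ denote the subcomplex generated by those $\bar{p}$-admissible filtered simplices whose image is contained in $U$ or in $V$. First I would check that the four open inclusions $U\cap V\hookrightarrow U$, $U\cap V\hookrightarrow V$, $U\hookrightarrow X$, $V\hookrightarrow X$ are codim-stratified in the sense of Definition \ref{DefinitionCodimStratifiee}: the strata of the open subspaces are by definition connected components of intersections with the strata of $X$, hence have the same codimension, and the induced perversity coincides with $\bar{p}$ on each strata. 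Proposition \ref{PropositionNaturaliteHomologieIntersection} then yields chain maps between the various $C^{\bar{p}}_*$ complexes.

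Second, I would set up the short exact sequence of chain complexes
\begin{equation*}
0\to C^{\bar{p}}_*(U\cap V,G)\xrightarrow{(i_*,-j_*)} C^{\bar{p}}_*(U,G)\oplus C^{\bar{p}}_*(V,G)\xrightarrow{k_*+l_*} C^{\bar{p},\mathcal{U}}_*(X,G)\to 0,
\end{equation*}
whose exactness can be verified simplex by simplex exactly as in the classical case: a filtered simplex with image in $U\cap V$ is determined by either of its inclusions; if $k_*\alpha+l_*\beta=0$ then each simplex of $\alpha$ equals, up to sign, a simplex of $\beta$, hence lies in $U\cap V$; surjectivity holds by definition. The associated long exact sequence in homology reads
\begin{equation*}
\dots\to I^{\bar{p}}H_i(U\cap V,G)\to I^{\bar{p}}H_i(U,G)\oplus I^{\bar{p}}H_i(V,G)\to H_i(C^{\bar{p},\mathcal{U}}_*(X,G))\to I^{\bar{p}}H_{i-1}(U\cap V,G)\to \dots
\end{equation*}

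The remaining and principal task is to prove that the inclusion $C^{\bar{p},\mathcal{U}}_*(X,G)\hookrightarrow C^{\bar{p}}_*(X,G)$ is a quasi-isomorphism. I would build a filtered barycentric subdivision operator on $\bar{p}$-admissible chains, together with a chain homotopy $T$ to the identity. The key point, and the main obstacle, is that for a filtered simplex $\sigma\colon \Delta^n\to X$ with filtration $\emptyset\subseteq\Delta^{n_0}\subseteq\dots\subseteq\Delta^{n_d}=\Delta^n$, the usual iterated-barycenter subdivision of $\Delta^n$ does not produce subsimplices respecting the filtration by the faces $\Delta^{n_k}$. The remedy is to subdivide by induction on the filtration depth: first subdivide the bottom face $\Delta^{n_0}$, then cone its subdivision against the barycenter of $\Delta^{n_1}$, and so on, iterating up to $\Delta^{n_d}$. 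By construction each subsimplex inherits a filtration in which the new $n_k'$ satisfy $n_k'\leq n_k$, so the admissibility inequality $\Real{\sigma}_S\leq n-\codim(S)+\bar{p}(S)$ of Definition \eqref{EquationSimplexeAdmissible} is preserved after reindexing. Once this filtered $\sd$ and a compatible prism homotopy $T$ are in place, a Lebesgue-number argument applied to the open cover $\{\sigma^{-1}(U),\sigma^{-1}(V)\}$ of $\Delta^n$ shows that for every $\bar{p}$-admissible chain $\xi$ some iterate $\sd^N\xi$ lies in $C^{\bar{p},\mathcal{U}}_*(X,G)$, with $\sum_{k=0}^{N-1}T\circ \sd^k$ realizing the chain homotopy $\sd^N\simeq \mathrm{id}$. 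The zig-zag lemma applied to the short exact sequence above then gives the required long exact sequence.
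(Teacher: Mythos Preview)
The thesis does not give a proof here; it simply refers to \cite[Proposition 4.1]{DavidInvarianceTopologique}. Your strategy---a short exact sequence of $\mathcal{U}$-small intersection chain complexes, together with a subdivision operator and chain homotopy establishing that $C^{\bar p,\mathcal{U}}_*(X,G)\hookrightarrow C^{\bar p}_*(X,G)$ is a quasi-isomorphism---is precisely the one carried out in that reference, so the outline is on target.

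One step, however, is \emph{not} ``exactly as in the classical case''. Surjectivity of $k_*+l_*$ is delicate because membership in $C^{\bar p}_*$ requires the \emph{boundary} to be admissible: if one splits a $\mathcal{U}$-small intersection chain $\xi$ naively as $\xi_U+\xi_V$, then $\partial\xi_U$ may contain non-admissible faces that cancelled in $\partial\xi$ against faces coming from $\xi_V$, so that $\xi_U\notin C^{\bar p}_*(U)$. One cure is to define $C^{\bar p,\mathcal{U}}_*$ as the image of $k_*+l_*$, making surjectivity tautological and pushing the difficulty into the quasi-isomorphism step. Your description of the subdivision is also off: iterated coning of $\sd(\Delta^{n_0})$ over barycenters of the successive $\Delta^{n_k}$ does not produce a subdivision of $\Delta^n$. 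What actually works is the \emph{ordinary} barycentric subdivision, once one observes that a simplex $[b_{\sigma_0},\dots,b_{\sigma_n}]$ of $\sd(\Delta^n)$ (with $\sigma_0\subsetneq\dots\subsetneq\sigma_n$ faces of $\Delta^n$ and $b_{\sigma_i}$ their barycenters) is automatically filtered---its vertices lying in $\Delta^{n_k}$ are exactly those with $\sigma_i\subseteq\Delta^{n_k}$, an initial segment by nestedness---with new indices $n_k'\leq n_k$, hence $\bar p$-admissible; moreover ordinary subdivision does shrink diameters, so the Lebesgue argument applies. The chain homotopy $T$ must then be built by coning with the barycenter in \emph{last} position, so that the resulting $(n{+}1)$-simplices remain filtered and admissible.
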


On renvoie à \cite[Proposition 4.1]{DavidInvarianceTopologique} pour une preuve.

Soit $X$ un espace filtré. On a défini dans la section \ref{SectionPseudoVarTop} le cône ouvert de $X$, $c(X)$ (Définition \ref{DefinitionConeOuvertFiltreClassique}). 
L'espace filtré $c(X)$ contient une strate de dimension $0$, le sommet du cône noté $v$, et des strates de dimension $\geq 1$, de la forme $S\times ]0,1[$ où $S$ est une strate de $X$. Ainsi, étant donnée une perversité $\bar{p}$ sur $X$ si on veut l'étendre en une perversité sur $c(X)$ il suffit de définir la valeur sur la strate $v$. Par abus de notation, on appellera la perversité correspondante sur $c(X)$ $\bar{p}$, bien que celle-ci dépende du choix de la valeur $\bar{p}(v)$. Avec ces notations, on a l'isomorphisme canonique suivant \cite[Proposition 5.2]{DavidInvarianceTopologique}.

\begin{prop}[Formule du cône]
Soient $X$ un espace filtré compact de dimension $d$ et $\bar{p}$ une perversité sur $X$. On suppose que $\bar{p}(v)\leq d-1$. On a un isomorphisme 
\begin{equation*}
I^{\bar{p}}H_k(c(X),G)\simeq \left\{\begin{array}{cl}
I^{\bar{p}}H_k(X,G) &\text{ si $k< d-\bar{p}(v)$}\\
0 &\text{ si $k\geq d-\bar{p}(v)$}
\end{array}\right.
\end{equation*}
où l'isomorphisme dans le cas $k<d-\bar{p}(v)$ est induit par l'inclusion
\begin{equation*}
X\simeq X\times\{1/2\}\hookrightarrow c(X).
\end{equation*}
voir la propriété de naturalité \ref{PropositionNaturaliteHomologieIntersection}.
\end{prop}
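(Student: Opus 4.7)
Le plan est de séparer l'argument selon que le degré $k$ se situe en-dessous ou au-dessus du seuil $d - \bar{p}(v)$, l'obstruction venant de la contribution possible du sommet $v$. Comme analyse préliminaire, la strate $\{v\}$ étant de codimension $d+1$ dans $c(X)$, tout simplexe $\bar{p}$-admissible $\sigma\colon \Delta^n \to c(X)$ rencontrant $v$ vérifie $0 \leq \Real{\sigma}_v \leq n - (d+1) + \bar{p}(v)$ par la condition \ref{EquationSimplexeAdmissible} appliquée à $v$, soit $n \geq d+1 - \bar{p}(v)$. Par contraposée, tout simplexe admissible de dimension $\leq d - \bar{p}(v)$ évite le sommet et se factorise à travers $c(X) \setminus \{v\} \simeq X \times ]0,1[$. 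Les strates $S \times ]0,1[$ ayant même codimension dans $c(X)$ que $S$ dans $X$, et en posant $\bar{p}(S \times ]0,1[) = \bar{p}(S)$, la condition d'admissibilité coïncide dans $c(X)$ et dans $X \times ]0,1[$ pour de tels simplexes.

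Dans le cas $k < d - \bar{p}(v)$, les simplexes intervenant en degrés $k$ et $k+1$ dans $C^{\bar{p}}_*(c(X), G)$ sont de dimension $\leq d - \bar{p}(v)$ et évitent $v$ ; l'inclusion naturelle
\begin{equation*}
C^{\bar{p}}_*(X \times ]0,1[, G) \hookrightarrow C^{\bar{p}}_*(c(X), G)
\end{equation*}
est donc un isomorphisme en ces degrés, d'où $I^{\bar{p}}H_k(c(X), G) \simeq I^{\bar{p}}H_k(X \times ]0,1[, G)$. Par la Proposition \ref{PropositionNaturaliteHomologieIntersection}, l'inclusion $\iota\colon X \simeq X \times \{1/2\} \hookrightarrow X \times ]0,1[$ et la projection $\pr\colon X \times ]0,1[ \to X$, toutes deux codim-stratifiées, induisent des morphismes en homologie d'intersection. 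Une décomposition stratifiée en prismes de $\Delta^n \times [0,1]$ fournit alors une homotopie de chaines admissibles entre $\iota \circ \pr$ et l'identité, établissant que $\iota_*$ est l'isomorphisme annoncé.

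Dans le cas $k \geq d - \bar{p}(v)$, j'introduis l'opération de cône interne $c \colon C_*(c(X), G) \to C_{*+1}(c(X), G)$, qui envoie $\sigma \colon \Delta^k \to c(X)$ sur sa jointure $c\sigma \colon \Delta^{k+1} \to c(X)$ avec $v$ via la structure conique de $c(X)$. Le calcul standard sur les faces donne $\partial(c\sigma) = \sigma - c(\partial \sigma)$. Pour les indices de filtration, un calcul direct montre que $\Real{c\sigma}_S = \Real{\sigma}_S + 1$ en toute strate $S$ effectivement rencontrée par $\sigma$, et que $\Real{c\sigma}_v = 0$ si $\sigma$ évite $v$. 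Le décalage uniforme de 1 en dimension et en indice de filtration préserve alors l'admissibilité aux strates non apicales, tandis que la condition au sommet devient $k+1 \geq d+1 - \bar{p}(v)$, satisfaite par hypothèse. Ainsi pour tout $\bar{p}$-cycle $\xi$ de degré $k$, la chaine $c\xi$ est admissible et $\partial(c\xi) = \xi - c(\partial \xi) = \xi$, ce qui fait de $c\xi$ une chaine de $\bar{p}$-intersection dont $\xi$ est le bord ; on en déduit $I^{\bar{p}}H_k(c(X), G) = 0$.

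Le point le plus technique est la construction de l'homotopie de chaines admissibles du premier cas : la triangulation de $\Delta^n \times [0,1]$ doit être arrangée pour que chaque $(n+1)$-simplexe produit soit $\bar{p}$-admissible, ce qui découle d'une inspection systématique des filtrations combinée au fait que la rétraction $\iota \circ \pr$ préserve strate par strate. La vérification des formules pour $\Real{c\sigma}$ dans le second cas est routinière mais demande une analyse soigneuse du comportement de la cône aux faces initiales de $\Delta^{k+1}$.
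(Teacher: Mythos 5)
Votre preuve est correcte. Notez que le texte ne démontre pas cette proposition : il renvoie à \cite[Proposition 5.2]{DavidInvarianceTopologique}, et votre argument est essentiellement celui, standard, que l'on trouve dans cette référence — scission au seuil $d-\bar{p}(v)$, identification des chaines admissibles de petit degré avec celles de $X\times\,]0,1[\,$, puis opérateur de cône $c$ au-dessus du seuil. Les points délicats sont bien traités : l'isomorphisme de complexes en degrés $k$ \emph{et} $k+1$ (tous deux $\leq d-\bar{p}(v)$) suffit pour conclure sur $H_k$ ; le calcul $\Real{c\sigma}_S=\Real{\sigma}_S+1$ est correct puisque le sommet du cône, situé dans $c(X)^0$, doit être ajouté comme \emph{premier} sommet pour que $c\sigma$ reste un simplexe filtré, de sorte que $(c\sigma)^{-1}(c(X)^j)$ est le joint du sommet $0$ avec la face initiale $\sigma^{-1}(c(X)^j)$ ; et le cas dégénéré $k=0$ de la formule $\partial(c\sigma)=\sigma-[v]$ ne se présente pas car l'hypothèse $\bar{p}(v)\leq d-1$ force $d-\bar{p}(v)\geq 1$. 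Deux simplifications possibles : dans le premier cas, plutôt que de reconstruire l'homotopie de chaines par prismes, vous pouvez invoquer directement la proposition d'invariance par homotopie stratifiée déjà énoncée, puisque $\iota$ et $\pr$ sont des équivalences d'homotopie stratifiées inverses (l'homotopie $((y,s),t)\mapsto (y,(1-t)s+t/2)$ préserve les strates) ; et dans le second cas, il vaut la peine d'écrire explicitement que si $\sigma$ rencontre $v$, l'admissibilité de $c\sigma$ en $v$ se réduit exactement à celle de $\sigma$ en $v$, ce que votre décalage uniforme donne bien.
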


La formule du cône permet de constater immédiatement que l'homologie d'intersection n'est pas invariante par homotopie. En effet, pour tout espace topologique $X$, le cône ouvert $c(X)$ est homotopiquement équivalent à un point, alors que son homologie d'intersection $I^{\bar{p}}H_*(c(X),G)$ est en général non-triviale d'après la proposition précédente. Cependant, l'homologie d'intersection est invariante par homotopie stratifiée. On rend cette affirmation plus précise à travers les définitions suivantes.

\begin{defin}
Soit $X$ un espace filtré de dimension $d$ et $K$ un espace topologique. On munit le produit $X\times K$ de la filtration
\begin{equation*}
\emptyset\subseteq X^0\times K\subseteq \dots\subseteq X^{d-1}\times K\subset X^d\times K=X\times K.
\end{equation*}
\end{defin}

\begin{defin}
Soit $f,g\colon X\to Y$ deux applications codim-stratifiées entre espaces filtrés. Une homotopie stratifiée entre $f$ et $g$ est une application codim-stratifiée 
\begin{equation*}
H\colon X\times [0,1]\to Y,
\end{equation*}
telle que $H(-,0)=f$ et $H(-,1)=g$.
On dit alors que $f$ et $g$ sont homotopes au sens stratifié. Si $f\colon X\to Y$ et $g\colon Y\to X$ sont deux applications codim-stratifiées telles que $f\circ g$ et $\Id_Y$ d'une part et $g\circ f$ et $\Id_X$ d'autre part sont homotopes au sens stratifiés, alors on dit que $f$ et $g$ sont des équivalences d'homotopie stratifiées inverses l'une de l'autre.
\end{defin}

On a alors la propriété suivante \cite[Proposition 3.13]{DavidInvarianceTopologique}.

\begin{prop}[Invariance par homotopie stratifiée]
Soient $f,g\colon X\to Y$ deux applications codim-stratifiées, homotopes au sens stratifié, et $\bar{p}, \bar{q}$ sont des perversités sur $X$ et $Y$ vérifiant les conditions de la proposition \ref{PropositionNaturaliteHomologieIntersection} par rapport aux applications $f$ et $g$. Alors, les applications induites en homologie d'intersection sont égales.
\begin{equation*}
f_*=g_*\colon I^{\bar{p}}H_*(X,G)\to I^{\bar{p}}H_*(Y,G).
\end{equation*}
En particulier, si $f\colon X\to Y$ est une équivalence d'homotopie stratifiée, $f$ induit un isomorphisme en homologie d'intersection
\begin{equation*}
f_*\colon I^{\bar{p}}H_*(X,G)\simeq I^{\bar{p}}H_*(Y,G).
\end{equation*}
\end{prop}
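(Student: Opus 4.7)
Le plan est de ramener l'énoncé à un argument classique d'homotopie de chaines via l'opérateur de prisme, en tirant profit de la naturalité établie par la Proposition \ref{PropositionNaturaliteHomologieIntersection}. On étend la perversité $\bar{p}$ à $X\times[0,1]$ en posant $\bar{p}(S\times[0,1]):=\bar{p}(S)$. Les inclusions $i_0,i_1\colon X\to X\times[0,1]$ définies par $i_\epsilon(x)=(x,\epsilon)$ sont alors codim-stratifiées et préservent trivialement les perversités. De plus, par hypothèse $H\colon X\times[0,1]\to Y$ est codim-stratifiée, et pour toute strate $S$ de $X$, la strate $S^H := (S\times[0,1])^H$ de $Y$ contient à la fois $f(S)=H(S\times\{0\})$ et $g(S)=H(S\times\{1\})$, d'où $S^H=S^f=S^g$ et $\bar{p}(S)\leq \bar{q}(S^H)$. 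Il suffira alors de montrer que $(i_0)_*=(i_1)_*$ en homologie d'intersection, car on aura $f_*=H_*(i_0)_*=H_*(i_1)_*=g_*$ par fonctorialité.

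Pour établir cette égalité, on construit l'opérateur de prisme
\begin{equation*}
P\colon C^{\bar{p}}_n(X,G)\to C^{\bar{p}}_{n+1}(X\times[0,1],G),\quad P(\sigma)=\sum_{i=0}^n(-1)^i(\sigma\times\text{id}_{[0,1]})\circ\alpha_i,
\end{equation*}
où $\alpha_i\colon\Delta^{n+1}\to\Delta^n\times[0,1]$ est l'inclusion linéaire du sous-simplexe engendré par les sommets $v_0,\dots,v_i,w_i,\dots,w_n$ avec $v_j=(e_j,0)$ et $w_j=(e_j,1)$. L'obstacle principal est de vérifier que l'image de $P$ consiste en des chaines $\bar{p}$-admissibles, ce qui requiert l'analyse précise de la filtration des sous-simplexes du prisme. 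Soit $\sigma\colon\Delta^n\to X$ un simplexe filtré $\bar{p}$-admissible de type $\Delta^{n_0}\subseteq\dots\subseteq\Delta^{n_d}$. On a $(\sigma\times\text{id})^{-1}(X^k\times[0,1])=\Delta^{n_k}\times[0,1]$, et un calcul direct sur les coordonnées barycentriques montre que $\alpha_i^{-1}(\Delta^{n_k}\times[0,1])$ est la face initiale de $\Delta^{n+1}$ engendrée par $v_0,\dots,v_{\min(i,n_k)}$ et $w_i,\dots,w_{n_k}$ (cette dernière famille étant vide si $n_k<i$), qui est de dimension $n_k+1$ si $n_k\geq i$ et $n_k$ sinon. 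Ainsi $(\sigma\times\text{id})\circ\alpha_i$ est un simplexe filtré, et pour toute strate $T=S\times[0,1]$ rencontrée, en utilisant $\codim_{X\times[0,1]}(T)=\codim_X(S)$,
\begin{equation*}
\Real{(\sigma\times\text{id})\circ\alpha_i}_T\leq n_k+1\leq n-\codim(S)+\bar{p}(S)+1=(n+1)-\codim(T)+\bar{p}(T),
\end{equation*}
grâce à la $\bar{p}$-admissibilité de $\sigma$.

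Il reste à vérifier l'identité classique $\partial P+P\partial=(i_1)_*-(i_0)_*$, qui résulte d'un calcul combinatoire standard sur les faces de la décomposition du prisme en $(n+1)$ simplexes : les faces internes se simplifient par paires, et les faces restantes correspondent à $(i_0)_*(\sigma)$, $(i_1)_*(\sigma)$ et à $P(\partial\sigma)$. L'admissibilité de $\partial P(\sigma)$ découle alors du fait que chaque face $d_j\sigma$ est elle-même $\bar{p}$-admissible (puisque $\sigma$ est de $\bar{p}$-intersection) et de l'analyse précédente appliquée à ces faces. On obtient ainsi $(i_0)_*=(i_1)_*$ en homologie d'intersection, puis $f_*=g_*$ par composition. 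Pour la seconde assertion, si $g\colon Y\to X$ est un inverse homotopique stratifié de $f$, on applique le résultat précédent aux homotopies $f\circ g\sim\Id_Y$ et $g\circ f\sim\Id_X$, obtenant $f_*g_*=(\Id_Y)_*=\Id$ et $g_*f_*=(\Id_X)_*=\Id$, d'où $f_*$ est un isomorphisme.
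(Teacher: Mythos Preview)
Your proof is correct. The paper itself does not give a proof of this proposition; it simply cites \cite[Proposition 3.13]{DavidInvarianceTopologique}. Your approach via the prism operator is the standard one and is precisely what the cited reference carries out: the substantive point is to verify that each prism piece $(\sigma\times\mathrm{id})\circ\alpha_i$ is a filtered simplex and satisfies the $\bar p$-admissibility bound, and your computation of $\alpha_i^{-1}(\Delta^{n_k}\times[0,1])$ as an initial face of dimension $n_k+1$ or $n_k$ handles this correctly. The reduction to $(i_0)_*=(i_1)_*$ via the factorisation $f=H\circ i_0$, $g=H\circ i_1$ and the check that $H$ itself satisfies the perversity hypothesis are also fine.
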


\begin{remarque}
En fait, l'hypothèse sur les codimensions dans la définition de morphismes codim-stratifiés peut être complètement omise, on obtient ainsi la notion de morphismes stratifiés. Dans ce cas, il faut modifier la condition de la proposition sur la naturalité de l'homologie d'intersection (Proposition \ref{PropositionNaturaliteHomologieIntersection}) en 
\begin{equation*}
\bar{p}(S)-codim(S)\leq \bar{q}(S^f)-codim(S^f).
\end{equation*}
On remarque aussi que si $f$ et $g$ sont codim-stratifiées et que $H$ est une homotopie stratifiée entre $f$ et $g$, alors $H$ vérifie automatiquement l'hypothèse sur les codimensions. En effet, si $S$ est une strate de $X$ de codimension $c$, et $S^f$ est la strate de $Y$ correspondante, alors $S\times [0,1]$ est une strate de $X\times [0,1]$ de codimension $c$, et $S^H=S^f$. Ainsi, la définition d'homotopie stratifiée se généralise immédiatement en omettant toute mention de la codimension, et si deux morphismes $f$ et $g$ sont codim-stratifiés et homotope par une homotopie stratifiée, on a automatiquement que l'homotopie entre $f$ et $g$ est codim-stratifiée, et on retrouve donc la définition \cite[Definition 1.9]{DavidInvarianceTopologique}
\end{remarque}

\section{Espaces stratifiés au dessus d'un ensemble ordonné}
\label{SectionChap1TopP}
\subsection{La condition de frontière et l'ensemble ordonné des strates}
\label{SectionFrontiere}
Les espaces stratifiés que nous avons considéré dans la section \ref{SectionPseudoVar} vérifiaient tous une condition topologique particulière appelée condition de frontière, que l'on définit ici.

\begin{defin}[Condition de frontière]
Soit $X$ un espace topologique partitionné en strates $(X^s)_{s\in \mathcal{S}}$. La stratification vérifie la condition de frontière si pour toute paire de strates $(X^r,X^s)$ la condition suivante est vérifiée
\begin{equation*}
X^r\cap \bar{X^s}\not =\emptyset \text{ si et seulement si } X^r\subseteq \bar{X^s}.
\end{equation*}
\end{defin}

On définit alors la relation binaire sur $\mathcal{S}$, $r\leq s\Leftrightarrow X^r\subseteq \bar{X^s}$. On a immédiatement la proposition suivante. On renvoie à \cite[Proposition 1.2]{Kloeckner} pour une preuve élémentaire.

\begin{prop}
Soit $X$ un espace stratifié vérifiant la condition de frontière et dont les strates sont localement fermées. La relation $\leq$ sur l'ensemble des strates de $X$ est une relation d'ordre.
\end{prop}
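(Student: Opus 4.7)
Le plan est de vérifier les trois axiomes d'une relation d'ordre : la réflexivité, la transitivité et l'antisymétrie. Les deux premières propriétés ne nécessitent aucune hypothèse topologique sur la stratification, alors que l'antisymétrie va reposer de manière cruciale sur la condition de frontière et sur l'hypothèse que les strates sont localement fermées.

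Pour la réflexivité, l'inclusion $X^r\subseteq \bar{X^r}$ est triviale, ce qui donne $r\leq r$. Pour la transitivité, supposons $r\leq s$ et $s\leq t$, c'est-à-dire $X^r\subseteq \bar{X^s}$ et $X^s\subseteq \bar{X^t}$. En prenant les adhérences dans la seconde inclusion, on obtient $\bar{X^s}\subseteq \bar{X^t}$ (car l'adhérence de $X^t$ est fermée et contient $X^s$), et donc $X^r\subseteq \bar{X^t}$, ce qui donne $r\leq t$.

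La partie principale du travail est l'antisymétrie, et c'est là que j'attends la difficulté. On suppose $r\leq s$ et $s\leq r$, et on veut montrer que $r=s$. Puisque les strates forment une partition, il suffit de prouver que $X^r=X^s$, et en supposant par l'absurde que $X^r\neq X^s$, on a $X^r\cap X^s=\emptyset$. L'idée est alors d'utiliser le fait qu'une strate localement fermée est ouverte dans son adhérence. Plus précisément, $X^s$ est ouverte dans $\bar{X^s}$, donc le complémentaire $\bar{X^s}\setminus X^s$ est fermé dans $\bar{X^s}$, donc fermé dans $X$. L'hypothèse $X^r\subseteq \bar{X^s}$ combinée à $X^r\cap X^s=\emptyset$ donne $X^r\subseteq \bar{X^s}\setminus X^s$, et comme ce dernier ensemble est fermé, on en déduit
\begin{equation*}
\bar{X^r}\subseteq \bar{X^s}\setminus X^s.
\end{equation*}

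Mais par l'autre hypothèse $s\leq r$, on a $X^s\subseteq \bar{X^r}\subseteq \bar{X^s}\setminus X^s$, ce qui est une contradiction puisque $X^s$ est non-vide. Donc $X^r=X^s$ et par conséquent $r=s$. On remarque que la condition de frontière elle-même n'intervient pas explicitement dans cet argument d'antisymétrie, mais elle est essentielle pour que la relation $\leq$ encode bien la géométrie des strates ; elle assure en particulier que la relation binaire définie est pertinente, et c'est elle qui garantit l'équivalence évoquée implicitement entre l'intersection non vide avec l'adhérence et l'inclusion dans l'adhérence. L'obstacle principal est donc de bien manipuler la propriété d'être localement fermée, via la reformulation standard "ouverte dans son adhérence", pour obtenir la contradiction voulue.
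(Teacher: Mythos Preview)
Ta preuve est correcte et complète. L'article lui-même ne donne pas de démonstration mais renvoie simplement à \cite[Proposition 1.2]{Kloeckner} pour une preuve élémentaire ; ton argument est précisément la preuve standard (réflexivité et transitivité directes, antisymétrie via le fait qu'une partie localement fermée est ouverte dans son adhérence), et c'est vraisemblablement celle de la référence citée.
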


Par cohérence avec les notations du reste de ce texte, on notera $P_X$ l'ensemble ordonné des strates de $X$. Considérons maintenant la notion d'application stratifiée de la définition \ref{DefinitionCodimStratifiee}, en omettant les hypothèses sur les codimensions. Si $X$ et $Y$ sont deux espaces stratifiés, vérifiant la condition de frontière, et dont les strates sont localement fermées, une application stratifiée $f\colon X\to Y$ induit un morphisme d'ensemble $P_X\to P_Y$. De plus, si $S$ et $R$ sont des strates de $X$ vérifiant $S\subseteq \bar{R}$, on a $f(S)\subseteq f(\bar{R})\subseteq\overline{f(R)}$. On en déduit que $S^f\subseteq \overline{R^f}$, et le morphisme induit par $f$, $P_X\to P_Y$ est un morphisme d'ensembles ordonnés. Ceci nous amène naturellement à la définition abstraite d'espaces stratifiés de la section suivante.

\subsection{La catégorie des espaces stratifiés}

\begin{defin}
Soit $P$ un ensemble ordonné. On le munit de la topologie de l'ordre (aussi appelée topologie d'Alexendroff) dont les ouverts sont les ensembles $U$ vérifiant la propriété suivante.
\begin{equation*}
\forall p\in U, \left( \ q\geq p\Rightarrow q\in U\right).
\end{equation*}
\end{defin}

\begin{defin}
Un espace stratifié est la donnée de
\begin{itemize}
\item un espace topologique $X$,
\item un ensemble ordonné $P_X$,
\item une application continue $\varphi_X\colon X\to P_X$.
\end{itemize}
Les strates de $X$ sont les préimages $\varphi_X^{-1}(p)\subset X$, où $p\in P_X$.
Un morphisme stratifié entre deux espaces stratifiés $\strat{X}$ et $\strat{Y}$ est la donnée d'une application continue $f\colon X\to Y$ et d'un morphisme d'ensembles ordonnés $\alpha\colon P_X\to P_Y$ tel que le diagramme suivant commute.
\begin{equation*}
\begin{tikzcd}
X
\arrow{r}{f}
\arrow[swap]{d}{\varphi_X}
&Y
\arrow{d}{\varphi_Y}
\\
P_X
\arrow[swap]{r}{\alpha}
&P_Y
\end{tikzcd}
\end{equation*}
La catégorie $\Strat$ est la catégorie des espaces stratifiés et des applications stratifiées.
\end{defin}

\begin{remarque}
Bien qu'elle puisse paraitre plus générale, la notion de morphisme stratifié qu'on a donné ici coïncide avec celle qu'on a étudié plus tôt dans le cas espaces stratifiés vérifiant la condition de frontière. En effet, pour ces espaces une application $f\colon X\to Y$ est stratifiée si pour toute strate $S$ de $X$, $f(S)$ est inclus dans une unique strate de $Y$, notée $S^f$. Mais alors, la correspondance $S\mapsto S^f$ définit un morphisme $\alpha\colon P_X\to P_Y$, et la condition de frontière garantit que ce morphisme préserve l'ordre. En partie pour cette raison, on identifiera souvent le morphisme de $\Strat$ $(f,\alpha)$ avec l'application continue $f$.
\end{remarque}

\subsection{Homotopies stratifiées et la catégorie des espaces filtrés}
\label{SectionHomotopieStratifieesEtTopP}
\begin{defin}
Soient $\strat{X}$ et $\strat{Y}$ deux espaces stratifiés et 
\begin{equation*}
(f,\alpha),(g,\beta)\colon \strat{X}\to\strat{Y}
\end{equation*}
deux applications stratifiées. Elles sont homotopes au sens stratifié si $\alpha=\beta$ et si il existe une application continue $H\colon X\times [0,1]\to Y$ telle que $H(-,0)=f$, $H(-,1)=g$ et telle que $(H,\alpha)$ est une application stratifiée de $(X\times [0,1],P_X, \varphi_X\circ \pr_X)$ vers $\strat{Y}$.
\end{defin}

\begin{remarque}
On peut reformuler la définition précédente comme suit. Deux applications stratifiées $f$ et $g$ sont homotopes au sens stratifié si il existe une homotopie $H$ entre $f$ et $g$ qui est elle même une application stratifiée. Pour donner un sens à l'affirmation "$H$ est une application stratifiée", il faut fixer une stratification du cylindre $X\times [0,1]$. On considère en fait la composée
\begin{equation*}
X\times [0,1]\xrightarrow{\pr_X}X\xrightarrow{\varphi_X}P_X.
\end{equation*}
Cette stratification implique en particulier que si $x\in X$ est un point, alors le chemin $t\mapsto H(x,t)$ est dans la même strate de $Y$ pour tout $0\leq t\leq 1$. L'existence d'une telle homotopie implique en particulier que les applications $f$ et $g$ induise le même morphisme sur les ensembles ordonnés de strates, ou autrement dit que $\alpha=\beta$.
\end{remarque}

\begin{defin}
Une application stratifiée 
\begin{equation*}
(f,\alpha)\colon \strat{X}\to\strat{Y}
\end{equation*}
est une équivalence d'homotopie stratifiée si il existe une application stratifiée
\begin{equation*}
(g,\beta)\colon \strat{Y}\to\strat{X}
\end{equation*}
telle que $(f,\alpha)\circ(g,\beta)$ et $(g,\beta)\circ(f,\alpha)$ sont respectivement homotopes au sens stratifié à $(\Id_X,\Id_{P_X})$ et $(\Id_Y,\Id_{P_Y})$. Dans ce cas, on dit que $(f,\alpha)$ et $(g,\beta)$ sont des équivalences d'homotopie stratifiées inverse l'une de l'autre.
\end{defin}

\begin{remarque}
En particulier, si $(f,\alpha)\colon \strat{X}\to\strat{Y}$ est une équivalence d'homotopie stratifiée, $f$ est une équivalence d'homotopie entre $X$ et $Y$. La réciproque est fausse. Si $f$ est une équivalence d'homotopie, alors $(f,\alpha)$ n'est pas nécessairement une équivalence d'homotopie stratifiée. La remarque suivante donne une obstruction simple, et on trouvera des exemples plus élaborés dans le chapitre \ref{ChapitreExemples}.
\end{remarque}

\begin{remarque}
Si $(f,\alpha)$ et $(g,\beta)$ sont des équivalences d'homotopies stratifiées inverses l'une de l'autre, alors $\alpha$ et $\beta$ sont des isomorphismes d'ensembles ordonnés inverses l'un de l'autre. En effet, par définition d'une homotopie stratifiée, si il existe une homotopie stratifiée entre $(f,\alpha)\circ (g,\beta)=(f\circ g,\alpha\circ \beta)$ et $(\Id_Y,\Id_{P_Y})$, on doit donc avoir $\alpha\circ \beta= \Id_{P_Y}$, de même pour la composition $\beta\circ\alpha$.
\end{remarque}

Ainsi si $(f,\alpha)$ est une équivalence d'homotopie stratifiée entre $\strat{X}$ et $\strat{Y}$, quitte à remplacer $\varphi_Y$ par $\alpha^{-1}\circ\varphi_Y$, on peut supposer que $P_Y=P_X$ et que $\alpha=\Id_{P_X}$. 
Puisqu'on s'intéresse aux propriétés homotopiques des espaces stratifiés, ceci suggère fortement de travailler dans une catégorie où les espaces sont tous stratifiés au dessus d'un même ensemble ordonné, $P$, et où le morphisme induit sur $P$ est l'identité. De façon à éviter toute confusion entre les deux contextes, on parlera dans le cas où $P$ est fixé d'espaces \textbf{filtrés} au dessus de $P$. On donne la définition précise suivante.

\begin{defin}\label{DefinitionChap1TopP}
Soit $P$ un ensemble ordonné. Un espace filtré au dessus de $P$ est la donnée de
\begin{itemize}
\item un espace topologique $X$,
\item une application continue $\varphi_X\colon X\to P$.
\end{itemize}
Une application filtrée entre deux espace filtrés $\fil{X}$ et $\fil{Y}$ est une application continue $f\colon X\to Y$ telle que le diagramme suivant commute
\begin{equation*}
\begin{tikzcd}
X
\arrow{rr}{f}
\arrow[swap]{dr}{\varphi_X}
&&Y
\arrow{dl}{\varphi_Y}
\\
&P
\end{tikzcd}
\end{equation*}
La catégorie $\Top_P$ est la catégorie des espaces filtrés au dessus de $P$ et des applications filtrées.
\end{defin}

\subsection{Espaces localement coniques et pseudo-variétés topologiques dans la catégorie $\Strat$}
\label{SectionPseudoVarTopP}
\begin{defin}\label{DefinitionVoisinageStratifie}
Soit $\strat{X}$ un espace stratifié, et $x\in X$ un point de $X$. Un voisinage stratifié de $x$ est la donnée d'un espace stratifié $\strat{U}$ et d'un morphisme stratifié
\begin{equation*}
(i,\alpha)\colon \strat{U}\to\strat{X},
\end{equation*}
tels que $i\colon U\to X$ est l'inclusion d'un voisinage  ouvert de $x$ dans $X$, et tels que $\alpha\colon P_U\to P_X$ est l'inclusion d'un sous-ensemble ordonné.
\end{defin}

\begin{remarque}
La définition précédente peut sembler arbitrairement compliquée. La notion de voisinage est élémentaire en topologie, et on pourrait définir naïvement un voisinage d'un point $x$ de $\strat{X}$ comme un voisinage $U$ de $x$ dans $X$ muni de la stratification $(\varphi_X)_{|U}\colon U\to P_X$. Le problème est que cette restriction de $\varphi_X$ n'a aucune raison d'être surjective ($\varphi_X$ elle même n'est pas supposée surjective), et en définissant un voisinage ainsi, on conserverait une information globale : l'ensemble ordonné $P_X$. La définition \ref{DefinitionVoisinageStratifie} évite partiellement cet écueil en autorisant un choix sur l'ensemble ordonné $P_U$. Ceci permet à la fois d'utiliser les voisinages pour décrire des propriétés locales et de considérer que $\strat{X}$ est un ouvert de $\strat{X}$, même si la stratification $\varphi_X$ n'est pas surjective.
\end{remarque}

\begin{defin}
Soit $P$ un ensemble ordonné. On définit $c(P)$, le cône de $P$, comme l'ensemble ordonné obtenu en adjoignant à $P$ un élément minimal, noté $-\infty$.
\end{defin}

\begin{defin}
Soit $\strat{X}$ un espace stratifié. On définit $c\strat{X}$, le cône stratifié de $\strat{X}$ comme
\begin{equation*}
c\strat{X}=(c(X),c(P_X),c(\varphi_X)),
\end{equation*}
où $c(X)$ est le cône ouvert de l'espace topologique $X$ (voir la définition \ref{DefinitionConeOuvertFiltreClassique}), et la stratification $c(\varphi_X)$ est définie par
\begin{align*}
\c(\varphi_X)\colon c(X)&\to c(P)\\
(x,t)&\mapsto \varphi_X(x), \text{ si $t>0$}\\
v&\mapsto -\infty
\end{align*}
On rappelle que $v$ désigne le sommet du cône, et que tout point du cône est de la forme $(x,t)$, où $x\in X$ et $t\in [0,1]$, avec $(x,0)=(x',0)=v\in c(X)$ pour tout $x,x'\in X$.
\end{defin}

\begin{defin}
Soient $\strat{X}$ un espace stratifié et $M$ un espace topologique non-filtré. On définit le produit de $\strat{X}$ et de $M$ comme l'espace stratifié
\begin{equation*}
M\times \strat{X}= (M\times X,P_X,\varphi_X\circ\pr_X),
\end{equation*}
où $\pr_X\colon M\times X\to X$ est la projection sur $X$.
\end{defin}

On peut alors donner une définition d'un espace coniquement stratifié dans $\Strat$.

\begin{defin}\label{DefinitionConiquementStratifieChap1}
Soit $\strat{X}$ un espace stratifié. C'est un espace coniquement stratifié si pour tout $x\in X$, il existe un voisinage stratifié $\strat{N}$ de $x$ dans $\strat{X}$, un espace stratifié $\strat{L}$, un voisinage $U$ de $x$ dans $\varphi_X^{-1}(\varphi_X(x))$ ainsi qu'un homéomorphisme stratifié
\begin{equation*}
(h,\alpha)\colon \strat{N}\to U\times c(\strat{L})
\end{equation*}
\end{defin}

\begin{remarque}\label{RemarqueConiquementStratifieLocalementConique}
Cette définition est équivalente à la définition de ce que J. Lurie appelle les "conically stratified spaces" \cite[Definition A.5.5]{HigherAlgebra}. Par ailleurs, on a vu avec la remarque \ref{RemarqueFiltreClassiqueEgalFiltre} qu'un espace filtré au sens classique correspondait à un espace stratifié au dessus de l'ensemble ordonné $\{0<1<\dots<n\}$. Ainsi, la définition \ref{DefinitionConiquementStratifieChap1} est aussi équivalente à la définition \ref{DefinitionLocalementConique}.
\end{remarque}

On peut aussi donner une définition de pseudo-variété topologique dans la catégorie $\Strat$ en s'inspirant de la définition \ref{DefinitionAlternativePseudoVarTop}.

\begin{defin}
Soit $\strat{X}$ un espace stratifié. C'est une pseudo-variété stratifiée de dimension $n$ si $X$ est séparé et si pour tout $x\in X$, il existe $0\leq i\leq n$, un voisinage stratifié $\strat{N}$ de $x$ dans $\strat{X}$, une pseudo-variété stratifiée de dimension $n-i-1$ $\strat{L}$ (vide si et seulement si $i=n$), ainsi qu'un homéomorphisme stratifié
\begin{equation}\label{EquationHomeoPseudoVarStrat}
(h,\alpha)\colon \strat{N}\to \mathbb{R}^i\times c(\strat{L})
\end{equation}
\end{defin}

\begin{prop}
Une pseudo-variété stratifiée est une pseudo-variété topologique.
\end{prop}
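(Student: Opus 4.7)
La preuve procède par récurrence sur la dimension $n$. Le cas $n=0$ est immédiat : l'hypothèse force $\strat{L}=\emptyset$ dans tout modèle local, donc chaque voisinage est homéomorphe à $\R^0\times c(\emptyset)=\{*\}$ ; l'espace $X$ est alors discret, ce qui en fait une pseudo-variété topologique de dimension $0$.

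Pour l'étape inductive, soit $\strat{X}$ une pseudo-variété stratifiée de dimension $n$ et supposons le résultat acquis en dimension strictement inférieure. Le plan est de construire une filtration classique $X^0\subseteq\dots\subseteq X^n=X$ à partir de la stratification abstraite $\varphi_X$, puis de vérifier la condition locale de la définition \ref{DefinitionAlternativePseudoVarTop}. Pour tout $x\in X$, on choisit un modèle local $(h,\alpha)\colon \strat{N}\to \R^i\times c(\strat{L})$ et on pose $\dim(x)=i$. L'invariance du domaine -- appliquée à la strate de $x$, localement homéomorphe à $\R^i\times\{v\}$ via $h$ -- garantit que $i$ ne dépend pas du choix du modèle local. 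On définit alors $X^k=\{x\in X\ |\ \dim(x)\leq k\}$.

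Il reste à établir deux points : que chaque $X^k$ est fermé dans $X$, et que pour tout $x\in X^i\setminus X^{i-1}$, l'homéomorphisme $h$ respecte les filtrations $X^{i+k}$ et $\R^i\times c(L^{k-1})$. La fermeture s'obtient via la structure conique locale : dans un voisinage modèle $\R^i\times c(\strat{L})$ centré en un point de dimension $i$, tout point hors de $\R^i\times\{v\}$ est de la forme $(t,l,s)$ avec $s>0$, et a -- par hypothèse de récurrence appliquée à $\strat{L}$, pseudo-variété topologique de dimension $n-i-1<n$ -- une dimension locale égale à $i+\dim_L(l)+1>i$. Ceci confine les strates de dimension $\leq k<i$ loin de $x$ et assure la fermeture de $X^k$. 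La compatibilité des filtrations découle du fait que $h$ est un morphisme stratifié et que, par hypothèse de récurrence, la filtration topologique de $L$ coïncide avec celle induite par la stratification de $\strat{L}$.

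L'obstacle principal est de faire coïncider la dimension abstraite $\dim(x)$ issue des modèles locaux avec la dimension topologique attendue, et d'en déduire la fermeture des $X^k$. Cela repose sur une identification soigneuse -- rendue possible par l'hypothèse de récurrence -- entre les filtrations stratifiées et topologiques sur le cône $c(\strat{L})$, puis sur son produit avec $\R^i$, afin d'assurer que les modèles locaux des points voisins se recollent en une filtration globale cohérente de $X$.
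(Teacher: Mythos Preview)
Your proof is correct and follows essentially the same approach as the paper: both construct the classical filtration by associating to each point the integer $i$ appearing in its local model $\R^i\times c(L)$, and then verify the recursive cone condition of Définition~\ref{DefinitionAlternativePseudoVarTop}. The paper is terser --- it observes that each abstract stratum $\varphi_X^{-1}(p)$ is an $i$-manifold via the restriction $h_{|S\cap N}\colon S\cap N\to\R^i$, takes $X^k$ to be the union of strata of dimension $\leq k$, and then appeals to Remarque~\ref{RemarqueConiquementStratifieLocalementConique} --- whereas you make the induction explicit and carefully justify both the well-definedness of $\dim(x)$ (via invariance of domain) and the closedness of the $X^k$ (via the inductive hypothesis on the links), points the paper leaves implicit.
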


\begin{proof}
Soient $\strat{X}$ une pseudo-variété stratifiée, et $x\in X$. Notons $S\subseteq X$ la strate de $\strat{X}$ contenant $x$. La restriction de l'homéomorphisme stratifiée \ref{EquationHomeoPseudoVarStrat} à $S\cap N$ fournit un homéomorphisme
\begin{equation*}
h_{|S}\colon S\cap N\to \mathbb{R}^i.
\end{equation*}
On en déduit que la strate contenant $x$ est une variété topologique de dimension $i$. Ceci permet de définir une filtration sur $X$
\begin{equation*}
\emptyset\subseteq X^0\subseteq \dots\subseteq X^n=X
\end{equation*}
en prenant pour $X^k$ l'union des strates de $\strat{X}$ qui sont des variétés de dimension $\leq k$. Les strates de cette filtration coïncide par construction avec les strates de $\strat{X}$. Ainsi, d'après la remarque \ref{RemarqueConiquementStratifieLocalementConique}, l'espace filtré $X$ est un CS espaces (voir la définition \ref{DefinPseudoVarTopologique1}) récursif. De plus, comme les entrelacs de strates de dimension $<n$ sont non vides, l'union des strates de dimension $n$ est dense dans $X$, et $X$ est donc une pseudo-variété.
\end{proof}

\chapter{Théorie de l'homotopie classique}
Ce chapitre est une introduction à la théorie de l'homotopie classique. A travers le théorème de Whitehead (Théorème \ref{TheoremeWhiteheadClassique}), on reprend quelques idées importantes qui ont menées à la définition des catégories modèles. On discute ensuite de l'équivalence entre la théorie homotopique des espaces topologiques et celle des ensembles simpliciaux. L'ordre choisi pour présenter les résultats reflète l'ordre dans lequel on procédera pour explorer la théorie homotopique des espaces stratifiés dans les chapitres suivants.

La première section de ce chapitre est centrée sur le théorème de Whitehead (Théorème \ref{TheoremeWhiteheadClassique}). 
Dans la section \ref{SectionHomotopiesGroupesHomotopiesChap2}, on définit la notion d'homotopie ainsi que les groupes d'homotopie des espaces topologique. Dans la section \ref{SectionEnsemblesSimpliciauxChap2}, on présente les ensembles simpliciaux ainsi que l'adjonction entre la catégorie des espaces topologiques et celle des ensembles simpliciaux. On discute ensuite dans les sections \ref{SectionHomotopieGroupesHomotopieSimpliciauxChap2} et \ref{SectionComplexesDeKan} des analogues simpliciaux aux groupes d'homotopie, et des complexes de Kan - les ensembles simpliciaux pour lesquels ces invariants ont de bonnes propriétés. On conclut cette section par une esquisse de preuve du théorème de Whitehead (voir le théorème \ref{TheoremeWhiteheadSimplicialChap2}).

Dans la section \ref{SectionCategoriesModeles}, on introduit la notion de catégorie modèle. Après avoir rappelé les axiomes (section \ref{SectionAxiomesCMF}), on étudie la notion d'homotopie dans une catégorie modèle (section \ref{SectionHomotopiesCMF}), et on présente la catégorie homotopique d'une catégorie modèle (section \ref{SectionCategorieHomotopiqueCMF}). Dans la section \ref{SectionCMFEngendrementCofibrant}, on présente le cas particulier des catégories modèles à engendrement cofibrant. Dans ces catégories, les fibrations sont caractérisées par leurs propriétés de relèvement par rapport à un ensemble de morphismes connus. Ceci permet de ramener beaucoup de preuves à l'étude de cet ensemble de flèches par un argument dû à D. Quillen appelé "argument du petit objet". Celui-ci est rappelé à la fin de la section. (On note que, à l'exception de $\sSTop_P$ pour laquelle la question est ouverte, toutes les catégories modèles que nous étudierons dans ce texte seront à engendrement cofibrant.) Dans la section \ref{SectionCMFSimpliciale}, on introduit la notion de catégorie modèle simpliciale.

Dans la section \ref{SectionAdjonctionKanQuillenChap2}, on montre comment la notion de catégorie modèle permet de montrer que les théories homotopiques des espaces topologiques et des ensembles simpliciaux sont équivalentes. On définit la catégorie modèle des ensembles simpliciaux dans la section \ref{SectionCMFsSChap2} et celle des espaces topologique dans la section \ref{SectionCMFTopChap2}. Finalement, dans la section \ref{SubsectionAdjonctionKanQuillenChap2}, on présente le théorème classique de Kan-Quillen affirmant que l'adjonction 
\begin{equation*}
\Real{-}\colon \sS\leftrightarrow\Top\colon \Sing
\end{equation*}
induit une équivalence entre les catégories homotopiques (Théorème \ref{TheoremeEquivalenceKanQuillenChap2}).
\label{ChapterRappelHomotopie}

\section{Théorèmes de Whitehead}

Dans cette section, on donne les définitions nécessaires pour comprendre et prouver le théorème classique de H. Whitehead \cite{WhiteheadCombinatorial1}.

\begin{theo}\label{TheoremeWhiteheadClassique}
Soient $X$ et $Y$ deux CW-complexes et $f\colon X\to Y$ une application continue. L'application $f$ est une équivalence d'homotopie si et seulement si, pour tout $n\geq 0$ et pour tout $x\in X$, $f$ induit des isomorphismes
\begin{equation*}
f_*\colon \pi_n(X,x)\to\pi_n(Y,f(x))
\end{equation*}
\end{theo}

\subsection{Homotopies et groupes d'homotopie}
\label{SectionHomotopiesGroupesHomotopiesChap2}
\begin{defin}
Soient $X$ et $Y$ deux espaces topologiques et $f,g\colon X\to Y$ deux applications continues. On dit que $f$ et $g$ sont homotopes si il existe une application continue
\begin{equation*}
H\colon X\times[0,1]\to Y,
\end{equation*}
telle que $f=\left(x\mapsto H(x,0)\right)$ et $g=\left(x\mapsto H(x,1)\right)$, dans ce cas, on dit que $f$ et $g$ sont homotopes par $H$ et on note $f\sim g$.
Soit $f\colon X\to Y$ une application continue. On dit que $f$ est une équivalence d'homotopie si il existe une application continue $g\colon Y\to X$ telle que les composées $f\circ g$ et $g\circ f$ sont respectivement homotopes à $\Id_Y$ et $\Id_X$. Dans ce cas, on dit que $f$ et $g$ sont des équivalences d'homotopie inverses l'une de l'autre.
\end{defin}

\begin{remarque}
Si $(X,x)$ et $(Y,y)$ sont des espaces pointés (c'est à dire qu'on a choisit des points $x\in X$ et $y\in Y$), deux applications $f,g\colon X\to Y$ vérifiant $f(x)=y$ et $g(x)=y$ sont homotopes au sens pointé si il existe une homotopie entre $f$ et $g$, $H\colon X\times [0,1]\to Y$ telle que $H(x,t)=y$ pour tout $0\leq t\leq 1$. On omettra par la suite de préciser au sens pointé en on notera les applications $f,g\colon (X,x)\to (Y,y)$ pour lever l'ambiguïté.
\end{remarque}

\begin{prop}
La relation d'homotopie ${\sim}$ est une relation d'équivalence.
\end{prop}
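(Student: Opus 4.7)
Pour montrer que $\sim$ est une relation d'équivalence, il faut vérifier la réflexivité, la symétrie et la transitivité. Le plan est de construire explicitement les homotopies requises dans chaque cas à partir de celles fournies par les hypothèses. La principale subtilité technique, quoique modeste, concernera la transitivité, où il faudra recoller deux homotopies en une seule et invoquer le lemme de recollement pour garantir la continuité de l'application obtenue.

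Pour la réflexivité, étant donnée une application continue $f\colon X\to Y$, je considèrerai l'homotopie constante $H\colon X\times [0,1]\to Y$ définie par $H(x,t)=f(x)$. Cette application est continue comme composée de $f$ avec la projection $\pr_X\colon X\times [0,1]\to X$, et vérifie bien $H(-,0)=H(-,1)=f$, ce qui donne $f\sim f$.

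Pour la symétrie, supposons que $f\sim g$ par une homotopie $H\colon X\times [0,1]\to Y$. Je poserai alors $H'(x,t)=H(x,1-t)$. L'application $H'$ est continue comme composée de $H$ avec l'homéomorphisme $X\times [0,1]\to X\times [0,1]$, $(x,t)\mapsto (x,1-t)$, et vérifie $H'(-,0)=g$, $H'(-,1)=f$, d'où $g\sim f$.

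Pour la transitivité, supposons $f\sim g$ par $H_1$ et $g\sim h$ par $H_2$. Je définirai $H\colon X\times [0,1]\to Y$ par
\begin{equation*}
H(x,t)=\left\{\begin{array}{cl} H_1(x,2t) & \text{ si } 0\leq t\leq 1/2\\ H_2(x,2t-1) & \text{ si } 1/2\leq t\leq 1\end{array}\right.
\end{equation*}
Les deux restrictions de $H$ aux fermés $X\times [0,1/2]$ et $X\times [1/2,1]$ sont continues, et coïncident sur $X\times \{1/2\}$ où elles valent toutes deux $g(x)$. Par le lemme de recollement pour les fermés, $H$ est continue sur $X\times [0,1]$, et vérifie $H(-,0)=f$, $H(-,1)=h$, ce qui établit $f\sim h$. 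La preuve est donc immédiate, et ne requiert qu'une manipulation élémentaire de paramétrages et l'utilisation standard de la continuité par recollement.
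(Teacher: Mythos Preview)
Your proof is correct and follows essentially the same approach as the paper: constant homotopy for reflexivity, time-reversal for symmetry, and concatenation for transitivity. You add a bit more justification (the gluing lemma, the explicit homeomorphism for symmetry), but the constructions are identical.
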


\begin{proof}
\begin{itemize}
\item Soit $f\colon X\to Y$ une application continue. La composée $X\times [0,1]\xrightarrow{\pr_X} X\xrightarrow{f}Y$ fournit une homotopie entre $f$ et $f$, ${\sim}$ est donc réflexive.
\item Soient $f,g\colon X\to Y$ deux applications continues et $H\colon X\times [0,1]\to Y$ une homotopie entre $f$ et $g$. Alors l'application
\begin{align*}
G\colon X\times [0,1]&\to Y\\
(x,t)&\mapsto H(x,1-t)
\end{align*}
est une homotopie entre $g$ et $f$, la relation ${\sim}$ est donc symétrique.
\item Soient $f,g,h\colon X\to Y$ des applications continues et $H_1,H_2\colon X\times [0,1]\to Y$ des homotopies respectivement entre $f$ et $g$ et entre $g$ et $h$. On définit l'homotopie
\begin{align*}
G\colon X\times [0,1]&\to Y\\
(x,t)&\mapsto\left\{\begin{array}{cl}
H_1(x,2t)&\text{ si $0\leq t\leq 1/2$}\\
H_2(x,2t-1) &\text{ si $1/2<t\leq 1$}
\end{array}\right.
\end{align*}
Alors, $G$ est une homotopie entre $f$ et $h$. La relation ${\sim}$ est donc transitive.
\end{itemize}
\end{proof}

\begin{defin}
Soient $X$ et $Y$ deux espaces topologiques. On définit l'ensemble des classes d'homotopies d'applications de $X$ vers $Y$ comme le quotient 
\begin{equation*}
[X,Y]=C^0(X,Y)/{\sim}.
\end{equation*}
Si $(X,x)$ et $(Y,y)$ sont des espaces pointés, on note $[(X,x),(Y,y)]$ les classes d'homotopie pointée correspondantes.
\end{defin}

\begin{defin}
Soit $X$ un espace topologique. Son ensemble de composantes connexes par arcs est défini comme
\begin{equation*}
\pi_0(X)=[*,X]
\end{equation*}
où $*$ est un point.
\end{defin}

\begin{defin}\label{DefinitionGroupeHomotopieClassique}
Soit $(X,x)$ un espace topologique pointé et $n\geq 0$. On définit le $n$-ième groupe d'homotopie de $X$ comme
\begin{equation*}
\pi_n(X,x)=[(S^n,*),(X,x)],
\end{equation*}
où $S^n\subset \mathbb{R}^{n+1}$ est la sphère unité et $*\in S^n$ est un point.
\end{defin}

\begin{remarque}
Si $n=1$ on retrouve la définition du groupe fondamental de $X$.
\end{remarque}

\begin{remarque}
Le $0$-ième groupe d'homotopie de $(X,x)$, $\pi_0(X,x)$ n'est pas un groupe, c'est l'ensemble des composantes connexes par arcs de $X$, pointé par la composante connexe par arcs de $x$ dans $X$. Si $n\geq 1$, le $n$-ième groupe d'homotopie de $(X,x)$ est un groupe, et il est abélien dès que $n\geq 2$.
\end{remarque}

\begin{prop}\label{PropositionCheminIsomorphismeGroupesHomotopies}
Soient $X$ un espace topologique, $x,x'\in X$ et $\gamma\colon [0,1]\to X$ un chemin vérifiant $\gamma(0)=x$ et $\gamma(1)=x'$. Alors, pour tout $n\geq 1$, $\gamma$ induit un morphisme de groupes
\begin{equation*}
\gamma_*\colon \pi_n(X,x)\to\pi_n(X,x')
\end{equation*}
\end{prop}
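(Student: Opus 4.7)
Le plan est de construire $\gamma_*$ par un procédé géométrique de transport le long du chemin $\gamma$, puis de vérifier qu'il passe au quotient et respecte la structure de groupe. Pour faciliter la construction, je remplacerais d'abord la définition \ref{DefinitionGroupeHomotopieClassique} par la formulation équivalente $\pi_n(X,x)=[(I^n,\partial I^n),(X,x)]$, obtenue via l'homéomorphisme $I^n/\partial I^n\simeq S^n$. Étant donné un représentant $f\colon(I^n,\partial I^n)\to(X,x)$, je définirais $\gamma_*(f)\colon I^n\to X$ en découpant $I^n$ en deux zones : sur le sous-cube central $[1/3,2/3]^n$, on utilise une copie reparamétrée de $f$ ; sur la couronne entre $\partial I^n$ et ce sous-cube, on fait parcourir le chemin $\gamma$ à rebours, paramétré via la distance au bord $\partial I^n$, de façon à atteindre $x'$ sur $\partial I^n$ et $x$ sur la frontière intérieure. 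Ainsi $\gamma_*(f)$ est continue et envoie $\partial I^n$ sur $x'$, donc définit un élément de $\pi_n(X,x')$.

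Je procéderais ensuite en trois temps. Premièrement, si $f\sim f'$ par une homotopie $H\colon I^n\times [0,1]\to X$ relative à $\partial I^n$, alors la même construction appliquée en famille à $H$ produit une homotopie entre $\gamma_*(f)$ et $\gamma_*(f')$ ; ceci montre que $\gamma_*$ passe au quotient par la relation d'homotopie. Deuxièmement, pour vérifier la compatibilité avec la loi de groupe, il s'agit d'établir $\gamma_*([f]\cdot [g])=\gamma_*([f])\cdot \gamma_*([g])$ lorsque le produit est défini par concaténation dans la première coordonnée de $I^n$ ; on construit pour cela une homotopie explicite entre les deux applications obtenues. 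Troisièmement, un calcul analogue fournit $(\gamma\ast\delta)_*=\delta_*\circ\gamma_*$ pour des chemins concaténables ainsi que $(c_x)_*=\Id_{\pi_n(X,x)}$ pour le chemin constant ; ceci permet d'identifier $(\gamma^{-1})_*$ comme l'inverse de $\gamma_*$, montrant au passage que $\gamma_*$ est en fait un isomorphisme.

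Le point le plus délicat sera la vérification de la compatibilité avec la loi de groupe. En effet, le paramétrage choisi dans la construction de $\gamma_*([f]\cdot [g])$ produit un unique chemin $\gamma$ sur le collet, alors que $\gamma_*([f])\cdot\gamma_*([g])$ contient naturellement deux copies juxtaposées de $\gamma$. Construire explicitement l'homotopie entre ces deux expressions demande un découpage soigneux de $I^n\times [0,1]$ et une reparamétrisation continue des collets. C'est un calcul standard mais c'est là que se concentre la substance géométrique de l'énoncé.
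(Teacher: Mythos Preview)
Your proposal is correct and follows the standard construction found in most textbooks: transport along $\gamma$ via a collar, then verification of well-definedness, compatibility with the group law, and functoriality in $\gamma$. There is nothing to compare with the paper's approach, however, because the paper does not give a proof of this proposition; it is stated as a classical fact in the background chapter and used only to support Remarque~\ref{RemarqueSensDirectWhiteheadChap2}. Your write-up would thus supply what the paper leaves to the reader.
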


\begin{prop}\label{PropositionHomotopieEgalesGroupesHomotopiesChap2}
Soit $f\colon (X,x)\to (Y,y)$ une application continue et $n\geq 0$. L'application $f$ induit un morphisme 
\begin{equation*}
f_*\colon \pi_n(X,x)\to \pi_n(Y,y),
\end{equation*}
qui est un morphisme de groupes dès que $n\geq 0$. Si $g\colon (X,x)\to (Y,y)$ est une seconde application continue et que $f$ et $g$ sont homotopes, alors pour tout $n\geq 0$, $f_*=g_*$.
\end{prop}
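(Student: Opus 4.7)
Le plan est de décomposer l'énoncé en trois parties et de les traiter dans l'ordre suivant : d'abord construire $f_*$ et vérifier qu'il est bien défini, ensuite montrer qu'il s'agit d'un morphisme de groupes pour $n\geq 1$, et enfin prouver l'invariance par homotopie.

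Pour construire $f_*$, je définirais l'application en posant $f_*([\sigma])=[f\circ \sigma]$ pour toute classe $[\sigma]\in \pi_n(X,x)$ représentée par une application continue pointée $\sigma\colon (S^n,*)\to (X,x)$. Deux vérifications s'imposent. Premièrement, $f\circ \sigma$ est bien une application continue pointée de $(S^n,*)$ vers $(Y,y)$ puisque $f(x)=y$. Deuxièmement, si $\sigma\sim \sigma'$ au sens pointé via une homotopie $H\colon S^n\times[0,1]\to X$, alors la composée $f\circ H\colon S^n\times[0,1]\to Y$ est une homotopie pointée entre $f\circ\sigma$ et $f\circ\sigma'$. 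Ainsi $f_*$ est bien défini sur le quotient.

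Pour $n\geq 1$, la structure de groupe sur $\pi_n(X,x)$ est donnée par la concaténation, que l'on peut décrire en choisissant un modèle commode de $S^n$ (par exemple $[0,1]^n/\partial[0,1]^n$) et en définissant $(\sigma\cdot\tau)(t_1,\dots,t_n)$ comme $\sigma(2t_1,t_2,\dots,t_n)$ si $t_1\leq 1/2$ et $\tau(2t_1-1,t_2,\dots,t_n)$ sinon. La post-composition par $f$ commute avec cette opération, puisque $f\circ(\sigma\cdot\tau)=(f\circ\sigma)\cdot(f\circ\tau)$ par construction. Il s'ensuit que $f_*([\sigma]\cdot[\tau])=f_*([\sigma])\cdot f_*([\tau])$, donc $f_*$ est un morphisme de groupes.

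Enfin, pour l'invariance par homotopie, supposons que $f$ et $g$ sont homotopes au sens pointé via $K\colon X\times[0,1]\to Y$, avec $K(x,t)=y$ pour tout $t$. Pour $[\sigma]\in \pi_n(X,x)$, je considère l'application
\begin{equation*}
H\colon S^n\times [0,1]\to Y,\qquad (s,t)\mapsto K(\sigma(s),t).
\end{equation*}
Elle est continue comme composée d'applications continues, vérifie $H(s,0)=f(\sigma(s))$ et $H(s,1)=g(\sigma(s))$, et préserve le point base puisque $H(*,t)=K(x,t)=y$. C'est donc une homotopie pointée entre $f\circ\sigma$ et $g\circ\sigma$, ce qui donne $f_*([\sigma])=g_*([\sigma])$ dans $\pi_n(Y,y)$. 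L'énoncé suit en faisant varier $[\sigma]$. Aucune étape ne présente d'obstacle réel : il s'agit essentiellement de vérifier la fonctorialité standard des groupes d'homotopie, la seule subtilité étant de bien respecter les conditions de pointage partout.
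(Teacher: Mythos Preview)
Your proof is correct and follows the standard argument. The paper does not provide a proof for this proposition: it is stated as a classical result in the review chapter on non-filtered homotopy theory and is immediately followed by a remark using it. There is therefore nothing to compare your argument against, but what you have written is precisely the expected verification of functoriality and homotopy invariance of $\pi_n$.
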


\begin{remarque}\label{RemarqueSensDirectWhiteheadChap2}
Les propositions \ref{PropositionHomotopieEgalesGroupesHomotopiesChap2} et \ref{PropositionCheminIsomorphismeGroupesHomotopies} impliquent le sens direct du théorème de Whitehead \ref{TheoremeWhiteheadClassique}. En effet, si $f\colon X\to Y$ est une équivalence d'homotopie, il existe une équivalence d'homotopie inverse $g\colon Y\to X$. Mais alors, comme $f\circ g$ et $\Id_Y$ sont homotopes, $f_*\circ g_*$ est un isomorphisme de groupes. De même, comme $g\circ f$ et $\Id_X$ sont homotopes, $g_*\circ f_*$ est un isomorphisme de groupes. Ainsi, $f_*$ et $g_*$ sont des isomorphismes inverses l'un de l'autre. En particulier, $f$ induit des isomorphismes entre tous les groupes d'homotopie.
\end{remarque}

\subsection{Ensembles simpliciaux}
\label{SectionEnsemblesSimpliciauxChap2}
Dans le cas simplement connexe - c'est à dire lorsque $\pi_0(X)=\pi_0(Y)=*$ et $\pi_1(X,x)=\pi_1(Y,y)=0$ - le théorème de Whitehead est souvent reformulé sous la forme suivante

\begin{theo}
Soient $X$ et $Y$ deux CW-complexes simplement connexes et $f\colon X\to Y$ une application continue. C'est une équivalence d'homotopie si et seulement si pour tout $n\geq 0$, $f$ induit des isomorphismes en homologie,
\begin{equation*}
f_*\colon H_n(X)\to H_n(Y).
\end{equation*}
\end{theo}

Ceci suggère que l'homologie d'un espace contient une grande partie de l'information sur son type d'homotopie, au moins dans le cas simplement connexe. 
On rappelle que pour $n\geq 0$ le simplexe standard $\Delta^n$ est l'espace topologique
\begin{equation*}
\Delta^n=\{(t_0,\dots,t_n)\in \mathbb{R}^{n+1}\ |\ \forall i, \ t_i\geq 0\ \sum_{i=0}^nt_i=1\}.
\end{equation*}
On sait que l'homologie singulière est définie à partir de l'ensemble des applications continues de la forme
\begin{equation*}
\sigma\colon \Delta^n\to X,
\end{equation*}
où $\Delta^n$, $n\geq 0$ est un simplexe standard, que l'on appelle simplexes singuliers. 
Puisque cet ensemble de simplexes permet de calculer l'homologie de $X$, il est naturel de se demander quelle information il contient sur $X$. En particulier, peut on calculer les groupes d'homotopie de $X$ à partir de son ensemble de simplexes singuliers? Pour répondre à cette question, détaillons plus précisément les propriétés de cet ensemble, que l'on notera $\Sing(X)$.

On constate d'abord que cet ensemble porte une graduation naturelle, par la dimension des simplexes standards. Ainsi, pour tout $n\geq 0$, on notera
\begin{equation*}
\Sing(X)_n=\{\sigma\colon \Delta^n\to X\}.
\end{equation*}
Ensuite, si $\sigma\in \Sing(X)_n$ est un simplexe singulier avec $n\geq 1$, on peut restreindre $\sigma$ à l'une des faces de $\Delta^n$. Énumérons les faces de $\Delta^n$. Soit $0\leq i\leq n$, on définit les inclusions de faces $D^i$ comme suit
\begin{align*}
D^i\colon\Delta^{n-1}&\to\Delta^n\\
(t_0,\dots,t_{n-1})&\mapsto(t_0,\dots,t_{i-1},0,t_i,\dots,t{n-1})
\end{align*}
Alors, les faces de codimension $1$ de $\Delta^n$ sont de la forme $D^i(\Delta^{n-1})$ pour $0\leq i\leq n$. Mais alors, pour tout $0\leq i\leq n$, la précomposition avec $D^i$ fournit une application
\begin{align*}
\Sing(X)_n&\xrightarrow{-\circ D^i}\Sing(X)_{n-1}\\
\left(\sigma\colon\Delta^n\to X\right)&\mapsto \left(\sigma\circ D^i\colon \Delta^{n-1}\to X\right)
\end{align*}
On notera $d_i$ cette application. De la même façon, pour $0\leq i\leq n$, on considère les contractions
\begin{align*}
S^i\colon \Delta^{n+1}&\to\Delta^n\\
(t_0,\dots,t_{n+1})&\mapsto (t_0,\dots,t_i+t_{i+1},t_{i+2},\dots,t_{n+1})
\end{align*}
Les précompositions avec $S^i$ induisent alors des applications
\begin{align*}
\Sing(X)_n&\xrightarrow{-\circ S^i}\Sing(X)_{n+1}\\
\left(\sigma\colon \Delta^n\to X\right)&\mapsto\left(\sigma\circ S^i\colon \Delta^{n+1}\to X\right)
\end{align*}
qu'on appellera dégénérescences. De même, en considérant n'importe quelle composition de $D^i$ et de $S^j$, $\alpha\colon \Delta^m\to \Delta^n$, on obtient une application
\begin{equation}\label{EquationMorphismeInduitAlphaSing}
\Sing(X)_n\xrightarrow{-\circ\alpha}\Sing(X)_m.
\end{equation}

Ainsi, on peut décrire la structure portée par $\Sing(X)$ en disant qu'il s'agit d'un ensemble gradué tel que pour toute composition de faces et de dégénérescences, $\alpha\colon \Delta^m\to \Delta^n$, on dispose d'un morphisme \ref{EquationMorphismeInduitAlphaSing}. Cependant, pour que la description soit complète, il est nécessaire de décrire les relations que vérifient ces différents morphismes. Une approche est de définir explicitement les relations vérifiées par les $D^i$ et $S^j$ (Voir par exemple \cite{FriedmanSimplicialSets}), mais on peut aussi procéder de façon plus conceptuelle. 

Soit $n\geq 0$, pour $0\leq i\leq n$, on note $e_i$ le $i+1$-ème sommet de $\Delta^n$. Autrement dit, $e_i=(t_0,\dots,t_{n})$, avec $t_i=1$ et $t_j=0$ pour tout $j\not=i$. Les applications $D^i$ et $S^j$ étant linéaires, il suffit de décrire l'image des sommets pour les décrire complètement, il en va de même pour leurs compositions qui seront, elles aussi, linéaires. Ainsi, on se ramène à l'étude d'applications entre ensembles :
\begin{equation}\label{EquationAlphaEntreEnsembles}
\alpha\colon \{0,\dots,m\}\to\{0,\dots,n\}.
\end{equation}
De plus, on remarque que les applications $D^i$ et $S^j$ - et donc leurs composées aussi - préservent l'ordre des sommets. Ainsi, il suffit de considérer les applications \ref{EquationAlphaEntreEnsembles} vérifiant pour tout $0\leq k\leq l\leq m$, $\alpha(k)\leq \alpha(l)$. 

\begin{defin}
On définit la catégorie $\Delta$ dont les objets sont les entiers, et dont les morphismes entre $m$ et $n$ sont les applications croissantes de $\{0,\dots,m\}$ vers $\{0,\dots,n\}$. Plus explicitement :
\begin{equation*}
\Hom_{\Delta}(m,n)=\{\alpha\colon \{0,\dots,m\}\to\{0,\dots,n\}\ |\ \forall\ k,l\in \{0,\dots,m\},\ k\leq l\ \Rightarrow \alpha(k)\leq \alpha(l)\}
\end{equation*}
\end{defin}

\begin{defin}
On définit la catégorie des ensembles simpliciaux comme la catégorie de foncteurs :
\begin{equation*}
\sS=\Fun(\Delta^{\op},\Set).
\end{equation*}
\end{defin}

\begin{remarque}
Un ensemble simplicial $K$ est donc une collection d'ensembles $K_n$, $n\geq 0$, et de morphismes $\{d_i\}$, $\{s_i\}$ de la forme
\begin{equation*}
d_i\colon K_n \to K_{n-1} \text{ et } s_i\colon K_n\to K_{n+1}.
\end{equation*}
Satisfaisant certaines propriétés. Si $K$ et $L$ sont des ensembles simpliciaux, un morphisme $f\colon K\to L$ est une collection d'applications
\begin{equation*}
f_n\colon K_n\to L_n
\end{equation*}
vérifiant $f_n\circ d^K_i=d^L_i\circ f_n$ et $f_n\circ s^K_i=s^L_i\circ f_n$, pour tout $0\leq i\leq n$.
\end{remarque}

\begin{defin}
L'association $X\mapsto \Sing(X)$ définit un foncteur
\begin{equation*}
\Sing\colon \Top\to \sS.
\end{equation*}
Si $f\colon X\to Y$ est une application continue, l'image de $f$ par $\Sing$ est définie comme
\begin{align*}
\Sing(X)&\to\Sing(Y)\\
\left(\sigma\colon\Delta^n\to X\right)&\mapsto \left(f\circ\sigma\colon \Delta^n\to Y\right)
\end{align*}
\end{defin}

\begin{remarque}\label{RemarqueYoneda1}
Il existe un foncteur pleinement fidèle $\Delta\to \sS$ (le plongement de Yoneda)
de la forme
\begin{align*}
\Delta&\to\sS\\
n&\mapsto \left(m\mapsto \Hom_{\Delta}(m,n)\right)
\end{align*}
Moins abstraitement, pour tout entier $n\geq 0$, on note $\Delta^n=\Hom_{\Delta}(-,n)$ l'image du plongement de Yoneda. Alors, pour tout $k\geq 0$, $(\Delta^n)_k$ est l'ensemble des dégénérescences de faces de $\Delta^n$ de dimension $k$. Ainsi, l'ensemble simplicial $\Delta^n$ encode l'ensemble des faces et dégénérescences du simplexe standard $\Delta^n$. C'est pour cette raison qu'on utilise la même notation. Pour tout $n\geq 0$ on dira que $\Delta^n\in \sS$ est un simplexe.
\end{remarque}

\begin{remarque}\label{RemarqueYonedaEnsemblesSimpliciauxChap2}
Toujours par le lemme de Yoneda, si $K$ est un ensemble simplicial, alors 
\begin{equation*}
K_n=\Hom_{\sS}(\Delta^n,K).
\end{equation*}
De plus, on a 
\begin{equation*}
K\sim\colim_{\sigma\in K}\Delta^n.
\end{equation*}
Autrement dit, tout ensemble simplicial est isomorphe à la colimite de ses simplexes. Ce constat permet de définir des foncteurs $F\colon \sS\to \mathcal{C}$, où $\mathcal{C}$ est une catégorie cocomplète, en les définissant sur $\Delta\subset\sS$ et en posant
\begin{equation*}
F(K)=\colim_{\sigma\in K}F(\Delta^n).
\end{equation*}
On appelle ce procédé "extension de Kan à gauche", ou extension par colimites.
\end{remarque}

\begin{defin}
Soit $\Delta^n\in \sS$ un simplexe, on définit sa réalisation comme le simplexe standard $\Delta^n$. On le notera $\Real{\Delta^n}$ à partir de maintenant pour lever l'ambiguïté. Ceci permet de définir le foncteur de réalisation
\begin{align*}
\Real{-}\colon\sS &\to\Top\\
K&\mapsto \colim_{\sigma\in K}\Real{\Delta^n}
\end{align*}
\end{defin}

\begin{prop}\label{PropositionAdjonctionSingRealChap2}
La paire de foncteurs $(\Real{-},\Sing)$ est une paire de foncteurs adjoints.
\end{prop}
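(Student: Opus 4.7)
Le plan est d'établir une bijection naturelle
\begin{equation*}
\Hom_{\Top}(\Real{K},X)\simeq \Hom_{\sS}(K,\Sing(X))
\end{equation*}
pour tout ensemble simplicial $K$ et tout espace topologique $X$, naturelle en les deux variables. La stratégie naturelle est de commencer par vérifier cette bijection sur les simplexes $\Delta^n$, puis de l'étendre à un ensemble simplicial arbitraire par passage à la colimite, en utilisant la présentation de $K$ donnée dans la remarque \ref{RemarqueYonedaEnsemblesSimpliciauxChap2}.

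Pour la première étape, il suffit d'invoquer le lemme de Yoneda pour les ensembles simpliciaux, rappelé dans la remarque \ref{RemarqueYonedaEnsemblesSimpliciauxChap2}. Il donne directement $\Hom_{\sS}(\Delta^n,\Sing(X))=\Sing(X)_n$, qui par définition est l'ensemble $\Hom_{\Top}(\Real{\Delta^n},X)$ des applications continues de $\Real{\Delta^n}=\Delta^n$ vers $X$. Cette bijection est manifestement naturelle en $X$, et il faut vérifier qu'elle est aussi naturelle en $n\in\Delta$, c'est-à-dire qu'elle commute aux précompositions par les $D^i$ et $S^j$. Cela résulte de la manière dont $\Sing(X)$ est construit : les faces et dégénérescences de $\Sing(X)$ sont précisément définies par précomposition avec les $D^i$ et $S^j$, exactement comme celles de $\Delta^n$ vu comme ensemble simplicial représentable.

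Pour la seconde étape, on exploite que $\Real{-}$ est défini par extension de Kan à gauche à partir de sa restriction aux simplexes, et qu'il préserve donc les colimites par construction. Concrètement, en écrivant $K=\colim_{\sigma\in K}\Delta^n$, on obtient la chaîne d'isomorphismes
\begin{align*}
\Hom_{\Top}(\Real{K},X) &\simeq \Hom_{\Top}\bigl(\colim_{\sigma\in K}\Real{\Delta^n},X\bigr) \\
&\simeq \lim_{\sigma\in K}\Hom_{\Top}(\Real{\Delta^n},X) \\
&\simeq \lim_{\sigma\in K}\Hom_{\sS}(\Delta^n,\Sing(X)) \\
&\simeq \Hom_{\sS}\bigl(\colim_{\sigma\in K}\Delta^n,\Sing(X)\bigr) \\
&\simeq \Hom_{\sS}(K,\Sing(X)),
\end{align*}
où la première égalité est la définition de $\Real{K}$, la deuxième et la quatrième viennent de ce que $\Hom(-,Y)$ envoie colimites sur limites dans sa première variable, la troisième est l'étape précédente appliquée à chaque simplexe, et la dernière est de nouveau le fait que tout ensemble simplicial est la colimite de ses simplexes.

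Le point le plus délicat de cette preuve est moins une difficulté mathématique qu'une vérification soigneuse de naturalité : il faut s'assurer que les isomorphismes construits pour chaque $\Delta^n$ s'organisent en une transformation naturelle au niveau de la catégorie d'indexation $\Delta{\downarrow}K$ des simplexes de $K$, pour pouvoir passer à la (co)limite. Une fois cette naturalité établie, l'argument est essentiellement formel et repose uniquement sur la définition de $\Real{-}$ par extension par colimites et sur la cocomplétude de $\Top$.
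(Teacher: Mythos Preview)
Ta preuve est correcte et suit essentiellement la même approche que celle du papier : les deux exploitent la présentation $K=\colim_{\sigma\in K}\Delta^n$ et la définition de $\Real{-}$ comme extension par colimites pour ramener la bijection au cas des simplexes, où elle résulte directement de Yoneda et de la définition de $\Sing$. Ta rédaction est simplement un peu plus explicite dans l'écriture de la chaîne d'isomorphismes passant par la limite.
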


\begin{proof}
Soient $K$ un ensemble simplicial et $Y$ un espace topologique. Par définition de $\Real{K}$ comme une colimite, un morphisme $f\colon \Real{K}\to Y$ correspond à la donnée d'une collection de morphismes, compatibles aux applications faces et dégénérescences,
de la forme
\begin{equation*}
f_{\sigma}\colon \Real{\Delta^n}\to Y,
\end{equation*}
où $\sigma\in K_n$. Par définition de $\Sing(Y)$, les $f_{\sigma}$ sont des éléments de $\Sing(Y)_n$. Ainsi, par la remarque \ref{RemarqueYoneda1}, on a une collection de morphismes $\Delta^n\xrightarrow{f_{\sigma}} \Sing(Y)$, compatibles aux faces et dégénérescences. Ceci permet de définir
\begin{equation*}
K=\colim_{\sigma\in K}\Delta^n\xrightarrow{\colim f_{\sigma}}\Sing(Y).
\end{equation*}
L'application induite par l'association $f\mapsto \colim f_{\sigma}$ est clairement bijective, les foncteurs $\Real{-}$ et $\Sing$ sont donc adjoints.
\end{proof}

Il est temps de donner la définition de CW-complexe \cite{WhiteheadCombinatorial1}.

\begin{defin}
Un espace topologique $X$ est un complexe cellulaire si il est séparé et si il existe une partition de $X$ en cellules ouvertes, $X=\coprod e^n_{\alpha}$, telle que pour toute cellule $e^n_{\alpha}$ il existe une application continue
\begin{equation*}
f_{\alpha}\colon \Real{\Delta^n}\to \bar{e^n_{\alpha}}\subset X
\end{equation*}
vérifiant
\begin{itemize}
\item  
$(f_{\alpha})_{|\Real{\Delta^n}\setminus\Real{\partial\Delta^n}}\colon \Real{\Delta^n}\setminus\Real{\partial\Delta^n}\to \bar{e^n_{\alpha}}$
est un homéomorphisme sur son image, d'image $e^n_{\alpha}$,
\item l'image $f_{\alpha}(\Real{\partial(\Delta^n)})$ est incluse dans l'union des cellules de $X$ de dimension $\leq n-1$.
\end{itemize}
Un sous-espace $Y\subset X$ est un sous-complexe cellulaire si $Y$ est une union de cellules de $X$, et que pour tout $e^n_{\alpha}\subset Y$, $\bar{e^n_{\alpha}}\subset Y$.
L'espace topologique $X$ est un CW-complexe si c'est un complexe cellulaire muni de la topologie faible - c'est à dire qu'un sous ensemble $U\subset X$ est ouvert dans $X$ si et seulement si, pour toute cellule $e^n_{\alpha}\subset X$, $U\cap e^n_{\alpha}$ est ouvert dans $e^n_{\alpha}$ - et si pour tout point $x\in X$, il existe un sous-complexe fini $Y_x\subset X$ contenant $x$.
\end{defin}

\begin{remarque}
La définition de CW-complexe est technique, mais elle décrit une intuition élémentaire. Un CW-complexe est un espace topologique obtenu comme recollement de cellules ouvertes.
\end{remarque}

Grâce au théorème suivant, dû à J. Milnor \cite{MilnorGeometricRealization}, on dispose d'une famille d'exemples de CW-complexes. On rappelle que si $K$ est un ensemble simplicial, un simplexe $\sigma\in K_n$ est dit dégénéré si il existe $\tau\in K_{n-1}$, tel que $\sigma=s^i(\tau)$ pour un certain $0\leq i\leq n-1$. Un simplexe est non-dégénéré si il n'est pas dégénéré.

\begin{theo}\label{TheoremeEnsembleSimplicialCWComplexe}
Soit $K$ un ensemble simplicial, sa réalisation $\Real{K}$ est un CW-complexe dont les cellules de dimension $n$ sont les simplexes non-dégénérés de $K$.
\end{theo}

\begin{remarque}
Il existe une réciproque partielle au théorème \ref{TheoremeEnsembleSimplicialCWComplexe}. Si $X$ est homotopiquement équivalent à un CW-complexe, alors $X$ est homotopiquement équivalent à la réalisation d'un ensemble simplicial. Cette réciproque est une conséquence du théorème de Whitehead (Théorème \ref{TheoremeWhiteheadClassique}) et des résultats que l'on verra plus tard dans ce chapitre.
\end{remarque}

\subsection{Homotopies et groupes d'homotopie pour les ensembles simpliciaux}
\label{SectionHomotopieGroupesHomotopieSimpliciauxChap2}
\begin{defin}\label{DefinitionProduitEnsemblesSimpliciauxChap2}
Soient $K$ et $L$ deux ensembles simpliciaux. On définit l'ensemble simplicial produit, $K\times L$ par
\begin{equation*}
(K\times L)_n=K_n\times L_n
\end{equation*}
pour tout $n\geq 0$. De plus, pour $0\leq i\leq n$, on pose
\begin{equation*}
d_i=d^K_i\times d^L_i\colon K_n\times L_n\to K_{n-1}\times L_{n-1}
\end{equation*}
et
\begin{equation*}
s_i=s^K_i\times s^L_i\colon K_n\times L_n\to K_{n+1}\times L_{n+1}
\end{equation*}
Ceci définit un foncteur
\begin{equation*}
-\times -\colon \sS\times\sS\to\sS
\end{equation*}
\end{defin}

\begin{defin}
Soient $f,g\colon K\to L$ deux morphismes entre ensembles simpliciaux. On dit que $f$ et $g$ sont homotopes si il existe un morphisme $H\colon K\times\Delta^1\to L$ tel qu'on ait les égalités
\begin{equation*}
H\circ i_0=f \text{ et } H\circ i_1= g,
\end{equation*}
où $i_0,i_1\colon K\simeq K\times\Delta^0\to K\times \Delta^1$ correspondent aux deux inclusions $\Delta^0\to \Delta^1$.
\end{defin}

\begin{remarque}
On a un homéomorphisme $\Real{\Delta^1}\simeq [0,1]$, et \cite{MilnorGeometricRealization} prouve que $\Real{K\times\Delta^1}\simeq \Real{K}\times \Real{\Delta^1}$. On en déduit que si $f,g\colon K\to L$ sont homotopes, alors leurs réalisations $\Real{f},\Real{g}\colon \Real{K}\to\Real{L}$ sont homotopes. Autrement dit, le foncteur $\Real{-}$ préserve les homotopies.
\end{remarque}

\begin{remarque}\label{RemarqueHomotopieSimplicialPasEquivalence}
La relation d'homotopie entre applications simpliciales n'est pas une relation d'équivalence, elle n'est par exemple pas symétrique en général. On parlera donc d'une homotopie de $f$ vers $g$ ou de $g$ vers $f$ lorsqu'il sera nécessaire de préciser. Cependant, on peut définir la relation d'équivalence engendrée par la relation d'homotopie. Autrement dit, on dira que $f,g\colon K\to L$ sont dans la même classe d'homotopie  si il existe une suite d'applications simpliciale $f^0,\dots,f^n\colon K\to L$ telles que $f^0=f$, $f^n=g$, et telle que pour tout $0\leq i\leq n-1$, il existe une homotopie de $f_i$ vers $f_{i+1}$ ou une homotopie de $f_{i+1}$ vers $f_i$. Dans ce cas, on notera $f\sim g$. Comme dans le cas topologique, on note 
\begin{equation*}
[K,L]=\frac{\Hom(K,L)}{{\sim}}
\end{equation*}
\end{remarque}

On souhaite définir les groupes d'homotopie pour les ensembles simpliciaux. Pour imiter la définition \ref{DefinitionGroupeHomotopieClassique}, il suffit de trouver un équivalent de la sphère unité. On remarque que si $n\geq 0$, $\partial(\Delta^{n+1})$, l'union des faces de dimension $n$ de $\Delta^{n+1}$ est un modèle pour la sphère unité. En effet, on a $\Real{\partial(\Delta^{n+1})}\simeq S^n$. On est donc amené à poser la définition (naïve) suivante.

\begin{defin}\label{DefinitionGroupeHomotopieNaif}
Soit $K$ un ensemble simplicial, $x\in K_0$ un sommet et $n\geq 0$ un entier, le $n$-ième groupe d'homotopie naïf de $(K,x)$ est défini comme
\begin{equation*}
\pi_n^{\naif}(K,x)=[(\partial(\Delta^{n+1}),*),(K,x)],
\end{equation*}
où $*\in \left(\partial(\Delta^{n+1})\right)_0$.
\end{defin}

On a alors immédiatement la propriété suivante

\begin{prop}\label{PropositionPiSingEgalPiTop}
Soit $(X,x)$ un espace topologique pointé, on a un isomorphisme naturel
\begin{equation*}
\pi_n(X,x)=\pi_n^{\naif}(\Sing(X),x),
\end{equation*}
où $x\in X$ est vu comme un $0$-simplexe de $\Sing(X)$, $x\colon \Delta^0\to X$.
\end{prop}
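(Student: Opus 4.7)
Le plan est d'exploiter directement l'adjonction $(\Real{-},\Sing)$ établie dans la Proposition~\ref{PropositionAdjonctionSingRealChap2}, combinée avec l'homéomorphisme naturel $\Real{K\times L}\simeq \Real{K}\times \Real{L}$ (dû à J.~Milnor, cf.~Théorème~\ref{TheoremeEnsembleSimplicialCWComplexe} et le commentaire qui le précède). Puisque $\Real{\partial(\Delta^{n+1})}\simeq S^n$, l'adjonction fournit d'abord une bijection naturelle
$$\Hom_{\sS}(\partial(\Delta^{n+1}),\Sing(X))\simeq \Hom_{\Top}(S^n,X).$$
En choisissant comme point base un sommet $*\in (\partial(\Delta^{n+1}))_0$, dont la réalisation fournit un point base de $S^n$, cette bijection se restreint en une bijection entre morphismes pointés, puisque l'unité et la counité de l'adjonction envoient points base sur points base.

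Il reste à montrer que cette bijection descend au quotient par la relation d'homotopie. Dans le sens simplicial vers topologique, une homotopie simpliciale pointée $H\colon \partial(\Delta^{n+1})\times \Delta^1\to \Sing(X)$ correspond par adjonction à une application continue $\Real{\partial(\Delta^{n+1})\times \Delta^1}\to X$. Via l'homéomorphisme $\Real{\partial(\Delta^{n+1})\times \Delta^1}\simeq S^n\times [0,1]$, cette application est précisément une homotopie topologique pointée entre les réalisations des morphismes de départ et d'arrivée. Dans l'autre sens, une homotopie topologique pointée $H\colon S^n\times [0,1]\to X$ induit $\Sing(H)\colon \Sing(S^n\times [0,1])\to\Sing(X)$, que l'on précompose avec l'unité $\partial(\Delta^{n+1})\times \Delta^1\to \Sing(\Real{\partial(\Delta^{n+1})\times \Delta^1})$ pour obtenir une homotopie simpliciale pointée entre les morphismes correspondants.

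Le principal obstacle technique tient à la Remarque~\ref{RemarqueHomotopieSimplicialPasEquivalence} : la relation d'homotopie simpliciale n'est pas en général une relation d'équivalence, alors que la relation d'homotopie topologique l'est. Il faut donc vérifier que la bijection précédente respecte bien la relation d'équivalence \emph{engendrée} par l'homotopie simpliciale. Dans le sens simplicial vers topologique, chaque maillon d'un zig-zag d'homotopies simpliciales produit une homotopie topologique (quitte à renverser l'homotopie $t\mapsto 1-t$ dans $[0,1]$), et par transitivité on obtient bien un élément de $\pi_n(X,x)$ bien défini. Dans l'autre sens, une homotopie topologique fournit directement une unique homotopie simpliciale, sans besoin de zig-zag. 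La compatibilité des deux passages au quotient se vérifie alors en inspectant que l'unité et la counité de l'adjonction envoient les homotopies constantes sur des homotopies constantes, ce qui permet d'identifier les classes pointées de chaque côté.
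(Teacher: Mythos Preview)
Your proof is correct and follows essentially the same approach as the paper: both rely on the adjunction $(\Real{-},\Sing)$, the homeomorphism $\Real{\partial(\Delta^{n+1})}\simeq S^n$, and the fact that the adjunction preserves homotopies. The paper's argument is considerably terser (it simply asserts that the adjunction preserves homotopies), whereas you spell out the passage in both directions and explicitly address the zig-zag issue coming from Remarque~\ref{RemarqueHomotopieSimplicialPasEquivalence}; this extra care is appropriate but not required here, since in the direction needed (checking that a single topological homotopy class gives a well-defined simplicial class) no zig-zag arises.
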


\begin{proof}
Comme $\Real{\partial(\Delta^{n+1})}\simeq S^n$, l'adjonction $(\Real{-},\Sing)$ fournit une bijection entre ensembles 
\begin{equation}\label{EquationIsomorphismeHomAdjonctionRealSing}
\Hom_{\Top}(S^n,X)\simeq \Hom_{\sS}(\partial(\Delta^{n+1}),\Sing(X)).
\end{equation}
De plus, l'adjonction $(\Real{-},\Sing)$ préserve les homotopies. On en déduit que le morphisme \ref{EquationIsomorphismeHomAdjonctionRealSing} induit un isomorphisme entre  $\pi_n(X,x)$ et $\pi_n^{\naif}(\Sing(X),x)$.
\end{proof}

\begin{remarque}\label{RemarqueGroupeHomotopieNaifFail}
La proposition \ref{PropositionPiSingEgalPiTop} garantit que la définition \ref{DefinitionGroupeHomotopieNaif} est compatible avec la définition des groupes d'homotopie d'un espace topologique (Définition \ref{DefinitionGroupeHomotopieClassique}). Cependant la définition \ref{DefinitionGroupeHomotopieNaif} n'est pas complètement satisfaisante. Prenons par exemple $K=\partial(\Delta^2)$. On note ses sommets $e_0,e_1,e_2$. Alors, les simplexes non-dégénérés de $K$ sont les simplexes $[e_0],[e_1],[e_2], [e_0,e_2],[e_0,e_1]$ et $[e_1,e_2]$. Calculons le premier groupe d'homotopie naïf de $K$. On a
\begin{equation*}
\Hom_{\sS}((\partial(\Delta^{2}),e_0),(K,e_0))=\{\Id_K,f,g,h\},
\end{equation*}
avec $f\colon \sigma\mapsto [e_0]$, $g$ définie comme suit

\begin{align*}
g([e_0])= [e_0], \ g([e_1])=[e_1], \ g([e_2])=[e_1]\\
g([e_0,e_1])=[e_0,e_1], \ g([e_0,e_2])=[e_0,e_1],\ g([e_1,e_2])=[e_1,e_1],
\end{align*}
et $h$ définie comme $g$ en échangeant les rôles de $e_1$ et $e_2$. On vérifie que $f\sim g\sim h,\not\sim \Id_K$. Ainsi, le premier groupe d'homotopie de $K$, 
\begin{equation*}
\pi_1^{\naif}(K,e_0)=\{\Id,f\}
\end{equation*}
contient deux éléments. D'autre part, on sait que $\Real{K}\simeq S^1$, et $\pi_1(S^1,*)\simeq \mathbb{Z}$. Les foncteurs $\pi_n^{\naif}$ ne sont donc pas compatibles à la réalisation en général. Par ailleur, $\pi_n^{\naif}(K)$ n'est pas un groupe en général.
\end{remarque}

\subsection{Complexes de Kan et théorèmes de Whitehead}
\label{SectionComplexesDeKan}
On a vu dans la remarque \ref{RemarqueGroupeHomotopieNaifFail} que les foncteurs $\pi_n^{\naif}$ avaient de bonnes propriétés par rapport aux ensembles simpliciaux de la forme $\Sing(X)$ mais de mauvaises propriétés en général. Explicitions les propriétés qui distinguent l'ensemble simplicial $\Sing(X)$. 

\begin{defin}
Soit $n\geq 0$ et $0\leq k\leq n$. Le cornet $\Lambda^n_k\subset \Delta^n$ est le sous-ensemble simplicial de $\Delta^n$ engendré par les faces propres de $\Delta^n$ différentes de $D^k(\Delta^{n-1})$. Alternativement, on décrit $\Lambda^n_k$ comme l'union
\begin{equation*}
\Lambda^n_k=\bigcup_{j\not= k}D^j(\Delta^{n-1})\subset \Delta^n
\end{equation*}
\end{defin}

\begin{defin}\label{DefinitionComplexeDeKan}
Un complexe de Kan est un ensemble simplicial $K$ tel que pour toute inclusion de cornet $\Lambda^n_k\to \Delta^n$, $n\geq 1$, $0\leq k\leq n$, et tout morphisme $\alpha\colon\Lambda^n_k\to K$, il existe un morphisme $\beta\colon \Delta^n\to K$ tel que $\beta_{|\Lambda^n_k}=\alpha$. On formule souvent cette condition sous la forme d'un diagramme commutatif.
\begin{equation*}
\begin{tikzcd}
\Lambda^n_k
\arrow{d}
\arrow{r}{\alpha}
&K
\\
\Delta^n
\arrow[swap,dashrightarrow]{ur}{\beta}
\end{tikzcd}
\end{equation*}
\end{defin}

\begin{prop}
Soit $X$ un espace topologique, l'ensemble simplicial $\Sing(X)$ est un complexe de Kan.
\end{prop}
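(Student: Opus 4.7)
Le plan est d'utiliser l'adjonction $(\Real{-},\Sing)$ établie à la proposition \ref{PropositionAdjonctionSingRealChap2} pour traduire le problème de relèvement dans $\sS$ en un problème de prolongement d'applications continues. Plus précisément, la donnée d'un morphisme $\alpha\colon \Lambda^n_k\to \Sing(X)$ équivaut, par adjonction, à la donnée d'une application continue $\tilde{\alpha}\colon \Real{\Lambda^n_k}\to X$, et de même pour un prolongement $\beta\colon \Delta^n\to \Sing(X)$. Il suffit donc de prolonger $\tilde\alpha$ le long de l'inclusion $\Real{\Lambda^n_k}\hookrightarrow \Real{\Delta^n}$.

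Pour réaliser ce prolongement, la stratégie est de construire une rétraction continue $r\colon\Real{\Delta^n}\to\Real{\Lambda^n_k}$, et de poser $\tilde\beta=\tilde\alpha\circ r$. Concrètement, on remarque que $\Real{\Lambda^n_k}$ est le bord du simplexe $\Real{\Delta^n}$ privé de l'intérieur de la $k$-ième face $D^k(\Real{\Delta^{n-1}})$. On choisit alors un point $c$ légèrement au-dessus du barycentre de la face $D^k(\Real{\Delta^{n-1}})$ (c'est à dire sur la droite reliant ce barycentre au sommet $e_k$, mais situé de l'autre côté de cette face par rapport à $e_k$). La projection radiale depuis $c$ envoie alors tout point de $\Real{\Delta^n}$ sur un unique point de $\Real{\Lambda^n_k}$, et cette projection se restreint à l'identité sur $\Real{\Lambda^n_k}$.

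L'étape principale, et la seule présentant une vraie difficulté technique, est de vérifier que cette projection radiale est bien définie et continue, et qu'elle prend effectivement ses valeurs dans $\Real{\Lambda^n_k}$. La clé est le choix géométrique du point $c$ : en le plaçant juste à l'extérieur de la $k$-ième face, on s'assure que toute demi-droite issue de $c$ et traversant $\Real{\Delta^n}$ rencontre d'abord une des faces incluses dans $\Real{\Lambda^n_k}$. On vérifiera ensuite sans difficulté que la restriction à $\Real{\Lambda^n_k}$ est bien l'identité, ce qui fait de cette application une rétraction.

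Une fois la rétraction $r$ construite, on pose $\tilde\beta=\tilde\alpha\circ r\colon \Real{\Delta^n}\to X$, et par adjonction on obtient le morphisme $\beta\colon \Delta^n\to \Sing(X)$ cherché. La commutation $\beta_{|\Lambda^n_k}=\alpha$ suit directement du fait que $r$ est une rétraction et de la naturalité de l'adjonction. Comme ceci est valable pour toute inclusion de cornet $\Lambda^n_k\hookrightarrow \Delta^n$, on conclut que $\Sing(X)$ est bien un complexe de Kan au sens de la définition \ref{DefinitionComplexeDeKan}.
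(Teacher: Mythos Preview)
La démonstration proposée est correcte et suit exactement la même approche que celle du texte : on utilise l'adjonction $(\Real{-},\Sing)$ pour ramener le problème à l'existence d'une rétraction continue $\Real{\Delta^n}\to\Real{\Lambda^n_k}$, puis on compose avec $\tilde\alpha$. Le texte se contente d'affirmer qu'une telle rétraction se construit aisément, tandis que tu en donnes une construction géométrique explicite par projection radiale, ce qui est un plus.
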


\begin{proof}
Soit $X$ un espace topologique, $\Lambda^n_k\to \Delta^n$ une inclusion de cornet et $\alpha\colon \Lambda^n_k\to \Sing(X)$ un morphisme. Par adjonction, $\alpha$ correspond à un unique morphisme $\widetilde{\alpha}\colon \Real{\Lambda^n_k}\to X$. On a alors le diagramme suivant
\begin{equation*}
\begin{tikzcd}
\Real{\partial(\Lambda^n_k)}
\arrow{r}{\widetilde{\alpha}}
\arrow{d}
&X\\
\Real{\Delta^n}
\arrow[swap, dashrightarrow]{ur}{\beta}
\end{tikzcd}
\end{equation*}
Mais alors, si $s\colon\Real{\Delta^n}\to\Real{\Lambda^n_k}$ est une section de l'inclusion $\Real{\Lambda^n_k}\to\Real{\Delta^n}$, la composée $\beta=\alpha\circ s$ fournit une solution au problème de relèvement, par adjonction. On construit aisément une telle section, ce qui conclut la preuve.
\end{proof}

La proposition classique suivante justifie de travailler avec les complexes de Kan. 

\begin{prop}
Soient $K$ et $L$ deux ensembles simpliciaux tels que $L$ est un complexe de Kan. La relation d'homotopie sur $\Hom(K,L)$ est une relation d'équivalence.
\end{prop}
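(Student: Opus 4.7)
Le plan est de traiter séparément les trois propriétés d'une relation d'équivalence, en observant que seules la symétrie et la transitivité nécessitent véritablement l'hypothèse que $L$ est un complexe de Kan. La réflexivité est immédiate : pour tout $f\colon K\to L$, la composée $K\times\Delta^1\xrightarrow{\pr_K}K\xrightarrow{f}L$ fournit une homotopie entre $f$ et $f$.

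Pour traiter la symétrie et la transitivité, je propose d'introduire le complexe de fonctions $\HOM(K,L)$, ensemble simplicial défini par $\HOM(K,L)_n=\Hom_{\sS}(K\times\Delta^n,L)$. Par adjonction, ses $0$-simplexes sont les morphismes $K\to L$, et ses $1$-simplexes sont précisément les homotopies : un $1$-simplexe $H$ satisfait $d_1H=f$ et $d_0H=g$ exactement lorsque $H$ est une homotopie de $f$ vers $g$. La relation d'homotopie sur $\Hom(K,L)$ s'identifie donc à la relation sur $\HOM(K,L)_0$ donnée par la connexité par un $1$-simplexe.

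L'obstacle principal est d'établir le lemme classique suivant : si $L$ est un complexe de Kan, alors $\HOM(K,L)$ l'est également. Ce résultat s'obtient en montrant que pour toute inclusion de cornet $\Lambda^n_k\hookrightarrow\Delta^n$, l'inclusion produit $K\times\Lambda^n_k\hookrightarrow K\times\Delta^n$ se décompose en une composition transfinie d'inclusions de cornets, ce qui se démontre par récurrence sur les squelettes de $K$. C'est la partie la plus technique du travail.

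Admettant ce lemme, il reste à démontrer que dans tout complexe de Kan $M$, la relation sur $M_0$ définie par \og il existe $\sigma\in M_1$ tel que $d_1\sigma=x$ et $d_0\sigma=y$ \fg{} est symétrique et transitive. Pour la symétrie, étant donné un $1$-simplexe $\sigma$ de $x$ vers $y$, je définirai un cornet $\Lambda^2_0\to M$ en posant $d_2=\sigma$ et $d_1=s_0(x)$ ; la compatibilité au sommet partagé est assurée par l'égalité $d_1\sigma=x=d_1s_0(x)$. L'extension $\tau\colon\Delta^2\to M$ donnée par la propriété de Kan aura pour face $d_0\tau$ un $1$-simplexe reliant $y$ à $x$, comme le vérifient les identités simpliciales $d_1d_0\tau=d_0d_2\tau=d_0\sigma=y$ et $d_0d_0\tau=d_0d_1\tau=d_0s_0(x)=x$. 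Pour la transitivité, étant donnés $\sigma_1$ de $x$ vers $y$ et $\sigma_2$ de $y$ vers $z$, le cornet $\Lambda^2_1\to M$ défini par $d_2=\sigma_1$ et $d_0=\sigma_2$ s'étendra en un $2$-simplexe dont la face $d_1$ est un $1$-simplexe reliant $x$ à $z$. En appliquant ces constructions à $M=\HOM(K,L)$, on obtiendra la conclusion recherchée.
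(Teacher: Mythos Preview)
Your proposal is correct and follows the standard approach: reduce to showing that $\HOM(K,L)$ is a Kan complex whenever $L$ is, then argue that connectedness by a $1$-simplex is an equivalence relation on the $0$-simplices of any Kan complex via horn-filling. The horn arguments you sketch for symmetry and transitivity are the usual ones and are correct.

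Note, however, that the paper does not actually prove this proposition: it is stated as a \og proposition classique \fg{} without proof, serving only to motivate the restriction to Kan complexes before stating the simplicial Whitehead theorem. So there is no proof in the paper to compare against. Your argument is exactly the kind of proof one finds in standard references such as Goerss--Jardine (cited elsewhere in the paper), and the key technical ingredient you identify, namely that $K\times\Lambda^n_k\hookrightarrow K\times\Delta^n$ is an anodyne extension, is a special case of the pushout-product property for anodyne extensions and monomorphisms in $\sS$.
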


En fait, les complexes de Kan sont les objets appropriés pour étudier l'homotopie des ensembles simpliciaux. On a le théorème suivant, dont on verra une preuve dans les sections suivantes, qui est une version simpliciale du théorème de Whitehead.

\begin{theo}\label{TheoremeWhiteheadSimplicialChap2}
Soient $K$ et $L$ deux complexes de Kan et $f\colon K\to L$ une application simpliciale. L'application $f$ est une équivalence d'homotopie simpliciale si et seulement si $f$ induit des isomorphismes sur tous les groupes d'homotopie (naïfs).
\end{theo}

On peut ensuite déduire le théorème de Whitehead classique, (Théorème \ref{TheoremeWhiteheadClassique}) du théorème \ref{TheoremeWhiteheadSimplicialChap2}. On esquisse ici la stratégie de preuve.

\begin{proof}[Démonstration du théorème \ref{TheoremeWhiteheadClassique}]
Soient $X,Y$ deux CW-complexes et $f\colon X\to Y$ une application continue. Si $f$ est une équivalence d'homotopie, on a vu à la remarque \ref{RemarqueSensDirectWhiteheadChap2} que $f$ induisait des isomorphismes sur tout les groupes d'homotopie, ce qui montre le sens direct du théorème \ref{TheoremeWhiteheadClassique}. Le coeur de la preuve de la réciproque est contenu dans le diagramme commutatif suivant.

\begin{equation}\label{DiagrammePreuveWhiteheadChap2}
\begin{tikzcd}[column sep = huge]
\phantom{X}
&\Real{\Sing(Y)}
\arrow{r}{\Real{\widetilde{g}}}
&\Real{\Sing(X)}
\arrow{r}{\Real{\Sing(f)}}
\arrow{d}{\ev_X}
&\Real{\Sing(Y)}
\arrow{d}{\ev_Y}
\\
X
\arrow{d}{i_X}
\arrow{r}{h}
&Y
\arrow{r}{g}
\arrow{u}{i_Y}
&X
\arrow{r}{f}
&Y
\\
\Real{\Sing(X)}
\arrow{r}{\Real{\widetilde{h}}}
&\Real{\Sing(Y)}
\arrow{u}{\ev_Y}
\arrow{r}{\Real{\Sing(g)}}
&\Real{\Sing(X)}
\arrow{u}{\ev_X}
\end{tikzcd}
\end{equation}
Si $f$ induit des isomorphismes sur tout les groupes d'homotopies, c'est aussi le cas de $\Sing(f)$, par la proposition \ref{PropositionPiSingEgalPiTop}. Mais alors, par le théorème \ref{TheoremeWhiteheadSimplicialChap2}, $\Sing(f)$ est une équivalence d'homotopie simpliciale et admet donc un inverse à homotopie près, $\widetilde{g}\colon \Sing(Y)\to \Sing(X)$. Comme la réalisation préserve les homotopies, $\Real{\widetilde{g}}$ est un inverse à homotopie près de $\Real{\Sing(f)}$. En utilisant l'unité $i_Y$ et la counité $\ev_X$ de l'adjonction $(\Real{-},\Sing)$, on définit $g$ comme la composition 
\begin{equation*}
g=ev_X\circ \Real{\widetilde{g}}\circ i_Y\colon Y\to X.
\end{equation*}
On vérifie ensuite qu'on peut construire une homotopie entre $f\circ g$ et $\Id_Y$ à partir de l'homotopie entre $\Real{\Sing(f)}\circ\Real{\widetilde{g}}$ et $\Id_{\Real{\Sing(Y)}}$. L'application $f$ est donc un inverse à gauche de $g$ à homotopie près, et on en déduit en particulier que $g$ induit des isomorphismes entre tous les groupes d'homotopies. En réappliquant la construction à $g$, on obtient un inverse à droite de $g$, $h$, à homotopie près. Comme $g$ admet un inverse à gauche et à droite, c'est une équivalence d'homotopie, et on en déduit que $f$ est une équivalence d'homotopie inverse à $g$.
\end{proof}

\begin{remarque}
La preuve du théorème \ref{TheoremeWhiteheadClassique} donnée par H. Whitehead dans \cite{WhiteheadCombinatorial1} ne fait pas intervenir les ensembles simpliciaux. En particulier, elle est différente de celle qu'on a présentée ici. On a choisi de présenter cette preuve plus moderne pour deux raisons. D'abord, la notion de catégorie modèle que l'on présente dans la section suivante est apparue comme la formalisation de l'idée qu'il était possible d'étudier l'homotopie des espaces à partir des ensembles simpliciaux - idée qui est à l'oeuvre dans cette preuve. Ensuite, parce que la stratégie de preuve qu'on a présenté ici est exactement celle qu'on utilisera au chapitre \ref{ChapitreGroupesHomotopiesFiltresEspaces} pour prouver la version filtrée du théorème de Whitehead (Théorème \ref{PremierTheoremeWhitehead}.
\end{remarque}

\section{Catégories modèles}
\label{SectionCategoriesModeles}
\subsection{Les axiomes d'une catégorie modèle}
\label{SectionAxiomesCMF}
\begin{defin}
Soient $\C$ une catégorie, et $i\colon A\to B$, $p\colon X\to Y$ deux morphismes de $\C$. On dit que $i$ a la propriété de relèvement à gauche (\textbf{L}eft \textbf{L}ifting \textbf{P}roperty) par rapport à $p$ si pour toute paire de morphisme $\alpha\colon A\to X$, $\beta\colon B\to Y$ telle que le diagramme suivant commute
\begin{equation*}
\begin{tikzcd}
A
\arrow{r}{\alpha}
\arrow[swap]{d}{i}
& X
\arrow{d}{p}
\\
B
\arrow[swap]{r}{\beta}
\arrow[dashrightarrow]{ur}{h}
&Y
\end{tikzcd}
\end{equation*}
il existe un morphisme $h\colon B\to X$ faisant commuter les deux triangles. Dans ce cas, on dit aussi que $p$ a la propriété de relèvement à droite (\textbf{R}ight \textbf{L}ifting \textbf{P}roperty) par rapport à $i$.
\end{defin}

\begin{defin}
Soient $X,Y$ deux objets d'une catégorie $\C$. On dit que $X$ est un rétract de $Y$ si il existe des morphismes $i\colon X\to Y$ et $r\colon Y\to X$ tels que $r\circ i=\Id_X$. Soient $f\colon X\to X'$ et $g\colon Y\to Y'$ deux morphismes de $\C$. On dit que le morphisme $f$ est un rétract de $g$ si c'est un rétract de $g$ dans la catégorie des morphismes de $\C$. Plus explicitement, $f$ est un rétract de $g$ si il existe des morphismes $i\colon X\to Y$, $r\colon Y\to X$, $i'\colon X'\to Y'$ et $r'\colon Y'\to X'$ tels que $r\circ i=\Id_X$, $r'\circ i'=\Id_{X'}$ et tels que le diagramme suivant commute.
\begin{equation*}
\begin{tikzcd}
X
\arrow{r}{i}
\arrow[swap]{d}{f}
&Y
\arrow{r}{r}
\arrow{d}{g}
&X
\arrow{d}{f}
\\
X'
\arrow[swap]{r}{i'}
& Y'
\arrow[swap]{r}{r'}
&X'
\end{tikzcd}
\end{equation*}
\end{defin}

\begin{defin}
Soit $\C$ une catégorie munie de trois classes de flèches stables par compositions et contenant les identités : une classe de cofibrations, une classe de fibrations et une classe d'équivalences faibles. Un morphisme appartenant à la fois à la classe des fibrations et à celle des équivalences faibles est une fibration triviale, et un morphisme appartenant à la fois à la classe des cofibrations et à celle des équivalences faibles est une cofibration triviale. C'est une catégorie modèle si les axiomes suivants sont vérifiés.
\begin{enumerate}
\item \label{AxiomeMC1} La catégorie $\C$ est stable par limites finies et colimites finies.
\item \label{AxiomeMC2}Si $f\colon X\to Y$ et $g\colon Y\to Z$ sont deux morphismes de $\C$ tels que deux des trois morphismes $f,g$ et $g\circ f$ sont des équivalences faibles, alors le troisième morphisme est une équivalence faible.
\item \label{AxiomeMC3}Soient $f$ et $g$ sont deux morphismes de $\C$ tels que $f$ est un rétract de $g$. Si $g$ est une fibration, une cofibration, ou une équivalence faible, alors $f$ aussi.
\item \label{AxiomeMC4}Les fibrations triviales ont la propriété de relèvement à droite par rapport aux cofibrations, et les cofibrations triviales ont la propriété de relèvement à gauche par rapport aux fibrations. Autrement dit, étant donnés une cofibration $i\colon A\to B$, une fibration $p\colon X\to Y$ et un diagramme commutatif
\begin{equation*}
\begin{tikzcd}
A
\arrow{r}{\alpha}
\arrow[swap]{d}{i}
& X
\arrow{d}{p}
\\
B
\arrow[swap]{r}{\beta}
\arrow[dashrightarrow]{ur}{h}
&Y
\end{tikzcd}
\end{equation*}
un relèvement $h$ existe dès que $i$ ou $p$ est une équivalence faible.
\item \label{AxiomeMC5}	Tout morphisme de $\C$, $f$ admet deux factorisations $f=q\circ i$ et $f=p\circ j$, où $q$ est une fibration triviale, $i$ est une cofibration, $p$ est une fibration et $j$ est une cofibration triviale.
\end{enumerate}
Si $\C$ est une catégorie, la donnée de trois classes de flèches vérifiant les axiomes \ref{AxiomeMC2}, \ref{AxiomeMC3}, \ref{AxiomeMC4} et \ref{AxiomeMC5} est une structure de modèle.
\end{defin}

L'axiome \ref{AxiomeMC4} affirme que les cofibrations triviales ont la propriété de relèvement à gauche par rapport aux fibrations (et de même pour les cofibrations et fibrations triviales). En fait, les cofibrations triviales sont complétement caractérisée par cette propriété. Ce constate est le contenu de la proposition classique suivante.

\begin{prop}\label{PropositionRelevementDetermineClasses}
Soit $\C$ une catégorie modèle.
\begin{itemize}
\item Les cofibrations sont les morphismes ayant la LLP par rapport aux fibrations triviales.
\item Les cofibrations triviales sont les morphismes ayant la LLP par rapport aux fibrations.
\item Les fibrations sont les morphismes ayant la RLP par rapport aux cofibrations triviales.
\item Les fibrations triviales sont les morphismes ayant la RLP par rapport aux cofibrations.
\end{itemize}
\end{prop}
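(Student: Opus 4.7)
Le plan est de traiter les quatre énoncés par la même stratégie. Dans chaque cas, une des implications est fournie directement par l'axiome \ref{AxiomeMC4} : par définition, les cofibrations ont la LLP par rapport aux fibrations triviales, les cofibrations triviales ont la LLP par rapport aux fibrations, et dualement pour les deux autres. Il reste donc à montrer la réciproque, c'est à dire qu'un morphisme ayant la propriété de relèvement appropriée appartient bien à la classe annoncée. L'outil essentiel pour cela est la combinaison de l'axiome de factorisation \ref{AxiomeMC5} avec la stabilité par rétract \ref{AxiomeMC3}, combinaison couramment appelée \emph{argument du rétract}.

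Détaillons la stratégie pour le premier point. Soit $f\colon X\to Y$ un morphisme ayant la LLP par rapport aux fibrations triviales. On applique \ref{AxiomeMC5} pour factoriser $f=q\circ i$, où $i\colon X\to Z$ est une cofibration et $q\colon Z\to Y$ est une fibration triviale. Le problème de relèvement
\begin{equation*}
\begin{tikzcd}
X \arrow{r}{i} \arrow[swap]{d}{f} & Z \arrow{d}{q} \\
Y \arrow[swap]{r}{\Id_Y} \arrow[dashrightarrow]{ur}{h} & Y
\end{tikzcd}
\end{equation*}
admet par hypothèse une solution $h\colon Y\to Z$. Le diagramme
\begin{equation*}
\begin{tikzcd}
X \arrow{r}{\Id_X} \arrow[swap]{d}{f} & X \arrow{r}{\Id_X} \arrow[swap]{d}{i} & X \arrow{d}{f} \\
Y \arrow[swap]{r}{h} & Z \arrow[swap]{r}{q} & Y
\end{tikzcd}
\end{equation*}
exhibe alors $f$ comme un rétract de $i$, et l'axiome \ref{AxiomeMC3} permet de conclure que $f$ est une cofibration.

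Les trois autres cas suivent strictement le même schéma en choisissant la factorisation adaptée. Pour les cofibrations triviales, on factorise $f=p\circ j$ avec $j$ cofibration triviale et $p$ fibration, puis la LLP de $f$ par rapport à $p$ fournit un relèvement qui exhibe $f$ comme rétract de $j$. Les deux cas duaux, concernant les fibrations et les fibrations triviales, s'obtiennent en inversant les rôles de $i$ et de l'égalité dans le carré de relèvement, pour exhiber $f$ comme rétract du facteur approprié. Aucune étape ne présente de difficulté technique véritable : tout repose sur le placement correct des identités dans le diagramme de rétract. C'est précisément ce résultat qui permet, en pratique, d'alléger considérablement la vérification d'une structure de modèle puisqu'il suffit de caractériser une classe de flèches (par exemple les fibrations et fibrations triviales), les autres étant automatiquement déterminées par propriété de relèvement.
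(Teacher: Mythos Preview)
Your proof is correct and is precisely the standard \emph{argument du rétract}. The paper itself states this proposition as a classical result without giving a proof, so there is nothing to compare against; your write-up would serve perfectly well as the omitted demonstration.
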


\begin{remarque}\label{RemarqueIlSuffitDeDeuxClasses}
Si $\C$ est une catégorie modèle, il suffit donc de connaitre deux classes parmi les classes de fibrations, cofibrations et équivalences faibles pour déduire la troisième. Étant données la classe de cofibrations (resp. de fibrations) et d'équivalences faibles, la classes des fibrations (resp. cofibrations) est caractérisée par ses propriétés de relèvement. Étant données les classes de cofibrations et de fibrations, les équivalences faibles sont les morphismes admettant une factorisation sous la forme d'une cofibration triviale suivie d'une fibration triviale. En effet, si $f$ est un morphisme de $\C$, par application de l'axiome \ref{AxiomeMC5} il existe une factorisation $f=q\circ i$ où $q$ est une fibration triviale et $i$ est une cofibration. Par l'axiome \ref{AxiomeMC2} (souvent appelé axiome de deux sur trois), $i$ est une équivalence faible si et seulement si $f$ est une équivalence faible. Et on a donc la caractérisation voulue.
De plus, comme la classe des cofibrations et celle des fibrations triviales se déterminent mutuellement, il suffit par exemple de donner les classes de fibrations et de fibrations triviales pour décrire complétement une catégorie modèle.
\end{remarque}

La proposition \ref{PropositionRelevementDetermineClasses} permet de caractériser les classes d'une catégorie modèle à partir de leurs propriétés de relèvement. Ceci permet notamment de montrer la proposition suivante.

\begin{prop}\label{PropositionCofibrationsStablesEverythingChap2}
Les classes de cofibrations et de cofibrations triviales d'une catégorie modèle sont stables par unions disjointes, sommes amalgamées et compositions transfinies. 
\end{prop}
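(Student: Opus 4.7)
Le plan est d'exploiter la caractérisation des cofibrations et cofibrations triviales par leurs propriétés de relèvement (Proposition \ref{PropositionRelevementDetermineClasses}) pour ramener l'énoncé à un résultat purement catégorique : la classe des morphismes ayant la LLP par rapport à une classe $\mathcal{F}$ de morphismes fixée est stable par unions disjointes, sommes amalgamées et compositions transfinies. En effet, les cofibrations sont exactement les morphismes ayant la LLP par rapport aux fibrations triviales, et les cofibrations triviales sont exactement les morphismes ayant la LLP par rapport aux fibrations. Il suffira donc de prouver le lemme général pour $\mathcal{F}$ quelconque et de l'appliquer deux fois.

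Pour les unions disjointes, je considérerais une famille $(i_\alpha\colon A_\alpha\to B_\alpha)$ de morphismes ayant la LLP par rapport à $p\colon X\to Y$. Étant donné un carré commutatif de sommets $\coprod A_\alpha$, $\coprod B_\alpha$, $X$, $Y$, la propriété universelle du coproduit décompose ce carré en une famille indexée par $\alpha$ de carrés pour $i_\alpha$ contre $p$. On relève chaque carré individuellement et on recolle les relèvements en un unique morphisme $\coprod B_\alpha\to X$ par propriété universelle. Pour les sommes amalgamées, si $i\colon A\to B$ a la LLP contre $p$ et si $i'\colon C\to B\cup_AC$ est son image par pushout le long de $A\to C$, un carré $(C\to X, B\cup_AC\to Y)$ contre $p$ induit par précomposition avec $A\to C$ et $B\to B\cup_AC$ un carré $(A\to X, B\to Y)$ contre $p$; un relèvement $B\to X$ de ce dernier, combiné au morphisme originel $C\to X$, fournit par propriété universelle du pushout le relèvement cherché $B\cup_AC\to X$. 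Pour les compositions transfinies $X_0\to\colim_{\alpha<\lambda}X_\alpha$, je construirais par récurrence transfinie une famille compatible de relèvements $X_\alpha\to X$ en utilisant la LLP à chaque successeur et la propriété universelle de la colimite aux ordinaux limites.

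La principale subtilité est la construction récursive dans le cas transfini : il faut s'assurer que les relèvements construits successivement sont compatibles entre eux pour pouvoir les recoller aux étapes limites. L'initialisation consiste à prendre le morphisme donné $X_0\to X$; pour un ordinal successeur $\alpha+1$, on relève contre le morphisme $X_\alpha\to X_{\alpha+1}$ (qui est dans notre classe par hypothèse) en partant du relèvement déjà construit $X_\alpha\to X$; pour un ordinal limite, on utilise que $X_\alpha=\colim_{\beta<\alpha}X_\beta$ et la compatibilité des relèvements déjà construits pour en déduire un unique relèvement $X_\alpha\to X$. Le relèvement final $\colim_\alpha X_\alpha\to X$ s'obtient alors par propriété universelle de la colimite.

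Une fois ce lemme général établi, l'application aux deux cas (cofibrations vs. fibrations triviales, cofibrations triviales vs. fibrations) est immédiate via la Proposition \ref{PropositionRelevementDetermineClasses}, ce qui conclura.
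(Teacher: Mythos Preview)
La proposition est correcte et suit essentiellement la même démarche que la preuve du papier : on utilise la caractérisation des cofibrations (triviales) par la LLP (Proposition \ref{PropositionRelevementDetermineClasses}) puis on montre que la classe des morphismes ayant la LLP contre une classe fixée est stable par ces trois opérations via les propriétés universelles. Le papier ne détaille que le cas des unions disjointes, alors que ta proposition traite aussi explicitement les sommes amalgamées et la récurrence transfinie, mais la stratégie est identique.
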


\begin{proof}
Ces trois affirmations se prouvent de la même façon. Soit $i\colon A\to B$ est un morphisme obtenu comme union disjointe, somme amalgamée ou comme composition transfinie de cofibrations (triviales), et $p\colon X\to Y$ une fibration (triviale). On considère le problème de relèvement suivant.
\begin{equation*}
\begin{tikzcd}
A
\arrow{r}
\arrow[swap]{d}{i}
&X
\arrow{d}{p}
\\
B
\arrow{r}
\arrow[dashrightarrow]{ur}{h}
&Y
\end{tikzcd}
\end{equation*}
Les propriétés universelles définissant le morphisme $i\colon A\to B$ garantissent qu'un relèvement $h$ existe. Par exemple, si $i\colon A\to B$ est une union disjointe de cofibrations (triviales)
\begin{equation*}
\coprod_{\alpha} i_{\alpha}\colon \coprod_{\alpha} A_{\alpha}\to \coprod_{\alpha} B_{\alpha}
\end{equation*}
Alors pour tout $\alpha$, il existe un relèvement $h_{\alpha}\colon B_{\alpha}\to X$ ce qui permet alors de définir un relèvement
\begin{equation*}
h=\coprod h_{\alpha}\colon \coprod B_{\alpha}\to X.
\end{equation*}
Mais alors, comme $i\colon A\to B$ a la propriété de relèvement à gauche par rapport à toutes les fibrations (triviales), c'est une cofibration (triviale).
\end{proof}

\begin{defin}
Soit $\C$ une catégorie modèle. Par l'axiome \ref{AxiomeMC1}, elle admet un objet initial, qu'on note $\emptyset$, et un objet final, qu'on note $*$. Soit $X$ un objet de $\C$. C'est un objet fibrant si le morphisme $X\to *$ est une fibration. C'est un objet cofibrant si le morphisme $\emptyset\to X$ est une cofibration.
\end{defin}

\subsection{Homotopies dans une catégorie modèle}
\label{SectionHomotopiesCMF}
\begin{defin}
Soit $X$ un objet d'une catégorie modèle $\C$. Un objet cylindre de $X$ dans $\C$ est la donnée d'un objet $X\times I$ (pas nécessairement un produit) et de morphismes :
\begin{equation*}
\begin{tikzcd}
X\coprod X\arrow{r}{i}& X\times I\arrow{r}{q} &X,
\end{tikzcd}
\end{equation*}
tels que la composée est égale à $\Id_X\coprod \Id_X$, et tels que $p$ est une équivalence faible.

Soit $Y$ un objet d'une catégorie modèle $\C$. Un objet chemin de $Y$ dans $\C$ est la donnée d'un objet $Y^{I}$ et de morphismes
\begin{equation*}
\begin{tikzcd}
Y\arrow{r}{j} &Y^{I}\arrow{r}{p} &Y\times Y,
\end{tikzcd}
\end{equation*}
tels que la composée est égale à l'application diagonale, $(\Id_Y,\Id_Y)$, et tels que $j$ est une équivalence faible.
\end{defin}

\begin{remarque}
Comme le suggèrent les notations, on peut obtenir des objets cylindres et chemins en appliquant l'axiome de factorisation \ref{AxiomeMC5}. 
\end{remarque}

\begin{defin}\label{DefinitionHomotopieGaucheDroite}
Soient $f,g\colon X\to Y$ deux morphismes. On dit que $f$ et $g$ sont homotopes à gauche (noté $f\sim_l g$) si il existe un objet cylindre pour $X$, 
\begin{equation*}
\begin{tikzcd}
X\coprod X\arrow{r}{i_0\coprod i_1} &X\times I\arrow{r} & X,
\end{tikzcd}
\end{equation*} 
et un morphisme
\begin{equation*}
\begin{tikzcd}
X \times I\arrow{r}{H} &Y,
\end{tikzcd}
\end{equation*}
tel que $H\circ i_0=f$ et $H\circ i_1=g$.
On dit que $f$ et $g$ sont homotopes à droite (noté $f\sim_r g$) si il existe un objet chemin pour $Y$, 
\begin{equation*}
\begin{tikzcd}
Y\arrow{r} &Y^I\arrow{r}{(p_0,p_1)} &Y\times Y
\end{tikzcd}
\end{equation*} 
et un morphisme
\begin{equation*}
\begin{tikzcd}
X \arrow{r}{G} &Y^I,
\end{tikzcd}
\end{equation*}
tel que $p_0\circ G=f$ et $p_1\circ G=g$. 
\end{defin}

\begin{remarque}
La définition \ref{DefinitionHomotopieGaucheDroite} montre que l'on peut définir une notion d'homotopie dans une catégorie modèle arbitraire. Cependant, les notions d'homotopies à gauche et à droite se comportent mal en général. En effet, les notions d'homotopies à gauche et à droite ne coïncident pas et les relations $\sim_l$ et $\sim_r$ ne sont pas des relations d'équivalences. De plus, on ne peut pas fixer un objet cylindre (ou un objet chemin) pour chaque objet a priori, et calculer la relation $\sim_l$ seulement par rapport à ce cylindre (voir par exemple la remarque \ref{RemarqueHomotopieSimplicialPasEquivalence}). Heureusement, lorsqu'on se restreint aux objets à la fois fibrants et cofibrants, la situation s'améliore.
\end{remarque}

\begin{prop}\label{PropBonObjetCylindre}
Soient $A$ un objet cofibrant et $X$ un objet fibrant. Alors les relations $\sim_l$ et $\sim_r$ sont des relations d'équivalences et coïncident sur $\Hom_{\C}(A,X)$. On notera simplement $\sim$ cette relation. De plus, deux morphismes $f,g\colon A\to X$ sont homotopes (à gauche) si et seulement si il existe une homotopie entre eux $A\times I\to X$, où $A\times I$ est un objet cylindre \textbf{fixé} (il faut que $A\times I$ soit un bon objet cylindre, voir \cite[Definition 4.2]{DwyerSpalinski}). 
\end{prop}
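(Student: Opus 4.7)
Le plan est de suivre l'approche classique de D. Quillen, en combinant trois ingrédients : les propriétés de relèvement (axiome \ref{AxiomeMC4}), la factorisation (axiome \ref{AxiomeMC5}) et l'axiome de deux sur trois. On commencera par introduire la notion d'objet cylindre et d'objet chemin "bons", c'est-à-dire, respectivement, tels que $i\colon A\coprod A\to A\times I$ est une cofibration et $(p_0,p_1)\colon Y^I\to Y\times Y$ est une fibration. L'axiome \ref{AxiomeMC5} garantit que de tels objets existent toujours. Pour toute homotopie (à gauche ou à droite) entre deux morphismes, on pourra se ramener à une homotopie par un bon objet cylindre/chemin grâce à un argument de factorisation suivi d'un relèvement.

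Ensuite, je montrerais le lemme clé suivant : lorsque $A$ est cofibrant, les deux morphismes $i_0,i_1\colon A\to A\times I$ d'un bon objet cylindre sont des cofibrations triviales. Pour la cofibration, on utilise que $A\to A\coprod A$ est une cofibration (somme amalgamée de $\emptyset\to A$, cofibration car $A$ est cofibrant, voir proposition \ref{PropositionCofibrationsStablesEverythingChap2}), puis que $i_0$ est la composée $A\to A\coprod A\to A\times I$ d'une cofibration et de la bonne cofibration $i$. Pour l'équivalence faible, la rétraction $q\circ i_k=\Id_A$ combinée avec deux-sur-trois donne le résultat. Dualement, lorsque $X$ est fibrant, les deux morphismes $p_0,p_1\colon X^I\to X$ d'un bon objet chemin sont des fibrations triviales.

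Muni de ce lemme, je prouverais que $\sim_l$ et $\sim_r$ coïncident sur $\Hom_{\C}(A,X)$. Étant donnée une homotopie à gauche $H\colon A\times I\to X$ entre $f$ et $g$ via un bon cylindre, et un bon objet chemin $X^I$ de $X$, on considère le diagramme de relèvement
\begin{equation*}
\begin{tikzcd}
A\arrow{r}{j\circ f}\arrow[swap]{d}{i_0}&X^I\arrow{d}{(p_0,p_1)}\\
A\times I\arrow[swap]{r}{(H,g\circ q)}\arrow[dashrightarrow]{ur}{K}&X\times X
\end{tikzcd}
\end{equation*}
Comme $i_0$ est une cofibration triviale et $(p_0,p_1)$ est une fibration, l'axiome \ref{AxiomeMC4} fournit un relèvement $K$. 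La composée $K\circ i_1\colon A\to X^I$ fournit alors une homotopie à droite entre $f$ et $g$. L'argument symétrique montre l'implication réciproque. Pour la réflexivité, symétrie et transitivité, on utilise respectivement le cylindre trivial $\Id_A\colon A\to A$, l'échange des inclusions $i_0$ et $i_1$, et la somme amalgamée de deux cylindres $A\times I\cup_A A\times I$ (en exploitant que la cofibration $i_1\colon A\to A\times I$ est préservée par somme amalgamée, proposition \ref{PropositionCofibrationsStablesEverythingChap2}), qui fournit un nouvel objet cylindre, bon lui aussi grâce à deux-sur-trois.

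Le point le plus délicat sera le dernier : montrer que si $f\sim g$ via n'importe quel objet cylindre, alors il existe une homotopie entre $f$ et $g$ par rapport à un \emph{bon} objet cylindre \emph{fixé} $A\times I$. La stratégie sera de comparer deux bons objets cylindres $A\times I$ et $A\times I'$ en construisant un troisième cylindre qui les relie, puis en relevant l'homotopie d'un cylindre vers l'autre via l'argument de relèvement standard (la difficulté provenant du fait que les deux inclusions $A\to A\times I$ ne sont pas équivalentes au sens homotopique "fort", mais seulement à travers un relèvement). L'essentiel sera à nouveau de jongler avec les cofibrations triviales et les fibrations, et j'anticipe que ce sera la partie la plus technique de la preuve, même si elle reste conceptuellement élémentaire.
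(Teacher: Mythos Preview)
The paper does not give its own proof of this proposition; it is stated as a standard result with a reference to \cite{DwyerSpalinski}. Your plan is precisely the classical argument found there, so the approach is correct and matches what the paper implicitly relies on.

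That said, there is a bookkeeping slip in your lifting diagram. As written, the square does not commute: going along the top and then the right gives $(p_0,p_1)\circ j\circ f = (f,f)$, whereas going along the left and then the bottom gives $(H\circ i_0,\; g\circ q\circ i_0) = (f,g)$. The fix is to take the bottom map to be $(f\circ q,\, H)$ instead of $(H,\, g\circ q)$. Then at $i_0$ one obtains $(f,f)$, which matches, and the relèvement $K$ satisfies $p_0\circ K = f\circ q$ and $p_1\circ K = H$; hence $K\circ i_1\colon A\to X^I$ is the desired right homotopy from $f$ to $g$. This is a harmless sign-of-index error and does not affect the validity of your strategy.
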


On a immédiatement une version catégorie modèle du théorème de Whitehead (voir \cite[Lemma 4.24]{DwyerSpalinski}).

\begin{theo}\label{TheoremeFibrantCofibrantEquivalencesFaiblesHomotopiesChap2}
Soient $f\colon A\to X$ un morphisme de $\C$, où $A$ et $X$ sont des objets fibrants et cofibrants. Alors $f$ est une équivalence faible si et seulement si il existe $g\colon X\to A$ tel que $f\circ g\sim \Id_X$ et $g\circ f\sim \Id_A$.
\end{theo}

\subsection{La catégorie homotopique d'une catégorie modèle}
\label{SectionCategorieHomotopiqueCMF}
\begin{defin}
Soient $\C$ une catégorie modèle et $W$ sa classe d'équivalence faible. La catégorie homotopique de $\C$, notée $\Ho(\C)$ et munie d'un foncteur $\gamma\colon \C\to \Ho(\C)$, est définie par la propriété universelle suivante. Pour tout foncteur $F\colon\C\to \mathcal{D}$ envoyant les équivalences faibles de $\C$ sur des isomorphismes de $\mathcal{D}$, il existe un unique foncteur $F'\colon \Ho(\C)\to \mathcal{D}$ tel que le diagramme suivant commute.
\begin{equation*}
\begin{tikzcd}
\C
\arrow[swap]{d}{\gamma}
\arrow{r}{F}
&\mathcal{D}
\\
\Ho(\C)
\arrow[dashrightarrow,swap]{ur}{F'}
\end{tikzcd}
\end{equation*}
\end{defin}

\begin{defin}
Soit $\C$ une catégorie modèle, et $\C_{c,f}$ la sous-catégorie pleine de $\C$ contenant les objets à la fois cofibrants et fibrants. On note $\C_{c,f}/{\sim}$ la catégorie dont les objets sont les mêmes que ceux de $\C_{c,f}$, et dont les morphismes sont les quotients
\begin{equation*}
\Hom_{\C}(A,X)/{\sim}.
\end{equation*}
\end{defin}

\begin{remarque}
Le quotient $\C/{\sim}$ est parfois appelé la catégorie homotopique naïve de $\C$. La catégorie $\C_{c,f}/{\sim}$ est alors la sous-catégorie homotopique naïve des objets cofibrants-fibrants.
\end{remarque}

\begin{theo}\label{TheoremeCategorieHomotopique}
Soit $\C$ une catégorie modèle. La catégorie homotopique de $\C$ existe, et l'inclusion $\C_{c,f}\to\C$ induit une équivalence de catégories
\begin{equation*}
\C_{c,f}/{\sim}\to \Ho(\C).
\end{equation*}
\end{theo}

\begin{proof}
On renvoie à \cite[Theorem 6.2]{DwyerSpalinski} pour une preuve complète. L'idée de la preuve est la suivante. Pour chaque objet $X$ de $\C$, on factorise le morphisme $X\to *$ à l'aide de l'axiome \ref{AxiomeMC5} pour obtenir
\begin{equation*}
\begin{tikzcd}
X\arrow{r}{j} &R(X)\arrow{r}{p} &*
\end{tikzcd}
\end{equation*}
où $j$ est une cofibration triviale et où $p$ est une fibration. On remplace ainsi $X$ par un objet fibrant $R(X)$ qui lui est faiblement équivalent. Puis, grâce à l'autre moitié de l'axiome \ref{AxiomeMC5}, on obtient une factorisation
\begin{equation*}
\begin{tikzcd}
\emptyset\arrow{r}{i} &Q(R(X))\arrow{r}{q} &R(X),
\end{tikzcd}
\end{equation*}
où $i$ est une cofibration et où $q$ est une fibration triviale. Comme les fibrations sont stables par composition, $Q(R(X))$ est un objet fibrant. Il est aussi cofibrant car $i$ est une cofibration, et il est faiblement équivalent à $X$ par le zigzag d'équivalences faibles
\begin{equation}\label{EquationZigZagQRX}
\begin{tikzcd}
X\arrow{r}{j} &R(X) & Q(R(X))\arrow[swap]{l}{q}.
\end{tikzcd}
\end{equation}
Le reste de la preuve consiste à montrer que tout morphisme $f\colon X\to Y$ correspond à un morphisme $\bar{f}\colon Q(R(X))\to Q(R(Y))$ et que ces deux morphismes peuvent être comparés à l'aide du zigzag \ref{EquationZigZagQRX}
\end{proof}

Il est alors naturel de se demander comment construire un foncteur $\Ho(\C)\to \Ho(\D)$ à partir d'un foncteur entre catégories modèles $F\colon\C\to \mathcal{D}$. Étant donné le théorème \ref{TheoremeCategorieHomotopique}, le foncteur $F$ devra préserver certaines propriétés liés aux objets cofibrants et fibrants.

\begin{defin}
Soient $\C$ et $\D$ deux catégories modèles et 
\begin{equation*}
F\colon \C\leftrightarrow \D\colon G
\end{equation*}
une paire de foncteurs adjoints. 
\begin{itemize}
\item L'adjonction $(F,G)$ est une adjonction de Quillen si $F$ envoie les cofibrations de $\C$ sur des cofibrations de $\D$ et $G$ envoie les fibrations de $\D$ sur des fibrations de $\C$.
\item C'est une équivalence de Quillen si de plus pour tout objet cofibrant $A$ de $\C$, tout objet fibrant $X$, de $\D$, et tout morphisme $f\colon F(A)\to X$, le morphisme $f$ est une équivalence faible de $\D$ si et seulement si son image par l'adjonction $\widetilde{f}\colon A\to G(X)$ est une équivalence faible de $\C$.
\end{itemize}
\end{defin}

On a alors le théorème suivant (voir \cite[Theorem 9.7]{DwyerSpalinski} pour une preuve).

\begin{theo}
Soit $F\colon \C\leftrightarrow \D\colon G$ une adjonction entre les catégories modèles $\C$ et $\D$. Si l'adjonction $(F,G)$ est une adjonction de Quillen, elle induit une paire de foncteurs adjoints
\begin{equation*}
\mathbf{L}F\colon \Ho(\C)\leftrightarrow \Ho(\D)\colon \mathbf{R}G.
\end{equation*}
Si de plus c'est une équivalence de Quillen, les foncteurs $\mathbf{L}F$ et $\mathbf{R}G$ sont des équivalences de catégories inverses l'une de l'autre.
\end{theo}

\begin{remarque}
Ainsi, la notion d'équivalence de Quillen permet de comparer des théories homotopiques. Si deux catégories modèles sont Quillen équivalents, elles ont des catégories homotopiques équivalentes et sont donc des modèles de la même théorie homotopique.
\end{remarque}

\begin{prop}\label{PropositionAdjonctionQuillenPreserveEquivalencesFaibles}
Soit $F\colon \C\leftrightarrow \D\colon G$ une adjonction de Quillen. Alors $F$ préserve les équivalences faibles entre objets cofibrants et $G$ préserve les équivalences faibles entre objets fibrants
\end{prop}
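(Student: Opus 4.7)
Le plan est de prouver les deux assertions par le lemme classique de K.\ Brown. On traite en détail la préservation par $F$ des équivalences faibles entre objets cofibrants; le cas de $G$ sera entièrement dual. L'idée est de ramener toute équivalence faible à une cofibration triviale -- que $F$ préserve par hypothèse, puisque $(F,G)$ est de Quillen -- grâce à une factorisation bien choisie.

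Soit $f\colon A\to B$ une équivalence faible entre objets cofibrants. On va commencer par factoriser le morphisme $(f,\Id_B)\colon A\coprod B\to B$, via l'axiome \ref{AxiomeMC5}, sous la forme
\begin{equation*}
\begin{tikzcd}
A\coprod B \arrow{r}{i} & C \arrow{r}{p} & B
\end{tikzcd}
\end{equation*}
avec $i$ cofibration et $p$ fibration triviale. Comme $A$ et $B$ sont cofibrants, les inclusions canoniques $A\to A\coprod B$ et $B\to A\coprod B$ s'obtiennent comme sommes amalgamées des cofibrations $\emptyset\to B$ et $\emptyset\to A$, et sont donc elles-mêmes des cofibrations d'après la Proposition \ref{PropositionCofibrationsStablesEverythingChap2}. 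En les composant avec $i$, on dispose de deux cofibrations $j_A\colon A\to C$ et $j_B\colon B\to C$ vérifiant $p\circ j_A=f$ et $p\circ j_B=\Id_B$. L'axiome \ref{AxiomeMC2} appliqué à chacune de ces deux factorisations montrera alors que $j_A$ et $j_B$ sont des équivalences faibles, donc des cofibrations triviales.

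Il ne restera qu'à appliquer $F$. Comme $F$ préserve les cofibrations triviales, $F(j_A)$ et $F(j_B)$ seront des équivalences faibles. L'identité $F(p)\circ F(j_B)=\Id_{F(B)}$ combinée à l'axiome \ref{AxiomeMC2} entraînera que $F(p)$ est une équivalence faible, et finalement $F(f)=F(p)\circ F(j_A)$ en sera une aussi par stabilité par composition. Pour le cas de $G$, on procédera de façon duale en factorisant plutôt $(\Id_X,g)\colon X\to X\times Y$ comme une cofibration triviale suivie d'une fibration, en utilisant que les projections $X\times Y\to X$ et $X\times Y\to Y$ sont des fibrations puisque $X$ et $Y$ sont fibrants, et en exploitant que $G$ préserve les fibrations triviales. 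Le point le moins évident, et que l'on peut identifier comme obstacle principal, est précisément cette observation : c'est seulement l'hypothèse de cofibrance (respectivement de fibrance) des objets source et but qui assure que les inclusions dans le coproduit (respectivement les projections depuis le produit) sont des cofibrations (respectivement des fibrations), et c'est ce qui fait fonctionner l'argument.
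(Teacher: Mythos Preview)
Your argument is correct and essentially identical to the paper's own proof: both use Ken Brown's lemma, factoring $(f,\Id_B)\colon A\coprod B\to B$ as a cofibration followed by a trivial fibration, observing that the composites $A\to C$ and $B\to C$ are trivial cofibrations (using cofibrancy of $A$ and $B$), and then applying $F$ together with two-out-of-three. The dual argument for $G$ is also the same.
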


\begin{proof}
Soit $f\colon A\to B$ un équivalence faible de $\C$ entre objets cofibrants. On considère le morphisme
\begin{equation*}
f\coprod\Id_B\colon A\coprod B\to B.
\end{equation*}
Par l'axiome \ref{AxiomeMC5}, on peut factoriser ce morphisme en une cofibration suivie d'une fibration triviale
\begin{equation*}
\begin{tikzcd}
A\coprod B
\arrow{r}{i}
&C
\arrow{r}{q}
&B
\end{tikzcd}.
\end{equation*}
D'autre part, les inclusions $i_A\colon A\to A\coprod B$ et $i_B\colon B\to A\coprod B$ peuvent être vues comme des sommes amalgamées
\begin{equation*}
\begin{tikzcd}
\emptyset
\arrow{r}
\arrow{d}
&A
\arrow{d}{i_A}
\\
B
\arrow[swap]{r}{i_B}
&A\coprod B
\end{tikzcd}
\end{equation*}
En particulier, comme $A$ et $B$ sont cofibrants, les inclusions $i_A,i_B\colon A,B\to A\coprod B$ sont des cofibrations. Notons $s\colon B\to C$ la composition
\begin{equation*}
B\to A\coprod B\to C.
\end{equation*}
Par construction, c'est une section de $q\colon C\to B$. Comme  $q$ est une équivalence faible et $q\circ s=\Id_Y$, par l'axiome de deux sur trois, on déduit que $s\colon B\to C$ est une équivalence faible. C'est aussi une composition de cofibrations, donc $s$ est une cofibration triviale. D'autre part, $f$ est égale à la composition
\begin{equation*}
\begin{tikzcd}
A\arrow{r}{i\circ i_A} &C\arrow{r}{q} & B.
\end{tikzcd}
\end{equation*}
Comme $q$ et $f$ sont des équivalences faibles, $i\circ i_A$ est une équivalence faible par deux sur trois. Comme $i$ et $i_A$ sont des cofibrations, $i\circ i_A$ est une cofibration triviale. Finalement, $F(f)$ est égale à la composition
\begin{equation*}
\begin{tikzcd}
F(A)\arrow{r}{F(i\circ i_A)}&F(C)\arrow{r}{F(q)} &F(B)
\end{tikzcd}
\end{equation*}
Comme $F$ préserve les cofibrations triviales, $F(i_A)$ est une cofibration triviale. D'autre part, $F(s)$ est une section de $F(q)$, et une cofibration triviale. Par deux sur trois, $F(q)$ est une équivalence faible, et donc $F(f)$ est une équivalence faible. La preuve pour $G$ est duale.
\end{proof}

\subsection{Catégories modèles à engendrement cofibrant}
\label{SectionCMFEngendrementCofibrant}

On a vu à la proposition \ref{PropositionRelevementDetermineClasses} que les classes de fibrations (triviales) et de cofibrations (triviales) étaient complètement déterminées par leur propriétés de relèvement les unes par rapport aux autres. De plus, on a vu à la remarque \ref{RemarqueIlSuffitDeDeuxClasses} qu'il suffisait de décrire les fibrations et les fibrations triviales pour décrire complètement une catégorie de modèle. On souhaite décrire celles-ci par des propriétés de relèvement, non pas par rapport à toutes les cofibrations (triviales), mais par rapport à un \textbf{ensemble} bien choisi de cofibrations (triviales). C'est la notion de catégorie modèle à engendrement cofibrant.

\begin{defin}
Une catégorie modèle $\C$ est à engendrement cofibrant si il existe deux ensembles de morphismes $I$ et $J$ tels que
\begin{itemize}
\item un morphisme est une fibration si et seulement si il a la propriété de relèvement à droite par rapport à tous les morphismes de $J$.
\item un morphisme est une fibration triviale si et seulement si il a la propriété de relèvement à droite par rapport à tous les morphismes de $I$.
\end{itemize}
\end{defin}

\begin{remarque}\label{RemarqueEngendrementCofibrantIEtJ}
Ainsi, pour décrire une catégorie modèle à engendrement cofibrant, il suffit de donner les deux ensembles de morphismes $I$ et $J$. Cependant, étant donné deux ensembles de morphismes, les classes de "fibrations" et de "fibrations triviales" qu'ils définissent ne sont en général pas les classes d'une catégorie modèle. Il existe  des théorèmes de reconnaissance, voir par exemple \cite[Theorem 11.3.1]{Hirschhorn}.
\end{remarque}

Un des avantages majeurs des catégories à engendrement cofibrant est qu'elles permettent l'argument du petit objet. Celui-ci permet notamment d'obtenir les factorisations de l'axiome \ref{AxiomeMC5} sous forme fonctorielle. On explicite cette construction ici. 

\begin{defin}
Un objet $A$ d'une catégorie $\C$ est petit si pour toute suite d'objet de $\C$
\begin{equation*}
X^0\to X^1\to\dots\to X^n\to\dots,
\end{equation*}
tout morphisme $f\colon A\to X^{\infty}=\colim_iX^i$, se factorise sous la forme
\begin{equation*}
\begin{tikzcd}
A
\arrow{rr}{f}
\arrow[swap]{dr}{f_n}
&&X^{\infty}
\\
&X^{n}
\arrow{ur}
\end{tikzcd}
\end{equation*}
pour un certain $n$.
\end{defin}

Supposons que $\C$ est une catégorie modèle à engendrement cofibrant avec les ensembles $I$ et $J$, et que $\C$ est complète et cocomplète. On suppose de plus que pour tout morphisme de $I$, $\alpha\colon A\to B$, l'objet $A$ est petit. Soit $f\colon X\to Y$ un morphisme de $\C$, on va factoriser $f$ en un cofibration suivie d'une fibration triviale (ou en une cofibration triviale suivie d'une fibration en échangeant $I$ et $J$).
Cette construction passe par la définition d'une suite d'objets $Z^n$ comme suit :
\begin{equation}\label{EquationDiagrammeArgumentPetitObjet}
\begin{tikzcd}
X
\arrow{r}{i_0}
\arrow[swap]{ddrr}{f}
&Z^1
\arrow{r}{i_1}
\arrow{ddr}{p^1}
&Z^2
\arrow{r}{i_2}
\arrow{dd}{p^2}
&\dots
\arrow{r}{i_{n-1}}
&Z^{n}
\arrow{ddll}{p^n}
\arrow{r}{i_n}
&\dots
\\
\\
&&Y
\end{tikzcd}
\end{equation}
telle que pour tout $n$, la composition $X\to Z^n\to Y$ est égale à $f$. On procède inductivement. On pose $Z^0=X$ et $p^0=f\colon Z^0\to Y$. Puis pour $n\geq 0$, on définit $S^n$ l'ensemble des diagrammes commutatifs de la forme suivante,
\begin{equation*}
\begin{tikzcd}
A
\arrow{r}{a}
\arrow[swap]{d}{\alpha}
&Z^n
\arrow{d}{p^n}
\\
B
\arrow[swap]{r}{b}
&Y
\end{tikzcd}
\end{equation*}
où $\alpha\colon A\to B$ est un morphisme de $I$.
Puis on définit $Z^{n+1}$ comme la somme amalgamée
\begin{equation*}
\begin{tikzcd}
\coprod_{S^n}A\arrow{r}{\coprod a}
\arrow[swap]{d}{\coprod\alpha}
&Z^n
\arrow{d}{i_n}
\\
\coprod_{S^n}B
\arrow{r}
&Z^{n+1}
\end{tikzcd}
\end{equation*}
Par définition, chacun des morphismes $\alpha\colon A\to B$ apparaissant dans $S^n$ sont des cofibrations. Les cofibrations étant stables par union disjointe, le morphisme
\begin{equation*}
\begin{tikzcd}
\coprod_{S^n}A
\arrow{r}{\coprod\alpha}
&
\coprod_{S^n}B
\end{tikzcd}
\end{equation*}
est une cofibration. Comme les cofibrations sont stables par sommes amalgamées, on en déduit que le morphisme $i_n$ est une cofibration. Par ailleurs, par construction de $S^n$, on dispose d'un morphisme
\begin{equation*}
\begin{tikzcd}
\coprod_{S^n}B\arrow{r}{\coprod b} & Y
\end{tikzcd}
\end{equation*}
Comme on a aussi un morphisme $p^n\colon Z^n\to Y$, par la propriété universelle de la somme amalgamée, on obtient un morphisme $p^{n+1}\colon Z^{n+1}\to Y$. On obtient donc le diagramme \ref{EquationDiagrammeArgumentPetitObjet}, où tous les morphismes horizontaux sont des cofibrations. Notons $j_n=i_{n-1}\circ\dots\circ i_0\colon X\to Z^n$, et $j$ l'inclusion
\begin{equation*}
j\colon X\to \colim Z^n=Z^{\infty}.
\end{equation*}
Alors, $f$ se factorise sous la forme
\begin{equation*}
\begin{tikzcd}
X
\arrow{rr}{j}
\arrow[swap]{dr}{f}
&& Z^{\infty}
\arrow{dl}{p^{\infty}}
\\
&Y
\end{tikzcd}
\end{equation*}
où $p^{\infty}=\colim p^n\colon \colim Z^n\to Y$. De plus, $j$ est une cofibration car c'est une composition de cofibrations. Il reste à montre que $p^{\infty}$ est une fibration. Considérons un problème de relèvement
\begin{equation*}
\begin{tikzcd}
A
\arrow{r}{a}
\arrow[swap]{d}{\alpha}
&Z^{\infty}
\arrow{d}{p^{\infty}}
\\
B
\arrow{r}
&Y
\end{tikzcd}
\end{equation*}
où $\alpha\colon A\to B\in I$.
Comme $A$ est petit, il existe un entier $n\geq 0$ tel que $a$ se factorise sous la forme
\begin{equation*}
\begin{tikzcd}
A\arrow{r}{a_n} &Z^n\arrow{r} & Z^{\infty}.
\end{tikzcd}
\end{equation*}
Mais, alors, par construction, il existe un relèvement dans $Z^{n+1}$.
\begin{equation*}
\begin{tikzcd}
A\arrow{r}{a_n}
\arrow{dd}
&Z^{n}
\arrow{d}
\\
&Z^{n+1}
\arrow{d}{p^{n+1}}
\\
B
\arrow{r}
\arrow[dashrightarrow]{ur}
&Y
\end{tikzcd}
\end{equation*}
Ainsi, le morphisme $p^{\infty}$ admet la propriété de relèvement à droite par rapport à tous les morphismes de $I$, c'est donc une fibration triviale.

\subsection{Catégories modèles simpliciales}
\label{SectionCMFSimpliciale}
\begin{defin}
Une catégorie $\C$ est une catégorie simpliciale si elle est munie d'un foncteur
\begin{equation*}
\Map\colon\C^{\op}\times \C\to \sS,
\end{equation*}
vérifiant
\begin{itemize}
\item pour toute paire d'objets de $\C$, $X,Y$, $\Map(X,Y)_0=\Hom_{\C}(X,Y)$,
\item pour tout objet de $\C$, $X$, le foncteur 
\begin{equation*}
\Map(X,-)\colon \C\to \sS
\end{equation*} 
admet un adjoint à gauche 
\begin{equation*}
X\otimes -\colon \sS\to \C
\end{equation*}
associatif au sens suivant : si $K,L$ dont des ensembles simpliciaux, on a un isomorphisme naturel en $X$, $K$ et $L$
\begin{equation*}
X\otimes \left(K\times L\right)\simeq \left(X\otimes K\right)\otimes L,
\end{equation*}
\item pour tout objet de $\C$, $Y$, le foncteur 
\begin{equation*}
\Map(-,Y)\colon \C^{\op}\to\sS
\end{equation*}
admet un adjoint à droite
\begin{equation*}
Y^{-}\colon \sS\to \C^{\op}.
\end{equation*}
\end{itemize}
\end{defin}

\begin{remarque}\label{RemarqueMapCategorieSimpliciale}
Alternativement, on peut définir une catégorie simpliciale comme une catégorie munie d'un foncteur
\begin{equation*}
-\otimes -\colon \C\times\sS\to \C,
\end{equation*}
vérifiant certaines hypothèses. Dans ce cas, pour $X,Y$ des objets de $\C$, on définit $\Map(X,Y)$ comme suit.
\begin{equation*}
\Map(X,Y)_n=\Hom_{\C}(X\otimes\Delta^n,Y).
\end{equation*}
Dans tout les cas, cette relation est vérifiée
(Voir \cite[Lemma II.2.4]{GoerssJardine}).
\end{remarque}

Soient $p\colon X\to Y$ et $i\colon A\to B$ deux morphismes d'une catégorie simpliciale $\C$. Le foncteur $\Map$ induit le diagramme commutatif suivant,

\begin{equation*}
\begin{tikzcd}
\Map(B,X)
\arrow[dashrightarrow]{dr}{(i^*,p_*)}
\arrow[bend left= 20]{drr}{(\Id_B,p_*)}
\arrow[bend right = 20, swap]{ddr}{(i^*,\Id_X)}
\\
&\Map(A,X)\times_{\Map(A,Y)}\Map(B,Y)
\arrow{r}
\arrow{d}
&\Map(B,Y)
\arrow{d}{(i^*,\Id_Y)}
\\
&\Map(A,X)
\arrow[swap]{r}{(\Id_A,p_*)}
&\Map(A,Y)
\end{tikzcd}
\end{equation*}
où l'ensemble simplicial $\Map(A,X)\times_{\Map(A,Y)}\Map(B,Y)$ est défini comme le produit fibré, et le morphisme $(i^*,p_*)$ est obtenu par la propriété universelle du produit fibré.

\begin{defin}
Une catégorie modèle $\C$ est une catégorie modèle simpliciale si c'est une catégorie simpliciale, et si pour toute fibration $p\colon X\to Y$ et toute cofibration $i\colon A\to B$, le morphisme d'ensembles simpliciaux
\begin{equation}\label{EquationAxiomeCMFSimpliciale}
(i^*,p_*)\colon \Map(B,X)\to \Map(A,X)\times_{\Map(A,Y)}\Map(B,Y)
\end{equation}
est une fibration de $\sS$, qui est triviale dès que $i$ ou $p$ est une équivalence faible.
\end{defin}

\begin{remarque}
La définition d'une catégorie modèle simpliciale fait intervenir la structure de modèle de Kan sur la catégorie $\sS$, que l'on définira dans la section suivante. On note pour l'instant que les objets fibrants de cette structure de modèle sont les complexes de Kan.
\end{remarque}

\begin{prop}
Soient $\C$ une catégorie modèle simpliciale et $A,X$ deux objets de $\C$ tels que $A$ est cofibrant et $X$ est fibrant. Alors, $\Map(A,X)$ est un complexe de Kan.
\end{prop}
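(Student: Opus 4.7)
La stratégie est d'appliquer directement l'axiome de catégorie modèle simpliciale (équation \ref{EquationAxiomeCMFSimpliciale}) à la cofibration $i\colon \emptyset\to A$ (qui est bien une cofibration puisque $A$ est cofibrant) et à la fibration $p\colon X\to *$ (qui est bien une fibration puisque $X$ est fibrant). Pour montrer que $\Map(A,X)$ est un complexe de Kan, il suffit en effet de montrer que l'unique morphisme $\Map(A,X)\to \Delta^0$ est une fibration de la structure de modèle de Kan sur $\sS$, et c'est précisément l'information fournie par l'axiome, pourvu que l'on identifie correctement la cible du morphisme $(i^*,p_*)$.

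Le point clé est donc de calculer le produit fibré
\begin{equation*}
\Map(\emptyset,X)\times_{\Map(\emptyset,*)}\Map(A,*).
\end{equation*}
Je commencerais par observer que le foncteur $\Map(X,-)\colon \C\to\sS$, étant adjoint à droite de $X\otimes -$, préserve les limites, et envoie donc l'objet final $*\in \C$ sur l'objet final $\Delta^0$ de $\sS$. Ceci donne $\Map(A,*)=\Delta^0$ et $\Map(\emptyset,*)=\Delta^0$. De manière duale, le foncteur $\Map(-,Y)\colon \C^{\op}\to \sS$, étant adjoint à droite de $Y^{-}$, préserve aussi les limites; or l'objet final de $\C^{\op}$ est l'objet initial $\emptyset$ de $\C$, et son image dans $\sS$ est donc l'objet final $\Delta^0$. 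Ainsi $\Map(\emptyset,X)=\Delta^0$, et le produit fibré considéré s'identifie à $\Delta^0\times_{\Delta^0}\Delta^0=\Delta^0$.

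Pour conclure, l'axiome des catégories modèles simpliciales appliqué à $i\colon \emptyset\to A$ et $p\colon X\to *$ garantit que
\begin{equation*}
(i^*,p_*)\colon \Map(A,X)\to \Map(\emptyset,X)\times_{\Map(\emptyset,*)}\Map(A,*)=\Delta^0
\end{equation*}
est une fibration de $\sS$, ce qui signifie exactement que $\Map(A,X)$ est un complexe de Kan. Il n'y a pas réellement d'obstacle technique dans cette preuve : tout repose sur l'identification des objets $\Map(\emptyset,-)$ et $\Map(-,*)$ avec $\Delta^0$, qui découle formellement des adjonctions imposées par la définition de catégorie simpliciale. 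Le seul point demandant un minimum d'attention est de bien justifier que ces adjonctions impliquent la préservation des objets finaux aux bons endroits, en particulier de se souvenir que l'objet initial de $\C$ est l'objet final de $\C^{\op}$.
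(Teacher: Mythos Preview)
Your proof is correct and follows exactly the same strategy as the paper: apply the simplicial model category axiom to the cofibration $\emptyset\to A$ and the fibration $X\to *$, and identify the target pullback with the terminal simplicial set. The only cosmetic difference is that the paper justifies $\Map(\emptyset,X)\simeq\Map(\emptyset,*)\simeq\Map(A,*)\simeq *$ by directly invoking the formula $\Map(X,Y)_n=\Hom_{\C}(X\otimes\Delta^n,Y)$ from Remarque~\ref{RemarqueMapCategorieSimpliciale}, whereas you argue via preservation of terminal objects by right adjoints.
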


\begin{proof}
On considère la cofibration $i\colon\emptyset\to A$ ainsi que la fibration $p\colon X\to *$. Alors, le morphisme \ref{EquationAxiomeCMFSimpliciale}
\begin{equation*}
(i^*,p_*)\colon\Map(A,X)\to \Map(\emptyset,X)\times_{\Map(\emptyset,*)}\Map(A,*)
\end{equation*}
est une fibration. De plus, par la remarque \ref{RemarqueMapCategorieSimpliciale}, on obtient 
\begin{equation*}
\Map(\emptyset,X)\simeq \Map(\emptyset,*)\simeq \Map(A,*)\simeq *
\end{equation*}
On en déduit que $\Map(A,X)\to *$ est une fibration. Autrement dit, $\Map(A,X)$ est un complexe de Kan.
\end{proof}

Dans une catégorie modèle simpliciale les ensembles simpliciaux de morphismes, $\Map(X,Y)$, et les ensembles de classes d'homotopie sont reliés.

\begin{prop}
Soient $\C$ une catégorie modèle simpliciale et $A,X$ deux objets de $\C$ tels que $A$ est cofibrant et $X$ est fibrant. Alors, on a
\begin{equation*}
\pi_0(\Map(A,X))=\Hom_{\C}(A,X)/{\sim}
\end{equation*}
\end{prop}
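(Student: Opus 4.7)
Le plan est de décrire explicitement les $0$-simplexes et les $1$-simplexes de $\Map(A,X)$ et d'identifier la relation engendrée sur $\pi_0$ avec la relation d'homotopie à gauche introduite à la définition \ref{DefinitionHomotopieGaucheDroite}.

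Par définition d'une catégorie simpliciale, on a $\Map(A,X)_0=\Hom_{\C}(A,X)$. Comme $A$ est cofibrant et $X$ est fibrant, on a vu que $\Map(A,X)$ est un complexe de Kan. Pour un complexe de Kan, deux $0$-simplexes sont dans la même composante connexe de $\pi_0$ si et seulement si ils sont reliés par un $1$-simplexe (l'existence de cornets $\Lambda^2_k\to \Map(A,X)$ garantit que cette relation est symétrique et transitive). Il suffit donc d'identifier la relation "être relié par un $1$-simplexe" avec la relation d'homotopie $\sim$ sur $\Hom_{\C}(A,X)$.

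L'étape clé est de montrer que l'objet $A\otimes\Delta^1$ est un (bon) objet cylindre pour $A$ dans $\C$. D'abord, la remarque \ref{RemarqueMapCategorieSimpliciale} donne une bijection naturelle entre les $1$-simplexes de $\Map(A,X)$ et les morphismes $A\otimes\Delta^1\to X$, compatible aux faces $d_0$ et $d_1$ qui correspondent aux deux inclusions $A\simeq A\otimes \Delta^0\to A\otimes\Delta^1$. Ensuite, on applique l'axiome simplicial \ref{EquationAxiomeCMFSimpliciale} à la cofibration $\emptyset\to A$ et à la cofibration $\partial\Delta^1\to\Delta^1$ de $\sS$ (qui existe puisque $\sS$ sera munie plus tard de la structure de Kan, où tout monomorphisme est une cofibration) pour obtenir que le morphisme
\begin{equation*}
A\coprod A\simeq A\otimes\partial\Delta^1\to A\otimes\Delta^1
\end{equation*}
est une cofibration. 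Pour montrer que la projection $A\otimes\Delta^1\to A$ est une équivalence faible, on utilise un argument du type Ken Brown : le foncteur $A\otimes -$, restreint aux objets cofibrants de $\sS$ (où tous les objets sont cofibrants), préserve les cofibrations et les cofibrations triviales par l'axiome simplicial, donc préserve toutes les équivalences faibles entre objets cofibrants par la proposition \ref{PropositionAdjonctionQuillenPreserveEquivalencesFaibles} appliquée à l'adjonction $(A\otimes -,\Map(A,-))$. L'équivalence faible $\Delta^1\to\Delta^0$ est alors envoyée sur une équivalence faible $A\otimes\Delta^1\to A$.

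Ayant établi que $A\otimes\Delta^1$ est un bon objet cylindre pour $A$, la proposition \ref{PropBonObjetCylindre} garantit que deux morphismes $f,g\colon A\to X$ sont homotopes (au sens de $\sim$) si et seulement si il existe une homotopie $H\colon A\otimes\Delta^1\to X$ entre eux, ce qui correspond exactement à l'existence d'un $1$-simplexe de $\Map(A,X)$ entre les $0$-simplexes $f$ et $g$. On en déduit la bijection voulue
\begin{equation*}
\pi_0(\Map(A,X))=\Hom_{\C}(A,X)/{\sim}.
\end{equation*}
La principale subtilité réside dans la vérification que $A\otimes\Delta^1$ est bien un objet cylindre au sens de $\C$ (et non une simple abstraction simpliciale), ce qui nécessite l'interaction entre l'axiome simplicial et le fait que $A$ est cofibrant.
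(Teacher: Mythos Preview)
Your proof is correct and follows the same approach as the paper: both arguments reduce the question to showing that $A\otimes\Delta^1$ is a good cylinder object for $A$, and then invoke Proposition \ref{PropBonObjetCylindre}. The paper simply cites \cite[Lemma II.3.5]{GoerssJardine} for this fact, whereas you spell it out via the pushout-product axiom and a Ken Brown argument for the weak equivalence $A\otimes\Delta^1\to A$; you also make explicit why $\pi_0$ of a Kan complex is detected by $1$-simplices, which the paper leaves implicit. One small point: the label \eqref{EquationAxiomeCMFSimpliciale} refers to the $\Map$-fibration form of the simplicial axiom (both morphisms in $\C$), while your argument uses the equivalent pushout-product form mixing a cofibration in $\C$ with a cofibration in $\sS$; it would be cleaner to cite that equivalent formulation directly.
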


\begin{proof}
Soient $\C$ une catégorie modèle simpliciale, et $A,X$ deux objets de $\C$ tels que $A$ est cofibrant et $X$ est fibrant. Alors, par la proprosition \ref{PropBonObjetCylindre}, deux morphismes $f,g\colon A\to X$ sont homotopes si et seulement si il existe une homotopie entre eux, $H\colon A\times I\to X$, où $A\times I$ est un bon cylindre de $A$ fixé. On vérifie que
\begin{equation*}
A\otimes\partial(\Delta^1)\to A\otimes \Delta^1\to A
\end{equation*}
fournit un bon cylindre de $A$ (voir \cite[Lemma II.3.5]{GoerssJardine}). D'autre part, on sait que
\begin{equation*}
\Map(A,X)_0=\Hom_{\C}(A,X).
\end{equation*}
et deux morphismes $f,g\in \Map(A,X)_0$ correspondent à la même classe dans $\pi_0(\Map(A,X))$ si et seulement si il existe une homotopie
\begin{equation*}
H\colon \Delta^1\to \Map(A,X),
\end{equation*}
telle que $H\circ i_0=f$ et $H\circ i_1=g$. Par la remarque \ref{RemarqueYonedaEnsemblesSimpliciauxChap2}, une telle homotopie correspond à un $1$-simplexe de $\Map(A,X)$, et par la remarque \ref{RemarqueMapCategorieSimpliciale}, ces simplexes correspondent aux morphismes de la forme
\begin{equation*}
A\otimes\Delta^1\to X.
\end{equation*} 
Finalement, deux morphismes $f,g\in \Hom(A,X)$ sont homotopes si et seulement si ils sont dans la même composante connexe de $\Map(A,X)$.
\end{proof}

\section{L'adjonction de Kan-Quillen}
\label{SectionAdjonctionKanQuillenChap2}
\subsection{La catégorie modèle des ensembles simpliciaux}
\label{SectionCMFsSChap2}
\begin{theo}\label{TheoremeStructureKanSSetChap2}
La catégorie des ensembles simpliciaux $\sS$ admet une structure de modèle à engendrement cofibrant où les ensembles générateurs des cofibrations, $I$, et des cofibrations triviales, $J$, sont les suivants.
\begin{itemize}
\item $I=\{\partial(\Delta^n)\to \Delta^n\ |\ n\geq 0\}$,
\item $J=\{\Lambda^n_k\to \Delta^n\ |\ n\geq 1,\ 0\leq k\leq n\}$
\end{itemize}
\end{theo}

\begin{remarque}
On a vu à la remarque \ref{RemarqueEngendrementCofibrantIEtJ} qu'il suffisait de décrire les ensembles générateurs $I$ et $J$ pour décrire complètement une catégorie modèle. Il est cependant utile d'expliciter les différentes classes de morphismes obtenues à partir de $I$ et $J$. On constate d'abord que les objets fibrants sont les complexes de Kan (Définition \ref{DefinitionComplexeDeKan}), et que les fibrations sont les fibrations de Kan. On a aussi la propriété suivante.
\end{remarque}

\begin{prop}\label{PropositionCofibrationsSSetMonomorphismes}
Les cofibrations de $\sS$ sont les monomorphismes.
\end{prop}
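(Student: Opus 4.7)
Le plan consiste à montrer les deux inclusions de classes séparément.

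Pour le sens facile (cofibration $\Rightarrow$ monomorphisme), j'observerais d'abord que chaque cofibration génératrice $\partial(\Delta^n)\to \Delta^n$ est un monomorphisme. Dans la catégorie de préfaisceaux $\sS$, la classe des monomorphismes est stable par coproduits, sommes amalgamées, compositions transfinies et rétracts (les limites et colimites dans $\sS$ se calculent simplexe par simplexe dans $\Set$). Comme l'argument du petit objet présenté en section \ref{SectionCMFEngendrementCofibrant} montre que toute cofibration est rétract d'une composition transfinie de sommes amalgamées de coproduits d'éléments de $I$, on en déduit que toute cofibration est un monomorphisme.

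Pour la réciproque (monomorphisme $\Rightarrow$ cofibration), je partirais d'un monomorphisme $i\colon X\hookrightarrow Y$ que l'on peut identifier à une inclusion, et je construirais la filtration squelettique relative. Pour chaque $n\geq -1$, on pose
\begin{equation*}
Y^{(n)}=X\cup\sk_n(Y)\subseteq Y,
\end{equation*}
de sorte que $Y^{(-1)}=X$ et $Y=\colim_n Y^{(n)}$. Le point clé est que pour tout $n\geq 0$, on dispose d'un carré cocartésien
\begin{equation*}
\begin{tikzcd}
\coprod_{\sigma}\partial(\Delta^n)
\arrow{r}
\arrow{d}
&Y^{(n-1)}
\arrow{d}
\\
\coprod_{\sigma}\Delta^n
\arrow{r}
&Y^{(n)}
\end{tikzcd}
\end{equation*}
où le coproduit est indexé par les simplexes non-dégénérés de dimension $n$ de $Y$ qui ne sont pas dans l'image de $X$. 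En effet, un simplexe non-dégénéré $\sigma\in Y_n\setminus X_n$ a son bord $\partial\sigma$ entièrement dans $Y^{(n-1)}$ car les faces sont de dimension $<n$, et les dégénérescences éventuelles d'un tel $\sigma$ sont déjà engendrées par $Y^{(n-1)}$ et l'adjonction du simplexe $\sigma$ lui même.

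Chaque inclusion $Y^{(n-1)}\to Y^{(n)}$ est donc une somme amalgamée d'un coproduit de cofibrations génératrices, donc une cofibration d'après la proposition \ref{PropositionCofibrationsStablesEverythingChap2}. L'inclusion $i\colon X\to Y$ s'obtient comme la composition transfinie de ces étapes, et est donc elle-même une cofibration. Le point technique à vérifier soigneusement, et qui constitue l'essentiel du travail, est la description précise du carré cocartésien ci-dessus : il s'agit de contrôler la distinction entre simplexes dégénérés et non-dégénérés de $Y$, et de s'assurer qu'un simplexe non-dégénéré de $Y$ qui apparaît comme dégénérescence d'un simplexe de $X$ est déjà dans $X$, ce qui découle du fait que $X\to Y$ est un monomorphisme (les dégénérescences étant des sections des faces, un simplexe non-dégénéré de $Y$ dans l'image d'une dégénérescence d'un simplexe de $X$ serait lui-même dans $X$).
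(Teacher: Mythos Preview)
Ta démonstration est correcte et suit essentiellement la même stratégie que celle du texte : filtration squelettique relative pour montrer que tout monomorphisme est une cofibration, et argument du petit objet combiné à la stabilité des monomorphismes pour la réciproque. La seule différence est cosmétique : le texte développe explicitement le diagramme de rétracte pour le sens cofibration $\Rightarrow$ monomorphisme, alors que tu invoques directement la conclusion de l'argument du petit objet.
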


\begin{proof}
Soit $i\colon X\to Y$ un monomorphisme de $\sS$, on peut obtenir $Y$ en "rajoutant" des simplexes à $X$. Plus précisément, on note $\sk_n(Y)\subset Y$ le sous-ensemble simplicial de $Y$ engendré par les simplexes de $Y$ de dimension $\leq n$. On a alors la suite d'inclusions
\begin{equation*}
X=X\cup\sk_{-1}(Y)\hookrightarrow X\cup\sk_{0}(Y)\hookrightarrow X\cup\sk_1(Y)\hookrightarrow \dots \hookrightarrow Y.
\end{equation*}
Pour $n\geq -1$, chacune de ces inclusions est une somme amalgamée de la forme
\begin{equation}\label{EquationCofibrationMonomorphismeChap2}
\begin{tikzcd}
\coprod \partial(\Delta^{n+1})
\arrow{r}
\arrow{d}
&X\cup \sk_n(Y)
\arrow{d}
\\
\coprod \Delta^{n+1}
\arrow{r}
&X\cup \sk_{n+1}(Y)
\end{tikzcd}
\end{equation}
où l'union est prise sur tout les $n+1$-simplexes non-dégénérés de $Y$ n'appartenant pas à $X$. Mais alors, pour tout $n$, l'inclusion $X\cup\sk_n(Y)\to X\cup\sk_{n+1}(Y)$ est une cofibration, car c'est une somme amalgamée d'une union disjointe de cofibration. Il suit que la composition transfinie \ref{EquationCofibrationMonomorphismeChap2} est une cofibration (voir la proposition \ref{PropositionCofibrationsStablesEverythingChap2}).

Réciproquement, supposons que $f\colon X\to Y$ est une cofibration. Par l'argument du petit objet, on peut factoriser $f$ sous la forme
\begin{equation*}
\begin{tikzcd}
X
\arrow{rr}{i}
\arrow[swap]{dr}{f}
&& Z
\arrow{dl}{q}
\\
&Y
\end{tikzcd}
\end{equation*}
avec $q$ une fibration triviale et $i$ une composition transfinie de sommes amalgamées d'unions disjointes de morphismes de la forme $\partial(\Delta^n)\to \Delta^n$ (voir la fin de la section \ref{SectionCMFEngendrementCofibrant}). En particulier, $i$ est un monomorphisme. Par ailleurs, par hypothèse $f$ est une cofibration. Il existe donc une solution $h$ au problème de relèvement suivant
\begin{equation*}
\begin{tikzcd}
X
\arrow[swap]{d}{f}
\arrow{r}{i}
&Z
\arrow{d}{q}
\\
Y
\arrow[swap]{r}{\Id_Y}
\arrow[dashrightarrow]{ur}{h}
&Y
\end{tikzcd}
\end{equation*}
Mais alors, $f$ est un rétract de $i$.
\begin{equation*}
\begin{tikzcd}
X
\arrow{r}{\Id_X}
\arrow[swap]{d}{f}
&X
\arrow{r}{\Id_X}
\arrow{d}{i}
&X
\arrow{d}{f}
\\
Y
\arrow{r}{h}
&Z
\arrow{r}{q}
&Y
\end{tikzcd}
\end{equation*}
Comme la classe des monomorphismes est stable par rétract, on en déduit que $f$ est un monomorphisme.
\end{proof}

\begin{remarque}
La preuve précédente fait apparaitre un fait clé de la théorie des catégories modèles à engendrement cofibrant : La classe de morphismes obtenues en appliquant les opérations d'union disjointes, de sommes amalgamées, de compositions transfinies et de rétract à l'ensemble $I$ est la classe des cofibrations (de même pour $J$ et la classe des cofibrations triviales).
\end{remarque}

La classe des équivalences faibles est plus difficile à décrire explicitement, mais on peut cependant en donner une caractérisation \cite[\S II.3]{QuillenHomotopicalAlgebra}. On rappelle que si $X,Y$ sont deux ensembles simpliciaux, $[X,Y]$ désigne l'ensemble des classes d'homotopies d'applications simpliciales de $X$ vers $Y$.

\begin{prop}\label{PropositionCaractérisationEquivalenceFaibleChap2}
Soit $f\colon X\to Y$ un morphisme entre ensembles simpliciaux. C'est une équivalence faible si et seulement si, pour tout complexe de Kan $Z$, le morphisme induit par $f$, 
\begin{equation*}
f^*\colon [Y,Z]\to [X,Z],
\end{equation*}
est un isomorphisme.
\end{prop}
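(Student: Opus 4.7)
Le plan de la preuve est de se ramener, via des remplacements fibrants, au théorème \ref{TheoremeFibrantCofibrantEquivalencesFaiblesHomotopiesChap2} qui caractérise les équivalences faibles entre objets cofibrants-fibrants comme étant exactement les équivalences d'homotopie. On note d'abord que dans $\sS$, tout objet est cofibrant, puisque $\emptyset\to X$ est toujours un monomorphisme et donc une cofibration par la proposition \ref{PropositionCofibrationsSSetMonomorphismes}. Seule la fibrance nécessite donc un remplacement. Pour un ensemble simplicial $X$ quelconque, l'axiome \ref{AxiomeMC5} fournit une factorisation $X\xrightarrow{j_X}X'\to *$ où $j_X$ est une cofibration triviale et $X'$ est un complexe de Kan, donc un objet cofibrant-fibrant; de même pour $Y$. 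En relevant $j_Y\circ f$ le long de $j_X$ par rapport à la fibration $Y'\to *$, on obtient $f'\colon X'\to Y'$ tel que $f'\circ j_X=j_Y\circ f$, et par deux sur trois, $f$ est une équivalence faible si et seulement si $f'$ l'est.

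L'étape clé consiste à montrer que, pour toute cofibration triviale $j\colon A\to A'$ et tout complexe de Kan $Z$, l'application de précomposition $j^*\colon [A',Z]\to [A,Z]$ est une bijection. La surjectivité découle directement de la propriété de relèvement à gauche de $j$ par rapport à la fibration $Z\to *$. Pour l'injectivité, si $u,v\colon A'\to Z$ vérifient $u\circ j\sim v\circ j$ par une homotopie $H\colon A\otimes\Delta^1\to Z$, on cherche à étendre $H$ et $u\coprod v$ en une homotopie $A'\otimes\Delta^1\to Z$. Ceci se ramène à un problème de relèvement le long du morphisme $(A\otimes\Delta^1)\cup_{A\otimes\partial(\Delta^1)}(A'\otimes\partial(\Delta^1))\to A'\otimes\Delta^1$, dont on vérifie qu'il est une cofibration triviale en utilisant que $A'\otimes\Delta^1$ est un bon objet cylindre et que les cofibrations triviales sont stables par somme amalgamée. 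Appliquant ce résultat aux cofibrations triviales $j_X$ et $j_Y$, on obtient que $f^*\colon [Y,Z]\to [X,Z]$ est une bijection si et seulement si $(f')^*\colon [Y',Z]\to [X',Z]$ l'est.

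Une fois ces réductions effectuées, les deux sens de l'équivalence se traitent rapidement. Pour le sens direct, si $f$ est une équivalence faible, alors $f'$ aussi, et par le théorème \ref{TheoremeFibrantCofibrantEquivalencesFaiblesHomotopiesChap2}, $f'$ est une équivalence d'homotopie, ce qui fournit un inverse à homotopie près et donc une bijection $(f')^*\colon [Y',Z]\to [X',Z]$ pour tout $Z$. Pour la réciproque, si $f^*$ est toujours un isomorphisme, alors $(f')^*$ aussi pour tout $Z$ complexe de Kan. En prenant $Z=X'$, on relève $[\Id_{X'}]\in [X',X']$ en un $[g]\in [Y',X']$ avec $g\circ f'\sim \Id_{X'}$. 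Puis en prenant $Z=Y'$, on a $(f')^*[f'\circ g]=[f'\circ g\circ f']=[f']=(f')^*[\Id_{Y'}]$, d'où $f'\circ g\sim \Id_{Y'}$ par injectivité de $(f')^*$. Ainsi $f'$ est une équivalence d'homotopie entre objets cofibrants-fibrants, donc une équivalence faible par le théorème \ref{TheoremeFibrantCofibrantEquivalencesFaiblesHomotopiesChap2}, et par deux sur trois $f$ aussi.

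L'obstacle principal est l'étape clé ci-dessus, c'est-à-dire l'invariance de $[-,Z]$ par précomposition avec des cofibrations triviales. C'est un point technique qui illustre la subtilité de la relation d'homotopie simpliciale (voir la remarque \ref{RemarqueHomotopieSimplicialPasEquivalence}), mais qui est essentiellement encapsulé dans le fait que $\sim$ est une relation d'équivalence sur $\Hom(K,Z)$ lorsque $Z$ est un complexe de Kan, combiné avec la caractérisation des cofibrations triviales par leur propriété de relèvement à gauche par rapport aux fibrations (proposition \ref{PropositionRelevementDetermineClasses}).
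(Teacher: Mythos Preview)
The paper does not supply its own proof; it attributes the result to Quillen \cite[\S II.3]{QuillenHomotopicalAlgebra}, whose argument goes through the theory of minimal fibrations and is explicitly invoked as one of three strategies for establishing the model structure on $\sS$ (see the proof of Théorème \ref{TheoremeStructureKanSSetChap2}). Your approach is genuinely different: you assume the model structure is already in place and derive the characterisation from general model-category machinery, via fibrant replacement and Théorème \ref{TheoremeFibrantCofibrantEquivalencesFaiblesHomotopiesChap2}.

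This introduces a logical tension you should flag. In the paper's presentation, the proposition is stated before the model structure is proved, and Quillen's route \emph{uses} it to verify axiom \ref{AxiomeMC2}. Your proof, by contrast, invokes axiom \ref{AxiomeMC5} and the Whitehead-type theorem for model categories, both of which presuppose the structure. This is not fatal circularity, since the paper describes two other strategies (via $\Real{-}$ or via $\Ex^{\infty}$) that build the model structure without this proposition; but your argument then functions only as an a posteriori characterisation, not as a tool for constructing the structure, which is the role it plays in Quillen's approach. What your route buys is conceptual cleanliness and generality (it works in any model category where all objects are cofibrant); what Quillen's buys is a self-contained bootstrap.

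One technical point: your justification that $(A\otimes\Delta^1)\cup_{A\otimes\partial\Delta^1}(A'\otimes\partial\Delta^1)\to A'\otimes\Delta^1$ is a trivial cofibration is not quite complete. Good-cylinder plus pushout-stability gives that $A\otimes\Delta^1\to P$ is a trivial cofibration, but you still need $j\otimes\Delta^1$ to be a weak equivalence before two-out-of-three yields the conclusion. More directly, once you are already granting yourself the full model structure, the whole key lemma is immediate from Théorème \ref{TheoremeCategorieHomotopique}: for $A$ cofibrant and $Z$ fibrant, $[A,Z]$ computes $\Hom_{\Ho(\sS)}(A,Z)$, and since the trivial cofibration $j$ becomes an isomorphism in $\Ho(\sS)$, $j^*$ is automatically a bijection.
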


\begin{proof}[Démonstration du théorème \ref{TheoremeStructureKanSSetChap2}]
La catégorie $\sS$ est complète et cocomplète (on peut calculer les limites dimension par dimension dans $\Set$) donc l'axiome \ref{AxiomeMC1} est vérifié. Par définition des fibrations (triviales) et des cofibrations (triviales) les propriétés de relèvement de l'axiome \ref{AxiomeMC4} sont satisfaites. Par l'argument du petit objet (voir la section \ref{SectionCMFEngendrementCofibrant}), l'axiome \ref{AxiomeMC5} de factorisation est vérifié. Les propriétés de relèvement étant stables par rétract, l'axiome \ref{AxiomeMC3} est vérifié. Il reste donc à montrer que les équivalences faibles satisfont l'axiome \ref{AxiomeMC2}, de deux sur trois. Il existe plusieurs stratégies distinctes pour montrer ce résultat. 

On peut passer par la caractérisation donnée en proposition \ref{PropositionCaractérisationEquivalenceFaibleChap2}. La preuve de cette proposition fait intervenir la notion de fibrations minimales, une notion spécifique aux cas des ensembles simpliciaux (voir la preuve originale de D. Quillen \cite{QuillenHomotopicalAlgebra}). 

Une autre stratégie est de montrer que $f\colon X\to Y$ est une équivalence faible de $\sS$ si et seulement si 
\begin{equation*}
\Real{f}\colon \Real{X}\to\Real{Y}
\end{equation*}
est une équivalence faible de $\Top$, voir par exemple  \cite[Theorem I.11.3]{GoerssJardine}. (Plus exactement, on définit les équivalences faibles de $\sS$ comme les morphismes qui se réalisent en des équivalences faibles de $\Top$, et on montre ensuite que la catégorie modèle obtenue est la même que celle décrite ici). Cette approche a le défaut de ne pas être interne à la catégorie des ensembles simpliciaux, et de nécessiter la construction préalable de la catégorie modèle des espaces topologiques.

Une troisième stratégie est de définir explicitement un foncteur de remplacement fibrant $\Ex^{\infty}\colon \sS\to\sS$. Ceci revient à définir, pour tout ensemble simplicial $X$, un ensemble simplicial fibrant (un complexe de Kan) $\Ex^{\infty}(X)$ ainsi qu'un morphisme
\begin{equation*}
\beta_{X}\colon X\to \Ex^{\infty}(X)
\end{equation*}
tels que le morphisme $\beta_X$ est une cofibration triviale pour tout $X$. (On demande en fait que $\beta\colon \Id\to \Ex^{\infty}$ soit une transformation naturelle). Soit $f\colon X\to Y$, alors on a le diagramme commutatif
\begin{equation*}
\begin{tikzcd}
X
\arrow[swap]{d}{\beta_X}
\arrow{r}{f}
&Y
\arrow{d}{\beta_Y}
\\
\Ex^{\infty}(X)
\arrow[swap]{r}{\Ex^{\infty}(f)}
&\Ex^{\infty}(Y)
\end{tikzcd}
\end{equation*}
On montre alors que $f$ est une équivalence faible si et seulement si $\Ex^{\infty}(f)$ est une équivalence faible (ce qui est aussi une conséquence de l'axiome \ref{AxiomeMC2} que l'on souhaite montrer). Or $\Ex^{\infty}(X)$ et $\Ex^{\infty}(Y)$ sont fibrants, par hypothèse, et cofibrants (car tous les ensembles simpliciaux sont cofibrants, $\emptyset\to X$ est toujours un monomorphisme) donc par le théorème \ref{TheoremeFibrantCofibrantEquivalencesFaiblesHomotopiesChap2}, $\Ex^{\infty}(f)$ est une équivalence faible si et seulement si c'est une équivalence d'homotopie. Comme les équivalences d'homotopies vérifient la propriété de deux sur trois, on en déduit que l'axiome \ref{AxiomeMC2} est vérifié. (On renvoie à \cite{Cisinski} ou encore à \cite{SeanMoss} pour des preuves complètes).
\end{proof}

\begin{remarque}
La description de la structure de modèle que l'on a donné ici est celle de D. Quillen \cite{QuillenHomotopicalAlgebra}. Les trois stratégies de preuves que l'on a mentionné dans la preuve précédente fournissent la même catégorie modèle, mais à partir de descriptions différentes. On peut montrer a posteriori que les classes de cofibrations, de fibrations et d'équivalences faibles coïncident, mais il est nécessaire a priori de montrer que toutes ces descriptions correspondent bien à des catégories modèles. Suivant la description choisie, l'étape difficile n'est pas toujours de vérifier l'axiome \ref{AxiomeMC2}. Par exemple, si on définit les équivalences faibles de $\sS$ comme les morphismes dont la réalisation est une équivalence faible, l'axiome de deux sur trois est automatiquement vérifié. Par ailleurs, pour la stratégie de preuve de D-C. Cisinski, on obtient une structure de modèle par application d'un résultat général \cite[Théorème 1.3.22]{Cisinski}, et la difficulté réside dans la caractérisation des fibrations. C'est cette approche-ci qu'on suivra au chapitre \ref{ConstructionCMFSSetP}, et de même, la partie difficile sera la caractérisation des fibrations. (Voir le théorème \ref{TheoDescriptionExpliciteSSetP} ainsi que les annexes \ref{ChapitreCaracterisationFibrationsAnnexe} et \ref{ChapitreCaracterisationMorphismeXExXAnnexe}).
\end{remarque}

La catégorie des ensembles simpliciaux $\sS$ est de plus une catégorie simpliciale. Le foncteur $-\otimes-$ est simplement le produit $-\times-$ de la définition \ref{DefinitionProduitEnsemblesSimpliciauxChap2}, et étant donné $A$ et $X$ deux ensembles simpliciaux, on a 
\begin{equation*}
\Map(A,X)_n=\Hom_{\sS}(A\times\Delta^n,X)
\end{equation*}
On a alors le résultat suivant \cite{QuillenHomotopicalAlgebra}.
\begin{theo}
La catégorie modèle $\sS$ est une catégorie modèle simpliciale.
\end{theo}

\begin{remarque}
On peut aussi caractériser les équivalences faibles de $\sS$ comme suit. Un morphisme $f\colon X\to Y$ de $\sS$, où $X$ et $Y$ sont fibrants, est une équivalence faible si et seulement si $f$ induit des isomorphismes sur les groupes d'homotopies naïfs (voir la définition \ref{DefinitionGroupeHomotopieNaif}). En fait, cette caractérisation vient du constat suivant : $f$ est une équivalence faible si et seulement si $\Real{f}$ induit des isomorphismes sur tous les groupes d'homotopies. D. Kan \cite{KanCombinatorialDefinitionHomotopyGroups} a montré que les groupes d'homotopie naïfs de $X$ et ceux de $\Real{X}$ coïncident, ce qui prouve la caractérisation voulue.
\end{remarque}

\begin{remarque}
On dispose maintenant d'une preuve complète du théorème de Whitehead (Théorème \ref{TheoremeWhiteheadClassique}). On a donné à la section \ref{SectionComplexesDeKan} une preuve dépendant d'une version simpliciale du théorème de Whitehead (Théorème \ref{TheoremeWhiteheadSimplicialChap2}). Comme $\sS$ est une catégorie modèle, le théorème \ref{TheoremeWhiteheadSimplicialChap2} est un cas particulier du théorème \ref{TheoremeFibrantCofibrantEquivalencesFaiblesHomotopiesChap2}, qui garantit que les équivalences faibles entre objets fibrants et cofibrants d'une catégorie modèle sont exactement les équivalences d'homotopies. On note qu'il n'est pas nécessaire de construire une structure de modèle sur la catégorie des espaces topologiques, $\Top$, pour obtenir ce résultat. Ceci illustre l'idée qu'il est possible de prouver des résultats sur la théorie homotopique des espaces à l'aide de celle des ensembles simpliciaux. C'est pour expliquer pourquoi une telle traduction est possible qu'il est nécessaire de définir la catégorie modèle des espaces topologiques. 
\end{remarque}

\subsection{La catégorie modèle des espaces topologiques}
\label{SectionCMFTopChap2}
\begin{theo}[\cite{QuillenHomotopicalAlgebra}]
La catégorie $\Top$ munie des classes de morphismes suivantes est une catégorie modèle.
\begin{itemize}
\item Les fibrations sont les morphismes ayant la propriété de relèvement à droite par rapport aux réalisations d'inclusions de cornets $\Real{\Lambda^n_k}\to\Real{\Delta^n}$, $n\geq 1$, $0\leq k\leq n$.
\item Les équivalences faibles sont les morphismes $f\colon X\to Y$ tel que $f$ induit un isomorphisme
\begin{equation*}
f_*\colon \pi_0(X)\to\pi_0(Y),
\end{equation*}
et, pour tout $x\in X$ et pour tout $n\geq 1$, $f$ induit des isomorphismes
\begin{equation*}
f_*\colon \pi_n(X,x)\to\pi_n(Y,f(x)).
\end{equation*}
\item Les cofibrations sont les morphismes ayant la propriétés de relèvement à droite par rapport aux fibrations triviales.
\end{itemize}
\end{theo}

On dispose de plusieurs caractérisation équivalente de cette catégorie modèle. En voici quelques unes.

\begin{prop}
Les fibrations de $\Top$ sont les fibrations de Serre
\end{prop}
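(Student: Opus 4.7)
Le plan est d'établir un homéomorphisme de paires entre les inclusions de cornets et les inclusions standards utilisées dans la définition des fibrations de Serre, de sorte que ces deux familles de morphismes aient exactement les mêmes propriétés de relèvement à droite, et définissent donc les mêmes fibrations.

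Plus précisément, pour tout $n\geq 1$ et tout $0\leq k\leq n$, je montrerais qu'il existe un homéomorphisme de paires
\begin{equation*}
(\Real{\Delta^n},\Real{\Lambda^n_k})\simeq (\Real{\Delta^{n-1}}\times [0,1],\Real{\Delta^{n-1}}\times\{0\}).
\end{equation*}
La construction se fait en deux temps. On décrit d'abord $\Real{\Delta^n}$ comme le cône à partir du sommet $e_k$ sur la face opposée $F_k\simeq \Real{\Delta^{n-1}}$. Sous cette description, $\Real{\Lambda^n_k}$ - qui est la réunion des faces de $\Real{\Delta^n}$ contenant $e_k$ - s'identifie au sous-cône $C(\partial F_k)$. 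On compose ensuite avec un homéomorphisme classique de paires entre $(C(F_k),F_k\cup C(\partial F_k))$ et $(\Real{\Delta^{n-1}}\times[0,1],\Real{\Delta^{n-1}}\times\{0\})$, qui aplatit sur la base du cylindre la réunion "fond plus murs".

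Cet homéomorphisme de paires fournit un isomorphisme dans la catégorie des flèches de $\Top$, donc les deux inclusions ont exactement les mêmes propriétés de relèvement à droite. Après identification $\Real{\Delta^{n-1}}\simeq D^{n-1}$, un morphisme $f\colon X\to Y$ est une fibration de la structure de modèle sur $\Top$ - c'est-à-dire a la propriété de relèvement à droite par rapport à toutes les inclusions $\Real{\Lambda^n_k}\to \Real{\Delta^n}$ - si et seulement si il a la propriété de relèvement à droite par rapport à toutes les inclusions $D^{n-1}\times\{0\}\to D^{n-1}\times [0,1]$, ce qui est exactement la définition d'une fibration de Serre.

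La principale difficulté réside dans la construction explicite de l'homéomorphisme entre le cône et le cylindre. Il faut produire une application continue qui envoie la réunion $F_k\cup C(\partial F_k)$, qui constitue la moitié du bord topologique du cône $C(F_k)$, sur la base $F_k\times\{0\}$ du cylindre, tout en étant un homéomorphisme global, ce qui nécessite un travail soigné au voisinage des points anguleux. Une fois cet homéomorphisme obtenu, le reste de la preuve est une application immédiate du fait que des morphismes isomorphes dans la catégorie des flèches ont les mêmes propriétés de relèvement.
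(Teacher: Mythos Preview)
The proposal is correct and takes essentially the same approach as the paper: both reduce the statement to the existence of a homeomorphism of pairs $(\Real{\Delta^n},\Real{\Lambda^n_k})\simeq (D^{n-1}\times[0,1],D^{n-1}\times\{0\})$, from which the equality of the two lifting conditions follows immediately. Note only a small slip in your sketch: after identifying $\Real{\Lambda^n_k}$ with $C(\partial F_k)$ in the cone description, the pair you should feed into the ``aplatissement'' homeomorphism is $(C(F_k),C(\partial F_k))$, not $(C(F_k),F_k\cup C(\partial F_k))$ --- under the identification $C(F_k)\simeq F_k\times[0,1]$ the horn becomes ``sommet plus murs'', and the flattening argument applies symmetrically.
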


\begin{proof}
Les fibrations de Serre sont définies par une propriété de relèvement par rapport aux inclusions $D^n\to D^n\times [0,1]$, où $D^n$ est la boule unité fermée de dimension $n$. On vérifie aisément qu'on a des homéomorphismes $D^n\times [0,1]\simeq \Real{\Delta^{n+1}}$ se restreignant en des homéomorphismes $D^n\simeq \Real{\Lambda^n_k}$. Ainsi, les conditions de relèvement que vérifient les fibrations de Serre et les fibrations de la structure de modèle sur $\Top$ sont les mêmes.
\end{proof}

\begin{theo}[\cite{QuillenHomotopicalAlgebra}]
La catégorie $\Top$ est à engendrement cofibrant, et les ensembles générateurs sont donnés par
\begin{itemize}
\item $I=\{\Real{\partial(\Delta^n)}\to\Real{\Delta^n}\ |\ n\geq 0\}$,
\item $J=\{\Real{\Lambda^n_k}\to\Real{\Delta^n}\ |\ n\geq 1, \ 0\leq k\leq n\}$.
\end{itemize}
\end{theo}

On note que les cofibrations (triviales) génératrices sont exactement les réalisations des cofibrations (triviales) génératrices de $\sS$. Ainsi, en utilisant l'adjonction $(\Real{-},\Sing)$, (voir la proposition \ref{PropositionAdjonctionSingRealChap2}), on obtient

\begin{prop}\label{PropositionSingCaracteriseFibrationsChap2}
Soit $f\colon X\to Y$ un morphisme de $\Top$. C'est une fibration (triviale) de $\Top$ si et seulement si $\Sing(f)$ est une fibration (triviale) de $\sS$.
\end{prop}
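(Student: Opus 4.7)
Le plan consiste à exploiter l'adjonction $(\Real{-},\Sing)$ (Proposition \ref{PropositionAdjonctionSingRealChap2}) conjointement avec le fait que les catégories $\sS$ et $\Top$ sont toutes deux à engendrement cofibrant, et que les ensembles générateurs du côté topologique sont précisément les réalisations des ensembles générateurs du côté simplicial.

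Je commencerais par traiter le cas des fibrations triviales, et le cas des fibrations se traitera mot pour mot de la même façon en remplaçant $I$ par $J$. Par définition, un morphisme $f\colon X\to Y$ de $\Top$ est une fibration triviale si et seulement si il a la propriété de relèvement à droite par rapport à toutes les réalisations $\Real{\partial(\Delta^n)}\to\Real{\Delta^n}$, $n\geq 0$. Autrement dit, pour tout diagramme commutatif
\begin{equation*}
\begin{tikzcd}
\Real{\partial(\Delta^n)}
\arrow{r}{a}
\arrow[swap]{d}
&X
\arrow{d}{f}
\\
\Real{\Delta^n}
\arrow[swap]{r}{b}
&Y
\end{tikzcd}
\end{equation*}
il existe un relèvement $\Real{\Delta^n}\to X$. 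Par l'adjonction $(\Real{-},\Sing)$, la donnée d'un tel diagramme commutatif équivaut à la donnée d'un diagramme commutatif
\begin{equation*}
\begin{tikzcd}
\partial(\Delta^n)
\arrow{r}{\widetilde{a}}
\arrow[swap]{d}
&\Sing(X)
\arrow{d}{\Sing(f)}
\\
\Delta^n
\arrow[swap]{r}{\widetilde{b}}
&\Sing(Y)
\end{tikzcd}
\end{equation*}
et la bijection induite par l'adjonction entre les ensembles de relèvements éventuels garantit qu'un tel diagramme admet une solution à gauche si et seulement si celui de droite en admet une. Ainsi, $f$ a la RLP par rapport à $\Real{\partial(\Delta^n)}\to\Real{\Delta^n}$ pour tout $n\geq 0$ si et seulement si $\Sing(f)$ a la RLP par rapport à $\partial(\Delta^n)\to\Delta^n$ pour tout $n\geq 0$. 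Par le théorème \ref{TheoremeStructureKanSSetChap2} et la description des générateurs $I$ de $\sS$, cette dernière condition est exactement celle qui caractérise les fibrations triviales de $\sS$.

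Pour le cas des fibrations, on reproduit l'argument en remplaçant $\{\partial(\Delta^n)\to\Delta^n\}$ par $\{\Lambda^n_k\to\Delta^n\}$ (et leurs réalisations respectives). Aucune difficulté n'est attendue puisque la preuve repose uniquement sur deux ingrédients déjà établis : la caractérisation des (triviales) fibrations par propriétés de relèvement par rapport à un ensemble de morphismes (engendrement cofibrant), et le transfert de ces propriétés par l'adjonction $(\Real{-},\Sing)$. La clé est que les générateurs coïncident exactement par le foncteur de réalisation, ce qui rend la correspondance parfaite.
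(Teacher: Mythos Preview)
Your proof is correct and follows essentially the same approach as the paper: both arguments use the adjunction $(\Real{-},\Sing)$ to translate lifting problems against the generating (trivial) cofibrations of $\Top$ into lifting problems against those of $\sS$, and conclude by the characterization of (trivial) fibrations in cofibrantly generated model categories. The only cosmetic difference is that you treat trivial fibrations first and the paper treats fibrations first.
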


\begin{proof}
La preuve est identique pour les fibrations et les fibrations triviales. Montrons le pour les fibrations. Soit $f\colon X\to Y$ un morphisme de $\Top$, et $\Lambda^n_k\to \Delta^n$ une inclusion de cornet. Un problème de relèvement pour $f$,
\begin{equation*}
\begin{tikzcd}
\Real{\Lambda^n_k}
\arrow{r}{a}
\arrow{d}
&X
\arrow{d}{f}
\\
\Real{\Delta^n}
\arrow[swap]{r}{b}
\arrow[dashrightarrow]{ur}{h}
&Y
\end{tikzcd}
\end{equation*}
correspond, par adjonction, à un problème de relèvement pour $\Sing(f)$.
\begin{equation*}
\begin{tikzcd}
\Lambda^n_k
\arrow{r}{\widetilde{a}}
\arrow{d}
&\Sing(X)
\arrow{d}{\Sing(f)}
\\
\Delta^n
\arrow[swap]{r}{\widetilde{b}}
\arrow[dashrightarrow]{ur}{\widetilde{h}}
&\Sing(Y)
\end{tikzcd}
\end{equation*}
Mais alors, la donnée d'un relèvement pour l'un de ces deux problèmes fournit un relèvement pour l'autre, par adjonction. Ainsi, $f$ est une fibration si et seulement si $\Sing(f)$ est une fibration.
\end{proof}

\begin{prop}\label{PropositionCWCofibrant}
Tout CW-complexe est cofibrant.
\end{prop}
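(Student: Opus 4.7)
The plan is to exhibit the map $\emptyset \to X$ as a transfinite composition of pushouts of coproducts of generating cofibrations, and then invoke Proposition \ref{PropositionCofibrationsStablesEverythingChap2} to conclude that $\emptyset \to X$ is a cofibration. Recall that the generating cofibrations of $\Top$ are the inclusions $\Real{\partial(\Delta^n)} \to \Real{\Delta^n}$ for $n \geq 0$, which via standard homeomorphisms identify with the inclusions of boundary spheres into closed disks $S^{n-1} \hookrightarrow D^n$ — precisely the type of map used in cell attachments.

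Concretely, I would use the skeletal filtration of $X$,
\begin{equation*}
\emptyset = X^{(-1)} \subseteq X^{(0)} \subseteq X^{(1)} \subseteq \dots \subseteq X^{(n)} \subseteq \dots \subseteq X,
\end{equation*}
where $X^{(n)}$ denotes the union of the cells of dimension at most $n$. By definition of a CW-complex, $X^{(n)}$ is obtained from $X^{(n-1)}$ by attaching $n$-cells along their characteristic maps, which amounts exactly to a pushout square in $\Top$ of the form
\begin{equation*}
\begin{tikzcd}
\coprod_{\alpha} \Real{\partial(\Delta^n)} \arrow{r} \arrow{d} & X^{(n-1)} \arrow{d} \\
\coprod_{\alpha} \Real{\Delta^n} \arrow{r} & X^{(n)}
\end{tikzcd}
\end{equation*}
where the coproduct is indexed over the $n$-cells of $X$. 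Moreover, the weak topology characterization of a CW-complex, together with the fact that every point lies in a finite subcomplex, guarantees that $X \simeq \colim_n X^{(n)}$ in $\Top$, so the filtration is genuinely a transfinite composition in the categorical sense.

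From this, the conclusion is nearly immediate. Each generating map $\Real{\partial(\Delta^n)} \to \Real{\Delta^n}$ is a cofibration, and Proposition \ref{PropositionCofibrationsStablesEverythingChap2} ensures that the class of cofibrations of $\Top$ is stable under coproducts, pushouts, and transfinite compositions. Therefore each inclusion $X^{(n-1)} \hookrightarrow X^{(n)}$ is a cofibration, and so is their transfinite composition $\emptyset \to X$. The only step requiring a little care is the identification of the topology on $X$ with the colimit topology — which is precisely what distinguishes a CW-complex from a general cellular complex, so this is built into the hypothesis. I would expect this topological identification, rather than the abstract stability arguments, to be the main (and only mildly delicate) point to justify carefully.
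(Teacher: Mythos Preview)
Your proposal is correct and follows essentially the same approach as the paper: both use the skeletal filtration, identify each inclusion $X^{(n-1)} \hookrightarrow X^{(n)}$ as a pushout of a coproduct of generating cofibrations, and conclude via the stability properties of Proposition~\ref{PropositionCofibrationsStablesEverythingChap2}. Your additional remark about the weak topology ensuring $X \simeq \colim_n X^{(n)}$ is a nice clarification that the paper leaves implicit.
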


\begin{proof}
Soit $X$ un CW-complexe. Pour tout $n\geq -1$, on note $X_n$ l'union des cellules de $X$ de dimension $\leq n$ ($X_{-1}=\emptyset$). Alors, l'inclusion $X_n\to X_{n+1}$ correspond à la somme amalgamée
\begin{equation*}
\begin{tikzcd}
\coprod \Real{\partial(\Delta^{n+1})}
\arrow{r}
\arrow{d}
&X_n
\arrow{d}
\\
\coprod\Real{\Delta^{n+1}}
\arrow{r}
&X_{n+1}
\end{tikzcd}
\end{equation*}
Comme l'inclusion $\Real{\partial(\Delta^{n+1})}\to\Real{\Delta^n}$ est une cofibration de $\Top$, et que les cofibrations sont stables par unions disjointes et sommes amalgamées, l'inclusion $X_n\to X_{n+1}$ est une cofibration. Mais alors, la composition transfinie
\begin{equation*}
\emptyset=X_{-1}\to X_0\to X_1\to\dots\to X
\end{equation*}
est une cofibration, et $X$ est cofibrant.
\end{proof}

\begin{remarque}
La réciproque est plus difficile à montrer. Si $X$ est un espace cofibrant, par l'argument du petit objet, en factorisant le morphisme $\emptyset\to X$ on obtient que $X$ est le rétract d'un espace $Z^{\infty}=\colim Z^n$, où $Z^{-1}=\emptyset$, et pour tout $n\geq -1$, $Z^{n+1}$ est obtenu à partir de $Z^n$ comme la somme amalgamée
\begin{equation*}
\begin{tikzcd}
\coprod_{(\alpha,\beta)} \Real{\partial(\Delta^{k_{\alpha}})}
\arrow{r}{\coprod\alpha}
\arrow{d}
&Z^n
\arrow{d}
\\
\coprod_{(\alpha,\beta)}\Real{\Delta^{k_{\alpha}}}
\arrow{r}
&Z^{n+1}
\end{tikzcd}
\end{equation*}
L'espace $Z^{\infty}$ est donc naturellement décomposé en cellules. Cependant, rien ne garantit que pour toute cellule $f_{\alpha}\colon \Real{\Delta^{k_{\alpha}}}\to Z^{\infty}$, $f_{\alpha}(\Real{\partial(\Delta^{k_{\alpha}})})$ est inclus dans une union de cellules de dimensions $\leq k_{\alpha}$. Néanmoins, par le théorème \ref{TheoremeEnsembleSimplicialCWComplexe}, on sait que la réalisation d'un ensemble simplicial est un CW-complexe, donc un objet cofibrant par la proposition \ref{PropositionCWCofibrant}, et on verra dans la section suivante que pour tout espace topologique $X$, la counité de l'adjonction $(\Real{-},\Sing)$,
\begin{equation*}
\Real{\Sing(X)}\to X,
\end{equation*}
est une équivalence faible. Finalement, si $X$ est cofibrant, la counité est une équivalence d'homotopie, et $X$ est homotopiquement équivalent à un CW complexe.
\end{remarque}

\begin{remarque}
La propriété \ref{PropositionCWCofibrant} fournit une preuve immédiate du théorème de Whitehead. Tous les espaces topologiques sont fibrants et les CW-complexes sont des objets cofibrants. Les équivalences faibles coincident avec les équivalences d'homotopies entre objets cofibrants-fibrants (Théorème \ref{TheoremeFibrantCofibrantEquivalencesFaiblesHomotopiesChap2}).
\end{remarque}

\subsection{L'adjonction de Kan-Quillen}
\label{SubsectionAdjonctionKanQuillenChap2}
On a vu à la proposition \ref{PropositionAdjonctionSingRealChap2} que les catégories $\Top$ et $\sS$ étaient liée par une adjonction. Cette adjonction, que l'on appelle adjonction de Kan-Quillen permet de comparer les deux catégories modèles.

\begin{prop}
L'adjonction
\begin{equation*}
\Real{-}\colon \sS\leftrightarrow\Top\colon \Sing
\end{equation*}
est une adjonction de Quillen.
\end{prop}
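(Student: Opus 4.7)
Le plan est d'utiliser le fait que les deux catégories modèles sont à engendrement cofibrant, avec des ensembles générateurs explicitement reliés par le foncteur de réalisation $\Real{-}$. La stratégie est donc de vérifier les conditions d'une adjonction de Quillen sur les générateurs, puis d'étendre à toutes les (co)fibrations (triviales) par un argument de stabilité.

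Plus précisément, je procéderais comme suit. D'abord, il suffit de montrer soit que $\Real{-}$ préserve les cofibrations et les cofibrations triviales, soit, dualement, que $\Sing$ préserve les fibrations et les fibrations triviales. La seconde reformulation est en fait déjà faite : c'est exactement le contenu de la proposition \ref{PropositionSingCaracteriseFibrationsChap2}, qui affirme que $f$ est une fibration (triviale) de $\Top$ si et seulement si $\Sing(f)$ est une fibration (triviale) de $\sS$. En particulier, $\Sing$ envoie fibrations sur fibrations et fibrations triviales sur fibrations triviales, ce qui est précisément la condition définissant une adjonction de Quillen exprimée par le foncteur de droite.

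Si l'on souhaite une preuve indépendante via $\Real{-}$, la démarche est la suivante. Les ensembles générateurs des cofibrations et cofibrations triviales de $\sS$ sont respectivement
\begin{equation*}
I_{\sS}=\{\partial(\Delta^n)\to \Delta^n\ |\ n\geq 0\},\quad J_{\sS}=\{\Lambda^n_k\to\Delta^n\ |\ n\geq 1, 0\leq k\leq n\},
\end{equation*}
et les ensembles générateurs correspondants pour $\Top$ sont exactement leurs réalisations $\Real{I_{\sS}}$ et $\Real{J_{\sS}}$. Ainsi $\Real{-}$ envoie tautologiquement les cofibrations et cofibrations triviales génératrices de $\sS$ sur des cofibrations et cofibrations triviales de $\Top$. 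Comme $\Real{-}$ est un adjoint à gauche, il préserve les colimites, donc en particulier les unions disjointes, les sommes amalgamées et les compositions transfinies ; il préserve aussi les rétracts. Or, par l'argument du petit objet (section \ref{SectionCMFEngendrementCofibrant}) et la stabilité des classes de cofibrations (Proposition \ref{PropositionCofibrationsStablesEverythingChap2}), toute cofibration (resp. cofibration triviale) de $\sS$ est un rétract d'une composition transfinie de sommes amalgamées d'unions disjointes d'éléments de $I_{\sS}$ (resp. $J_{\sS}$). Il suit immédiatement que $\Real{-}$ envoie cofibrations sur cofibrations et cofibrations triviales sur cofibrations triviales.

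Il n'y a pas d'obstacle technique sérieux : tout le travail délicat est concentré dans la construction et la caractérisation des deux structures de modèle ainsi que dans la proposition \ref{PropositionSingCaracteriseFibrationsChap2}. Une fois ces résultats disponibles, la vérification de la condition de Quillen se réduit à une observation sur les ensembles générateurs, combinée à la préservation des colimites par un adjoint à gauche.
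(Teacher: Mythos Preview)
Your proposal is correct and essentially coincides with the paper's argument. The paper verifies its own definition of Quillen adjunction by checking that $\Sing$ preserves fibrations (citing Proposition~\ref{PropositionSingCaracteriseFibrationsChap2}) and that $\Real{-}$ preserves cofibrations via the skeletal decomposition of a monomorphism; your two variants (checking both conditions on the $\Sing$ side, or both on the $\Real{-}$ side via the generating sets and preservation of colimites) use exactly the same ingredients, merely organized through an equivalent characterization of Quillen adjunctions.
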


\begin{proof}
On a vu à la proposition \ref{PropositionSingCaracteriseFibrationsChap2} que le foncteur $\Sing$ préserve les fibrations. Montrons que le foncteur $\Real{-}$ préserve les cofibrations. Soit $X\to Y$ une cofibration de $\sS$. C'est un monomorphisme, et d'après la preuve de la proposition \ref{PropositionCofibrationsSSetMonomorphismes}, on peut écrire ce morphisme comme la composition transfinie
\begin{equation*}
X=X\cup\sk_{-1}(Y)\to X\cup\sk_0(Y)\to X\cup\sk_1(Y)\to\dots\to Y
\end{equation*}
où pour tout $n\geq -1$, l'inclusion $X\cup\sk_n(Y)\to X\cup\sk_{n+1}(Y)$ est de la forme
\begin{equation*}
\begin{tikzcd}
\coprod \partial(\Delta^{n+1})
\arrow{r}
\arrow{d}
&X\cup \sk_n(Y)
\arrow{d}
\\
\coprod \Delta^{n+1}
\arrow{r}
&X\cup \sk_{n+1}(Y)
\end{tikzcd}
\end{equation*}
La réalisation étant un adjoint à gauche, elle commute avec les colimites. Le morphisme $\Real{X}\to\Real{Y}$ est donc la composition transfinie
\begin{equation*}
\Real{X}=\Real{X\cup\sk_{-1}(Y)}\to \Real{X\cup\sk_0(Y)}\to \Real{X\cup\sk_1(Y)}\to\dots\to \Real{Y},
\end{equation*}
où pour $n\geq -1$, les inclusions successives sont les sommes amalgamées
\begin{equation*}
\begin{tikzcd}
\coprod \Real{\partial(\Delta^{n+1})}
\arrow{r}
\arrow{d}
&\Real{X\cup \sk_n(Y)}
\arrow{d}
\\
\coprod \Real{\Delta^{n+1}}
\arrow{r}
&\Real{X\cup \sk_{n+1}(Y)}
\end{tikzcd}
\end{equation*}
Comme $\Real{\partial(\Delta^n)}\to\Real{\Delta^n}$ est une cofibration de $\Top$ pour tout $n\geq -1$, et comme les cofibrations sont stables par unions disjointes, sommes amalgamées et compositions transfinies, on en déduit que $\Real{X}\to\Real{Y}$ est une cofibration de $\Top$.
\end{proof}

\begin{prop}
Les foncteurs $\Real{-}\colon \sS\to \Top$ et $\Sing\colon \Top\to \sS$ préservent les équivalences faibles.
\end{prop}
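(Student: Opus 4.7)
Le plan est d'exploiter la proposition \ref{PropositionAdjonctionQuillenPreserveEquivalencesFaibles}, qui garantit qu'une adjonction de Quillen préserve les équivalences faibles entre objets cofibrants (pour l'adjoint à gauche) et entre objets fibrants (pour l'adjoint à droite). Comme on vient d'établir que $(\Real{-},\Sing)$ est une adjonction de Quillen, il suffit donc de montrer que tous les objets de $\sS$ sont cofibrants et que tous les objets de $\Top$ sont fibrants.

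Pour $\Real{-}$, je remarquerais que pour tout ensemble simplicial $X$, l'unique morphisme $\emptyset\to X$ est trivialement un monomorphisme, donc une cofibration d'après la proposition \ref{PropositionCofibrationsSSetMonomorphismes}. Ainsi tout objet de $\sS$ est cofibrant, et la proposition \ref{PropositionAdjonctionQuillenPreserveEquivalencesFaibles} implique que $\Real{-}$ préserve toutes les équivalences faibles.

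Pour $\Sing$, je vérifierais que pour tout espace topologique $X$, le morphisme $X\to *$ est une fibration. Ceci se ramène à montrer que tout problème de relèvement de la forme
\begin{equation*}
\begin{tikzcd}
\Real{\Lambda^n_k}\arrow{r}{a}\arrow{d}& X\arrow{d}\\
\Real{\Delta^n}\arrow{r}\arrow[dashrightarrow]{ur}{h}& *
\end{tikzcd}
\end{equation*}
admet une solution. Puisque la cible est un point, il suffit d'exhiber un relèvement $h\colon \Real{\Delta^n}\to X$ de $a$, ce qui est immédiat en utilisant n'importe quelle rétraction continue $s\colon \Real{\Delta^n}\to \Real{\Lambda^n_k}$ (dont l'existence élémentaire est classique) et en posant $h=a\circ s$. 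Ainsi tout objet de $\Top$ est fibrant, et la proposition \ref{PropositionAdjonctionQuillenPreserveEquivalencesFaibles} implique que $\Sing$ préserve toutes les équivalences faibles.

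Il n'y a pas vraiment d'obstacle technique dans cette preuve : tout le travail a déjà été fait en établissant l'adjonction de Quillen, et le résultat se ramène à l'observation (facile dans chacun des deux cas) que les structures de modèles concernées possèdent respectivement la propriété que tout objet est cofibrant ou que tout objet est fibrant.
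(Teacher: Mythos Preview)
Your proof is correct and follows exactly the same approach as the paper: both apply Proposition~\ref{PropositionAdjonctionQuillenPreserveEquivalencesFaibles} after observing that every simplicial set is cofibrant and every topological space is fibrant. You simply spell out these two facts in more detail than the paper does.
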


\begin{proof}
Tous les ensembles simpliciaux sont cofibrants, et tous les espaces topologiques sont fibrants. Comme l'adjonction $(\Real{-},\Sing)$ est une adjonction de Quillen, c'est une application de la proposition \ref{PropositionAdjonctionQuillenPreserveEquivalencesFaibles}.
\end{proof}

On peut finalement énoncer le théorème suivant. 

\begin{theo}[\cite{QuillenHomotopicalAlgebra}]\label{TheoremeEquivalenceKanQuillenChap2}
L'adjonction
\begin{equation*}
\Real{-}\colon \sS\leftrightarrow\Top\colon \Sing
\end{equation*}
est une équivalence de Quillen.
\end{theo}

\begin{proof}
Soient $K$ un ensemble simplicial, $X$ un espace topologique et $f\colon \Real{K}\to X$ une application continue. Notons $\widetilde{f}\colon K\to \Sing(X)$ son image par l'adjonction $(\Real{-},\Sing)$. On considère les diagrammes commutatifs suivants, où $\eta$ et $\epsilon$ sont respectivement l'unité et la counité de l'adjonction $(\Real{-},\Sing)$.
\begin{equation}\label{DiagrammeKanQuillenChap21}
\begin{tikzcd}
&\Real{\Sing(X)}
\arrow{dr}{\epsilon_X}
\\
\Real{K}
\arrow[swap]{rr}{f}
\arrow{ur}{\Real{\widetilde{f}}}
&&X
\end{tikzcd}
\end{equation}
\begin{equation}\label{DiagrammeKanQuillenChap22}
\begin{tikzcd}
K
\arrow{rr}{\widetilde{f}}
\arrow[swap]{dr}{\eta_K}
&&\Sing(X)\\
&\Sing(\Real{K})
\arrow[swap]{ur}{\Sing(f)}
\end{tikzcd}
\end{equation}
Supposons que $f\colon \Real{K}\to X$ est une équivalence faible. Comme le foncteur $\Sing$ préserve les équivalences faibles, par l'axiome de deux sur trois dans le diagramme \ref{DiagrammeKanQuillenChap22}, il suffit de montrer que $\eta_K$ est une équivalence faible pour en déduire que $\widetilde{f}$ est une équivalence faible. De même, si $\widetilde{f}$ est une équivalence faible, par deux sur trois dans le diagramme \ref{DiagrammeKanQuillenChap21}, il suffit de montrer que $\epsilon_X$ est une équivalence faible pour en déduire que $f$ est une équivalence faible. Ces deux résultats (Lemmes \ref{LemmeUniteEquivalenceFaibleChap2} et \ref{LemmeCouniteEquivalenceFaibleChap2}) ont été montrés par J. Milnor \cite{MilnorGeometricRealization}.
\end{proof}

\begin{lemme}\label{LemmeUniteEquivalenceFaibleChap2}
Soit $K$ un ensemble simplicial. L'unité $\eta_K\colon K\to \Sing(\Real{K})$ est une équivalence faible de $\sS$.
\end{lemme}

\begin{lemme}\label{LemmeCouniteEquivalenceFaibleChap2}
Soit $X$ un espace topologique. La counité $\epsilon_X\colon \Real{\Sing(X)}\to X$ est une équivalence faible de $\Top$.
\end{lemme}
\chapter[La catégorie modèle des ensembles simpliciaux filtrés]{La catégorie modèle des ensembles simpliciaux filtrés}
\label{ConstructionCMFSSetP}
\chaptermark{Ensembles simpliciaux filtrés}

Dans ce chapitre, on construit une catégorie modèle pour la théorie homotopique des espaces filtrés. Par analogie avec le cas non-filtré, il est naturel de passer par l'étude d'une catégorie d'objets simpliciaux. Ainsi,  on étudie la catégorie modèle des ensembles simpliciaux filtrés. 

Dans la section \ref{SectionCategorieSSetP}, on définira la catégorie $\sS_P$ des ensembles simpliciaux filtrés, et on verra comment la notion d'homotopie stratifiée se traduit pour ses objets. 

Ensuite, dans la section \ref{SectionConstructionCMFsSetP}, on appliquera les méthodes de Cisinski \cite{Cisinski} pour définir une structure de modèle sur $\sS_P$. C'est le théorème \ref{ExistenceCMFCisinski}. 

Par construction, cette catégorie sera engendrée de façon cofibrante, mais les résultats de Cisinski ne permettent pas de façon générale de caractériser les classes de cofibrations triviales génératrices. La section \ref{SectionCaracterisationFibrations} permet d'obtenir une description des fibrations "à la Kan" en terme de propriétés de relèvement par rapport à des cornets. Cette caractérisation passe par la définition d'un foncteur de remplacement fibrant $\Exi_P$. On obtient finalement une description explicite de la catégorie modèle $\sSU_P$, c'est le contenu du théorème \ref{TheoDescriptionExpliciteSSetP}. La preuve de ce dernier résultat utilise des résultats techniques sur les catégories de préfaisceaux qui sont prouvés dans les annexes \ref{ChapitreCaracterisationFibrationsAnnexe} et \ref{ChapitreCaracterisationMorphismeXExXAnnexe}.

\section{La catégorie des ensembles simpliciaux filtrés}
\label{SectionCategorieSSetP}
\subsection{Ensembles simpliciaux filtrés}
\begin{remarque}
Dans tout ce texte, un ensemble ordonné est un ensemble muni d'un ordre \textit{partiel}.
\end{remarque}
\begin{defin}
Soit $P$ un ensemble ordonné. Son nerf, noté $N(P)$ est l'ensemble simplicial dont les simplexes sont donnés par des suites croissantes d'éléments de $P$ :
\begin{equation*}
N(P)_n=\{(p_0,\dots,p_n)\ | p_0\leq\dots\leq p_n\in P\}
\end{equation*}
Les applications faces sont obtenues par effacement d'un des termes de la suite :
\begin{align*}
d_i\colon N(P)_n&\to N(P)_{n-1}\\
(p_0,\dots,p_n)&\mapsto (p_0,\dots,\widehat{p_i},\dots,p_n)
\end{align*}

et les applications dégénerescences sont obtenues par répétition d'un des termes de la suite :
\begin{align*}
s_i\colon N(P)_n&\to N(P)_{n+1}\\
(p_0,\dots,p_{i-1},p_i,p_{i+1},\dots,p_n)&\mapsto (p_0,\dots,p_{i-1},p_i,p_i,p_{i+1},\dots,p_n)
\end{align*}
\end{defin}

\begin{defin}
Un ensemble simplicial filtré au dessus de $P$ est la donnée :
\begin{itemize}
\item d'un ensemble simplicial $X$,
\item d'une filtration $\varphi\colon X\to N(P)$.
\end{itemize}
Une application filtrée entre $(X,\varphi\colon X\to N(P))$ et $(Y,\psi\colon Y\to N(P))$ est la donnée d'une application $f\colon X\to Y$ telle que le triangle suivant est commutatif.
\begin{equation*}
\begin{tikzcd}
X
\arrow[swap]{dr}{\varphi}
\arrow{rr}{f}
&\phantom{X}
&Y
\arrow{dl}{\psi}
\\
\phantom{X}
&N(P)
\end{tikzcd}
\end{equation*}
La donnée des ensembles simpliciaux filtrés et des applications filtrées fournit une description de la catégorie $\sS_P$ des ensembles simpliciaux filtrés au dessus de $P$.
\end{defin}

\begin{defin}
On appelle simplexe filtré tout objet de $\sS_P$ de la forme $(\Delta^N,\varphi\colon \Delta^N\to N(P))$, où $\Delta^N$, $N\geq 0$ est le $N$-simplexe et $\varphi$ est une filtration quelconque. Par commodité, on identifie cet objet $(\Delta^N,\varphi)$ avec le simplexe de $N(P)$, $\varphi(\Delta^{N})=[p_0,\dots,p_N]$. On note $\Delta(P)$ la sous-catégorie pleine de $\sS_P$ formée des simplexes filtrés.

\end{defin}

\begin{prop}\label{CategoriePrefaisceaux}
La catégorie $\sS_P$ est équivalente à la catégorie des préfaisceaux sur $\Delta(P)$. Cette équivalence est réalisée par le foncteur suivant :

\begin{align*}
\sS_P&\to \Fun(\Delta(P)^{\op},\Set)\\
(X\to N(P))&\mapsto \Hom_{\sS_P}(-,X)
\end{align*}
\end{prop}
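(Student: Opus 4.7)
The plan is to recognize this as a special case of the classical fact that for any small category $\mathcal{C}$ and any presheaf $F$ on $\mathcal{C}$, the slice $\widehat{\mathcal{C}}/F$ is equivalent to the presheaf category on the category of elements of $F$. Here $\mathcal{C}=\Delta$, $F=N(P)$, and one checks that the category of elements of $N(P)$ is precisely $\Delta(P)$: an object of the category of elements is a pair $(n,\sigma)$ with $\sigma\in N(P)_n$, which by the Yoneda lemma is the same data as a morphism $\Delta^n\to N(P)$, i.e.\ a filtered simplex; and a morphism is a map $\Delta^m\to\Delta^n$ in $\Delta$ commuting with the structure maps to $N(P)$, which is exactly a morphism in $\Delta(P)$. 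So the statement reduces to the general slice-vs-elements equivalence.

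Concretely, I would construct the functor in both directions and exhibit the unit and counit as natural isomorphisms. In one direction, the functor of the statement sends $(X,\varphi\colon X\to N(P))$ to the presheaf $h_X=\Hom_{\sS_P}(-,X)$ on $\Delta(P)$. In the other direction, given a presheaf $F\colon\Delta(P)^{\op}\to\Set$, define a filtered simplicial set $L(F)$ as the colimit
\[
L(F)=\colim_{(\Delta^n,\varphi)\in\Delta(P)}F(\Delta^n,\varphi)\cdot(\Delta^n,\varphi),
\]
taken in $\sS_P$, where $F(\Delta^n,\varphi)\cdot(\Delta^n,\varphi)$ denotes the coproduct in $\sS_P$ of copies of $(\Delta^n,\varphi)$ indexed by $F(\Delta^n,\varphi)$. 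Equivalently, I would describe $L(F)$ more explicitly: for each $n$,
\[
L(F)_n=\coprod_{\varphi\in N(P)_n}F(\Delta^n,\varphi),
\]
with the structural map to $N(P)_n$ being the obvious projection, and with face and degeneracy maps induced from the functoriality of $F$ along the morphisms in $\Delta(P)$ corresponding to $d_i$ and $s_i$ in $\Delta$ (applied to $\varphi$).

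The core verification is then twofold. First, for any $(X,\varphi)\in\sS_P$, the canonical map $L(h_X)\to X$ is an isomorphism; this follows from the fact that an $n$-simplex of $X$ is uniquely determined by its image in $N(P)_n$ together with its factorization through $(\Delta^n,\varphi(x))\to X$, so that
\[
X_n\;=\;\coprod_{\varphi\in N(P)_n}\Hom_{\sS_P}\bigl((\Delta^n,\varphi),X\bigr)\;=\;\coprod_{\varphi}h_X(\Delta^n,\varphi).
\]
This is essentially the Yoneda lemma applied fiberwise over $N(P)$. Second, for any presheaf $F$ on $\Delta(P)$, the map $F\to h_{L(F)}$ is an isomorphism, which reduces by a density argument to the case $F=\Hom_{\Delta(P)}(-,(\Delta^n,\varphi))$ and there is again immediate from the definition of $L$.

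The main (and only) obstacle is bookkeeping: one must check that the face/degeneracy maps on $L(F)$ defined via the projections $N(P)_n\to N(P)_{n-1}$ and the functoriality of $F$ assemble into a genuine simplicial set, and that the two natural transformations $L\circ h_{(-)}\Rightarrow\mathrm{Id}$ and $\mathrm{Id}\Rightarrow h_{L(-)}$ are compatible with morphisms. None of this is deep; it is a direct application of the general slice/elements equivalence, with the only content specific to our setting being the identification of the category of elements of $N(P)$ with $\Delta(P)$.
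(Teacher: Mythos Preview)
Your proof is correct and follows the same approach as the paper: both construct the inverse functor by setting the degree-$N$ simplices of $L(F)$ to be $\coprod_{\varphi\in N(P)_N} F(\Delta^N,\varphi)$ with the evident filtration and with face/degeneracy maps induced from the functoriality of $F$ along the morphisms $D^i,S^i$ in $\Delta(P)$. Your additional framing via the general equivalence $\widehat{\mathcal{C}}/F\simeq\widehat{\int F}$ (identifying $\Delta(P)$ with the category of elements of $N(P)$) is not in the paper, which proceeds purely by the explicit construction and does not spell out the unit/counit verification you sketch; but the concrete content is identical.
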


\begin{proof}
Soit $Y\colon\Delta(P)^{\op}\to\Set$ un foncteur, et soit $N\geq 0$. On pose 
\begin{equation*}
Z_N=\coprod_{\varphi\colon \Delta^N\to N(P)} Y(\Delta^N,\varphi).
\end{equation*}
Soient $N\geq 1$, $0\leq i\leq N$ et $(\Delta^N,\varphi\colon\Delta^N\to N(P))$ un simplexe filtré. L'inclusion $\Delta^{N-1}\subset \Delta^N$ comme la $i$-ème face donne l'application filtrée :
\begin{equation*}
\begin{tikzcd}
\Delta^{N-1}
\arrow[swap]{dr}{\varphi\circ D^i}
\arrow{rr}{D^i}
&\phantom{X}
&\Delta^N
\arrow{dl}{\varphi}
\\
\phantom{X}
&N(P)
\end{tikzcd}
\end{equation*}

de même, pour $N\geq 0$, la $i$-ème dégénérescence $\Delta^{N+1}\to \Delta^N$ donne l'application filtrée 

\begin{equation*}
\begin{tikzcd}
\Delta^{N+1}
\arrow[swap]{dr}{\varphi\circ S^i}
\arrow{rr}{S^i}
&\phantom{X}
&\Delta^N
\arrow{dl}{\varphi}
\\
\phantom{X}
&N(P)
\end{tikzcd}
\end{equation*}

Ainsi, en appliquant le foncteur contravariant $Y$, on obtient
\begin{align*}
d_i\colon Y(\Delta^N,\varphi)&\to Y(\Delta^{N-1},\varphi\circ D^i)\\ s_i\colon Y(\Delta^N,\varphi)&\to Y(\Delta^{N+1},\varphi\circ S^i).
\end{align*}

Puis, en passant à l'union sur toutes les filtrations de $\Delta^N$, on obtient :
\begin{align*}
d_i\colon Z_N&\to Z_{N-1}\\ 
s_i\colon Z_N&\to Z_{N+1}.
\end{align*}
Finalement, $Z$ est un ensemble simplicial. Pour produire la filtration $\psi\colon Z\to N(P)$, on pose $\psi(Y(\Delta^N,\varphi))=\varphi(\Delta^N)$.
\end{proof}

\begin{corollaire}
Soit $\fil{X}$ un ensemble simplicial filtré. Alors on a
\begin{equation*}
\fil{X}\simeq\colim\limits_{\sigma\in\Hom(\Delta^\varphi,\fil{X})}\Delta^{\varphi}
\end{equation*}
\end{corollaire}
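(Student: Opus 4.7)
La stratégie consiste à exploiter directement la proposition \ref{CategoriePrefaisceaux} qui identifie $\sS_P$ à la catégorie des préfaisceaux sur $\Delta(P)$. Une fois cette traduction faite, l'énoncé devient le résultat classique qu'un préfaisceau est canoniquement une colimite de préfaisceaux représentables, indexée par sa catégorie des éléments (parfois appelé théorème de densité, ou ``co-Yoneda''). C'est exactement l'analogue filtré de la remarque \ref{RemarqueYonedaEnsemblesSimpliciauxChap2} du chapitre précédent.

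Plus précisément, le plan est le suivant. D'abord, via l'équivalence de la proposition \ref{CategoriePrefaisceaux}, on voit $\fil{X}$ comme le préfaisceau $Y_{\fil{X}}=\Hom_{\sS_P}(-,\fil{X})$ sur $\Delta(P)$, et chaque simplexe filtré $\Delta^{\varphi}$ comme le préfaisceau représentable $Y_{\Delta^{\varphi}}=\Hom_{\Delta(P)}(-,\Delta^{\varphi})$ (le fait que l'équivalence envoie $\Delta^{\varphi}$ sur le représentable correspondant découle du lemme de Yoneda). Ensuite, on construit le morphisme canonique
\begin{equation*}
\Phi\colon \colim_{\sigma\in\Hom(\Delta^{\varphi},\fil{X})}\Delta^{\varphi}\to \fil{X},
\end{equation*}
défini à partir de la famille de morphismes $\{\sigma\colon \Delta^{\varphi}\to \fil{X}\}$ par la propriété universelle de la colimite. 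Ici la colimite est prise sur la catégorie des éléments du préfaisceau $Y_{\fil{X}}$ : les objets sont les paires $(\Delta^{\varphi},\sigma)$ et les morphismes $(\Delta^{\varphi},\sigma)\to (\Delta^{\psi},\tau)$ sont les applications filtrées $\alpha\colon \Delta^{\varphi}\to \Delta^{\psi}$ telles que $\tau\circ\alpha=\sigma$.

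Pour conclure, je vérifierais que $\Phi$ est un isomorphisme en le testant sur chaque simplexe filtré $\Delta^{\psi}$, c'est-à-dire en comparant les ensembles $\Hom_{\sS_P}(\Delta^{\psi},-)$ appliqués des deux côtés. À gauche, on obtient l'ensemble des classes d'équivalence de paires $((\Delta^{\varphi},\sigma),\alpha\colon \Delta^{\psi}\to \Delta^{\varphi})$ modulo la relation engendrée par la catégorie des éléments ; l'association $((\Delta^{\varphi},\sigma),\alpha)\mapsto \sigma\circ\alpha$ fournit une bijection avec $\Hom_{\sS_P}(\Delta^{\psi},\fil{X})$, d'inverse $\tau\mapsto ((\Delta^{\psi},\tau),\Id_{\Delta^{\psi}})$. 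Ceci montre que $\Phi$ induit une bijection sur tout représentable, donc est un isomorphisme par Yoneda. Aucune étape n'est réellement difficile : tout repose sur le formalisme catégorique standard, et l'unique point à expliciter soigneusement est la description de la catégorie d'indexation, puisque les simplexes filtrés forment une catégorie plus riche que $\Delta$.
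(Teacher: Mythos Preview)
Your proof is correct and takes the same approach as the paper: the corollary is stated without proof there, as an immediate consequence of Proposition~\ref{CategoriePrefaisceaux} via the standard density/co-Yoneda lemma that any presheaf is the colimit of representables over its category of elements. Your explicit verification by evaluating at each $\Delta^{\psi}$ (which is valid since colimits in $\widehat{\Delta(P)}$ are computed pointwise and, by Yoneda, $\Hom_{\sS_P}(\Delta^{\psi},-)$ is naturally isomorphic to evaluation at $\Delta^{\psi}$) is exactly the standard argument underlying this fact.
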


\subsection{Foncteurs utiles}

\begin{prop}
Il existe une adjonction 
\begin{equation*}
U\colon \sS_P\leftrightarrow \sS\colon F
\end{equation*}
où $U(X,\varphi\colon X\to N(P))=X$, et $F(K)=(K\times N(P),\pr_{N(P)}\colon K\times N(P)\to N(P))$.
\end{prop}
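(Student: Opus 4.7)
The plan is to exhibit directly the natural bijection
\[
\Hom_{\sS}(U\fil{X}, K) \simeq \Hom_{\sS_P}(\fil{X}, F(K))
\]
for every $\fil{X} \in \sS_P$ and every $K \in \sS$, and then to check naturality in both variables. Since the formulas for $U$ and $F$ are given explicitly, this is essentially an application of the universal property of the product $K \times N(P)$ in $\sS$.

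First, I would define the two maps of the bijection. Given $f \colon U\fil{X} = X \to K$ in $\sS$, I send it to the pair $(f, \varphi_X) \colon X \to K \times N(P)$, obtained from the universal property of the product. By construction $\pr_{N(P)} \circ (f, \varphi_X) = \varphi_X$, so this is a morphism $\fil{X} \to F(K)$ in $\sS_P$. Conversely, given $g \colon \fil{X} \to F(K)$, i.e.\ $g \colon X \to K \times N(P)$ with $\pr_{N(P)} \circ g = \varphi_X$, I send it to $\pr_K \circ g \colon X \to K$.

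Next I would check that the two constructions are mutually inverse. In one direction, starting from $f$, composing with $\pr_K$ recovers $f$ since $\pr_K \circ (f, \varphi_X) = f$. In the other direction, starting from $g = (g_1, g_2)$ with $g_2 = \varphi_X$, the composition $(\pr_K \circ g, \varphi_X) = (g_1, g_2) = g$ recovers $g$. This uses only the universal property of the product in $\sS$.

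Finally I would verify naturality: if $f \colon (Y, \varphi_Y) \to (X, \varphi_X)$ in $\sS_P$ and $h \colon K \to L$ in $\sS$, both squares expressing naturality commute by direct computation using the definitions of $U$, $F$, and the bijection. There is no real obstacle here; the statement is essentially the universal property of $K \times N(P)$ rephrased in the slice/filtered language, so the only thing to be careful about is tracking the filtration throughout and making sure the pair $(f, \varphi_X)$ is always respected on the $N(P)$ coordinate.
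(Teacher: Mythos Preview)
Your proposal is correct and follows essentially the same approach as the paper: both construct the bijection by sending $f \colon X \to K$ to the pair $(f,\varphi_X) \colon X \to K \times N(P)$ and conversely composing with $\pr_K$, which is just the universal property of the product. Your treatment is in fact slightly more thorough, since you also spell out the mutual inverse check and naturality, which the paper leaves implicit.
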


\begin{proof}
Soient $(X,\varphi\colon X\to N(P))$ un ensemble simplicial filtré et $K$ un ensemble simplicial. On a les isomorphismes suivants
\begin{align*}
\Hom_{\sS_P}((X,\varphi),(K\times N(P),\pr_{N(P)}))& \to \Hom_{\sS}(X,K)\\
(f\colon X\to K\times N(P))&\mapsto (\pr_{K}\circ f\colon X\to K)\\
&\text{et}\\
\Hom_{\sS}((X,K)& \to \Hom_{\sS_P}((X,\varphi),(K\times N(P),\pr_{N(P)}))\\
(g\colon X\to K)&\mapsto (g\times\varphi\colon X\to K\times N(P))
\end{align*}
\end{proof}

\begin{defin}\label{ProduitFiltre}
Soient $(X,\varphi_X)$ et $(Y,\varphi_Y)$ deux ensembles simpliciaux filtrés. On définit leur produit filtré, $(X,\varphi_X)\times_{N(P)} (Y,\varphi_Y)$ comme le produit fibré suivant :
\begin{equation*}
\begin{tikzcd}
X\times_{N(P)} Y
\arrow[swap]{d}{\pr_X}
\arrow{r}{\pr_Y}
&Y
\arrow{d}{\varphi_Y}
\\
X
\arrow[swap]{r}{\varphi_X}
&N(P)
\end{tikzcd}
\end{equation*}
où la filtration sur $X\times_{N(P)} Y$ est la composition $\varphi_X\circ \pr_X = \varphi_Y\circ \pr_Y$.
\end{defin}

\begin{defin}\label{TenseurSimplicial}
Soient $(X,\varphi)$ un ensemble simplicial filtré et $K$ un ensemble simplicial. On définit $K\otimes (X,\varphi)$ comme le produit filtré $F(K)\times_{N(P)}(X,\varphi)$. Cette définition donne lieu à un bifoncteur :
\begin{equation*}
-\otimes-\colon \sS\times \sS_P\to \sS_P
\end{equation*}
\end{defin}

\begin{remarque}
Avec cette définition, $F(K)$ devient $K\otimes(N(P),\Id_{N(P)})$.
\end{remarque}

\subsection{Homotopies filtrées}

\begin{defin}\label{DefHomotopiesFiltrees}
Soient $f,g\colon (X,\varphi_X)\to(Y,\varphi_Y)$ deux applications filtrées. Une homotopie filtrée élémentaire entre $f$ et $g$ est une application filtrée 
$H\colon \Delta^1\otimes (X,\varphi_X)\to (Y,\varphi_Y)$
 telle que le diagramme suivant commute 
\begin{equation*}
\begin{tikzcd}
X
\arrow[swap]{dr}{i_0\otimes \Id_X}
\arrow[bend left = 12]{drr}{f}
&\phantom{X}
&\phantom{X}
\\
\phantom{X}
&\Delta^1\otimes X
\arrow{r}{H}
&Y
\\
X
\arrow{ur}{i_0\otimes\Id_X}
\arrow[swap,bend right = 12]{urr}{g}
\end{tikzcd}
\end{equation*}
où $i_0\colon \Delta^0\to \Delta^1$ est l'inclusion $\{0\}\hookrightarrow [0,1]$, et où on fait l'identification $(X,\varphi_X)\simeq \Delta^0\otimes (X,\varphi_X)$. Plus généralement, on considère la relation d'équivalence $\sim_P$ sur l'ensemble $\Hom_{\sS_P}((X,\varphi_X),(Y,\varphi_Y))$ engendrée par $f\sim_P g$ s'il existe une homotopie filtrée élémentaire entre $f$ et $g$. On dit que deux applications filtrées $f$ et $g$ sont homotopes au sens filtré (ou par une homotopie filtrée) si $f\sim_P g$.
\end{defin}

\begin{defin}
Une application filtrée $f\colon (X,\varphi_X)\to(Y,\varphi_Y)$ est une équivalence d'homotopie filtrée s'il existe une application filtrée $g\colon (Y,\varphi_Y)\to (X,\varphi_X)$ tel que $g\circ f\sim_P\Id_X$ et $f\circ g\sim_P\Id_Y$. Dans ce cas, on dit que $(X,\varphi_X)$ et $(Y,\varphi_Y)$ sont filtrés homotopiquement équivalents.
\end{defin}

\begin{remarque}
En particulier, une équivalence d'homotopie filtrée $f\colon (X,\varphi_X)\to (Y,\varphi_Y)$ fournit une équivalence d'homotopie (non-filtrée) entre $X$ et $Y$. La réciproque est fausse. En effet, si $f\colon (X,\varphi_X)\to (Y,\varphi_Y)$ est une application filtrée telle que l'application entre ensembles simpliciaux $f\colon X\to Y$ est une équivalence d'homotopie, alors il existe $g\colon Y\to X$ telle que $f\circ g\sim \Id_Y$ et $g\circ f\sim \Id_X$. Cependant, $g$ n'est pas nécessairement une application filtrée. De plus, même si $g$ préserve les filtrations, en général $f\circ g\not\sim_P \Id_Y$ et $g\circ f\not\sim_P \Id_X$.
\end{remarque}

\section{Construction d'une structure de modèle}
\label{SectionConstructionCMFsSetP}
\subsection{Cornets admissibles}

\begin{defin}
Soit $\Delta^{\varphi}=(\Delta^N,\varphi)$ un simplexe filtré. On appelle $k$-ème cornet de $\Delta^{\varphi}$, noté $\Lambda^{\varphi}_k$ l'ensemble simplicial filtré $(\Lambda^N_k,\varphi_{|\Lambda^N_k})$. Où $\Lambda^N_k=\cup_{i\not = k}d_i\Delta^N$ est le $k$-ème cornet de $\Delta^N$.
\end{defin}

\begin{prop}\label{PropCornetAdmissible}
Soient $\Delta^{\varphi}=(\Delta^N,\varphi)$ un simplexe filtré, et $0\leq k\leq N$ un entier. Les assertions suivantes sont équivalentes :
\begin{enumerate}
\item il existe $m\in \{k-1,k\}$ tel que $\varphi=\varphi\circ D_k\circ S_m$ où $D_k$ et $S_m$ sont définies comme dans la preuve de \ref{CategoriePrefaisceaux}. 
\item $\varphi(\Delta^N)=[p_0,\dots,p_N]$ avec $p_k=p_{k+1}$ ou $p_k=p_{k-1}$.
\item L'inclusion $\Lambda^{\varphi}_k\to \Delta^{\varphi}$ est une équivalence d'homotopie filtrée élémentaire. 	
\end{enumerate}
\end{prop}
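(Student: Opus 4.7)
The implications $(1)\Leftrightarrow (2)$ are purely combinatorial. Interpreted as an order-preserving map $[N]\to [N]$, the composition $D_k\circ S_m$ is the identity except that vertex $k$ is replaced by $k-1$ (when $m=k-1$) or by $k+1$ (when $m=k$). Applying $\varphi$ and comparing with $\varphi$ itself, the equality $\varphi=\varphi\circ D_k\circ S_m$ reduces exactly to $p_k=p_{k-1}$ (for $m=k-1$) or $p_k=p_{k+1}$ (for $m=k$).

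For $(2)\Rightarrow(3)$, I will assume $p_k=p_{k-1}$ (the other case is symmetric). I define $r\colon\Delta^{\varphi}\to\Lambda^{\varphi}_k$ via the poset map $\rho\colon [N]\to [N]$ that sends $k-1$ to $k$ and fixes the remaining vertices; its image omits $k-1\neq k$, so $\rho$ indeed defines a simplex of $\Lambda^N_k$, and the hypothesis $p_k=p_{k-1}$ makes $\rho$ filtered. The poset map $h\colon [N]\times[1]\to [N]$ with $h(\ell,0)=\ell$ and $h(\ell,1)=\rho(\ell)$ is order-preserving (the only nontrivial case being $(k-1,0)\leq(k-1,1)$, giving $k-1\leq k$) and filtered, producing an elementary homotopy $H\colon\Delta^1\otimes\Delta^{\varphi}\to\Delta^{\varphi}$ witnessing $i\circ r\sim_P\Id_{\Delta^{\varphi}}$. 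A case analysis on the faces $d_j\iota_N$ of $\Lambda^N_k$ (with $j\neq k$) shows that $H$ restricts to $G\colon\Delta^1\otimes\Lambda^{\varphi}_k\to\Lambda^{\varphi}_k$: when $j=k-1$, $\rho$ acts as the identity on the vertex set $V_{k-1}=[N]\setminus\{k-1\}$; when $j\notin\{k-1,k\}$, the image of any shuffle simplex is contained in $(V_j\setminus\{k-1\})\cup\{k\}\subseteq [N]\setminus\{j\}$, still missing $j\neq k$. This yields $r\circ i\sim_P\Id_{\Lambda^{\varphi}_k}$.

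For $(3)\Rightarrow(2)$, I proceed by analyzing the constraints imposed by an elementary filtered homotopy equivalence. Since $\Delta^N\cong N([N])$ and the nerve functor is fully faithful on posets, the filtered map $i\circ r$ is induced by an order-preserving $\sigma\colon[N]\to[N]$, and the elementary homotopy $H$ between $\Id$ and $i\circ r$ comes from a poset map $[N]\times[1]\to[N]$. Examining the prismatic decomposition of $\Delta^1\times\Delta^N$ into its top $(N+1)$-simplices $((0,0),\dots,(0,m),(1,m),\dots,(1,N))$, the weak-monotonicity of their $H$-images forces either $\sigma(\ell)\geq\ell$ for all $\ell$ or $\sigma(\ell)\leq\ell$ for all $\ell$. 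In the first case, let $\ell_0$ be the smallest index with $\sigma(\ell_0)>\ell_0$; the filtered condition $p_{\sigma(\ell_0)}=p_{\ell_0}$ gives $p_{\ell_0}=p_{\ell_0+1}=\cdots=p_{\sigma(\ell_0)}$. I then consider the face $d_{\ell_0+1}\iota_N$ of $\Lambda^N_k$, valid provided $\ell_0+1\neq k$: the shuffle simplex at split $m=\ell_0$ maps under $G$ to the ordered sequence $(0,\dots,\ell_0,\sigma(\ell_0),\sigma(\ell_0+2),\dots,N)$, whose image set contains $\{0,\dots,\ell_0+1\}\cup\{\sigma(\ell_0+2),\dots,N\}$ and so equals either $[N]$ or $[N]\setminus\{k\}$ — neither of which lies in $\Lambda^N_k$. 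This forces $\ell_0+1=k$, i.e.\ $\sigma(k-1)=k$, hence $p_{k-1}=p_k$. The case $\sigma(\ell)\leq\ell$ is strictly symmetric and yields $p_k=p_{k+1}$.

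The main technical obstacle lies in the last direction: one must verify that the shuffle-simplex argument rules out \emph{every} candidate $\sigma$ outside the admissible form, including maps with several simultaneous jumps. The essential point is that if $\sigma$ is a non-identity, order-preserving, filtered map into $\Lambda^N_k$, then applying the argument above to every face $d_j\iota_N$ and every split $m$ forces $\sigma$ to coincide with one of the two "extra degeneracies" $\delta^k\sigma^{k-1}$ or $\delta^k\sigma^k$, which is precisely condition~(1).
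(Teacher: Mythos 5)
Your treatment of $(1)\Leftrightarrow(2)$ and of $(2)\Rightarrow(3)$ is correct and is essentially the paper's argument (you work out the case $p_k=p_{k-1}$ where the paper writes out $p_k=p_{k+1}$; the verification that the homotopy restricts to the horn, face by face, is the same computation).

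The direction $(3)\Rightarrow(2)$, however, has a genuine gap, and it is precisely the one you flag at the end without resolving it. In your ``first case'' ($\sigma(\ell)\geq\ell$, homotopy from $\Id$ to $\sigma$), you take $\ell_0$ \emph{minimal} with $\sigma(\ell_0)>\ell_0$ and claim that the shuffle simplex of $\Delta^1\times d_{\ell_0+1}\Delta^N$ split at $m=\ell_0$ has image $[N]$ or $[N]\setminus\{k\}$. This is false in general: all vertices above the split pass through $\sigma$, which may skip values. Concretely, for $N=4$ and $\sigma=(1,1,3,3,4)$ one has $\ell_0=0$, and the simplex $[(0,0),(1,0),(1,2),(1,3),(1,4)]$ maps to the vertex set $\{0,1,3,4\}=[4]\setminus\{2\}$, which lies in $d_2\Delta^4\subseteq\Lambda^4_k$ for every $k\neq 2$; no contradiction arises, even though such a $\sigma$ must be excluded. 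Your closing claim that ``applying the argument to every face and every split'' forces $\sigma$ to be one of the two extra degeneracies is exactly the missing content, and it is not a routine check.

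The fix is to place the split at the \emph{other} end of the set of jumps. In your convention $\sigma\geq\Id$, take $\ell_0$ \emph{maximal} with $\sigma(\ell_0)>\ell_0$: then $\sigma(\ell)=\ell$ for $\ell>\ell_0$, and monotonicity on the edge $[\ell_0,\ell_0+1]$ forces $\sigma(\ell_0)=\ell_0+1$. Now in the shuffle simplex of $\Delta^1\times d_{\ell_0+1}\Delta^N$ split at $\ell_0$, the low vertices contribute $\{0,\dots,\ell_0\}$ via the identity end, and the high vertices contribute $\{\ell_0+1,\ell_0+2,\dots,N\}$ because $\sigma$ is the identity there and $\sigma(\ell_0)=\ell_0+1$ fills the hole left by removing $\ell_0+1$ from the face; the image is the full simplex, which is not in $\Lambda^N_k$ unless $\ell_0+1=k$, whence $p_{k-1}=p_k$. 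This is the mirror of what the paper does: it works with $\sigma\leq\Id$ and $l$ minimal, so that the high vertices are handled by the identity end of the homotopy and the low ones by minimality, and it additionally phrases the argument as a contradiction from $p_k\neq p_{k\pm1}$ (which pins $\sigma(k)=k$ and hence $l,l-1\neq k$). You should also treat $N=1$ separately, where the horn is a single vertex and the conclusion follows from $r$ being filtered alone.
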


\begin{proof}
($1\Rightarrow 2$) Soit $\Delta^{\varphi}$ et $k$ tels que $\varphi=\varphi\circ D_k\circ S_m$ avec $m=k$ ou $m=k-1$. On a le diagramme commutatif suivant :
\begin{equation*}
\begin{tikzcd}
\Delta^N
\arrow[swap]{dr}{\varphi}
\arrow{r}{S_m}
&\Delta^{N-1}
\arrow{r}{D_k}
&\Delta^{N}
\arrow{dl}{\varphi}
\\
\phantom{X}
&N(P)
\end{tikzcd}
\end{equation*}
Notons $e_0,\dots,e_N$ les $N+1$ sommets de $\Delta^N$. Dans le cas $m=k$, on a $D_k(S_k(e_k))=e_{k+1}$. On en déduit que $p_{k+1}=\varphi(e_{k+1})=\varphi(D_k(S_k(e_k)))=\varphi(e_k)=p_k$. De même, si $m=k-1$, on obtient $p_k=p_{k-1}$.

($2\Rightarrow 1$) Soient $\Delta^N, \varphi, k$ tels que $\varphi(\Delta^N)=[p_0,\dots,p_N]$. La composition $D_k\circ S_{k}$ peut se décrire sur les sommets de $\Delta^N$ par :
\begin{align*}
(\Delta^N)_0&\to (\Delta^N)_0\\
e_i&\mapsto \left\{\begin{array}{cl}
e_i & \text{ si $i\not =k$}\\
e_{k+1} &\text{ si $i=k$}
\end{array}\right.
\end{align*}
En particulier, si $p_{k+1}=p_k$, on a $\varphi(D_k(S_k(e_i)))=p_i$ pour tout $0\leq i\leq N$. Comme les simplexes de $N(P)$ sont entièrement déterminés par leurs sommets, on en déduit que dans ce cas $\varphi=\varphi\circ D_k\circ S_k$. Le même raisonnement dans le cas où $p_k=p_{k-1}$ aboutit à $\varphi=\varphi D_k\circ S_{k-1}$.

($2\Rightarrow 3$) On suppose que $\varphi(\Delta^N)=[p_0,\dots,p_N]$ avec $p_k=p_{k+1}$ (le cas $p_k=p_{k-1}$ est symétrique). Considérons la composition 
\begin{equation*}
(\Delta^N,\varphi\circ D_{k+1}\circ S_k)\xrightarrow{S_k} (\Delta^{N-1},\varphi\circ D_{k+1})\xrightarrow{D_{k+1}}(\Delta^N,\varphi).
\end{equation*}
D'une part, on a $\varphi\circ D_{k+1}\circ S_k=\varphi$, par les mêmes arguments que précédemment. Donc la composition est un morphisme 
\begin{equation*}
(\Delta^N,\varphi)\xrightarrow{D_{k+1}\circ S_k}(\Delta^N,\varphi).
\end{equation*} 
D'autre part, on remarque que $D_{k+1}\circ S_k(\Delta^N)\subseteq d_{k+1}\Delta^N\subseteq \Lambda^N_k$. En particulier, on a une application filtrée bien définie :
\begin{equation*}
(\Delta^N,\varphi)\xrightarrow{D_{k+1}\circ S_k} (\Lambda^N_k,\varphi_{|\Lambda^N_k}).
\end{equation*}
On va montrer que c'est un inverse à homotopie filtrée près
de l'inclusion $(\Lambda^N_k,\varphi_{|\Lambda^N_k})\to (\Delta^N,\varphi)$.
On définit, sur les sommets de $\Delta^N$, l'homotopie filtrée suivante :
\begin{align*}
H\colon \Delta^1\otimes(\Delta^N,\varphi)&\to (\Delta^N,\varphi) \\
(\epsilon,e_i)&\mapsto \left\{\begin{array}{cl}
e_i & \text{ si $\epsilon = 1$ ou $i\not = k+1$}\\
e_{k} & \text{ si $\epsilon =0$ et $i=k+1$}
\end{array}\right. 
\end{align*}
L'homotopie est bien définie car les simplexes de $\Delta^N$, et de $\Delta^1\times\Delta^N$ sont uniquement déterminés par leur sommets, et elle est filtrée car $\varphi(e_k)=\varphi(e_{k+1})$, par hypothèse. On constate donc que $H$ fournit une homotopie filtrée élémentaire entre $D_{k+1}\circ S_k$ et $\Id_{\Delta^N}$.
Montrons maintenant que $H(\Delta^1\times\Lambda^N_k)\subseteq \Lambda^N_k$. Il suffit pour cela de montrer que le simplexe $[e_0,\dots,e_N]\subset \Delta^N$ n'est pas atteint par $H_{|\Delta^1\times \Lambda^N_k}$. Soit $\sigma\in \Delta^1\times \Lambda^N_k$ un $N$-simplexe. Alors on a $\sigma\in \Delta^1\times d_{l}\Delta^N$ pour un certain $l\not =k$. On peut écrire $\sigma$ sous une des deux formes suivantes :
\begin{align*}
\sigma=[(0,e_0),\dots, \widehat{(0,e_l)},\dots,(0,e_m),(1,e_m),\dots,(1,e_N)]\\
\text{ou}\\
\sigma=[(0,e_0),\dots,(0,e_m),(1,e_m),\dots,\widehat{(1,e_l)},\dots,(1,e_N)]
\end{align*}
On calcule $H(\sigma)$. En général, on a $H(\epsilon,e_i)\in \{e_i,e_k\}$. En particulier, comme $l\not = k$, $H(\sigma)$ n'atteint pas $e_l$, et donc $H(\sigma)\subseteq d_l(\Delta^N)\subseteq\Lambda^N_k$. Finalement, on obtient l'existence de la restriction suivante :
\begin{align*}
H_{|\Delta^1\times\Lambda^N_k}\colon \Delta^1\otimes(\Lambda^N_k,\varphi_{|\Lambda^N_k})&\to (\Lambda^N_k,\varphi_{|\Lambda^N_k}) \\
(\epsilon,e_i)&\mapsto \left\{\begin{array}{cl}
e_i & \text{ si $\epsilon = 1$ ou $i\not = k+1$}\\
e_{k} & \text{ si $\epsilon =0$ et $i=k+1$}
\end{array}\right. 
\end{align*}
En particulier, $H_{|\Delta^1\times\Lambda^N_k}$ fournit l'homotopie filtrée élémentaire entre $(D_{k+1}\circ S_k)_{|\Lambda^N_k}$ et $\Id_{\Lambda^N_k}$.

($3\Rightarrow 2$) Notons $j\colon (\Lambda^N_k,\varphi_{|\Lambda^N_k})\to (\Delta^N,\varphi)$ l'inclusion du cornet. On suppose qu'il existe une application filtrée $r\colon (\Delta^N,\varphi)\to(\Lambda^N_k,\varphi_{|\Lambda^N_k})$ et des homotopies filtrées élémentaires $H$ et $G$ entre $r\circ j$ et $\Id_{\Lambda^N_k}$ et entre $j\circ r$ et $\Id_{\Delta^N}$ respectivement.  Si $N=1$, $\Lambda^1_k$ est un $0$-simplexe et $p_0=p_1$ car $r$ est filtrée.
Pour $N\geq 2$, on suppose que $p_k\not=p_{k+1},p_{k-1}$ et par symétrie, on suppose que $H_{|\{0\}\times\Lambda^N_k}=r\circ j$ et $H_{|\{1\}\times \Lambda^N_k}=\Id_{\Lambda^N_k}$. En particulier, cette hypothèse implique que pour tout $0\leq i\leq N$, $r\circ j (e_i)=e_{i'}$ avec $i'\leq i$. 
De plus, comme il n'existe pas de section $s\colon \Delta^N\to \Lambda^N_k$ telle que $s\circ j=\Id_{\Lambda^N_k}$, cela implique qu'il doit exister $i$ tel que $r\circ j(e_i)\not =e_{i}$.
On définit :
\begin{equation*}
l=\min\{i\ |\ r\circ j(e_i)\not = e_{i}\}
\end{equation*}
Comme $r\circ j$ est filtrée, cela implique que $p_k=\varphi(e_k)=\varphi\circ r\circ j (e_k)$. D'autre part, comme on a supposé que $p_k\not= p_{k+1},p_{k-1}$, on a $\varphi(e_i)=p_k\Leftrightarrow i=k$. On en déduit que $r\circ j(e_k)=e_k$, et donc que $l\not = k$.
Ensuite, on observe que $r\circ j(e_l)=e_{l'}$ avec $l'<l$. Mais $r\circ j (e_{l-1})=e_{l-1}$, par définition de $l$, et $r\circ j$ envoie le segment $[e_{l-1},e_{l}]$ sur un segment $[e_{l-1},e_{l'}]$ ce qui impose que $l'\geq l-1$. Finalement, on otient $l'=l-1$, et par le même raisonnement que précédemment, on en déduit qu'en particulier $l-1\not = k$. On calcule l'image par $H$ du simplexe 
\begin{equation*}
\sigma=[(0,e_0),\dots,(0,e_{l-2}),\widehat{(0,e_{l-1})},(0,e_l)(1,e_l),\dots,(1,e_N)]\subseteq \Delta^1\times d_{l-1}\Delta^N\subseteq \Delta^1\times \Lambda^N_k
\end{equation*}
\begin{equation*}
H(\sigma)=[e_0,\dots, e_{l-2},\widehat{e_{l-1}},e_{l-1},e_l,\dots,e_N]=\Delta^N\not\subseteq \Lambda^N_k
\end{equation*}
On en déduit que $p_k=p_{k+1}$ ou $p_k=p_{k-1}$.

\end{proof}

\begin{defin}
L'inclusion d'un cornet $\Lambda^{\varphi}_k\to \Delta^{\varphi}$ est dite admissible, si les conditions équivalentes de la proposition \ref{PropCornetAdmissible} sont vérifiées.
\end{defin}

\begin{remarque}
Dans la proposition \ref{PropCornetAdmissible}, l'adjectif élémentaire peut en fait être omis de la condition 3. En effet, parmi les conséquences des résultats prouvés dans la suite de ce chapitre, on obtient que les cornets admissibles sont les seuls cornets qui sont aussi des équivalences faibles filtrées. En particulier, ce sont les seules à être des équivalences d'homotopie filtrées. Cependant, la formulation de la proposition \ref{PropCornetAdmissible} permet d'écrire une preuve élémentaire.
\end{remarque}

\subsection{Propriétés de relèvement et saturation}

\begin{defin}Soient $f\colon A\to B$, et $g\colon X\to Y$ deux morphismes d'une catégorie $\mathcal{C}$. On dit que $f$ admet la propriété de relèvement à gauche (Left Lifting Property) par rapport à $g$ et que $g$ admet la propriété de relèvement à droite (Right Lifting Property) par rapport à $f$ si pour tout carré commutatif dans $\mathcal{C}$ 
\begin{equation*}
\begin{tikzcd}
A
\arrow{r}
\arrow[swap]{d}{f}
&X
\arrow{d}{g}
\\
B\arrow{r}
\arrow[dashed]{ur}{h}
&Y
\end{tikzcd} 
\end{equation*}
il existe un morphisme $h\colon B\to X$ qui fait commuter les deux triangles. 

Soit $\Lambda$ une classe de morphismes de $\mathcal{C}$. On dit que $g$ admet la RLP par rapport à $\Lambda$ si pour tout $f$ dans $\Lambda$, $g$ admet la RLP par rapport à $f$. On note :
\begin{equation*}
r(\Lambda)=\{g\ |\ \text{$g$ admet la RLP par rapport à $\Lambda$}\}.
\end{equation*}

De façon symétrique, si $\Gamma$ est une classe de morphismes de $\mathcal{C}$, on dit que $f$ admet la LLP par rapport à $\Gamma$ si $f$ admet la LLP par rapport à $g$ pour tout $g$ dans $\Gamma$. On note :
\begin{equation*}
l(\Gamma)=\{ f\ |\ \text{$f$ admet la LLP par rapport à $\Gamma$}\}.
\end{equation*}
\end{defin}

\begin{defin} Soit $\Lambda$ une classe de morphisme de $\mathcal{C}$. La classe $\Lambda$ est saturée si elle vérifie les conditions suivantes :
\begin{itemize}
\item $\Lambda$ contient les isomorphismes de $\mathcal{C}$.
\item $\Lambda$ est stable par image directe. C'est à dire que pour tout carré cocartésien 
\begin{equation*}
\begin{tikzcd}
A
\arrow{r}
\arrow{d}{f}
&C
\arrow{d}{g}
\\
B
\arrow{r}
&D
\end{tikzcd}
\end{equation*}
si $f$ est dans $\Lambda$, alors $g$ est dans $\Lambda$.
\item $\Lambda$ est stable par rétraction. C'est à dire que étant donné $f\colon A\to B$ et $f'\colon A'\to B'$ tel qu'il existe un diagramme commutatif 
\begin{equation*}
\begin{tikzcd}
A'
\arrow{r}{i}
\arrow{d}{f'}
&A
\arrow{r}{r}
\arrow{d}{f}
&A'
\arrow{d}{f'}
\\
B'
\arrow{r}{j}
&B
\arrow{r}{s}
&B'
\end{tikzcd}
\end{equation*}
avec $r\circ i= \Id_{A'}$ et $s\circ j=\Id_{B'}$, Si $f$ est dans $\Lambda$ alors $f'$ est dans $\Lambda$.
\item $\Lambda$ est stable par somme directe. C'est à dire que si $f_s\colon A_s\to B_s, s\in S$ est une collection de morphismes de $\Lambda$, alors leur somme directe 
\begin{equation*}
\coprod_{s\in S} f_s\colon \coprod_{s\in S} A_s\to\coprod_{s\in S} B_s
\end{equation*}
est dans $\Lambda$.
\item $\Lambda$ est stable par composition dénombrable. C'est à dire que si $f_i\colon A_i\to A_{i+1}, i\in \mathbb{N}$ est une collection de morphismes de $\Lambda$, la composition dénombrable
\begin{equation*}
A_0\to\lim_{\to} A_i
\end{equation*}
est dans $\Lambda$.
\end{itemize}
\end{defin}

\begin{defin}
Soit $\mathcal{A}$ une classe de flèche dans $\mathcal{C}$, on appelle classe saturée engendrée par $\mathcal{A}$, et on note $\text{Sat}(\mathcal{A})$, la plus petite classe saturée de $\mathcal{C}$ contenant $\mathcal{A}$.
\end{defin}

\begin{prop}\label{SatureLifting}
Soit $\mathcal{A}$ un ensemble de morphismes de $\sS_P$, tel que pour tout morphisme de $\mathcal{A}$, $f\colon (A,\varphi)\to (B,\psi)$, l'ensemble simplicial $A$ ne contient qu'un nombre fini de simplexes non-dégénérés. Alors, $\text{Sat}(\mathcal{A})=l(r(\mathcal{A}))$.
\end{prop}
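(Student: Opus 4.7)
The plan is to prove the two inclusions $\Sat(\mathcal{A}) \subseteq l(r(\mathcal{A}))$ and $l(r(\mathcal{A})) \subseteq \Sat(\mathcal{A})$ separately. The first inclusion requires nothing on $\mathcal{A}$, while the hypothesis of finiteness of non-degenerate simplices in the sources will be used crucially for the second inclusion, via Quillen's small object argument.

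For the inclusion $\Sat(\mathcal{A}) \subseteq l(r(\mathcal{A}))$, it suffices to verify that $l(r(\mathcal{A}))$ is saturated and contains $\mathcal{A}$. Containment is tautological. Saturation is a formal verification that classes defined by left lifting properties are stable under the five operations (containing isomorphisms, cobase change, retracts, coproducts, transfinite composition). For instance, if $(f_s\colon A_s\to B_s)_{s\in S}$ are all in $l(r(\mathcal{A}))$ and $g\colon X\to Y$ is in $r(\mathcal{A})$, then any square from $\coprod f_s$ to $g$ decomposes into a family of squares from each $f_s$ to $g$, each of which admits a lift, and the family of lifts assembles into a lift for the coproduct by the universal property of $\coprod$. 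The other operations are analogous standard verifications.

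The reverse inclusion $l(r(\mathcal{A}))\subseteq \Sat(\mathcal{A})$ is the heart of the argument. Given $f\colon X\to Y$ in $l(r(\mathcal{A}))$, the plan is to apply the small object argument to factor $f$ as $f = p\circ i$ with $i\in \Sat(\mathcal{A})$ and $p\in r(\mathcal{A})$, then use the lifting property of $f$ against $p$ to exhibit $f$ as a retract of $i$, and finally conclude using stability of $\Sat(\mathcal{A})$ under retract. Concretely, I construct inductively a tower $X = Z^0\to Z^1\to Z^2\to \cdots$ by iteratively forming pushouts along coproducts of all squares from morphisms of $\mathcal{A}$ into the current stage (as in the diagram sketch at the end of Section \ref{SectionCMFEngendrementCofibrant}). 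Setting $Z^{\infty} = \colim_n Z^n$, the canonical map $i\colon X\to Z^{\infty}$ lies in $\Sat(\mathcal{A})$ since it is a transfinite composition of pushouts of coproducts of elements of $\mathcal{A}$, and $f$ factors as $p\circ i$ where $p\colon Z^{\infty}\to Y$.

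The main obstacle, and the place where the hypothesis on $\mathcal{A}$ is essential, is showing that $p$ belongs to $r(\mathcal{A})$. Given a square from some $\alpha\colon (A,\varphi)\to(B,\psi)$ in $\mathcal{A}$ to $p$, one must show that the top map $A\to Z^{\infty}$ factors through some finite stage $Z^n$, so that the construction of $Z^{n+1}$ produces the required lift. This is the smallness (compactness) statement: since $A$ has only finitely many non-degenerate simplices, any morphism $A\to \colim_n Z^n$ into a sequential colimit of monomorphisms factors through some $Z^n$, because each non-degenerate simplex, being a finite amount of data, must appear at some finite stage, and finitely many such stages can be dominated by a single one. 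With this smallness in hand, $p\in r(\mathcal{A})$. Finally, the lifting property of $f$ against $p$ supplies a map $h\colon Y\to Z^{\infty}$ with $h\circ f = i$ and $p\circ h = \Id_Y$, which exhibits $f$ as a retract of $i$; by saturation of $\Sat(\mathcal{A})$ under retracts, $f\in \Sat(\mathcal{A})$.
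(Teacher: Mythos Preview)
Your proposal is correct and follows essentially the same approach as the paper: first verify that $l(r(\mathcal{A}))$ is saturated to get one inclusion, then run the small object argument to factor any $f\in l(r(\mathcal{A}))$ as $p\circ i$ with $i\in\Sat(\mathcal{A})$ and $p\in r(\mathcal{A})$, and conclude via the retract argument. The only minor remark is that your justification of smallness invokes that the tower consists of monomorphisms, which is not part of the hypotheses; the correct (and simpler) reason is that an object of $\sS_P$ with finitely many non-degenerate simplices is compact, so $\Hom(A,-)$ commutes with the sequential colimit regardless of whether the transition maps are injective.
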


\begin{proof}
Soit $\mathcal{A}$ un ensemble de morphismes de $\sS_P$ vérifiant l'hypothèse de la proposition \ref{SatureLifting}. Montrons que $l(r(\mathcal{A}))$ est une classe saturée.
Tout isomorphisme de $\sS_P$ appartient à $l(r(\mathcal{A}))$ pour tout ensemble de morphisme $\mathcal{A}$.
Soit $f\colon (A,\varphi_A)\to (B,\varphi_B)$ un morphisme de $l(r(\mathcal{A}))$, et soit $g$ son image directe, obtenue comme la somme amalgamée suivante 
\begin{equation*}
\begin{tikzcd}
(A,\varphi_A)
\arrow{r}
\arrow{d}{f}
&(C,\varphi_C)
\arrow{d}{g}
\\
(B,\varphi_B)
\arrow{r}
&(D,\varphi_D)
\end{tikzcd}
\end{equation*}
Montrons que $g\in l(r(\mathcal{A}))$. Considérons un problème de relèvement pour $g$, avec $h\in r(\mathcal{A})$.
\begin{equation*}
\begin{tikzcd}
(C,\varphi_C)
\arrow[swap]{d}{g}
\arrow{r}{\alpha}
&(X,\varphi_X)
\arrow{d}{h}
\\
(D,\varphi_D)
\arrow[swap]{r}{\beta}
\arrow[dashed]{ur}
&(Y,\varphi_Y)
\end{tikzcd}
\end{equation*}

En reprenant la définition de $g$, on obtient le diagramme suivant

\begin{equation*}
\begin{tikzcd}
(A,\varphi_A)
\arrow{r}
\arrow{d}{f}
&(C,\varphi_C)
\arrow{r}{\alpha}
&(X,\varphi_X)
\arrow{d}{h}
\\
(B,\varphi_B)
\arrow[swap]{r}
\arrow[dashed,near start]{urr}{l}
&(D,\varphi_D)
\arrow[crossing over, leftarrow, near end]{u}{g}
\arrow[swap]{r}{\beta}
\arrow[dashed,swap]{ur}{l'}
&(Y,\varphi_Y)
\end{tikzcd}
\end{equation*}
Le carré extérieur de ce diagramme correspond à un problème de relèvement pour $f$. Comme $f\in l(r(\mathcal{A}))$ il existe un relèvement $l$ faisant commuter le diagramme. Mais, par la propriété universelle de la somme amalgamée, $\alpha$ et $l$ induisent un unique morphisme $l'\colon (D,\varphi_D)\to (X,\varphi_X)$. On en déduit que $g\in l(r(\mathcal{A}))$ et donc que $l(r(\mathcal{A}))$ est stable par image directe. On montre de la même façon que la classe $l(r(\mathcal{A}))$ est stable par somme directe et par composition transfinie en utilisant les propriétés universelles qui les définissent. Montrons que $l(r(\mathcal{A}))$ est stable par rétracte. Soit $f\in l(r(\mathcal{A}))$, et $f'$ un rétracte de $f$. 
Considérons le problème de relèvement suivant pour $f'$, où $g\in r(A)$
\begin{equation*}
\begin{tikzcd}
\fil{A'}
\arrow{d}{f'}
\arrow{r}{\alpha}
&\fil{X}
\arrow{d}{g}
\\
\fil{B'}
\arrow[dashed]{ur}
\arrow{r}{\beta}
&\fil{Y}
\end{tikzcd}
\end{equation*}
En utilisant le fait que $f'$ est un rétracte de $f$, on obtient le diagramme suivant :
\begin{equation*}
\begin{tikzcd}
\fil{A'}
\arrow{r}{i}
\arrow{d}{f'}
&\fil{A}
\arrow{r}{r}
\arrow{d}{f}
&\fil{A'}
\arrow{r}{\alpha}
&\fil{X}
\arrow{d}{g}
\\
\fil{B'}
\arrow[swap]{r}{j}
&\fil{B}
\arrow[swap]{r}{s}
\arrow[dashed, near start]{urr}{l}
&\fil{B'}
\arrow[swap]{r}{\beta}
\arrow[leftarrow, crossing over, near end]{u}{f'}
\arrow[dashed,swap]{ur}{l'}
&\fil{Y}
\end{tikzcd}
\end{equation*}
Les compositions $\alpha\circ r$ et $\beta\circ s$ donnent un problème de relèvement pour $f$. Comme $f\in l(r(\mathcal{A})$ et $g\in r(\mathcal{A})$, il existe un relèvement $l\colon \fil{B}\to\fil{X}$ faisant commuter le diagramme. Puis, on pose $l'=l\circ j$. Comme $s\circ j=\Id_{B'}$, on a bien $g\circ l'=g\circ l\circ j=\beta\circ s\circ j=\beta$. De même, $l'\circ f'=l\circ j\circ f'=l\circ f\circ i=\alpha\circ r\circ i=\alpha$. Finalement, nous avons montré que $l(r(\mathcal{A}))$ est une classe saturée. En particulier, on a $\text{Sat}(\mathcal{A})\subseteq l(r(\mathcal{A}))$. 

Prouvons l'autre inclusion, soit $f\colon \fil{X}\to\fil{Y}$ un morphisme de $l(r(\mathcal{A}))$. Montrons, par l'argument du petit objet, que $f$ peut s'écrire comme le rétracte d'une composition d'image directe de sommes directes de morphismes de $\mathcal{A}$. Considérons l'ensemble $S_1$ de tous les triplets $s=(a_s,\alpha_s,\beta_s)$ où $a_s\colon \fil{A_s}\to \fil{B_s}$ est un morphisme de $\mathcal{A}$, et où les trois morphismes font partie d'un diagramme commutatif de la forme :
\begin{equation*}
\begin{tikzcd}
\fil{A_s}
\arrow{r}{\alpha_s}
\arrow[swap]{d}{a_s}
&\fil{X}
\arrow{d}{f}
\\
\fil{B_s}
\arrow[swap]{r}{\beta_s}
&\fil{Y}
\end{tikzcd}
\end{equation*}
On définit l'ensemble simplicial filtré $\fil{Z_1}$ comme la somme amalgamée suivante :
\begin{equation*}
\begin{tikzcd}
\coprod_{s\in S_1}\fil{A_s}
\arrow{r}{\coprod\alpha_s}
\arrow[swap]{d}{\coprod a_s}
&\fil{X}
\arrow{d}{j_1}
\\
\coprod_{s\in S_1}\fil{B_s}
\arrow{r}
&\fil{Z_1}
\end{tikzcd}
\end{equation*}

Par la propriété universelle de la somme amalgamée, on obtient le morphisme $f_1$.
\begin{equation*}
\begin{tikzcd}
\coprod_{s\in S_1}\fil{A_s}
\arrow{r}{\coprod\alpha_s}
\arrow[swap]{d}{\coprod a_s}
&\fil{X}
\arrow{d}{j_1}
\arrow[bend left=12]{ddr}{f}
&\phantom{\fil{Y}}
\\
\coprod_{s\in S_1}\fil{B_s}
\arrow{r}
\arrow[bend right=6,swap]{drr}{\coprod \beta_s}
&\fil{Z_1}
\arrow{dr}{f_1}
&\phantom{\fil{Y}}
\\
\phantom{X}
&\phantom{X}
&\fil{Y}
\end{tikzcd}
\end{equation*}
On a donc obtenu une factorisation $f=f_1\circ j_1$. De plus, on constate que $j_1$ est dans $\text{Sat}(\mathcal{A})$ car elle est obtenue comme l'image directe de la somme directe $\coprod a_s$ de morphisme de $\mathcal{A}$. On itère ensuite la construction comme suit. On suppose construit $i_n=j_n\circ\dots\circ j_1\colon\fil{X}\to \fil{Z_1}\to\dots\to Z_n$, et $f_n\colon \fil{Z_n}\to\fil{Y_n}$ tels que $f_n\circ j_n= f$. Et on définit l'ensemble $S_n$ comme l'ensemble des triplets $s=(a_s,\alpha_s,\beta_s)$ tel $\alpha_s\colon \fil{A_s}\to\fil{B_s}$ est dans $\mathcal{A}$ et les trois morphismes font partie d'un diagramme commutatif de la forme
\begin{equation*}
\begin{tikzcd}
\fil{A_s}
\arrow{r}{\alpha_s}
\arrow[swap]{d}{a_s}
&\fil{Z_n}
\arrow{d}{f_n}
\\
\fil{B_s}
\arrow[swap]{r}{\beta_s}
&\fil{Y}
\end{tikzcd}
\end{equation*}
De la même façon que précédemment, on construit $Z_{n+1}$ comme une somme amalgamée, et on obtient les applications $j_{n+1}$ et $f_{n+1}$ :
\begin{equation*}
\begin{tikzcd}
\coprod_{s\in S_n}\fil{A_s}
\arrow{r}{\coprod\alpha_s}
\arrow[swap]{d}{\coprod a_s}
&\fil{Z_n}
\arrow{d}{j_{n+1}}
\arrow[bend left=12]{ddr}{f}
&\phantom{\fil{Y}}
\\
\coprod_{s\in S_n}\fil{B_s}
\arrow{r}
\arrow[bend right=6,swap]{drr}{\coprod \beta_s}
&\fil{Z_{n+1}}
\arrow{dr}{f_{n+1}}
&\phantom{\fil{Y}}
\\
\phantom{X}
&\phantom{X}
&\fil{Y}
\end{tikzcd}
\end{equation*}
Comme précédemment, $j_{n+1}$ est dans $\text{Sat}(\mathcal{A})$. En particulier, on en déduit que $i_{n+1}=j_{n+1}\circ\dots\circ j_1$ est dans $\text{Sat}(\mathcal{A})$ comme composition de morphismes de $\text{Sat}(\mathcal{A})$. On passe ensuite à la limite inductive, et on définit 
\begin{equation*}
\fil{Z_{\infty}}=\lim_{\to} \fil{Z_n}
\end{equation*}
On obtient une factorisation $f=f_\infty\circ i_\infty$, où $i_\infty$ est la composition dénombrable des $j_n$. En particulier, $i_\infty$ est dans $\text{Sat}(\mathcal{A}))$. Nous allons montrer que $f_{\infty}$ est dans $r(\mathcal{A})$.
Considérons un problème de relèvement pour $f_{\infty}$, où $a\colon \fil{A}\to \fil{B}$ est dans $\mathcal{A}$.
\begin{equation*}
\begin{tikzcd}
\fil{A}
\arrow[swap]{d}{a}
\arrow{r}{\alpha}
&\fil{Z_{\infty}}
\arrow{d}{f_{\infty}}
\\
\fil{B}
\arrow[swap]{r}{\beta}
&\fil{Y}
\end{tikzcd}
\end{equation*}
Comme $a$ est un morphisme de $\mathcal{A}$, $A$ ne doit avoir par hypothèse qu'un nombre fini de simplexes non-dégénérés. En particulier, toute application simpliciale $A\to \lim\limits_{\to} Z_n$ doit se factoriser comme $A\to Z_m\to \lim\limits_{\to}Z_n$ pour un certain $m$. Finalement, on est dans la situation suivante :
\begin{equation*}
\begin{tikzcd}
\fil{A}
\arrow{rr}{\alpha}
\arrow{dr}{\alpha_m}
\arrow[swap]{ddd}{a}
&\phantom{X}
&\fil{Z_{\infty}}
\arrow{ddd}{f_{\infty}}
\\
\phantom{X}
&\fil{Z_m}
\arrow[swap]{d}{j_m}
\arrow{ur}{\iota_m}
&\phantom{X}
\\
\phantom{X}
&\fil{Z_{m+1}}
\arrow{dr}{f_{m+1}}
\arrow[swap]{uur}{\iota_{m+1}}
&\phantom{X}
\\
\fil{B}
\arrow[dashed]{ur}{l}
\arrow{rr}{\beta}
&\phantom{X}
&\fil{Y}
\end{tikzcd}
\end{equation*}
Le triplet $(a,\alpha_m, \beta)$ correspond à un problème de relèvement pour $f_m=f_{m+1}\circ j_m\colon \fil{Z_m}\to\fil{Y}$, et est dans $S_m$. En particulier,  par définition de $Z_{m+1}$, il existe une application $l$ faisant commuter le diagramme. On en déduit que la composition $\iota_{m+1}\circ l\colon \fil{B}\to \fil{Z_{\infty}}$ fournit le relèvement souhaité. On a montré que $f_{\infty}$ est dans $r(\mathcal{A})$.
Considérons maintenant le problème de relèvement suivant.
\begin{equation*}
\begin{tikzcd}
\fil{X}
\arrow{r}{j_{\infty}}
\arrow{d}{f}
&\fil{Z_{\infty}}
\arrow{d}{f_{\infty}}
\\
\fil{Y}
\arrow[dashed]{ur}{l}
\arrow{r}{\Id_Y}
&\fil{Y}
\end{tikzcd}
\end{equation*}
Par hypothèse, $f$ est dans $l(r(\mathcal{A}))$, et on a montré que $f_{\infty}$ était dans $r(\mathcal{A})$. En particulier, il existe un relèvement $l$ faisant commuter le diagramme. On peut donc écrire $f$ comme un rétracte de $j_{\infty}$
\begin{equation*}
\begin{tikzcd}
\fil{X}
\arrow{r}{\Id_X}
\arrow{d}{f}
&\fil{X}
\arrow{r}{\Id_X}
\arrow{d}{j_{\infty}}
&\fil{X}
\arrow{d}{f}
\\
\fil{Y}
\arrow{r}{l}
&\fil{Z_{\infty}}
\arrow{r}{f_{\infty}}
&\fil{Y}
\end{tikzcd}
\end{equation*}
Donc, $f$ est le rétracte d'un morphisme de $\text{Sat}(\mathcal{A})$, et donc $f$ est dans $\text{Sat}(\mathcal{A})$. Finalement, $l(r(A))\subseteq \text{Sat}(\mathcal{A})$, et donc $l(r(A))=\text{Sat}(\mathcal{A})$.
\end{proof}

\subsection{Extensions anodines}

\begin{defin}\label{DefinitionExtensionAnodine}
Soit $\Lambda$ une classe de (mono-)morphismes dans $\sS_P$. On dit que $\Lambda$ est une classe d'extensions anodines si les conditions suivantes sont satisfaites :
\begin{itemize}
\item (An0) Il existe un ensemble $\mathcal{A}$ de morphismes dans $\sS_P$ tel que $\Lambda= l(r(\mathcal{A}))$.
\item (An1) Si $(X,\varphi)\hookrightarrow (Y,\psi)$ est une inclusion filtrée et $\epsilon=0$ ou $1$ est un sommet de $\Delta^1$, alors les applications 
\begin{equation*}
\Delta^1\otimes (X,\varphi)\cup \{\epsilon\}\otimes (Y,\varphi)\to \Delta^1\otimes (Y,\varphi)
\end{equation*}
sont dans $\Lambda$.
\item (An2) Si $(X,\varphi)\hookrightarrow (Y,\psi)$ est dans $\Lambda$, alors
\begin{equation*}
\Delta^1\otimes (X,\varphi)\cup \partial(\Delta^1)\otimes (Y,\varphi)\to \Delta^1\otimes (Y,\varphi)
\end{equation*}
est dans $\Lambda$.
\end{itemize}
\end{defin}

\begin{prop}\label{DefExtensionsAnodinesSSP}
Soit $\A$ l'ensemble des inclusions de cornets admissibles $\Lambda^{\varphi}_k\hookrightarrow \Delta^{\varphi}$ dans $\sS_P$. La classe $\Lambda=l(r(\A))$ est une classe d'extensions anodines.
\end{prop}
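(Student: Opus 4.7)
Condition (An0) est immédiate par construction : il suffit de prendre $\mathcal{A} = \A$. Pour (An1) et (An2), je procéderai en deux temps : d'abord une réduction à des cas particuliers via la saturation de $\Lambda$ fournie par la proposition \ref{SatureLifting}, puis une analyse combinatoire explicite du prisme $\Delta^1 \times \Delta^n$.

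Pour (An1), je réduirai d'abord au cas où l'inclusion est de la forme $\partial \Delta^{\varphi} \hookrightarrow \Delta^{\varphi}$ avec $\Delta^{\varphi}$ un simplexe filtré. En effet, tout monomorphisme $(X,\varphi) \hookrightarrow (Y,\psi)$ de $\sS_P$ s'obtient comme composition transfinie de sommes amalgamées le long de telles inclusions de bords de simplexes filtrés (par le même argument qu'à la proposition \ref{PropositionCofibrationsSSetMonomorphismes}, en utilisant le fait que $\sS_P$ est une catégorie de préfaisceaux par \ref{CategoriePrefaisceaux}). Comme $\Lambda$ est saturée et que le produit $\Delta^1 \otimes (-)$ commute aux colimites, il suffira donc de vérifier (An1) pour les inclusions $\partial \Delta^{\varphi} \hookrightarrow \Delta^{\varphi}$. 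Pour un tel simplexe avec $\varphi(\Delta^N) = [p_0, \dots, p_N]$, j'utiliserai la décomposition classique du prisme $\Delta^1 \times \Delta^N$ en $(N+1)$ simplexes de dimension $N+1$ :
\begin{equation*}
\sigma_i = [(0,0), \dots, (0,i), (1,i), (1,i+1), \dots, (1,N)], \quad 0 \le i \le N.
\end{equation*}
Pour $\epsilon = 1$, j'attacherai successivement $\sigma_0, \sigma_1, \dots, \sigma_N$ à $\{1\} \otimes \Delta^{\varphi} \cup \Delta^1 \otimes \partial \Delta^{\varphi}$. Une vérification directe montre qu'à l'étape $i$, la seule face nouvelle est celle opposée au sommet $(1,i)$ à la position $i+1$, de sorte que l'attachement se fait le long du cornet $\Lambda^{N+1}_{i+1} \to \Delta^{N+1}$. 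La filtration sur $\Delta^1 \otimes \Delta^{\varphi}$ étant induite par la projection sur $\Delta^{\varphi}$, les sommets $(0,i)$ et $(1,i)$ se projettent tous deux sur $p_i$ : la filtration de $\sigma_i$ présente donc la répétition $p_i = p_i$ aux positions $i$ et $i+1$, assurant l'admissibilité du cornet par la proposition \ref{PropCornetAdmissible}. Pour $\epsilon = 0$, un argument symétrique s'applique en attachant $\sigma_N, \sigma_{N-1}, \dots, \sigma_0$.

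Pour (An2), je réduirai de même via la saturation au cas où l'inclusion anodine est un générateur $\Lambda^{\varphi}_k \hookrightarrow \Delta^{\varphi}$, et il faudra montrer que
\begin{equation*}
\Delta^1 \otimes \Lambda^{\varphi}_k \cup \partial \Delta^1 \otimes \Delta^{\varphi} \longrightarrow \Delta^1 \otimes \Delta^{\varphi}
\end{equation*}
appartient à $\Lambda$. Le principe reste le même : décomposer le complément dans le prisme en simplexes $\sigma_i$ et les attacher dans un ordre adapté à $k$ (typiquement en commençant par $\sigma_{k-1}$ et $\sigma_k$, puis en procédant vers les extrémités), en vérifiant à chaque étape que le cornet formé est admissible. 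La répétition $p_i = p_i$ entre positions $i$ et $i+1$ reste la clé, combinée à la répétition déjà présente dans $\varphi$ autour de l'indice $k$ par admissibilité de $\Lambda^{\varphi}_k$.

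L'obstacle principal sera l'analyse combinatoire fine de (An2) : contrairement à (An1) où l'ordre d'attachement des $\sigma_i$ est canonique, ici il faut choisir un ordre compatible avec le cornet de départ $\Lambda^{\varphi}_k$ et vérifier simplexe par simplexe que la face manquante crée bien un cornet admissible dans le prisme filtré. Le travail technique consistera essentiellement à énumérer les faces de chaque $\sigma_i$ qui ont déjà été attachées (soit dans $\Delta^1 \otimes \Lambda^{\varphi}_k$, soit dans $\partial \Delta^1 \otimes \Delta^{\varphi}$, soit à une étape antérieure) et à identifier la face restante, puis à vérifier la condition de répétition de la proposition \ref{PropCornetAdmissible}.
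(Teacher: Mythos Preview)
Your treatment of (An0) and (An1) is correct and essentially identical to the paper's: the paper also reduces (An1) to the case $\partial\Delta^\varphi \hookrightarrow \Delta^\varphi$ by a skeletal argument, then uses the same prismatic decomposition into the $\sigma_i$ (denoted $C_i$ in the paper).

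For (An2), however, your plan has a genuine gap. You propose to attach only the $(N+1)$-simplices $\sigma_i$, one at a time, each along an admissible horn with a single missing face. A count shows this cannot work for $N \geq 2$: the complement of $\Delta^1 \otimes \Lambda^\varphi_k \cup \partial\Delta^1 \otimes \Delta^\varphi$ in $\Delta^1 \otimes \Delta^\varphi$ contains, besides the $N+1$ simplices $\sigma_i$ and the $N$ shared faces $\tau_i = d_{i+1}(\sigma_i)$, the entire interior of the prism $\Delta^1 \times d_k\Delta^N$ --- that is, $N$ further $N$-simplices and $N-1$ further $(N-1)$-simplices. Each horn attachment of a $\sigma_i$ contributes exactly one new $N$-face, so $N+1$ such attachments account for only $N+1$ of the $2N$ new $N$-simplices. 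Concretely, for $N=2$, $k=0$, $p_0 = p_1$: after attaching $\sigma_0$ (whose only missing face is $\tau_0$), both $\sigma_1$ and $\sigma_2$ still have two missing faces each, so your suggested order ``start near $k$, then move outward'' cannot proceed. A direct prismatic proof remains possible, but it requires interleaving $N$-dimensional horn attachments to fill parts of $\Delta^1 \times d_k\Delta^N$, and the admissibility of \emph{those} horns is where the hypothesis $p_k = p_{k\pm 1}$ enters.

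The paper bypasses this combinatorics entirely. It first proves (Lemme~\ref{SatureeABC}) that $\Sat(\A) = \Sat(\C)$, where $\C$ is the class of all maps $\Delta^1 \otimes X \cup \{\epsilon\} \otimes Y \to \Delta^1 \otimes Y$ for $X \hookrightarrow Y$ a monomorphism; the extra ingredient beyond your (An1) argument is the inclusion $\A \subset \Sat(\C)$, obtained by exhibiting each admissible horn as a \emph{retract} of a map in $\C$. Then (Lemme~\ref{LemmeAn2}) it observes that the pushout-product of any map in $\C$ with any monomorphism $Z \hookrightarrow W$ is again in $\C$ --- a one-line computation with the cylinder --- so the class of $f$ satisfying the (An2) condition is saturated and contains $\C$, hence contains $\Lambda$. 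This yields (An2) and in fact the stronger pushout-product property used later (Théorème~\ref{CategorieModelSimpliciale}), with no prism bookkeeping.
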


\begin{proof}
La définition de $\Lambda$ assure que la condition (An0) est vérifiée. La condition (An1) découle du lemme \ref{SatureeABC} et la condition (An2) est un cas particulier du lemme \ref{LemmeAn2}, avec $\fil{Z}\to\fil{W} =F(\partial(\Delta^1))\to F(\Delta^1)$.
\end{proof}

\begin{lemme}\label{SatureeABC}
Soit $\B$ l'ensemble des morphismes de $\sS_P$ de la forme 
\begin{equation*}
\Delta^1\otimes \partial(\Delta^{\varphi})\cup\{\epsilon\}\otimes \Delta^{\varphi}\to \Delta^1\otimes \Delta^{\varphi}
\end{equation*}
où $\Delta^{\varphi}=(\Delta^N,\varphi)$ est un simplexe filtré, et $\epsilon= 0$ ou $1$ est un sommet de $\Delta^1$.
Soit $\C$ la classe des morphismes de $\sS_P$ de la forme
\begin{equation*}
\Delta^1\otimes\fil{X}\cup\{\epsilon\}\otimes\fil{Y}\to\Delta^1\otimes\fil{Y}
\end{equation*}
où $\fil{X}\to\fil{Y}$ est une inclusion d'ensembles simpliciaux filtrés, et $\epsilon=0$ ou $1$ est un sommet de $\Delta^1$.
On a $\Sat(\A)=\Sat(\B)=\Sat(\C)$.
\end{lemme}

\begin{proof}
($\B\subset \Sat(\A)$) Soit $\Delta^{\varphi}=(\Delta^N,\varphi)$ un simplexe filtré. On notera $\Delta^N=[e_0,\dots,e_N]$ où les $e_i$ sont les sommets de $\Delta^N$ pour $0\leq i\leq N$. On se place dans le cas $\epsilon=1$, le cas $\epsilon=0$ étant similaire. On considère le morphisme de $\B$
\begin{equation*}
f\colon\Delta^1\otimes \partial(\Delta^{\varphi})\cup\{1\}\otimes \Delta^{\varphi}\to \Delta^1\otimes \Delta^{\varphi}.
\end{equation*}
On a $\Delta^1\otimes\Delta^{\varphi}=(\Delta^1\times\Delta^N,\varphi\circ\pr_{\Delta^N})$. On décompose $\Delta^1\times\Delta^N$ comme suit :
Soit $0\leq i\leq N$ un entier, on note $C_i$ le $n+1$-simplexe non-dégénéré de $\Delta^1\times\Delta^N$ défini comme :
\begin{equation*}
C_i=[(0,e_0),\dots,(0,e_i),(1,e_i),\dots,(1,e_N)]
\end{equation*}
Et on note $D_{-1}=\Delta^1\times\partial(\Delta^N)\cup\{1\}\times\Delta^N$, et $D_i=D_{i-1}\cup C_i$. On remarque que $D_N=\Delta^1\times\Delta^N$. En particulier, $f$ se factorise comme 
\begin{equation*}
\Delta^1\otimes \partial(\Delta^{\varphi})\cup\{\epsilon\}\otimes \Delta^{\varphi}\to (D_{0},(\varphi\circ\pr_{\Delta^N})_{|D_0})\to\dots\to (D_{N},\varphi\circ pr_{\Delta^N})=\Delta^1\otimes \Delta^{\varphi}
\end{equation*}
Il suffit donc de montrer que pour tout $0\leq i\leq N$, l'inclusion $(D_{i-1},\varphi\circ\pr_{\Delta^N})\to(D_{i},\varphi\circ\pr_{\Delta^N})$ est dans $\Sat(\A)$. Considérons le carré cocartésien suivant.
\begin{equation*}
\begin{tikzcd}
C_i\cap D_{i-1}
\arrow{r}
\arrow{d}
&D_{i-1}
\arrow{d}
\\
C_i
\arrow{r}
& D_i
\end{tikzcd}
\end{equation*}
On va montrer que l'inclusion $C_i\cap D_{i-1}\to C_i$, munie de la filtration induite, est dans $\Sat(\A)$, ce qui impliquera le résultat. On sait que $C_i$ est le $N+1$ simplexe $[(0,e_0),\dots,(0,e_i),(1,e_i),\dots,(1,e_N)]$. De plus, pour $0\leq l<i$ et pour $i<m\leq N$, les $N$ simplexes de la forme
\begin{equation*}
\begin{array}{c}
[(0,e_0),\dots,\widehat{(0,e_l)},\dots,(0,e_i),(1,e_i),\dots,(1,e_N)]\\
\text{et} \\{}
[(0,e_0),\dots,(0,e_i),(1,e_i),\dots,\widehat{(1,e_m)},\dots,(1,e_N)]
\end{array}
\end{equation*}
appartiennent à $\Delta^1\times \partial(\Delta^N)\subseteq D_{-1}\subseteq D_{i-1}$.
Ensuite, on a :
\begin{align*}
[(0,e_0),\dots,\widehat{(0,e_i)},(1,e_i),\dots,(1,e_N)]&\subseteq [(0,e_0),\dots,(0,e_{i-1}),(1,e_{i-1}),(1,e_i),\dots,(1,e_N)]\\
&=C_{i-1}\\
&\subseteq D_{i-1}.
\end{align*}
Et, on observe que 
\begin{equation*}
[(0,e_0),\dots,(0,e_i),\widehat{(1,e_i)},\dots,(1,e_N)]\not \subset D_{i-1}
\end{equation*}
On en déduit que l'inclusion $C_i\cap D_{i-1}\to C_i$ est de la forme $\Lambda^{N+1}_i\to \Delta^{N+1}$. On note $\psi=(\varphi\circ\pr_{\Delta^N})_{|C_i}\colon C_i\simeq\Delta^{N+1}\to N(P)$, on calcule $\psi(0,e_i)=\varphi(e_i)=\psi(1,e_i)$, et donc l'inclusion du cornet $\Lambda^{\psi}_i\to \Delta^\psi$ est admissible. En particulier, $(C_i\cap D_{i-1},\psi_{|\C_i\cap D_{i-1}})\to (C_i,\psi)$ est dans $\Sat(\A)$. Finalement, on a bien $f\in \Sat(\A)$ et donc $\B\subset \Sat(\A)$.

($\C\subset\Sat(\B)$) Soit $\fil{X}\to\fil{Y}$ une inclusion d'ensembles simpliciaux filtrés. On note $\sk^N(Y)$ le $N$-squelette de $Y$, c'est à dire le sous-ensemble simplicial de $Y$ généré par les simplexes de dimensions inférieures ou égales à $N$, avec la convention $\sk^{-1}(Y)=\emptyset$. De même, on note $\sk^N\fil{Y}=(\sk^N(Y),(\varphi_Y)_{|\sk^N(Y)})$. Alors, on a
\begin{align*}
\Delta^1\otimes\fil{X}\cup\{\epsilon\}\otimes\fil{Y}
\simeq \Delta^1\otimes\left(\fil{X}\cup\sk^{-1\phantom{|}}\fil{Y}\right)\cup\{\epsilon\}\otimes\fil{Y}\\
\text{et}\\
\lim_{\to}\left(\Delta^1\otimes\left(\fil{X}\cup\sk^{N}\fil{Y}\right)\cup\{\epsilon\}\otimes\fil{Y}\right)\simeq \Delta^1\otimes \fil{Y}
\end{align*}
Et $f$ est l'inclusion canonique du terme $N=-1$ dans la limite inductive. En particulier, il suffit de montrer pour tout $N\geq -1$ que le morphisme 
\begin{equation*}
\Delta^1\otimes\left(\fil{X}\cup\sk^{N}\fil{Y}\right)\cup\{\epsilon\}\otimes\fil{Y}\to \Delta^1\otimes\left(\fil{X}\cup\sk^{N+1}\fil{Y}\right)\cup\{\epsilon\}\otimes\fil{Y}
\end{equation*}
est dans $\Sat(\B)$ pour obtenir que $f$ est dans $\Sat(\B)$. Notons $\Sigma^N$ l'ensemble des $N$ simplexes non dégénérés de $Y$ qui ne sont pas contenus dans $X$, et considérons le diagramme commutatif suivant.
\begin{equation}\label{CarreCocartesienAnodines2}
\begin{tikzcd}
\coprod\limits_{\sigma\in \Sigma^{N+1}}\left(\Delta^1\otimes\partial(\Delta^{\varphi_Y\circ \sigma})\cup \{\epsilon\}\otimes\Delta^{\varphi_Y\circ \sigma}\right)
\arrow{r}{\coprod \Id\otimes\sigma}
\arrow{d}
&\Delta^1\otimes\left(\fil{X}\cup\sk^{N}\fil{Y}\right)\cup\{\epsilon\}\otimes\fil{Y}
\arrow{d}{f_N}
\\
\coprod\limits_{\sigma\in \Sigma^{N+1}}\left(\Delta^1\otimes\Delta^{\varphi_Y\circ \sigma}\right)
\arrow{r}{\coprod \Id\otimes\sigma}
&\Delta^1\otimes\left(\fil{X}\cup\sk^{N+1}\fil{Y}\right)\cup\{\epsilon\}\otimes\fil{Y}
\end{tikzcd}
\end{equation}
On doit montrer que $f_N$ est dans $\Sat(\B)$, et on sait que le morphisme de gauche est dans $\Sat(\B)$. Il suffit donc de montrer que ce carré est cocartésien. Pour ça, on utilise le fait que le carré suivant est cocartésien, par définition du squelette :
\begin{equation}\label{CarreCocartesienAnodines}
\begin{tikzcd}
\coprod\limits_{\sigma\in \Sigma^{N+1}}\partial(\Delta^{\varphi_Y\circ \sigma})
\arrow{r}{\sigma}
\arrow{d}
&\fil{X}\cup\sk^{N}\fil{Y}
\arrow{d}
\\
\coprod\limits_{\sigma\in \Sigma^{N+1}}\Delta^{\varphi_Y\circ \sigma}
\arrow{r}{\coprod \sigma}
&\fil{X}\cup\sk^{N+1}\fil{Y}
\end{tikzcd}
\end{equation}
Puis, on considère le diagramme commutatif suivant. Pour alléger les notations, on omet les filtrations et on note $Z^N=X\cup\sk^N(Y)$, et $\Delta=\Delta^{\varphi_Y\circ\sigma}$.
\begin{equation*}
\begin{tikzcd}[column sep=30]
\coprod \Delta^1\otimes\partial(\Delta)
\arrow{r}
\arrow{d}
&\Delta^1\otimes Z^N
\arrow{d}
&\phantom{X}
\\
\coprod\Delta^1\otimes \partial(\Delta)\cup\{\epsilon\}\otimes\Delta
\arrow{r}
\arrow{d}
&\Delta^1\otimes Z^N\cup\{\epsilon\}\otimes Z^{N+1}
\arrow{d}
\arrow{r}
&\Delta^1\otimes Z^N\cup \{\epsilon\}\otimes Y
\arrow{d}
\\
\coprod\Delta^1\otimes \Delta
\arrow{r}
&\Delta^1\otimes Z^{N+1}
\arrow{r}
&\Delta^1\otimes Z^{N+1}\cup\{\epsilon\}\otimes Y
\end{tikzcd}
\end{equation*}
Dans ce diagramme, le carré de droite est cocartésien. De plus, le rectangle de gauche est cocartésien, car c'est l'image du carré \ref{CarreCocartesienAnodines} par le foncteur $\Delta^1\otimes -$ qui préserve les colimites. Le rectangle du bas est le carré \ref{CarreCocartesienAnodines2}, dont on veut montrer qu'il est cocartésien. Finalement, il suffit de montrer que le carré en haut à gauche est cocartésien pour obtenir le résultat voulu. Pour ce faire, on considère d'abord le diagramme suivant :
\begin{equation*}
\begin{tikzcd}
\coprod\{\epsilon\}\otimes\partial(\Delta^{\varphi_Y\circ\sigma})
\arrow{r}
\arrow{d}
&\{\epsilon\}\otimes\fil{Z^N}
\arrow{d}
\arrow{r}
&\Delta^1\otimes\fil{Z^N}
\arrow{d}
\\
\coprod\{\epsilon\}\otimes\Delta^{\varphi_Y\circ\sigma}
\arrow{r}
&\{\epsilon\}\otimes\fil{Z^{N+1}}
\arrow{r}
&\Delta^1\otimes\fil{Z^N}\cup \{\epsilon\}\otimes\fil{Z^{N+1}}
\end{tikzcd}
\end{equation*}
Le carré de gauche est cocartésien car c'est l'image du diagramme \ref{CarreCocartesienAnodines} par le foncteur $\{\epsilon\}\otimes-$, qui préserve les colimites. Le carré de droite est cocartésien par construction, et on en déduit que le rectangle est cocartésien.
Puis, on considère le diagramme 
\begin{equation*}
\begin{tikzcd}[column sep= 5pt]
\coprod\{\epsilon\}\otimes\partial(\Delta^{\varphi_Y\circ\sigma})
\arrow{r}
\arrow{d}
&\coprod\Delta^1\otimes\partial(\Delta^{\varphi_Y\circ\sigma})
\arrow{r}
\arrow{d}
&\Delta^1\otimes\fil{Z^N}
\arrow{d}
\\
\coprod\{\epsilon\}\otimes\Delta^{\varphi_Y\circ\sigma}
\arrow{r}
&\coprod\Delta^1\otimes\partial(\Delta^{\varphi_Y\circ\sigma})\cup\{\epsilon\}\otimes\Delta^{\varphi_Y\circ\sigma}
\arrow{r}
&\Delta^1\otimes\fil{Z^N}\cup \{\epsilon\}\otimes\fil{Z^{N+1}}
\end{tikzcd}
\end{equation*}
On vient de prouver que le rectangle extérieur était cocartésien et le carré de gauche est cocartésien par construction. On en déduit que le carré de droite est cocartésien, ce qui complète la preuve.

($\A\subset\Sat(\C)$). Soit $i\colon\Lambda^{\varphi}_k\to \Delta^\varphi$ une inclusion de cornet admissible. Comme précédemment, $\Delta^{\varphi}=(\Delta^N,\varphi)$ et on note $\Delta^N=[e_0,\dots,e_N]$.  Nous allons prouver que $i$ peut s'obtenir comme le rétracte d'une application de $\mathcal{C}$.
\begin{equation*}
\begin{tikzcd}
\Lambda^{\varphi}_k
\arrow{r}{j}
\arrow{d}{i}
&\Delta^1\otimes\Lambda^{\varphi}_k\cup\{\epsilon\}\otimes\Delta^{\varphi}
\arrow{r}{r}
\arrow{d}{i'}
&\Lambda^{\varphi}_k
\arrow{d}{i}
\\
\Delta^{\varphi}
\arrow{r}{j}
&\Delta^1\otimes\Delta^{\varphi}
\arrow{r}{r}
&\Delta^{\varphi}
\end{tikzcd}
\end{equation*}
Ici, le morphisme $i'$ est le morphisme d'inclusion induit par $i$, et donc est dans $\C$. En particulier, il suffit d'exhiber $j\colon \Delta^{\varphi}\to\Delta^1\otimes\Delta^{\varphi}$ et $r\colon\Delta^1\otimes\Delta^{\varphi}\to \Delta^{\varphi}$ tels que $r\circ j=\Id_{\Delta^{\varphi}}$, $j(\Lambda^{\varphi}_k\subseteq \Delta^1\otimes\Lambda^{\varphi}_k\cup\{\epsilon\}\otimes\Delta^{\varphi}$, et $r(\Delta^1\otimes\Lambda^{\varphi}_k\cup\{\epsilon\}\otimes\Delta^{\varphi})\subset \Lambda^{\varphi}_k$.
Comme $\Lambda^{\varphi}_k$ est un cornet admissible, par hypothèse, on a $\varphi(e_{k})=\varphi(e_{k-1})$ ou $\varphi(e_{k})=\varphi(e_{k+1})$. On se place dans le second cas, ce qui impose de choisir $\epsilon=0$ (Dans le premier cas, il suffit d'échanger les occurrences de $0$ et $1$, et de renverser les relations d'ordres apparaissant dans les définitions suivantes).
On prendra pour $j$ l'inclusion $\Delta^{\varphi}\simeq\{1\}\otimes\Delta^{\varphi}\hookrightarrow\Delta^1\otimes\Delta^{\varphi}$.
On définit $r$ sur les sommets comme suit.
\begin{align*}
\Delta^1\otimes\Delta^{\varphi}&\xrightarrow{r}\Delta^{\varphi}\\
(\mu,e_l)&\mapsto\left\{\begin{array}{cl}
e_k & \text{ si $\mu=0$, $\varphi(e_l)=\varphi(e_k)$, et $l\geq k$} \\
e_l & \text{ sinon}
\end{array}\right.
\end{align*}
On vérifie que $r$ induit bien une application simpliciale. Pour cela, on remarque que $r$ est compatible à la relation d'ordre sur les sommets. En effet, si $l\leq l'\in \{0,\dots,N\}$ et $\mu\leq \mu'\in\{0,1\}$, et qu'on note $r(\mu,e_l)=e_m$ et $r(\mu',e_{l'})=e_{m'}$ alors $m\leq m'$. De plus, $r$ est une application filtrée. Si on note $\psi=\varphi\circ\pr_{\Delta^{N}}$, on a $\psi(\mu,e_l)=\varphi(e_l)$ et 
\begin{equation*}
\varphi(r(\mu,e_l))=\left\{\begin{array}{cl}
\varphi(e_k) & \text{ si $\mu=0$, $\varphi(e_l)=\varphi(e_k)$, et $l\geq k$} \\
\varphi(e_l) & \text{ sinon}
\end{array}\right.=\varphi(e_l)=\psi(\mu,e_l)
\end{equation*}
On a immédiatement $r\circ j=\Id_{\Delta^{\varphi}}$ et $j(\Lambda^{\varphi}_k)\subseteq \Delta^1\otimes\Lambda^{\varphi}_k\cup\{\epsilon\}\otimes\Delta^{\varphi})$. Il reste donc à montrer que $r(\Delta^1\otimes\Lambda^{\varphi}_k\cup\{\epsilon\}\otimes\Delta^{\varphi})\subseteq \Lambda^{\varphi}_k$, ce qui revient à montrer que la face $d_k(\Delta^N)$ n'est pas atteinte par la restriction de $j$ à $\Delta^1\otimes\Lambda^{\varphi}_k\cup\{\epsilon\}\otimes\Delta^{\varphi}$. Comme tous les $N-1$ simplexes de $\Delta^1\otimes\Lambda^{\varphi}_k\cup\{\epsilon\}\otimes\Delta^{\varphi}$ sont contenus dans un $N$ simplexes, il suffit de considérer l'image de ces derniers. Soit $\sigma$ un $N$ simplexe de $\Delta^1\times\Lambda^N_k\cup \{0\}\times\Delta^N$. Alors, $\sigma\in \Delta^1\times\Lambda^N_k$ ou $\sigma \in \{0\}\times\Delta^N$. Dans le premier cas, $\sigma$ est d'une des deux formes suivantes :
\begin{align*}
\sigma=[(0,e_0),\dots,\widehat{(0,e_l)},\dots,(0,e_m),(1,e_m),\dots,(1,e_N)]\\
\text{ou}\\
\sigma=[(0,e_0),\dots,(0,e_m),(1,e_m),\dots,\widehat{(1,e_l)},\dots,(1,e_N)]
\end{align*}
avec $l\not=k,m$. En particulier, on a $r(\sigma)\subseteq d_l(\Delta^N)\subseteq \Lambda^N_k$.
Dans le second cas, $\sigma= [(0,e_0),\dots,(0,e_N)]$, et on a $r(\sigma)\subseteq d_{k+1}(\Delta^N)\subseteq \Lambda^N_k$, car $r(0,e_{k+1})=e_k$. Finalement, nous avons construit le rétracte souhaité ce qui complète la preuve du cas $\A\subset\Sat(\C)$, et du lemme \ref{SatureeABC}.
\end{proof}

\begin{remarque}
La preuve du lemme \ref{SatureeABC} fait apparaitre pourquoi la restriction aux inclusions de cornets admissibles est nécessaire. En effet, pour prouver qu'une inclusion de cornet $\Lambda^{\varphi}_k\to\Delta^{\varphi}$ est dans $\Sat(\C)$, il est nécessaire de supposer
qu'au moins un sommet de $\Lambda^N_k$ en plus du $k$-ième a la même image par la filtration que le $k$-ième sommet, ce qui revient à dire que l'inclusion de cornet est admissible. On remarque de plus que la preuve de l'inclusion $\B\subset\Sat(\A)$ est une adaptation directe de la preuve dans le cas non-filtré, et qu'elle ne fait apparaitre que des cornets admissibles. Ce qui donne un argument quant à la pertinence de cette définition. Finalement, on observe que la classe $C$ est incluse dans toute classe d'extensions anodines (relativement au cylindre $\Delta^1\otimes-$), en vertu de l'axiome (An1). En conséquence de cette observation et du lemme \ref{SatureeABC}, on obtient que les inclusions de cornets admissibles sont incluses dans toute classe d'extensions anodines.
\end{remarque}

\begin{lemme}\label{LemmeAn2}
Soit $\fil{X}\to\fil{Y}$ un morphisme de $\Lambda$. et $\fil{Z}\to\fil{W}$ une inclusion d'ensembles simpliciaux filtrés. Alors, le morphisme
\begin{equation*}
\fil{W}\times_{N(P)}\fil{X}\cup\fil{Z}\times_{N(P)}\fil{Y}\to\fil{W}\times_{N(P)}\fil{Y}
\end{equation*}
est dans $\Lambda$.
\end{lemme}

\begin{proof}
Soit $\D$ la classe de morphismes de $\sS_P$ contenant les morphismes $f\colon\fil{X}\to\fil{Y}$ tels que 
\begin{equation*}
\fil{W}\times_{N(P)}\fil{X}\cup\fil{Z}\times_{N(P)}\fil{Y}\to\fil{W}\times_{N(P)}\fil{Y}
\end{equation*}
est dans $\Lambda$ pour toute inclusion d'ensembles simpliciaux filtrés $\fil{Z}\to\fil{W}$. Nous allons montrer que la classe $\D$ est saturée, et qu'elle contient $\C$, ce qui suffit à prouver le résultat souhaité à l'aide du lemme \ref{SatureeABC}. 
Soit $f\colon \fil{X}\to\fil{Y}$ dans $\D$. Considérons $f'$ son image directe :
\begin{equation}\label{DiagrammePreuveAn2}
\begin{tikzcd}
\fil{X}
\arrow{d}{f}
\arrow{r}
&\fil{X'}
\arrow{d}{f'}
\\
\fil{Y}
\arrow{r}
&\fil{Y'}
\end{tikzcd}
\end{equation}

et soit $\fil{Z}\to\fil{W}$ une inclusion d'ensembles simpliciaux filtrés. On a le diagramme commutatif suivant. Pour alléger les notations, on omet les filtrations. Attention cependant, $X\times Y$ désigne le produit filtré $\fil{X}\times_{N(P)}\fil{Y}$, et non le produit des ensembles simpliciaux sous-jacents.

\begin{equation*}
\begin{tikzcd}[column sep=30]
\phantom{X}
&X'\times Z
\arrow{rr}
\arrow{dd}
&\phantom{X}
&Y'\times Z
\arrow{dd}
&\phantom{X}
\\
X\times Z
\arrow[crossing over]{rr}
\arrow{dd}
\arrow{ur}
&\phantom{X}
&Y\times Z
\arrow{ur}
&\phantom{X}
&\phantom{X}
\\
\phantom{X}
&X'\times W
\arrow{rr}
&\phantom{X}
& X'\times W\cup  Y'\times Z
\arrow{dd}{i'}
&\phantom{X}
\\
X\times W
\arrow{ur}
\arrow{rr}
&\phantom{X}
& X\times W\cup Y\times Z
\arrow[leftarrow, crossing over]{uu}
\arrow{ur}
\arrow{dd}{i}\\
&&& Y'\times W
\\
\phantom{X}
&
& Y\times W
\arrow{ur}
&\phantom{X}
\end{tikzcd}
\end{equation*}
Par hypothèse, $i$ est dans $\Lambda$, montrons que $i'$ est dans $\Lambda$. Par construction, la face avant et la face arrière du cube apparaissant dans le diagramme précédent sont cocartésiennes.
De plus, par définition de $f'$, la face supérieure du cube est elle aussi cocartésienne. 
On en déduit que la face inférieure est cocartésienne. 
De plus, la composition de la face inférieure avec le carré contenant $i$ et $i'$ est aussi cocartésienne, car c'est l'image du diagramme \ref{DiagrammePreuveAn2} par le foncteur $-\times\fil{W}$ qui préserve les colimites. Finalement, on en déduit que le carré contenant $i$ et $i'$ est cocartésien, et donc que $i'$ est dans $\Lambda$, ce qui implique que $f'$ est dans $\D$ et donc que $\D$ est stable par image directe. De la même façon, on montre que $\D$ est stable par rétracte, somme directe et composition dénombrable, la classe $\D$ est donc saturée. Montrons que $\C\subset \D$.
Soit $f\colon\fil{X}\to\fil{Y}$ un morphisme de $\C$, et soit $\fil{Z}\to\fil{W}$ une inclusion d'ensembles simpliciaux filtrés. Le morphisme $f$ est de la forme 
\begin{equation*}
\fil{X}=\Delta^1\otimes\fil{A}\cup\{\epsilon\}\otimes\fil{B}\to\Delta^1\otimes\fil{B}=\fil{Y}
\end{equation*}
avec $\fil{A}\to\fil{B}$ une inclusion d'ensemble simpliciaux filtrés.
On calcule 
\begin{align*}
&\fil{W}\times\fil{X}\cup\fil{Z}\times\fil{Y}\\
&=\fil{W}\times(\Delta^1\otimes\fil{A}\cup\{\epsilon\}\otimes\fil{B}) 
\cup \fil{Z}\times(\Delta^1\otimes\fil{B})\\
&=\Delta^1\otimes\left(\fil{W}\times\fil{A}\cup\fil{Z}\times\fil{B}\right)\cup\{\epsilon\}\otimes \left(\fil{W}\times\fil{B}\right)
\end{align*}
On en déduit que l'inclusion 
\begin{equation*}
\fil{W}\times\fil{X}\cup\fil{Z}\times\fil{Y}\to\fil{W}\times\fil{Y}
\end{equation*}
est égale au morphisme de $\C$ correspondant à l'inclusion d'ensembles simpliciaux filtrés
\begin{equation*}
\fil{W}\times\fil{A}\cup\fil{Z}\times\fil{B}\to\fil{W}\times\fil{B}.
\end{equation*}
En particulier, $f$ est dans $\D$, ce qui implique $\C\subset \D$ et donc $\Lambda\subset \D$ par le lemme \ref{SatureeABC}.
\end{proof}

\subsection{Théorème d'existence d'une structure de modèle}

\begin{defin}\label{DefClassesSSP}
On définit les classes suivantes dans la catégorie $\sS_P$.
\begin{itemize}
\item Un morphisme $f\colon (X,\varphi_X)\to(Y,\varphi_Y)$ est une \textbf{cofibration} si $f\colon X\to Y$ est un monomorphisme.
\item Un morphisme est une \textbf{fibration triviale} s'il a la propriété de relèvement à droite par rapport aux cofibrations.
\item Un morphisme est une \textbf{extension anodine} s'il est dans $l(r(\A))$, où $\A$ est l'ensemble des inclusions de cornets admissibles.
\item Un morphisme est une \textbf{fibration naïve} s'il a la propriété de relèvement à droite par rapport aux inclusions de cornets admissibles.
\item Un ensemble simplicial filtré $\fil{X}$ est \textbf{fibrant} si le morphisme $\varphi_X\colon\fil{X}\to(N(P),\Id_{N(P)})$ est une fibration naïve.
\item Un morphisme $f\colon\fil{X}\to\fil{Y}$ est une \textbf{équivalence faible} si pour tout ensemble simplicial filtré fibrant, $\fil{Z}$, l'application entre ensembles de classes d'homotopies 
\begin{equation*}
f^{*}\colon [\fil{Y},\fil{Z}]\to[\fil{X},\fil{Z}]
\end{equation*}
est une bijection. Où $[\fil{Y},\fil{Z}]$ désigne l'ensemble des applications filtrées de $\fil{Y}$ vers $\fil{Z}$ à homotopie filtrée près.
\item Un morphisme est une \textbf{cofibration triviale} si c'est à la fois une cofibration et une équivalence faible.
\item Un morphisme est une \textbf{fibration} s'il a la propriété de relèvement à droite par rapport aux cofibrations triviales.
\end{itemize}
\end{defin}

\begin{theo}\label{ExistenceCMFCisinski}
La catégorie $\sS_P$ munie des classes de cofibrations, fibrations et équivalences faibles de la définition \ref{DefClassesSSP} est une catégorie modèle.
\end{theo}
\begin{proof}
C'est une application immédiate de \cite[Théorème 1.3.22]{Cisinski}
\end{proof}

\begin{remarque}
Le théorème de Cisinski ne garantit pas que la classe des fibrations naïves et celle des fibrations coïncident. Dualement, on ne sait pas a priori si les cofibrations triviales sont toutes des extensions anodines. On a en revanche les inclusions 
\begin{equation*}
\textbf{fibrations}\subseteq \textbf{fibrations naïves} \text{ et } \textbf{extensions anodines}\subseteq \textbf{cofibrations triviales}
\end{equation*}
Cependant, la description des fibrations naïves en terme de propriétés de relèvement par rapport aux inclusions de cornets admissibles est beaucoup plus maniable que la définition des fibrations. Aussi, on verra que dans le cas qui nous intéresse ici, la classe des fibrations et celle des fibrations naïves coïncident. Ce fait est difficile à démontrer mais permet ensuite de ramener un grand nombre de preuves à une étude de problème de relèvement impliquant des cornets admissibles. En particulier, on pourra montrer aisément en utilisant ce fait que la structure de modèle ici définie est simpliciale. 
\end{remarque}

\section{Caractérisation des fibrations}
\label{SectionCaracterisationFibrations}
\subsection{Subdivision filtrée}\label{SectionSubdivisionFiltree}

\begin{defin}
On définit la subdivision filtrée de $(N(P),\Id_{N(P)})$, $(\sd_P(N(P)),\varphi_P)$ comme suit. Les simplexes de $\sd_P(N(P))$ sont donnés par
\begin{equation*}
\sd_P(N(P))_k=\{\left( (\sigma_0,\dots,\sigma_k),(p_0,\dots,p_k)\right)\ |\ \sigma_0\subseteq\dots\subseteq\sigma_k\in N(P), \ p_0\leq \dots\leq p_k\in P, \ \{p_i\}\subseteq \sigma_0\ \forall i\}
\end{equation*}
les applications faces et dégénérescences sont données par :
\begin{align*}
\sd_P(N(P))_k&\xrightarrow{d_i}\sd_P(N(P))_{k-1}\\
\left( (\sigma_0,\dots,\sigma_k),(p_0,\dots,p_k)\right)&\mapsto \left( (\sigma_0,\dots,\widehat{\sigma_i},\dots,\sigma_k),(p_0,\dots,\widehat{p_i},\dots,p_k)\right)\\
\sd_P(N(P))_k&\xrightarrow{s_i}\sd_P(N(P))_{k+1}\\
\left( (\sigma_0,\dots,\sigma_k),(p_0,\dots,p_k)\right)&\mapsto \left( (\sigma_0,\dots,\sigma_i,\sigma_i,\dots,\sigma_k),(p_0,\dots,p_i,p_i,\dots,p_k)\right)\\
\end{align*}
En particulier, on a $\sd_P(N(P))\subseteq \sd(N(P))\times N(P)$. La composition $\sd_P(N(P))\to\sd(N(P))\times N(P)\xrightarrow{\pr_{N(P)}} N(P)$ fournit la filtration $\varphi$.
Soit $\fil{X}$ un ensemble simplicial filtré. On définit sa subdivision filtrée, $\sd_P\fil{X}$ comme suit. Comme ensemble simplicial, c'est le produit fibré
\begin{equation*}
\begin{tikzcd}
\sd_P(X)
\arrow{d}
\arrow{r}
&\sd(X)
\arrow{d}{\sd(\varphi_X)}
\\
\sd_P(N(P))
\arrow{r}
&\sd(N(P))
\end{tikzcd}
\end{equation*}
La filtration est donnée par la composition $\sd_P(X)\to\sd_P(N(P))\xrightarrow{\varphi} N(P)$. Par abus de notation, on note $(\sd_P(X),\sd_P(\varphi_X))$ pour la subdivision filtrée de $\fil{X}$, même si l'ensemble simplicial $\sd_P(X)$ dépend de la filtration $\varphi_X$.
Comme la subdivision $\sd$ est fonctorielle, cette définition induit un foncteur 
\begin{equation*}
\sd_P\colon\sS_P\to\sS_P
\end{equation*}
\end{defin}

\begin{prop}\label{SubdivisionSimplexe}
Soit $\Delta^{\varphi}=(\Delta^N,\varphi)$ un simplexe filtré. L'ensemble simplicial $\sd_P(\Delta^{\varphi})$ peut être décrit comme suit 
\begin{equation*}
\sd_P(\Delta^{\varphi})_k\simeq\{[(\sigma_0,p_0),\dots,(\sigma_k,p_k)]\ |\ \sigma_0\subseteq\dots\subseteq \sigma_k\in (\Delta^{N})_{\nd},p_0\leq\dots\leq p_k\in P, \{p_i\}\subseteq \varphi(\sigma_0)\ \forall i\},
\end{equation*}
où $(\Delta^{N})_{\nd}$ désigne l'ensemble des simplexes non-dégénérés de $\Delta^N$. Et la filtration $\psi\colon\sd_P(\Delta^{\varphi})\to N(P)$ est donné par $\psi[(\sigma_0,p_0),\dots,(\sigma_n,p_n)]=[p_0,\dots,p_n]$.
De plus, si $\fil{X}$ est un ensemble simplicial filtré, sa subdivision filtrée est isomorphe à la colimite suivante : 
\begin{equation*}
\sd_P(X)\simeq \colim\limits_{\sigma\in\Hom(\Delta^{\varphi},\fil{X})} \sd_P(\Delta^{\varphi})
\end{equation*}
Et la filtration $\sd_P(X)\to N(P)$ est obtenue par colimite des filtrations sur les $\sd_P(\Delta^{\varphi})$.
\end{prop}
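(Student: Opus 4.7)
Mon plan pour prouver la proposition \ref{SubdivisionSimplexe} procède en deux étapes correspondant aux deux énoncés : d'abord la description explicite de $\sd_P(\Delta^\varphi)$, ensuite la formule de colimite.

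Pour la première partie, je déroulerais directement la définition. Par construction, $\sd_P(\Delta^\varphi)$ est le produit fibré
\begin{equation*}
\sd_P(\Delta^\varphi)=\sd(\Delta^N)\times_{\sd(N(P))}\sd_P(N(P)).
\end{equation*}
Or les produits fibrés d'ensembles simpliciaux se calculent dimension par dimension. Il suffit donc, pour chaque $k$, de décrire les $k$-simplexes. J'utiliserais la description classique (voir par exemple \cite{GoerssJardine}) : un $k$-simplexe de $\sd(\Delta^N)$ est une chaîne $\sigma_0\subseteq\dots\subseteq\sigma_k$ de simplexes non-dégénérés de $\Delta^N$, et l'image par $\sd(\varphi)$ est la chaîne $\varphi(\sigma_0)\subseteq\dots\subseteq\varphi(\sigma_k)$ dans $\sd(N(P))$. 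Par ailleurs, un $k$-simplexe de $\sd_P(N(P))$ est un couple $((\tau_0,\dots,\tau_k),(p_0,\dots,p_k))$ avec $\tau_0\subseteq\dots\subseteq \tau_k$ et $\{p_i\}\subseteq \tau_0$ pour tout $i$, envoyé sur la chaîne $(\tau_0,\dots,\tau_k)$ dans $\sd(N(P))$. Un élément du produit fibré correspond donc à des données $(\sigma_\bullet,\tau_\bullet,p_\bullet)$ avec $\tau_i=\varphi(\sigma_i)$. Les $\tau_i$ étant déterminés, on obtient exactement la description annoncée, et la formule pour la filtration $\psi$ se lit directement à partir de la définition de $\varphi_P$ sur $\sd_P(N(P))$. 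La compatibilité avec les faces et dégénérescences est formelle.

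Pour la seconde partie, je m'appuierais sur deux faits généraux. D'une part, tout ensemble simplicial filtré est naturellement colimite de ses simplexes filtrés : c'est exactement la proposition \ref{CategoriePrefaisceaux} (et son corollaire), qui identifie $\sS_P$ à la catégorie de préfaisceaux sur $\Delta(P)$, dans laquelle le lemme de Yoneda usuel donne $\fil{X}\simeq \colim_{\sigma\in \Hom(\Delta^\varphi,\fil{X})}\Delta^\varphi$. D'autre part, le foncteur classique de subdivision $\sd\colon \sS\to\sS$ est un adjoint à gauche (de $\Ex$) et préserve donc les colimites. Ainsi $\sd(X)\simeq\colim\sd(\Delta^N)$ et l'image de cette colimite vers $\sd(N(P))$ est compatible avec les morphismes structurels.

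Il reste à faire passer la colimite à travers le produit fibré avec $\sd_P(N(P))\to\sd(N(P))$. C'est là le point le plus délicat : la formule souhaitée équivaut à dire que le foncteur de changement de base
\begin{equation*}
(-)\times_{\sd(N(P))}\sd_P(N(P))\colon \sS/\sd(N(P))\to \sS
\end{equation*}
préserve les colimites. Ceci est vrai car $\sS$ est un topos : pour tout morphisme $f\colon A\to B$ d'ensembles simpliciaux, le foncteur $f^*\colon \sS/B\to \sS/A$ admet un adjoint à droite (la somme dépendante), donc préserve les colimites. Alternativement, on peut vérifier cette stabilité dimension par dimension dans $\Set$, où elle est immédiate puisque les produits fibrés d'ensembles commutent avec les colimites filtrantes et les sommes disjointes. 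En combinant :
\begin{equation*}
\sd_P(X)=\sd(X)\times_{\sd(N(P))}\sd_P(N(P))\simeq \left(\colim_\sigma \sd(\Delta^N)\right)\times_{\sd(N(P))}\sd_P(N(P)) \simeq \colim_\sigma \sd_P(\Delta^\varphi),
\end{equation*}
la dernière identification utilisant que $\sd(\Delta^N)\times_{\sd(N(P))}\sd_P(N(P))=\sd_P(\Delta^\varphi)$ par définition. La compatibilité avec la filtration suit formellement puisque celle-ci est induite sur chaque $\sd_P(\Delta^\varphi)$ par composition avec $\varphi_P$, et ces morphismes sont compatibles avec la colimite. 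L'obstacle principal est véritablement cette interversion colimite/produit fibré, que le cadre des préfaisceaux rend néanmoins automatique.
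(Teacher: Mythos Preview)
Your proposal is correct and follows essentially the same approach as the paper: unwinding the pullback definition dimension by dimension for the first part, then using the colimit decomposition of $\fil{X}$ together with the fact that $\sd$ is a left adjoint for the second. You are in fact more explicit than the paper on the key step of commuting the colimit past the pullback along $\sd_P(N(P))\to\sd(N(P))$; the paper simply writes down the pullback square with $\colim_\sigma\sd(\Delta^N)$ in the corner and says ``on en déduit le résultat voulu,'' whereas you correctly invoke the topos-theoretic fact that base change preserves colimits. One small terminological slip: the right adjoint to $f^*$ is the dependent \emph{product} (or direct image $f_*$), not the dependent sum, which is the left adjoint; the argument itself is unaffected.
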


\begin{proof}
Soit $\Delta^{\varphi}=(\Delta^N,\varphi)$ un simplexe filtré. Calculons $\sd_P(\Delta^{\varphi})$. Par définition la subdivision est obtenue comme le produit fibré 
\begin{equation*}
\begin{tikzcd}
\sd_P(\Delta^{\varphi})
\arrow{r}
\arrow{d}
&\sd(\Delta^N)
\arrow{d}{\sd(\varphi)}
\\
\sd_P(N(P))
\arrow{r}
&\sd(N(P))
\end{tikzcd}
\end{equation*}
On note temporairement $S(\Delta^{\varphi})$ l'ensemble simplicial filtré défini dans l'énoncé de la proposition \ref{SubdivisionSimplexe}. On a deux morphismes 
\begin{align*}
S(\Delta^{\varphi})&\to\sd(\Delta^N)\\ {} 
{[}(\sigma_0,p_0),\dots,(\sigma_n,p_n)]&\mapsto [\sigma_0,\dots,\sigma_n]\\
S(\Delta^{\varphi})&\to\sd_P(N(P))\\ {} 
{[}(\sigma_0,p_0),\dots,(\sigma_n,p_n)]&\mapsto [(\varphi(\sigma_0),p_0),\dots,(\varphi(\sigma_n),p_n)]
\end{align*}
De plus, on vérifie que les compositions avec $\sd(\varphi)$ et $\pr_{\sd(N(P))}$ respectivement donnent le même morphisme
\begin{align*}
S(\Delta^{\varphi})&\to\sd(N(P))\\ {} 
[(\sigma_0,p_0),\dots,(\sigma_n,p_n)]&\mapsto [\varphi(\sigma_0),\dots,\varphi(\sigma_n)]
\end{align*}
en particulier, par la propriété universelle du produit fibré, il existe un morphisme qui factorise les morphismes précédents.
\begin{align*}
S(\Delta^{\varphi})&\xrightarrow{f}\sd_P(\Delta^{\varphi})\\
[(\sigma_0,p_0),\dots,(\sigma_n,p_n)]&\mapsto (((\varphi(\sigma_0),\dots,\varphi(\sigma_n)),(p_0,\dots,p_n),(\sigma_0,\dots,\sigma_n))
\end{align*}
Montrons que $f$ est une bijection. Soient $\sigma=[\sigma_0,\dots,\sigma_n]$ un simplexe de $\sd(\Delta^N)$, et 
\begin{equation*}
\tau=(\tau_0,\dots,\tau_n),(p_0,\dots,p_n)
\end{equation*}
un simplexe de $\sd_P(N(P))$ tels que $\sd(\varphi)(\sigma)=\pr_{\sd(N(P))}(\tau)$. Alors, on a $\tau_i=\varphi(\sigma_i)$ pour tout $i$, et $\tau$ est de la forme $((\varphi(\sigma_0),\dots,\varphi(\sigma_n)),(p_0,\dots,p_n))$. On en déduit que $(\tau,\sigma)$ est dans l'image de $f$, et donc que $f$ est surjective.
De plus, on constate $f([(\sigma_0,p_0),\dots,(\sigma_n,p_n)])=f'([(\sigma'_0,p'_0),\dots,(\sigma'_n,p'_n)])$ implique $\sigma_i=\sigma'_i$ et $p_i=p'_i$ pour tout $i$, et donc que $f$ est injective. On en déduit que $f\colon S(\Delta^{\varphi})\xrightarrow{\simeq}\sd_P(\Delta^{\varphi})$ est un isomorphisme. On vérifie ensuite que la filtration sur $\sd_P(\Delta^{\varphi})$ correspond bien à la définition donnée dans la proposition \ref{SubdivisionSimplexe}, ce qui prouve le premier point.
Soit $\fil{X}$ un ensemble simplicial filtré. On rappelle que 
\begin{equation*}
\fil{X}\simeq \colim\limits_{\sigma\in \Hom(\Delta^{\varphi},\fil{X})}\Delta^{\varphi}
\end{equation*}
De plus, on sait sur la subdivision non filtrée $\sd$, que 
\begin{equation*}
\sd(X)\simeq\sd\left(\colim\limits_{\sigma\in\Hom(\Delta^N,X)}\Delta^N\right)\simeq\colim\limits_{\sigma\in\Hom(\Delta^N,X)}\sd(\Delta^N)
\end{equation*}
En particulier, $\sd_P(X)$ est définie comme le produit fibré 
\begin{equation*}
\begin{tikzcd}
\sd_P(X)
\arrow{d}
\arrow{r}
&\colim\limits_{\sigma\in\Hom(\Delta^N,X)}\sd(\Delta^N)
\arrow{d}{\sd(\varphi_X)}
\\
\sd_P(N(P))
\arrow{r}
&\sd(N(P))
\end{tikzcd}
\end{equation*}
On en déduit le résultat voulu.
\end{proof}

\begin{corollaire}\label{SimplexesSubdivisionEnsembleSimplicialFiltre}
Soit $\fil{X}$ un ensemble simplicial filtré. Tout simplexe $\sigma\colon \Delta^{\varphi}\to\sd_P\fil{X}$ de la subdivision de $\fil{X}$, peut se factoriser sous la forme
\begin{equation*}
\Delta^{\varphi}\xrightarrow{\widehat{\sigma}}\sd_P(\Delta^{\phi})\xrightarrow{\sd_P(\widetilde{\sigma})} \sd_P\fil{X}.
\end{equation*}
En particulier, tout simplexe de la subdivision de $\fil{X}$ peut s'écrire sous la forme 
\begin{equation*}
(\widetilde{\sigma},[(\sigma_0,p_0),\dots,(\sigma_n,p_n)])
\end{equation*}
où $\widetilde{\sigma}\colon \Delta^{\psi}\to X$ est un simplexe non dégénéré de $X$, et $[(\sigma_0,p_0),\dots,(\sigma_n,p_n)]$ est un simplexe de $\sd_P(\Delta^{\psi})$. Si on exige de plus que $\sigma_n=\Delta^{\psi}$, cette écriture est unique.
\end{corollaire}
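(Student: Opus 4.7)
The plan is to unpack the colimit formula of Proposition \ref{SubdivisionSimplexe}. By Yoneda, specifying a filtered simplex $\sigma\colon\Delta^{\varphi}\to\sd_P\fil{X}$ amounts to specifying an $n$-simplex of $\sd_P(X)$ whose filtration equals $\varphi$. Using the colimit description, such a simplex is represented by a pair $(\tau,[(\sigma_0,p_0),\dots,(\sigma_n,p_n)])$ where $\tau\colon\Delta^{\phi}\to\fil{X}$ is some filtered simplex and $[(\sigma_0,p_0),\dots,(\sigma_n,p_n)]\in\sd_P(\Delta^{\phi})_n$, with $\sigma_0\subseteq\cdots\subseteq\sigma_n$ non-degenerate faces of $\Delta^{\phi}$, $p_0\leq\cdots\leq p_n$ in $P$, and $\{p_i\}\subseteq\phi(\sigma_0)$. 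The bracket part corresponds by Yoneda to a map $\widehat{\sigma}\colon\Delta^{\varphi}\to\sd_P(\Delta^{\phi})$, and composing with $\sd_P(\tau)$ recovers $\sigma$, which already yields the required factorization but not yet with the desired normalization $\sigma_n=\Delta^{\psi}$.

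Next, I would normalize the representative so that the top face fills the whole ambient simplex. The idea is to restrict $\tau$ to the (non-degenerate) face $\sigma_n\subseteq\Delta^{\phi}$: let $\iota\colon\sigma_n\hookrightarrow\Delta^{\phi}$ be the face inclusion and set $\widetilde{\sigma}=\tau\circ\iota$, $\psi=\varphi_X\circ\widetilde{\sigma}=\phi\circ\iota$. Since each $\sigma_i$ is contained in $\sigma_n$, the chain $(\sigma_0,\dots,\sigma_n)$ pulls back to a chain $(\sigma'_0,\dots,\sigma'_{n-1},\Delta^{\psi})$ of non-degenerate faces of $\Delta^{\psi}$, and the condition $\{p_i\}\subseteq\phi(\sigma_0)=\psi(\sigma'_0)$ still holds. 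Naturality of $\sd_P$ for $\iota$ gives $\sd_P(\tau)\circ\sd_P(\iota)=\sd_P(\widetilde{\sigma})$, so the pair $(\widetilde{\sigma},[(\sigma'_0,p_0),\dots,(\Delta^{\psi},p_n)])$ represents the same simplex $\sigma$ and now has top entry equal to $\Delta^{\psi}$.

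Third, to further arrange that $\widetilde{\sigma}$ itself is non-degenerate, I would invoke the standard Eilenberg--Zilber decomposition in $X$: write $\widetilde{\sigma}=\widetilde{\sigma}^{\nd}\circ s$ uniquely, with $s$ a composition of degeneracies and $\widetilde{\sigma}^{\nd}$ non-degenerate; push the degeneracies into $\widehat{\sigma}$ via $\sd_P$ functoriality. The filtration $\psi$ factors through this as $\varphi_X\circ\widetilde{\sigma}^{\nd}$ composed with the degeneracy, and the chain $(\sigma'_0,\dots,\Delta^{\psi})$ gets transported accordingly, still ending at the top simplex of the new ambient. Thus we obtain the desired factorization in the claimed form.

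For uniqueness, suppose $(\widetilde{\sigma}_1,\widehat{\sigma}_1)$ and $(\widetilde{\sigma}_2,\widehat{\sigma}_2)$ both represent $\sigma$ with top face equal to the whole ambient simplex. Evaluating on the top vertex of $\widehat{\sigma}_j$ shows that the images of $\widetilde{\sigma}_1$ and $\widetilde{\sigma}_2$ in $X$ agree as $n$-simplices; since both are required non-degenerate, uniqueness of the Eilenberg--Zilber decomposition forces $\widetilde{\sigma}_1=\widetilde{\sigma}_2$, and hence $\psi_1=\psi_2$. The remaining data (the chain $(\sigma'_0,\dots,\sigma'_{n-1})$ and the $p_i$'s) is then read off from $\sigma$ itself. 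The main technical point to check carefully will be the behaviour of the filtrations under the restriction to $\sigma_n$ and under the Eilenberg--Zilber degeneracy, specifically that the condition $\{p_i\}\subseteq\psi(\sigma'_0)$ is preserved at each step — everything else is an essentially formal manipulation of the colimit description provided by Proposition \ref{SubdivisionSimplexe}.
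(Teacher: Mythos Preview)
Your proposal is correct and follows the same approach as the paper: both derive the factorisation from the colimit description of Proposition \ref{SubdivisionSimplexe} and then normalize by restricting to the top face $\sigma_n$ of the chain. You are in fact more careful than the paper in explicitly separating out the Eilenberg--Zilber step that forces $\widetilde{\sigma}$ to be non-degenerate, which the paper leaves implicit in its one-line restriction argument.
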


\begin{proof}
L'existence d'une telle factorisation provient de la définition de $\sd_P\fil{X}$ comme une colimite de subdivsion de simplexes filtrés. Pour prouver l'unicité, il suffit de voir qu'il existe une unique face de $\Delta^{\psi}$ qui contient l'image de l'application $\Delta^{\varphi}\to\sd_P(\Delta^{\psi})$ et qui est maximale pour cette propriété. La restriction de la factorisation à cette face donne l'unique factorisation pour laquelle $\sigma_n=\Delta^{\psi}$.
\end{proof}

\begin{defin}\label{DernierSommetFiltre}
Soit $\Delta^{\varphi}=(\Delta^N,\varphi)$ un simplexe filtré. On définit l'application de dernier sommet, comme suit
\begin{align*}
\sd_P(\Delta^{\varphi})& \xrightarrow{\lv_P}\Delta^{\varphi}\\ {}
[(\sigma_0,p_0),\dots,(\sigma_n,p_n)]&\mapsto[(\lv_P(\sigma_0,p_0),\dots,\lv_P(\sigma_n,p_n)]
\end{align*}
où $\lv_P(\sigma,p)$ est donnée par 
\begin{equation*}
\lv_P(\sigma,p)=\max\{e_k\in (\sigma)_0\subseteq(\Delta^N)_0\ |\ \varphi(e_k)=p\}.
\end{equation*}
Soit $\fil{X}$ un ensemble simplicial filtré, on définit l'application de dernier sommet sur $\fil{X}$ comme la composition suivante
\begin{equation*}
\sd_P(X)\simeq\colim\limits_{\sigma\in\Hom(\Delta^{\varphi},\fil{X})} \sd_P(\Delta^{\varphi})\xrightarrow{\colim(\lv_P)} \colim\limits_{\sigma\in\Hom(\Delta^{\varphi},\fil{X})}\Delta^{\varphi}\simeq \fil{X}
\end{equation*}
Finalement, on obtient une transformation naturelle 
\begin{equation*}
\lv_P\colon \sd_P\to \Id
\end{equation*}
\end{defin}

\begin{prop}
Le foncteur $\sd_P$ admet un adjoint à droite $\Ex_P\colon\sS_P\to\sS_P$. 
\end{prop}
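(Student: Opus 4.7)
The plan is to apply the standard nerve-realization paradigm for presheaf categories, using the fact established in Proposition~\ref{CategoriePrefaisceaux} that $\sS_P$ is equivalent to the category of presheaves on $\Delta(P)$.

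First, I would observe that by Proposition~\ref{SubdivisionSimplexe}, the functor $\sd_P$ is by construction a left Kan extension: it is defined on the small subcategory $\Delta(P) \subset \sS_P$ of filtered simplices by $\Delta^{\varphi} \mapsto \sd_P(\Delta^{\varphi})$, and extended to all of $\sS_P$ via the colimit formula
\begin{equation*}
\sd_P(X,\varphi_X) \simeq \colim_{\sigma \in \Hom(\Delta^{\varphi}, (X,\varphi_X))} \sd_P(\Delta^{\varphi}).
\end{equation*}
As a left Kan extension from a small category into a cocomplete category, $\sd_P$ preserves all small colimits.

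Next, I would define the candidate right adjoint $\Ex_P$ explicitly using the Yoneda principle. For any filtered simplicial set $\fil{X}$, set
\begin{equation*}
\Ex_P\fil{X}_{\Delta^{\varphi}} = \Hom_{\sS_P}(\sd_P(\Delta^{\varphi}), \fil{X}),
\end{equation*}
viewed as a presheaf on $\Delta(P)$. Under the equivalence of Proposition~\ref{CategoriePrefaisceaux}, this defines an object of $\sS_P$; the underlying simplicial structure and the filtration $\Ex_P\fil{X} \to N(P)$ are read off from the presheaf structure (a $k$-simplex of $\Ex_P\fil{X}$ over a filtration $\varphi\colon \Delta^k \to N(P)$ is precisely an element of $\Hom_{\sS_P}(\sd_P(\Delta^{\varphi}), \fil{X})$). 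Functoriality of $\Ex_P$ in $\fil{X}$ is immediate from post-composition.

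Finally, I would check the adjunction isomorphism
\begin{equation*}
\Hom_{\sS_P}(\sd_P\fil{Y}, \fil{X}) \simeq \Hom_{\sS_P}(\fil{Y}, \Ex_P\fil{X})
\end{equation*}
natural in $\fil{X}$ and $\fil{Y}$. The verification reduces to the case $\fil{Y} = \Delta^{\varphi}$ where both sides equal $\Hom_{\sS_P}(\sd_P(\Delta^{\varphi}), \fil{X})$ by definition and by the Yoneda lemma, respectively. The general case follows since both functors send colimits in $\fil{Y}$ to limits: the right-hand side by construction (a $\Hom$ out of a colimit), the left-hand side because $\sd_P$ commutes with colimits (by the first step) and $\Hom_{\sS_P}(-, \fil{X})$ sends colimits to limits. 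No serious obstacle arises here; the only mild subtlety is keeping track of the filtration in the definition of $\Ex_P\fil{X}$, which is transparent once one uses the presheaf description of $\sS_P$.
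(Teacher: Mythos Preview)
Your proof is correct and follows essentially the same approach as the paper: define $\Ex_P\fil{X}(\Delta^{\varphi}) = \Hom_{\sS_P}(\sd_P(\Delta^{\varphi}), \fil{X})$ and invoke the nerve--realization adjunction for presheaves on $\Delta(P)$. The paper simply states this formula and writes ``par construction'' for the adjunction, whereas you spell out the verification via Yoneda and colimit preservation, but the underlying argument is identical.
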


\begin{proof}
Soit $\fil{X}$ un ensemble simplicial filtré. Comme foncteur de $\Delta(P)^{\op}\to \Set$, $\Ex_P(X)$ est définit par la formule 
\begin{equation*}
\Ex_P(X)(\Delta^{\varphi})=\Hom_{\sS_P}(\sd_P(\Delta^{\varphi}),\fil{X})
\end{equation*}
Par construction, $\Ex_P$ est un adjoint à droite de $\sd_P$.
\end{proof}

\begin{defin}
On a la suite de transformation naturelle :
\begin{equation*}
\Id\xrightarrow{\iota}\Ex_P\xrightarrow{\Ex_P(\iota)}\Ex_P^{2}\to\dots
\end{equation*}
La colimite définit un foncteur 
\begin{equation*}
\Exi_P\colon\sS_P\to\sS_P
\end{equation*}
ainsi qu'une transformation naturelle
\begin{equation*}
j\colon\Id\to\Exi_P
\end{equation*}
\end{defin}

Le reste de ce chapitre a pour but de prouver que pour tout ensemble simplicial filtré $\fil{X}$, $\Exi_P(X)$ est fibrant, et $j_X\colon X\to\Exi_P(X)$ est une extension anodine. Une partie de la démonstration, essentiellement indépendante du reste de ce texte, utilise des méthodes générales sur les catégories de préfaisceaux et est reportée aux annexes \ref{ChapitreCaracterisationFibrationsAnnexe} et \ref{ChapitreCaracterisationMorphismeXExXAnnexe}.

\subsection{Présentations anodines filtrées}

\begin{defin}
Soit $m\colon\fil{X}\to\fil{Y}$ une inclusion d'ensembles simpliciaux filtrés. Une présentation anodine filtrée de $m$  est la donnée d'une suite croissante de sous-ensembles simpliciaux filtrés de $\fil{Y}$, $(\fil{X^n})_{n\in \mathbb{N}}$ tels que 
\begin{itemize}
\item $\fil{X^0}=\fil{X}$, $\cup_{n\in\mathbb{N}}\fil{X^n}=\fil{Y}$.
\item pour tout $n\in \mathbb{N}$, l'inclusion $\fil{X^n}\to\fil{X^{n+1}}$ est l'image directe d'une union disjointe d'inclusions de cornets admissibles.
\end{itemize}
Une extension anodine filtrée forte (ou simplement extension anodine forte) est une inclusion d'ensembles simpliciaux filtrés admettant une présentation anodine filtrée.
\end{defin}

\begin{prop}
Un morphisme $f\colon\fil{X}\to\fil{Y}$ de $\sS_P$ est une extension anodine si et seulement si c'est un rétracte d'une extension anodine filtrée forte.
\end{prop}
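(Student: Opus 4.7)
La preuve se décompose en deux implications qui utilisent les outils déjà développés dans ce chapitre, en particulier la proposition \ref{SatureLifting} (argument du petit objet).

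Pour l'implication directe (retract d'une extension anodine forte $\Rightarrow$ extension anodine), le plan est d'observer que la classe des extensions anodines $\Lambda = l(r(\mathcal{A}))$ est saturée par la proposition \ref{SatureLifting}. Ainsi, puisque $\mathcal{A}$ est inclus dans $\Lambda$ par définition, toute union disjointe de morphismes de $\mathcal{A}$ est dans $\Lambda$, toute image directe d'une telle union disjointe est dans $\Lambda$, toute composition dénombrable de telles images directes reste dans $\Lambda$, et finalement, tout rétracte d'une telle composition est dans $\Lambda$. Toute extension anodine forte est donc une extension anodine, et la classe des extensions anodines étant stable par rétracte, on obtient le résultat.

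Pour la réciproque (extension anodine $\Rightarrow$ rétracte d'une extension anodine forte), le plan est de reprendre mot pour mot la construction par l'argument du petit objet déjà faite dans la preuve de la proposition \ref{SatureLifting}. Le point crucial est de vérifier que cette construction, appliquée à l'ensemble $\mathcal{A}$ des inclusions de cornets admissibles, produit effectivement une extension anodine forte (et non seulement un objet obtenu par composition transfinie). Pour cela, on observe que pour toute inclusion $\Lambda^\varphi_k \to \Delta^\varphi$ dans $\mathcal{A}$, la source $\Lambda^\varphi_k$ ne possède qu'un nombre fini de simplexes non-dégénérés ; l'hypothèse de finitude de la proposition \ref{SatureLifting} est donc satisfaite, ce qui autorise à indexer la composition par $\mathbb{N}$. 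Plus précisément, étant donné $f\colon \fil{X}\to\fil{Y}$ extension anodine, la proposition \ref{SatureLifting} produit une factorisation
\begin{equation*}
\begin{tikzcd}
\fil{X}
\arrow{rr}{f}
\arrow[swap]{dr}{i_\infty}
&&\fil{Y}
\\
&\fil{Z_\infty}
\arrow[swap]{ur}{f_\infty}
\end{tikzcd}
\end{equation*}
où $f_\infty \in r(\mathcal{A})$ et où $i_\infty \colon \fil{X}\to\fil{Z_\infty} = \colim \fil{Z_n}$ est la composition dénombrable d'inclusions $j_n\colon \fil{Z_{n-1}}\to \fil{Z_n}$, chacune étant l'image directe d'une union disjointe de morphismes de $\mathcal{A}$.

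Le point à vérifier soigneusement sera que l'inclusion $i_\infty$ est bien une extension anodine filtrée forte au sens de la définition, c'est-à-dire qu'elle admet une présentation anodine filtrée $(\fil{X^n})_{n\in\mathbb{N}}$ avec $\fil{X^0} = \fil{X}$ et $\cup_n \fil{X^n} = \fil{Z_\infty}$. Il suffit de poser $\fil{X^n} = \fil{Z_n}$ via les inclusions canoniques dans la colimite, chaque étape $\fil{X^n}\to\fil{X^{n+1}}$ étant par construction une image directe d'une union disjointe de cornets admissibles. Enfin, par la propriété de relèvement à droite de $f_\infty$ par rapport à $f$ (en utilisant que $f$ est dans $l(r(\mathcal{A}))$), on obtient un relèvement $l\colon\fil{Y}\to\fil{Z_\infty}$ qui permet d'exprimer $f$ comme rétracte de $i_\infty$, exactement comme à la fin de la preuve de la proposition \ref{SatureLifting}. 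La principale subtilité sera simplement de bien articuler l'identification entre la sortie de l'argument du petit objet et la définition d'extension anodine forte ; le reste est mécanique.
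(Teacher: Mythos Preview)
Your proposal is correct and follows exactly the same approach as the paper: both directions are deduced from Proposition~\ref{SatureLifting}, the first using that $l(r(\mathcal{A}))$ is saturated, the second by invoking the small object argument carried out in its proof to exhibit $f$ as a retract of the transfinite composition $i_\infty$. The paper's proof is in fact just a two-line reference to Definition~\ref{DefClassesSSP} and (the proof of) Proposition~\ref{SatureLifting}, so your write-up simply makes explicit what the paper leaves to the reader.
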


\begin{proof}
Si $f$ est le rétracte d'une extension anodine filtrée forte, c'est une extension anodine, par définition de la classe des extensions anodines comme classe saturée. Voir la définition \ref{DefClassesSSP} et la proposition \ref{SatureLifting}. Réciproquement, si $f$ est une extension anodine, alors $f$ peut s'écrire comme le rétracte d'une extension anodine filtrée forte, voir la preuve de la proposition \ref{SatureLifting}.
\end{proof}

\begin{remarque}
Soit $\fil{X}\to\fil{Y}$ une extension anodine filtrée forte, munie d'une présentation anodine filtrée. Soit $\sigma\in Y$ un simplexe non-dégénéré de $Y$. Comme $Y=\cup X^n$, il existe un entier $n$, minimal, tel que $\sigma\in X^n$. Si $n=0$, $\sigma\in X$. Si $n>0$, par construction $X^n$ diffère de $X^{n-1}$ par le remplissage de cornets admissibles. Aussi, les simplexes non dégénérés de $X^{n} _{\nd}\setminus X^{n-1}_{\nd}$ arrivent par paires. Ceux maximaux pour l'inclusion, correspondant au simplexe $\Delta^{\varphi}$ pour une inclusion de cornet admissible $\Lambda^{\varphi}_k\to\Delta^{\varphi}$, et ceux correspondant à $d_k(\Delta^{\varphi})$. Ainsi, deux cas sont possibles. Ou bien $\sigma$ est maximal dans $X^n_{\nd}\setminus X^{n-1}_{\nd}$ pour l'inclusion, ou alors $\sigma$ n'est pas maximal pour l'inclusion parmi les simplexes de $X^n_{\nd}\setminus X^{n-1}_{\nd}$ et il existe un unique simplexe maximal, $\tau\in X^{n}_{\nd}$, tel que $\sigma\subset\tau$. On note $\tau=\rho(\sigma)$. Dans le premier cas, on dis que $\sigma$ est de type I, dans le second cas, on dis que $\sigma$ est de type II. Et on note $Y_{\nd}\setminus X_{\nd}= Y_{I}\coprod Y_{II}$ pour la partition en simplexes de type I et II.
\end{remarque}

\begin{defin}\label{DefinitionOrdreAncestral}
Soit $\fil{X}\to\fil{Y}$ une extension anodine filtrée forte, munie d'une présentation anodine filtrée. On définit l'ordre ancestral $<_{\rho}$ sur l'ensemble des simplexes non dégénéré de $Y$, noté $Y_{\nd}$ comme la plus petite relation transitive vérifiant
\begin{itemize}
\item Si $\sigma,\tau\in Y_{\nd}\setminus X_{\nd}$ sont tels que $\sigma\subseteq \tau$ et $\tau\not=\rho(\sigma)$, alors $\sigma<_{\rho} \tau$.
\item Si $\sigma,\tau\in Y_{\nd}\setminus X_{\nd}$ sont tels que $\sigma\subset \rho(\tau)$ et $\sigma\not=\rho(\tau)$, alors $\sigma<_{\rho} \tau$.
\item Si $\sigma\in X_{\nd}$ et $\tau\in Y_{\nd}\setminus X_{\nd}$, alors $\sigma<_{\rho}\tau$.
\end{itemize}
\end{defin}

\begin{remarque}
On remarque que l'ordre ancestral ${<_{\rho}}$ est bien défini pour n'importe quel monomorphisme $\fil{X}\to\fil{Y}$, dès que l'on dispose d'une partition $Y_{\nd}\setminus X_{\nd}= Y_{I}\coprod Y_{II}$ et d'une bijection $\rho\colon Y_{II}\to Y_I$.
\end{remarque}

\begin{prop}
Soit $\fil{X}\to\fil{Y}$ une extension anodine filtrée forte, l'ordre ancestral $<_{\rho}$ est bien fondé.
\end{prop}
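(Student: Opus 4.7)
Le plan est d'exhiber un morphisme strictement croissant de $(Y_{\nd}, <_\rho)$ vers $(\mathbb{N}, <)$, ce qui impliquera immédiatement que $<_\rho$ est bien fondé.

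Fixons une présentation anodine filtrée $(X^n)_{n \in \mathbb{N}}$ de l'extension $\fil{X}\to\fil{Y}$ et définissons, pour tout $\sigma \in Y_{\nd}$,
\[
\nu(\sigma) = \min\{n \geq 0 \ |\ \sigma \in X^n\},
\]
de sorte que $\nu(\sigma) = 0$ exactement lorsque $\sigma \in X_{\nd}$. L'observation combinatoire centrale est que la somme amalgamée qui fait passer de $X^{n-1}$ à $X^n$ n'introduit, en tant que nouveaux simplexes non-dégénérés, que les paires $(\Delta^{\varphi_i}, d_{k_i}(\Delta^{\varphi_i}))$ provenant des inclusions de cornets admissibles $\Lambda^{\varphi_i}_{k_i} \hookrightarrow \Delta^{\varphi_i}$ remplies au pas $n$ : toutes les autres faces de $\Delta^{\varphi_i}$ sont déjà contenues dans $\Lambda^{\varphi_i}_{k_i} \subseteq X^{n-1}$ grâce aux identités simpliciales. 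En particulier, si $\nu(\tau) = n \geq 1$, toute face propre de $\tau$ appartient à $X^{n-1}$, à la seule exception près — lorsque $\tau$ est de type I — de son partenaire de type II $\rho^{-1}(\tau)$, qui est aussi de niveau $\nu = n$.

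J'examinerais ensuite les trois clauses de la définition \ref{DefinitionOrdreAncestral} afin de prouver qu'une instance directe $\sigma <_\rho \tau$ satisfait $\nu(\sigma) < \nu(\tau)$. Pour la clause 3, l'inégalité est immédiate puisque $\nu(\sigma) = 0 < \nu(\tau)$. Pour la clause 1, la contrainte $\tau \neq \rho(\sigma)$ exclut exactement la seule situation dans laquelle $\sigma \subsetneq \tau$ sans que $\sigma$ soit dans $X^{\nu(\tau)-1}$, à savoir lorsque $\sigma$ est le partenaire de type II d'un $\tau$ de type I ; dans tout autre sous-cas, $\sigma$ est une face propre de $\tau$ qui appartient déjà à $X^{\nu(\tau)-1}$. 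La clause 2 se traite de façon analogue : la seule face propre de $\rho(\tau)$ qui n'est pas dans $X^{\nu(\tau)-1}$ est $\tau$ lui-même, et la disjonction $\sigma \neq \rho(\tau)$ avec $\sigma \neq \tau$ (convention implicite car $<_\rho$ est strict) laisse uniquement la possibilité $\sigma \in X^{\nu(\tau)-1}$.

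Munie de cette inégalité pour les relations directes, la transitivité de $<_\rho$ et celle de $<$ sur $\mathbb{N}$ donnent $\nu(\sigma) < \nu(\tau)$ dès que $\sigma <_\rho \tau$. Une chaine descendante infinie $\sigma_0 >_\rho \sigma_1 >_\rho \dots$ produirait alors une suite strictement décroissante infinie d'entiers naturels, ce qui est absurde ; donc $<_\rho$ est bien fondé. Le principal point technique à surveiller est le remplissage simultané de plusieurs cornets admissibles au même pas : il faut vérifier que, modulo les identifications simpliciales dans la somme amalgamée, des cornets admissibles distincts contribuent à des paires véritablement indépendantes de nouveaux simplexes non-dégénérés, de sorte que la description de $X^n \setminus X^{n-1}$ ci-dessus soit correcte et que la partition $Y_{\nd}\setminus X_{\nd} = Y_I \coprod Y_{II}$ ainsi que la fonction $\rho$ soient non ambigües.
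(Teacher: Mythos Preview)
Your proof is correct and follows essentially the same approach as the paper: define the rank function $\nu(\sigma) = \min\{n \mid \sigma \in X^n\}$ and show it is strictly increasing on the generating relations of $<_\rho$. The paper's proof is much terser, simply asserting that ``les relations qui g\'en\`erent l'ordre $<_\rho$ v\'erifient toutes $\sigma <_\rho \tau \Rightarrow R(\sigma) < R(\tau)$'' without going through the clause-by-clause analysis you provide; your explicit verification of each case, and your observation that irreflexivity must be assumed implicitly in clause~2, fill in details the paper leaves to the reader.
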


\begin{proof}
L'application
\begin{align*}
(Y_{\nd},<_{\rho})&\xrightarrow{R}(\mathbb{N},<)\\
\sigma&\mapsto \min\{n\ |\ \sigma\in X^{n}\}
\end{align*}
préserve l'ordre. En effet, les relations qui génèrent l'ordre $<_{\rho}$ vérifient toutes $\sigma<_{\rho} \tau\Rightarrow R(\sigma)<R(\tau)$. On en déduit que l'ordre $<_{\rho}$ est bien fondé.
\end{proof}

\begin{prop}\label{PresentationAnodinePartition}
Soit $f\colon\fil{X}\to\fil{Y}$ une inclusion d'ensembles simpliciaux filtrés. Alors $f$ est une extension anodine filtrée forte si et seulement s'il existe 
\begin{itemize}
\item Une partition $Y_{\nd}\setminus X_{\nd}=Y_I\coprod Y_{II}$,
\item une bijection $\rho\colon Y_{II}\to Y_{I}$,
\end{itemize}
telles que 
\begin{itemize}
\item pour tout $\sigma\colon \Delta^{\varphi}\to \fil{Y}$ dans $Y_{II}$, il existe un unique $k$ tel que $\sigma=d_k(\rho(\sigma))$, et l'inclusion de cornet $\Lambda^{\varphi}_k\to \Delta^{\varphi}$ est admissible,
\item l'ordre ancestral $<_{\rho}$ est bien fondé.
\end{itemize}
\end{prop}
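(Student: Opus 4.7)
The statement asserts an equivalence, so I will treat the two implications separately.

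For the direct implication, I would essentially read off the partition and the bijection $\rho$ from the data of a filtered anodyne presentation $(X^n)_{n\in\mathbb{N}}$. This is what is spelled out in the paragraph preceding Definition~\ref{DefinitionOrdreAncestral}: each passage $X^n\hookrightarrow X^{n+1}$ is a pushout of a coproduct of admissible horn inclusions $\Lambda^{\varphi}_k\to\Delta^{\varphi}$, so the new non-degenerate simplexes at stage $n+1$ appear in pairs $(d_k\Delta^\varphi,\Delta^\varphi)$. I would take $Y_I$ to be the union over $n$ of the maximal new simplexes (the $\Delta^\varphi$'s), $Y_{II}$ the union of the missing faces, and $\rho$ the map sending $\sigma$ to the unique maximal simplex containing it in $X^{n}_{\nd}\setminus X^{n-1}_{\nd}$. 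The admissibility condition on $\Lambda^{\varphi}_k\hookrightarrow\Delta^{\varphi}$ is built into the definition of a presentation. For well-foundedness, I would verify as in the discussion before the proposition that the rank function $R(\sigma)=\min\{n\ |\ \sigma\in X^n\}$ strictly decreases along each of the three generating relations of $<_\rho$, hence on the transitive closure; this yields a monotone map $(Y_{\nd},<_\rho)\to(\mathbb{N},<)$ and therefore well-foundedness.

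For the converse, which is the real content, I would build a filtered anodyne presentation by transfinite recursion on the rank function $\mathrm{rk}\colon Y_{\nd}\setminus X_{\nd}\to\mathrm{Ord}$ induced by the well-founded order $<_\rho$. For each ordinal $\alpha$, set $X^{\alpha}\subseteq\fil{Y}$ to be the sub-object generated by $\fil{X}$ together with all pairs $\{\sigma,\rho(\sigma)\}$ with $\sigma\in Y_{II}$ and $\mathrm{rk}(\sigma)<\alpha$. At a limit ordinal, $X^\alpha=\bigcup_{\beta<\alpha}X^\beta$. At a successor $\alpha+1$, I would exhibit $X^\alpha\hookrightarrow X^{\alpha+1}$ as the pushout
\begin{equation*}
\begin{tikzcd}
\coprod_{\mathrm{rk}(\sigma)=\alpha}\Lambda^{\varphi_\sigma}_{k_\sigma}
\arrow{r}\arrow{d}
& X^{\alpha}\arrow{d}\\
\coprod_{\mathrm{rk}(\sigma)=\alpha}\Delta^{\varphi_\sigma}
\arrow{r}
& X^{\alpha+1}
\end{tikzcd}
\end{equation*}
so that each step is the image direct of a coproduct of admissible horn inclusions. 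Finally one shows $\fil{Y}=\bigcup_\alpha X^\alpha$ by checking that every non-degenerate simplex of $Y$ eventually belongs to some $X^\alpha$; once an ordinal bound is secured, one reindexes the transfinite sequence by $\mathbb{N}$ (which is allowed since this class is saturated and stable under transfinite composition, so a cofinal $\mathbb{N}$-sequence suffices) to produce a genuine filtered anodyne presentation.

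The main obstacle, and the whole point of introducing the ancestral order, is the verification that the top map in the pushout above is well-defined, i.e.\ that for each $\sigma$ of rank $\alpha$ the horn $\Lambda^{\varphi_\sigma}_{k_\sigma}\subseteq\Delta^{\varphi_\sigma}=\rho(\sigma)$ already sits inside $X^\alpha$. Concretely, one must show that every non-degenerate face $\tau\subsetneq\rho(\sigma)$ distinct from $\sigma$ lies in $X^\alpha$. I would distinguish three cases: if $\tau\in X_{\nd}$ there is nothing to prove; if $\tau\in Y_{II}$, the defining relation $\tau\subsetneq\rho(\sigma)$ with $\tau\ne\rho(\sigma)$ gives $\tau<_\rho\sigma$ by the second generating relation, so $\mathrm{rk}(\tau)<\alpha$ and the pair $\{\tau,\rho(\tau)\}$ was already inserted; if $\tau\in Y_I$, write $\tau=\rho(\tau')$ for the unique $\tau'\in Y_{II}$, then the first generating relation applied to $\tau'\subseteq\rho(\sigma)$ (using $\rho(\sigma)\ne\rho(\tau')=\tau$ since $\rho$ is injective and $\sigma\ne\tau'$) yields $\tau'<_\rho\rho(\sigma)$, and combined with the second generating relation one obtains $\tau'<_\rho\sigma$, so again the pair $\{\tau',\rho(\tau')\}=\{\tau',\tau\}$ is in $X^\alpha$. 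Well-foundedness of $<_\rho$ is used precisely to make this transfinite induction go through and to guarantee that the rank function exists.
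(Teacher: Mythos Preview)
Your direct implication and your verification that the horn $\Lambda^{\varphi_\sigma}_{k_\sigma}$ of $\rho(\sigma)$ already lies in $X^\alpha$ are correct and follow the same idea as the paper; your case analysis is in fact more explicit than what the paper writes out. (A minor remark: in the case $\tau\in Y_I$, the detour through relation~1 is unnecessary---relation~2 applied to $\tau'\subsetneq\rho(\sigma)$ directly yields $\tau'<_\rho\sigma$.)

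There is, however, a genuine gap in your final reindexing step. Appealing to saturation and stability under transfinite composition only shows that $f\colon X\to Y$ is an anodyne extension, not a \emph{strong filtered} anodyne extension: the definition of a filtered anodyne presentation demands that each step $X^n\to X^{n+1}$ be a \emph{single} pushout of a coproduct of admissible horn inclusions. If you pick a cofinal $\omega$-sequence inside a longer ordinal, each jump becomes a transfinite composition of such pushouts, and this is not in general again a pushout of a coproduct of horn inclusions---the horns attached at intermediate stages require simplices that were not yet present at the start of the jump.

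The fix is to observe that the rank function is already $\mathbb{N}$-valued, so no reindexing is needed. For $\sigma\in Y_{II}$, the \emph{direct} $<_\rho$-predecessors of $\sigma$ in $Y_{\nd}\setminus X_{\nd}$ are, by the generating relations, among the proper faces of $\rho(\sigma)$, hence form a finite set. Since $<_\rho$ is well-founded and this direct-predecessor relation is finitely branching, K\"onig's lemma shows that every $\sigma$ has only finitely many $<_\rho$-predecessors in $Y_{\nd}\setminus X_{\nd}$, whence $\mathrm{rk}(\sigma)<\omega$. This is what the paper does: it defines the rank $F$ with codomain $\mathbb{N}$ directly (tacitly relying on this finiteness) and sets $X^n$ to be generated by $X$ together with the pairs $\{\sigma,\rho(\sigma)\}$ with $F(\sigma)\le n$.
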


\begin{proof}
Pour le sens direct, c'est une conséquence de la définition et des propositions précédentes. Pour la réciproque, étant donné une partition $Y_{I}\coprod Y_{II}$ et une bijection $\rho\colon Y_{II}\to Y_{I}$, on considère l'application  
$F\colon X_{\nd}\cup Y_{II}\to\mathbb{N}$ définie par la formule récursive suivante :
\begin{equation*}
F(\sigma)=\left\{\begin{array}{cl}
0& \text{si $\sigma\in X_{\nd}$}\\
\sup\{1\}\cup\{F(\tau)+1\ |\ \tau<_{\rho}\sigma, \tau\in X_{\nd}\cup Y_{II}\}& \text{si $\sigma\in Y_{II}$}
\end{array}\right.
\end{equation*}
Puis on définit $X^n$ par son ensemble de simplexes non-dégénérés :
\begin{equation*}
(X^n)_{\nd}=X_{\nd}\cup \{\sigma\in Y_{II}\ |\ F(\sigma)\leq n\}\cup\{\tau\in Y_{I}\ |\ \exists \sigma\in Y_{II},\ \rho(\sigma)= \tau, \ F(\sigma)\leq n\}
\end{equation*}
Il reste à montrer que pour tout $n\in \N$, l'inclusion $\fil{X^n}\to\fil{X^{n+1}}$ est l'image directe d'une union disjointe d'inclusions de cornets admissibles. Soit $n\geq 0$, notons $S_n$ l'ensemble des simplexes de type II de $X^{n+1}_{\nd}\setminus X^{n}_{\nd}$. De plus, si $\sigma$ est un simplexe de $S_n$, on note $k_{\sigma}$ l'entier tel que $\sigma=d_{k_{\sigma}}(\rho(\sigma))$. Alors, l'inclusion $X^n\to X^{n+1}$ peut être obtenue comme l'image directe suivante
\begin{equation*}
\begin{tikzcd}
\coprod\limits_{\sigma\in S_n}\Lambda^{\varphi\circ\rho(\sigma)}_{k_{\sigma}}
\arrow{r}{\coprod\rho(\sigma)_{|\Lambda}}
\arrow{d}
&\fil{X^n}
\arrow{d}
\\
\coprod\limits_{\sigma\in S_n}\Delta^{\varphi\circ\rho(\sigma)}
\arrow{r}{\coprod\rho(\sigma)}
&\fil{X^{n+1}}
\end{tikzcd}
\end{equation*}
\end{proof}

\subsection{La subdivision filtrée préserve les extensions anodines}

L'objet de cette section est de prouver la propriété suivante 

\begin{prop}\label{SDPreserveExtensionsAnodines}
Soit $f\colon \fil{X}\to\fil{Y}$ une extension anodine. Alors $\sd_P(f)\colon \sd_P\fil{X}\to\sd_P\fil{Y}$ est une extension anodine.
\end{prop}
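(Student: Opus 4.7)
Le plan est de tirer parti de la caractérisation de la classe des extensions anodines comme classe saturée engendrée par l'ensemble $\A$ des inclusions de cornets admissibles. D'après la Proposition \ref{SatureLifting}, on a $\Lambda = l(r(\A)) = \Sat(\A)$. Comme $\sd_P$ est un adjoint à gauche de $\Ex_P$, il préserve toutes les colimites, en particulier les unions disjointes, les sommes amalgamées et les compositions transfinies. Il préserve par ailleurs les isomorphismes et les rétracts (comme tout foncteur), et à l'aide de la description explicite de la Proposition \ref{SubdivisionSimplexe}, on vérifie directement qu'il préserve les monomorphismes. Par conséquent, la classe
\begin{equation*}
\mathcal{F} = \{f \in \text{Mor}(\sS_P)\ |\ \sd_P(f) \in \Lambda\}
\end{equation*}
est saturée, $\Lambda$ étant elle-même stable sous ces opérations. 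Pour prouver la proposition, il suffit donc d'établir que $\A \subseteq \mathcal{F}$, c'est-à-dire que pour toute inclusion de cornet admissible $\Lambda^{\varphi}_k \hookrightarrow \Delta^{\varphi}$, le morphisme induit $\sd_P(\Lambda^{\varphi}_k) \to \sd_P(\Delta^{\varphi})$ est une extension anodine.

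Fixons donc un simplexe filtré $\Delta^{\varphi} = (\Delta^N, \varphi)$ et un entier $0 \leq k \leq N$ tels que le cornet $\Lambda^{\varphi}_k$ soit admissible. Par la Proposition \ref{PropCornetAdmissible}, il existe $m \in \{k-1, k+1\}$ avec $\varphi(e_k) = \varphi(e_m)$. J'appliquerais alors la Proposition \ref{PresentationAnodinePartition} pour construire une présentation anodine filtrée forte de l'inclusion $\sd_P(\Lambda^{\varphi}_k) \hookrightarrow \sd_P(\Delta^{\varphi})$. Cela requiert d'exhiber, en posant $X = \sd_P(\Lambda^{\varphi}_k)$ et $Y = \sd_P(\Delta^{\varphi})$, une partition $Y_{\nd} \setminus X_{\nd} = Y_I \coprod Y_{II}$ et une bijection $\rho \colon Y_{II} \to Y_I$ telles que chaque $\sigma \in Y_{II}$ soit la $k_{\sigma}$-ème face de $\rho(\sigma)$ via une inclusion de cornet admissible, l'ordre ancestral $<_{\rho}$ étant bien fondé. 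D'après le Corollaire \ref{SimplexesSubdivisionEnsembleSimplicialFiltre} et la Proposition \ref{SubdivisionSimplexe}, les simplexes non-dégénérés de $Y \setminus X$ sont exactement les chaînes $[(\sigma_0, p_0), \dots, (\sigma_n, p_n)]$ dont le terme maximal $\sigma_n$ vaut $\Delta^N$ ou $d_k(\Delta^N)$.

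Je construirais la bijection $\rho$ par une procédure d'insertion-suppression adaptée au contexte filtré, en m'inspirant de la construction classique utilisée dans le cas non-filtré pour le foncteur $\Ex^{\infty}$. Concrètement, pour chaque chaîne $c \in Y_{II}$, on définit $\rho(c) \in Y_I$ en insérant ou en retirant un élément de la forme $(d_k(\Delta^N), p)$ ou $(\Delta^N, p)$ à une position bien choisie, le choix étant dicté par la première position où apparaît un tel terme. La condition d'admissibilité $\varphi(e_k) = \varphi(e_m)$ garantit alors que le cornet obtenu à la face $k_{\sigma}$ de $\rho(\sigma)$ est admissible au sens de la Proposition \ref{PropCornetAdmissible} : deux sommets consécutifs du simplexe $\rho(\sigma)$, correspondant à deux éléments adjacents de la chaîne, auront même image dans $N(P)$ grâce à cette coïncidence. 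Le caractère bien fondé de l'ordre ancestral se vérifie en associant à chaque chaîne un entier construit à partir de sa longueur et de la position du premier terme maximal, fonction qui décroît strictement le long des relations génératrices de $<_{\rho}$.

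La principale difficulté réside dans le choix précis de la partition et de la bijection $\rho$, et dans la vérification simultanée de l'admissibilité de chacun des cornets produits. Il faudra distinguer plusieurs cas selon que les chaînes contiennent déjà des paires $(\Delta^N, p)$ ou $(d_k(\Delta^N), p)$ pour différentes valeurs de $p$, et selon la position relative de ces termes par rapport aux éléments $(\sigma_i, p_i)$ avec $\varphi(\sigma_i) \ni \varphi(e_k)$. La construction doit être suffisamment rigide pour garantir à la fois que l'indice $k_{\sigma}$ soit uniquement déterminé et que le cornet engendré hérite directement de l'admissibilité initiale via la relation $\varphi(e_k) = \varphi(e_m)$. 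C'est cette compatibilité combinatoire subtile entre la subdivision $\sd_P$ et la structure filtrée sur $\Delta^{\varphi}$ qui constitue le cœur technique de la preuve.
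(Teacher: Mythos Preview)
Your approach is correct and is exactly the one the paper follows: reduce to admissible horn inclusions via saturation, then apply Proposition \ref{PresentationAnodinePartition} by exhibiting an explicit partition $Y_{\nd}\setminus X_{\nd}=Y_I\coprod Y_{II}$ together with a bijection $\rho$ and checking admissibility and well-foundedness of $<_\rho$.

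Two points where your sketch understates the difficulty. First, the bijection $\rho$ is not simply an insertion/suppression of a pair $(d_k(\Delta^N),p)$ or $(\Delta^N,p)$ at the first occurrence of a top cell: the paper needs \emph{four} distinct pairing schemes (labelled (a)--(h)), and two of them (the pairings (e)$\leftrightarrow$(f) and (g)$\leftrightarrow$(h)) insert intermediate faces of the form $(\eta_w\cup\{e_k\},\gamma_0)$ or $(\kappa_1\setminus\{e_{k'}\},\gamma_0)$ rather than top cells; this is where the existence of the auxiliary vertex $e_{k'}$ with $\varphi(e_{k'})=\varphi(e_k)$ is genuinely used. Second, the well-foundedness of $<_\rho$ is not witnessed by a single integer-valued rank function as you suggest: a type-II simplex can pass to a direct ancestor of the \emph{same} dimension, and the paper controls these transitions by tracking several quantities ($\gamma_0$, then $r$, $t$, $\epsilon_0$, $u$, $w$, $y$ depending on the case) in a lexicographic fashion, together with the observation that transitions which violate the natural order among the four cases strictly increase $\gamma_0$, which is bounded above.
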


\begin{proof}
Par fonctorialité, $\sd_P$ préserve les rétractes et les compositions. De plus, comme $\sd_P$ admet un adjoint à droite, il préserve les colimites. En particulier, il suffit de prouver que pour toute inclusion de cornet admissible $\Lambda^\varphi_k\to\Delta^\varphi$, l'inclusion $\sd_P(\Lambda^{\varphi}_k)\to\sd_P(\Delta^{\varphi})$ est une extension anodine. Fixons $\Delta^{\varphi}=(\Delta^N,\varphi)$ un simplexe filtré et $\Lambda^{\varphi}_k\to\Delta^\varphi$ une inclusion de cornet admissible. En utilisant la proposition \ref{SubdivisionSimplexe} pour décrire les simplexes de $\sd_P(\Delta^{\varphi})$, on définit les ensembles de simplexes non dégénérés suivants. Un simplexe non dégénéré de $\sd_P(\Delta^{\varphi})$ est dans un des ensembles $\ref{a}-\ref{h}$ s'il peut s'écrire sous la forme correspondante.
\begin{enumerate}[label=(\alph*)]
\item \label{a} $[(\xi_1,\alpha_1),\dots,(\xi_q,\alpha_q),(d_{k}(\Delta^N),\beta_0),\dots,(d_{k}(\Delta^N),\beta_r),(\Delta^N,\gamma_1),\dots,(\Delta^N,\gamma_s)]$, \\ où on a ($\beta_r\not=\gamma_1$, ou $s=0$).
\item \label{b} $[(\xi_1,\alpha_1),\dots,(\xi_q,\alpha_q),(d_{k}(\Delta^N),\beta_0),\dots,(d_{k}(\Delta^N),\beta_r),(\Delta^N,\beta_r),(\Delta^N,\gamma_1),\dots,(\Delta^N,\gamma_s)]$.
\item \label{c} $[(\mu_0,\epsilon_0),\dots,(\mu_t,\epsilon_0),(\mu_t,\epsilon_1),\dots,(\mu_t,\epsilon_u),(\sigma_1,\nu_1),\dots,(\sigma_v,\nu_v),(\Delta^N,\gamma_0),\dots,(\Delta^N,\gamma_s)]$, \\ où on a  ($\epsilon_u\not =\nu_1$ ou ($v=0$ et $\epsilon_u\not =\gamma_0$)).
\item \label{d} $[(\mu_0,\epsilon_0),\dots,(\mu_t,\epsilon_0),(\mu_t,\epsilon_1),\dots,(\mu_t,\epsilon_u),(\mu_t,\nu_1),(\sigma_1,\nu_1),\dots \\
\phantom{X} \hspace{200pt} \dots,(\sigma_v,\nu_v),(\Delta^N,\gamma_0),\dots ,(\Delta^N,\gamma_s)]$,\\ où $\nu_1=\gamma_0$ dans le cas où $v=0$.
\item \label{e} $[(\eta_0,\gamma_0),\dots,(\eta_w,\gamma_0),(\zeta_1,\gamma_0),\dots,(\zeta_x,\gamma_0),(\Delta^N,\gamma_0),\dots,(\Delta^N,\gamma_s)]$,
 où $\zeta_1\not = \eta_w\cup \{e_k\}$.
\item \label{f} $[(\eta_0,\gamma_0),\dots,(\eta_w,\gamma_0),(\eta_w\cup\{e_k\},\gamma_0),(\zeta_1,\gamma_0),\dots,(\zeta_x,\gamma_0),(\Delta^N,\gamma_0),\dots,(\Delta^N,\gamma_s)]$.
\item \label{g} $[(\theta_1,\gamma_0),\dots,(\theta_y,\gamma_0),(\kappa_1,\gamma_0),\dots,(\kappa_z,\gamma_0),(\Delta^N,\gamma_0),\dots,(\Delta^N,\gamma_s)]$, où $\kappa_1\not = \theta_y\cup \{e_{k'}\}$.
\item \label{h} $[(\theta_1,\gamma_0),\dots,(\theta_y,\gamma_0),(\kappa_1\setminus\{e_{k'}\},\gamma_0),(\kappa_1,\gamma_0),\dots,(\kappa_z,\gamma_0),(\Delta^N,\gamma_0),\dots,(\Delta^N,\gamma_s)]$.
\end{enumerate}
où :
\begin{itemize}
\item $e_k$ est le $k$-ème sommet de $\Delta^N$.
\item $e_{k'}\not=e_k$ est un sommet fixé de $\Delta^N$ tel que $\varphi(e_{k'})=\varphi(e_k)$ (Un tel sommet existe car l'inclusion de cornet $\Lambda^{\varphi}_k\to\Delta^{\varphi}$ est admissible)
\item $q,r,s,t,u,v,w,x,y,z\geq 0$
\item $(\xi_i)_{1\leq i \leq q},(\mu_i)_{1\leq i\leq t},(\sigma_i)_{1\leq i\leq v},(\eta_i)_{1\leq i\leq w},(\zeta_i)_{1\leq i\leq x},(\theta_i)_{1\leq i\leq y},(\kappa_i)_{1\leq i\leq z}$ sont des suites strictement croissantes de faces propres de $\Delta^N$ distinctes de $d_{k}(\Delta^N)$.
\item $(\alpha_i)_{1\leq i\leq q}$ et $(\nu_i)_{1\leq i\leq v}$ sont des chaines strictement croissantes d'éléments de $P$. Lorsque $q$ (resp. $v$) est nul, la chaine est vide.
\item $(\beta_i)_{1\leq i\leq r},(\epsilon_i)_{1\leq i\leq u}$ et $(\gamma_i)_{1\leq i\leq s}$ sont des chaines strictement croissantes et non vides d'éléments de $P$.
\item $e_k\not \in \eta_i$ pour tout $1\leq i\leq w$
\item $e_k\in \zeta_i$ pour tout $1\leq i\leq x$
\item $e_{k}\in \theta_i$ et $e_{k'}\not\in \theta_i$ pour tout $1\leq i\leq y$
\item $e_k,e_{k'}\in \kappa_i$ pour tout $1\leq i\leq z$.
\end{itemize}
On obtient ensuite une présentation anodine en prenant comme simplexes de type II l'union des ensembles \ref{a},\ref{c},\ref{e} et \ref{g}, comme simplexes de type I l'union des ensembles \ref{b}, \ref{d}, \ref{f} et \ref{h}, et comme bijection $\rho$ l'application suggérée par les notations, induisant les bijections $\ref{a}\simeq \ref{b}, \ref{c}\simeq \ref{d},\ref{e}\simeq \ref{f}$ et $\ref{g}\simeq \ref{h}$. Les lemmes \ref{DecompositionSimplexesSubdivision} et \ref{LemmeRhoBijection} permettent ensuite d'appliquer la proposition \ref{PresentationAnodinePartition} pour conclure. 
\end{proof}

\begin{lemme}\label{DecompositionSimplexesSubdivision}
L'ensemble des simplexes non dégénérés de $\sd_P(\Delta^{\varphi})$ est égal à l'union disjointe suivante :
\begin{equation*}
\sd_P(\Delta^{\varphi})_{\nd}=\ref{a}\coprod\ref{b}\coprod\ref{c}\coprod\ref{d}\coprod\ref{e}\coprod\ref{f}\coprod\ref{g}\coprod\ref{h}\coprod \sd_P(\Lambda^{\varphi}_k)_{\nd}
\end{equation*}
\end{lemme}

\begin{proof}
Soit $\tau=[(\tau_0,p_0),\dots,(\tau_n,p_n)]$ un simplexe non dégénéré de $\sd_P(\Delta^{\varphi})_{\nd}$. Procédons par disjonctions de cas successives.
\begin{itemize}
\item Si $\tau_n\not= \Delta^N, d_k(\Delta^N)$, alors $\tau$ est un simplexe non dégénéré de $\sd_P(\Lambda^{\varphi})_{\nd}$. De plus, $\tau$ n'appartient à aucun des ensembles de simplexes $\ref{a}-\ref{h}$.
\item Si $\tau_n=d_k(\Delta^N)$, alors $\tau$ est dans $\ref{a}$, et $\tau$ n'est dans aucun des autres ensembles. 
\item Si $\tau_n= \Delta^N$, alors $\tau$ n'est pas un simplexe de $\sd_P(\Lambda^{\varphi}_k)$ et on distingue plusieurs sous cas
\begin{itemize}
\item S'il existe $0\leq i\leq n$ tel que $\tau_i=d_k(\Delta^N)$, alors $\tau$ n'est dans aucun des ensembles $\ref{c}-\ref{h}$, notons $j=\max\{i\ |\ \tau_i=d_k(\Delta^N)\}$. 
\begin{itemize}
\item si $p_j=p_{j+1}$, $\tau$ est dans \ref{b} et n'est pas dans \ref{a}.
\item si $p_j< p_{j+1}$, $\tau$ est dans \ref{a} et n'est pas dans \ref{b}.
\end{itemize}
\item Si pour tout $0\leq i\leq n$, $\tau_i\not = d_k(\Delta^N)$, alors $\tau$ n'est ni dans \ref{a}, ni dans \ref{b}. Notons $j=\min\{i\ |\ \tau_i=\Delta^N\}$. On distingue plusieurs cas 
\begin{itemize}
\item si $p_0\not= p_j$, alors $\tau$ n'est dans aucun des ensembles $\ref{e}-\ref{h}$. On note $l=\max\{i\ |\ p_l=p_0\}$, et $m=\max\{i\ |\ \tau_i=\tau_l\}$.
\begin{itemize}
\item si $p_m=p_{m+1}$, $\tau$ est dans \ref{d}, et n'est pas dans \ref{c}.
\item si $p_m<p_{m+1}$, $\tau$ est dans \ref{c}, et n'est pas dans \ref{d}.
\end{itemize}
\end{itemize}
\end{itemize}
\end{itemize}

Dans les cas restants, on a $\tau_n=\Delta^N$, $\tau_i\not= d_k(\Delta^N)$ pour tout $0\leq i\leq n$, et en notant $j=\min\{i\ |\ \tau_i=\Delta^N\}$, $p_0=p_j$. En particulier, $\tau$ n'est pas un simplexe de $\sd_P(\Lambda^{\varphi}_k)$, et n'appartient à aucun des ensembles $\ref{a}-\ref{d}$. On distingue les cas restants :
\begin{itemize}
\item Si $e_k$ n'est pas dans $\tau_0$, alors $\tau$ n'appartient pas à \ref{g}, ni à \ref{h}. On note $l=\max\{i\ |\ e_k\not\in\tau_i\}$
\begin{itemize}
\item Si $\tau_{i+1}=\tau_{i}\cup\{e_k\}$, $\tau$ est dans \ref{f}, et n'est pas dans \ref{e}.
\item Si $\tau_{i+1}\not=\tau_{i}\cup\{e_k\}$, $\tau$ est dans \ref{e}, et n'est pas dans \ref{f}.
\end{itemize}
\item Si $e_k\in \tau_0$, $\tau$ n'est pas dans \ref{e}, ni dans \ref{f}, on note $l=\min\{i\ |\ e_{k'}\in \tau_i\}$ ($l$ est bien défini car $e_{k'}\in \Delta^N=\tau_n$.)
\begin{itemize}
\item Si $l=0$, $\tau$ est dans \ref{g} et n'est pas dans \ref{h}.
\item Si $l\geq 1$ et $\tau_l=\tau_{l-1}\cup\{e_{k'}\}$, $\tau$ est dans \ref{h} et n'est pas dans \ref{g}.
\item Si $l\geq 1$ et $\tau_l\not=\tau_{l-1}\cup\{e_{k'}\}$, $\tau$ est dans \ref{g} et n'est pas dans \ref{h}.
\end{itemize}
\end{itemize}
Finalement, nous avons montré que tout simplexe non dégénéré de $\sd_P(\Delta^{\varphi})$ est contenu dans un unique ensemble de la décomposition du lemme \ref{DecompositionSimplexesSubdivision}.
\end{proof}

\begin{lemme}\label{LemmeRhoBijection}
L'application 
\begin{equation*}
\rho\colon \ref{a}\coprod\ref{c}\coprod\ref{e}\coprod\ref{g}\to\ref{b}\coprod\ref{d}\coprod\ref{f}\coprod\ref{h}
\end{equation*}
est une bijection.
\end{lemme}

\begin{proof}
Par définition des ensembles \ref{b}, \ref{d}, \ref{f} et \ref{h}, l'application $\rho$ est surjective. La preuve du lemme \ref{DecompositionSimplexesSubdivision} implique que les ensembles \ref{b}, \ref{d}, \ref{f} et \ref{h} sont disjoints. Il suffit donc de montrer que chacune des restrictions de $\rho$ aux ensembles \ref{a}, \ref{c}, \ref{e} et \ref{g} est injective. Les arguments étant similaires, on donne la preuve pour la restriction à \ref{a}. Soient $\tau$ et $\tau'$ deux simplexes dans \ref{a} tels que $\rho(\tau)=\rho(\tau')$. Notons 
\begin{equation*}
\rho(\tau)=[(\tau_0,p_0),\dots,(\tau_n,p_n)].
\end{equation*}
On pose $l=\min\{i\ |\ \tau_l=\Delta^N\}$.
Par construction de $\rho$, on a $\tau=d_l(\rho(\tau))=d_l(\rho(\tau'))=\tau'$.
\end{proof}

\begin{lemme}\label{RhoPreserveStratification}
Soit $\tau$ un simplexe dans \ref{a},\ref{c},\ref{e} ou \ref{g}. Alors, en notant $\rho(\tau)\colon \Delta^{\psi}\to \sd_P(\Delta^{\varphi})$ son image par $\rho$, il existe un unique entier $l$ tel que $\tau=d_l\rho(\tau)$. De plus, l'inclusion de cornet $\Lambda^{\psi}_l\to\Delta^{\psi}$ est admissible.
\end{lemme}

\begin{proof}
La preuve est similaire pour chacun des ensembles \ref{a},\ref{c}\ref{e} et \ref{g}, on l'écrit donc seulement pour les simplexes de \ref{a}. Soit 
\begin{equation*}
\tau=[(\xi_1,\alpha_1),\dots,(\xi_q,\alpha_q),(d_{k}(\Delta^N),\beta_0),\dots,(d_{k}(\Delta^N),\beta_r),(\Delta^N,\gamma_1),\dots,(\Delta^N,\gamma_s)]
\end{equation*}
un simplexe dans \ref{a}. Alors, ou bien $s=0$, ou bien $\beta_r\not=\gamma_1$. On a, 
\begin{equation*}
\rho(\tau)=[(\xi_1,\alpha_1),\dots,(\xi_q,\alpha_q),(d_{k}(\Delta^N),\beta_0),\dots,(d_{k}(\Delta^N),\beta_r),(\Delta^N,\beta_r),(\Delta^N,\gamma_1),\dots,(\Delta^N,\gamma_s)].
\end{equation*}
en particulier, on a 
\begin{align*}
\tau&=[(\xi_1,\alpha_1),\dots,(\xi_q,\alpha_q),(d_{k}(\Delta^N),\beta_0),\dots,(d_{k}(\Delta^N),\beta_r),(\Delta^N,\gamma_1),\dots,(\Delta^N,\gamma_s)]\\
&=[(\xi_1,\alpha_1),\dots,(\xi_q,\alpha_q),(d_{k}(\Delta^N),\beta_0),\dots,(d_{k}(\Delta^N),\beta_r),\widehat{(\Delta^N,\beta_r)},(\Delta^N,\gamma_1),\dots,(\Delta^N,\gamma_s)]\\
&=d_l(\rho(\tau))
\end{align*}
pour un certain $l$. L'entier $l$ est unique car, par hypothèse, le sommet $(\Delta^N,\beta_r)$ n'est pas dans $\tau$. On note $\rho(\tau)\colon \Delta^{\psi}\to \sd_P(\Delta^{\varphi})$. On a $\psi(e_l)=\beta_r=\psi(e_{l-1})$, par définition de $\rho(\tau))$, et donc l'inclusion de cornet $\Lambda^{\psi}_l\to \Delta^{\psi}$ est admissible.
\end{proof}

\begin{lemme}
L'ordre ancestral $<_{\rho}$ est bien fondé.
\end{lemme}

\begin{proof}
Il suffit de montrer qu'il n'existe pas de chaine infinie strictement décroissante de simplexes non dégénérés. On commence par remarquer que si $\tau$ est un simplexe non dégénéré, et que $\tau'<_{\rho}\tau$ par une des relations élémentaire de la définition \ref{DefinitionOrdreAncestral},  alors $\dim(\tau')\leq \dim(\tau)$, ou $\tau'\in \sd_P(\Lambda^{\varphi}_k)$. De plus, si $\tau$ est de type I, on a $\dim(\tau')<\dim(\tau)$ ou $\tau'\in \sd_P(\Lambda^{\varphi}_k)$. Comme la dimension est bornée inférieurement, il suffit de montrer qu'il n'existe aucune chaine infinie strictement décroissante de simplexes de type II à dimension fixée. Pour ce faire, nous allons montrer que, à dimension fixée,
\begin{itemize}
\item il n'existe de chaine infinie strictement décroissante dans aucun des ensembles \ref{a}, \ref{c}, \ref{e} et \ref{g},
\item toute suite strictement décroissante dans $\ref{a}\coprod \ref{c}\coprod\ref{e}\coprod\ref{g}$ ne peut changer d'ensemble qu'un nombre fini de fois.
\end{itemize}
Dans la suite de cette preuve, on adopte le vocabulaire suivant. Si $\tau$ est un simplexe de type II, et $\tau'\not=\tau$ est un simplexe non dégénéré de $\sd_P(\Delta^{\varphi})$ avec $\dim(\tau)=\dim(\tau')$, on dit que $\tau'$ est un ancêtre direct de $\tau$ si $\tau'<_{\rho}\tau$ par une des relations élémentaires de la définition \ref{DefinitionOrdreAncestral}. En particulier, cela revient à dire que $\tau'=d_l(\rho(\tau))$ pour un certain $l$. Il suffit alors de montrer qu'il n'existe pas de chaine infinie de simplexes de type II de $\sd_P(\Delta^{\varphi})$, $(\tau_i)$ avec $\tau_{i+1}$ un ancêtre direct de $\tau_i$ pour tout $i\geq 0$.

On étudie les ancêtres directs possibles pour des simplexes  de chacun des ensembles \ref{a},\ref{c},\ref{e} et \ref{g}. L'idée de la preuve peut être résumée par le graphe dirigé de la figure \ref{FigureDirectAncestors}.

soit 
\begin{equation*}
\tau=[(\xi_1,\alpha_1),\dots,(\xi_q,\alpha_q),(d_{k}(\Delta^N),\beta_0),\dots,(d_{k}(\Delta^N),\beta_r),(\Delta^N,\gamma_1),\dots,(\Delta^N,\gamma_s)]
\end{equation*}
un simplexe de \ref{a}. On a 
\begin{equation*}
\rho(\tau)=[(\xi_1,\alpha_1),\dots,(\xi_q,\alpha_q),(d_{k}(\Delta^N),\beta_0),\dots,(d_{k}(\Delta^N),\beta_r),(\Delta^N,\beta_r),(\Delta^N,\gamma_1),\dots,(\Delta^N,\gamma_s)]
\end{equation*}
et les ancêtres directs de $\tau$ sont d'une des formes suivantes 
\begin{align*}
&[(\xi_1,\alpha_1),\dots,\widehat{(\xi_i,\alpha_i)},\dots,(\xi_q,\alpha_q),(d_{k}(\Delta^J),\beta_0),\dots\\
&\hspace{180pt} \dots,(d_{k}(\Delta^J),\beta_r),(\Delta^J,\beta_r),(\Delta^J,\gamma_1),\dots,(\Delta^J,\gamma_s)]\\
&\hspace{360pt}1\leq i\leq q\\
&[(\xi_1,\alpha_1),\dots,(\xi_q,\alpha_q),(d_{k}(\Delta^J),\beta_0),\dots,\widehat{(d_{k}(\Delta^J),\beta_i)},\dots\\
&\hspace{180pt}\dots,(d_{k}(\Delta^J),\beta_r),(\Delta^J,\beta_r),(\Delta^J,\gamma_1),\dots,(\Delta^J,\gamma_s)]\\
&\hspace{360pt}0\leq i \leq r\\
&[(\xi_1,\alpha_1),\dots,(\xi_q,\alpha_q),(d_{k}(\Delta^J),\beta_0),\dots,(d_{k}(\Delta^J),\beta_r),(\Delta^J,\beta_r),(\Delta^J,\gamma_1),\dots \\
&\hspace{286pt}\dots,\widehat{(\Delta^J,\gamma_i)},\dots,(\Delta^J,\gamma_s)]\\
&\hspace{360pt}1\leq i\leq s
\end{align*}

Dans le premier et dernier cas, on obtient un simplexe de \ref{b}. Dans le second cas, plusieurs sous cas sont possibles :
\begin{itemize}
\item si $i<r$ on obtient un simplexe de \ref{b},
\item si $i=r>0$, on obtient un simplexe de \ref{a} pour lequel $r$ est strictement plus petit,
\item si $i=r=0$, on obtient un simplexe de \ref{c},\ref{e} ou \ref{g}.
\end{itemize}
De plus, dans tous les sous-cas où on obtient un simplexe de type II, $\gamma_0$ est conservé.
En particulier, il n'existe pas de suite infinie d'ancêtres directs de simplexes de \ref{a} puisque chaque passage à un ancêtre direct dans \ref{a} diminue l'entier positif $r$. 

Soit 
\begin{equation*}
[(\mu_0,\epsilon_0),\dots,(\mu_t,\epsilon_0),(\mu_t,\epsilon_1),\dots,(\mu_t,\epsilon_u),(\sigma_1,\nu_1),\dots,(\sigma_v,\nu_v),(\Delta^N,\gamma_0),\dots,(\Delta^N,\gamma_s)]
\end{equation*}
son image par $\rho$ est 
\begin{align*}
&[(\mu_0,\epsilon_0),\dots,(\mu_t,\epsilon_0),(\mu_t,\epsilon_1),\dots,(\mu_t,\epsilon_u),(\mu_t,\nu_1),(\sigma_1,\nu_1),\dots \\
&\phantom{X} \hspace{200pt} \dots,(\sigma_v,\nu_v),(\Delta^N,\gamma_0),\dots ,(\Delta^N,\gamma_s)],
\end{align*}
et ses ancêtres directs sont d'une des formes suivantes : 
\begin{align*}
&[(\mu_0,\epsilon_0),\dots,\widehat{(\mu_i,\epsilon_0)},\dots,(\mu_t,\epsilon_0),(\mu_t,\epsilon_1),\dots,(\mu_t,\epsilon_u),(\mu_t,\nu_1),(\sigma_1,\nu_1),\dots \\
&\phantom{X} \hspace{200pt} \dots,(\sigma_v,\nu_v),(\Delta^N,\gamma_0),\dots ,(\Delta^N,\gamma_s)]
\\
&[(\mu_0,\epsilon_0),\dots,(\mu_t,\epsilon_0),(\mu_t,\epsilon_1),\dots,\widehat{(\mu_t,\epsilon_i)},\dots,(\mu_t,\epsilon_u),(\mu_t,\nu_1),(\sigma_1,\nu_1),\dots \\
&\phantom{X} \hspace{200pt} \dots,(\sigma_v,\nu_v),(\Delta^N,\gamma_0),\dots ,(\Delta^N,\gamma_s)]
\\
&[(\mu_0,\epsilon_0),\dots,(\mu_t,\epsilon_0),(\mu_t,\epsilon_1),\dots,(\mu_t,\epsilon_u),(\mu_t,\nu_1),(\sigma_1,\nu_1),\dots,\widehat{(\sigma_i,\nu_i)},\dots \\
&\phantom{X} \hspace{200pt} \dots,(\sigma_v,\nu_v),(\Delta^N,\gamma_0),\dots ,(\Delta^N,\gamma_s)]
\\
&[(\mu_0,\epsilon_0),\dots,(\mu_t,\epsilon_0),(\mu_t,\epsilon_1),\dots,(\mu_t,\epsilon_u),(\mu_t,\nu_1),(\sigma_1,\nu_1),\dots \\
&\phantom{X} \hspace{160pt} \dots,(\sigma_v,\nu_v),(\Delta^N,\gamma_0),\dots,\widehat{(\Delta^N,\gamma_i)} ,(\Delta^N,\gamma_s)]
\end{align*}
Dans le premier cas on a les sous-cas suivants. 
\begin{itemize}
\item Si $i<t$, on obtient un simplexe de \ref{d}.
\item Si $i=t>0$, on obtient un simplexe de \ref{c} de la forme suivante,
\begin{align*}
&[(\mu_0,\epsilon_0),\dots,(\mu_{t-1},\epsilon_0),(\mu_t,\epsilon_1),\dots,(\mu_t,\epsilon_u),(\mu_t,\nu_1),(\sigma_1,\nu_1),\dots \\
&\phantom{X} \hspace{200pt} \dots,(\sigma_v,\nu_v),(\Delta^N,\gamma_0),\dots ,(\Delta^N,\gamma_s)].
\end{align*}
En le réécrivant sous la forme \ref{c}, on obtient le même $\epsilon_0$ et un $t$ strictement plus petit.
\item Si $i=t=0$, on obtient un simplexe de la forme suivante 
\begin{align*}
&[(\mu_t,\epsilon_1),\dots,(\mu_t,\epsilon_u),(\mu_t,\nu_1),(\sigma_1,\nu_1),\dots \\
&\phantom{X} \hspace{200pt} \dots,(\sigma_v,\nu_v),(\Delta^N,\gamma_0),\dots ,(\Delta^N,\gamma_s)]
\end{align*}
plusieurs cas sont possibles :
\begin{itemize}
\item on obtient un simplexe de \ref{c} avec $\epsilon_0$ strictement plus grand, 
\item on obtient un simplexe de \ref{d},
\item on obtient un simplexe de \ref{e},\ref{f},\ref{g} ou \ref{h}.
\end{itemize}
\end{itemize}
Dans le deuxième cas, on obtient un simplexe de \ref{d}.
Dans le troisième cas, deux sous-cas sont possibles.
\begin{itemize}
\item Si $i=1$, on obtient un simplexe de la forme 
\begin{align*}
&[(\mu_0,\epsilon_0),\dots,(\mu_t,\epsilon_0),(\mu_t,\epsilon_1),\dots,(\mu_t,\epsilon_u),(\mu_t,\nu_1),\widehat{(\sigma_1,\nu_1)},\dots \\
&\phantom{X} \hspace{200pt} \dots,(\sigma_v,\nu_v),(\Delta^N,\gamma_0),\dots ,(\Delta^N,\gamma_s)]
\end{align*}
c'est un simplexe de \ref{c}, pour lequel $u$ est strictement plus grand, et avec les mêmes $\epsilon_0$ et $t$, ou un simplexe de \ref{d} si (($v\geq 2$ et $\mu_2=\mu_1$) ou ($v=1$ et $\mu_1=\gamma_0$)).
\item Si $i>1$, on obtient un simplexe de \ref{d}.
\end{itemize}
Dans le dernier cas, on obtient un simplexe de $\sd_P(\Lambda^{\varphi}_k)$ ou de \ref{d}.
En particulier, dans tous les sous-cas où on obtient un simplexe de type II, $\gamma_0$ est conservé. Comme précédemment, on conclut qu'il n'existe pas de suite infinie de simplexes de \ref{c} puisque tout passage à un ancêtre direct dans \ref{c}
\begin{itemize}
\item augmente $\epsilon_0$, qui est borné par $\gamma_0$,
\item ou diminue $t$, borné par $0$, en gardant $\epsilon_0$ constant,
\item ou augmente $u$, borné par la dimension du simplexe, en gardant $\epsilon_0$ et $t$ constants.
\end{itemize}
De plus, on constate que tout passage à un ancêtre direct de type II garde $\gamma_0$ constant.

Soit 
\begin{equation*}
[(\eta_0,\gamma_0),\dots,(\eta_w,\gamma_0),(\zeta_1,\gamma_0),\dots,(\zeta_x,\gamma_0),(\Delta^N,\gamma_0),\dots,(\Delta^N,\gamma_s)]
\end{equation*}
un simplexe de \ref{e}. Son image par $\rho$ est 
\begin{equation*}
[(\eta_0,\gamma_0),\dots,(\eta_w,\gamma_0),(\eta_w\cup\{e_k\},\gamma_0),(\zeta_1,\gamma_0),\dots,(\zeta_x,\gamma_0),(\Delta^N,\gamma_0),\dots,(\Delta^N,\gamma_s)]
\end{equation*}
et ses ancêtres directs sont de la forme
\begin{align*}
&[(\eta_0,\gamma_0),\dots,\widehat{(\eta_i,\gamma_0)},\dots,(\eta_w,\gamma_0),(\eta_w\cup\{e_k\},\gamma_0),(\zeta_1,\gamma_0),\dots,(\zeta_x,\gamma_0),(\Delta^N,\gamma_0),\dots,(\Delta^N,\gamma_s)]\\
&[(\eta_0,\gamma_0),\dots,(\eta_w,\gamma_0),(\eta_w\cup\{e_k\},\gamma_0),(\zeta_1,\gamma_0),\dots,\widehat{(\zeta_i,\gamma_0)},\dots,(\zeta_x,\gamma_0),(\Delta^N,\gamma_0),\dots,(\Delta^N,\gamma_s)]\\
&[(\eta_0,\gamma_0),\dots,(\eta_w,\gamma_0),(\eta_w\cup\{e_k\},\gamma_0),(\zeta_1,\gamma_0),\dots,(\zeta_x,\gamma_0),(\Delta^N,\gamma_0),\dots,\widehat{(\Delta^N,\gamma_i)},\dots,(\Delta^N,\gamma_s)]
\end{align*}
Dans le premier cas, trois sous cas sont possibles.
\begin{itemize}
\item Si $i<w$, on obtient un simplexe de \ref{f}.
\item Si $i=w>0$, on obtient un simplexe de \ref{e} avec un $w$ strictement plus petit.
\item Si $i=w=0$, on obtient un simplexe de la forme
\begin{equation*}
[(\eta_w\cup\{e_k\},\gamma_0),(\zeta_1,\gamma_0),\dots,(\zeta_x,\gamma_0),(\Delta^N,\gamma_0),\dots,(\Delta^N,\gamma_s)]
\end{equation*}
c'est un simplexe de \ref{g} ou \ref{h}.
\end{itemize}
Dans le second cas, on obtient un simplexe de \ref{f}. Dans le troisième cas, trois sous cas sont possibles.
\begin{itemize}
\item Si $i>0$, on obtient un simplexe de \ref{f}.
\item Si $i=s=0$, on obtient un simplexe de $\sd_P(\Lambda^{\varphi}_k)$.
\item Si $i=0<s$, on obtient un simplexe de la forme 
\begin{equation*}
[(\eta_0,\gamma_0),\dots,(\eta_w,\gamma_0),(\eta_w\cup\{e_k\},\gamma_0),(\zeta_1,\gamma_0),\dots,(\zeta_x,\gamma_0),(\Delta^N,\gamma_1),\dots,(\Delta^N,\gamma_s)]
\end{equation*}
qui est dans \ref{c} ou \ref{d}, et $\gamma_0$ est strictement plus grand.
\end{itemize}
On conclut qu'il n'existe pas de suite infinie de simplexes de \ref{e} puisque tout passage à un ancêtre direct dans \ref{e} diminue $w$ qui est borné par $0$. De plus, on constate que le passage à un ancêtre direct depuis \ref{e} vers \ref{c} augmente strictement $\gamma_0$, qui est borné par $\gamma_s$, et que le passage à un ancêtre direct dans \ref{e} ou \ref{g} garde $\gamma_0$ constant.

Soit 
\begin{equation*}
[(\theta_1,\gamma_0),\dots,(\theta_y,\gamma_0),(\kappa_1,\gamma_0),\dots,(\kappa_z,\gamma_0),(\Delta^N,\gamma_0),\dots,(\Delta^N,\gamma_s)]
\end{equation*}
un simplexe de \ref{g}. Son image par $\rho$ est 
\begin{equation*}
[(\theta_1,\gamma_0),\dots,(\theta_y,\gamma_0),(\kappa_1\setminus\{e_{k'}\},\gamma_0),(\kappa_1,\gamma_0),\dots,(\kappa_z,\gamma_0),(\Delta^N,\gamma_0),\dots,(\Delta^N,\gamma_s)]
\end{equation*}
et ses ancêtres directs sont d'une des formes suivantes :
\begin{align*}
&[(\theta_1,\gamma_0),\dots,\widehat{(\theta_i,\gamma_0)},\dots,(\theta_y,\gamma_0),(\kappa_1\setminus\{e_{k'}\},\gamma_0),(\kappa_1,\gamma_0),\dots,(\kappa_z,\gamma_0),(\Delta^N,\gamma_0),\dots,(\Delta^N,\gamma_s)]\\
&[(\theta_1,\gamma_0),\dots,(\theta_y,\gamma_0),(\kappa_1\setminus\{e_{k'}\},\gamma_0),(\kappa_1,\gamma_0),\dots,\widehat{(\kappa_i,\gamma_0)},\dots,(\kappa_z,\gamma_0),(\Delta^N,\gamma_0),\dots,(\Delta^N,\gamma_s)]\\
&[(\theta_1,\gamma_0),\dots,(\theta_y,\gamma_0),(\kappa_1\setminus\{e_{k'}\},\gamma_0),(\kappa_1,\gamma_0),\dots,(\kappa_z,\gamma_0),(\Delta^N,\gamma_0),\dots,\widehat{(\Delta^N,\gamma_i)},\dots,(\Delta^N,\gamma_s)].
\end{align*}
Dans le premier cas, on obtient un simplexe de \ref{h}.
Dans le deuxième cas, deux-sous cas sont possibles :
\begin{itemize}
\item si $i>1$, on obtient un simplexe de \ref{h},
\item si $i=1$, on obtient un simplexe de la forme
\begin{equation*}
[(\theta_1,\gamma_0),\dots,(\theta_y,\gamma_0),(\kappa_1\setminus\{e_{k'}\},\gamma_0),(\kappa_2,\gamma_0),\dots,(\kappa_z,\gamma_0),(\Delta^N,\gamma_0),\dots,(\Delta^N,\gamma_s)].
\end{equation*}
C'est un simplexe de \ref{g}, où $y$ est strictement plus grand.
\end{itemize}
Dans le troisième cas, trois sous-cas sont possibles :
\begin{itemize}
\item si $i>0$, on obtient un simplexe de \ref{h},
\item si $i=s=0$, on obtient un simplexe de $\sd_P(\Lambda^{\varphi}_k)$,
\item si $i=0<s$, on obtient un simplexe de \ref{c} ou \ref{d}, avec $\gamma_0$ strictement plus grand.
\end{itemize}
On conclut comme précédemment qu'il n'existe pas de suite infinie de simplexes de \ref{g} puisque tout passage à un ancêtre direct dans \ref{g} augmente $y$ qui est borné par la dimension du simplexe. De plus, on observe que le passage à un ancêtre direct de \ref{g} vers \ref{c} augmente $\gamma_0$ qui est borné par $\gamma_s$. Par ailleurs, tout passage à un ancêtre direct de \ref{g} vers \ref{g} garde $\gamma_0$ constant.

Finalement, nous avons montré qu'un passage à un ancêtre direct pour un simplexe de type II pouvait donner une des choses suivantes :
\begin{itemize}
\item un simplexe de type I ou de $\sd_P(\Lambda^{\varphi}_k)$,
\item un simplexe du même ensemble \ref{a},\ref{c},\ref{e},\ref{g}, en fixant $\gamma_0$, et ceci ne peut arriver qu'un nombre fini de fois,
\item un simplexe d'un autre ensemble, en respectant l'ordre \ref{a}>\ref{c}>\ref{e}>\ref{g}, en fixant la quantité $\gamma_0$,
\item un simplexe d'un autre ensemble, en ne respectant pas l'ordre précédent, en augmentant strictement la quantité $\gamma_0$.
\end{itemize}
Comme $\gamma_0$ est borné par le $\gamma_s$ d'un simplexe initial, on en déduit que toute chaine de simplexes de type II, strictement décroissante pour l'ordre ancestral, est finie.
\end{proof}

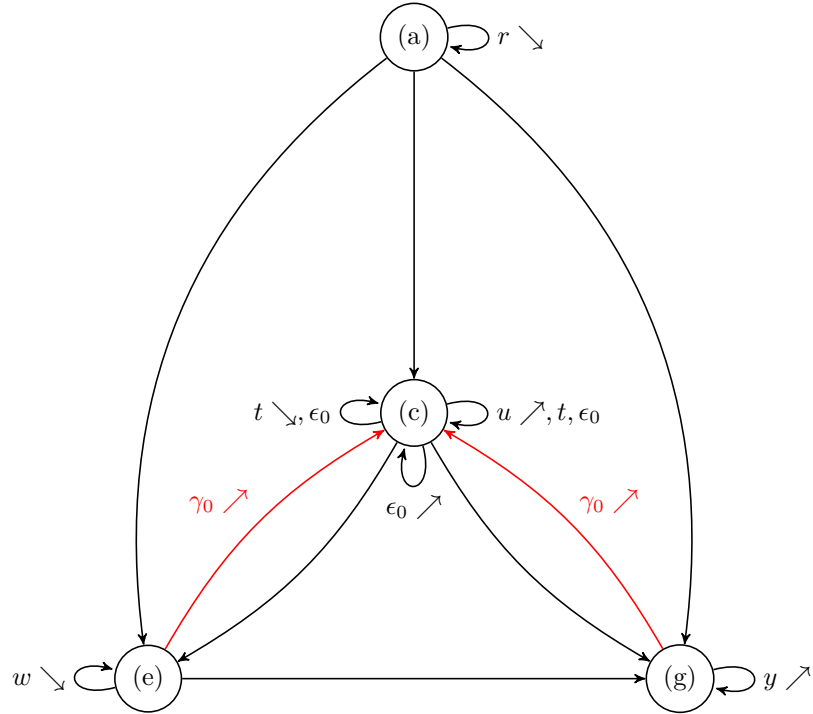
\begin{figure}[h]
\centering
\begin{tikzpicture}[->,>=stealth',shorten >=0pt,auto,node distance=5cm,semithick]
\node[state](A){\ref{a}};
\node[state](C)[below of =A]{\ref{c}};
\node[state](E)[below left of=C]{\ref{e}};
\node[state](G)[below right of =C]{\ref{g}};

\path (A) edge [loop right] node{$r\searrow$} (A)
	edge node{} (C)
	edge [bend right] node{} (E)
	edge [bend left] node{} (G)

(C) edge [loop right]  node{$u\nearrow,t,\epsilon_0$} (C)
	edge [loop left] node{$t\searrow,\epsilon_0$} (C)
	edge [loop below] node{$\epsilon_0\nearrow$} (C)
	edge [bend right=15] node{} (G)
	edge [bend left=15] node{} (E)
(E) edge [loop left] node{$w\searrow$} (E)
	edge node{} (G)
	edge [bend left=15,red] node{$\gamma_0\nearrow$} (C)
(G) edge [loop right] node{$y\nearrow$} (G)
	edge [bend right=15,red] node[above right]{$\gamma_0\nearrow$} (C); 	
\end{tikzpicture}
\caption{Le graphe dirigé représentant les ancêtres directs possibles pour chaque ensemble de simplexes. Les labels le long des flèches indiquent quelles quantités caractéristiques du simplexe augmentent ($\nearrow$), diminuent ($\searrow$) ou restent constantes lors du passage à un ancêtre direct.}
\label{FigureDirectAncestors}
\end{figure}

\clearpage

\subsection{$\Ex_P^{\infty}(X)$ est fibrant}

L'objet de cette sous section est de prouver le lemme suivant 
\begin{lemme}\label{ExInfiniFibrant}
Soit $\fil{X}$ un ensemble simplicial filtré, alors $\Exi_P(X)$ est fibrant.
\end{lemme}

Pour prouver ce résultat, on se servira de l'application de dernier sommet filtré (Définition \ref{DernierSommetFiltre}) ainsi que de deux autres applications similaires, dont on donne la définition.

\begin{defin}
Soit $\Delta^{\varphi}=(\Delta^N,\varphi)$ un simplexe filtré, avec $\Delta^N=[e_0,\dots,e_N]$, et soit $\tau=([e_{i_0},\dots,e_{i_k}],p)$ un sommet de la subdivision filtrée $\sd_P(\Delta^{\varphi})$. On définit le dernier sommet de $\tau$ comme $\lv(\tau)=e_{i_k}\in\Delta^{\varphi}$.
%
Cette définition s'étend en une application \textbf{non-filtrée}
\begin{align*}
\lv\colon \sd_P\fil{X}&\to X\\
(\sigma,[(\sigma_0,p_0),\dots,(\sigma_n,p_n))])&\mapsto \sigma([\lv(\sigma_0,p_0),\dots,\lv(\sigma_n,p_n)]).
\end{align*} 
Ici, $\sigma([\lv(\sigma_0),\dots,\lv(\sigma_n)])$ désigne la composition non filtrée $\Delta^{n}\to\Delta^N\xrightarrow{\sigma}X$, où $\sigma\colon \Delta^{\psi}\to X$ est un simplexe non dégénéré, avec $\Delta^{\psi}=(\Delta^N,\psi)$. Voir Corollaire \ref{SimplexesSubdivisionEnsembleSimplicialFiltre}.
Si $\tau=([e_{i_0},\dots,e_{i_k}],p)$ contient au moins un sommet tel que $\varphi(e_{i_l})=q$, on définit le dernier sommet de couleur $q$ de $\tau$ comme $\lv_q(\tau)=\max\{e\ |\ e\in \tau, \varphi(e)=q\}$, dans le cas contraire, l'application $\lv_q$ n'est pas définie. De même, cette définition s'étend aux sommets de $\sd_P\fil{X}$.
\end{defin}

\begin{remarque}
L'application de dernier sommet filtrée de la définition \ref{DernierSommetFiltre} peut se réécrire sous la forme
\begin{equation*}
\lv_P[(\sigma_0,p_0),\dots,(\sigma_n,p_n)]=[\lv_{p_0}(\sigma_0),\dots,\lv_{p_n}(\sigma_n)]
\end{equation*}
\end{remarque}

\begin{proof}[Démonstration du Lemme \ref{ExInfiniFibrant}]
Soit $\fil{X}$ un ensemble simplicial filtré. Considérons le problème de relèvement suivant :
\begin{equation*}
\begin{tikzcd}
\Lambda^{\varphi}_k
\arrow{d}
\arrow{r}
&\Exi_P(X)
\\
\Delta^{\varphi}
\arrow[dashed]{ur}
\end{tikzcd}
\end{equation*}
où $\Lambda^{\varphi}_k\to \Delta^{\varphi}$ est une inclusion de cornet admissible. Comme $\Lambda^{\varphi}_k$ est compact, l'image de $\Lambda^{\varphi}_k$ dans $\Exi_P$ se factorise par une inclusion $\Ex^k_P(X)\to\Exi_P(X)$ pour un certain $k\geq 0$. Quitte à remplacer $X$ par $\Ex^k_P(X)$, on peut supposer que l'on est dans la situation suivante
\begin{equation*}
\begin{tikzcd}
\Lambda^{\varphi}_k
\arrow{r}{\lambda}
\arrow{d}
&\Ex_P(X)
\arrow{d}
\\
\Delta^{\varphi}
\arrow[dashed]{r}
&\Ex^{3}_P(X)
\end{tikzcd}
\end{equation*}
Il suffit donc de trouver une application $\Delta^{\varphi}\to \Ex^3_P(X)$ faisant commuter le diagramme. En remarquant l'égalité entre les composées $\Lambda^{\varphi}_k\xrightarrow{\lambda}\Ex_P(X)\to\Ex^3_P(X)$ et $\Lambda^{\varphi}_k\to\Ex^2_P(\Lambda^{\varphi}_k)\xrightarrow{\Ex^2_P(\lambda)}\Ex^3_P(X)$ et en utilisant l'adjonction, on obtient à partir du diagramme précédent le diagramme suivant :
\begin{equation}\label{DiagrammeExInfiniFibrant}
\begin{tikzcd}
\sd^2_P(\Lambda^{\varphi}_k)
\arrow{r}{\lv^2_P}
\arrow{d}
&\Lambda^{\varphi}_k
\arrow{d}
\arrow{dr}{\lambda}
\\
\sd^2_P(\Delta^{\varphi})
\arrow[dashed, swap]{r}{h}
&\Ex_P(\sd_P(\Lambda^{\varphi}_k))
\arrow[swap]{r}{\Ex_P(\widehat{\lambda})}
&\Ex_P(X)
\end{tikzcd}
\end{equation}
Il suffit donc d'exhiber une application $h\colon \sd^2_P(\Delta^{\varphi})\to \Ex_P(\sd_P(\Delta^{\varphi}_k))$ rendant le diagramme commutatif pour prouver le résultat souhaité.
Soit $\sigma=[(\sigma_0,q_0),\dots,(\sigma_n,q_n)]$ un simplexe de la double subdivision de $\Delta^{\varphi}$. (On rappelle que chacun des $\sigma_l=[(\tau_0^l,r_0^l),\dots,(\tau_{m_l}^l,r_{m_l}^l)]$ est un simplexe de la subdivision filtrée $\sd_P(\Delta^{\varphi})$). On note $\Delta^{\psi}=(\Delta^n,\psi)$ avec $\Delta^n=[d_0,\dots,d_n]$ et $\psi(\Delta^n)=[q_0,\dots,q_n]$ et $\Delta^{\varphi}=(\Delta^N,\varphi)$, avec $\Delta^N=[e_0,\dots,e_N]$. On définit l'application $f_{\sigma}\colon \Delta^{\psi}_0\to\Delta^{\varphi}_0$ entre les sommets des simplexes filtrés. (Attention, $f_{\sigma}$ ne peut en général pas être étendue en une application simpliciale $\Delta^{\psi}\to\Delta^{\varphi}$.) Notons $p=\varphi(e_k)$.
\begin{align*}
f_{\sigma}\colon (\Delta^{\psi})_0&\to (\Delta^{\varphi})_0\\
d_l&\mapsto\left\{\begin{array}{cl}
\lv_{q_l}(\lv_{q_l}(\sigma_l))& \text{ si $p>q_l$, ou $\sigma_l$ ne contient pas de sommet de couleur $p$,}\\
\phantom{X}& \text{ ou $\lv(\sigma_l)\not = d_k(\Delta^{\varphi}),\Delta^{\varphi}$}\\
\lv_{q_l}(\lv_p(\sigma_l)) &\text{ si $p<q_l$, et $\sigma_l$ contient un sommet de couleur $p$}\\
\phantom{X} &\text{ et $\lv(\sigma_l)=d_k(\Delta^{\varphi})$ ou $\Delta^{\varphi}$.}\\
e_k &\text{ si $p=q_l$, et $\lv(\sigma_l)=d_k(\Delta^{\varphi})$ ou $\Delta^{\varphi}$.}
\end{array}
\right.
\end{align*}
Par le lemme \ref{FSigmaBienDef}, $f_{\sigma}\colon (\Delta^{\psi})_0\to(\Delta^{\varphi})_0$ est bien définie, et préserve la filtration. Pour obtenir une application $h(\sigma)\colon \sd_P(\Delta^{\psi})\to\sd_P(\Delta^{\varphi})$. On définit d'abord $h(\sigma)$ sur les sommets de $\sd_P(\Delta^{\psi})$ comme suit. Si $(\mu,s)$ est un sommet de $\sd_P(\Delta^{\psi})$, on définit $h(\sigma)(\mu,s)=(\nu,s)\in \sd_P(\Delta^{\varphi})$, où $\nu$ est la face de $\Delta^{\varphi}$ contenant tous les sommets $f_{\sigma}(d)$, $d\in\mu$. Cette définition fait sens car $f_{\sigma}$ préserve la filtration, et donc, si $\mu$ contient un sommet de couleur $s$, $\nu$ aussi. On étend ensuite la définition de $h(\sigma)$ en une application simpliciale par la définition suivante :
\begin{align*}
h(\sigma)\colon \sd_P(\Delta^{\psi})&\to\sd_P(\Delta^{\varphi})\\
[(\mu_0,s_0),\dots,(\mu_{m'},s_{m'})]&\mapsto[h(\sigma)(\mu_0,s_0),\dots,h(\sigma)(\mu_{m'},s_{m'})]
\end{align*}
Par construction, $h(\sigma)$ est une application simpliciale filtrée.
De plus, l'application $\sigma\mapsto h(\sigma)$ est simpliciale filtrée car, par construction de $f_{\sigma}$, on a $f_{d_i(\sigma)}=f_{\sigma}\circ D_i\colon (d_i(\Delta^{\psi}))_0\to(\Delta^{\psi})_0\to (\Delta^{\varphi})_0$, et de même pour les dégénérescences.
Il reste à montrer que pour tout $\sigma$, $h(\sigma)\in \Ex_P(sd_P(\Lambda^{\varphi}_k)$ (a priori, on sait seulement que $h(\sigma)\in \Ex_P(\Delta^{\varphi})$), et que $h$ rend bien le diagramme précédent commutatif. C'est le contenu des lemmes \ref{HSigmaDansLambda} et \ref{HFaitCommuterLeDiagramme}.
\end{proof}

\begin{lemme}\label{FSigmaBienDef}
L'application $f_{\sigma}\colon (\Delta^{\psi})_0\to(\Delta^{\varphi})_0$ est bien définie, et, si $d\in (\Delta^{\psi})_0$ est un sommet de $\Delta^{\psi}$, on a $\varphi(f_{\sigma}(d))=\psi(d)$.
\end{lemme}

\begin{proof}
 Si on fixe un sommet $d_l\in \Delta^{\psi}_0$, une seule des conditions sur $(\sigma_l,q_l)$ est vérifiée, il suffit donc de montrer que les applications de dernier sommet intervenant dans le calcul sont toutes bien définies.
On rappelle que $(\sigma_l,q_l)$ est un sommet de $\sd^2_P(\Delta^{\varphi})$. Dans le premier cas, par construction de la subdivision filtrée, $\sigma_l$ doit contenir au moins un sommet de couleur $q_l$, et donc $\lv_{q_l}(\sigma_l)$ est bien défini. On a donc, $\lv_{q_l}(\sigma_l)=(\tau^l_m,q_l)$ pour un certain $m\geq 0$. De même, $\tau^l_m$ doit contenir un sommet de couleur $q_l$, et donc $\lv_{q_l}(\tau^l_m)$ est bien défini.
 De plus, par construction, $\varphi(\lv_{q_l}(\tau^l_m))=q_l=\psi(d_l)$. Dans le second cas, $\lv_p(\sigma_l)=(\tau^l_m,p)$ pour un certain $m\geq 0$. Une fois encore, $\tau^l_m$ doit contenir un point de couleur $q_l$, car $[(\tau^l_0,q_0),\dots,(\tau^l_{m_l},q_{m_l})]$ est un simplexe de $\sd(\Delta^{\varphi})$, donc $\lv_{q_l}\tau^l_m$ est bien défini, et on a $\varphi(\lv_{q_l}(\tau^l_m)=q_l=\psi(d_l)$. Finalement, dans le troisième cas, on a $\varphi(e_k)=p=q_l=\psi(d_l)$. On en déduit que $f_{\sigma}$ est bien définie et préserve la filtration sur les sommets.
\end{proof}

\begin{lemme}\label{HSigmaDansLambda}
Pour tout simplexe $\sigma\colon\Delta^{\psi}\to \sd^2_P(\Delta^{\varphi})$, $h(\sigma)(\sd_P(\Delta^{\psi})) \subseteq\sd_P(\Lambda^{\varphi}_k)$.
\end{lemme}

\begin{proof}
Soit $\sigma\colon\Delta^{\psi}\to \sd^2_P(\Delta^{\varphi})$ un simplexe de $\sd^2_P(\Delta^{\varphi})$. On écrit $\sigma=[(\sigma_0,q_0),\dots,(\sigma_n,q_n)]$, 
avec $\sigma_l=[(\tau_0^l,r_0^l),\dots,(\tau_{m_l}^l,r_{m_l}^l)]$. On raisonne par contradiction. Supposons qu'il existe un sommet $(\mu,s)\in \sd_P(\Delta^{\psi})$ tel que $h(\sigma)(\mu,s)\not\in\sd_P(\Lambda^{\varphi})$. 
Dans ce cas, $h(\sigma)(\mu,s)=(\Delta^{\varphi},s)$ ou $(d_k(\Delta^{\varphi}),s)$. En particulier, $f_{\sigma}$ doit atteindre tous les sommets de $d_k(\Delta^{\varphi})$.
Par construction de $f_{\sigma}$, cela implique que tous les sommets de $d_k(\Delta^{\varphi})$ appartiennent à un des $\tau^l_i$. Mais, par construction de la subdivision filtrée, on doit avoir $\cup\tau^l_i\subseteq \tau_{m_n}^n$, et donc $d_k(\Delta^{\varphi})\subseteq \tau^n_{m_n}$. Soit $j=\min\{i\ |\ d_k(\Delta^{\varphi})\subseteq\tau^n_i\}$. Si $j=0$, alors comme pour tout $(i,l)$, $\tau^l_i$ est dans la liste $(\tau^n_0\subseteq\dots\subseteq\tau^n_{m_n})$, on a $\tau^l_i=d_k(\Delta^{\varphi})$ ou $\Delta^{\varphi}$. En particulier, pour tout $l$ tel que $q_l=p$, on a $f_{\sigma}(d_l)=e_k$. Or, $\Lambda^{\varphi}_k$ est admissible, donc $\varphi(e_k)=\varphi(e_{k'})$ avec $k'=k+1$ ou $k-1$. En particulier $e_{k'}$ n'est pas atteint par $f_{\sigma}$.
Si $j>0$, on a $d_k(\Delta^{\varphi})\not\subseteq\tau^n_{j-1}$. En particulier, il existe $k'\not=k$ tel que $\tau^n_{j-1}\subseteq d_{k'}(\Delta^{\varphi})$. De plus, par construction de la subdivision filtrée, tous les $\tau^l_i$ apparaissent dans la suite $(\tau^n_0\subseteq\dots\subseteq\tau^n_{m_n})$. En particulier, on a   $\tau^l_i\subseteq \tau^{n}_{j-1}\subseteq d_{k'}(\Delta^{\varphi})$ ou $d_k(\Delta^{\varphi})\subseteq\tau^n_{j}\subseteq\tau^l_i$. Considérons le sommet $e_{k'}$ de $\Delta^{\varphi}$, et notons $p'=\varphi(e_{k'})$. Par hypothèse, il existe $l'$ tel que $f_{\sigma}(d_{l'})=e_{k'}$. Par construction de $f_{\sigma}$, il existe $i'$ tel que $\tau^{l'}_{i'}$ contient $e_{k'}$, mais alors, par les remarques précédentes, $d_k(\Delta^{\varphi})\subseteq\tau^{l'}_{i'}$. En particulier, $d_k\Delta^{\varphi}\subseteq\lv(\sigma_{l'})$. De même, pour tout $l\geq l'$, on a aussi  $d_k\Delta^{\varphi}\subseteq\lv(\sigma_{l})$. On distingue maintenant plusieurs cas, suivant la valeur de $p'$.
\begin{itemize}
\item Si $p'<p$, alors comme $\Lambda^{\varphi}_k$ est admissible, $\varphi(e_k)=\varphi(e_{k+1})$ ou $\varphi(e_{k-1})$. L'argument étant identique dans les deux cas, on suppose $\varphi(e_{k+1})=\varphi(e_k)$. Par hypothèse, il existe $l$ tel que $f_{\sigma}(d_l)=e_{k+1}$. Comme $p>p'$, cela impose que $l>l'$. En particulier, $d_k(\Delta^{\varphi})\subseteq\lv(\sigma_l)$, et donc $f_{\sigma}(d_l)=e_k$ ce qui donne la contradiction souhaitée.
\item Si $p=p'$, alors $f_{\sigma}(d_{l'})=e_{k}\not=e_{k'}$, ce qui donne la contradiction souhaitée.
\item Si $p<p'$, comme $\Lambda^{\varphi}_k$ est admissible, $p=\varphi(e_k)=\varphi(e_{k+1})$ ou $\varphi(e_{k-1})$. Par hypothèse, tous les sommets de $d_k(\Delta^{\varphi})$ sont atteints donc en particulier, $f_{\sigma}$ atteint un sommet de couleur $p$. Comme $f_{\sigma}$ préserve la filtration, et par définition de la subdivision, on en déduit que tous les $\sigma_l$ contiennent un sommet de couleur $p$. De plus  comme $e_{k'}\in \lv(\sigma_{l'})$, on a $d_k(\Delta^{\varphi})\subseteq\lv(\sigma_{l'})$, et donc
$\lv(\sigma_{l'})=d_k(\Delta^{\varphi})$ ou $\Delta^{\varphi}$.
On en déduit que $f_{\sigma}(d_{l'})=\lv_{q_{l'}}(\lv_p(\sigma_{l'}))=e_{k'}$.
En particulier, $\lv_p(\sigma_{l'})$ doit contenir $e_{k'}$, et on en déduit que $d_k(\Delta^{\varphi})\subseteq\lv_p(\sigma_{l'})$. 
Comme $p'>p$ et par construction de $\sd_P$, ceci implique que pour tout $l$, $d_k(\Delta^{\varphi})\subseteq\lv_p(\sigma_{l'})\subseteq\lv_{p'}(\sigma_l)\subseteq\lv(\sigma_l)$. Comme $\Lambda^{\varphi}_k$ est admissible, et par symétrie, on suppose que $\varphi(e_{k+1})=\varphi(e_k)$. Par hypothèse, il existe $l$ tel que $f_{\sigma}(d_l)=e_{k+1}$, mais on a $d_k(\Delta^{\varphi})\subseteq\lv(\sigma_l)$, et donc $f_{\sigma}(d_l)=e_k$, ce qui donne la contradiction voulue.
%
\end{itemize}
\end{proof}

\begin{lemme}\label{HFaitCommuterLeDiagramme}
L'application $h$ comme définie dans la preuve de lemme \ref{ExInfiniFibrant} fait commuter le diagramme \ref{DiagrammeExInfiniFibrant}.
\end{lemme}

\begin{proof}
Soit $\sigma\colon \Delta^{\psi}\to\sd^2_P(\Lambda^{\varphi}_k)$ un simplexe. On note encore $\sigma=[(\sigma_0,q_0),\dots,(\sigma_n,q_n)]$. 
Comme $\sigma$ est dans la double subdivision de $\sd_P(\Lambda^{\varphi}_k)$, on a en particulier que $\lv(\sigma_l)\not=d_k(\Delta^{\varphi}),\Delta^{\varphi}$ pour tout $l$.
On en déduit que pour tout $l$, $f_{\sigma}(d_l)=\lv_{q_l}(\lv_{q_l})(\sigma_l)$. On remarque que l'on peut réécrire le second terme comme $\lv_P(\lv_P(\sigma_l,q_l))$. Calculons maintenant la composition $\sd^2_P(\Lambda^{\varphi}_k)\to\Lambda^{\varphi}_k\to\Ex_P(\sd_P(\Lambda^{\varphi}_k))$.
Le morphisme $\Lambda^{\varphi}_k\to\Ex_P(\sd_P(\Lambda^{\varphi}_k))$ envoie un simplexe $\Lambda^{\varphi}_k$, $\gamma\colon \Delta^{\psi}\to\Lambda^{\varphi}_k$ sur le simplexe de $\Ex_P(\sd_P(\Lambda^{\varphi}_k))$, $\sd_P(\gamma)\colon\sd_P(\Delta^{\psi})\to\sd_P(\Lambda^{\varphi}_k)$.
En particulier, l'application $\sd_P(\gamma)$ est obtenue à partir de $\gamma_{|(\Delta^{\psi})_0} \colon (\Delta^{\psi})_0 \to  (\Lambda^{\varphi}_k)_0$ de la même façon que $h(\sigma)$ est obtenue à partir de $f_{\sigma}$.
D'après le calcul précédent, on a bien $\lv^2_P(\sigma)_{|(\Delta^{\psi})_0}=f_{\sigma}$.
\end{proof}

\subsection{Une description explicite de la structure de modèle}

Dans cette section, on montre le théorème suivant

\begin{theo}\label{TheoDescriptionExpliciteSSetP}
La catégorie des ensembles simpliciaux filtrés $\sS_P$, munie des classes de flèches suivantes, est une catégorie modèle propre et engendrée de façon cofibrante.
\begin{itemize}
\item Un morphisme $f\colon (X,\varphi_X)\to(Y,\varphi_Y)$ est une \textbf{cofibration} si $f\colon X\to Y$ est un monomorphisme.
\item Un morphisme est une \textbf{fibration triviale} s'il a la propriété de relèvement à droite par rapport aux cofibrations.
\item Un morphisme est une \textbf{cofibration triviale} s'il est dans $l(r(\A))$, où $\A$ est l'ensemble des inclusions de cornets admissibles.
\item un morphisme est une \textbf{fibration} s'il a la propriété de relèvement à droite par rapport aux inclusions de cornets admissibles,
\item un morphisme $f\colon\fil{X}\to\fil{Y}$ est une \textbf{équivalence faible} si $\Exi_P(f)\colon\Exi_P\fil{X}\to\Exi_P\fil{Y}$ est une \textbf{équivalence d'homotopie filtrée}.
\end{itemize}
De plus, les classes de morphismes ainsi obtenues coïncident avec celles de la définition \ref{DefClassesSSP} et du théorème \ref{ExistenceCMFCisinski}.
\end{theo}

\begin{proof}
Nous allons d'abord montrer que les classes de la définition \ref{DefClassesSSP} coïncident avec celles définies ici. Cela revient à montrer que la classe des fibrations et celle des fibrations naïves coïncident, ou de façon équivalente, que la classe des extensions anodines et celle des cofibrations triviales coïncident. Ceci est une conséquence du théorème \ref{IdentificationFibrationsNaives} appliqué à la catégorie $\sS_P=\widehat{\Delta(P)}$, avec les foncteurs $S=\sd_P$ et $E=\Ex_P$, la transformation naturelle $\lv_P$, ainsi que le cylindre correspondant à la définition \ref{DefHomotopiesFiltrees}, et la classe d'extension anodine $\An=\Lambda$, munie de l'ensemble des inclusions de cornets admissibles comme ensemble générateur. (voir proposition \ref{DefExtensionsAnodinesSSP}). Nous avons déjà montré que :
\begin{itemize}
\item Le foncteur $\sd_P$ préserve les cofibrations et les extensions anodines (Proposition \ref{SDPreserveExtensionsAnodines}).
\item Pour tout ensemble simplicial filtré, $\fil{X}$, $\Exi_P\fil{X}$ est fibrant (Lemme \ref{ExInfiniFibrant}).
\end{itemize}
Il reste à montrer que $\Delta(P)$ est une catégorie de Eilenberg-Zilber - c'est une conséquence de \cite[Example 1.3.3]{Cisinski2} et du fait que $\Delta$ est une catégorie d'Eilenberg-Zilber - et que pour tout $\Delta^{\varphi}$, $\lv_P\colon \sd_P(\Delta^{\varphi})\to\Delta^{\varphi}$ est une équivalence faible absolue, ce qui est une conséquence du lemme \ref{SDSimplexeEquivalenceFaibleAbsolue}. Ceci prouve la caractérisation des fibrations et cofibrations (triviales). Soit $f\colon\fil{X}\to\fil{Y}$ un morphisme. On a le diagramme commutatif suivant
\begin{equation*}
\begin{tikzcd}
\fil{X}
\arrow{r}{f}
\arrow{d}
&\fil{Y}
\arrow{d}
\\
\Exi_P(X)
\arrow{r}{\Exi_P(f)}
&\Exi_P(Y)
\end{tikzcd}
\end{equation*}
Par le théorème \ref{TheoremeExXFaiblementEquivalentaX} les morphismes $\fil{X}\to\Exi_P(X)$ et $\fil{Y}\to\Exi_P(Y)$ sont des équivalences faibles. Ainsi, par deux sur trois, $f$ est une équivalence faible si et seulement si $\Exi_P(f)$ est une équivalence faible. Mais, par le lemme \ref{ExInfiniFibrant}, $\Exi_P(X)$ et $\Exi_P(Y)$ sont fibrants. En particulier, $\Exi_P(f)$ est une équivalence faible si et seulement si c'est une équivalence d'homotopie. Finalement, la propreté de la structure de modèle est le contenu du lemme \ref{PropreteSSetP}.
\end{proof}

\begin{lemme}\label{SDSimplexeEquivalenceFaibleAbsolue}
Soit $\Delta^{\psi}$ un simplexe filtré de $N(P)$ et $\overline{\Delta^{\psi}}$ l'unique simplexe non dégénéré dont $\Delta^{\psi}$ est une dégénérescence. Alors la composition
\begin{equation*}
\sd_P(\Delta^{\psi})\xrightarrow{\lv_P}\Delta^{\psi}\to\overline{\Delta^{\psi}}
\end{equation*}
est une équivalence d'homotopie dont l'inverse est donné par  l'extension anodine 
\begin{align*}
\overline{\Delta^{\psi}}&\to\sd_P(\Delta^{\psi})\\
[p_0,\dots,p_n]&\mapsto[(\Delta^{\psi},p_0),\dots,(\Delta^{\psi},p_n)]
\end{align*}
En particulier, $\sd_P(\Delta^{\psi})\to\Delta^{\psi}$ est une équivalence faible absolue au sens de \cite[Définition 1.3.55]{Cisinski}. 
\end{lemme}

\begin{proof}
Comme $\overline{\Delta^{\psi}}\subseteq N(P)$, et que les morphismes considérés sont filtrés, il est clair que la composition $\overline{\Delta^{\psi}}\to\sd_P(\Delta^{\psi})\to\overline{\Delta^{\psi}}$ est égale à l'identité. Il suffit donc de construire une homotopie 
\begin{equation*}
H\colon\Delta^1\otimes\sd_P(\Delta^{\psi})\to\sd_P(\Delta^{\psi})
\end{equation*}
telle que $H_{|\{0\}\otimes\sd_P(\Delta^{\psi})}=\Id_{\sd_P(\Delta^{\psi})}$, et telle que $H_{|\{1\}\otimes\sd_P(\Delta^{\psi})}$ est égale à la composée $\sd_P(\Delta^{\psi})\to\overline{\Delta^{\psi}}\to\sd_P(\Delta^{\psi})$. Comme l'ensemble simplicial sous jacent à $\Delta^1\otimes\sd_P(\Delta^{\psi})$ provient d'un complexe simplicial orienté, il suffit de définir $H$ sur les sommets et de vérifier que $H$ envoie tous les simplexes de $\Delta^{1}\otimes\sd_P(\Delta^{\psi})$ sur des simplexes de $\sd_P(\Delta^{\psi})$. On pose :
\begin{align*}
H\colon (\Delta^1\otimes\sd_P(\Delta^{\psi}))_0&\to (\sd_P(\Delta^{\psi}))_0\\
(\epsilon, (\sigma,p))&\mapsto\left\{\begin{array}{cl}
(\sigma,p) & \text{ si $\epsilon=0$}\\
(\Delta^{\psi},p) & \text{ si $\epsilon=1$}
\end{array}\right.
\end{align*}
Comme les restrictions de $H$ en $0$ et $1$ sont des applications simpliciales, il suffit de vérifier que $H([(0,(\sigma_0,p_0)),\dots,(0,(\sigma_k,p_k)),(1,(\sigma_{k+1},p_{k+1})),\dots,(1,(\sigma_n,p_n)])$ est un simplexe de $\sd_P(\Delta^{\psi})$ pour tout simplexe de $\sd_P(\Delta^{\psi})$, $\sigma=[(\sigma_0,p_0),\dots,(\sigma_n,p_n)]$ et pour tout $0<k<n$. On calcule
\begin{align*}
&H([(0,(\sigma_0,p_0)),\dots,(0,(\sigma_k,p_k)),(1,(\sigma_{k+1},p_{k+1})),\dots,(1,(\sigma_n,p_n)])\\
&= [(\sigma_0,p_0),\dots,(\sigma_k,p_k),(\Delta^{\psi},p_{k+1}),\dots,(\Delta^{\psi},p_n)]
\end{align*}
Comme par hypothèse, $\sigma$ est un simplexe de $\sd_P(\Delta^{\psi})$, on a $\sigma_0\subseteq \dots\subseteq \sigma_k\subseteq \Delta^{\psi}$. De plus,  $\sigma_0$ contient un sommet de couleur $p_i$ pour tout $0\leq i\leq n$, donc $H(\sigma)\in \sd_P(\Delta^{\psi})$. Et on a l'homotopie souhaitée. 

En particulier, le morphisme $\overline{\Delta^{\psi}}\to\sd_P(\Delta^{\psi})$ est un rétracte par déformation fort (Voir \cite[Définition 1.3.25]{Cisinski}), et donc une extension anodine \cite[Lemme 1.3.38]{Cisinski}. 
On en déduit que c'est en particulier une équivalence faible absolue \cite[Définition 1.3.55]{Cisinski}. 
Par deux sur trois le morphisme $\sd_P(\Delta^{\psi})\to\overline{\Delta^{\psi}}$ est aussi une équivalence faible absolue. 
Montrons que $\Delta^{\psi}\to\overline{\Delta^{\psi}}$ est une équivalence faible absolue. 
Notons $\Delta^{\psi}=(\Delta^N,\psi)$ avec 
\begin{equation*}
\Delta^N= [e^{p_0}_0,\dots,e^{p_0}_{j_{p_0}},e^{p_1}_0,
\dots ,e^{p_1}_{j_{p_1}},\dots,e^{p_n}_{j_{p_n}}] 
\end{equation*}
 et $\psi(e^{p}_k)=p$. 
Alors, on a la section suivante
\begin{align*}
\overline{\Delta^{\psi}}&\to\Delta^{\psi}\\  
{[}p_0,\dots,p_n]&\mapsto [e^{p_0}_0,e^{p_1}_0,\dots,e^{p_n}_0].
\end{align*}
Par construction, la composition $\overline{\Delta^{\psi}}\to\Delta^{\psi}\to\overline{\Delta^{\psi}}$ est l'identité de $\overline{\Delta^{\psi}}$, et on construit une homotopie $\Delta^1\otimes\Delta^{\psi}\to \Delta^{\psi}$ entre $\Id_{\Delta^{\psi}}$ et la composition inverse aisément. On en déduit que $\overline{\Delta^{\psi}}\to\Delta^{\psi}$ est une extension anodine, donc une équivalence faible absolue. 
Considérons maintenant le diagramme commutatif suivant
\begin{equation*}
\begin{tikzcd}
\sd_P(\overline{\Delta^{\psi}})
\arrow{r}{\lv_P}
\arrow{d}
&
\overline{\Delta^{\psi}}
\arrow{d}
\\
\sd_P(\Delta^{\psi})
\arrow{r}{\lv_P}
&\Delta^{\psi}
\end{tikzcd}
\end{equation*}
On a montré que le morphisme du haut et celui de droite sont des équivalences faibles absolues. De plus, par le lemme \ref{SDPreserveExtensionsAnodines}, le morphisme de gauche est une extension anodine, donc une équivalence faible absolue. On en déduit que le morphisme $\lv_P\colon \sd_P(\Delta^{\psi})\to\Delta^{\psi}$ est une équivalence faible absolue.
\end{proof}

\begin{lemme}\label{PropreteSSetP}
La structure de modèle sur $\sS_P$ des théorèmes \ref{ExistenceCMFCisinski} et \ref{TheoDescriptionExpliciteSSetP} est propre.
\end{lemme}

\begin{proof}
Nous avons défini la classe des extensions anodines comme la classe de flèche générée par $\mathcal{A}$, l'ensemble des inclusions de cornets admissibles. (i.e., la classe des extensions anodines est définie comme $l(r(\mathcal{A}))$.)
Nous allons montrer que la classe des extensions anodines est aussi générée par $\Lambda_{F(\Delta^1)}(\emptyset,\mathcal{M})$ au sens de la construction \cite[1.3.12]{Cisinski}. On pourra ensuite appliquer \cite[Corollaire 1.5.7]{Cisinski} grâce à \cite[Corollaire 1.4.18]{Cisinski} (voir aussi  \cite[Proposition 1.4.2]{Cisinski}) pour obtenir la propreté. 

Notons $\Lambda=\Lambda_{F(\Delta^1)}(\emptyset,\mathcal{M})$ pour simplifier les notations. Montrons d'abord que $\mathcal{A}\subseteq l(r(\Lambda))$. Comme 
\begin{equation*}
\{\partial(\Delta^{\varphi})\to\Delta^{\varphi}\ |\ \Delta^{\varphi}\in \Delta(P)\}
\end{equation*}
est un modèle cellulaire de $\sS_P$, par \cite[1.3.12]{Cisinski}, on a $\Lambda=\cup_{i\in \N}\Lambda^i$, avec 
\begin{equation*}
\Lambda^0=(\Delta^1\otimes\partial(\Delta^{\varphi})\cup\{\epsilon\}\otimes\Delta^{\varphi}\to\Delta^1\otimes\Delta^{\varphi}\ |\ \Delta^{\varphi}\in \Delta(P), \epsilon=0,1\}
\end{equation*}
et
\begin{equation*}
\Lambda^{i+1}=\{\Delta^1\otimes \fil{X}\cup \partial(\Delta^1)\otimes \fil{Y}\ |\ \fil{X}\to\fil{Y}\in \Lambda^i\}
\end{equation*}
On remarque que $\Lambda^0=\B$ où $\B$ est la classe de morphismes définie dans le lemme \ref{SatureeABC}. De plus, par la proposition \ref{SatureLifting}, on a $l(r(\B))=\Sat(\B)$. On déduit donc du lemme \ref{SatureeABC} que $\A\subseteq l(r(\B))=\l(r(\Lambda^0))\subseteq l(r(\Lambda))$. Réciproquement, par le lemme \ref{SatureeABC}, on obtient que $l(r(\B))=l(r(\A))$ est une classe d'extensions anodines. En particulier, par l'axiome (An2) de la définition \ref{DefinitionExtensionAnodine}, pour tout $\fil{X}\to\fil{Y} \in l(r(\B))$, le morphisme $\Delta^1\otimes\fil{X}\cup\{\epsilon\}\otimes\fil{Y}\to\Delta^1\otimes\fil{Y}$ est dans $l(r(\B))$. Comme $\B=\Lambda^0$, on en déduit que $\Lambda\subseteq l(r(\B))=l(r(\A))$.
\end{proof}

\begin{remarque}\label{RemarqueSSUPUniverselle}
La preuve du lemme précédent repose sur le constat suivant : la classe des équivalences faibles filtrées est le plus petit localisateur de $\sS_P$ contenant les équivalences d'homotopie filtrées (on renvoie à \cite[Définition 1.4.1]{Cisinski} pour la définition d'un localisateur). Ceci implique en particulier que toute structure de modèle sur $\sS_P$ ayant les monomorphismes pour cofibrations, et telle que les équivalences d'homotopie filtrées y sont des équivalences faibles est une localisation de la structure de modèle du théorème \ref{TheoDescriptionExpliciteSSetP}. En ce sens, la structure de modèle qu'on considère ici est universelle parmi les structures de modèle sur $\sS_P$. Lorsque plusieurs structures de modèle apparaitront simultanément, on notera $\sSU_P$ la structure provenant du théorème \ref{TheoDescriptionExpliciteSSetP}.
\end{remarque}
\chapter{Diagrammes et groupes d'homotopie filtrés}
\label{ChapitreStructureSimplicialeSSetP}
On a vu au chapitre précédent que la catégorie des ensembles simpliciaux filtrés $\sS_P$ pouvait être munie d'une structure modèle. On voit dans ce chapitre que cette catégorie modèle est simpliciale, et que les équivalences faibles entres objets fibrants y sont caractérisées par des invariants algébriques : les groupes d'homotopie filtrés. 

Dans la première section de ce chapitre, on définit une structure de catégorie simpliciale sur $\sS_P$ et on montre qu'elle fait de $\sS_P$ une catégorie modèle simpliciale. 

Dans la section \ref{SectionCategorieModeleDiagrammesSimpliciaux}, on définit la catégorie modèle des diagrammes simpliciaux. Il s'agit d'une catégorie de foncteurs, nécessaire pour la définition des groupes d'homotopie filtrés, et qui sera utile tout au long de ce texte. 
On définit finalement les groupes d'homotopies filtrés dans la section \ref{SectionGroupesHomotopieFiltres} (Définitions \ref{DefinSPi0Simplicial} et \ref{DefinitionGroupesHomotopieFiltree}), où on montre aussi qu'ils caractérisent les équivalences faibles entre objets fibrants. C'est le contenu du Théorème \ref{EquivalenceFaibleIsoGroupeHomotopie}. On montre ensuite comment simplifier le calcul des groupes d'homotopie filtrés.

Enfin, dans la section \ref{SectionRaffinementStratification}, on introduit une méthode pour raffiner la filtration d'un ensemble simplicial filtré de façon à ce que toutes ses strates soient connexes.

\section{La catégorie $\sS_P$ est une catégorie modèle simpliciale}

On rappelle la définition \ref{TenseurSimplicial}.

\begin{defin}
Soit $\fil{X}$ un ensemble simplicial filtré, $K$ un ensemble simplicial. On définit $K\otimes \fil{X}$ comme le produit filtré $F(K)\times_{N(P)}\fil{X}$. Cette définition donne lieu à un bifoncteur :
\begin{equation*}
-\otimes-\colon \sS\times \sS_P\to \sS_P
\end{equation*}
\end{defin}

\begin{defin}
On définit le bifoncteur $(-)^{-}\colon \sS_P\times\sS\to\sS_P$ comme suit 
\begin{align*}
\sS_P\times\sS^{\op}&\to\sS_P\\
(\fil{Y},K)&\mapsto \fil{Y}^K\left\{\begin{array}{rcl}
\Delta(P)^{\op}&\to &\Set\\
\Delta^{\varphi}&\mapsto&\Hom_{\sS_P}(K\otimes\Delta^{\varphi},\fil{Y})
\end{array}\right.
\end{align*}
\end{defin}
Par construction, la propriété suivante est vérifiée. 

\begin{prop}\label{AdjonctionPreuveCatSimpliciale}
Soit $K$ un ensemble simplicial. Le foncteur $(-)^K\colon \sS_P\to \sS_P$ est adjoint à droite au foncteur $K\otimes-\colon \sS_P\to \sS_P$.
\end{prop}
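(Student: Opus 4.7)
L'idée est d'établir la bijection naturelle
\begin{equation*}
\Hom_{\sS_P}(K\otimes\fil{X},\fil{Y})\simeq \Hom_{\sS_P}(\fil{X},\fil{Y}^K)
\end{equation*}
d'abord dans le cas où $\fil{X}=\Delta^{\varphi}$ est un simplexe filtré, puis en étendant par colimites. Le cas d'un simplexe est immédiat : par définition de $\fil{Y}^K$ comme préfaisceau sur $\Delta(P)$, et par l'identification $\Hom_{\sS_P}(\Delta^{\varphi},\fil{Y}^K)=(\fil{Y}^K)(\Delta^{\varphi})$ donnée par le lemme de Yoneda via l'équivalence de catégories de la proposition \ref{CategoriePrefaisceaux}, on a
\begin{equation*}
\Hom_{\sS_P}(\Delta^{\varphi},\fil{Y}^K)=\Hom_{\sS_P}(K\otimes\Delta^{\varphi},\fil{Y}).
\end{equation*}

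Pour passer au cas général, j'utiliserais deux ingrédients. D'une part, le corollaire de la proposition \ref{CategoriePrefaisceaux} qui exprime tout ensemble simplicial filtré comme colimite de ses simplexes :
\begin{equation*}
\fil{X}\simeq \colim_{\sigma\in\Hom(\Delta^{\varphi},\fil{X})}\Delta^{\varphi}.
\end{equation*}
D'autre part, il faut vérifier que le foncteur $K\otimes -\colon \sS_P\to\sS_P$ préserve les colimites en la seconde variable. C'est l'étape qui demande le plus d'attention. Le foncteur est défini comme le changement de base $F(K)\times_{N(P)}-$ le long de $F(K)\to N(P)$ dans la catégorie de préfaisceaux $\sS_P$ (vue comme slice $\sS/N(P)$, qui est encore une catégorie de préfaisceaux par la proposition \ref{CategoriePrefaisceaux}). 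Or dans toute catégorie de préfaisceaux, le produit avec un objet fixé admet un adjoint à droite (l'exponentielle interne), et préserve donc les colimites. Alternativement, on peut vérifier à la main que les colimites dans $\sS_P$ se calculent simplexe par simplexe dans $\Set$, et que le produit fibré ensembliste avec un ensemble fixé préserve les colimites d'ensembles.

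Une fois ce point acquis, on assemble les morceaux :
\begin{equation*}
\Hom_{\sS_P}(K\otimes\fil{X},\fil{Y})\simeq\Hom_{\sS_P}\bigl(\colim_{\sigma}(K\otimes\Delta^{\varphi}),\fil{Y}\bigr)\simeq\lim_{\sigma}\Hom_{\sS_P}(K\otimes\Delta^{\varphi},\fil{Y}),
\end{equation*}
et symétriquement,
\begin{equation*}
\Hom_{\sS_P}(\fil{X},\fil{Y}^K)\simeq\lim_{\sigma}\Hom_{\sS_P}(\Delta^{\varphi},\fil{Y}^K)\simeq\lim_{\sigma}\Hom_{\sS_P}(K\otimes\Delta^{\varphi},\fil{Y}),
\end{equation*}
les deux dernières lignes étant reliées par l'isomorphisme établi pour les simplexes. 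La naturalité en $\fil{X}$ et en $\fil{Y}$ est alors automatique car elle provient de la naturalité du lemme de Yoneda et de celle du passage aux (co)limites. Le principal obstacle est le contrôle de la préservation des colimites par $K\otimes -$, mais l'observation que $\sS_P$ est une catégorie de préfaisceaux (donc cartésienne close) rend ce point standard.
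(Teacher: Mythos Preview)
Your argument is correct, and the strategy is standard: the definition of $\fil{Y}^K$ is chosen precisely so that Yoneda gives the adjunction on representables, and then extending by colimites requires only that $K\otimes-$ preserves colimites, which you justify via the cartesian closure of the presheaf category $\sS_P$ (equivalently, of the slice $\sS/N(P)$).

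The paper takes a slightly different route: instead of arguing abstractly, it writes down the two maps
\[
\Hom_{\sS_P}(K\otimes\fil{X},\fil{Y})\longleftrightarrow \Hom_{\sS_P}(\fil{X},\fil{Y}^K)
\]
explicitly, sending $f\colon K\otimes\fil{X}\to\fil{Y}$ to the morphism $\sigma\mapsto f\circ(\Id_K\otimes\sigma)$ and, conversely, sending $g\colon\fil{X}\to\fil{Y}^K$ to the map $\sigma\mapsto g(\pr_X\sigma)(\pr_{F(K)}\times\Id)$ on simplexes. This has the virtue of making the adjunction maps concrete, which can be convenient when one later needs to track an explicit homotopy or check compatibility with another construction. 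Your approach, on the other hand, isolates the one nontrivial input (colimite-preservation of $K\otimes-$) and explains \emph{why} the adjunction exists, namely because $\sS_P$ is a presheaf topos; it also makes naturality automatic rather than something to verify. Both arguments are complete; yours is arguably cleaner for this particular statement.
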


\begin{proof}
Soient $\fil{X}$ et $\fil{Y}$ deux ensembles simpliciaux filtrés, et $K$ un ensemble simplicial. On construit les morphismes d'adjonctions 
\begin{align*}
\Hom(K\otimes\fil{X},\fil{Y})&\to\Hom(\fil{X},\fil{Y}^K)\\
\left( f\colon K\otimes \fil{X}\to \fil{Y}\right) &\mapsto \left\{\begin{array}{ccc}
X&\to &\fil{Y}^K\\
(\sigma\colon \Delta^{\varphi}\to \fil{X}) &\mapsto& f\circ (\Id_K\otimes\sigma)\colon K\otimes\Delta^{\varphi}\to\fil{Y} 
\end{array}\right.
\end{align*}
et
\begin{align*}
\Hom(\fil{X},\fil{Y}^K)&\to\Hom(K\otimes\fil{X},\fil{Y})\\
g\colon \fil{X}\to\fil{Y}^K&\mapsto \left\{\begin{array}{ccc}
K\otimes \fil{X}&\to&\fil{Y}\\
(\sigma\colon \Delta^{\varphi}\to K\otimes \fil{X})&\mapsto & g\circ \pr_X (\sigma)(\pr_{F(K)}\times \Id_{\Delta^{\varphi}})
\end{array}\right.
\end{align*}
Ce sont des isomorphismes inverses l'un de l'autre, ce qui complète la preuve.
\end{proof}

\begin{defin}
On définit le bifoncteur $\Map(-,-)\colon \sS_P^{\op}\times\sS_P\to\sS$ comme suit :
\begin{align*}
\sS_P^{\op}\times\sS_P&\to\sS\\
(\fil{X},\fil{Y})&\mapsto \Map(\fil{X},\fil{Y}) \left\{\begin{array}{rcl}
\Delta^{\op}&\to &\Set\\
\Delta^{n}&\mapsto&\Hom_{\sS_P}(\Delta^n\otimes \fil{X},\fil{Y})
\end{array}\right.
\end{align*}
\end{defin}

Alors, on a la proposition suivante

\begin{prop}
Les foncteurs $\Map$, $(-)^{-}$ et $-\otimes-$ munissent $\sS_P$ d'une structure de catégorie simpliciale.
\end{prop}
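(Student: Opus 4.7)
The plan is to verify the three conditions of the definition of a simplicial category in turn, relying on Proposition \ref{AdjonctionPreuveCatSimpliciale} for the right-adjoint part, and reducing the other two conditions to simple manipulations of fibered products over $N(P)$.

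First, I would check the compatibility $\Map(\fil{X},\fil{Y})_0 = \Hom_{\sS_P}(\fil{X},\fil{Y})$. By definition this amounts to computing $\Hom_{\sS_P}(\Delta^0 \otimes \fil{X}, \fil{Y})$, so it suffices to exhibit a canonical isomorphism $\Delta^0 \otimes \fil{X} \simeq \fil{X}$ in $\sS_P$. This is immediate since $F(\Delta^0) = \Delta^0 \times N(P) \to N(P)$ is an isomorphism, whence the projection $F(\Delta^0)\times_{N(P)}\fil{X} \to \fil{X}$ is an isomorphism of filtered simplicial sets.

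Second, I would establish the natural bijection
\begin{equation*}
\Hom_{\sS_P}(K\otimes\fil{X},\fil{Y}) \simeq \Hom_{\sS}(K,\Map(\fil{X},\fil{Y})),
\end{equation*}
natural in all three variables. When $K = \Delta^n$ this is the definition of $\Map$, and the general case follows by writing $K = \colim_{\Delta^n \to K}\Delta^n$ and using that the functor $-\otimes\fil{X}$ commutes with colimits in its left argument (since $F$ preserves colimits and fibered products over $N(P)$ commute with colimits). Varying $\fil{Y}$ this gives the adjunction $K\otimes - \dashv \Map(\fil{X},-)$, and varying $\fil{X}$ combined with Proposition \ref{AdjonctionPreuveCatSimpliciale} gives
\begin{equation*}
\Hom_{\sS_P}(\fil{X},\fil{Y}^K) \simeq \Hom_{\sS_P}(K\otimes\fil{X},\fil{Y}) \simeq \Hom_{\sS}(K,\Map(\fil{X},\fil{Y})),
\end{equation*}
i.e.\ the right-adjoint part, $\Map(-,\fil{Y}) \dashv \fil{Y}^{-}$.

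Third, I would verify the associativity isomorphism $\fil{X}\otimes(K\times L) \simeq (\fil{X}\otimes K)\otimes L$. Unwinding, the left-hand side is $F(K\times L)\times_{N(P)}\fil{X}$ and the right-hand side is $F(L)\times_{N(P)}\bigl(F(K)\times_{N(P)}\fil{X}\bigr)$. Using $F(K\times L) = (K\times L)\times N(P)$ and the associativity of fibered products, both expressions identify canonically with $K\times L\times_{N(P)}\fil{X}$, which gives the desired natural isomorphism.

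The main obstacle is really bookkeeping rather than substance: one must check that all the bijections constructed are natural not just in $K$ but also in $\fil{X}$ and $\fil{Y}$, and that the associativity isomorphism is compatible with the adjunction isomorphisms in the sense required to make $\sS_P$ a simplicial category (so that $\Map(\fil{X},\fil{Y})$ assembles into a genuine enrichment). This compatibility follows by chasing through the explicit descriptions of the adjunction units and counits, all of which reduce to the universal property of the fibered product over $N(P)$.
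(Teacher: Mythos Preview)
Your proposal is correct and follows essentially the same approach as the paper. The paper invokes \cite[Lemme II.2.4]{GoerssJardine} to reduce to checking that $K\otimes -$ is left adjoint to $(-)^K$ (Proposition \ref{AdjonctionPreuveCatSimpliciale}), that $-\otimes\fil{X}$ preserves colimits and satisfies $\Delta^0\otimes\fil{X}\simeq\fil{X}$, and the associativity isomorphism; you instead verify the axioms of the definition directly, but the underlying computations (the identification $F(\Delta^0)\simeq N(P)$, the chain $F(K)\times_{N(P)}F(L)\simeq F(K\times L)$ via associativity of fibered products, and the reference to Proposition \ref{AdjonctionPreuveCatSimpliciale}) are identical.
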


\begin{proof}
D'après \cite[Lemme II.2.4]{GoerssJardine}, il suffit de vérifier les conditions suivantes 
\begin{itemize}
\item Pour tout ensemble simplicial $K$, $K\otimes-$ est adjoint à gauche de $(-)^K$. C'est le contenu de la proposition \ref{AdjonctionPreuveCatSimpliciale}.
\item Pour tout ensemble simplicial filtré $\fil{X}$, le foncteur $-\otimes \fil{X}$ commute avec les colimites et vérifie $\Delta^0\otimes \fil{X}\simeq X$. La première affirmation est vraie car $-\otimes \fil{X}=(-\times X,\varphi_X\circ\pr_X)$, et $-\times X$ commute avec les colimites. La seconde affirmation provient du fait que $F(\Delta^0)\simeq N(P)$ et de la définition de $\Delta^0\otimes \fil{X}$ comme $F(\Delta^0)\times_{N(P)} \fil{X}\simeq \fil{X}$.
\item Pour tous ensembles simpliciaux $K,L$ et tout ensemble simplicial filtré $\fil{X}$, il existe un isomorphisme naturel entre $K\otimes(L\otimes \fil{X})$ et $(K\times L)\otimes X$. Cet isomorphisme provient de la suite d'isomorphismes suivante 
\begin{align*}
K\otimes (L\otimes \fil{X})& \simeq F(K)\times_{N(P)}(L\otimes \fil{X})\\
&\simeq F(K)\times_{N(P)}(F(L)\times_{N(P)}\fil{X})\\
&\simeq (F(K)\times_{N(P)}F(L))\times_{N(P)}\fil{X}\\
&\simeq (F(K\times L))\times_{N(P)}\fil{X}\\
&\simeq (K\times L)\otimes \fil{X}
\end{align*}
où le quatrième isomorphisme provient du fait que 
\begin{align*}
F(K)\times_{N(P)}F(L)&\simeq (K\times N(P),\pr_{N(P)})\times_{N(P)}(L\times N(P),\pr_{N(P)})\\
&\simeq (K\times L\times N(P),\pr_{N(P)})\\
&\simeq F(K\times L).
\end{align*}
\end{itemize}
\end{proof}

\begin{theo}\label{CategorieModelSimpliciale}
La catégorie modèle $\sS_P$, munie des foncteurs $\Map$, $-\otimes-$ et $(-)^{-}$ est une catégorie modèle simpliciale.
\end{theo}
On renvoie à \cite[Section II.3]{GoerssJardine} pour la définition d'une catégorie modèle simpliciale.

\begin{proof}
D'après \cite[Corollaire II.3.12]{GoerssJardine}, il suffit de prouver les affirmations suivantes :
\begin{itemize}
\item Soit $f\colon \fil{X}\to\fil{Y}$ une cofibration entre ensembles simpliciaux filtrés, et $n\geq 0$. Alors l'application 
\begin{equation*}
\Delta^n\otimes \fil{X}\cup_{\partial(\Delta^n)\otimes\fil{X}}\partial(\Delta^n)\otimes \fil{Y}\to\Delta^n\otimes\fil{Y}
\end{equation*}
est une cofibration, qui est triviale si $f$ est une cofibration triviale.
\item Soit $f\colon \fil{X}\to\fil{Y}$ une cofibration entre ensembles simpliciaux filtrés, alors les applications
\begin{equation*}
\Delta^1\otimes \fil{X}\cup_{\{\epsilon\}\otimes \fil{X}}\{\epsilon\}\otimes\fil{Y}\to\Delta^1\otimes\fil{Y},
\end{equation*}
où $\epsilon=0$ ou $1$ est un sommet de $\Delta^1$, sont des cofibrations triviales.
\end{itemize}
La première affirmation, dans le cas où $f$ n'est pas triviale vient du fait que les cofibrations de $\sS_P$ sont les monomorphismes, et que les opérations impliquées préservent les monomorphismes. Dans le cas où $f$ est triviale c'est une extension anodine, par le théorème \ref{TheoDescriptionExpliciteSSetP}, et on obtient le résultat en appliquant le lemme \ref{LemmeAn2} avec $\fil{Z}\to\fil{W}=F(\partial(\Delta^n))\to F(\Delta^n)$. Par l'axiome (An1) de la définition des classes d'extensions anodines (définition \ref{DefinitionExtensionAnodine}), tous les morphismes apparaissant dans la deuxième affirmation sont des extensions anodines, et donc des cofibrations triviales.
\end{proof}

\section{Diagrammes d'ensembles simpliciaux}
\label{SectionCategorieModeleDiagrammesSimpliciaux}
\begin{defin}\label{DefCategorieDiagrammesSimpliciaux}
On définit la catégorie des diagrammes simpliciaux 
\begin{equation*}
\Diag_P=\Fun(\Delta(P)^{\op},\sS)
\end{equation*}
\end{defin}

\begin{prop}[{\cite[Théorème 11.6.1]{Hirschhorn}}]\label{CategorieModeleDiagramme}
La catégorie $\Diag_P$, est une catégorie modèle où :
\begin{itemize}
\item un morphisme $f\colon F\to G$ est une fibration si $f_{\Delta^{\varphi}}\colon F(\Delta^{\varphi})\to G(\Delta^{\varphi})$ est une fibration de Kan, pour tout $\Delta^{\varphi}\in \Delta(P)$,
\item un morphisme $f\colon F\to G$ est une équivalence faible si $f_{\Delta^{\varphi}}\colon F(\Delta^{\varphi})\to G(\Delta^{\varphi})$ est une équivalence faible pour la structure de Kan sur $\sS$, pour tout $\Delta^{\varphi}\in \Delta(P)$.
\end{itemize}
\end{prop}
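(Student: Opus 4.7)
La proposition est énoncée comme une application directe du théorème \cite[Théorème 11.6.1]{Hirschhorn} sur les structures de modèles projectives sur des catégories de foncteurs à valeurs dans une catégorie modèle à engendrement cofibrant. Mon plan est donc de rappeler et d'appliquer ce résultat au cas particulier $\mathcal{C}=\Delta(P)^{\op}$, en identifiant les données nécessaires. On observe d'abord que $\sS$, munie de la structure de Kan-Quillen (Théorème \ref{TheoremeStructureKanSSetChap2}), est une catégorie modèle à engendrement cofibrant avec
\begin{equation*}
I=\{\partial(\Delta^n)\to\Delta^n\ |\ n\geq 0\} \quad\text{et}\quad J=\{\Lambda^n_k\to\Delta^n\ |\ n\geq 1,\ 0\leq k\leq n\}.
\end{equation*}
D'autre part, $\Delta(P)^{\op}$ est une petite catégorie.

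La stratégie consiste à construire la structure de modèle voulue par transport des générateurs le long du plongement de Yoneda. Pour chaque $\Delta^{\varphi}\in \Delta(P)$, on note $h^{\varphi}=\Hom_{\Delta(P)}(-,\Delta^{\varphi})\in \Diag_P$ le préfaisceau représenté. On définit ensuite les ensembles générateurs $I_P$ et $J_P$ pour $\Diag_P$ comme les ensembles des morphismes de la forme $h^{\varphi}\times i$ pour $i\in I$, respectivement $h^{\varphi}\times j$ pour $j\in J$, où $\Delta^{\varphi}$ parcourt les objets de $\Delta(P)$. Par le lemme de Yoneda, pour tout diagramme $F\in \Diag_P$, tout $\Delta^{\varphi}$ et tout ensemble simplicial $K$, on a l'adjonction
\begin{equation*}
\Hom_{\Diag_P}(h^{\varphi}\times K,F)\simeq \Hom_{\sS}(K,F(\Delta^{\varphi})).
\end{equation*}
En particulier, un morphisme $f\colon F\to G$ de $\Diag_P$ a la propriété de relèvement à droite par rapport à $I_P$ (resp. $J_P$) si et seulement si pour tout $\Delta^{\varphi}$, le morphisme $f_{\Delta^{\varphi}}$ a la propriété de relèvement à droite par rapport à $I$ (resp. $J$) dans $\sS$. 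Cela fournit la caractérisation voulue des fibrations triviales et des fibrations.

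Il reste à appliquer un théorème de reconnaissance des catégories modèles à engendrement cofibrant, comme \cite[Theorem 11.3.1]{Hirschhorn}. On vérifie :
\begin{itemize}
\item la catégorie $\Diag_P$ est complète et cocomplète, les (co)limites étant calculées point par point ;
\item la classe des équivalences faibles vérifie la propriété de deux sur trois et est stable par rétract, ces deux faits étant hérités point par point de $\sS$ ;
\item les domaines des morphismes de $I_P$ et $J_P$ sont petits par rapport aux $I_P$-cellules (resp. $J_P$-cellules), ce qui découle du même énoncé pour $I$ et $J$ dans $\sS$ et du fait que les colimites dans $\Diag_P$ se calculent point par point ;
\item toute $J_P$-cellule est une équivalence faible, ce qui, par l'adjonction ci-dessus et le calcul point par point des colimites, se ramène au fait que toute $J$-cellule est une équivalence faible dans $\sS$.
\end{itemize}
L'obstacle principal, si on souhaitait écrire la preuve sans invoquer directement Hirschhorn, serait la vérification du caractère petit des domaines dans $\Diag_P$ ; cette étape est cependant standard et se réduit, via Yoneda, à la propriété analogue déjà connue dans $\sS$. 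Une fois ces points vérifiés, le théorème de reconnaissance fournit la structure de modèle annoncée, dont les fibrations et équivalences faibles sont précisément celles décrites dans l'énoncé.
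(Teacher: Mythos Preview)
Your proposal is correct and follows exactly the approach the paper takes: the proposition is stated as a direct citation of \cite[Théorème 11.6.1]{Hirschhorn} without further proof, and you have simply unpacked the content of that reference by describing the projective generating sets and the recognition theorem. There is nothing to add.
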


\begin{defin}\label{DefFoncteurDiagrammeSimplicial}
On définit le foncteur
\begin{align*}
D\colon \sS_P&\to  \Diag_P\\
\fil{X}&\mapsto (\Delta^{\varphi}\mapsto \Map(\Delta^{\varphi},\fil{X})
\end{align*}
Par abus de notation, on notera $D(X)$ pour l'image par $D$ de $\fil{X}$ lorsqu'il n'y aura pas d'ambiguïté sur la filtration choisie sur $X$.
\end{defin}

\begin{prop}\label{DiagrammeCaracteriseFibrationTriviale}
Soit $f\colon \fil{X}\to\fil{Y}$ une fibration (triviale). Alors, $D(f)$ est une fibration (triviale). De plus, si $\fil{X}$ et $\fil{Y}$ sont fibrants et $D(f)$ est triviale, alors $f$ est triviale.
\end{prop}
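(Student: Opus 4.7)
Mon plan est d'exploiter la structure de catégorie modèle simpliciale établie au Théorème~\ref{CategorieModelSimpliciale}. Pour la première implication, j'applique la condition de compatibilité simpliciale à la cofibration $\emptyset\hookrightarrow\Delta^{\varphi}$ et à la fibration $f$ : cela entraîne que
\begin{equation*}
\Map(\Delta^{\varphi},\fil{X})\to\Map(\emptyset,\fil{X})\times_{\Map(\emptyset,\fil{Y})}\Map(\Delta^{\varphi},\fil{Y})
\end{equation*}
est une fibration de Kan, triviale lorsque $f$ l'est. Puisque $\Map(\emptyset,-)=\Delta^0$, ce morphisme s'identifie à $D(f)_{\Delta^{\varphi}}$, d'où les deux premières parties de l'énoncé par la Proposition~\ref{CategorieModeleDiagramme}.

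Pour la réciproque, sous les hypothèses que $\fil{X}$ et $\fil{Y}$ sont fibrants et que $D(f)$ est une fibration triviale, je montrerai que $f$ possède la propriété de relèvement à droite par rapport à chaque inclusion $i\colon\partial(\Delta^{\varphi})\to\Delta^{\varphi}$, ce qui suffit à conclure. Un tel relèvement correspond à un $0$-simplexe de $\Map(\Delta^{\varphi},\fil{X})$ se relevant au-dessus d'une donnée fixée dans
\begin{equation*}
P=\Map(\Delta^{\varphi},\fil{Y})\times_{\Map(\partial(\Delta^{\varphi}),\fil{Y})}\Map(\partial(\Delta^{\varphi}),\fil{X}).
\end{equation*}
La compatibilité simpliciale assure que le morphisme canonique $(i^*,f_*)\colon \Map(\Delta^{\varphi},\fil{X})\to P$ est une fibration de Kan ; il suffit donc d'établir qu'elle est triviale pour obtenir la surjectivité voulue sur les $0$-simplexes. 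Or sa composée avec la projection $P\to\Map(\Delta^{\varphi},\fil{Y})$ coïncide avec $D(f)_{\Delta^{\varphi}}$, triviale par hypothèse ; par deux sur trois, il suffira donc que la projection $P\to\Map(\Delta^{\varphi},\fil{Y})$ soit elle-même une équivalence faible, ou de manière équivalente par image inverse, que $f_*\colon \Map(\partial(\Delta^{\varphi}),\fil{X})\to\Map(\partial(\Delta^{\varphi}),\fil{Y})$ soit une fibration triviale.

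L'étape principale, qui constituera le cœur technique de la preuve, consistera à établir par récurrence sur la paire (dimension de $A$, nombre de simplexes non dégénérés de $A$ de dimension maximale), ordonnée lexicographiquement, le lemme auxiliaire suivant : pour tout ensemble simplicial filtré $A$ ayant un nombre fini de simplexes non dégénérés, le morphisme $f_*\colon \Map(A,\fil{X})\to\Map(A,\fil{Y})$ est une fibration de Kan triviale. Dans l'étape inductive, on choisit un simplexe non dégénéré $\sigma\colon \Delta^{\psi}\to A$ de dimension maximale et on décompose $A\simeq A'\cup_{\partial(\Delta^{\psi})}\Delta^{\psi}$, fournissant
\begin{equation*}
\Map(A,-)\simeq \Map(A',-)\times_{\Map(\partial(\Delta^{\psi}),-)}\Map(\Delta^{\psi},-),
\end{equation*}
où $A'$ et $\partial(\Delta^{\psi})$ sont strictement plus petits pour l'ordre de récurrence. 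Les hypothèses de récurrence combinées à l'hypothèse sur $D(f)$ fournissent des fibrations de Kan triviales aux trois coins de ce diagramme, et la compatibilité simpliciale — utilisant ici crucialement la fibrance de $\fil{X}$ et $\fil{Y}$ via les fibrations $\fil{X}\to *$ et $\fil{Y}\to *$ — assure que les morphismes $\Map(\Delta^{\psi},-)\to\Map(\partial(\Delta^{\psi}),-)$ sont des fibrations de Kan. L'obstacle principal consistera à invoquer correctement le lemme de recollement pour les équivalences faibles entre produits fibrés dans la structure propre de Kan sur $\sS$, pour conclure que le morphisme induit sur les limites est trivial. Appliqué à $A=\partial(\Delta^{\varphi})$, ce lemme achève la démonstration.
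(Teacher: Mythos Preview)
Your plan is correct and follows essentially the same route as the paper: both arguments deduce the first assertion from the simplicial model structure, and both reduce the converse to showing that $\Map(\partial(\Delta^{\varphi}),\fil{X})\to\Map(\partial(\Delta^{\varphi}),\fil{Y})$ is a weak equivalence, using fibrancy of $\fil{X}$ and $\fil{Y}$ to control the relevant mapping spaces. The only difference is in how this last point is established: the paper writes $\partial(\Delta^{\varphi})$ as a colimit over its non-degenerate faces, identifies the resulting limit of fibrations with a homotopy limit, and concludes directly; you instead run a cell-by-cell induction and invoke the gluing lemma at each step. Your argument is more explicit and avoids mentioning homotopy limits, while the paper's is more concise; the underlying content is the same.
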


La preuve utilise le lemme suivant .

\begin{lemme}\label{MapBordsSimplexes}
Soit $f\colon \fil{X}\to\fil{Y}$ une fibration. Alors $f$ est une fibration triviale si et seulement si, pour tout $\Delta^{\varphi}\in \Delta(P)$, les applications induites par $f$ 
\begin{align*}
\Map(\partial(\Delta^{\varphi}),\fil{X})&\to\Map(\partial(\Delta^{\varphi}),\fil{Y})\\
\Map(\Delta^{\varphi},\fil{X})&\to \Map(\Delta^{\varphi},\fil{Y})
\end{align*}
sont des équivalences faibles pour la structure de Kan sur $\sS$.
\end{lemme}

\begin{proof}
Par définition d'une structure de modèle simpliciale, si $f$ est une fibration triviale, les applications simpliciales du lemme \ref{MapBordsSimplexes} sont des fibrations triviales dans $\sS$ pour la structure de Kan.
Pour la réciproque, fixons $\Delta^{\varphi}$ un simplexe filtré et considérons le diagramme commutatif suivant, dont le carré central est cocartésien.
\begin{equation*}
\begin{tikzcd}
\Map(\Delta^{\varphi},\fil{X})
\arrow{dr}{(4)}
\arrow[swap, bend right = 18]{ddr}{(2)}
\arrow[bend left = 6]{drr}
&\phantom{X}
&\phantom{X}
\\
\phantom{X}
&\Map(\Delta^{\varphi},\fil{Y})\times
\Map(\partial(\Delta^{\varphi}),\fil{X})
\arrow{r}
\arrow{d}{(3)}
&\Map(\partial(\Delta^{\varphi}),\fil{X})
\arrow{d}{(1)}
\\
\phantom{X}
&\Map(\Delta^{\varphi},\fil{Y})
\arrow{r}
&\Map(\partial(\Delta^{\varphi}),\fil{Y})
\end{tikzcd}
\end{equation*}
Comme $\sS_P$ est une catégorie modèle simpliciale, on sait que les morphismes $(1)$ et $(2)$ sont des fibrations. Comme par hypothèse ce sont aussi des équivalences faibles, ce sont des fibrations triviales. Comme $(3)$ est l'image de $(1)$ par un produit fibré, $(3)$ est aussi une fibration triviale. On en déduit que $(4)$ est une équivalence faible, par l'axiome de 2 sur 3. C'est de plus une fibration, car $\sS_P$ est une catégorie modèle simpliciale. On en déduit que $(4)$ est en particulier une surjection.
Soit maintenant un problème de relèvement pour $f$ 
\begin{equation*}
\begin{tikzcd}
\partial(\Delta^{\varphi})
\arrow{r}
\arrow{d}
&\fil{X}
\arrow{d}{f}
\\
\Delta^{\varphi}
\arrow{r}
\arrow[dashed]{ur}{g}
&\fil{Y}
\end{tikzcd}
\end{equation*}
Un tel diagramme correspond à un $0$-simplexe de $\Map(\Delta^{\varphi},\fil{Y})\times
_{\Map(\partial(\Delta^{\varphi}),\fil{Y})}
\Map(\partial(\Delta^{\varphi}),\fil{X})$. Une pré-image de
ce $0$-simplexe par l'application $(4)$ fournit un relèvement $g$. On en déduit que $f$ admet la propriété de relèvement à droite par rapport à toutes les inclusions $\partial(\Delta^{\varphi})\to\Delta^{\varphi}$, et donc que $f$ est une fibration triviale.
\end{proof}

\begin{proof}[Démonstration de la proposition \ref{DiagrammeCaracteriseFibrationTriviale}]
Soit $f\colon \fil{X}\to\fil{Y}$ une fibration.
Pour tout $\Delta^{\varphi}\in \Delta(P)$, $D(f)(\Delta^{\varphi})\colon \Map(\Delta^{\varphi},\fil{X})\to\Map(\Delta^{\varphi},\fil{Y})$ est une fibration car $\sS_P$ est une catégorie modèle simpliciale. On en déduit que $D(f)$ est une fibration de $\Diag_P$. De même, $D(f)$ est une fibration triviale si $f$ est une fibration triviale. Supposons maintenant que $D(f)$ est une fibration triviale. Alors, on sait que les applications $\Map(\Delta^{\varphi},\fil{X})\to\Map(\Delta^{\varphi},\fil{Y})$ sont des équivalences faibles pour tout $\Delta^{\varphi}\in \Delta(P)$, et, par le lemme \ref{MapBordsSimplexes} il suffit de montrer que les applications $\Map(\partial(\Delta^{\varphi}),\fil{X})\to\Map(\partial(\Delta^{\varphi}),\fil{Y})$ sont des équivalences faibles pour tout $\Delta^{\varphi}\in \Delta(P)$.
Fixons un simplexe filtré $\Delta^{\varphi}\in \Delta(P)$. Comme pour tout ensemble simplicial filtré, on peut écrire $\partial(\Delta^{\varphi})$ comme la colimite 
\begin{equation*}
\partial(\Delta^{\varphi})\simeq \colim_{\Delta^{\psi}\subset \partial(\Delta^{\varphi})}\Delta^{\psi}
\end{equation*}
De plus, comme toutes les faces de simplexes non dégénérés de $\Delta^{\varphi}$ sont des simplexes non dégénérés, on peut restreindre cette colimite aux inclusions de simplexes non dégénérés. En appliquant $\Map(-,\fil{X})$, il vient
\begin{align*}
\Map(\partial(\Delta^{\varphi}),\fil{X})&\simeq \Map\left(\colim_{\Delta^{\psi}\subset\partial(\Delta^{\varphi})}\Delta^{\psi},\fil{X}\right)\\
&\simeq \lim_{\Delta^{\psi}\subset \partial(\Delta^{\varphi})}\Map(\Delta^{\psi},\fil{X})
\end{align*}
Finalement, si $\Delta^{\psi_1}\subset\Delta^{\psi_2}\subset\Delta^{\varphi}$ sont des inclusions de simplexes filtrés, le morphisme
$\Map(\Delta^{\psi_2},\fil{X})\to \Map(\Delta^{\psi_1},\fil{X})$ est une fibration, car $\Delta^{\psi_1}\to\Delta^{\psi_2}$ est une cofibration et $\fil{X}$ est fibrant. En particulier, la limite précédente est une limite de fibration. Finalement, on a
\begin{align*}
\Map(\partial(\Delta^{\varphi}),\fil{X})&\simeq \lim\Map(\Delta^{\psi},\fil{X})\\
&\sim \holim\Map(\Delta^{\psi},\fil{X})\\
&\sim \holim\Map(\Delta^{\psi},\fil{Y})\\
&\sim \lim\Map(\Delta^{\psi},\fil{Y})\\
&\simeq \Map(\partial(\Delta^{\varphi}),\fil{Y})
\end{align*}
et le lemme \ref{MapBordsSimplexes} permet de conclure.
\end{proof}

\begin{corollaire}\label{DiagAdjonctionQuillen}
Le foncteur $D$ est un foncteur de Quillen à droite.
\end{corollaire}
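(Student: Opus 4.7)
The plan is to combine Proposition \ref{DiagrammeCaracteriseFibrationTriviale}, which already shows that $D$ preserves fibrations and trivial fibrations, with the construction of a left adjoint to $D$. Once the left adjoint is in hand, the definition of a Quillen adjunction is satisfied immediately.

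First, I would exhibit the left adjoint $L\colon \Diag_P \to \sS_P$ by the coend formula
\begin{equation*}
L(F) = \int^{\Delta^{\varphi} \in \Delta(P)} F(\Delta^{\varphi}) \otimes \Delta^{\varphi},
\end{equation*}
where the simplicial tensoring $\otimes$ of an ensemble simplicial by a filtered simplex is the one from Definition \ref{TenseurSimplicial}. The coend exists because $\sS_P$ is cocomplete (this follows from its description as a presheaf category, Proposition \ref{CategoriePrefaisceaux}). The adjunction $L \dashv D$ is then a formal consequence of the tensor–cotensor–mapping triple that gives $\sS_P$ the structure of a simplicial category (Theorem \ref{CategorieModelSimpliciale}): for any $F \in \Diag_P$ and any $\fil{X} \in \sS_P$, a standard coend computation yields
\begin{equation*}
\Hom_{\sS_P}(L(F), \fil{X}) \simeq \int_{\Delta^{\varphi}} \Hom_{\sS}\bigl(F(\Delta^{\varphi}), \Map(\Delta^{\varphi}, \fil{X})\bigr) \simeq \Hom_{\Diag_P}(F, D(\fil{X})).
\end{equation*}

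Second, to conclude that $(L,D)$ is a Quillen adjunction, it suffices to verify that the right adjoint $D$ preserves fibrations and trivial fibrations. Both statements are exactly the content of Proposition \ref{DiagrammeCaracteriseFibrationTriviale}. This establishes the corollary; no step here is expected to present genuine difficulty, the substantive work having been carried out in the proof of Proposition \ref{DiagrammeCaracteriseFibrationTriviale} via the cofinality/limit argument using Lemma \ref{MapBordsSimplexes}.
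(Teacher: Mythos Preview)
The proposal is correct and follows essentially the same strategy as the paper: construct a left adjoint and invoke Proposition~\ref{DiagrammeCaracteriseFibrationTriviale} for preservation of (trivial) fibrations. Your coend $L(F)=\int^{\Delta^{\varphi}}F(\Delta^{\varphi})\otimes\Delta^{\varphi}$ is the same object as the paper's $\Colim(F)=\colim_{\Delta^{\psi}\to\Delta^{\varphi}}F(\Delta^{\varphi})\otimes\Delta^{\psi}$ (a coend is a colimit over the twisted arrow category), and your appeal to the simplicial tensor--Map adjunction for the verification is the abstract version of the paper's explicit bijections.
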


\begin{proof}
Il suffit de montrer que $D$ admet un adjoint à gauche. En effet, la première partie de la proposition \ref{DiagrammeCaracteriseFibrationTriviale} garantit que si une paire d'adjoints existe, il s'agira d'une paire de Quillen.
On définit l'adjoint à gauche $\Colim\colon \Diag_P\to \sS_P$ comme suit :
\begin{align*}
\Diag_P&\xrightarrow{\Colim} \sS_P\\
F&\mapsto \colim_{\Delta^{\psi}\to\Delta^{\varphi}} F(\Delta^\varphi)\otimes\Delta^{\psi}.
\end{align*}
Ici, la colimite est prise sur la petite catégorie des morphismes de $\Delta(P)$. 
On vérifie qu'on a bien une adjonction. Soit $F$ un objet de $\Diag_P$, et $\fil{X}$ un ensemble simplicial filtré. On a la bijection
\begin{align*}
\Hom_{\Diag_P}(F,D(X))&\to\Hom_{\sS_P}(\Colim(F),\fil{X})\\
(f_{\varphi}\colon F(\Delta^{\varphi})\to\Map(\Delta^{\varphi},\fil{X})_{\Delta^{\varphi}\in \Delta(P)}&\mapsto \colim f_{\varphi,\psi}\colon \colim F(\Delta^{\varphi})\otimes\Delta^{\psi}\to\fil{X},
\end{align*}
où  $f_{\varphi,\psi}$ est défini comme le morphisme adjoint à la composition 
\begin{equation*}
F(\Delta^{\varphi})\xrightarrow{f_{\varphi}} \Map(\Delta^{\varphi},\fil{X})\to\Map(\delta^{\psi},\fil{X})
\end{equation*}
par l'adjonction $(\Map(\Delta^{\psi},-),-\otimes\Delta^{\psi})$. Et la bijection inverse 
\begin{align*}
\Hom_{\sS_P}(\Colim(F),\fil{X})&\to \Hom_{\Diag_P}(F,D(X))\\
g\colon \Colim(F)\to \fil{X}&\mapsto (g_{\varphi}\colon F(\Delta^{\varphi})\to \Map(\Delta^{\varphi},\fil{X}))_{\Delta^{\varphi}\in \Delta(P)}
\end{align*}
où $g_{\varphi}$ est défini comme l'adjoint de la composition 
\begin{equation*}
F(\Delta^{\varphi})\otimes\Delta^{\varphi}\to\Colim(F)\xrightarrow{g}\to \fil{X}.
\end{equation*}
\end{proof}

\begin{prop}\label{fEquivalenceFaibleSSIDfEquivalenceFaible}
Soit $f\colon \fil{X}\to \fil{Y}$ un morphisme entre objets fibrants de $\sS_P$. $f$ est une équivalence faible si et seulement si $D(f)$ est une équivalence faible.
\end{prop}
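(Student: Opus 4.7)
The strategy is to reduce to Proposition \ref{DiagrammeCaracteriseFibrationTriviale}, which already gives the analogous statement for (trivial) fibrations, by inserting a mapping path factorization.

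First I would treat the easy direction. By Corollary \ref{DiagAdjonctionQuillen}, the functor $D$ is a right Quillen adjoint, hence by Proposition \ref{PropositionAdjonctionQuillenPreserveEquivalencesFaibles} (applied on the right-hand side) it preserves weak equivalences between fibrant objects. Since $\fil{X}$ and $\fil{Y}$ are fibrant, if $f$ is a weak equivalence then $D(f)$ is a weak equivalence of $\Diag_P$.

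For the converse, assume $D(f)$ is a weak equivalence. I would use the factorization axiom \ref{AxiomeMC5} in $\sS_P$ to write
\begin{equation*}
f\colon \fil{X}\xrightarrow{\ j\ }\fil{Z}\xrightarrow{\ p\ }\fil{Y},
\end{equation*}
where $j$ is a trivial cofibration and $p$ is a fibration. Since $\fil{Y}$ is fibrant and fibrations compose, $\fil{Z}$ is fibrant as well. Applying $D$, which is right Quillen, $D(p)$ is a fibration of $\Diag_P$ and, by the first part of Proposition \ref{DiagrammeCaracteriseFibrationTriviale} (or again by Ken-Brown/Proposition \ref{PropositionAdjonctionQuillenPreserveEquivalencesFaibles}) applied to the trivial cofibration $j$ between cofibrant (and in fact fibrant) objects, $D(j)$ is a weak equivalence. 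The two-out-of-three axiom in $\Diag_P$, combined with the hypothesis on $D(f)=D(p)\circ D(j)$, then forces $D(p)$ to be a weak equivalence.

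At this stage $D(p)$ is a trivial fibration in $\Diag_P$, and both $\fil{Z}$ and $\fil{Y}$ are fibrant. The second, less obvious, part of Proposition \ref{DiagrammeCaracteriseFibrationTriviale} applies and yields that $p$ itself is a trivial fibration in $\sS_P$. Consequently $f=p\circ j$ is the composition of a trivial cofibration and a trivial fibration, hence a weak equivalence. The main obstacle is packaged in the invocation of Proposition \ref{DiagrammeCaracteriseFibrationTriviale}: that is the step where one genuinely needs fibrancy of $\fil{X}$ and $\fil{Y}$, and without it the converse would fail because $D$ only detects the homotopy type of mapping spaces out of filtered simplices, which is correctly computed only on fibrant targets.
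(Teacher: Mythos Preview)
Your proof is correct and follows essentially the same route as the paper: both factor $f$ as a trivial cofibration followed by a fibration, observe that the middle object is fibrant, deduce that $D(p)$ is a trivial fibration via two-out-of-three, and then invoke Proposition~\ref{DiagrammeCaracteriseFibrationTriviale} on fibrant objects to conclude. The only cosmetic difference is that you appeal explicitly to Ken Brown's lemma for $D(j)$, whereas the paper just says $D(i)$ is a weak equivalence between fibrant objects; the content is the same.
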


\begin{proof}
D'après le corollaire \ref{DiagAdjonctionQuillen}, $D$ est un foncteur de Quillen à droite. En particulier, $D$ préserve les équivalences faibles entre objets fibrants. Pour montrer la réciproque, soit $f\colon \fil{X}\to \fil{Y}$ un morphisme entre objets fibrants, tel que $D(f)$ est une équivalence faible. Alors, soit $f=pi$ une factorisation de $f$ en une cofibration triviale suivie d'une fibration. En appliquant $D$ à cette factorisation, on obtient 
\begin{equation*}
\begin{tikzcd}
D(X)
\arrow{r}{D(f)}
\arrow[swap]{d}{D(i)}
&D(Y)
\\
D(Z)
\arrow[swap]{ur}{D(p)}
\end{tikzcd}
\end{equation*}
Comme $\fil{Y}$ est fibrant et que $p\colon\fil{Z}\to \fil{Y}$ est une fibration, on en déduit que $Z$ est fibrant. Mais alors, $D(i)$ est l'image d'une équivalence faible entre objets fibrants, et c'est donc une équivalence faible. Par 2 sur 3, $D(p)$ est donc une fibration triviale. Comme $\fil{Z}$ et $\fil{Y}$ sont fibrants, on peut appliquer la proposition \ref{DiagrammeCaracteriseFibrationTriviale} pour conclure que $p$ est une fibration triviale, et donc que $f$ est une équivalence faible, car composition d'une fibration triviale et d'une cofibration triviale.
\end{proof}

\section{Groupes d'homotopie filtrés}
\label{SectionGroupesHomotopieFiltres}
\subsection{Pointage et composantes connexes filtrées}

\begin{defin}\label{DefinSPi0Simplicial}
Soit $\fil{X}$ un ensemble simplicial filtré fibrant. On définit son ensemble de composantes connexes filtrées comme l'ensemble simplicial filtré suivant 
\begin{align*}
s\pi_0(\fil{X})\colon \Delta(P)^{\op}&\to \Set\\
\Delta^{\varphi} &\mapsto \pi_0(\Map(\Delta^{\varphi},\fil{X}))
\end{align*}
\end{defin}

\begin{remarque}\label{SimplexesDeSPi0}
Soit $\fil{X}$ un ensemble simplicial filtré fibrant. Un simplexe de $s\pi_0\fil{X}$, $\sigma\colon \Delta^{\varphi}\to s\pi_0\fil{X}$ est une classe d'homotopie $[\Delta^{\varphi},\fil{X}]$. En effet, par construction l'ensemble $\pi_0(\Map(\Delta^{\varphi},\fil{X})$ est l'ensemble des applications filtrées $\Hom(\Delta^{\varphi},\fil{X})$ quotienté par la relation d'équivalence induite par l'ensemble $\Hom(\Delta^{1}\otimes\Delta^{\varphi},\fil{X})$. C'est donc l'ensemble des applications filtrées à homotopie filtrée près $[\Delta^{\varphi},\fil{X}]$.
\end{remarque}

\begin{defin}
Soit $\fil{X}$ un ensemble simplicial filtré fibrant. Un pointage de $\fil{X}$ est la donnée d'un sous ensemble simplicial filtré $\fil{V}\subseteq (N(P),\Id)$, et d'une application simpliciale filtrée
\begin{equation*}
\phi\colon \fil{V}\to \fil{X}.
\end{equation*}
\end{defin}

\begin{remarque}\label{PointageInduitSimplicial}
Soit $\fil{X}$ un ensemble simplicial filtré fibrant, et $\phi\colon \fil{V}\to\fil{X}$ un pointage de $X$. Si $\fil{V'}\subseteq \fil{V}$ est un sous ensemble simplicial filtré, alors $\phi$ induit un pointage 
\begin{equation*}
\phi_{|V'}\colon \fil{V'}\to\fil{X}
\end{equation*}
\end{remarque}

\begin{defin}
Soit $\fil{X}$ un ensemble simplicial filtré fibrant, et $(\phi_i ,\fil{V_i})_{i\in I}$ un ensemble de pointages de $\fil{X}$. On dit que l'ensemble de pointages $(\phi_i,\fil{V_i})_{i\in I}$ est complet si pour tout simplexe filtré non dégénéré $\sigma\colon \Delta^{\varphi}\to s\pi_0(\fil{X})$, il existe $i\in I$, tel que $\Delta^{\varphi}\subseteq V_i$ et $[(\phi_i)_{|\Delta^{\varphi}}]=\sigma$.
\end{defin}

\begin{prop}\label{PointagesCompletsExistent}
Soit $\fil{X}$ un ensemble simplicial filtré fibrant, il existe un ensemble complet de pointages de $\fil{X}$.
\end{prop}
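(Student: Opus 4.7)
The plan is to exhibit, for each non-degenerate filtered simplex $\Delta^{\varphi}$ and each class $\sigma \in s\pi_0\fil{X}(\Delta^{\varphi})$, an individual pointing $(\phi_\sigma, \fil{V_\sigma})$ that realizes $\sigma$; taking the family of all these as our indexed set of pointings will then yield a complete set by construction.

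The first step is the observation that when $\Delta^{\varphi}$ is non-degenerate, the filtration $\varphi \colon \Delta^N \to N(P)$ is automatically injective. Indeed, $N(P)$ is the nerve of a poset, so its non-degenerate $N$-simplices correspond to strictly increasing chains $p_0 < p_1 < \cdots < p_N$, and such a chain forces $\varphi$ to be a monomorphism of simplicial sets. Thus $\varphi$ realizes $\Delta^{\varphi}$ as a sub-filtered-simplicial-set of $(N(P), \Id)$, and this inclusion is what we will take as $\fil{V_\sigma}$.

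Next, by Remarque \ref{SimplexesDeSPi0}, the class $\sigma$ is represented by some filtered map $\widetilde{\sigma} \colon \Delta^{\varphi} \to \fil{X}$. We pick such a representative for every pair $(\Delta^{\varphi}, \sigma)$ (invoking the axiom of choice uniformly over all such pairs) and set $\phi_\sigma := \widetilde{\sigma} \colon \fil{V_\sigma} \to \fil{X}$. By construction, $(\phi_\sigma, \fil{V_\sigma})$ is a pointing of $\fil{X}$, the inclusion $\Delta^{\varphi} \subseteq \fil{V_\sigma}$ holds trivially (in fact with equality), and $[(\phi_\sigma)_{|\Delta^{\varphi}}] = [\widetilde{\sigma}] = \sigma$, whence completeness.

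No serious technical difficulty is anticipated; the argument is essentially a packaging of the axiom of choice. The one conceptual point worth isolating is that the notion of "simplexe filtré non dégénéré" here must be read as a condition on $\Delta^{\varphi}$ itself, i.e.\ on $\varphi$ being a non-degenerate simplex of $N(P)$: this is precisely what guarantees the embedding $\Delta^{\varphi} \hookrightarrow (N(P), \Id)$ used to define $\fil{V_\sigma}$, and without it the restriction $\phi_{|\Delta^{\varphi}}$ coming from any pointing would necessarily be a degeneracy, so the construction would simply have nothing to land on.
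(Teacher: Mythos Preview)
Your proof is correct and matches the paper's approach: index the pointings by the non-degenerate simplices $\sigma$ of $s\pi_0\fil{X}$, pick a representative $\phi_\sigma\colon\Delta^{\varphi}\to\fil{X}$ of each homotopy class, and set $\fil{V_\sigma}=\Delta^{\varphi}$. Your additional remarks on why $\Delta^{\varphi}$ embeds into $(N(P),\Id)$ when it is non-degenerate simply make explicit a point the paper leaves unstated.
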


\begin{proof}
Soit $\sigma\colon \Delta^{\varphi}\to s\pi_0\fil{X}$ un simplexe filtré non dégénéré. D'après la remarque \ref{SimplexesDeSPi0} $\sigma$ correspond à un élément dans l'ensemble des classes d'homotopies $[\Delta^{\varphi},\fil{X}]$. Un représentant de cette classe d'homotopie, $\phi_{\sigma}\colon \Delta^{\varphi}\to \fil{X}$ fournit un pointage de $\fil{X}$ vérifiant $[\phi_{\sigma}]=\sigma$. L'ensemble $(\Delta^{\varphi_{\sigma}},\phi_{\sigma})_{\sigma\in s\pi_0\fil{X}_{\nd}}$ donne un ensemble de pointages complet pour $\fil{X}$.
\end{proof}

\begin{remarque}
L'ensemble de pointages construit ici contient plus de pointages que nécessaire. En effet, il suffit de considérer les pointages de la forme $\phi_{\sigma}$, où $\sigma$ est un simplexe maximal de $s\pi_0\fil{X}$.
\end{remarque}

\subsection{Caractérisation des équivalences faibles}

\begin{defin}\label{DefinitionGroupesHomotopieFiltree}
Soient $\fil{X}$ un ensemble simplicial filtré fibrant, $n\geq 1$ un entier et $\phi\colon\fil{V}\to\fil{X}$ un pointage de $\fil{X}$. On définit le $n$-ième groupe d'homotopie filtré de $\fil{X}$, comme l'ensemble simplicial filtré suivant
\begin{align*}
s\pi_n(\fil{X},\phi)\colon \Delta(P)^{\op}&\to\Set\\
\Delta^{\varphi}&\mapsto \left\{\begin{array}{cl}
\pi_n(\Map(\Delta^{\varphi},\fil{X}),\phi_{|\Delta^{\varphi}})&\text{ si $\Delta^{\varphi}\subseteq \fil{V}$}\\
\emptyset &\text{ si $\Delta^{\varphi}\not\subseteq \fil{V}$} 
\end{array}\right.
\end{align*}
\end{defin}

\begin{prop}\label{HomotopiePointagesGroupesHomotopie}
Soient $\phi,\psi\colon \fil{V}\to\fil{X}$ deux pointages d'un ensemble simplicial filtré fibrant. Si $\phi$ et $\psi$ sont homotopes par une homotopie filtrée $H$, il existe un isomorphisme naturel
\begin{equation*}
s\pi_n(H)\colon s\pi_n(\fil{X},\phi)\to s\pi_n(\fil{X},\psi)
\end{equation*}
De plus, si $f\colon\fil{X}\to\fil{Y}$ est une application filtrée entre ensembles simpliciaux fibrants, les homotopies $H$ et $f\circ H$ induisent le diagramme commutatif suivant :
\begin{equation*}
\begin{tikzcd}
s\pi_n(\fil{X},\phi)
\arrow{d}{s\pi_n(H)}
\arrow{r}{s\pi_n(f)}
&s\pi_n(\fil{Y},f\circ\phi)
\arrow{d}{s\pi_n(f\circ H)}
\\
s\pi_n(\fil{X},\psi)
\arrow{r}{s\pi_n(f)}
&s\pi_n(\fil{Y},f\circ\psi)
\end{tikzcd}
\end{equation*}
\end{prop}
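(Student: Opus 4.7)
The plan is to reduce the statement, one simplex at a time, to the classical fact that a path in a Kan complex between two vertices induces an isomorphism on homotopy groups based at those vertices, and then to package this isomorphism into a morphism of diagrams via the naturality of the mapping spaces $\Map(\Delta^{\varphi},\fil{X})$.

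Concretely, I would proceed as follows. Fix a simplex $\Delta^{\varphi}\subseteq \fil{V}$. By Theorem \ref{CategorieModelSimpliciale}, $\sS_P$ is a simplicial model category; since $\Delta^{\varphi}$ is cofibrant and $\fil{X}$ is fibrant, $\Map(\Delta^{\varphi},\fil{X})$ is a Kan complex. The restriction $H_{|\Delta^1\otimes\Delta^{\varphi}}\colon \Delta^1\otimes\Delta^{\varphi}\to \fil{X}$ corresponds via the adjunction $(-\otimes\Delta^{\varphi},\Map(\Delta^{\varphi},-))$ to a $1$-simplex
\begin{equation*}
h_{\varphi}\colon \Delta^1\to \Map(\Delta^{\varphi},\fil{X})
\end{equation*}
joining $\phi_{|\Delta^{\varphi}}$ to $\psi_{|\Delta^{\varphi}}$. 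By the classical theorem, conjugation with $h_{\varphi}$ yields an isomorphism of groups
\begin{equation*}
c_{h_{\varphi}}\colon \pi_n(\Map(\Delta^{\varphi},\fil{X}),\phi_{|\Delta^{\varphi}})\xrightarrow{\simeq}\pi_n(\Map(\Delta^{\varphi},\fil{X}),\psi_{|\Delta^{\varphi}}),
\end{equation*}
and this isomorphism depends only on the homotopy class of $h_{\varphi}$ rel endpoints. Define $s\pi_n(H)_{\Delta^{\varphi}}$ to be $c_{h_{\varphi}}$ whenever $\Delta^{\varphi}\subseteq \fil{V}$ (and the unique map $\emptyset\to\emptyset$ otherwise).

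The main verification is the naturality in $\Delta^{\varphi}$, i.e.\ that the collection $(c_{h_{\varphi}})_{\Delta^{\varphi}\subseteq \fil{V}}$ assembles into a morphism of diagrams over $\Delta(P)^{\op}$. For any morphism $\alpha\colon \Delta^{\psi}\to \Delta^{\varphi}$ in $\Delta(P)$ with both simplices in $\fil{V}$, precomposition induces a simplicial map $\alpha^*\colon \Map(\Delta^{\varphi},\fil{X})\to \Map(\Delta^{\psi},\fil{X})$, and since $H$ is a single global homotopy, one has $\alpha^*\circ h_{\varphi}=h_{\psi}\circ(\Id_{\Delta^1})$. The claim then reduces to the classical naturality of conjugation: if $g\colon K\to L$ is a map of Kan complexes and $h$ is a path in $K$, then $g\circ c_h=c_{g\circ h}\circ g_*$ on $\pi_n$. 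This gives the desired morphism of diagrams; it is an isomorphism because each component $c_{h_{\varphi}}$ is. The main obstacle here is purely bookkeeping — keeping track of the support condition ($\Delta^{\varphi}\subseteq \fil{V}$) under face maps, which is automatic since $\fil{V}$ is a sub-simplicial set.

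For the second assertion, given $f\colon \fil{X}\to \fil{Y}$, the homotopy $f\circ H$ adjoints to the path $f_*\circ h_{\varphi}$ in $\Map(\Delta^{\varphi},\fil{Y})$ joining $f\circ \phi_{|\Delta^{\varphi}}$ to $f\circ \psi_{|\Delta^{\varphi}}$. The diagram to be checked has components, at each $\Delta^{\varphi}\subseteq \fil{V}$,
\begin{equation*}
\begin{tikzcd}
\pi_n(\Map(\Delta^{\varphi},\fil{X}),\phi_{|\Delta^{\varphi}})
\arrow{d}[swap]{c_{h_{\varphi}}}
\arrow{r}{(f_*)_*}
&\pi_n(\Map(\Delta^{\varphi},\fil{Y}),f\circ\phi_{|\Delta^{\varphi}})
\arrow{d}{c_{f_*\circ h_{\varphi}}}
\\
\pi_n(\Map(\Delta^{\varphi},\fil{X}),\psi_{|\Delta^{\varphi}})
\arrow{r}{(f_*)_*}
&\pi_n(\Map(\Delta^{\varphi},\fil{Y}),f\circ\psi_{|\Delta^{\varphi}}),
\end{tikzcd}
\end{equation*}
whose commutativity is again the classical naturality $f_*\circ c_h=c_{f_*h}\circ f_*$ applied to $K=\Map(\Delta^{\varphi},\fil{X})$, $L=\Map(\Delta^{\varphi},\fil{Y})$ and the path $h=h_{\varphi}$. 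Since every step is the standard argument applied pointwise and the compatibilities are inherited from $H$ and $f\circ H$ themselves, the proof amounts to assembling these classical facts — no genuine obstruction arises beyond verifying that the support condition and the identification via adjunction are compatible with the morphisms of $\Delta(P)$.
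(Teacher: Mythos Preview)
Your proposal is correct and follows the same underlying idea as the paper --- pointwise change-of-basepoint on the Kan complexes $\Map(\Delta^{\varphi},\fil{X})$ --- but the execution differs slightly. Where you adjoint $H_{|\Delta^1\otimes\Delta^{\varphi}}$ to a path $h_{\varphi}\colon\Delta^1\to\Map(\Delta^{\varphi},\fil{X})$ and invoke the classical conjugation isomorphism $c_{h_{\varphi}}$, the paper instead uses the zigzag of pointed simplicial sets
\[
(\Map(\Delta^{\varphi},\fil{X}),\phi_{|\Delta^{\varphi}})\xleftarrow{\Map(\iota_0,-)}(\Map(\Delta^1\otimes\Delta^{\varphi},\fil{X}),H_{|\Delta^1\otimes\Delta^{\varphi}})\xrightarrow{\Map(\iota_1,-)}(\Map(\Delta^{\varphi},\fil{X}),\psi_{|\Delta^{\varphi}}),
\]
observing that the $\iota_\epsilon$ are trivial cofibrations so that the vertical maps become isomorphisms after applying $\pi_n$; the isomorphism is then $\pi_n(\iota_1)\circ\pi_n(\iota_0)^{-1}$. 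The two descriptions are linked by the adjunction $\Map(\Delta^1\otimes\Delta^{\varphi},\fil{X})\simeq\Map(\Delta^1,\Map(\Delta^{\varphi},\fil{X}))$ and give the same map. The paper's organization has the advantage that both naturality in $\Delta^{\varphi}$ and the compatibility with $f$ drop out of a single commutative diagram of pointed simplicial sets (with the $f$-column drawn alongside from the start), whereas you treat these two verifications separately; your route, on the other hand, makes the link with the classical change-of-basepoint story more explicit.
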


\begin{proof}
Soient $H\colon \Delta^1\otimes\fil{V}\to \fil{X}$ l'homotopie filtrée entre $\phi$ et $\psi$, et soit $\Delta^{\varphi}\subseteq \fil{V}$ un simplexe filtré. On a le diagramme commutatif de morphismes d'ensembles simpliciaux pointés suivant. 
\begin{equation*}
\begin{tikzcd}[column sep=60]
(\Map(\Delta^{\varphi},\fil{X}),\phi_{|\Delta^{\varphi}})
\arrow{r}{\Map(\Delta^{\varphi},f)}
&(\Map(\Delta^{\varphi},\fil{Y}),f\circ\phi_{|\Delta^{\varphi}})
\\
(\Map(\Delta^1\otimes\Delta^{\varphi},\fil{X}),H_{|\Delta^1\otimes\Delta^{\varphi}})
\arrow{u}{\Map(\iota_0,\fil{X})}
\arrow{r}{\Map(\Delta^1\otimes\Delta^{\varphi},f)}
\arrow[swap]{d}{\Map(\iota_1,\fil{X})}
&(\Map(\Delta^1\otimes\Delta^{\varphi},\fil{Y}),f\circ H_{|\Delta^1\otimes\Delta^{\varphi}})
\arrow[swap]{u}{\Map(\iota_0,\fil{Y})}
\arrow{d}{\Map(\iota_1,\fil{Y})}
\\
(\Map(\Delta^{\varphi},\fil{X}),\psi_{|\Delta^{\varphi}})
\arrow{r}{\Map(\Delta^{\varphi},f)}
&(\Map(\Delta^{\varphi},\fil{Y}),f\circ\psi_{|\Delta^{\varphi}})
\end{tikzcd}
\end{equation*}
On remarque que les applications $\iota_i\colon\Delta^0\otimes\Delta^{\varphi}\to \Delta^1\otimes\Delta^{\varphi}$ sont des cofibrations triviales. En particulier, comme $\sS_P$ est une catégorie modèle simpliciale, et que $\fil{X}$ et $\fil{Y}$ sont fibrants, en appliquant le foncteur $\pi_n$ au diagramme précédent, tous les morphismes verticaux deviennent des isomorphismes. On
obtient ainsi l'isomorphisme souhaité en posant
\begin{equation*}
s\pi_n(H)_{\Delta^{\varphi}}=\pi_n(\iota_1)\circ (\pi_n(\iota_0))^{-1}
\end{equation*}
\end{proof}

\begin{remarque}
La proposition \ref{HomotopiePointagesGroupesHomotopie} implique que pour tout ensemble simplicial filtré $\fil{X}$, et tout pointage $\phi\colon\fil{V}\to\fil{X}$, $s\pi_1(\fil{X},\phi)$ agit naturellement sur $s\pi_n(\fil{X},\phi)$ pour tout $n\geq 1$. Cette action est à comprendre dans le sens suivant. Soit $\sigma\colon \Delta^{\varphi}\to s\pi_1(\fil{X},\phi)$ un simplexe, alors $\sigma$ correspond à une classe d'homotopie dans $[(\Delta^1,\partial(\Delta^1)),(\Map(\Delta^{\varphi},\fil{X}),\phi_{|\Delta^{\varphi}})]$. En particulier, $\sigma$ correspond à une homotopie entre $\phi$ et $\phi$. Par la proposition \ref{HomotopiePointagesGroupesHomotopie}, cette homotopie induit un isomorphisme $s\pi_n(\fil{X},\phi_{|\Delta^{\varphi}})\to s\pi_n(\fil{X},\phi_{|\Delta^{\varphi}})$. De plus, par construction, l'ensemble $s\pi_n(\fil{X},\phi)(\Delta^{\varphi})$ est un groupe dès qu'il est non-vide, et on obtient que la collection d'isomorphismes induits par les simplexes de $s\pi_1(\fil{X},\phi)(\Delta^{\varphi})$ sur $s\pi_n(\fil{X},\phi)(\Delta^{\varphi})$ est une action de groupe.
\end{remarque}

\begin{prop}\label{HomotopieMorphismesGroupesHomotopie}
Soient $f,g\colon \fil{X}\to\fil{Y}$ deux applications filtrées entre ensembles simpliciaux filtrés fibrants, $\phi\colon\fil{V}\to\fil{X}$ un pointage de $X$ et $H\colon \Delta^1\otimes\fil{X}\to \fil{Y}$ une homotopie filtrée entre $f$ et $g$. Alors, on a le diagramme commutatif suivant 
\begin{equation*}
\begin{tikzcd}
s\pi_n(\fil{X},\phi)
\arrow{r}{s\pi_n(f)}
\arrow[swap]{dr}{s\pi_n(g)}
&s\pi_n(\fil{Y},f\circ\phi)
\arrow{d}{s\pi_n(H\circ \phi)}
\\
\phantom{X}
&s\pi_n(\fil{Y},g\circ\phi)
\end{tikzcd}
\end{equation*} 
En particulier, si $f\circ \phi=g\circ \phi$, et $H\circ (\Delta^1\otimes\phi)$ est égal à la composition $\Delta^1\otimes\fil{V}\xrightarrow{\pr_V}\fil{V}\xrightarrow{f\circ\phi}\fil{Y}$, alors $s\pi_n(f)=s\pi_n(g)$.
\end{prop}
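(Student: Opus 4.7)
The plan is to reduce, once again, to a classical fact about basepoint change in $\pi_n$ of Kan complexes, and then to apply it levelwise on $\Delta(P)$. Fix a filtered simplex $\Delta^{\varphi}\subseteq\fil{V}$ and apply the simplicial mapping functor $\Map(\Delta^{\varphi},-)$ to the homotopy $H$. Since $\sS_P$ is a simplicial model category (Theorem \ref{CategorieModelSimpliciale}), the functor $\Map(\Delta^{\varphi},-)$ takes a filtered homotopy $H\colon \Delta^1\otimes\fil{X}\to\fil{Y}$ to a simplicial homotopy in $\sS$ between $\Map(\Delta^{\varphi},f)$ and $\Map(\Delta^{\varphi},g)$, viewed as maps between the Kan complexes $\Map(\Delta^{\varphi},\fil{X})$ and $\Map(\Delta^{\varphi},\fil{Y})$. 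Restricting this homotopy along the basepoint $\phi_{|\Delta^{\varphi}}\in\Map(\Delta^{\varphi},\fil{X})_0$ yields precisely the path in $\Map(\Delta^{\varphi},\fil{Y})$ traced out by $H\circ\phi_{|\Delta^{\varphi}}$, from $f\circ\phi_{|\Delta^{\varphi}}$ to $g\circ\phi_{|\Delta^{\varphi}}$.

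Next I would invoke the standard fact that, for two simplicially homotopic pointed maps $u,v\colon (X,x_0)\to (Y,y_0)$ between Kan complexes, if $H$ is a homotopy between them and $\gamma$ is the path from $u(x_0)$ to $v(x_0)$ that $H$ traces, then $v_{*}=\gamma_{*}\circ u_{*}\colon \pi_n(X,x_0)\to\pi_n(Y,v(x_0))$. This is exactly the shape of the commutative triangle in the statement once we identify $\gamma_{*}$ with the isomorphism $s\pi_n(H\circ\phi)_{\Delta^{\varphi}}$ supplied by Proposition \ref{HomotopiePointagesGroupesHomotopie}. So the verification amounts to checking that the isomorphism $s\pi_n(H\circ\phi)_{\Delta^{\varphi}}$ constructed in the proof of Proposition \ref{HomotopiePointagesGroupesHomotopie}, as $\pi_n(\iota_1)\circ\pi_n(\iota_0)^{-1}$ applied to $\Map(\Delta^1\otimes\Delta^{\varphi},\fil{Y})$ pointed at $H\circ\phi_{|\Delta^{1}\otimes\Delta^{\varphi}}$, coincides with the classical $\gamma_{*}$.

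Naturality in $\Delta^{\varphi}\in\Delta(P)$ is immediate from the naturality of all constructions involved, so the pointwise commutativity assembles into commutativity of the triangle in $\Diag_P$ (or more precisely, of the relevant functors from $\Delta(P)^{\op}$ to pointed sets/groups). The main obstacle, modest as it is, will be to make the bookkeeping between the filtered tensor $-\otimes-$, the mapping space $\Map$ and basepoints clean enough that the classical $\pi_n$-statement applies verbatim; the commutative square from Proposition \ref{HomotopiePointagesGroupesHomotopie} applied with $f=g$ and to the map $H$ itself will be the right organizational tool.

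For the final assertion, I would simply observe that the hypothesis $H\circ(\Delta^1\otimes\phi)=(f\circ\phi)\circ\pr_V$ means that the homotopy $H\circ\phi\colon \Delta^1\otimes\fil{V}\to\fil{Y}$ is constant at $f\circ\phi=g\circ\phi$. Hence the path $\gamma_{\Delta^{\varphi}}$ in $\Map(\Delta^{\varphi},\fil{Y})$ is a degenerate $1$-simplex at $f\circ\phi_{|\Delta^{\varphi}}$, so $s\pi_n(H\circ\phi)$ is the identity transformation, and the triangle collapses to the identity $s\pi_n(f)=s\pi_n(g)$.
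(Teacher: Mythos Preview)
The paper states this proposition without proof. Your approach is correct and is precisely the standard argument one would expect: reduce levelwise to the classical statement about basepoint-change isomorphisms and homotopic maps between Kan complexes, using that $\Map(\Delta^{\varphi},-)$ carries filtered homotopies to simplicial ones and that the isomorphism $s\pi_n(H\circ\phi)_{\Delta^{\varphi}}$ was constructed in Proposition~\ref{HomotopiePointagesGroupesHomotopie} exactly as $\pi_n(\iota_1)\circ\pi_n(\iota_0)^{-1}$.
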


\begin{theo}\label{EquivalenceFaibleIsoGroupeHomotopie}
Soit $f\colon\fil{X}\to\fil{Y}$ un morphisme entre ensembles simpliciaux filtrés fibrants. Alors, $f$ est une équivalence faible si et seulement si $s\pi_0(f)\colon s\pi_0(\fil{X})\to s\pi_0(\fil{Y})$ est un isomorphisme et pour tout pointage $\phi\colon\fil{V}\to\fil{X}$, et pour tout entier $n\geq 1$, le morphisme
\begin{equation*}
s\pi_n(f)\colon s\pi_n(\fil{X},\phi)\to s\pi_n(\fil{Y},f\circ\phi)
\end{equation*}
est un isomorphisme.
\end{theo}

\begin{proof}
Comme $\fil{X}$ et $\fil{Y}$ sont fibrants, on sait par la proposition \ref{fEquivalenceFaibleSSIDfEquivalenceFaible} que $f$ est une équivalence faible si et seulement si $D(f)$ est une équivalence faible. De plus, par définition, $D(f)$ est une équivalence faible si et seulement si, pour tout $\Delta^{\varphi}\in \Delta(P)$, l'application
\begin{equation*}
D(f)(\Delta^{\varphi})\colon \Map(\Delta^{\varphi},\fil{X})\to\Map(\Delta^{\varphi},\fil{Y})
\end{equation*}
est une équivalence faible. Cette dernière est une équivalence faible si et seulement si, pour tout $n\geq 0$ et pour tout $*\in [\Delta^{\varphi},\fil{X}]=\pi_0(\Map(\Delta^{\varphi},\fil{X}))$, l'application
\begin{equation*}
\pi_n(D(f)(\Delta^{\varphi}))\colon \pi_n(\Map(\Delta^{\varphi},\fil{X}),*)\to\pi_n(\Map(\Delta^{\varphi},\fil{Y}),*)
\end{equation*}
est un isomorphisme. Cette dernière condition est satisfaite si et seulement si, pour tout pointage $\phi\colon \fil{V}\to\fil{X}$ et pour tout $n\geq 0$, l'application 
\begin{equation*}
s\pi_n(f)\colon s\pi_n(\fil{X},\phi)\to s\pi_n(\fil{Y},f\circ \phi)
\end{equation*}
est un isomorphisme.
\end{proof}

\begin{corollaire}\label{CorollaireEquivalenceFaiblePointageComplet}
Sous les hypothèses du théorème \ref{EquivalenceFaibleIsoGroupeHomotopie}, $f$ est une équivalence faible si et seulement si, pour $(\fil{V_i},\phi_i)_{i\in I}$ un ensemble complet de pointages de $\fil{X}$, le morphisme 
\begin{equation*}
s\pi_0(f)\colon s\pi_0(\fil{X})\to s\pi_0(\fil{Y})
\end{equation*}
ainsi que les morphismes
\begin{equation*}
s\pi_n(f)\colon s\pi_n(\fil{X},\phi_i)\to s\pi_n(\fil{Y},f\circ\phi_{i})
\end{equation*}
sont des isomorphismes pour $i\in I$ et $n\geq 1$.
\end{corollaire}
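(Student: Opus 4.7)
The forward direction is immediate from Theorem \ref{EquivalenceFaibleIsoGroupeHomotopie}: if $f$ is a weak equivalence, it induces isomorphisms $s\pi_n(f)$ for \emph{every} pointing, in particular for those of the complete set. For the converse, I would invoke the same theorem, reducing the claim to showing that $s\pi_n(f)\colon s\pi_n(\fil{X},\phi)\to s\pi_n(\fil{Y},f\circ\phi)$ is an isomorphism for an arbitrary pointing $\phi\colon \fil{V}\to\fil{X}$ and arbitrary $n\geq 1$.

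Since this is a morphism of presheaves on $\Delta(P)$, I check it at each simplex $\Delta^\varphi$; the two sides are empty unless $\Delta^\varphi\subseteq V$, so assume $\Delta^\varphi\subseteq V$. The basepoint $\phi|_{\Delta^\varphi}$ represents a class $\sigma:=[\phi|_{\Delta^\varphi}]\in s\pi_0(\fil{X})(\Delta^\varphi)$, i.e.\ a simplex of the presheaf $s\pi_0(\fil{X})$ over $\Delta^\varphi$. The strategy is to find an index $i$ for which $\phi|_{\Delta^\varphi}$ is filtered-homotopic to (a restriction of) $\phi_i$, and transfer the iso known for $\phi_i$ to the one at $\phi$. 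Suppose first that $\sigma$ is non-degenerate in $s\pi_0(\fil{X})$. Completeness then produces $i$ with $\Delta^\varphi\subseteq V_i$ and $[\phi_i|_{\Delta^\varphi}]=\sigma$; since $\Map(\Delta^\varphi,\fil{X})$ is a Kan complex, the equality of classes yields a filtered homotopy $H\colon \Delta^1\otimes\Delta^\varphi\to\fil{X}$ between $\phi_i|_{\Delta^\varphi}$ and $\phi|_{\Delta^\varphi}$. Applying Proposition \ref{HomotopiePointagesGroupesHomotopie} with $V$ replaced by $\Delta^\varphi$ to $H$ and $f\circ H$ gives a commutative square whose vertical arrows $s\pi_n(H)$ and $s\pi_n(f\circ H)$ are isomorphisms; its top row, evaluated at $\Delta^\varphi$, is an evaluation of $s\pi_n(f,\phi_i)$, which is iso by hypothesis, and therefore the bottom row is too — that is exactly $s\pi_n(f,\phi)(\Delta^\varphi)$.

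If $\sigma$ is degenerate, write $\sigma=t^{*}\bar\sigma$ with $t\colon \Delta^\varphi\to\Delta^\xi$ a non-identity surjection in $\Delta(P)$ and $\bar\sigma$ non-degenerate over $\Delta^\xi$. Completeness furnishes $\phi_j$ with $\Delta^\xi\subseteq V_j$ and $[\phi_j|_{\Delta^\xi}]=\bar\sigma$; the composite $\phi_j|_{\Delta^\xi}\circ t\colon\Delta^\varphi\to\fil{X}$ then represents $\sigma$ and is thus filtered-homotopic to $\phi|_{\Delta^\varphi}$. The morphism $t$ admits a filtered section $d\colon\Delta^\xi\to\Delta^\varphi$ (any face map dual to $t$ preserves the filtration precisely because $t$ does), and the usual simplicial homotopy between $d\circ t$ and $\Id_{\Delta^\varphi}$ is automatically filtered, since it only ever identifies vertices carrying the same label of $\varphi$. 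Consequently $t^{*}\colon \Map(\Delta^\xi,-)\to \Map(\Delta^\varphi,-)$ is a homotopy equivalence of Kan complexes and induces isomorphisms on all $\pi_n$. Combining this equivalence (applied to both $\fil{X}$ and $\fil{Y}$) with the non-degenerate case carried out at $\Delta^\xi$, and then transferring along the homotopy between $\phi_j|_{\Delta^\xi}\circ t$ and $\phi|_{\Delta^\varphi}$ via Proposition \ref{HomotopiePointagesGroupesHomotopie}, gives the required iso at $\Delta^\varphi$.

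The principal technical point I expect to be delicate is the degenerate case: producing the filtered section of $t$ and checking that the standard contracting homotopy really lands in $\sS_P$, then gluing together the three naturality squares (transfer from $\phi_j$ to $\phi_j\circ t$, transfer from $\phi_j|_{\Delta^\xi}\circ t$ to $\phi|_{\Delta^\varphi}$, and the analogous square downstairs in $\fil{Y}$) without ambiguity. The non-degenerate case is essentially a direct application of completeness together with the naturality provided by Proposition \ref{HomotopiePointagesGroupesHomotopie}.
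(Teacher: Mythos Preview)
Your argument is correct and follows the paper's strategy: reduce to pointings on a single simplex, use completeness to find some $\phi_i$ whose restriction to $\Delta^\varphi$ is filtered-homotopic to $\phi|_{\Delta^\varphi}$, then transfer via Proposition~\ref{HomotopiePointagesGroupesHomotopie}. The degenerate case you flag as delicate is in fact immediate, and the paper dispatches it without even a case distinction: since $V_j\subseteq N(P)$ is a sub-simplicial set (hence closed under degeneracies), one has $\Delta^\varphi\subseteq V_j$ automatically and $(\phi_j)|_{\Delta^\varphi}=(\phi_j)|_{\Delta^\xi}\circ t$, so the hypothesis that $s\pi_n(f,\phi_j)$ is an isomorphism already gives the isomorphism at $\Delta^\varphi$ directly for the pointing $(\phi_j)|_{\Delta^\varphi}$ --- your $t^*$-equivalence detour and the gluing of three naturality squares are unnecessary.
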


\begin{proof}
D'après la preuve du théorème \ref{EquivalenceFaibleIsoGroupeHomotopie}, il suffit de vérifier les hypothèses du théorème pour les pointages de la forme $\phi\colon \Delta^{\varphi}\to\fil{X}$. Soit $\phi$ un tel pointage, alors, d'après la remarque \ref{SimplexesDeSPi0}, $[\phi]$ correspond à un simplexe de $s\pi_0\fil{X}$. Comme l'ensemble de pointage $(\fil{V_i},\phi_i)_{i\in I}$ est supposé complet, il existe $i\in I$ tel que $[(\phi_i)_{|\Delta^{\varphi}}]=[\phi]$. En particulier, $(\phi_i)_{|\Delta^{\varphi}}$ et $\phi$ sont homotopes par une homotopie filtrée. Mais alors, par la proposition \ref{HomotopiePointagesGroupesHomotopie}, on a le diagramme commutatif suivant
\begin{equation*}
\begin{tikzcd}
s\pi_n(\fil{X},\phi)
\arrow{d}{\simeq}
\arrow{r}{s\pi_n(f)}
&s\pi_n(\fil{Y},f\circ\phi)
\arrow{d}{\simeq}
\\
s\pi_n(\fil{X},(\phi_i)_{|\Delta^{\varphi}})
\arrow{r}{s\pi_n(f)}
&s\pi_n(\fil{Y},f\circ (\phi_i)_{|\Delta^{\varphi}})
\end{tikzcd}
\end{equation*}
où les flèches verticales sont des isomorphismes. On en déduit qu'on a un isomorphisme pour le pointage $\phi$ si et seulement si on a un isomorphisme pour le pointage $(\phi_i)_{|\Delta^{\varphi}}$. 
\end{proof}

\subsection{Calcul des groupes d'homotopie filtrés}

\begin{prop}\label{SimplexesNonDegeneresGroupesHomotopiesFiltres}
Soient $\fil{X}$ un ensemble simplicial filtré fibrant, $\phi\colon\fil{V}\to\fil{X}$ un pointage et $n\geq 0$ un entier. Les simplexes non dégénérés de $s\pi_n(\fil{X},\phi)$ sont tous les simplexes de la forme
\begin{equation*}
\Delta^{\varphi}\to s\pi_n(\fil{X},\phi)
\end{equation*}
avec $\Delta^{\varphi}$ un simplexe non dégénéré de $N(P)$.
\end{prop}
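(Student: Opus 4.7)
The plan is to use the equivalence $\sS_P \simeq \Fun(\Delta(P)^{\op},\Set)$ of Proposition \ref{CategoriePrefaisceaux}. Under this equivalence, a simplex of $s\pi_n(\fil{X},\phi)$ of dimension $N$ corresponds to a pair $(\varphi,y)$ with $\varphi\colon \Delta^N\to N(P)$ and $y\in s\pi_n(\fil{X},\phi)(\Delta^{\varphi})$, and the degeneracy maps act by $s^*(\varphi,y)=(\varphi\circ s,s^*(y))$ for $s\colon \Delta^{N+1}\to\Delta^N$ a degeneracy in $\Delta$. Hence $(\varphi,y)$ is non-degenerate in $s\pi_n(\fil{X},\phi)$ if and only if, for every non-trivial degeneracy $s\colon \Delta^N\to\Delta^M$, either $\varphi$ does not factor as $\bar\varphi\circ s$ with $\bar\varphi\colon \Delta^M\to N(P)$, or $y$ does not belong to the image of the corresponding $s^*$.

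First, the easy direction: if $\Delta^\varphi$ is non-degenerate in $N(P)$, then no such factorization of $\varphi$ exists, so every $y\in s\pi_n(\Delta^\varphi)$ yields a non-degenerate simplex. For the converse, I would write $\varphi=\bar\varphi\circ s$ with $\bar\varphi$ non-degenerate, and aim to show that $s^*\colon s\pi_n(\Delta^{\bar\varphi})\to s\pi_n(\Delta^\varphi)$ is surjective. The key input is Lemma \ref{SDSimplexeEquivalenceFaibleAbsolue}, which furnishes a section $i\colon \Delta^{\bar\varphi}\to\Delta^\varphi$ of $s$ that is an anodyne extension, hence a trivial cofibration. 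Since $\fil{X}$ is fibrant and $\sS_P$ is a simplicial model category (Theorem \ref{CategorieModelSimpliciale}), the induced map
\[
i^*\colon \Map(\Delta^\varphi,\fil{X})\to\Map(\Delta^{\bar\varphi},\fil{X})
\]
is a trivial fibration. Since $s\circ i=\Id_{\Delta^{\bar\varphi}}$, the morphism $s^*\colon \Map(\Delta^{\bar\varphi},\fil{X})\to \Map(\Delta^\varphi,\fil{X})$ is a section of $i^*$, and therefore is itself a weak equivalence of simplicial sets by two-out-of-three.

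It then remains to check base points. For $n\geq 1$, the set $s\pi_n(\fil{X},\phi)(\Delta^\varphi)$ is non-empty only when $\Delta^\varphi\subseteq\fil{V}$; since $\fil{V}$ is closed under faces and $i$ realizes $\Delta^{\bar\varphi}$ as a face of $\Delta^\varphi$, one has $\Delta^{\bar\varphi}\subseteq\fil{V}$ as well. Moreover, by construction of the pointage, $\phi_{|\Delta^\varphi}=\phi_{|\Delta^{\bar\varphi}}\circ s$, so $s^*$ preserves base points and induces an isomorphism on $\pi_n$; in particular the map $s\pi_n(\Delta^{\bar\varphi})\to s\pi_n(\Delta^\varphi)$ is surjective. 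For $n=0$, the weak equivalence $s^*$ likewise induces a bijection on $\pi_0$, and the degenerate cases where $\Delta^\varphi\not\subseteq\fil{V}$ with $n\geq 1$ are vacuous since the corresponding sets are empty. Hence every simplex $(\varphi,y)$ with $\varphi$ degenerate is itself degenerate, which completes the proof. The main subtle point is producing the section and showing $s^*$ is a weak equivalence; this is exactly handled by combining Lemma \ref{SDSimplexeEquivalenceFaibleAbsolue} with the simplicial model category axiom.
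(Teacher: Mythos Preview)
Your proof is correct and follows essentially the same route as the paper's: both produce a face section $i\colon\Delta^{\bar\varphi}\to\Delta^{\varphi}$ that is a trivial cofibration, apply the simplicial model structure (Theorem \ref{CategorieModelSimpliciale}) against the fibrant $\fil{X}$ to see that $i^*$ is a weak equivalence, and then use two-out-of-three to conclude that $s^*$ is a weak equivalence. You are a bit more explicit than the paper about base points and the case $\Delta^{\varphi}\not\subseteq\fil{V}$; note that the fact you extract from Lemma \ref{SDSimplexeEquivalenceFaibleAbsolue} (that the section $\overline{\Delta^{\psi}}\to\Delta^{\psi}$ is an anodyne extension) appears in its proof rather than its statement.
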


\begin{proof}
Soit $\Delta^{\varphi}=(\Delta^n,\varphi)$ un simplexe non-dégénéré de $N(P)$, et soit $\Delta^{\varphi\circ S}=(\Delta^{n'},\varphi\circ S)$ une dégénérescence de $\Delta^{\varphi}$. Soit  $D\colon \Delta^{\varphi}\to\Delta^{\varphi\circ S}$ une section de $S$. Par construction, c'est une cofibration triviale. En particulier, en appliquant $\Map(-,\fil{X})$, on obtient une équivalence faible
\begin{equation*}
\Map(\Delta^{\varphi\circ S},\fil{X})\to \Map(\Delta^{\varphi},\fil{X}).
\end{equation*}
Par deux sur trois, on en déduit que l'application induite par $S$
\begin{equation*}
\Map(\Delta^{\varphi},\fil{X})\to \Map(\Delta^{\varphi\circ S},\fil{X}).
\end{equation*}
est aussi une équivalence faible.
En appliquant le foncteur $\pi_n$, on obtient donc un isomorphisme, ce qui implique que tous les simplexes de la forme $\Delta^{\varphi\circ S}\to s\pi_n(\fil{X},\phi)$ sont des dégénérescences de simplexes de la forme $\Delta^{\varphi}\to s\pi_n(\fil{X},\phi)$. 
\end{proof}

\begin{corollaire}\label{CorollaireSimplexesNonDegeneressPin}
Soient $\fil{X}$ et $\fil{Y}$ deux ensembles simpliciaux filtrés fibrants, $\phi\colon \fil{V}\to\fil{X}$ un pointage de $\fil{X}$, $f\colon \fil{X}\to\fil{Y}$ une application filtrée et $n\geq 0$ un entier. Alors $f$ induit un isomorphisme
\begin{equation*}
s\pi_n(f)\colon s\pi_n(\fil{X},\phi)\to s\pi_n(\fil{Y},f\circ\phi)
\end{equation*}
si et seulement si, pour tout simplexe non dégénéré $\Delta^\varphi\subseteq V$, $f$ induit des isomorphismes
\begin{equation*}
\pi_n(\Map(\Delta^{\varphi},\fil{X}),\phi_{|\Delta^{\varphi}})\to\Map(\Delta^{\varphi},\fil{Y}),f\circ\phi_{|\Delta^{\varphi}})
\end{equation*}
\end{corollaire}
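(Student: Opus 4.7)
The plan is to reduce the statement that $s\pi_n(f)$ is an isomorphism of presheaves on $\Delta(P)$ to the pointwise condition on non-degenerate simplices, relying entirely on the preceding Proposition \ref{SimplexesNonDegeneresGroupesHomotopiesFiltres} as the key input. A morphism in $\Fun(\Delta(P)^{\op},\Set)$ is an isomorphism if and only if it is a bijection when evaluated on every simplex filtré $\Delta^{\varphi}$, and I will show that it suffices to check this on the non-degenerate $\Delta^{\varphi}\subseteq V$.

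First, I would handle the simplices that contribute trivially. If $\Delta^{\varphi}$ is a simplexe filtré non dégénéré of $N(P)$ such that $\Delta^{\varphi}\not\subseteq V$, then by Definition \ref{DefinitionGroupesHomotopieFiltree} both $s\pi_n(\fil{X},\phi)(\Delta^{\varphi})$ and $s\pi_n(\fil{Y},f\circ\phi)(\Delta^{\varphi})$ are empty (for $n\geq 1$; for $n=0$ they are defined on all of $\Delta(P)$ and the argument below applies uniformly), so the induced map is a bijection for trivial reasons. Hence we may restrict attention to $\Delta^{\varphi}\subseteq V$.

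Next, I would reduce from arbitrary $\Delta^{\varphi}\subseteq V$ to non-degenerate ones. Writing an arbitrary simplex in the form $\Delta^{\varphi\circ S}$ where $\Delta^{\varphi}\to N(P)$ is non-degenerate and $S\colon \Delta^{\varphi\circ S}\to\Delta^{\varphi}$ is a degenerescence, I observe that $\Delta^{\varphi}\subseteq V$ because $V$ is stable par faces and dégénérescences as a sub-objet simplicial of $N(P)$. Proposition \ref{SimplexesNonDegeneresGroupesHomotopiesFiltres} then shows that the morphism $S^*\colon s\pi_n(\fil{X},\phi)(\Delta^{\varphi})\to s\pi_n(\fil{X},\phi)(\Delta^{\varphi\circ S})$ induced by $S$ is a bijection, and likewise for $\fil{Y}$. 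By the naturality of $s\pi_n(f)$ with respect to $S$, the diagram
\[
\begin{tikzcd}
s\pi_n(\fil{X},\phi)(\Delta^{\varphi})
\arrow{r}{s\pi_n(f)}
\arrow[swap]{d}{S^*}[swap]{\simeq}
& s\pi_n(\fil{Y},f\circ\phi)(\Delta^{\varphi})
\arrow{d}{S^*}{\simeq}
\\
s\pi_n(\fil{X},\phi)(\Delta^{\varphi\circ S})
\arrow{r}{s\pi_n(f)}
& s\pi_n(\fil{Y},f\circ\phi)(\Delta^{\varphi\circ S})
\end{tikzcd}
\]
commutes, so the top arrow is a bijection if and only if the bottom one is.

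Combining these two steps, $s\pi_n(f)$ is an isomorphism of préfaisceaux if and only if its evaluation on every non-degenerate $\Delta^{\varphi}\subseteq V$ is a bijection, which unwinds to the stated condition on $\pi_n(\Map(\Delta^{\varphi},\fil{X}),\phi_{|\Delta^{\varphi}})\to\pi_n(\Map(\Delta^{\varphi},\fil{Y}),f\circ\phi_{|\Delta^{\varphi}})$. There is no serious obstacle here: the proof is purely formal once Proposition \ref{SimplexesNonDegeneresGroupesHomotopiesFiltres} is available, since the latter has already done the substantive work of showing that the degenerate simplices of $s\pi_n$ contribute no new information.
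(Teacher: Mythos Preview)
Your proposal is correct and follows exactly the intended approach: the paper states this as an immediate corollary of Proposition~\ref{SimplexesNonDegeneresGroupesHomotopiesFiltres} without giving an explicit proof, and your argument spells out precisely the formal reduction that this proposition enables. The only comment is that your handling of the two cases (simplices outside $V$ giving empty values, and the degenerate-to-nondegenerate reduction via the naturality square) is the natural unpacking of why the result deserves the name ``corollaire''.
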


\begin{corollaire}
Soit $\fil{X}$ un ensemble simplicial filtré fibrant. S'il existe un entier $d\geq 0$ tel que pour toute suite strictement croissante d'éléments de $P$, $p_0<p_1<\dots <p_k$, on a $n\leq d$, alors pour tout pointage $\phi\colon \fil{V}\to \fil{X}$, et pour tout entier $n\geq 0$,
l'ensemble simplicial sous-jacent à $s\pi_n(\fil{X},\phi)$ est de dimension inférieure à $d$.
\end{corollaire}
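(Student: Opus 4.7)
Le plan est de ramener la question à un calcul sur le nerf $N(P)$, en utilisant la proposition \ref{SimplexesNonDegeneresGroupesHomotopiesFiltres} qui décrit explicitement les simplexes non dégénérés de $s\pi_n(\fil{X},\phi)$.

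D'abord, je rappellerais que la dimension d'un ensemble simplicial $K$ est le supremum des dimensions de ses simplexes non dégénérés. Il suffit donc de borner la dimension des simplexes non dégénérés de $s\pi_n(\fil{X},\phi)$. Par la proposition \ref{SimplexesNonDegeneresGroupesHomotopiesFiltres}, tout simplexe non dégénéré de $s\pi_n(\fil{X},\phi)$ est de la forme $\sigma\colon\Delta^{\varphi}\to s\pi_n(\fil{X},\phi)$ où $\Delta^{\varphi}$ est un simplexe non dégénéré de $N(P)$. En particulier, la dimension de $s\pi_n(\fil{X},\phi)$ est bornée par la dimension de $N(P)$ en tant qu'ensemble simplicial.

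Ensuite, j'observerais qu'un simplexe non dégénéré de $N(P)$ de dimension $k$ correspond exactement à une suite strictement croissante $p_0<p_1<\dots<p_k$ d'éléments de $P$. Sous l'hypothèse de l'énoncé, toute telle suite vérifie $k\leq d$, et donc la dimension de $N(P)$ est majorée par $d$. Par conséquent, tout simplexe non dégénéré de $s\pi_n(\fil{X},\phi)$ est de dimension au plus $d$, ce qui conclut.

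L'essentiel du travail ayant été fait en amont (dans la proposition \ref{SimplexesNonDegeneresGroupesHomotopiesFiltres}), il n'y a pas d'obstacle majeur ici : la preuve est une combinaison immédiate de cette proposition et de la description combinatoire des simplexes non dégénérés de $N(P)$.
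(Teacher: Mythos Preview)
Your proposal is correct and matches the paper's intended argument: the corollary is stated without proof in the paper because it follows immediately from Proposition~\ref{SimplexesNonDegeneresGroupesHomotopiesFiltres}, exactly as you outline. Note that the ``$n\leq d$'' in the statement is a typo for ``$k\leq d$'', which you have correctly interpreted.
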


\begin{corollaire}
Soit $X$ un complexe de Kan, $P=\{*\}$ le poset trivial, $\varphi_X\colon X\to \{*\}$ la filtration triviale et $\phi\colon \{*\}\to X$ un pointage. Alors, $s\pi_n(\fil{X},\phi)_0\simeq \pi_n(X,\phi(*))$ pour tout $n\geq 0$, et $s\pi_n(\fil{X})$ est un ensemble simplicial de dimension $0$.
\end{corollaire}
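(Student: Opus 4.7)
Pour prouver ce corollaire, je commencerais par dérouler les définitions dans le cas particulier $P=\{*\}$. Le nerf $N(\{*\})$ est réduit à un unique $0$-simplexe (ses simplexes en dimension $n$ sont tous des dégénérescences de ce $0$-simplexe). En conséquence, la catégorie des simplexes filtrés $\Delta(\{*\})$ s'identifie canoniquement à $\Delta$, et la filtration $\varphi_X\colon X\to N(\{*\})$ est l'unique morphisme possible. En particulier, pour tout simplexe filtré $\Delta^\varphi$ de $N(\{*\})$, la filtration $\varphi$ est entièrement déterminée, et les simplexes non-dégénérés de $N(\{*\})$ se réduisent à $\Delta^0$.

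Pour le premier point (l'isomorphisme entre $s\pi_n(\fil{X},\phi)_0$ et $\pi_n(X,\phi(*))$), j'identifierais $\Map(\Delta^0,\fil{X})$ avec $X$ comme ensemble simplicial. En effet, par définition,
\begin{equation*}
\Map(\Delta^0,\fil{X})_n=\Hom_{\sS_P}(\Delta^n\otimes \Delta^0,\fil{X})=\Hom_{\sS_P}(F(\Delta^n)\times_{N(P)}\Delta^0,\fil{X}).
\end{equation*}
Comme $N(P)=\Delta^0$, on a $F(\Delta^n)\simeq \Delta^n$ et le produit fibré se réduit à $\Delta^n$. Ainsi $\Map(\Delta^0,\fil{X})_n\simeq \Hom_{\sS}(\Delta^n,X)=X_n$, et cet isomorphisme est naturel en $n$. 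En particulier, pour $n\geq 1$,
\begin{equation*}
s\pi_n(\fil{X},\phi)_0=\pi_n(\Map(\Delta^0,\fil{X}),\phi_{|\Delta^0})\simeq \pi_n(X,\phi(*)),
\end{equation*}
et pour $n=0$, $s\pi_0(\fil{X})_0=\pi_0(\Map(\Delta^0,\fil{X}))\simeq \pi_0(X)$.

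Pour le second point (la dimension de $s\pi_n(\fil{X})$), j'invoquerais directement la proposition \ref{SimplexesNonDegeneresGroupesHomotopiesFiltres}, qui affirme que les simplexes non-dégénérés de $s\pi_n(\fil{X},\phi)$ sont exactement ceux de la forme $\Delta^{\varphi}\to s\pi_n(\fil{X},\phi)$ avec $\Delta^{\varphi}$ un simplexe non-dégénéré de $N(P)$. Puisque l'unique simplexe non-dégénéré de $N(\{*\})$ est son $0$-simplexe, tous les simplexes non-dégénérés de $s\pi_n(\fil{X},\phi)$ sont de dimension $0$. Autrement dit, $s\pi_n(\fil{X},\phi)$ est de dimension $0$ en tant qu'ensemble simplicial. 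Aucune étape ne présente de difficulté véritable : c'est essentiellement un déroulement de définitions combiné à l'application directe de la proposition \ref{SimplexesNonDegeneresGroupesHomotopiesFiltres}.
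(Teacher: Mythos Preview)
Your proof is correct and follows exactly the approach implicit in the paper: the corollary is stated without proof there, as an immediate consequence of unfolding the definitions and applying Proposition~\ref{SimplexesNonDegeneresGroupesHomotopiesFiltres}. Your computation of $\Map(\Delta^0,\fil{X})\simeq X$ and your use of the proposition to conclude dimension~$0$ are precisely what is expected.
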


La proposition \ref{SimplexesNonDegeneresGroupesHomotopiesFiltres} permet de calculer les valeur du foncteur $s\pi_n(\fil{X},\phi)$ seulement sur la sous catégorie de $\Delta(P)$ contenant les simplexes non dégénérés. Pour exploiter ce résultat, on utilisera la catégorie suivante.

\begin{defin}\label{DefinitionRP}
On définit la sous catégorie pleine $R(P)\subset\Delta(P)$ dont les objets sont les monomorphismes
\begin{equation*}
\varphi\colon\Delta^n\to N(P).
\end{equation*}
Autrement dit, $R(P)$ est la catégorie des simplexes non dégénérés de $N(P)$, où les morphismes sont les inclusions. Si $\Delta^{\varphi}\in \Delta(P)$ est un simplexe de $N(P)$, il existe un unique simplexe non dégénéré de $N(P)$ dont $\Delta^{\varphi}$ est une dégénérescence. Dans la suite de ce texte, on notera $\Delta^{\bar{\varphi}}$ le simplexe de $R(P)$ correspondant à $\Delta^{\varphi}$ de cette façon.
\end{defin}

On remarque que l'inclusion de sous catégorie $j\colon R(P)\to\Delta(P)$ induit un foncteur de restriction
\begin{equation*}
j^*\colon\Fun(\Delta(P)^{\op},\mathcal{C})\to\Fun(R(P)^{\op},\mathcal{C}),
\end{equation*}
pour toute catégorie $\mathcal{C}$. En particulier, si $\fil{X}$ est un ensemble simplicial filtré fibrant et $\phi\colon \fil{V}\to\fil{X}$ est un pointage, alors l'image de $s\pi_n(\fil{X},\phi)$ par $j^*$ fournit un foncteur
\begin{equation*}
j^*s\pi_n(\fil{X},\phi)\colon R(P)^{\op}\to\Set.
\end{equation*}
Avec ces notations, on peut donc réécrire la proposition \ref{SimplexesNonDegeneresGroupesHomotopiesFiltres} sous la forme suivante.

\begin{prop}
Soit $\fil{X}$ un ensemble simplicial filtré fibrant, et $\phi\colon \fil{V}\to\fil{X}$ un pointage. Alors le foncteur 
\begin{equation*}
s\pi_n(\fil{X},\phi)\colon \Delta(P)^{\op}\to\Set
\end{equation*}
est entièrement détérminé par sa restriction à $R(P)^{\op}$.
\begin{equation*}
j^*s\pi_n(\fil{X},\phi)\colon R(P)^{\op}\to\Set.
\end{equation*}
\end{prop}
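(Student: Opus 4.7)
Le plan est de reconstruire explicitement le foncteur $F = s\pi_n(\fil{X},\phi)$ sur $\Delta(P)^{\op}$ à partir de sa restriction $j^*F$ à $R(P)^{\op}$, en s'appuyant de manière essentielle sur la proposition \ref{SimplexesNonDegeneresGroupesHomotopiesFiltres}.

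Premièrement, pour déterminer les valeurs de $F$ sur les objets, j'observerai que pour tout simplexe $\Delta^{\varphi}\in\Delta(P)$, la décomposition d'Eilenberg-Zilber de $\varphi\in N(P)$ fournit une unique surjection $S_{\varphi}\colon \Delta^n\to \Delta^{\bar{n}}$ dans $\Delta$ telle que $\varphi=\bar{\varphi}\circ S_{\varphi}$, avec $\bar{\varphi}$ injective, donc $\Delta^{\bar{\varphi}}\in R(P)$. L'égalité $\bar{\varphi}\circ S_{\varphi}=\varphi$ fait de $S_{\varphi}$ un morphisme de $\Delta(P)$ allant de $\Delta^{\varphi}$ vers $\Delta^{\bar{\varphi}}$, et la preuve de la proposition \ref{SimplexesNonDegeneresGroupesHomotopiesFiltres} établit que $F(S_{\varphi})\colon F(\Delta^{\bar{\varphi}})\to F(\Delta^{\varphi})$ est une bijection (via une section de $S_{\varphi}$, qui est une cofibration triviale, et l'argument de deux sur trois appliqué au foncteur $\Map(-,\fil{X})$). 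Ainsi, $F(\Delta^{\varphi})$ est canoniquement en bijection avec $(j^*F)(\Delta^{\bar{\varphi}})$.

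Deuxièmement, pour déterminer l'action de $F$ sur les morphismes, je considère un morphisme arbitraire $\alpha\colon \Delta^{\psi}\to\Delta^{\varphi}$ de $\Delta(P)$. En appliquant la factorisation épi-mono de $\Delta$ au composé $S_{\varphi}\circ\alpha$, on obtient $S_{\varphi}\circ\alpha=\bar{\alpha}\circ\sigma$ avec $\sigma$ une surjection et $\bar{\alpha}$ une injection. Puisque $\bar{\varphi}\circ\bar{\alpha}$ est injective et que $(\bar{\varphi}\circ\bar{\alpha})\circ\sigma=\bar{\varphi}\circ S_{\varphi}\circ\alpha=\varphi\circ\alpha=\psi$, l'unicité de la décomposition d'Eilenberg-Zilber pour $\psi$ impose $\bar{\varphi}\circ\bar{\alpha}=\bar{\psi}$ et $\sigma=S_{\psi}$. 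Ceci fait de $\bar{\alpha}$ un morphisme de $R(P)$ de $\Delta^{\bar{\psi}}$ vers $\Delta^{\bar{\varphi}}$, et on obtient dans $\Delta(P)$ le carré commutatif
\begin{equation*}
\begin{tikzcd}
\Delta^{\psi}
\arrow{r}{\alpha}
\arrow[swap]{d}{S_{\psi}}
&\Delta^{\varphi}
\arrow{d}{S_{\varphi}}
\\
\Delta^{\bar{\psi}}
\arrow[swap]{r}{\bar{\alpha}}
&\Delta^{\bar{\varphi}}
\end{tikzcd}
\end{equation*}
dont la ligne inférieure est un morphisme de $R(P)$.

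En appliquant $F$ à ce carré et en exploitant les bijections $F(S_{\psi})$ et $F(S_{\varphi})$ de la première étape, on obtient l'égalité $F(\alpha)=F(S_{\psi})\circ(j^*F)(\bar{\alpha})\circ F(S_{\varphi})^{-1}$, qui exprime entièrement $F(\alpha)$ en fonction de $(j^*F)(\bar{\alpha})$. Les valeurs de $F$ sur les objets et son action sur les morphismes étant ainsi toutes reconstructibles à partir de $j^*F$, le foncteur $F$ est bien déterminé par sa restriction. La principale subtilité est la vérification que la factorisation épi-mono du composé $S_{\varphi}\circ\alpha$ fournit bien un carré dans $\Delta(P)$ (et non simplement dans $\Delta$), ce qui repose sur l'unicité de la décomposition d'Eilenberg-Zilber appliquée à $\psi$; une fois ce carré obtenu, la conclusion découle mécaniquement de la proposition \ref{SimplexesNonDegeneresGroupesHomotopiesFiltres}.
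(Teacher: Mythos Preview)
Your argument is correct and complete. The paper presents this proposition merely as a reformulation of Proposition~\ref{SimplexesNonDegeneresGroupesHomotopiesFiltres} without supplying any additional proof, whereas you spell out explicitly the reconstruction of $F$ on objects and morphisms via the Eilenberg--Zilber factorisation; both rest on the same key input (the bijections $F(S_\varphi)$ established in Proposition~\ref{SimplexesNonDegeneresGroupesHomotopiesFiltres}), and your explicit commutative square makes rigorous what the paper leaves implicit.
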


\begin{remarque}\label{GroupesHomotopiesFiltresReduits}
En vertu de la proposition précédente, on peut identifier les groupes d'homotopie filtrés avec leur restriction à $R(P)$ sans risque de confusion. Pour cette raison, lorsqu'on calculera des groupes d'homotopie filtrés, on calculera simplement leur restriction à $R(P)$. Cependant, il est parfois utile de voir les groupes d'homotopie filtrés comme des foncteurs depuis $\Delta(P)^{\op}$ comme le montre la proposition \ref{PropositionRaffinementStratification}.
\end{remarque}

\section{Raffinement d'une stratification}
\label{SectionRaffinementStratification}
\begin{defin}
Soit $\fil{X}$ un ensemble simplicial filtré fibrant. On définit la transformation naturelle $\lambda\colon \Id\to s\pi_0$ comme suit.
\begin{align*}
\lambda_{\fil{X}}\colon\fil{X}&\to s\pi_0(\fil{X})\\
\sigma\colon \Delta^{\varphi}\to \fil{X}&\mapsto [\sigma]\in [\Delta^{\varphi},\fil{X}].
\end{align*}
Notons $P_{\fil{X}}=(s\pi_0\fil{X}_0,\leq)$ l'ordre partiel sur les sommets de $s\pi_0\fil{X}$, où la relation $\leq$ est engendrée par $\sigma\leq \tau$ s'il existe $\mu\colon \Delta^1\to s\pi_0(\fil{X})$ tel que $\sigma=\mu\circ D_1$ et $\tau=\mu\circ D_0$, où $D_0$ et $D_1$ correspondent aux deux inclusions de face $\Delta^0\to \Delta^1$.
Alors, on a la suite d'application suivante, dont la composition est égale à $\varphi_X$.
\begin{equation*}
\fil{X}\xrightarrow{\lambda}s\pi_0(\fil{X})\to N(P_{\fil{X}})\to N(P)
\end{equation*}
\end{defin}

\begin{prop}\label{PropositionRaffinementStratification}
Soit $\fil{X}$ un ensemble simplicial filtré fibrant, notons $\widetilde{\varphi}_X\colon X\to N(P_{\fil{X}})$ l'application filtrée de la définition précédente. Alors, pour tout $p\in P_{\fil{X}}$, $\widetilde{\varphi}_X^{-1}(p)$ est un ensemble simplicial connexe. De plus, $(X,\widetilde{\varphi}_X)$ est un objet fibrant de $\sS_{P_{\fil{X}}}$.
\end{prop}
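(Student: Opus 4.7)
La stratégie repose sur une description explicite de l'ensemble ordonné $P_{\fil{X}}$. En effet, on a $s\pi_0(\fil{X})_0=\pi_0(\Map(\Delta^0,\fil{X}))$, et les calculs donnent $\Map(\Delta^q,\fil{X})\simeq \varphi_X^{-1}(q)$ pour tout $q\in P$. Ainsi, comme ensemble,
\begin{equation*}
P_{\fil{X}}=\coprod_{q\in P}\pi_0(\varphi_X^{-1}(q)),
\end{equation*}
et l'ordre est engendré par $[v_0]\leq [v_1]$ s'il existe une 1-arête $e\colon \Delta^{\psi}\to \fil{X}$ avec $d_1 e=v_0$ et $d_0 e=v_1$ (ce qui impose $\varphi_X(v_0)\leq \varphi_X(v_1)$ dans $P$). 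Par construction, le morphisme $\widetilde{\varphi}_X\colon \fil{X}\to N(P_{\fil{X}})$ envoie un simplexe $\sigma$ sur le simplexe du nerf dont les sommets sont les classes $[v_i]$ des sommets de $\sigma$.

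Pour la connexité des strates raffinées, fixons $p\in P_{\fil{X}}$ correspondant à une composante connexe $C_p\subseteq \varphi_X^{-1}(q)$. Puisque $N(P_{\fil{X}})$ est un nerf d'ensemble ordonné, un simplexe de $\fil{X}$ appartient à $\widetilde{\varphi}_X^{-1}(p)$ si et seulement si tous ses sommets sont dans $C_p$. D'une part, l'ensemble $\widetilde{\varphi}_X^{-1}(p)$ est non vide, puisque tout représentant de $p$ en fournit un sommet. D'autre part, étant donnés deux sommets $v_0,v_1\in \widetilde{\varphi}_X^{-1}(p)$, ils appartiennent à la même composante $C_p$, et on trouve donc (utilisant la fibrance de $\fil{X}$, qui assure que la relation d'homotopie est une relation d'équivalence) une 1-arête $e$ filtrée au-dessus de $q$ reliant $v_0$ à $v_1$. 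Les deux sommets de $e$ étant dans $C_p$, $e$ elle-même est un simplexe de $\widetilde{\varphi}_X^{-1}(p)$, d'où la connexité.

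Pour la fibrance dans $\sS_{P_{\fil{X}}}$, on utilise la caractérisation du théorème \ref{TheoDescriptionExpliciteSSetP} et on considère un problème de relèvement :
\begin{equation*}
\begin{tikzcd}
\Lambda^{\varphi}_k \arrow{r}{\alpha}\arrow[swap]{d} & \fil{X}\arrow{d}{\widetilde{\varphi}_X}\\
\Delta^{\varphi}\arrow{r}{\varphi}\arrow[dashrightarrow]{ur}{h} & N(P_{\fil{X}})
\end{tikzcd}
\end{equation*}
où $\Lambda^{\varphi}_k\to\Delta^{\varphi}$ est un cornet admissible dans $\sS_{P_{\fil{X}}}$, c'est-à-dire $\varphi(e_k)=\varphi(e_{k\pm 1})=p$. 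En composant avec la projection $N(P_{\fil{X}})\to N(P)$, on obtient une filtration $\varphi'$ sur $\Delta^n$ telle que $\varphi'(e_k)=\varphi'(e_{k\pm 1})=q$, où $p$ se projette sur $q$; ainsi, le cornet reste admissible dans $\sS_P$. Par la fibrance de $\fil{X}$ dans $\sS_P$, il existe un relèvement $h\colon \Delta^{\varphi'}\to \fil{X}$ respectant $\varphi_X$.

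Il reste à vérifier que ce $h$ relève également le diagramme dans $\sS_{P_{\fil{X}}}$, c'est-à-dire que $\widetilde{\varphi}_X\circ h=\varphi$. Comme $N(P_{\fil{X}})$ est un nerf d'ensemble ordonné, deux $n$-simplexes coïncident dès qu'ils ont les mêmes sommets; il suffit donc de vérifier que $\widetilde{\varphi}_X(h(e_i))=\varphi(e_i)$ pour tout $0\leq i\leq n$. Or, pour $n\geq 1$, chaque sommet $e_i$ de $\Delta^n$ appartient au cornet $\Lambda^n_k$ (il suffit de choisir une face $d_j\Delta^n$ avec $j\neq k,i$, qui existe pour $n\geq 1$), de sorte que $h(e_i)=\alpha(e_i)$ a déjà la bonne image par hypothèse sur $\alpha$. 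Ceci achève la démonstration; le point éventuellement délicat, à savoir la gestion du sommet $e_k$ qui est a priori nouvellement choisi par le relèvement, disparaît car $e_k$ appartient lui-même au cornet dès que $n\geq 1$.
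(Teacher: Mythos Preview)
Le papier énonce cette proposition sans démonstration; votre plan est donc le seul argument disponible, et l'approche est correcte dans ses grandes lignes. L'identification de $P_{\fil{X}}$ à $\coprod_{q\in P}\pi_0(\varphi_X^{-1}(q))$ est juste, de même que l'argument de connexité (qu'on peut d'ailleurs simplifier: un zigzag d'arêtes dans $\varphi_X^{-1}(q)$ entre $v_0$ et $v_1$ reste entièrement dans $\widetilde{\varphi}_X^{-1}(p)$ puisque tous ses sommets intermédiaires sont, par connexité, dans la même composante $C_p$; la fibrance n'est pas nécessaire à cette étape).

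Il y a cependant une lacune dans l'argument de fibrance. Vous affirmez que pour $n\geq 1$, chaque sommet $e_i$ appartient au cornet $\Lambda^n_k$ en choisissant une face $d_j\Delta^n$ avec $j\neq k,i$; mais pour $n=1$ et $i\neq k$, un tel $j$ n'existe pas, puisque $\{0,1\}\setminus\{k,i\}=\emptyset$. Le cornet $\Lambda^1_k$ est réduit au seul sommet $e_k$, et $e_{1-k}$ n'y appartient pas. Ce cas se traite néanmoins directement: le relèvement $h$ obtenu par fibrance sur $P$ est une $1$-arête de $\varphi_X^{-1}(q)$ reliant $h(e_k)=\alpha(e_k)\in C_p$ à $h(e_{1-k})$; ces deux sommets sont donc dans la même composante $C_p$, d'où $\widetilde{\varphi}_X(h(e_{1-k}))=p=\varphi(e_{1-k})$. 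Pour $n\geq 2$, votre argument est correct tel quel.
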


\begin{remarque}
Ainsi, à tout ensemble simplicial filtré (fibrant) sur $P$, on peut associer un ensemble simplicial filtré (fibrant) sur un autre ordre partiel, tel que toutes ses strates soient connexes.
\end{remarque}

\chapter{Groupes d'homotopie filtrés et espaces filtrés}\label{ChapitreGroupesHomotopiesFiltresEspaces}

L'objet de ce texte est d'étudier la théorie de l'homotopie des espaces stratifiés. On a vu dans les chapitres précédents que les groupes d'homotopie filtrée permettaient de caractériser les équivalences faibles entre ensembles simpliciaux filtrés. Aussi, il est naturel d'étendre la définition des groupes d'homotopie filtrés aux espaces filtrés et d'expliciter les propriétés homotopiques des espaces filtrés qui sont codées par les groupes d'homotopie filtrés.

Dans la première section de ce chapitre, on définit la catégorie des espaces filtrés, et on étudie ses propriétés. Elle est liée par une adjonction à la catégorie des ensembles simpliciaux filtrés, (section \ref{SectionAdjonctionTopPsSetP}), c'est une catégorie simpliciale (section \ref{SectionTopPCategorieSimpliciale}) et on y définit une notion d'homotopie filtrée compatible avec les définitions précédentes section (\ref{SectionHomotopiesFiltreesTopP}).

Dans la section \ref{SectionGroupesHomotopieFiltresTopP}, on définit les groupes d'homotopie filtrés pour les espaces filtrés (Définitions \ref{DefinitionSPi0TopP} et \ref{DefinitionGroupesHomotopieFiltresTopP}), et on étudie leurs propriétés. On montre que pour une classe d'espaces filtrés appropriée, les groupes d'homotopie filtrés d'un espace filtré coincident avec ceux de l'ensemble simplicial filtré correspondant (Proposition \ref{IsomorphismeGroupesHomotopieSingP}).

Finalement, dans la section \ref{SectionTheoremesWhitehead}, on présente deux généralisations du théorème de Whitehead aux espaces filtrés. Ces théorèmes sont vrais pour une certaine classe d'espaces filtrés PL, décrite dans la section \ref{SubsectionObjetsFibrantsNaifTopP}, incluant notamment les pseudo-variétés PL. 
Pour deux tels objets et un morphisme entre eux $f\colon \fil{X}\to\fil{Y}$, on a équivalence entre les assertions suivantes
\begin{itemize}
\item Le morphisme $f$ est une équivalence d'homotopie filtrée.
\item Le morphisme $f$ induit des isomorphismes entre tout les groupes d'homotopie filtrés.
\item Le morphisme $f$ induit des équivalences faibles sur toutes les strates et tous les entrelacs homotopiques.
\end{itemize}
L'équivalence entre les deux premières assertions est le contenu du théorème \ref{PremierTheoremeWhitehead}, et l'équivalence avec la dernière assertion provient du théorème \ref{DeuxiemeTheoremeWhitehead}.

\begin{remarque}\label{RemarqueCategorieDeltaEngendre}
Dans ce chapitre et dans le reste de ce texte, on travaille avec la catégorie des espaces topologiques $\Delta$-engendrés. Un espace topologique $X$ est $\Delta$-engendré si pour tout $U\subset X$, $U$ est ouvert si et seulement si pour toute application continue $f\colon \Delta^n\to X$, $f^{-1}(U)$ est un ouvert du simplexe standard $\Delta^n$. Par \cite{Dugger} et \cite{ConvenientCategory}, la catégorie des espaces $\Delta$-engendré est cartésienne et localement présentable. On notera $\Top$ cette catégorie, et on notera $-\times-\colon \Top\times \Top\to \Top$ son produit. On réserve la notation $\otimes$ pour les produits entre objets filtrés et non-filtrés.
\end{remarque}

\section{La catégorie des espaces filtrés}
\subsection{Espaces et morphismes filtrés}

\begin{defin}
Soit $P$ un ensemble ordonné. On considère $P$ comme un espace topologique muni de la topologie d'Alexandroff définie comme suit. Un ensemble $U\subset P$ est ouvert si et seulement si $\forall x\in U$, $\forall y\in P$, $y\geq x \Rightarrow y\in U$. En particulier, une base d'ouvert de $P$ est donnée par $(U_p)_{p\in P}$ où $U_p$ est défini comme 
\begin{equation*}
U_p=\{q\in P\ |\ q\geq p\}
\end{equation*}
\end{defin}
Dans tout le reste de ce texte, on identifiera l'ensemble ordonné $P$ avec l'espace topologique $P$ muni de la topologie d'Alexandroff.

\begin{defin}\label{DefinitionTopP}
Un espace topologique filtré au dessus de $P$ est la donnée
\begin{itemize}
\item d'un espace topologique $A$
\item d'une application continue $\varphi_A\colon A\to P$. 
\end{itemize}
Une application filtrée $f\colon \fil{A}\to\fil{B}$ est la donnée d'une application continue $f\colon A\to B$ telle que le triangle suivant commute
\begin{equation*}
\begin{tikzcd}
A
\arrow{rr}{f}
\arrow[swap]{dr}{\varphi_A}
&\phantom{X}
&B
\arrow{dl}{\varphi_B}
\\
\phantom{X}
&P
\end{tikzcd}
\end{equation*}
En considérant la composition usuelle, on obtient la catégorie des espaces filtrés au dessus de $P$, $\Top_P$.
\end{defin}

\subsection{Adjonction avec les ensembles simpliciaux filtrés}
\label{SectionAdjonctionTopPsSetP}
\begin{defin}\label{DefinitionVarphiP}
Soit $P$ un ensemble ordonné, la réalisation de son classifiant, $\Real{N(P)}$, possède une filtration canonique $\varphi_P\colon \Real{N(P)}\to P$ définie par 
\begin{align*}
\varphi_P\colon\Real{N(P)}&\to P\\
([p_0,\dots,p_n],t\in \mathring{\Delta}^n)&\mapsto p_n 
\end{align*}
où $\mathring{\Delta}^n$ désigne l'intérieur du $n$-simplexe canonique $\Delta^n$ si $n\geq 1$, et $\Delta^0$ si $n=0$.
\end{defin}

\begin{defin}\label{DefinitionRealP}
Soit $\fil{X}$ un ensemble simplicial filtré. On définit sa réalisation filtrée $\RealP{\fil{X}}$, comme l'espace filtré $(\Real{X},\varphi_P\circ\Real{\varphi_X})$. Ceci définit un foncteur 
\begin{equation*}
\RealP{-}\colon\sS_P\to \Top_P
\end{equation*}
\end{defin}
On notera par abus de notation $\RealP{X}$ pour désigner l'espace filtré $\RealP{\fil{X}}$.

\begin{defin}
Soit $\fil{A}$ un ensemble simplicial filtré, on définit l'ensemble simplicial filtré $\Sing_P\fil{A}$ comme suit
\begin{align*}
\Sing_P\fil{A}\colon \Delta(P)^{\op}&\to \Set\\
\Delta^{\varphi}&\mapsto \Hom_{\Top_P}(\RealP{\Delta^{\varphi}},\fil{A})
\end{align*}
Ceci définit un foncteur $\Sing_P\colon \Top_P\to\sS_P$.
\end{defin}
Par abus de notation, on notera $\Sing_P(A)$ pour désigner l'image de $\fil{A}$ par $\Sing_P$.

\begin{prop}\label{AdjonctionSingReal}
Le foncteur $\RealP{-}$ est un adjoint à gauche du foncteur $\Sing_P$.
\end{prop}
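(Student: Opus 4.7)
Le plan est d'exploiter la description de $\sS_P$ comme catégorie de préfaisceaux sur $\Delta(P)$ (Proposition \ref{CategoriePrefaisceaux}) et la densité des simplexes filtrés. La cohérence de la construction $\RealP{-}$ avec les colimites permettra alors de ramener la bijection d'adjonction au cas d'un simplexe filtré, où elle sera essentiellement tautologique par définition de $\Sing_P$.

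D'abord, je vérifierais que $\RealP{-}$ commute avec les colimites. Ceci résulte de deux observations: la réalisation non-filtrée $\Real{-}\colon\sS\to\Top$ commute avec les colimites (c'est un adjoint à gauche), et la filtration $\varphi_P\circ\Real{\varphi_X}$ sur $\RealP{\fil{X}}$ est fonctorielle en $\fil{X}$, si bien que les filtrations se recollent bien lors d'un passage à la colimite. Concrètement, pour un diagramme $\fil{X}=\colim\fil{X_i}$ dans $\sS_P$, l'espace $\Real{X}$ s'identifie à $\colim\Real{X_i}$ dans $\Top$, et les applications $\varphi_P\circ\Real{\varphi_{X_i}}\colon\Real{X_i}\to P$ induisent par propriété universelle la filtration $\varphi_P\circ\Real{\varphi_X}$.

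Ensuite, j'évaluerais les deux foncteurs sur les simplexes filtrés. Pour un simplexe filtré $\Delta^{\varphi}\in\Delta(P)$, par la description de $\sS_P$ comme $\Fun(\Delta(P)^{\op},\Set)$ (Proposition \ref{CategoriePrefaisceaux}) et le lemme de Yoneda, on a
\begin{equation*}
\Hom_{\sS_P}(\Delta^{\varphi},\Sing_P\fil{A})\simeq \Sing_P(\fil{A})(\Delta^{\varphi})=\Hom_{\Top_P}(\RealP{\Delta^{\varphi}},\fil{A}),
\end{equation*}
la seconde égalité étant exactement la définition de $\Sing_P$. Ceci fournit la bijection d'adjonction sur les simplexes filtrés, de façon naturelle en $\Delta^{\varphi}$ et en $\fil{A}$.

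Pour finir, j'étendrais la bijection à un ensemble simplicial filtré arbitraire $\fil{X}$ par un argument standard d'extension par colimites. En écrivant $\fil{X}\simeq\colim_{\sigma\in\Hom(\Delta^{\varphi},\fil{X})}\Delta^{\varphi}$ (corollaire après Proposition \ref{CategoriePrefaisceaux}), on obtient la chaine d'isomorphismes naturels
\begin{equation*}
\Hom_{\Top_P}(\RealP{\fil{X}},\fil{A})\simeq\lim\Hom_{\Top_P}(\RealP{\Delta^{\varphi}},\fil{A})\simeq\lim\Hom_{\sS_P}(\Delta^{\varphi},\Sing_P\fil{A})\simeq\Hom_{\sS_P}(\fil{X},\Sing_P\fil{A}),
\end{equation*}
en utilisant que $\RealP{-}$ commute aux colimites, la bijection sur les simplexes filtrés, puis le fait que $\Hom(-,\Sing_P\fil{A})$ transforme les colimites en limites. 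Le seul point qui demande une vérification soigneuse est la compatibilité entre la colimite dans $\Top_P$ et la topologie quotient sous-jacente (les colimites dans $\Top_P$ étant calculées en prenant la colimite dans $\Top$ muni de la filtration induite par propriété universelle), mais ceci découle directement du fait que $\Top_P$ est la catégorie des objets au-dessus de $P$ dans $\Top$.
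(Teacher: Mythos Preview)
Your proof is correct and follows the standard ``nerve--realization'' pattern: verify that $\RealP{-}$ preserves colimits, establish the bijection on representables via Yoneda, then extend by density. This is a more abstract route than the paper's, which instead writes down the two maps
\begin{align*}
\Hom_{\Top_P}(\RealP{X},\fil{A})&\to\Hom_{\sS_P}(\fil{X},\Sing_P(A)),\quad f\mapsto\bigl(\sigma\mapsto f\circ\RealP{\sigma}\bigr),\\
\Hom_{\sS_P}(\fil{X},\Sing_P(A))&\to\Hom_{\Top_P}(\RealP{X},\fil{A}),\quad g\mapsto\bigl((\sigma,t)\mapsto g(\sigma)(t)\bigr),
\end{align*}
and leaves the verification that they are mutual inverses implicit. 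Your approach has the advantage of making the adjunction an instance of a general principle (any colimit-preserving extension of a functor $\Delta(P)\to\Top_P$ has a right adjoint given by the corresponding nerve), and it makes the naturality manifest. The paper's approach is more explicit and avoids appealing to colimit preservation and the presheaf description, at the cost of leaving the well-definedness and bijectivity checks to the reader. Both are entirely standard for this kind of statement.
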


\begin{proof}
Soient $\fil{A}$ un espace filtré et $\fil{X}$ un ensemble simplicial filtré. On définit les bijections suivantes 
\begin{align*}
\Hom_{\Top_P}(\RealP{X},\fil{A})&\to\Hom_{\sS_P}(\fil{X},\Sing_P(A))\\
(f\colon \RealP{X}\to\fil{A})&\mapsto \left\{\begin{array}{ccc}
\fil{X}&\to &\Sing_P(A)\\
(\sigma\colon\Delta^{\varphi}\to\fil{X})&\mapsto & f\circ\RealP{\sigma}\in \Sing_P(A)(\Delta^{\varphi})
\end{array}\right.
\end{align*}
\begin{align*}
\Hom_{\sS_P}(\fil{X},\Sing_P(A))&\to\Hom_{\Top_P}(\RealP{X},\fil{A})\\
(g\colon \fil{X}\to\Sing_P(A))&\mapsto \left(\begin{array}{ccc}
\RealP{X}&\to&\fil{A}\\
(\sigma,t)&\mapsto & g(\sigma)(t) 
\end{array}
\right)
\end{align*}
\end{proof}

\begin{prop}\label{SingPPullback}
Soit $\fil{X}$ un espace filtré. Le carré suivant est cartésien
\begin{equation}\label{CarreCocartesienSingP}
\begin{tikzcd}
\Sing_P\fil{X}
\arrow{d}
\arrow{r}
&\Sing(X)
\arrow{d}{\Sing(\varphi_X)}
\\
N(P)
\arrow{r}{i}
&\Sing(P)
\end{tikzcd}
\end{equation}
où $i\colon N(P)\to\Sing(P)$ est l'adjoint du morphisme canonique $\varphi_P\colon\Real{N(P)}\to P$
\end{prop}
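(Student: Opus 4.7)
The plan is to unfold the definitions on both sides and exhibit a natural bijection between $n$-simplices, compatible with the structure maps. The identification rests on Proposition \ref{CategoriePrefaisceaux}, which lets us describe the underlying simplicial set of $\Sing_P\fil{X}$ explicitly.

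First, I would describe the $n$-simplices of $\Sing_P\fil{X}$ as an ensemble simplicial. By Proposition \ref{CategoriePrefaisceaux} applied to $\Sing_P\fil{X}\colon \Delta(P)^{\op}\to\Set$, an $n$-simplex of the underlying simplicial set is a pair $(\varphi,\sigma)$, where $\varphi\colon \Delta^{n}\to N(P)$ is a filtration of the $n$-simplex and $\sigma\in\Sing_P\fil{X}(\Delta^{\varphi})=\Hom_{\Top_P}(\RealP{\Delta^{\varphi}},\fil{X})$ is a filtered continuous map. Spelling out the compatibility condition, this is a pair $(\varphi,\sigma)$ with $\sigma\colon\Delta^{n}\to X$ a continuous map satisfying $\varphi_{X}\circ\sigma=\varphi_{P}\circ \Real{\varphi}$, and the projection to $N(P)$ simply records $\varphi$.

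Next, I would describe the $n$-simplices of the pullback $N(P)\times_{\Sing(P)}\Sing(X)$: they are pairs $(\tau,\sigma)$ where $\tau=[p_{0},\dots,p_{n}]\in N(P)_{n}$ and $\sigma\colon\Delta^{n}\to X$ is a continuous map such that $\varphi_{X}\circ\sigma=i(\tau)$ as morphisms $\Delta^{n}\to\Sing(P)$, i.e.\ as continuous maps $\Delta^{n}\to P$. The key observation is that by construction of the counit of the adjunction $(\Real{-},\Sing)$, the map $i(\tau)\colon\Delta^{n}\to P$ is exactly the composition $\Delta^{n}\xrightarrow{\Real{\tau}}\Real{N(P)}\xrightarrow{\varphi_{P}} P$. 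Thus, setting $\varphi=\tau$, the compatibility condition $\varphi_{X}\circ\sigma=i(\tau)$ becomes $\varphi_{X}\circ\sigma=\varphi_{P}\circ \Real{\varphi}$, which is precisely the condition above.

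I would then check that the assignment $(\varphi,\sigma)\mapsto(\varphi,\sigma)$ is a bijection on $n$-simplices which is natural in $n$ (compatible with the face and degeneracy maps, both given by precomposition with the corresponding linear maps $\Delta^{m}\to\Delta^{n}$) and commutes with the projections to $N(P)$ and to $\Sing(X)$. By the Yoneda lemma in $\sS$, this furnishes an isomorphism of simplicial sets $\Sing_{P}\fil{X}\xrightarrow{\simeq} N(P)\times_{\Sing(P)}\Sing(X)$ making the square \eqref{CarreCocartesienSingP} commute, which proves that it is cartesian.

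The only delicate point is unwinding the adjoint map $i$; everything else is a direct translation of the two definitions. I expect no obstacle beyond bookkeeping, since all vertical maps are defined as projections and the compatibility equations coincide tautologically once $i(\tau)=\varphi_{P}\circ\Real{\tau}$ is recognised.
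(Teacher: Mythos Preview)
Your proposal is correct and follows essentially the same approach as the paper: both arguments unwind the definition of $\Sing_P$ via Proposition~\ref{CategoriePrefaisceaux}, identify the adjoint map $i(\tau)$ with $\varphi_P\circ\Real{\tau}$, and match the compatibility conditions on each side. The only cosmetic difference is that the paper first observes that $\Sing_P\fil{X}\to\Sing(X)$ and $i\colon N(P)\to\Sing(P)$ are injective and then checks that the image of $\Sing_P\fil{X}$ in $\Sing(X)$ coincides with the preimage of $N(P)$, whereas you construct the bijection of $n$-simplices directly; the content is the same.
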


\begin{proof}
Il est clair que le diagramme \ref{CarreCocartesienSingP} est commutatif, par définition de $\Sing_P$. De plus, par construction, le morphisme $\Sing_P\fil{X}\to\Sing(X)$ est injectif, de même que le morphisme $N(P)\to\Sing(P)$. Il suffit donc de vérifier que l'image de $\Sing_P\fil{X}$ dans $\Sing(X)$ est égale à la préimage de $N(P)$ par $\Sing_P(\varphi_X)$. Soit $\sigma\colon \Real{\Delta^n}\to X$ un simplexe de $\Sing(X)$ tel que la composition  
\begin{equation*}\varphi_X\circ \sigma\colon\Real{\Delta^n}\to X\to P
\end{equation*}
est un simplexe de $i(N(P))$.
Autrement dit, tel qu'il existe $\varphi\colon \Delta^{n}\to N(P)$,
vérifiant
\begin{equation*}
\varphi_X\circ\sigma=\varphi_P\circ\Real{\varphi}\colon \Real{\Delta^n}\to P.
\end{equation*}
Alors, on a le diagramme commutatif suivant
\begin{equation*}
\begin{tikzcd}
\Real{\Delta^n}
\arrow{rr}{\sigma}
\arrow[swap]{dr}{\varphi_X\circ\sigma=\varphi_P\circ\Real{\varphi}}
&\phantom{X}
&X
\arrow{dl}{\varphi_X}
\\
\phantom{X}
&P
\end{tikzcd}
\end{equation*}
En particulier, $\sigma$ est l'image d'un simplexe filtré $\RealP{\Delta^{\varphi}}\to \fil{X}$.
\end{proof}
\subsection{Structure simpliciale}
\label{SectionTopPCategorieSimpliciale}

\begin{defin}\label{DefinTenseurTopPSimplicial}
Soit $\fil{X}$ un espace filtré et $K$ un ensemble simplicial. On définit l'espace filtré $K\otimes \fil{X}$ comme $(\Real{K}\times X, \varphi_X\circ\pr_{X})$. Ceci définit un bifoncteur 
\begin{equation*}
-\otimes-\colon\sS\times \Top_P\to\Top_p
\end{equation*}
On en déduit un foncteur 
\begin{align*}
\Map\colon\Top_P^{\op}\times\Top_P&\to\sS\\
(\fil{X},\fil{Y})&\mapsto \Map(X,Y)\colon \left\{\begin{array}{ccl}
\Delta^{\op}&\to &\Set\\
\Delta^n&\mapsto &\Hom_{\Top_P}(\Delta^n\otimes\fil{X},\fil{Y})
\end{array}\right.
\end{align*}
\end{defin}

\begin{defin}\label{DefinitionEspacesDeCheminsStratifies}
Soit $K$ un ensemble simplicial et $\fil{Y}$ un espace filtré. On définit l'espace filtré $\fil{Y}^{K}$ comme suit. L'espace topologique sous jacent est obtenu comme l'union :
\begin{equation*}
\fil{Y}^{K}=\coprod_{p\in P}\Hom_{\Top_P}(\RealP{K_p},\fil{Y})\subset \Hom_{\Top}(\Real{K},Y)
\end{equation*}
où $K_p$ est l'ensemble simplicial filtré $K\to\{p\}\subset N(P)$.
La topologie est obtenue en considérant $\fil{Y}^K$ comme un sous objet de $\Hom_{\Top}(\Real{K},Y)$. La filtration est donnée par $\varphi(\Hom_{\Top_P}(\RealP{K_p},\fil{Y}))=p$.
\end{defin}

\begin{prop}\label{AdjointFoncteurTenseurTop}
La définition précédente s'étend en un foncteur 
\begin{equation*}
(-)^{-}\colon\Top_P\times \sS^{\op}\to \Top_P
\end{equation*}
De plus, si $K$ est un ensemble simplicial fixé, le foncteur $K\otimes -$ est un adjoint à gauche du foncteur $(-)^{K}$.
\end{prop}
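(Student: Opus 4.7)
The plan is to split the proof into two parts: first the functoriality of $(-)^{-}$, then the adjunction.

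For functoriality, given a simplicial map $\alpha\colon K\to L$ and a filtered map $h\colon\fil{Y}\to\fil{Y'}$, I would define $h^\alpha\colon\fil{Y}^L\to\fil{Y'}^K$ by post-composition with $h$ and pre-composition with $\Real{\alpha}$. The key verification is that if $g\in \Hom_{\Top_P}(\RealP{L_p},\fil{Y})$ sits in the $p$-component of $\fil{Y}^L$, then $h\circ g\circ\Real{\alpha}$ still satisfies $\varphi_{Y'}\circ(h\circ g\circ\Real{\alpha})=p$ on all of $\Real{K}$: indeed, the source $\Real{\alpha}\colon\RealP{K_p}\to\RealP{L_p}$ respects the constant filtration $p$, and $h$ is filtered. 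Continuity of $h^\alpha$ is inherited from the classical continuity of pre/post-composition on $\Hom_{\Top}(\Real{L},Y)\to\Hom_{\Top}(\Real{K},Y')$, since the topology on $\fil{Y}^K$ is by definition the subspace topology.

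For the adjunction, fix $K\in\sS$. Since $\Top$ (the category of $\Delta$-generated spaces, Remark~\ref{RemarqueCategorieDeltaEngendre}) is cartesian closed, one has the classical bijection
\begin{equation*}
\Hom_{\Top}(\Real{K}\times X,Y)\simeq \Hom_{\Top}(X,\Hom_{\Top}(\Real{K},Y)),
\end{equation*}
natural in $X$ and $Y$. I will show that this bijection restricts to a bijection
\begin{equation*}
\Hom_{\Top_P}(K\otimes\fil{X},\fil{Y})\simeq \Hom_{\Top_P}(\fil{X},\fil{Y}^K).
\end{equation*}
Starting from a filtered map $f\colon K\otimes\fil{X}\to\fil{Y}$, the defining condition is that $\varphi_Y(f(k,x))=\varphi_X(x)$ for all $k\in\Real{K}$, $x\in X$. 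Its exponential adjoint $\tilde f\colon X\to\Hom_{\Top}(\Real{K},Y)$ sends $x$ with $\varphi_X(x)=p$ to a continuous map $\tilde f(x)\colon\Real{K}\to Y$ landing in $\varphi_Y^{-1}(p)$; equivalently, $\tilde f(x)$ defines a filtered map $\RealP{K_p}\to\fil{Y}$. Hence $\tilde f$ factors through the subspace $\fil{Y}^K\subset\Hom_{\Top}(\Real{K},Y)$, and by construction $\varphi_{\fil{Y}^K}\circ\tilde f=\varphi_X$, so $\tilde f$ is filtered. Conversely, a filtered $g\colon\fil{X}\to\fil{Y}^K$ gives by the $\Top$-adjunction a continuous $\hat g\colon\Real{K}\times X\to Y$, and the condition that $g$ lands in the strata of $\fil{Y}^K$ compatibly with $\varphi_X$ is exactly the condition $\varphi_Y\circ\hat g=\varphi_X\circ\pr_X$.

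The main obstacle is a purely topological one: one must check that the subspace topology on $\fil{Y}^K$ is fine enough (resp.\ coarse enough) that both directions of the adjoint correspondence yield \emph{continuous} filtered maps. This relies crucially on working in the category of $\Delta$-generated spaces, where the exponential $\Hom_\Top(\Real{K},Y)$ is genuinely an internal hom and the exponential adjunction is a homeomorphism of mapping spaces, so the bijection above is automatically continuous in each variable. Naturality in $\fil{X}$ and $\fil{Y}$ follows directly from naturality of the $\Top$-adjunction together with the functoriality of $(-)^K$ established in the first step.
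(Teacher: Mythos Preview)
Your proof is correct and follows essentially the same approach as the paper: both arguments exhibit the adjunction by restricting the cartesian-closed exponential bijection of $\Top$ to the filtered setting, checking that the filtered conditions on $f\colon K\otimes\fil{X}\to\fil{Y}$ and on $\tilde f\colon\fil{X}\to\fil{Y}^K$ correspond under this bijection. Your treatment is in fact more complete than the paper's, since you explicitly verify functoriality of $(-)^{-}$ and highlight the role of cartesian closedness of $\Delta$-generated spaces for continuity, both of which the paper's proof leaves implicit.
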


\begin{proof}
Soit $K$ un ensemble simplicial et $\fil{X}$ et $\fil{Y}$ deux espaces filtrés. On a les applications suivantes
\begin{align*}
\Hom(\fil{X},\fil{Y}^K)&\to\Hom(K\otimes\fil{X},\fil{Y})\\
(f\colon X\to Y^K)&\mapsto \left\{\begin{array}{ccc}
\Real{K}\times X&\to &Y\\
((\sigma,t),x)&\mapsto & f(x)(\sigma,t)
\end{array}\right.
\end{align*}
et
\begin{align*}
\Hom(K\otimes\fil{X},\fil{Y})&\to \Hom(\fil{X},\fil{Y}^K)\\
(g\colon \Real{K}\times X\to Y)&\mapsto\left\{\begin{array}{ccc}
X&\to &\fil{Y}^K\\
x&\mapsto &(\sigma,t)\mapsto g((\sigma,t),x)
\end{array}\right.
\end{align*}
Il suffit de vérifier que ces applications sont bien définies, elles seront, par construction, inverses l'une de l'autre. Si $f\colon \fil{X}\to\fil{Y}^K$ est une application filtrée, alors, pour tout $x\in X$ et tout $(\sigma,t)\in \Real{K}$, $\varphi_{K\otimes X}((\sigma,t),x))=\varphi_X(x)=\varphi_{Y^K}(f(x))=\varphi_Y(f(x)(\sigma,t))$. En particulier, la première application est bien définie. Et, si $g\colon K\otimes \fil{X}\to \fil{Y}$ est une application filtrée,  pour tout $x$ et pour tout $(\sigma,t)$, on a $\varphi_X(x)=\varphi_{K\otimes X}((\sigma,t),x)=\varphi_Y(g((\sigma,t),x))$. On en déduit que la deuxième application est bien définie.
\end{proof}

\begin{prop}\label{PropositionTopPCategorieSimpliciale}
La catégorie $\Top_P$ munie des foncteurs $\Map$, $-\otimes - $, et $(-)^{-}$ est une catégorie simpliciale.
\end{prop}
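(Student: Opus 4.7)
Le plan est d'imiter la stratégie utilisée pour prouver le théorème \ref{CategorieModelSimpliciale}, à savoir appliquer \cite[Lemme II.2.4]{GoerssJardine}. Il suffit donc de vérifier les trois points suivants.

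D'abord, pour tout ensemble simplicial $K$, le foncteur $K\otimes -\colon \Top_P\to \Top_P$ doit être adjoint à gauche de $(-)^K$. C'est précisément le contenu de la proposition \ref{AdjointFoncteurTenseurTop}, il n'y a donc rien à faire.

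Ensuite, pour tout espace filtré $\fil{X}$, il faut vérifier que le foncteur $-\otimes \fil{X}\colon \sS\to \Top_P$ commute avec les colimites et que $\Delta^0\otimes \fil{X}\simeq \fil{X}$. Le second point est immédiat : par définition \ref{DefinTenseurTopPSimplicial}, on a $\Delta^0\otimes\fil{X}=(\Real{\Delta^0}\times X,\varphi_X\circ\pr_X)\simeq (X,\varphi_X)=\fil{X}$, puisque $\Real{\Delta^0}$ est un point. Pour la commutation aux colimites, on décompose le foncteur comme la composition $K\mapsto (\Real{K}\times X,\varphi_X\circ\pr_X)$. Le foncteur de réalisation $\Real{-}\colon \sS\to \Top$ est un adjoint à gauche, donc commute avec les colimites. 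De plus, dans la catégorie $\Top$ des espaces $\Delta$-engendrés (voir la remarque \ref{RemarqueCategorieDeltaEngendre}), le foncteur $-\times X$ est un adjoint à gauche car $\Top$ est cartésienne fermée, et donc commute aussi avec les colimites. Finalement, les colimites dans $\Top_P$ se calculent comme les colimites dans $\Top$ munies de la filtration induite, ce qui garantit que la filtration $\varphi_X\circ \pr_X$ est préservée.

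Enfin, il faut produire un isomorphisme naturel $K\otimes (L\otimes \fil{X})\simeq (K\times L)\otimes \fil{X}$ pour $K,L\in \sS$ et $\fil{X}\in \Top_P$. On écrit la chaine d'isomorphismes
\begin{align*}
K\otimes (L\otimes\fil{X}) &= (\Real{K}\times (\Real{L}\times X),\varphi_X\circ\pr_X\circ\pr_{\Real{L}\times X})\\
&\simeq (\Real{K}\times \Real{L}\times X,\varphi_X\circ\pr_X)\\
&\simeq (\Real{K\times L}\times X,\varphi_X\circ\pr_X)\\
&= (K\times L)\otimes\fil{X},
\end{align*}
le passage clé étant $\Real{K}\times \Real{L}\simeq \Real{K\times L}$. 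Ceci est le point principal à justifier : dans la catégorie des espaces topologiques usuels, cet isomorphisme repose sur un théorème de Milnor qui demande que l'un des deux ensembles simpliciaux soit dénombrable ou que l'on travaille dans une catégorie commodément cartésienne. Dans le cadre choisi ici des espaces $\Delta$-engendrés (remarque \ref{RemarqueCategorieDeltaEngendre}), la catégorie $\Top$ est cartésienne fermée et le foncteur de réalisation $\Real{-}$ préserve les produits finis, ce qui fournit l'isomorphisme recherché sans hypothèse supplémentaire. La naturalité en $K$, $L$ et $\fil{X}$ est alors immédiate, ce qui achève la vérification des trois axiomes de \cite[Lemme II.2.4]{GoerssJardine}.
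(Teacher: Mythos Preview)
Your proof is correct and follows essentially the same approach as the paper: both apply \cite[Lemme II.2.4]{GoerssJardine} and verify the same three points in the same way, invoking Proposition~\ref{AdjointFoncteurTenseurTop} for the adjunction, the cartesian closedness of $\Delta$-engendered spaces for colimit preservation, and the isomorphism $\Real{K}\times\Real{L}\simeq\Real{K\times L}$ for associativity. Your write-up is in fact slightly more careful than the paper's in justifying this last isomorphism via the cartesian closure of $\Top$.
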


\begin{proof}
D'après \cite[Lemme II.2.4]{GoerssJardine}, il suffit de vérifier les points suivants. 
\begin{itemize}
\item Pour tout ensemble simplicial $K$, le foncteur $K\otimes -$ est un adjoint à gauche du foncteur $(-)^K$, c'est la proposition \ref{AdjointFoncteurTenseurTop}.
\item Pour tout espace topologique filtré $\fil{X}$, le foncteur $-\otimes \fil{X}$ préserve les colimites et vérifie $\Delta^{0}\otimes \fil{X}\simeq \fil{X}$. La première affirmation provient du fait que $-\times X$ préserve les colimites dans $\Top$ (C'est notamment pour avoir cette propriété qu'on travaille avec la catégorie des espaces $\Delta$-engendrés). La deuxième provient du fait que pour tout espace topologique $X$, $X\times \Delta^{0}\simeq X$.
\item Il existe un isomorphisme $L\otimes(K\otimes \fil{X})\simeq (L\times K)\otimes \fil{X}$ naturel en $K,L\in \sS$ , et en $\fil{X}\in \Top_P$.
Cet isomorphisme provient de l'isomorphisme naturel 
$\Real{K}\times(\Real{L}\times X)\simeq \Real{K\times L}\times X$.  
\end{itemize}
\end{proof}

\begin{prop}\label{AdjonctionSimplicialeSingReal}
L'adjonction $\RealP{-}\colon\sS_P\leftrightarrow\Top_P\colon\Sing_P$ de la proposition \ref{AdjonctionSingReal} est une adjonction simpliciale.
\end{prop}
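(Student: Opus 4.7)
Pour prouver que l'adjonction $(\RealP{-}, \Sing_P)$ est simpliciale, la stratégie est d'établir la compatibilité du foncteur de réalisation filtrée avec les tenseurs simpliciaux, ce qui permettra d'obtenir un isomorphisme naturel entre les ensembles simpliciaux de morphismes $\Map$ des deux catégories.

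Concrètement, la première étape est de construire un isomorphisme naturel d'espaces filtrés
\begin{equation*}
\RealP{K \otimes \fil{X}} \simeq K \otimes \RealP{\fil{X}}
\end{equation*}
pour tout ensemble simplicial $K$ et tout ensemble simplicial filtré $\fil{X}$. Par la définition \ref{TenseurSimplicial}, on a $K \otimes \fil{X} = F(K) \times_{N(P)} \fil{X}$, dont l'ensemble simplicial sous-jacent est $K \times X$. En appliquant le foncteur $\RealP{-}$ et en utilisant le fait que la réalisation $\Real{-}$ commute aux produits dans la catégorie des espaces $\Delta$-engendrés, on obtient que l'espace topologique sous-jacent est $\Real{K} \times \Real{X}$. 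Un calcul direct de la filtration, en utilisant la définition \ref{DefinitionRealP} et le fait que $\varphi_P \circ \Real{\varphi_X \circ \pr_X} = \varphi_P \circ \Real{\varphi_X} \circ \pr_{\Real{X}}$, montre que cette filtration coïncide avec celle de $K \otimes \RealP{\fil{X}}$ donnée à la définition \ref{DefinTenseurTopPSimplicial}. Cet isomorphisme est manifestement naturel en $K$ et $\fil{X}$.

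Ensuite, on utilise cet isomorphisme pour transporter l'adjonction au niveau simplicial. Pour $\fil{X} \in \sS_P$ et $\fil{A} \in \Top_P$, on calcule pour tout $n \geq 0$
\begin{align*}
\Map_{\Top_P}(\RealP{\fil{X}}, \fil{A})_n &= \Hom_{\Top_P}(\Delta^n \otimes \RealP{\fil{X}}, \fil{A}) \\
&\simeq \Hom_{\Top_P}(\RealP{\Delta^n \otimes \fil{X}}, \fil{A}) \\
&\simeq \Hom_{\sS_P}(\Delta^n \otimes \fil{X}, \Sing_P(\fil{A})) \\
&= \Map_{\sS_P}(\fil{X}, \Sing_P(\fil{A}))_n,
\end{align*}
où le deuxième isomorphisme provient de l'étape précédente et le troisième de la proposition \ref{AdjonctionSingReal}. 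Il faut vérifier que ces bijections, définies dimension par dimension, sont compatibles aux faces et dégénérescences, ce qui découle de la naturalité de l'isomorphisme $\RealP{\Delta^n \otimes \fil{X}} \simeq \Delta^n \otimes \RealP{\fil{X}}$ par rapport aux morphismes $\Delta^m \to \Delta^n$ de $\sS$.

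L'étape la plus technique est le calcul explicite de la filtration sur $\RealP{K \otimes \fil{X}}$ et la vérification qu'elle coïncide avec celle de $K \otimes \RealP{\fil{X}}$ ; tout le reste de la preuve est formel et découle du théorème d'adjonction ordinaire. Une fois cet isomorphisme établi, la conclusion s'obtient par dualité via l'adjoint à droite $(-)^K$, ce qui garantit que l'adjonction $(\RealP{-}, \Sing_P)$ respecte bien toute la structure simpliciale au sens de \cite[Lemme II.2.4]{GoerssJardine}.
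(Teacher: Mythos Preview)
Your proof is correct and follows essentially the same approach as the paper: both establish the isomorphism $\RealP{\Delta^n\otimes\fil{X}}\simeq\Delta^n\otimes\RealP{\fil{X}}$ (using that realization commutes with products), and then chain together the definition of $\Map$, this isomorphism, and the ordinary adjunction of Proposition~\ref{AdjonctionSingReal} to obtain the bijection $\Map(\RealP{X},\fil{Y})_n\simeq\Map(\fil{X},\Sing_P(Y))_n$ natural in $n$. You are slightly more explicit than the paper about verifying the filtration on the realized product and about checking compatibility with faces and degeneracies, but the argument is the same.
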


\begin{proof}
Soient $\fil{X}$ un ensemble simplicial filtré et $\fil{Y}$ un espace topologique filtré. On a les bijections canoniques suivantes
\begin{align*}
\Map(\RealP{X},\fil{Y})_n&= \Hom(\Delta^n\otimes\RealP{X},\fil{Y}\\
&\simeq \Hom(\RealP{\Delta^n\otimes\fil{X}},\fil{Y})\\
&\simeq \Hom(\Delta^n\otimes\fil{X},\Sing_P(Y))\\
&= \Map(\fil{X},\Sing_P(Y))_n
\end{align*}
La première bijection provient du fait que si $K$ et $L$ sont des ensembles simpliciaux, avec $K$ ayant un nombre fini de simplexes non dégénérés, on a un isomorphisme canonique $\Real{K\times L}\simeq \Real{K}\times\Real{L}$ \cite{MilnorGeometricRealization}. La seconde bijection provient de l'adjonction.
\end{proof}

\subsection{Homotopies filtrées}
\label{SectionHomotopiesFiltreesTopP}

\begin{defin}
Soit $f,g\colon \fil{X}\to\fil{Y}$ deux applications filtrées entre espaces filtrés. On dit que $f$ et $g$ sont homotopes au sens filtré s'il existe une application filtrée $H\colon \Delta^1\otimes\fil{X}\to\fil{Y}$ telle que $H\circ (\iota_0\otimes\Id_X)=f$ et $H\circ (\iota_1\otimes \Id_X)=g$. On dira aussi que $f$ et $g$ sont homotopes par l'homotopie filtrée $H$.
\end{defin}

\begin{prop}
Soient $\fil{X}$ et $\fil{Y}$ deux espaces filtrés et $f,g\colon \fil{X}\to\fil{Y}$ deux applications filtrées homotopes au sens filtré. Alors $\Sing_P(f)$ et $ \Sing_P(g)$ sont homotopes au sens filtré.
\end{prop}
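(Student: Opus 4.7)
Le plan est de construire explicitement l'homotopie filtrée $\widetilde{H}\colon\Delta^1\otimes\Sing_P\fil{X}\to\Sing_P\fil{Y}$ à partir de $H\colon\Delta^1\otimes\fil{X}\to\fil{Y}$, en exploitant la description des deux tenseurs $\otimes$. D'abord, on déplie les deux définitions. Dans $\Top_P$, par la définition \ref{DefinTenseurTopPSimplicial}, l'homotopie $H$ est simplement une application continue $H\colon\Real{\Delta^1}\times X\to Y$ vérifiant $\varphi_Y\circ H=\varphi_X\circ\pr_X$, avec $H(\iota_0,-)=f$ et $H(\iota_1,-)=g$. Dans $\sS_P$, un simplexe de $\Delta^1\otimes\Sing_P\fil{X}=F(\Delta^1)\times_{N(P)}\Sing_P\fil{X}$ au dessus de $\Delta^{\varphi}$ correspond à une paire $(\alpha,\sigma)$ où $\alpha\colon \Delta^{\varphi}\to F(\Delta^1)$ est (au dessus de $N(P)$) la donnée libre d'une application simpliciale $\alpha\colon\Delta^n\to\Delta^1$, et $\sigma\colon \RealP{\Delta^{\varphi}}\to\fil{X}$ est une application filtrée.

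La construction : on pose
\begin{equation*}
\widetilde{H}_{\Delta^{\varphi}}(\alpha,\sigma)\colon \Real{\Delta^{\varphi}}\to Y,\qquad t\mapsto H\bigl(\Real{\alpha}(t),\sigma(t)\bigr).
\end{equation*}
Il faut vérifier trois points. (i) L'image $\widetilde{H}_{\Delta^{\varphi}}(\alpha,\sigma)$ est bien un élément de $\Sing_P\fil{Y}(\Delta^{\varphi})$, c'est-à-dire filtrée : cela découle immédiatement de $\varphi_Y(H(\Real{\alpha}(t),\sigma(t)))=\varphi_X(\sigma(t))=\varphi_P\circ\Real{\varphi}(t)$, puisque $\sigma$ est filtrée et $H$ respecte la filtration. (ii) L'association $(\alpha,\sigma)\mapsto \widetilde{H}_{\Delta^{\varphi}}(\alpha,\sigma)$ est naturelle en $\Delta^{\varphi}$ et définit donc une application filtrée $\widetilde{H}\in \Hom_{\sS_P}(\Delta^1\otimes\Sing_P\fil{X},\Sing_P\fil{Y})$ : cela résulte d'une vérification élémentaire, en précomposant par les faces et dégénérescences $\Delta^{\psi}\to\Delta^{\varphi}$. (iii) Les valeurs aux bouts : en $\epsilon=0$, l'application $\alpha$ est la constante $\iota_0$, donc $\widetilde{H}_{\Delta^{\varphi}}(\iota_0,\sigma)(t)=H(\iota_0,\sigma(t))=f(\sigma(t))=\Sing_P(f)(\sigma)(t)$, et de même en $\epsilon=1$ on obtient $\Sing_P(g)$.

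Il n'y a pas de véritable obstacle dans cette preuve : la seule difficulté technique consiste à ne pas confondre les deux définitions du symbole $\otimes$, l'une utilisant $F(\Delta^1)\times_{N(P)}-$ dans $\sS_P$ (définition \ref{TenseurSimplicial}), l'autre utilisant $\Real{\Delta^1}\times -$ dans $\Top_P$ (définition \ref{DefinTenseurTopPSimplicial}), et à vérifier que la compatibilité des filtrations passe bien à travers la formule $t\mapsto H(\Real{\alpha}(t),\sigma(t))$. On remarquera d'ailleurs que cette construction n'est rien d'autre que l'adjoint, par l'adjonction simpliciale de la proposition \ref{AdjonctionSimplicialeSingReal}, de la composition $\RealP{\Delta^1\otimes\Sing_P\fil{X}}\to \Delta^1\otimes\RealP{\Sing_P\fil{X}}\xrightarrow{\Id\otimes\ev} \Delta^1\otimes\fil{X}\xrightarrow{H}\fil{Y}$, ce qui fournit une preuve alternative plus conceptuelle.
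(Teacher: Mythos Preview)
Your proof is correct and takes essentially the same approach as the paper: the paper factors your formula through an intermediate map $h\colon\Delta^1\otimes\Sing_P(X)\to\Sing_P(\Delta^1\otimes\fil{X})$, $(\alpha,\sigma)\mapsto \Real{\alpha}\times\sigma$, and then composes with $\Sing_P(H)$, which is exactly your $\widetilde{H}_{\Delta^{\varphi}}(\alpha,\sigma)(t)=H(\Real{\alpha}(t),\sigma(t))$ written in one step. Your closing remark about the adjunction is a nice addition not present in the paper's proof.
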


\begin{proof}
Il suffit d'exhiber une application $h$ faisant commuter le diagramme suivant 
\begin{equation*}
\begin{tikzcd}[column sep = large]
\phantom{X}
&\Sing_P(X)
\arrow[bend left =12]{drr}{\Sing_P(f)}
\arrow[swap]{dl}{\iota_0\otimes\Id}
\arrow{dr}{\Sing_P(\iota_0\otimes\Id)}
\\
\Delta^1\otimes\Sing_P(X)
\arrow{rr}{h}
&\phantom{X}
&\Sing_P(\Delta^1\otimes\fil{X})
\arrow{r}{\Sing_P(H)}
&\Sing_P(Y)
\\
\phantom{X}
&\Sing_P(X)
\arrow[swap]{ur}{\Sing_P(\iota_1\otimes\Id)}
\arrow{ul}{\iota_1\otimes\Id}
\arrow[swap,bend right= 12]{urr}{\Sing_P(g)}
\end{tikzcd}
\end{equation*}
On définit $h$ comme suit 
\begin{align*}
h\colon \Delta^1\otimes\Sing_P(X)&\to \Sing_P(\Delta^1\otimes \fil{X})\\
(\sigma\colon \Delta^{\varphi}\to \Delta^1,\tau\colon \RealP{\Delta^{\varphi}}\to \fil{X})&\mapsto \Real{\sigma}\times\tau\colon \RealP{\Delta^{\varphi}}\to \Delta^1\otimes\fil{X}
\end{align*}
\end{proof}

\begin{prop}
Soient $\fil{X}$ et $\fil{Y}$ deux ensembles simpliciaux filtrés et $f,g\colon \fil{X}\to\fil{Y}$ deux applications  homotopes au sens filtré. Alors, $\RealP{f}$ et $\RealP{g}$ sont homotopes au sens filtré.
\end{prop}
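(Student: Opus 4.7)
The plan is to show that $\RealP{-}$ sends the standard filtered cylinder to the filtered cylinder in $\Top_P$, so that a filtered homotopy $H$ between $f$ and $g$ can simply be realized to give a filtered homotopy between $\RealP{f}$ and $\RealP{g}$.

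First I would unpack the definitions on both sides. By Definition \ref{TenseurSimplicial}, the filtered cylinder $\Delta^1 \otimes \fil{X}$ is $F(\Delta^1) \times_{N(P)} \fil{X}$, whose underlying simplicial set is $(\Delta^1 \times N(P)) \times_{N(P)} X \simeq \Delta^1 \times X$, equipped with the filtration $\varphi_X \circ \pr_X$. On the topological side, by Definition \ref{DefinTenseurTopPSimplicial}, the filtered cylinder $\Delta^1 \otimes \RealP{\fil{X}}$ has underlying space $\Real{\Delta^1} \times \Real{X}$ with filtration $\varphi_{\RealP{X}} \circ \pr_{\RealP{X}} = \varphi_P \circ \Real{\varphi_X} \circ \pr_{\Real{X}}$.

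Next I would construct a natural filtered isomorphism $\RealP{\Delta^1 \otimes \fil{X}} \simeq \Delta^1 \otimes \RealP{\fil{X}}$. The underlying homeomorphism comes from Milnor's theorem: since $\Delta^1$ has finitely many non-degenerate simplices, the canonical map $\Real{\Delta^1 \times X} \to \Real{\Delta^1} \times \Real{X}$ is a homeomorphism (this is the same fact already used in the proof of Proposition \ref{AdjonctionSimplicialeSingReal}). Checking that this homeomorphism preserves the filtration is a direct computation: the filtration $\varphi_P \circ \Real{\varphi_X \circ \pr_X}$ factors through projection onto $\Real{X}$ followed by $\varphi_P \circ \Real{\varphi_X}$, which is exactly the filtration on $\Delta^1 \otimes \RealP{\fil{X}}$.

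Given this identification, the proof concludes formally. Let $H: \Delta^1 \otimes \fil{X} \to \fil{Y}$ be the filtered homotopy with $H \circ (\iota_0 \otimes \Id_X) = f$ and $H \circ (\iota_1 \otimes \Id_X) = g$. Applying $\RealP{-}$ and composing with the inverse of the natural isomorphism above yields an application filtrée
\begin{equation*}
\widetilde{H} \colon \Delta^1 \otimes \RealP{\fil{X}} \xrightarrow{\simeq} \RealP{\Delta^1 \otimes \fil{X}} \xrightarrow{\RealP{H}} \RealP{\fil{Y}}.
\end{equation*}
Naturality of the identification with respect to the inclusions $\iota_0, \iota_1 \colon \Delta^0 \to \Delta^1$ (which are themselves realized compatibly) ensures that $\widetilde{H} \circ (\iota_0 \otimes \Id_{\RealP{X}}) = \RealP{f}$ and $\widetilde{H} \circ (\iota_1 \otimes \Id_{\RealP{X}}) = \RealP{g}$, so $\widetilde{H}$ is the desired filtered homotopy. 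The main technical point is the verification that the Milnor homeomorphism is compatible with the filtrations, but since both sides factor through projection onto $\Real{X}$, this is essentially immediate once the definitions are spelled out.
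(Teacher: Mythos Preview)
Your proof is correct and follows exactly the same approach as the paper: establish the filtered isomorphism $\RealP{\Delta^1\otimes\fil{X}}\simeq\Delta^1\otimes\RealP{\fil{X}}$ and transport the homotopy $H$ through it. The paper's proof is a two-line sketch asserting this isomorphism without justification, so your expanded version, which spells out the Milnor homeomorphism and checks compatibility with the filtrations, is a faithful and more detailed rendering of the same argument.
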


\begin{proof}
Soit $H\colon \Delta^1\otimes\fil{X}\to\fil{Y}$ une homotopie entre $f$ et $g$. On a $\RealP{\Delta^1\otimes\fil{X}}\simeq\Delta^1\otimes\RealP{\fil{X}}$. En particulier, $\RealP{H}$ fournit une homotopie entre $\RealP{f}$ et $\RealP{g}$.
\end{proof}
\section{Groupes d'homotopie filtrés pour les espaces filtrés}
\label{SectionGroupesHomotopieFiltresTopP}
\subsection{Espaces de fonctions}

\begin{defin}
Soient $\fil{X}$ et $\fil{Y}$ deux espaces filtrés. On définit une topologie sur $\Hom_{\Top_P}(\fil{X},\fil{Y})$ via l'inclusion
\begin{equation*}
\Hom_{\Top_P}(\fil{X},\fil{Y})\subseteq C^0(X,Y),
\end{equation*}
et on note $C^0_P(\fil{X},\fil{Y})$ l'espace topologique correspondant (ou $C^0_P(X,Y)$ lorsqu'il n'y a pas d'ambiguïté possible sur les filtrations considérées). Ceci définit un foncteur
\begin{equation*}
C^0_P\colon \Top_P^{\op}\times\Top_P\to\Top
\end{equation*}
\end{defin}

\begin{remarque}
Ici, la topologie de $C^0(X,Y)$ est donné par la structure cartésienne sur la catégorie des espaces $\Delta$-engendrés. Voir la remarque \ref{RemarqueCategorieDeltaEngendre} et \cite{Dugger}.
\end{remarque}

\begin{defin}
Soient $A$ un espace topologique et $\fil{X}$ un espace filtré. On définit le produit $A\otimes \fil{X}$ comme l'espace filtré $A\times X\xrightarrow{\varphi_X\circ\pr_X} P$. Ceci définit un foncteur
\begin{equation*}
\Top\times\Top_P\to\Top_P
\end{equation*}
\end{defin}

La notation choisie pour ce foncteur est la même que celle du foncteur de la définition \ref{DefinTenseurTopPSimplicial}. La proposition suivante garantit que cela n'introduira pas de conflits dans les notations.
\begin{prop}
Soit $\fil{X}$ un simplexe filtré. La composition de foncteurs
\begin{equation*}
\sS\xrightarrow{\Real{-}}\Top\xrightarrow{-\otimes \fil{X}}\Top_P
\end{equation*}
est égale au foncteur $-\otimes\fil{X}\colon \sS\to \Top_P$. En particulier, pour tout ensemble simplicial $K$, on a $K\otimes\fil{X}=\Real{K}\otimes\fil{X}$.
\end{prop}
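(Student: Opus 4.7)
The plan is to verify this by direct unwinding of the two definitions of $-\otimes-$ and checking that they produce identical data (underlying space, product topology, and filtration map), then extending the equality from objects to morphisms by naturality.

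First I would fix an ensemble simplicial $K$ and an espace filtré $\fil{X}=(X,\varphi_X)$, and compute both sides of the claimed equality. By Definition \ref{DefinTenseurTopPSimplicial}, the bifoncteur $-\otimes-\colon \sS\times\Top_P\to\Top_P$ sends the pair $(K,\fil{X})$ to the espace filtré whose sous-jacent espace topologique is $\Real{K}\times X$ and whose filtration is $\varphi_X\circ\pr_X$. On the other hand, the second foncteur $-\otimes-\colon \Top\times\Top_P\to\Top_P$ applied to $(\Real{K},\fil{X})$ gives, by its very definition, the espace filtré $(\Real{K}\times X,\varphi_X\circ\pr_X)$. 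These are literally the same donnée, so the two objects coincide.

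Next I would check the equality on morphismes, which is immediate. A morphisme $f\colon K\to L$ of $\sS$ is sent by $-\otimes\fil{X}$ (in the sense of Definition \ref{DefinTenseurTopPSimplicial}) to $f\otimes\Id_X$, which on underlying spaces is $\Real{f}\times\Id_X$. The composition $\sS\xrightarrow{\Real{-}}\Top\xrightarrow{-\otimes\fil{X}}\Top_P$ sends the same $f$ first to $\Real{f}\colon\Real{K}\to\Real{L}$ and then to $\Real{f}\otimes\Id_X=\Real{f}\times\Id_X$ as a morphisme filtré. These two morphismes coincide, so the two foncteurs $\sS\to\Top_P$ are equal.

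The only potential subtlety, and the main thing to check, is that on the underlying categorie of espaces $\Delta$-engendrés the produit $\Real{K}\times X$ used in both definitions is really the same espace topologique — in particular with the same topology. Since we work within the categorie cartesienne des espaces $\Delta$-engendrés (see Remark \ref{RemarqueCategorieDeltaEngendre}), there is no ambiguïté: the produit categorique is well-defined and functorial. Therefore both occurrences of $\Real{K}\times X$ refer to the same objet, and no topological comparison argument is needed beyond invoking this conventions. This makes the proof essentially a tautologie; the value of the énoncé is to justify the coherence of the common notation $\otimes$ used throughout the chapter for both bifoncteurs.
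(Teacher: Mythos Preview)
Your proposal is correct and follows the same approach as the paper, which simply observes that by Definition~\ref{DefinTenseurTopPSimplicial} the object $K\otimes\fil{X}$ is literally defined as $\Real{K}\otimes\fil{X}$. Your version is more thorough in spelling out the morphism level and the $\Delta$-engendré topology convention, but the core argument is the same tautology.
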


\begin{proof}
Par définition (voir définition \ref{DefinTenseurTopPSimplicial}), $K\otimes\fil{X}$ est défini comme $\Real{K}\otimes \fil{X}$.
\end{proof}

\begin{prop}\label{AdjonctionTenseurTopologique}
Soit $\fil{X}$ un espace filtré. Le foncteur $-\otimes\fil{X}$ est un adjoint à gauche du foncteur $C^0_P(X,-)$. De plus, cette adjonction est simpliciale.
\end{prop}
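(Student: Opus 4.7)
The plan is to construct the adjunction by restricting the classical cartesian adjunction $-\times X \dashv C^0(X,-)$ available in the category of $\Delta$-engendré spaces (Remarque \ref{RemarqueCategorieDeltaEngendre}), and then to verify compatibility with the simplicial enrichments defined above.

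First I would exhibit the bijection on underlying sets. Given a continuous map $f \colon A \times X \to Y$ and its exponential transpose $\widetilde{f} \colon A \to C^0(X,Y)$, $a \mapsto f(a,-)$, the key observation is that the filtered condition on $f$ — namely $\varphi_Y \circ f = \varphi_X \circ \pr_X$ — is equivalent to the requirement that $\widetilde{f}(a) \colon X \to Y$ be a filtered map for every $a \in A$, which is precisely the condition that $\widetilde{f}$ factor through the subspace $C^0_P(\fil{X},\fil{Y}) \subseteq C^0(X,Y)$. This gives mutually inverse bijections
\begin{equation*}
\Hom_{\Top_P}(A \otimes \fil{X}, \fil{Y}) \;\longleftrightarrow\; \Hom_{\Top}(A, C^0_P(\fil{X},\fil{Y})),
\end{equation*}
once one checks that the transposed map $\widetilde{f} \colon A \to C^0_P(\fil{X},\fil{Y})$ is continuous: this follows from the continuity of $\widetilde{f} \colon A \to C^0(X,Y)$ combined with the fact that $C^0_P$ carries the subspace topology from $C^0(X,Y)$. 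Naturality in $A$ and $\fil{Y}$ is immediate from the classical cartesian adjunction.

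Second, I would upgrade this to a simplicial adjunction. It suffices to exhibit, for each $n \geq 0$, a natural bijection on $n$-simplices of the mapping simplicial sets. Unwinding the definitions,
\begin{equation*}
\Map_{\Top_P}(A \otimes \fil{X},\fil{Y})_n = \Hom_{\Top_P}(\Delta^n \otimes (A \otimes \fil{X}),\fil{Y}),
\end{equation*}
while
\begin{equation*}
\Map_{\Top}(A, C^0_P(\fil{X},\fil{Y}))_n = \Hom_{\Top}(\Real{\Delta^n} \times A, C^0_P(\fil{X},\fil{Y})).
\end{equation*}
The natural isomorphism $\Delta^n \otimes (A \otimes \fil{X}) \simeq (\Real{\Delta^n} \times A) \otimes \fil{X}$ in $\Top_P$ — which holds because both sides have underlying space $\Real{\Delta^n} \times A \times X$ filtered by $\varphi_X \circ \pr_X$ — reduces this to the set-level bijection established above. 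Naturality in $n$ (with respect to face and degeneracy maps of $\Delta$) is then automatic.

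The proof is essentially routine, the only subtle point being the verification that continuity and the filtered condition transpose correctly under the cartesian closed structure. The main obstacle, modest as it is, is precisely this continuity check: one must use that $\Top$ denotes the category of $\Delta$-engendré spaces, which is cartesian closed (as invoked in Remarque \ref{RemarqueCategorieDeltaEngendre}), so that the classical exponential adjunction is available and restricts cleanly to the filtered setting. No further work is required for the simplicial enrichment beyond invoking associativity of the tensor actions.
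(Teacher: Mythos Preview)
Your proposal is correct and follows essentially the same approach as the paper: both arguments restrict the classical cartesian adjunction by checking that the filtered condition $\varphi_Y \circ f = \varphi_X \circ \pr_X$ transposes exactly to the condition that the adjoint map lands in the subspace $C^0_P(\fil{X},\fil{Y})$, and both handle the simplicial enrichment via the associativity isomorphism $\Delta^n \otimes (A \otimes \fil{X}) \simeq (\Real{\Delta^n} \times A) \otimes \fil{X}$.
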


\begin{proof}
Soient $A$ un espace topologique et $\fil{Y}$ un espace filtré. On a les bijections suivantes
\begin{align*}
\Hom_{\Top_P}(A\otimes\fil{X},\fil{Y})&\to\Hom_{\Top}(A,C^0_P(X,Y))\\
f\colon A\times X\to Y&\mapsto\left(a\mapsto \left\{\begin{array}{ccc}
X&\to&Y\\
x&\mapsto & f(a,x)
\end{array}\right.\right)
\end{align*}
et
\begin{align*}
\Hom_{\Top}(A,C^0_P(X,Y))&\to \Hom_{\Top_P}(A\otimes\fil{X},\fil{Y})\\
g\colon A\to C^0_P(X,Y)&\mapsto \left\{\begin{array}{ccc}
A\times X&\to & Y\\
(a,x)&\mapsto & g(a)(x)
\end{array}\right.
\end{align*}
Pour montrer l'adjonction, il suffit de montrer que ces applications sont bien définies. En effet, elles seront inverses l'une de l'autre par construction.
Soit $f\colon A\otimes\fil{X}\to\fil{Y}$ une application filtrée. En particulier $f\colon A\times X\to Y$ est une application continue et l'application $a\mapsto f(a,-)\colon A\to C^0(X,Y)$ est continue. De plus, on a $\varphi_X(x)=\varphi_{A\otimes X}(a,x)=\varphi_Y(f(a,x))$. On en déduit que la première application est bien définie.
Soient $g\colon A\to C^0_P(X,Y)$ une application continue, et $(a,x)\in A\times X$. Alors, $\varphi_{A\otimes X}(a,x)=\varphi_X(x)=\varphi_Y(g(a)(x))$. On en déduit que la deuxième application est bien définie.
Montrons que l'adjonction est simpliciale. On calcule 
\begin{align*}
\Map_{\Top_P}(A\otimes\fil{X},\fil{Y})_n&\simeq \Hom_{\Top_P}(\Delta^n\otimes(A\otimes\fil{X}),\fil{Y})\\
&\simeq \Hom_{\Top_P}((\Real{\Delta^n}\times A)\otimes\fil{X},\fil{Y})\\
&\simeq \Hom_{\Top}(\Real{\Delta^n}\times A,C^0_P(X,Y))\\
&\simeq \Map_{\Top}(A,C^0_P(X,Y))_n
\end{align*}
Et toutes les bijections sont naturelles par rapport à $\Delta^n$.
\end{proof}
\subsection{Foncteurs diagrammes}

On rappelle la définition de la catégorie de diagrammes simpliciaux sur $P$ (définition \ref{DefCategorieDiagrammesSimpliciaux}), $\Diag_P=\Fun(\Delta(P)^{\op},\sS)$. Seulement pour cette sous-section, de façon à éviter toute ambiguïté, on notera $D^{\sS}$ pour désigner le foncteur de la définition \ref{DefFoncteurDiagrammeSimplicial}.

\begin{defin}
On définit la catégorie des diagrammes topologiqes sur $P$ comme
\begin{equation*}
\Diag^{\Top}_P=\Fun(\Delta(P)^{\op},\Top)
\end{equation*}
Et on note $\Sing\colon \Diag^{\Top}_P\to\Diag_P$ le foncteur
\begin{align*}
\Sing\colon \Diag^{\Top}&\to\Diag\\
\left(F\colon \Delta(P)^{\op}\to \Top\right)&\mapsto \left(\Sing\circ F\colon \Delta(P)^{\op}\to \sS\right)
\end{align*}
\end{defin}

\begin{defin}\label{DefinitionFoncteurDTopologique}
On définit les foncteurs
\begin{align*}
D\colon \Top_P &\to \Diag_P\\
\fil{X}&\mapsto (\Delta^{\varphi}\mapsto \Map_{\Top_P}(\RealP{\Delta^{\varphi}},\fil{X})
\end{align*}
et
\begin{align*}
D^{\Top}\colon \Top_P &\to \Diag^{\Top}_P\\
\fil{X}&\mapsto (\Delta^{\varphi}\mapsto C^0_P(\RealP{\Delta^{\varphi}},\fil{X})
\end{align*}
\end{defin}

\begin{prop}\label{DiagrammesTopologiquesIsomorphes}
Les foncteurs $D, D^{\sS}\circ \Sing_P$ et $\Sing\circ D^{\Top}$ sont isomorphes.
\end{prop}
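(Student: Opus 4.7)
La stratégie consiste à évaluer chacun des trois foncteurs sur un espace filtré fixé $\fil{X}$ et un simplexe filtré fixé $\Delta^{\varphi}\in\Delta(P)$, niveau par niveau, et à exhiber un isomorphisme canonique entre les trois ensembles simpliciaux obtenus en passant par un objet commun : l'ensemble $\Hom_{\Top_P}(\Delta^n\otimes\RealP{\Delta^{\varphi}},\fil{X})$, où le produit est celui de la définition \ref{DefinTenseurTopPSimplicial}.

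Premièrement, pour le foncteur $D$, on a directement par la définition \ref{DefinitionFoncteurDTopologique} et par la structure simpliciale sur $\Top_P$ (Proposition \ref{PropositionTopPCategorieSimpliciale}) que
\begin{equation*}
D(\fil{X})(\Delta^{\varphi})_n=\Map_{\Top_P}(\RealP{\Delta^{\varphi}},\fil{X})_n=\Hom_{\Top_P}(\Delta^n\otimes\RealP{\Delta^{\varphi}},\fil{X}).
\end{equation*}

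Deuxièmement, pour $D^{\sS}\circ \Sing_P$, on évalue
\begin{equation*}
(D^{\sS}\circ\Sing_P)(\fil{X})(\Delta^{\varphi})_n=\Hom_{\sS_P}(\Delta^n\otimes\Delta^{\varphi},\Sing_P\fil{X}),
\end{equation*}
et l'adjonction simpliciale $(\RealP{-},\Sing_P)$ de la proposition \ref{AdjonctionSimplicialeSingReal} permet d'identifier ceci à $\Hom_{\Top_P}(\RealP{\Delta^n\otimes\Delta^{\varphi}},\fil{X})$. Il reste alors à observer l'isomorphisme naturel $\RealP{\Delta^n\otimes\Delta^{\varphi}}\simeq \Delta^n\otimes\RealP{\Delta^{\varphi}}$ dans $\Top_P$, qui découle du fait que le foncteur $F$ fournit l'isomorphisme $\Real{F(\Delta^n)\times_{N(P)}\Delta^{\varphi}}\simeq \Real{\Delta^n}\times\RealP{\Delta^{\varphi}}$ comme espaces (c'est une instance du résultat de Milnor utilisé dans la preuve de la proposition \ref{AdjonctionSimplicialeSingReal}) et que la filtration passe par la projection sur $\RealP{\Delta^{\varphi}}$ des deux côtés.

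Troisièmement, pour $\Sing\circ D^{\Top}$, on calcule
\begin{equation*}
(\Sing\circ D^{\Top})(\fil{X})(\Delta^{\varphi})_n=\Hom_{\Top}(\Real{\Delta^n},C^0_P(\RealP{\Delta^{\varphi}},\fil{X})),
\end{equation*}
et l'adjonction $(-\otimes\RealP{\Delta^{\varphi}},C^0_P(\RealP{\Delta^{\varphi}},-))$ établie à la proposition \ref{AdjonctionTenseurTopologique} fournit une bijection naturelle de cet ensemble vers $\Hom_{\Top_P}(\Real{\Delta^n}\otimes\RealP{\Delta^{\varphi}},\fil{X})$, qui est exactement $\Hom_{\Top_P}(\Delta^n\otimes\RealP{\Delta^{\varphi}},\fil{X})$ par la compatibilité entre les deux usages de la notation $\otimes$.

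Il reste à vérifier que les trois bijections ainsi construites sont naturelles en $\fil{X}\in\Top_P$, en $\Delta^{\varphi}\in\Delta(P)$, et en $[n]\in\Delta$, ce qui est routinier : chacune est obtenue par une adjonction naturelle (ou par l'identité). Aucune étape n'est véritablement délicate, l'unique point requérant un peu d'attention étant le contrôle des filtrations dans l'isomorphisme $\RealP{\Delta^n\otimes\Delta^{\varphi}}\simeq \Delta^n\otimes\RealP{\Delta^{\varphi}}$, mais celui-ci est immédiat puisque les deux filtrations sont définies par la même composition avec la projection sur $\RealP{\Delta^{\varphi}}$ suivie de $\varphi_P\circ\Real{\varphi}$.
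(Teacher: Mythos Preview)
Your proof is correct and follows essentially the same approach as the paper: evaluate each functor level by level on a fixed $\fil{X}$ and $\Delta^{\varphi}$, and identify the three resulting simplicial sets via the adjunctions of Propositions~\ref{AdjonctionSimplicialeSingReal} and~\ref{AdjonctionTenseurTopologique}. The only cosmetic difference is that for the isomorphism $D\simeq D^{\sS}\circ\Sing_P$, the paper invokes the simplicial adjunction directly at the level of $\Map$ (one line), whereas you unpack it dimension by dimension and make the isomorphism $\RealP{\Delta^n\otimes\Delta^{\varphi}}\simeq \Delta^n\otimes\RealP{\Delta^{\varphi}}$ explicit; this is exactly the content of the proof of Proposition~\ref{AdjonctionSimplicialeSingReal}, so the two arguments coincide.
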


\begin{proof}
Soient $\fil{X}$ un espace filtré et $\Delta^{\varphi}\in \Delta(P)$. Par la proposition \ref{AdjonctionSimplicialeSingReal}, on a les isomorphismes suivants naturels en $\fil{X}$ et en $\Delta^{\varphi}$.
\begin{align*}
D(X)(\Delta^{\varphi})&=\Map_{\Top_P}(\RealP{\Delta^{\varphi}},\fil{X})\\
&\simeq \Map_{\sS_P}(\Delta^{\varphi},\Sing_P(X))\\
&\simeq D^{\sS}(\Sing_P(X))(\Delta^{\varphi})
\end{align*}
D'autre part, pour $n\geq 0$, d'après la proposition \ref{AdjonctionTenseurTopologique}, on a les bijections suivantes, naturelles en $\fil{X}$, et en $\Delta^{\varphi}$.
\begin{align*}
(D(X)(\Delta^{\varphi}))_n&=\Map(\RealP{\Delta^{\varphi}},\fil{X})_n\\
&\simeq \Hom(\Delta^n\otimes\RealP{\Delta^{\varphi}},\fil{X})\\
&\simeq \Hom(\Real{\Delta^n}\otimes\RealP{\Delta^{\varphi}},\fil{X})\\
&\simeq \Hom(\Real{\Delta^n},C^0_P(\RealP{\Delta^{\varphi}},  \fil{X}))\\
&\simeq \Sing(C^0_P(\RealP{\Delta^{\varphi}},  \fil{X})_n\\
&\simeq \left(\Sing(D^{\Top}(X))(\Delta^{\varphi})\right)_n
\end{align*}
Comme les bijections respectent les applications faces et dégénérescences $\Delta^n\to \Delta^m$, on en déduit l'isomorphisme naturel en $X$, $D(X)\simeq \Sing\circ D^{\Top}(X)$.
\end{proof}

\begin{remarque}
En vertu de la proposition précédente, on écrira $D$ pour désigner indifféremment un des trois foncteurs $D, D^{\sS}\circ \Sing_P$ et $\Sing\circ D^{\Top}$.
\end{remarque}

\subsection{Pointages et composantes connexes}

\begin{defin}\label{DefinitionSPi0TopP}
Soit $\fil{X}$ un espace filtré. On définit son ensemble de composantes connexes filtrées comme l'ensemble simplicial filtré suivant 
\begin{align*}
s\pi_0\fil{X}\colon \Delta(P)^{\op}&\to \Set\\
\Delta^{\varphi}&\mapsto \pi_0(\Map(\RealP{\Delta^{\varphi}},\fil{X})
\end{align*}
\end{defin}

\begin{remarque}\label{DefinirSPi0DepuisDiagrammes}
De façon équivalente, on peut définir $s\pi_0$ comme la composée du foncteur $D\colon \Top_P\to\Diag_P$ avec le foncteur $\pi_0\colon \Fun(\Delta(P)^{\op},\sS)\to\Fun(\Delta(P)^{\op},\Set)$. En particulier, par la remarque \ref{DiagrammesTopologiquesIsomorphes}, on peut aussi calculer $s\pi_0\fil{X}(\Delta^{\varphi})$ comme $\pi_0(C^0_P(\RealP{\Delta^{\varphi}},\fil{X}))$ ou comme $\pi_0(\Map(\Delta^{\varphi},\Sing_P(X))$. En particulier, si $\Sing_P\fil{X}$ est un objet fibrant de $\sS_P$, on a $s\pi_0\fil{X}\simeq s\pi_0(\Sing_P(\fil{X})$ (voir définition \ref{DefinSPi0Simplicial}). On verra dans la section \ref{SubsectionObjetsFibrantsNaifTopP} un critère pour assurer que $\Sing_P\fil{X}$ est fibrant.
\end{remarque}

\begin{defin}\label{DefinitionPointageEspaceFiltre}
Soit $\fil{X}$ un espace filtré. Un pointage de $ \fil{X}$ est la donnée d'un sous ensemble simplicial filtré $\fil{V}\subseteq (N(P),\Id)$ et d'une application filtrée
\begin{equation*}
\phi\colon \RealP{V}\to\fil{X}
\end{equation*}
\end{defin}

\begin{remarque}
Soit $\fil{X}$ un espace filtré et $\phi\colon \RealP{V}\to\fil{X}$ un pointage de $X$. De même que dans le cas simplicial (voir remarque \ref{PointageInduitSimplicial}), si $\fil{V'}\subseteq\fil{V}$ est un sous ensemble simplicial filtré, on a un pointage induit
\begin{equation*}
\phi_{|V'}\colon \RealP{V'}\to\fil{X}
\end{equation*}
\end{remarque}

\begin{remarque}
Soit $\fil{X}$ un espace filtré. Un simplexe de $s\pi_0\fil{X}$, $\sigma\colon \Delta^{\varphi}\to s\pi_0\fil{X}$ correspond à une classe d'homotopie filtrée d'applications $\RealP{\Delta^{\varphi}}\to \fil{X}$. En effet, en vertu de la remarque \ref{DefinirSPi0DepuisDiagrammes}, un tel simplexe est un élément de $\pi_0(C^0_P(\RealP{\Delta^{\varphi}},\fil{X}))$, autrement dit une classe d'homotopie d'applications filtrées. Réciproquement étant donné un pointage de la forme $\phi\colon \RealP{\Delta^{\varphi}}\to\fil{X}$, on obtient un simplexe de $s\pi_0\fil{X}$ en considérant sa classe d'homotopie filtrée 
\begin{equation*}
[\phi]\in \pi_0(C^0_P(\RealP{\Delta^{\varphi}},\fil{X}))
\end{equation*}
\end{remarque}

\begin{defin}
Soit $\fil{X}$ un espace filtré et $(\phi_i,\fil{V_i})_{i\in I}$ un ensemble de pointages de $\fil{X}$. On dit que l'ensemble de pointages $(\phi_i,\fil{V_i})_{i\in I}$ est complet, si pour tout simplexe non dégénéré $\sigma\colon \Delta^{\varphi}\to s\pi_0(\fil{X})$, il existe $i\in I$ tel que $\Delta^{\varphi}\subseteq V_i$ et $[(\phi_i)_{|\Delta^{\varphi}}]=\sigma$.
\end{defin}

\begin{prop}
Tout espace filtré admet un ensemble complet de pointages.
\end{prop}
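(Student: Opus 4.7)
The plan is to mimic, almost verbatim, the argument used in the simplicial setting (Proposition \ref{PointagesCompletsExistent}), replacing the interpretation of non-degenerate simplices of $s\pi_0$ via $\Hom_{\sS_P}(\Delta^{\varphi},\fil{X})$ by the topological reinterpretation provided by the remark immediately preceding the statement.

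More precisely, I would proceed as follows. Fix an espace filtré $\fil{X}$. For each simplexe non dégénéré $\sigma\colon \Delta^{\varphi}\to s\pi_0\fil{X}$, invoke the identification
\begin{equation*}
s\pi_0\fil{X}(\Delta^{\varphi})=\pi_0\bigl(C^0_P(\RealP{\Delta^{\varphi}},\fil{X})\bigr),
\end{equation*}
which follows from Définition \ref{DefinitionSPi0TopP} together with the isomorphism of foncteurs $D\simeq \Sing\circ D^{\Top}$ established in Proposition \ref{DiagrammesTopologiquesIsomorphes}. Under this identification, $\sigma$ corresponds to a classe d'homotopie filtrée d'applications $\RealP{\Delta^{\varphi}}\to \fil{X}$; choose any représentant $\phi_{\sigma}\colon \RealP{\Delta^{\varphi}}\to\fil{X}$. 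Setting $\fil{V_{\sigma}}=\Delta^{\varphi}$, the pair $(\phi_{\sigma},\fil{V_{\sigma}})$ is by construction a pointage of $\fil{X}$ in the sense of Définition \ref{DefinitionPointageEspaceFiltre}.

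I would then exhibit the family
\begin{equation*}
\bigl(\phi_{\sigma},\fil{V_{\sigma}}\bigr)_{\sigma\in s\pi_0\fil{X}_{\nd}}
\end{equation*}
as the desired ensemble complet de pointages. Completeness is immediate: given any $\sigma\colon\Delta^{\varphi}\to s\pi_0\fil{X}$ non dégénéré, the index $\sigma$ itself satisfies $\Delta^{\varphi}\subseteq V_{\sigma}=\Delta^{\varphi}$ and, by construction of $\phi_{\sigma}$ as a lift of $\sigma$, one has $[(\phi_{\sigma})_{|\Delta^{\varphi}}]=[\phi_{\sigma}]=\sigma$.

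There is no real obstacle in this argument; it is a direct translation of Proposition \ref{PointagesCompletsExistent}. The only point that deserves a sentence of justification is the bijection between simplexes non dégénérés of $s\pi_0\fil{X}$ above $\Delta^{\varphi}$ and classes of filtered maps $\RealP{\Delta^{\varphi}}\to\fil{X}$, which is exactly what the remark recorded just before the proposition provides. (As in the simplicial case, the family constructed is highly redundant — one could restrict to simplexes $\sigma$ that are maximaux in $s\pi_0\fil{X}$ — but this is irrelevant for the mere existence statement we are asked to prove.)
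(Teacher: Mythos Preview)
Your proposal is correct and follows essentially the same argument as the paper: for each non-degenerate simplex $\sigma$ of $s\pi_0\fil{X}$, choose a representative $\phi_{\sigma}\colon\RealP{\Delta^{\varphi_{\sigma}}}\to\fil{X}$ using the identification with $\pi_0(C^0_P(\RealP{\Delta^{\varphi}},\fil{X}))$, and observe that the resulting family is complete by construction.
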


\begin{proof}
On définit l'ensemble de pointages suivant $(\phi_{\sigma},\Delta^{\varphi_{\sigma}})_{\sigma\in s\pi_0(\fil{X})_{\nd}}$ comme suit. Pour tout simplexe non dégénéré $\sigma\colon \Delta^{\varphi_{\sigma}}\to s\pi_0(\fil{X})$, on choisit un pointage 
\begin{equation*}
\phi_{\sigma}\colon \RealP{\Delta^{\varphi_{\sigma}}}\to\fil{X} 
\end{equation*}
tel que $[\phi_{\sigma}]=\sigma$. Par la remarque précédente, un tel pointage existe toujours. Alors, l'ensemble obtenu est un ensemble de pointages complet.
\end{proof}

\begin{remarque}\label{PointageCompletTopSimplicial}
Soit $\fil{X}$ un espace filtré tel que $\Sing_P(X)$ est un objet fibrant de $\sS_P$. Alors, tout pointage de $\fil{X}$ correspond à un unique pointage de $\Sing_P(X)$ par adjonction, et un ensemble complet de pointages de $\fil{X}$ correspond à un ensemble complet de pointages de $\Sing_P(X)$.
\end{remarque}

\subsection{Groupes d'homotopie filtrés supérieurs}

\begin{defin}\label{DefinitionGroupesHomotopieFiltresTopP}
Soient $\fil{X}$ un espace filtré, $n\geq 1$ un entier et $\phi\colon \RealP{V}\to\fil{X}$ un pointage de $\fil{X}$. On définit le $n$-ième groupe d'homotopie filtré de $\fil{X}$ comme l'ensemble simplicial filtré suivant
\begin{align*}
s\pi_n(\fil{X},\phi)\colon \Delta(P)^{\op}&\to \Set\\
\Delta^{\varphi}&\mapsto \left\{\begin{array}{cl}
\pi_n(\Map(\RealP{\Delta^{\varphi}},\fil{X}),\phi_{|\Delta^{\varphi}}) & \text{ si $\Delta^{\varphi}\subseteq \fil{V}$}\\
\emptyset & \text{ si $\Delta^{\varphi}\not\subseteq \fil{V}$}
\end{array}\right.
\end{align*}
\end{defin}

\begin{remarque}
Encore une fois, quitte à modifier les catégories impliquées pour tenir compte des notions de pointage, on aurait pu définir $s\pi_n$ comme la composée du foncteur $D\colon \Top_P\to\Diag_P$ avec le foncteur $\pi_n$. 
En particulier, en utilisant la proposition \ref{DiagrammesTopologiquesIsomorphes}, on peut calculer $s\pi_n(\fil{X},\phi)(\Delta^{\varphi})$ 
comme $\pi_n(\Map(\Delta^{\varphi},\Sing_P(X)),\phi_{|\Delta^{\varphi}})$ 
ou comme $\pi_n(C^0_P\RealP{\Delta^{\varphi}},\fil{X}),\phi_{|\Delta^{\varphi}})$.
\end{remarque}

\begin{prop}\label{IsomorphismeGroupesHomotopieSingP}
Soient $\fil{X}$ un espace filtré et $\phi\colon \RealP{V}\to\fil{X}$ un pointage de $\fil{X}$. Si $\Sing_P(X)$ est un objet fibrant de $\sS_P$, on a $s\pi_n(\fil{X},\phi)\simeq s\pi_n(\Sing_P(X),\widehat{\phi})$, où $\widehat{\phi}$ est obtenu à partir de $\phi$ par adjonction.
\end{prop}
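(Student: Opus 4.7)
The plan is to reduce the statement to the simplicial adjunction of Proposition \ref{AdjonctionSimplicialeSingReal}, which provides, for every simplex $\Delta^{\varphi} \in \Delta(P)$, a natural isomorphism of simplicial sets
\begin{equation*}
\Map_{\Top_P}(\RealP{\Delta^{\varphi}},\fil{X}) \simeq \Map_{\sS_P}(\Delta^{\varphi},\Sing_P(X)).
\end{equation*}
First I would verify that under this isomorphism the pointing $\phi_{|\Delta^{\varphi}}$, viewed as a $0$-simplex on the left, corresponds exactly to $\widehat{\phi}_{|\Delta^{\varphi}}$ on the right; this is the defining property of the adjoint transpose $\widehat{\phi}$, applied to the sub-pointing obtained by restriction to $\Delta^{\varphi} \subseteq \fil{V}$.

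Next I would apply $\pi_n$ with these base points to both sides of the adjunction isomorphism. Since $\Sing_P(X)$ is fibrant by hypothesis, the mapping complex $\Map_{\sS_P}(\Delta^{\varphi},\Sing_P(X))$ is a Kan complex (because $\sS_P$ is a simplicial model category by Theorem \ref{CategorieModelSimpliciale} and $\Delta^{\varphi}$ is cofibrant), so its homotopy groups are well-defined and $s\pi_n(\Sing_P(X),\widehat{\phi})$ makes sense in the terms of Definition \ref{DefinitionGroupesHomotopieFiltree}. We then obtain the level-wise bijection
\begin{equation*}
s\pi_n(\fil{X},\phi)(\Delta^{\varphi}) = \pi_n\bigl(\Map_{\Top_P}(\RealP{\Delta^{\varphi}},\fil{X}),\phi_{|\Delta^{\varphi}}\bigr) \simeq \pi_n\bigl(\Map_{\sS_P}(\Delta^{\varphi},\Sing_P(X)),\widehat{\phi}_{|\Delta^{\varphi}}\bigr) = s\pi_n(\Sing_P(X),\widehat{\phi})(\Delta^{\varphi}),
\end{equation*}
for $\Delta^{\varphi} \subseteq \fil{V}$, while both sides are empty by convention for $\Delta^{\varphi} \not\subseteq \fil{V}$.

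Finally, to upgrade this family of bijections into an isomorphism of functors $\Delta(P)^{\op} \to \Set$, I would observe that the adjunction bijection is natural in $\Delta^{\varphi}$ with respect to the face and degeneracy maps inside $\Delta(P)$, and that $\pi_n$ is functorial in pointed Kan complexes. These two naturalities compose to give naturality of the isomorphism in $\Delta^{\varphi}$, which is precisely what is required for an isomorphism in $\sS_P$ (viewed through the equivalence of Proposition \ref{CategoriePrefaisceaux}).

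No serious obstacle is anticipated: the result is essentially a formal corollary of the simplicial adjunction. The only point requiring care is the bookkeeping of base points through the adjunction — one must check that restriction of the pointing commutes with the adjoint correspondence $\phi \leftrightarrow \widehat{\phi}$, but this is immediate because the adjunction bijection $\Hom_{\Top_P}(\RealP{-},\fil{X}) \simeq \Hom_{\sS_P}(-,\Sing_P(X))$ of Proposition \ref{AdjonctionSingReal} is itself natural in the first variable.
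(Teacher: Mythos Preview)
Your proposal is correct and follows essentially the same approach as the paper: both arguments apply $\pi_n$ to the natural isomorphism of mapping complexes $\Map_{\Top_P}(\RealP{\Delta^{\varphi}},\fil{X}) \simeq \Map_{\sS_P}(\Delta^{\varphi},\Sing_P(X))$ coming from the simplicial adjunction of Proposition~\ref{AdjonctionSimplicialeSingReal}, and then invoke naturality in $\Delta^{\varphi}$ to conclude. Your version is simply more explicit about the bookkeeping of base points and the role of fibrancy.
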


\begin{proof}
On a d'une part
\begin{align*}
s\pi_n(\fil{X},\phi)(\Delta^{\varphi})&\simeq \pi_n(\Map(\RealP{\Delta^{\varphi}},\fil{X}),\phi_{|\Delta^{\varphi}})\\
&\simeq \pi_n(\Map(\Delta^{\varphi},\Sing_P(X)),\widehat{\phi}_{|\Delta^{\varphi}}).
\end{align*}
Et d'autre part, si $\Sing_P(X)$ est fibrant, on a 
\begin{equation*}
s\pi_n(\Sing_P(X),\widehat{\phi})(\Delta^{\varphi})=\pi_n(\Map(\Delta^{\varphi},\Sing_P(X)),\widehat{\phi}_{|\Delta^{\varphi}}).
\end{equation*}
Comme toutes ces bijections sont naturelles, on obtient l'isomorphisme voulu.
\end{proof}

\begin{remarque}\label{RemarqueSingPPasFibrant}
Si $\Sing_P(X)$ n'est pas fibrant, ses groupes d'homotopies filtrés ne sont (a priori) pas définis, et l'affirmation précédente n'a plus de sens. La raison pour laquelle les groupes d'homotopie filtrés peuvent être définis pour tout espace filtré provient de la proposition \ref{DiagrammesTopologiquesIsomorphes}. En effet, pour tout espace filtré $\fil{X}$ et pour tout simplexe filtré $\Delta^{\varphi}\in \Delta(P)$, on a l'isomorphisme d'ensembles simpliciaux
\begin{equation*}
\Map(\RealP{\Delta^{\varphi}},\fil{X})\simeq\Map(\Delta^{\varphi},\fil{X})\simeq \Sing(C^P_0(\RealP{\Delta^{\varphi}},\fil{X}))
\end{equation*}
en particulier, ce sont tous des complexes de Kan, car ils sont isomorphes à l'image d'un espace topologique par $\Sing$. On peut donc calculer leur groupes d'homotopies sans avoir besoin de calculer un remplacement fibrant.
\end{remarque}

Comme dans le cas des ensembles simpliciaux filtrés, les groupes d'homotopies filtrés sont des invariants du type d'homotopie filtré. Cette affirmation est rendue précise par les propositions suivantes  (voir les preuves des propositions \ref{HomotopiePointagesGroupesHomotopie} et \ref{HomotopieMorphismesGroupesHomotopie} pour les preuves dans le cas simplicial).

\begin{prop}\label{PointagesHomotopesIsomorphismes}
Soient $\phi,\psi\colon \RealP{V}\to\fil{X}$ deux pointages d'un espace filtré. Si $\phi$ et $\psi$ sont homotopes par une homotopie filtrée $H$, il existe un isomorphisme naturel
\begin{equation*}
s\pi_n(H)\colon s\pi_n(\fil{X},\phi)\to s\pi_n(\fil{X},\psi)
\end{equation*}
pour tout $n\geq 1$.
De plus, si $f\colon\fil{X}\to\fil{Y}$ est une application filtrée, les homotopies $H$ et $f\circ H$ induisent le diagramme commutatif suivant :
\begin{equation*}
\begin{tikzcd}
s\pi_n(\fil{X},\phi)
\arrow{d}{s\pi_n(H)}
\arrow{r}{s\pi_n(f)}
&s\pi_n(\fil{Y},f\circ\phi)
\arrow{d}{s\pi_n(f\circ H)}
\\
s\pi_n(\fil{X},\psi)
\arrow{r}{s\pi_n(f)}
&s\pi_n(\fil{Y},f\circ\psi)
\end{tikzcd}
\end{equation*}
\end{prop}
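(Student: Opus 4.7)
The plan is to adapt the proof of Proposition~\ref{HomotopiePointagesGroupesHomotopie} to the topological setting. Fix a non-degenerate simplex $\Delta^{\varphi} \subseteq \fil{V}$, so that $H$ restricts to a filtered map $H_\varphi \colon \Delta^1 \otimes \RealP{\Delta^{\varphi}} \to \fil{X}$. The two inclusions $\iota_0, \iota_1 \colon \RealP{\Delta^{\varphi}} \to \Delta^1 \otimes \RealP{\Delta^{\varphi}}$ induce by precomposition a zigzag of pointed simplicial sets
\begin{equation*}
(\Map(\RealP{\Delta^{\varphi}}, \fil{X}), \phi_{|\Delta^{\varphi}}) \xleftarrow{\iota_0^*} (\Map(\Delta^1 \otimes \RealP{\Delta^{\varphi}}, \fil{X}), H_\varphi) \xrightarrow{\iota_1^*} (\Map(\RealP{\Delta^{\varphi}}, \fil{X}), \psi_{|\Delta^{\varphi}}),
\end{equation*}
and my goal is to show that each $\iota_\epsilon^*$ is a homotopy equivalence so that
\begin{equation*}
s\pi_n(H)_{\Delta^{\varphi}} := \pi_n(\iota_1^*) \circ \pi_n(\iota_0^*)^{-1}
\end{equation*}
is a well-defined isomorphism.

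Unlike in the simplicial case, I cannot directly invoke the simplicial model structure on $\sS_P$, since $\Sing_P(\fil{X})$ need not be fibrant (cf.\ Remark~\ref{RemarqueSingPPasFibrant}). The key observation is instead that the projection $\pr \colon \Delta^1 \otimes \RealP{\Delta^{\varphi}} \to \RealP{\Delta^{\varphi}}$ is a \emph{filtered} homotopy inverse of $\iota_\epsilon$: the equality $\pr \circ \iota_\epsilon = \Id$ is immediate, and the straight-line interpolation
\begin{equation*}
\Delta^1 \otimes (\Delta^1 \otimes \RealP{\Delta^{\varphi}}) \to \Delta^1 \otimes \RealP{\Delta^{\varphi}}, \qquad (s,(t,x)) \mapsto \bigl((1-s)\epsilon + st,\, x\bigr)
\end{equation*}
is a filtered homotopy from $\iota_\epsilon \circ \pr$ to the identity, its filtered nature being automatic because the filtration on $\Delta^1 \otimes \RealP{\Delta^{\varphi}}$ ignores the $\Delta^1$ factor.

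Applying the contravariant functor $C^0_P(-, \fil{X})$ and using the adjunction of Proposition~\ref{AdjonctionTenseurTopologique} to convert the filtered homotopy above into an ordinary homotopy on $C^0_P$-spaces, the maps $\iota_\epsilon^*$ and $\pr^*$ become mutually inverse homotopy equivalences of topological spaces. Composing with $\Sing$ and invoking Proposition~\ref{DiagrammesTopologiquesIsomorphes} to identify $\Map(-,\fil{X})$ with $\Sing \circ C^0_P(-, \fil{X})$, one obtains that $\iota_\epsilon^*$ is a simplicial homotopy equivalence between Kan complexes, hence induces bijections on all $\pi_n$. This produces the desired isomorphism $s\pi_n(H)_{\Delta^{\varphi}}$, and naturality in $\Delta^{\varphi}$ is inherited from the functoriality of $\Map(-, \fil{X})$.

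For the second assertion, post-composition with $f$ commutes with precomposition by $\iota_\epsilon$, producing two commutative squares of pointed simplicial sets (one for $\iota_0$, one for $\iota_1$) whose image under $\pi_n$ assembles into the claimed commutative diagram. The main technical point of the argument is the filtered homotopy equivalence of $\iota_\epsilon$; modulo this observation, the remaining verifications are routine diagram chasing.
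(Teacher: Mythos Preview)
Your proof is correct and matches the paper's intended approach: the paper does not give a separate argument for this proposition but simply refers back to the simplicial case (Proposition~\ref{HomotopiePointagesGroupesHomotopie}), and you have supplied precisely the adaptation needed. Your observation that one must replace the simplicial-model-category argument (which required fibrancy of $\fil{X}$) by the explicit filtered homotopy inverse $\pr$ of $\iota_\epsilon$, combined with the fact that $\Map(\RealP{\Delta^{\varphi}},\fil{X})\simeq\Sing(C^0_P(\RealP{\Delta^{\varphi}},\fil{X}))$ is always a Kan complex, is exactly the point flagged in Remark~\ref{RemarqueSingPPasFibrant} and is the right way to make the translation go through.
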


\begin{prop}\label{ApplicationsHomotopesMorphismesEgaux}
Soient $f,g\colon \fil{X}\to\fil{Y}$ deux applications filtrées entre espaces filtrés, $\phi\colon\RealP{V}\to\fil{X}$ un pointage de $X$ et $H\colon \Delta^1\otimes\fil{X}\to \fil{Y}$ une homotopie filtrée entre $f$ et $g$. Alors, pour tout $n\geq 0$, on a le diagramme commutatif suivant 
\begin{equation*}
\begin{tikzcd}
s\pi_n(\fil{X},\phi)
\arrow{r}{s\pi_n(f)}
\arrow[swap]{dr}{s\pi_n(g)}
&s\pi_n(\fil{Y},f\circ\phi)
\arrow{d}{s\pi_n(H\circ \phi)}
\\
\phantom{X}
&s\pi_n(\fil{Y},g\circ\phi)
\end{tikzcd}
\end{equation*} 
En particulier, si $f\circ \phi=g\circ \phi$, et $H\circ (\Delta^1\otimes\phi)$ est égal à la composition $\Delta^1\otimes\RealP{V}\xrightarrow{\pr_{\RealP{V}}}\RealP{V}\xrightarrow{f\circ\phi}\fil{Y}$, alors $s\pi_n(f)=s\pi_n(g)$.
\end{prop}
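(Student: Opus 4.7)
The plan is to reduce to a levelwise statement on mapping spaces and then apply the classical basepoint-change fact for $\pi_n$ of a topological/simplicial space. Fix $n \geq 0$ and a non-degenerate simplex $\Delta^{\varphi} \subseteq \fil{V}$. Writing $\psi = \phi_{|\Delta^{\varphi}}$, the diagram to be checked, evaluated at $\Delta^{\varphi}$, becomes a triangle of pointed sets
\begin{equation*}
\begin{tikzcd}
\pi_n(\Map(\RealP{\Delta^{\varphi}},\fil{X}),\psi)
\arrow{r}{(f_*)_*}
\arrow[swap]{dr}{(g_*)_*}
& \pi_n(\Map(\RealP{\Delta^{\varphi}},\fil{Y}),f\circ\psi)
\arrow{d}{\alpha}
\\
& \pi_n(\Map(\RealP{\Delta^{\varphi}},\fil{Y}),g\circ\psi)
\end{tikzcd}
\end{equation*}
where $f_*, g_*$ are post-composition by $f$ and $g$ and $\alpha$ is the basepoint-change isomorphism along the 1-simplex in $\Map(\RealP{\Delta^{\varphi}},\fil{Y})$ corresponding (by the adjunction of Proposition \ref{AdjointFoncteurTenseurTop}) to $H\circ(\Id\otimes\psi)\colon \Delta^1\otimes\RealP{\Delta^{\varphi}}\to \fil{Y}$. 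Because the construction $\Delta^{\varphi}\mapsto s\pi_n(-)(\Delta^{\varphi})$ is natural and factors through $D\colon \Top_P\to \Diag_P$, commutativity of this triangle for every $\Delta^{\varphi}\subseteq V$ implies commutativity of the full diagram of functors.

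The bridge between the global homotopy $H$ and the local triangle is obtained from the simplicial structure of $\Top_P$ (Proposition \ref{PropositionTopPCategorieSimpliciale}). Indeed, post-composition with $H$ gives a morphism of simplicial sets
\begin{equation*}
H_\sharp\colon \Map(\RealP{\Delta^{\varphi}},\Delta^1\otimes\fil{X})\longrightarrow \Map(\RealP{\Delta^{\varphi}},\fil{Y}),
\end{equation*}
and composing with the canonical map $\Delta^1\otimes \Map(\RealP{\Delta^{\varphi}},\fil{X})\to \Map(\RealP{\Delta^{\varphi}},\Delta^1\otimes\fil{X})$ supplied by the simplicial structure produces a simplicial homotopy
\begin{equation*}
\mathcal{H}\colon \Delta^1\otimes \Map(\RealP{\Delta^{\varphi}},\fil{X})\longrightarrow \Map(\RealP{\Delta^{\varphi}},\fil{Y})
\end{equation*}
whose restrictions at $\{0\}$ and $\{1\}$ are $f_*$ and $g_*$, respectively, and whose restriction to the vertex $\psi$ is exactly the path $H\circ(\Id\otimes\psi)$ that defines $\alpha$.

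With $\mathcal{H}$ in hand, the commutativity of the triangle is the classical statement that for a homotopy of pointed maps of Kan complexes in which both the source and target basepoints may move, the induced maps on $\pi_n$ are intertwined by the basepoint-change isomorphism along the tracked path; since $\Map(\RealP{\Delta^{\varphi}},-)$ always lands in Kan complexes (Remark \ref{RemarqueSingPPasFibrant}), this applies directly. The second assertion of the proposition is then immediate: if $f\circ\phi = g\circ\phi$ and $H\circ(\Delta^1\otimes\phi)=f\circ\phi\circ \pr_{\RealP{V}}$, then the path $H\circ(\Id\otimes\psi)$ is constant, so $\alpha$ is the identity and $s\pi_n(f)=s\pi_n(g)$.

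The routine but delicate point will be matching the isomorphism $\alpha$ with the one produced by Proposition \ref{PointagesHomotopesIsomorphismes} from the homotopy of pointings $H\circ \phi\colon \Delta^1\otimes \RealP{V}\to \fil{Y}$. This is the main thing to verify: both are defined by composing the inverse of the weak equivalence $\Map(\iota_0,-)$ with the weak equivalence $\Map(\iota_1,-)$ applied to the same 1-simplex in a suitable mapping space, so the identification is essentially tautological once one unwinds the simplicial adjunctions — but writing it out carefully, together with a verification that the constructions are compatible under $D$, is where the bulk of the bookkeeping lies.
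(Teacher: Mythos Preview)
Your proposal is correct and essentially matches the paper's intended approach. The paper does not actually write out a proof for this proposition: it is stated immediately after Proposition~\ref{PointagesHomotopesIsomorphismes} with only a parenthetical reference to the simplicial analogues (Propositions~\ref{HomotopiePointagesGroupesHomotopie} and~\ref{HomotopieMorphismesGroupesHomotopie}), and the simplicial version~\ref{HomotopieMorphismesGroupesHomotopie} is itself stated without proof. Your levelwise reduction to the classical basepoint-change fact, via the simplicial homotopy $\mathcal{H}$ built from the simplicial structure on $\Top_P$, is exactly the kind of argument the paper gestures at; it is the natural companion to the zig-zag through $\Map(\Delta^1\otimes\Delta^{\varphi},-)$ used in the proof of Proposition~\ref{HomotopiePointagesGroupesHomotopie}, and your closing remark about identifying $\alpha$ with the isomorphism of Proposition~\ref{PointagesHomotopesIsomorphismes} is precisely the bookkeeping one would need to make that explicit.
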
 

\begin{corollaire}\label{EquivalenceHomotopieIsoTop}
Soient $f\colon \fil{X}\to\fil{Y}$ une équivalence d'homotopie filtrée et $\phi\colon \RealP{V}\to\fil{X}$ un pointage de $\fil{X}$. Alors, on a un isomorphisme
\begin{equation*}
s\pi_0(f)\colon s\pi_0\fil{X}\to s\pi_0\fil{Y}
\end{equation*}
et pour tout $n\geq 1$, on a un isomorphisme
\begin{equation*}
s\pi_n(f)\colon s\pi_n(\fil{X},\phi)\to s\pi_n(\fil{Y},f\circ\phi)
\end{equation*}
\end{corollaire}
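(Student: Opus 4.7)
La stratégie est classique: on imite, dans le cadre filtré, la preuve du fait qu'une équivalence d'homotopie induit des isomorphismes sur les groupes d'homotopie. L'ingrédient principal est fourni par les propositions \ref{PointagesHomotopesIsomorphismes} et \ref{ApplicationsHomotopesMorphismesEgaux}, qui jouent ici le même rôle que dans la preuve de leurs analogues simpliciaux.

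Soit $g\colon \fil{Y}\to\fil{X}$ une équivalence d'homotopie filtrée inverse de $f$, et soient $H\colon \Delta^1\otimes\fil{X}\to\fil{X}$ une homotopie filtrée entre $g\circ f$ et $\Id_X$, et $G\colon \Delta^1\otimes\fil{Y}\to\fil{Y}$ une homotopie filtrée entre $f\circ g$ et $\Id_Y$. Pour $s\pi_0$, qui ne fait intervenir aucun pointage, la proposition \ref{ApplicationsHomotopesMorphismesEgaux} (avec $n=0$) donne $s\pi_0(g)\circ s\pi_0(f)=s\pi_0(g\circ f)=s\pi_0(\Id_X)=\Id$, et symétriquement $s\pi_0(f)\circ s\pi_0(g)=\Id$. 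Ainsi $s\pi_0(f)$ est un isomorphisme d'inverse $s\pi_0(g)$.

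Pour $n\geq 1$, la subtilité est que la composée $g\circ f$ induit $s\pi_n(\fil{X},\phi)\to s\pi_n(\fil{X},g\circ f\circ\phi)$, et non pas directement un endomorphisme de $s\pi_n(\fil{X},\phi)$. Cependant, en appliquant la proposition \ref{ApplicationsHomotopesMorphismesEgaux} à la paire $(g\circ f,\Id_X)$, à l'homotopie $H$ et au pointage $\phi$, on obtient un diagramme commutatif
\begin{equation*}
\begin{tikzcd}
s\pi_n(\fil{X},\phi)
\arrow{r}{s\pi_n(g\circ f)}
\arrow[swap]{dr}{s\pi_n(\Id_X)}
&s\pi_n(\fil{X},g\circ f\circ\phi)
\arrow{d}{s\pi_n(H\circ\phi)}
\\
&s\pi_n(\fil{X},\phi)
\end{tikzcd}
\end{equation*}
où, par la proposition \ref{PointagesHomotopesIsomorphismes}, $s\pi_n(H\circ\phi)$ est un isomorphisme (l'isomorphisme de changement de point-base associé à l'homotopie $H\circ\phi$ entre les pointages $g\circ f\circ\phi$ et $\phi$). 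Comme $s\pi_n(\Id_X)=\Id$, on en déduit que la composée
\begin{equation*}
s\pi_n(\fil{X},\phi)\xrightarrow{s\pi_n(f)}s\pi_n(\fil{Y},f\circ\phi)\xrightarrow{s\pi_n(g)}s\pi_n(\fil{X},g\circ f\circ\phi)\xrightarrow{s\pi_n(H\circ\phi)}s\pi_n(\fil{X},\phi)
\end{equation*}
est l'identité, ce qui montre que $s\pi_n(f)$ admet un inverse à gauche.

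Pour obtenir l'inverse à droite, on applique la même construction dans l'autre sens, à la paire $(f\circ g,\Id_Y)$, à l'homotopie $G$ et au pointage $f\circ\phi$ de $\fil{Y}$. On obtient alors que la composée
\begin{equation*}
s\pi_n(\fil{Y},f\circ\phi)\xrightarrow{s\pi_n(g)}s\pi_n(\fil{X},g\circ f\circ\phi)\xrightarrow{s\pi_n(f)}s\pi_n(\fil{Y},f\circ g\circ f\circ\phi)\xrightarrow{s\pi_n(G\circ f\circ\phi)}s\pi_n(\fil{Y},f\circ\phi)
\end{equation*}
est l'identité. Combiné au fait que $s\pi_n(f)$ admet un inverse à gauche, ceci entraine que $s\pi_n(f)$ est un isomorphisme. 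Aucune étape de la preuve ne présente de réelle difficulté — tout est déjà encapsulé dans les deux propositions précitées — il s'agit simplement d'assembler correctement les diagrammes en tenant compte du changement de pointage induit par $g\circ f$.
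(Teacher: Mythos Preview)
Your argument is correct and is exactly the standard deduction the paper has in mind: the corollary is stated without proof, as an immediate consequence of Propositions~\ref{PointagesHomotopesIsomorphismes} and~\ref{ApplicationsHomotopesMorphismesEgaux}, and your chain of isomorphisms is precisely how one unpacks it. The only point worth making explicit in your last step is that the two identities together show that $s\pi_n(g)\circ s\pi_n(f)$ is an isomorphism (equal to $s\pi_n(H\circ\phi)^{-1}$) and that $s\pi_n(g)$ is a split monomorphism; hence $s\pi_n(g)$ is an isomorphism, and therefore so is $s\pi_n(f)$.
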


\section{Théorèmes de Whitehead filtrés}
\label{SectionTheoremesWhitehead}
\subsection{Objets fibrants}
\label{SubsectionObjetsFibrantsNaifTopP}
On rappelle que si $\fil{X}$ est un espace filtré tel que $\Sing_P(X)$ est un objet fibrant de $\sS_P$, on a pour tout pointage $\phi$, $s\pi_n(\fil{X},\phi)\simeq s\pi_n(\Sing_P(X),\widehat{\phi})$ (voir Proposition \ref{IsomorphismeGroupesHomotopieSingP}). On souhaite donc identifier un critère permettant de garantir que l'ensemble simplicial filtré $\Sing_P(X)$ sera fibrant, afin d'appliquer le théorème \ref{EquivalenceFaibleIsoGroupeHomotopie} pour caractériser les équivalences d'homotopies filtrées entre espaces filtrés. Dans cette sous-section, on dira qu'un espace filtré $\fil{X}$ est fibrant si $\Sing_P\fil{X}$ est fibrant. Attention, cette notion ne correspond a priori pas à la définition d'une classe d'objets fibrants dans une catégorie modèle.

\begin{exemple}\label{ExempleEspaceNonFibrant}
Soit $P=\{p_0<p_1\}$ un poset linéaire à $2$ éléments. On considère le $2$-simplexe filtré $[p_0,p_0,p_1]=\Delta^{\varphi}$, et $\Lambda^{\varphi}_1$ le cornet obtenu à partir de $\Delta^{\varphi}$ en enlevant la face $d_1(\Delta^{\varphi})$ ainsi que le simplexe de dimension maximale. On observe que $\Lambda^{\varphi}_1\to\Delta^{\varphi}$ est une inclusion de cornet admissible. Par ailleurs, l'espace filtré $\RealP{\Lambda^{\varphi}_1}$ n'est pas fibrant. Plus particulièrement il n'existe pas de solution au problème de relèvement suivant :
\begin{equation*}
\begin{tikzcd}
\Lambda^{\varphi}_1
\arrow{r}{\widehat{\Id}}
\arrow{d}
&\Sing_P(\RealP{\Lambda^{\varphi}_1})
\\
\Delta^{\varphi}
\arrow[dashed]{ur}
\end{tikzcd}
\end{equation*}
où $\widehat{\Id}$ est obtenu par adjonction à partir de $\Id\colon \RealP{\Lambda^{\varphi}_1}\to\RealP{\Lambda^{\varphi}_1}$. En effet, par adjonction, ce problème de relèvement est équivalent au problème suivant :
\begin{equation*}
\begin{tikzcd}
\RealP{\Lambda^{\varphi}_1}
\arrow{r}{\Id}
\arrow{d}
&\RealP{\Lambda^{\varphi}_1}
\\
\RealP{\Delta^{\varphi}}
\arrow[dashed]{ur}{f}
\end{tikzcd}
\end{equation*}
Une solution à ce problème est une section de l'inclusion $\RealP{\Lambda^{\varphi}_1}\to\RealP{\Delta^{\varphi}}$, cependant, il n'existe pas de telle section filtrée. En effet, supposons qu'il existe une telle section $f\colon \RealP{\Delta^{\varphi}}\to\RealP{\Lambda^{\varphi}_1}$. Soit $U\subset \RealP{\Lambda^{\varphi}_1}$ un ouvert inclus dans l'intérieur de $\RealP{d_2(\Delta^{\varphi})}$. Alors, par continuité, $f^{-1}(U)$ est un ouvert de $\RealP{\Delta^{\varphi}}$, mais comme $f$ est filtré, on doit avoir $\varphi(f^{-1}(U))=\varphi(U)=\{p_0\}$. On en déduit que $f^{-1}(U)\subseteq \varphi^{-1}(\{p_0\})=\RealP{d_2(\Delta^{\varphi})}$ ce qui contredit le fait que $f^{-1}(U)$ est ouvert. Cependant, on verra comme une conséquence des résultats de cette section que $\RealP{\Delta^{\varphi}}$ est fibrant. La figure \ref{FigureCornetNonFibrant} illustre la situation.
\end{exemple}

\begin{figure}[h]
\centering
\begin{tikzpicture}[scale = 1.5]
\node (f) at (4,1) {$f$};
\node  (U) at (5.3,0.5) {$U$};
\draw[blue, thick,opacity=0.5](0,1)--(1,1);
\draw[red, thick](0,0)--(0,1.014);
\draw[->] (1.1,0.5)--(1.8,0.5);
\filldraw[blue,blue,opacity=0.5](2,1)--(3,1)--(2,0)--(2,1);
\draw[red,thick](2,0)--(2,1.005);
\draw[->, dashed] (3.1,0.5)--(4.8,0.5);
\draw[blue, thick,opacity=0.5](5,1)--(6,1);
\draw[red, thick](5,0)--(5,1.014);
\draw[black] (4.9,0.25)--(5.1,0.25);
\draw[black] (4.9,0.75)--(5.1,0.75);

\end{tikzpicture}
\caption{L'inclusion de cornet $\RealP{\Lambda^{\varphi}_1}\to \RealP{\Delta^{\varphi}}=\RealP{[p_0,p_0,p_1]}$, ainsi que la "section" $f$.}
\label{FigureCornetNonFibrant}
\end{figure}

\begin{defin}\label{DefinitionCone}
Soit $(L,\varphi_L\colon L\to Q)$ un espace filtré sur le poset $Q$. On note $c(Q)$ le poset obtenu en ajoutant à $Q$ un élément minimal $-\infty$, et on définit le cône de $\fil{L}$ comme l'espace filtré sur le poset $c(Q)$ obtenu comme
\begin{equation*}
c(L)=L\times [0,1[/(L\times\{0\})
\end{equation*}
où la filtration est donnée par
\begin{align*}
\varphi_{c(L)}\colon c(L)&\to c(Q)\\
(x,s)&\mapsto \left\{\begin{array}{cl}
\varphi_L(x)& \text{ si $s\not =0$}\\
-\infty &\text{ si $s=0$}
\end{array}\right.
\end{align*}
\end{defin} 

\begin{defin}[{\cite[Definition A.4.10]{HigherAlgebra}}]\label{DefinitionConiquementStratifie}
Soient $\fil{X}$ un espace filtré, et $x\in X$ un point de $X$. Notons $\varphi_X(x)=p$ et $P_{>p}\subset P$ pour le sous ensemble ordonné contenant tous les éléments de $P$ strictement supérieurs à $P$. On dit que $\fil{X}$ est coniquement stratifié en $x$ s'il existe 
\begin{itemize}
\item un espace filtré $(L,\varphi_L\colon L\to P_{>p})$,
\item un espace topologique $V$ 
\item un voisinage de $x$, $\fil{U}$ 
\item un homéomorphisme filtré $\fil{U}\simeq V\otimes c(\fil{L})$.
\end{itemize} 
Ici, on voit $c(\fil{L})$ comme un espace filtré sur $P$ via l'inclusion $c(P_{>p})\to P$ envoyant $-\infty$ sur $p$.
L'espace filtré $\fil{X}$ est coniquement stratifié s'il est coniquement stratifié en tout point.
\end{defin}

\begin{exemple}
Les diverses notions de pseudo-variétés considérées au chapitre \ref{ChapterEspacesStratifies} fournissent des exemples d'espaces coniquement stratifiés.
\end{exemple}

L'objet de cette sous-section est de prouver le résultat suivant. 

\begin{prop}\label{ConiquementStratifieImpliqueFibrant}
Soit $\fil{X}$ un espace filtré coniquement stratifié. Alors, $\Sing_P(X)$ est un objet fibrant de $\sS_P$.
\end{prop}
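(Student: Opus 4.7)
The plan is to translate fibrancy of $\Sing_P(\fil{X})$ via the adjunction from Proposition \ref{AdjonctionSingReal} into an extension problem in $\Top_P$, and then solve that problem using the conical local structure of $\fil{X}$. By adjunction, lifting an admissible horn inclusion $\Lambda^\varphi_k \hookrightarrow \Delta^\varphi$ against $\Sing_P(\fil{X}) \to N(P)$ amounts to extending a filtered map $f\colon \RealP{\Lambda^\varphi_k} \to \fil{X}$ to the full $\RealP{\Delta^\varphi}$. By admissibility (Proposition \ref{PropCornetAdmissible}) we may assume, up to symmetry, that $\varphi(e_k) = \varphi(e_{k+1}) = p$ for some $p \in P$; geometrically, the edge $[e_k, e_{k+1}]$ is entirely contained in the stratum $\varphi_P^{-1}(p)$ of $\Real{\Delta^\varphi}$, and in particular $f(e_k) \in \varphi_X^{-1}(p)$.

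The proof proceeds by induction on $N = \dim \Delta^\varphi$, with the base cases handled directly. For the inductive step, choose a conical neighborhood $\fil{U} \simeq W \otimes c(\fil{L})$ of $f(e_k)$, as provided by Definition \ref{DefinitionConiquementStratifie}, with $\fil{L}$ filtered over $P_{>p}$. By compactness of $\RealP{\Lambda^\varphi_k}$ and a barycentric subdivision argument, one first performs a preliminary filtered homotopy of $f$, fixed on the complement of a neighborhood of $e_k$ in the horn, so that an open star of $e_k$ in $\RealP{\Lambda^\varphi_k}$ is sent into $\fil{U}$. Once this is achieved, the problem decouples: outside the star of $e_k$ one uses the inductive hypothesis applied to the lower-dimensional admissible horns obtained by restriction to the other faces, and inside $\fil{U} \simeq W \otimes c(\fil{L})$ one constructs the extension explicitly by exploiting the cone parameter. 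Concretely, a point in the interior of $\Real{\Delta^\varphi}$ is sent to $(w, (\ell, t))$ where $w \in W$ and $(\ell, t) \in c(\fil{L})$ are determined by the barycentric coordinate along $e_k$; the fact that $\varphi(e_k) = \varphi(e_{k+1})$ ensures that this assignment respects the filtration, since the cone parameter $t$ vanishes precisely on the stratum $\varphi_P^{-1}(p)$ containing $[e_k, e_{k+1}]$.

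The main obstacle will be the straightening step, which is the technical heart of the argument. A filtered homotopy must preserve the filtration at every time, so the deformation carrying the image of $f$ into the conical neighborhood cannot be constructed by a naive contraction: it must be compatible with the stratification throughout. This requires covering $\RealP{\Lambda^\varphi_k}$ by pullbacks of conical neighborhoods from a suitable locally finite cover of $\fil{X}$, and gluing the local deformations together by a filtered partition-of-unity argument that uses the cone coordinates to remain in the correct strata. This is exactly the content of Lurie's Theorem A.6.4 in \cite{HigherAlgebra}, cited by the author in Remark \ref{RemarqueConiquementStratifieLocalementConique}, and the proof should follow that strategy essentially verbatim once the equivalence between the two notions of conical stratification is made explicit.
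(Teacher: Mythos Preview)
Your approach and the paper's diverge sharply, and it is worth seeing why the paper's is more efficient.

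You propose to establish the horn-filling directly in $\Top_P$ by induction on dimension, using the conical local structure and a straightening step; at the end you acknowledge that this straightening step \emph{is} Lurie's Theorem A.6.4, and that your argument would follow his ``essentially verbatim''. In other words, your proposal is not really an independent proof: it is a rediscovery of the outline of Lurie's argument, packaged as a direct attack on the admissible horns.

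The paper does something much shorter. It takes Lurie's theorem as a black box (Theorem~\ref{LurieSingQuasiCat}): $\Sing_P(X)$ is a quasi-category, hence all \emph{inner} horns lift. The only remaining work is the admissible \emph{outer} horns, $k=0$ or $k=n$. Here admissibility gives $\varphi(e_0)=\varphi(e_1)$ (resp.\ $\varphi(e_{n-1})=\varphi(e_n)$), so the affine involution of $\Real{\Delta^\varphi}$ swapping $e_0$ and $e_1$ is a \emph{filtered} homeomorphism. This involution carries $\RealP{\Lambda^\varphi_0}$ onto $\RealP{\Lambda^\varphi_1}$, converting the outer-horn extension problem into an inner-horn one, which Lurie's theorem already solves. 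That is the entire proof.

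So the paper isolates precisely the delta between ``$\Sing_P(X)$ is a quasi-category'' and ``$\Sing_P(X)$ is fibrant in $\sS_P$'': it is just the outer admissible horns, and those are handled by a two-line symmetry trick rather than by reopening the conical machinery. Your inductive/straightening strategy would work in principle, but it reproves Lurie's result instead of leveraging it.
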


La preuve repose sur le théorème \cite[Theorem A.6.4]{HigherAlgebra}, que l'on rappelle ici.

\begin{theo}\label{LurieSingQuasiCat}
Soit $\fil{X}$ un espace filtré coniquement stratifié. Alors $\Sing_P(X)$ est une quasi-catégorie et $\Sing_P(X)\to N(P)$ est une fibration intérieure.
\end{theo}

On en déduit une preuve de la proposition \ref{ConiquementStratifieImpliqueFibrant}.

\begin{proof}
Soient $\fil{X}$ un espace filtré coniquement stratifié et $\Lambda^{\varphi}_k\to \Delta^{\varphi}=(\Delta^n,\varphi)$ une inclusion de cornet admissible. On considère le problème de relèvement suivant.
\begin{equation*}
\begin{tikzcd}
\Lambda^{\varphi}_k
\arrow{d}
\arrow{r}
&\Sing_P(X)
\\
\Delta^{\varphi}
\arrow[dashed]{ur}
\end{tikzcd}
\end{equation*}
Si $0<k<n$, il existe un relèvement par le Théorème \ref{LurieSingQuasiCat}. Si $n=1$, $\Lambda^{\varphi}_k$ est un $0$-simplexe, et comme $\Lambda^{\varphi}_k\to\Delta^{\varphi}$ est admissible, il existe une section $\Delta^{\varphi}\to \Lambda^{\varphi}_k$. Ceci fournit une solution au problème de relèvement. Si $0=k<n$, par adjonction, on considère le problème de relèvement équivalent
\begin{equation*}
\begin{tikzcd}
\RealP{\Lambda^{\varphi}_0}
\arrow{d}
\arrow{r}
&\fil{X}
\\
\RealP{\Delta^{\varphi}}
\arrow[dashed]{ur}
\end{tikzcd}
\end{equation*}
Comme $\Lambda^{\varphi}_0\to\Delta^{\varphi}$ est admissible, on a $\varphi(e_0)=\varphi(e_1)$. En particulier, l'homéomorphisme affine défini par
\begin{align*}
f\colon \RealP{\Delta^{\varphi}}&\to\RealP{\Delta^{\varphi}}\\
e_i&\mapsto\left\{\begin{array}{cl}
e_0 &\text{ si $i=1$}\\
e_1 &\text{ si $i=0$}\\
e_i &\text{ sinon}
\end{array}\right.
\end{align*}
fournit un homéomorphisme filtré.
La restriction de $f$ à $\RealP{\Lambda^{\varphi}_0}$ fournit un homéomorphisme 
\begin{equation*}
f\colon \RealP{\Lambda^{\varphi}_0}\xrightarrow{\simeq}\RealP{\Lambda^{\varphi}_1}
\end{equation*}
On remarque par ailleurs que $f=f^{-1}$. Finalement, on a la situation suivante :
\begin{equation*}
\begin{tikzcd}
\RealP{\Lambda^{\varphi}_1}
\arrow{r}{f}
\arrow{d}
&\RealP{\Lambda^{\varphi}_0}
\arrow{d}
\arrow{r}
&\fil{X}
\\
\RealP{\Delta^{\varphi}}
\arrow[swap]{r}{f}
\arrow[dashed, crossing over,near start]{urr}{h}
&\RealP{\Delta^{\varphi}}
\end{tikzcd}
\end{equation*}
Finalement, comme $0<1<n$, on peut appliquer le Théorème \ref{LurieSingQuasiCat} pour obtenir un relèvement $h$. (L'adjoint de) la composée $h\circ f$ fournit une solution au problème de relèvement initial. Le cas $0<k=n$ est similaire.
\end{proof}

\subsection{Premier théorème de Whitehead filtré}
L'objet de cette sous-section est de prouver le théorème suivant.

\begin{theo}\label{PremierTheoremeWhitehead}
Soient $\fil{X},\fil{Y}$ deux espaces filtrés et $f\colon \fil{X}\to\fil{Y}$ une application filtrée. On suppose que 
\begin{itemize}
\item Il existe $\fil{A},\fil{B}$ deux ensembles simpliciaux filtrés tels que $\fil{X}\simeq\RealP{\fil{A}}$ et $\fil{Y}\simeq\RealP{\fil{B}}$. (On ne suppose pas que $f$ est la réalisation d'une application simpliciale).
\item Les ensembles simpliciaux filtrés $\Sing_P(X)$ et $\Sing_P(Y)$ sont fibrants.
\end{itemize}
Alors, $f$ est une équivalence d'homotopie filtrée si et seulement si 
\begin{equation*}
s\pi_0(f)\colon s\pi_0\fil{X}\to s\pi_0\fil{Y}
\end{equation*}
est un isomorphisme, et, pour tout pointage $\phi\colon\RealP{V}\to\fil{X}$ et pour tout $n\geq 1$, les morphismes
\begin{equation*}
s\pi_n(f)\colon s\pi_n(\fil{X},\phi)\to s\pi_n(\fil{Y},f\circ \phi)
\end{equation*}
sont des isomorphismes.
\end{theo}

\begin{remarque}
Comme dans le cas simplicial (voir Corollaire \ref{CorollaireEquivalenceFaiblePointageComplet}) il suffit de tester les morphismes $s\pi_n(f)$ sur un ensemble complet de pointages de $\fil{X}$. 
\end{remarque}

\begin{proof}
Le sens direct est le corollaire \ref{EquivalenceHomotopieIsoTop}. 
La preuve de la réciproque repose sur la construction du diagramme suivant
\begin{equation}\label{DiagrammePreuveWhitehead1}
\begin{tikzcd}[column sep = huge]
\phantom{X}
&\RealP{\Sing_P(Y)}
\arrow{r}{\RealP{\widetilde{g}}}
&\RealP{\Sing_P{X}}
\arrow{r}{\RealP{\Sing_P(f)}}
\arrow{d}{\ev_X}
&\RealP{\Sing_P{Y}}
\arrow{d}{\ev_Y}
\\
\fil{X}
\arrow{d}{i_X}
\arrow{r}{h}
&\fil{Y}
\arrow{r}{g}
\arrow{u}{i_Y}
&\fil{X}
\arrow{r}{f}
&\fil{Y}
\\
\RealP{\Sing_P{X}}
\arrow{r}{\RealP{\widetilde{h}}}
&\RealP{\Sing_P{Y}}
\arrow{u}{\ev_Y}
\arrow{r}{\RealP{\Sing_P(g)}}
&\RealP{\Sing_P{X}}
\arrow{u}{\ev_X}
\end{tikzcd}
\end{equation}
Soit $(\phi_i,\fil{V_i})_{i\in I}$ un ensemble complet de pointages de $\fil{X}$. On suppose que $f\colon \fil{X}\to\fil{Y}$ induit des isomorphismes
\begin{equation*}
s\pi_0(f)\colon s\pi_0\fil{X}\to s\pi_0\fil{Y}
\end{equation*}
 et
\begin{equation*}
s\pi_n(f)\colon s\pi_n(\fil{X},\phi_i)\to s\pi_n(\fil{Y},f\circ \phi_i)
\end{equation*}
pour tout $n\geq 1$. Alors, comme les ensembles simpliciaux filtrés $\Sing_P(X)$ et $\Sing_P(Y)$ sont fibrants,  par la proposition \ref{IsomorphismeGroupesHomotopieSingP} on a aussi les isomorphismes
\begin{equation*}
s\pi_0(\Sing_P(f))\colon s\pi_0(\Sing_P(X))\to s\pi_0(\Sing_P(Y))
\end{equation*}
 et
\begin{equation*}
s\pi_n(\Sing_P(f))\colon s\pi_n(\Sing_P(f),\widehat{\phi_i})\to s\pi_n(\Sing_P(Y),\Sing_P(f)\circ \widehat{\phi_i})
\end{equation*}
pour tout $n\geq 1$. En particulier, par le corollaire \ref{CorollaireEquivalenceFaiblePointageComplet}, et la remarque \ref{PointageCompletTopSimplicial}, $\Sing_P(f)$ est une équivalence faible. Comme c'est une équivalence faible entre objets fibrants et cofibrants, c'est une équivalence d'homotopie de $\sS_P$. On en déduit qu'il existe une application filtrée $\widetilde{g}\colon \Sing_P(Y)\to\Sing_P(X)$ telle que $\Sing_P(f)\circ\widetilde{g}$ est homotope à $\Id$ par une homotopie filtrée $\widetilde{H}\colon \Delta^1\otimes\Sing_P(Y)\to\Sing_P(Y)$. 
Notons $\ev\colon \RealP{\Sing_P(-)}\to\Id$ l'unité de l'adjonction $(\RealP{-},\Sing_P)$ et $i\colon \RealP{-}\to\RealP{\Sing_P(\RealP{-})}$ l'image par $\RealP{-}$ de la co-unité de cette même adjonction. Alors, on définit l'application $g\colon \fil{Y}\to\fil{X}$ comme la composée $g=\ev_X\circ\RealP{\widetilde{g}}\circ i_Y$ ($i_Y$ est bien définie car par hypothèse $\fil{Y}\simeq \RealP{\fil{B}}$). On a construit les deux carrés supérieurs du diagramme \ref{DiagrammePreuveWhitehead1}. Montrons que $f\circ g$ est homotope par une homotopie filtrée à $\Id_Y$. On définit $H\colon \Delta^1\otimes \fil{Y}\to\fil{Y}$ comme la composée
\begin{equation*}
\Delta^1\otimes\fil{Y}\xrightarrow{\Delta^1\otimes i_Y} \Delta^1\otimes\RealP{\Sing_P(Y)}\xrightarrow{\RealP{\widetilde{H}}}\RealP{\Sing_P(Y)}\xrightarrow{\ev_Y}\fil{Y}.
\end{equation*}
Par hypothèse, on a $\widetilde{H}_{|\{0\}\otimes\Sing_P(Y)}=\widetilde{g}\circ\Sing_P(f)$. Par commutativité du diagramme, on en déduit que 
\begin{align*}
H_{|\{0\}\otimes\fil{Y}}&=\ev_Y\circ \widetilde{H}_{|\{0\}\otimes\Sing_P(Y)}\circ(\{0\}\otimes i_Y)\\
&=\ev_Y\circ\RealP{\Sing_P(f)}\circ\RealP{\widetilde{g}}\circ i_Y\\
&= f\circ\ev_X\circ\RealP{\widetilde{g}}\circ i_Y\\
&= f\circ g
\end{align*}
D'autre part, on calcule 
\begin{align*}
H_{|\{1\}\otimes\fil{Y}}&=\ev_Y\circ \widetilde{H}_{|\{1\}\otimes\Sing_P(Y)}\circ(\{0\}\otimes i_Y)\\
&=\ev_Y\circ\Id\circ i_Y\\
&= \Id_Y
\end{align*}
On en déduit que $g\circ f$ est homotope par une homotopie filtrée à $\Id_Y$. En particulier, par application de la proposition \ref{ApplicationsHomotopesMorphismesEgaux}, on en déduit qu'on a l'isomorphisme
\begin{equation*}
s\pi_0(g)\colon s\pi_0(\fil{Y}\to s\pi_0(\fil{X}),
\end{equation*}
et que $s\pi_0(g)=(s\pi_0(f))^{-1}$. Par ailleurs, si $(\phi_i,\fil{V_i})_{i\in I}$ est un ensemble de pointages complet de $X$, alors $(f\circ\phi_i,\fil{V_i})_{i\in I}$ est un ensemble de pointages complet de $\fil{Y}$. On en déduit, par les propositions \ref{ApplicationsHomotopesMorphismesEgaux} et \ref{PointagesHomotopesIsomorphismes}, que pour tout $n\geq 0$ et pour tout $i\in I$, on a les isomorphismes
\begin{equation*}
s\pi_n(g)\colon s\pi_n(\fil{Y},f\circ\phi_i)\to s\pi_n(\fil{X},g\circ f\circ \phi_i).
\end{equation*} 
En particulier, $g$ vérifie les mêmes hypothèses que $f$. On peut donc itérer les constructions précédentes pour obtenir $\widetilde{h}\colon \Sing_P(X)\to \Sing_P(Y)$ tel que $\Sing_P(g)\circ\widetilde{h}$ est homotope à $\Id$ par une homotopie filtrée $\widetilde{G}\colon \Delta^1\otimes\Sing_P(X)\to\Sing_P(X)$. Puis on définit $h=\ev_Y\circ\RealP{\widetilde{h}}\circ i_X$, et on note $G$ l'homotopie entre $g\circ h$ et $\Id$ obtenue à partir de $\widetilde{G}$. On obtient finalement le diagramme commutatif \ref{DiagrammePreuveWhitehead1}. Montrons maintenant que $g\circ f$ est homotope à $\Id_X$ par une homotopie filtrée. On a la suite d'homotopie suivante
\begin{align*}
g\circ f&\sim_{g\circ f\circ G} g\circ f\circ g\circ h\\
&\sim_{g\circ H\circ (\Delta^1\otimes h)} g\circ h\\
&\sim_{G} \Id_X
\end{align*}
où on note $f\sim_{H} g$ pour expliciter que $f$ est homotope à $g$ par l'homotopie filtrée $H$. Finalement, $g$ est un inverse à gauche et à droite de $f$ à homotopie filtrée près, $f$ est donc une équivalence d'homotopie filtrée.
\end{proof}

\begin{remarque}\label{PremierTheoremeWhiteheadAHomotopiePres}
L'hypothèse $\fil{X}\simeq \RealP{\fil{A}}$ et $\fil{Y}\simeq \RealP{\fil{B}}$ du théorème \ref{PremierTheoremeWhitehead} peut être affaiblie. En effet, il suffit d'imposer l'existence d'équivalences d'homotopies filtrées entre $\fil{X}$ et $\RealP{\fil{A}}$ et entre $\fil{Y}$ et $\RealP{\fil{B}}$. On remarque que dans la preuve précédente, cette hypothèse est utilisée pour définir l'application $i_Y$ comme la composition
\begin{equation*}
\fil{Y}\simeq \RealP{\fil{B}}\to \RealP{\Sing_P(\RealP{\fil{B}})}\simeq \RealP{\Sing_P(Y)}
\end{equation*}
où l'application 
\begin{equation*}
\RealP{\fil{B}}\to \RealP{\Sing_P(\RealP{\fil{B}})}
\end{equation*}
est la réalisation de l'unité de l'adjonction $\RealP{-},\Sing_P$. Si on suppose seulement l'existence d'une équivalence d'homotopie filtrée
\begin{equation*}
\alpha\colon \fil{Y}\to \RealP{\fil{A}}
\end{equation*}
On peut définir $i_Y$ comme la composée
\begin{equation*}
\fil{Y}\xrightarrow{\alpha} \RealP{\fil{B}}\to \RealP{\Sing_P(\RealP{\fil{B}})}\xrightarrow{\RealP{\Sing_P(\alpha^{-1})}} \RealP{\Sing_P(Y)}
\end{equation*}
Où $\alpha^{-1}$ est un inverse à homotopie filtré près de $\alpha$.
En composant l'homotopie entre $\alpha^{-1}\circ \alpha$ et l'identité de $\fil{Y}$ avec les homotopies apparaissant dans la preuve, on obtient toujours que $g$ est un inverse à gauche de $f$. En procédant de même pour $\fil{X}$ et $h$, on arrive au résultat.
\end{remarque}

\subsection{Squelette des groupes d'homotopies filtrés et deuxième théorème de Whitehead filtré}

\begin{defin}
Soient $\fil{X}$ un espace filtré et $p\in P$. On définit la $p$-strate de $\fil{X}$ comme 
\begin{equation*}
X_p=\varphi_{X}^{-1}({p})\simeq C^0_P(\RealP{[p]},\fil{X})
\end{equation*}
De plus, si $[p_0,p_1]$ est un simplexe non dégénéré de $N(P)$, (c'est à dire $p_0<p_1\in P$), alors on définit le $[p_0,p_1]$-entrelac homotopique (noté $\Hol$ pour "Homotopy link") de $\fil{X}$ comme 
\begin{equation*}
\Hol_{[p_0,p_1]}(\fil{X})=C^0_P(\RealP{[p_0,p_1]},\fil{X})
\end{equation*}
\end{defin}

Dans cette sous section, on montre le théorème suivant 

\begin{theo}\label{DeuxiemeTheoremeWhitehead}
Soient $\fil{X},\fil{Y}$ deux espaces filtrés et $f\colon\fil{X}\to\fil{Y}$ une application filtrée. On suppose que
\begin{itemize}
\item il existe $\fil{A},\fil{B}$ deux ensembles simpliciaux filtrés tels que $\fil{X}\simeq\RealP{\fil{A}}$ et $\fil{Y}\simeq\RealP{\fil{B}}$, (on ne suppose pas que $f$ provient d'une application simpliciale),
\item les espaces filtrés $\fil{X}$ et $\fil{Y}$ sont coniquement stratifiés.
\end{itemize}
Alors, $f$ est une équivalence d'homotopie filtrée si et seulement si $f$ induit une équivalence faible sur chacune des strates et sur chacun des entrelacs homotopiques.
\end{theo}

Pour prouver le théorème \ref{DeuxiemeTheoremeWhitehead}, nous aurons besoin des lemmes suivants :

\begin{lemme}\label{EquivalenceFaibleSimplexeNonDegenere}
Soient $\fil{X}$ un espace filtré, et $\Delta^{\varphi}$ un simplexe de $N(P)$. Notons $\overline{\Delta^{\varphi}}$ l'unique simplexe non dégénéré de $N(P)$ tel que $\Delta^{\varphi}$ est une dégénérescence de $\overline{\Delta^{\varphi}}$. Alors, la dégénérescence $\Delta^{\varphi}\to\overline{\Delta^{\varphi}}$ induit une équivalence faible 
\begin{equation*}
\Map(\RealP{\overline{\Delta^{\varphi}}},\fil{X})\to \Map(\RealP{\Delta^{\varphi}},\fil{X})
\end{equation*}
\end{lemme}

\begin{proof}
Si $\Sing_P(X)$ est fibrant, c'est une application directe du corollaire \ref{CorollaireSimplexesNonDegeneressPin} et de la proposition \ref{AdjonctionSimplicialeSingReal}. Dans le cas général, il suffit de voir que le morphisme $\Delta^{\varphi}\to\overline{\Delta^{\varphi}}$ est une équivalence d'homotopie filtrée, et que $\RealP{-}$ et $\Map(-,\fil{X})$ préserve les équivalences d'homtopies (filtrées).
\end{proof}

\begin{defin}
Soit $\Delta^{\varphi}=(\Delta^n,\varphi)=[p_0,\dots,p_n]$ un simplexe non dégénéré de $N(P)$. On définit son squelette extérieur $\OSk(\Delta^{\varphi})\subseteq\Delta^{\varphi}$ comme le sous ensemble simplicial filtré de $\Delta^{\varphi}$ contenant les sommets de $\Delta^{\varphi}$ et les $1$-simplexes de la forme $[p_i,p_{i+1}]$. En particulier, $\OSk(\Delta^{\varphi})=(\OSk(\Delta^n),\varphi_{|\OSk(\Delta^n)})$.
\end{defin}

\begin{lemme}\label{EquivalenceFaibleSqueletteExterieur}
Soient $\fil{X}$ un espace filtré coniquement stratifié, et $\Delta^{\varphi}$ un simplexe non dégénéré de $N(P)$. Alors, l'inclusion $\OSk(\Delta^{\varphi})\to\Delta^{\varphi}$ induit une équivalence faible
\begin{equation*}
\Map(\RealP{\Delta^{\varphi}},\fil{X})\to\Map(\RealP{\OSk(\Delta^{\varphi})},\fil{X}).
\end{equation*}
\end{lemme}

\begin{proof}
Par la proposition \ref{AdjonctionSimplicialeSingReal}, il suffit de montrer que 
\begin{equation*}
\Map(\Delta^{\varphi},\Sing_P(X))\to\Map(\OSk(\Delta^{\varphi}),\Sing_P(X))
\end{equation*}
est une équivalence faible. Par la proposition \ref{ConiquementStratifieImpliqueFibrant}, $\Sing_P(X)$ est fibrant et par définition $\OSk(\Delta^{\varphi})\to\Delta^{\varphi}$ est une cofibration. On déduit du théorème \ref{CategorieModelSimpliciale} que ce morphisme est une fibration, il suffit donc de montrer que c'est une fibration triviale. Considérons le problème de relèvement suivant
\begin{equation*}
\begin{tikzcd}
\partial(\Delta^n)
\arrow{r}{\alpha}
\arrow{d}
&\Map(\Delta^{\varphi},\Sing_P(X))
\arrow{d}
\\
\Delta^n
\arrow[swap]{r}{\beta}
\arrow[dashed]{ur}
&\Map(\OSk(\Delta^{\varphi}),\Sing_P(X))
\end{tikzcd}
\end{equation*}
Par adjonction, la donnée d'un tel diagramme correspond à la donnée d'un morphisme 
\begin{equation*}
\partial(\Delta^n)\otimes\Delta^{\varphi}\cup_{\partial(\Delta^n)\otimes\OSk(\Delta^{\varphi})}\Delta^n\otimes\OSk(\Delta^{\varphi})\xrightarrow{\widehat{\alpha}\cup\widehat{\beta}}\fil{X}
\end{equation*}
Et une solution au problème de relèvement précédent correspond à un relèvement dans le diagramme suivant
\begin{equation*}
\begin{tikzcd}
\partial(\Delta^n)\otimes\Delta^{\varphi}\cup_{\partial(\Delta^n)\otimes\OSk(\Delta^{\varphi})}\Delta^n\otimes\OSk(\Delta^{\varphi})
\arrow[swap]{d}{j}
\arrow{r}{\widehat{\alpha}\cup\widehat{\beta}}
&\fil{X}
\\
\Delta^n\otimes\Delta^{\varphi}
\arrow[dashed, swap]{ur}{h}
\end{tikzcd}
\end{equation*}
On remarque par ailleurs que l'inclusion $j\colon\partial(\Delta^n)\otimes\Delta^{\varphi}\cup_{\partial(\Delta^n)\otimes\OSk(\Delta^{\varphi})}\Delta^n\otimes\OSk(\Delta^{\varphi})\to \Delta^n\otimes\Delta^{\varphi}$ est une surjection sur les sommets. En particulier, toute application simpliciale $h\colon \Delta^n\otimes\Delta^{\varphi}\to X$ faisant commuter le diagramme précédent sera nécessairement filtrée.
Par \cite[Theorem A.6.4]{HigherAlgebra} (voir Théorème \ref{ConiquementStratifieImpliqueFibrant}), on sait que $\Sing_P(X)$ est un objet fibrant pour la structure de Joyal sur $\sS$. En particulier, il suffit de montrer que l'inclusion $j$ est une cofibration triviale pour la structure de Joyal sur $\sS$. Considérons maintenant le diagramme commutatif suivant
\begin{equation*}
\begin{tikzcd}
\partial(\Delta^n)\otimes\OSk(\Delta^{\varphi})
\arrow{d}
\arrow{r}
&
\partial(\Delta^n)\otimes\Delta^{\varphi}
\arrow{d}
\arrow[bend left = 20]{ddr}
&\phantom{X}
\\
\Delta^n\otimes\OSk(\Delta^{\varphi})
\arrow{r}
\arrow[bend right = 12]{drr}
&\partial(\Delta^n)\otimes\Delta^{\varphi}\cup_{\partial(\Delta^n)\otimes\OSk(\Delta^{\varphi})}\Delta^n\otimes\OSk(\Delta^{\varphi})
\arrow{dr}{j}
&\phantom{X}
\\
\phantom{X}
&\phantom{X}
&\Delta^n\otimes\Delta^{\varphi}
\end{tikzcd}
\end{equation*} 
Le carré est cocartésien, par construction.
Notons $\Delta^{\varphi}=(\Delta^N,\varphi)$, l'inclusion $\partial(\Delta^n)\times\OSk(\Delta^N))\to\partial(\Delta^n)\times\Delta^N$ est une cofibration triviale dans la structure de Joyal, par \cite[Proposition 2.13]{Joyal} et \cite[Theorem 6.12]{Joyal}. De même, l'inclusion $\partial(\Delta^n)\times\OSk(\Delta^N)\to\Delta^n\times\OSk(\Delta^N)$ est une cofibration. On déduit du théorème \cite[Theorem 6.12]{Joyal} que le morphisme $j$ est une cofibration triviale dans la structure de Joyal.
En particulier, il existe une solution $h$ au problème de relèvement précédent et donc le morphisme 
\begin{equation*}
\Map(\Delta^{\varphi},\Sing_P(X))\to\Map(\OSk(\Delta^{\varphi}),\Sing_P(X))
\end{equation*}
est une fibration triviale.
\end{proof}

\begin{lemme}\label{LemmeEquivalenceFaible1Squelette}
Soient $f\colon \fil{A}\to\fil{B}$ une application filtrée entre deux ensembles simpliciaux filtrés fibrants, et $\Delta^{\varphi}=[p_0,\dots,p_n]\in N(P)$ un simplexe non dégénéré. 
Si, pour tout $0\leq i\leq n$  et pour tout $0\leq j\leq n-1$, $f$ induit des équivalence faibles
\begin{equation*}
\Map([p_j,p_{j+1}],\fil{A})\to\Map([p_j,p_{j+1}],\fil{B})
\end{equation*}
et
\begin{equation*}
\Map([p_i],\fil{A})\to\Map([p_i],\fil{B})
\end{equation*}
Alors, $f$ induit une équivalence faible 
\begin{equation*}
\Map(\OSk(\Delta^{\varphi}),\fil{A})\to\Map(\OSk(\Delta^{\varphi}),\fil{B})
\end{equation*}
\end{lemme}

\begin{proof}
Comme pour tout ensemble simplicial filtré, on a 
\begin{equation*}
\OSk(\Delta^{\varphi})=\colim_{\sigma\colon \Delta^{\psi}\to\OSk(\Delta^{\varphi})}\Delta^{\psi}
\end{equation*}
De plus, on sait que $\OSk(\Delta^{\varphi})$ est engendré par les simplexes non dégénérés $[p_j,p_{j+1}]$ et $[p_i]$. En particulier, en notant $S$ la catégorie des simplexes de la forme $[p_i]$ ou $[p_j,p_{j+1}]$, on a 
\begin{equation*}
\OSk(\Delta^{\varphi})=\colim_{\Delta^{\psi}\in S}\Delta^{\psi}
\end{equation*} 
On calcule maintenant 
\begin{align*}
\Map(\OSk(\Delta^{\varphi}),\fil{A})&\simeq\Map(\colim_{S}\Delta^{\psi},\fil{A})\\
&\simeq\lim_{S}\Map(\Delta^{\psi},\fil{A})
\end{align*}
De plus, les seuls morphismes non triviaux de $S$ sont de la forme $[p_i]\subseteq [p_j,p_{j+1}]$ avec $i=j$ ou $j+1$ (Cela vient du fait que $\Delta^{\varphi}$ est non dégénéré, et donc $p_j<p_{j+1}$ pour tout $j$). En particulier, ce sont des cofibrations. Comme $\sS_P$ est une catégorie modèle simpliciale et que $\fil{A}$ est fibrant, on en déduit que tous les morphismes apparaissant dans la limite
\begin{equation*}
\lim_{S}\Map(\Delta^{\psi},\fil{A})
\end{equation*}
sont des fibrations, et tous les objets sont fibrants.
Finalement, on a
\begin{equation*}
\Map(\OSk(\Delta^{\varphi},\fil{A})\simeq\lim_{S}\Map(\Delta^{\psi},\fil{A})\sim \holim_{S}\Map(\Delta^{\psi},\fil{A})
\end{equation*}
et de même pour $\fil{B}$. Par ailleurs, on sait par hypothèse que pour tout $\Delta^{\psi}\in S$, $f$ induit une équivalence faible
\begin{equation*}
\Map(\Delta^{\psi},\fil{A})\to\Map(\Delta^{\psi},\fil{B})
\end{equation*}
On en déduit que $f$ induit une équivalence faible 
\begin{equation*}
\holim_{S}\Map(\Delta^{\psi},\fil{A})\to\holim_{S}\Map(\Delta^{\psi},\fil{B}),
\end{equation*}
d'où le résultat voulu.
\end{proof}

\begin{proof}[Démonstration du Théorème \ref{DeuxiemeTheoremeWhitehead}]
Soient $\fil{X},\fil{Y}$ et $f\colon\fil{X}\to\fil{Y}$ deux espaces filtrés et une application filtrée vérifiant les hypothèses du théorème \ref{DeuxiemeTheoremeWhitehead}. Supposons d'abord que $f$ est une équivalence d'homotopie filtrée. Alors, il existe $g\colon\fil{Y}\to\fil{X}$ une application filtrée, et des homotopies filtrées $H\colon\Delta^1\otimes\fil{X}\to\fil{X}$ et $G\colon\Delta^1\otimes\fil{Y}\to\fil{Y}$ respectivement entre $g\circ f$ et $\Id_X$ et entre $f\circ g$ et $\Id_Y$. En particulier, les restrictions de $H$ et $G$ à $X_p$ et $Y_p$, montrent que les restrictions de $f$ et $g$ induisent des équivalences d'homotopies entre les strates de $X$ et de $Y$. De même, étant donné $[p_0,p_1]$ un simplexe non dégénéré de $N(P)$, en composant les homotopies induites par $H$ et $G$ avec les inclusions naturelles
\begin{equation*}
\Delta^1\otimes C^0_P(\RealP{[p_0,p_1]},\fil{X})\to C^0_P(\RealP{[p_0,p_1]},\Delta^1\otimes\fil{X})
\end{equation*}
et
\begin{equation*}
\Delta^1\otimes C^0_P(\RealP{[p_0,p_1]},\fil{Y})\to C^0_P(\RealP{[p_0,p_1]},\Delta^1\otimes\fil{Y})
\end{equation*}
on obtient que $f$ induit des équivalences d'homotopies entre les entrelacs homotopiques de $X$ et ceux de $Y$.

Supposons maintenant que $f$ induit des équivalences faibles entre chacune des strates et chacun des entrelacs homotopiques de $X$ et ceux de $Y$. Montrons que $f$ induit un isomorphisme 
\begin{equation*}
s\pi_0(f)\colon s\pi_0\fil{X}\to s\pi_0\fil{Y}
\end{equation*}
et que pour tout pointage de $\fil{X}$, $\phi\colon \RealP{V}\to\fil{X}$ et pout tout $n\geq 1$, $f$ induit un isomorphisme 
\begin{equation*}
s\pi_n(\fil{X},\phi)\to s\pi_n(\fil{Y},\phi).
\end{equation*}
Par définition de $s\pi_n$, il suffit de montrer que pour tout simplexe $\Delta^{\varphi}\in N(P)$, $f$ induit des équivalences faibles
\begin{equation*}
\Map(\RealP{\Delta^{\varphi}},\fil{X})\to\Map(\RealP{\Delta^{\varphi}},\fil{Y}).
\end{equation*}
Par le lemme \ref{EquivalenceFaibleSimplexeNonDegenere}, il suffit de le vérifier pour $\Delta^{\varphi}$ un simplexe non dégénéré. Par le lemme \ref{EquivalenceFaibleSqueletteExterieur}, comme $\fil{X}$ et $\fil{Y}$ sont coniquements stratifiés par hypothèse, il suffit de montrer que $f$ induit des équivalences faibles
\begin{equation*}
\Map(\RealP{\OSk(\Delta^{\varphi})},\fil{X})\to\Map(\RealP{\OSk(\Delta^{\varphi})},\fil{Y}).
\end{equation*}
Pour tout simplexe non dégénéré $\Delta^{\varphi}\in N(P)$.
En utilisant l'adjonction simpliciale (voir proposition \ref{AdjonctionSimplicialeSingReal}), on en déduit qu'il suffit de montrer que pour tout simplexe non dégénéré $\Delta^{\varphi}\in N(P)$, $f$ induit une équivalence faible
\begin{equation*}
\Map(\OSk(\Delta^{\varphi}),\Sing_P(X))\to\Map(\OSk(\Delta^{\varphi}),\Sing_P(Y)).
\end{equation*}
On sait par le théorème \ref{ConiquementStratifieImpliqueFibrant} que $\Sing_P(X)$ et $\Sing_P(Y)$ sont des objets fibrants de $\sS_P$. Par le lemme \ref{LemmeEquivalenceFaible1Squelette} il suffit donc de montrer que pour tout $[p]\in N(P)$ et pour tout simplexe non dégénéré $[p_0,p_1]\in N(P)$, $f$ induit des équivalences faibles
\begin{equation*}
\Map([p],\Sing_P(X))\to\Map([p],\Sing_P(Y)),
\end{equation*}
et
\begin{equation*}
\Map([p_0,p_1],\Sing_P(X))\to\Map([p_0,p_1],\Sing_P(Y)).
\end{equation*}
Par la proposition \ref{DiagrammesTopologiquesIsomorphes}, cela revient à montrer que $f$ induit des équivalences faibles
\begin{equation*}
X_p\simeq C^0_P(\RealP{[p]},\fil{X})\to\C^0_P(\Real{[p]},\fil{Y})\simeq Y_p
\end{equation*}
et 
\begin{equation*}
\Hol_{[p_0,p_1]}(X)= C^0_P(\RealP{[p_0,p_1]},\fil{X})\to C^0_P(\RealP{[p_0,p_1]},\fil{Y})=\Hol_{[p_0,p_1]}(Y).
\end{equation*}
Or, par hypothèse, ces applications sont des équivalences faibles. On en déduit que $f$ est une équivalence d'homotopie filtrée par application du théorème \ref{PremierTheoremeWhitehead}.
\end{proof}

\begin{remarque}
Soient $K$ un ensemble simplicial et $n\geq 0$ un entier. On appelle $n$-squelette de $K$ le sous ensemble simplicial $\sk_n(K)\subseteq K$ engendré par les simplexes non-dégénéré de $K$ de dimension inférieure à $n$. On peut étendre cette définition aux ensembles simpliciaux filtrés en posant $\sk_n(\fil{A})=(\sk_n(A),\varphi_{|\sk_n(A)})$. Avec ces notations, on peut reformuler le théorème \ref{DeuxiemeTheoremeWhitehead} sous la forme suivante. Soient $\fil{X}$, $\fil{Y}$ et $f\colon \fil{X}\to\fil{Y}$ vérifiant les hypothèses du théorème \ref{DeuxiemeTheoremeWhitehead}. Alors, $f$ induit des isomorphismes
\begin{equation*}
s\pi_0(f)\colon s\pi_0\fil{X}\to s\pi_0\fil{Y}
\end{equation*}
et
\begin{equation*}
s\pi_n(f)\colon s\pi_0(\fil{X},\phi)\to s\pi_0(\fil{Y},f\circ\phi)
\end{equation*}
pour tout pointage $\phi\colon\RealP{V}\to\fil{X}$ et pour tout entier $n\geq 1$, si et seulement si $f$ induit des isomorphismes
\begin{equation*}
s\pi_0(f)\colon \sk_1(s\pi_0\fil{X})\to \sk_1(s\pi_0\fil{Y})
\end{equation*}
et
\begin{equation*}
s\pi_n(f)\colon \sk_1(s\pi_0(\fil{X},\phi))\to \sk_1(s\pi_0(\fil{Y},f\circ\phi))
\end{equation*}
pour tout pointage $\phi\colon\RealP{V}\to\fil{X}$ et pour tout entier $n\geq 1$.
En particulier, étant donné une telle application $f$, il suffit de calculer les morphismes induits sur le $1$-squelette des groupes d'homotopies filtrés pour montrer que c'est une équivalence faible. Cependant, l'existence d'un isomorphisme (abstrait) $\Sk_1(s\pi_n\fil{X})\simeq \sk_1(s\pi_n(\fil{Y}))$ n'implique pas l'existence d'un isomorphisme (abstrait) $s\pi_n\fil{X}\simeq s\pi_n\fil{Y}$. En particulier, on peut trouver des obstructions à l'existence d'une équivalence d'homotopie $f\colon \fil{X}\to\fil{Y}$ en examinant les simplexes de dimensions supérieures à $1$ de $s\pi_n\fil{X}$ et $s\pi_n\fil{Y}$.
\end{remarque}

\begin{remarque}\label{RemarqueNandLalWhitehead}
Le théorème \ref{DeuxiemeTheoremeWhitehead} est très similaire au résultat obtenu par Miller \cite[Theorem 6.3]{Miller}. En effet, il montre que si $\fil{X}$ et $\fil{Y}$ sont des espaces homotopiquement stratifiés, et $f\colon \fil{X}\to\fil{Y}$ est une application filtrée, alors $f$ est une équivalence d'homotopie si et seulement si $f$ induit des équivalences faibles sur toutes les strates et tous les entrelacs homotopiques. Cependant, les notions d'espaces homotopiquement stratifiés et d'espaces coniquement stratifiés ne coïncident pas exactement. De plus, les méthodes employées pour prouver \cite[Theorem 6.3]{Miller} et le théorème \ref{DeuxiemeTheoremeWhitehead} sont très différentes.
\end{remarque}
\begin{remarque}
Dans \cite{NandLal}, Nand-Lal montre que si $\fil{X}$ est un espace métrique, homotopiquement stratifié et de stratification finie, $\Sing_P\fil{X}$ est une quasi-catégorie. On remarque que la preuve de la proposition \ref{ConiquementStratifieImpliqueFibrant} s'adapte pour prouver que $\Sing_P\fil{X}$ est fibrant dans $\sS_P$ dès que l'ensemble simplicial sous-jacent est une quasi-catégorie. Ceci permet de généraliser le théorème \ref{DeuxiemeTheoremeWhitehead} pour inclure cette classe d'objets. On obtient ainsi une version plus proche du résultat de Miller \cite[Theorem 6.3]{Miller}
\end{remarque}

\chapter[Exemples et applications des théorèmes de Whitehead filtrés]{Exemples et applications des théorèmes de Whitehead filtrés}\label{ChapitreExemples}
\chaptermark{Exemples}
L'objet de ce chapitre est de comprendre, à travers des exemples, comment utiliser les groupes d'homotopie filtrés pour différentier des types d'homotopie filtrés, et quel type d'information est contenu dans le type d'homotopie filtré. 

Dans la première section de ce chapitre, on s'intéresse au cas (simple) où $P=\{p_0<p_1\}$. Dans la section \ref{ConstructionFibre}, on étudie une construction générale permettant d'obtenir un espace coniquement stratifié (Voir la définition \ref{DefinitionConiquementStratifie}) à partir d'un fibré localement trivial. Cette construction a pour vertu de permettre de calculer les groupes d'homotopie filtrés explicitement (Voir les propositions \ref{GroupesHomotopiesConeOuvertFibre} et \ref{GroupesHomotopiesFibre}). De plus, sous de bonnes hypothèses sur le fibré de départ, l'espace obtenu est une pseudo-variété (Propostions \ref{FibrePseudoVariete} et \ref{GroupesHomotopiesFibre}). Dans la section \ref{SectionPseudoVarietesFibres}, on étudie de tels exemples. Dans la section \ref{SectionEilenbergMaclane} on explore ensuite la possibilité de construire des espaces d'Eilenberg-Mac Lane à partir de cette construction. On obtient un résultat partiel sous la forme de la proposition \ref{PropositionCaracterisationEilenbergMaclane}.

Dans la section \ref{SectionPlongementEspaceFiltre}, on s'intéresse à des filtrations induites par des plongements. Étant donnés un espace topologique $X$ et une collection de sous-espaces $X_i\subseteq X$, on dispose de l'ensemble ordonné des intersections des $X_i$, $P_X$. On obtient alors une filtration naturelle $X\to P_X$. La proposition \ref{PropositionPlongementPseudoVariete} permet de garantir que l'espace filtré ainsi obtenu est une pseudo-variété (et donc vérifie les hypothèses du théorème \ref{PremierTheoremeWhitehead}). On étudie ensuite le cas particulier des noeuds, et on montre que les résultats de \cite{Waldhausen} et de \cite{GordonLuecke} impliquent que le premier groupe d'homotopie filtré fournit un invariant complet des nœuds (Théorème \ref{TheoremeGroupesHomotopieFiltresNoeuds}).

Dans la section \ref{SectionFibresFiltres}, on reprend la construction de la section \ref{ConstructionFibre}, permettant de construire un espace filtré à partir d'un fibré, pour montrer qu'elle a encore un sens lorsque la fibre est elle même filtrée. Ceci permet de fournir un contexte plus général dans lequel les groupes d'homotopie filtrés peuvent encore être exprimés de façon "élémentaire" (voir la proposition \ref{PropositionGroupesHomotopiesFiltreFibreFiltre}).

On commence ce chapitre par un exemple élémentaire permettant d'illustrer les constructions à venir.

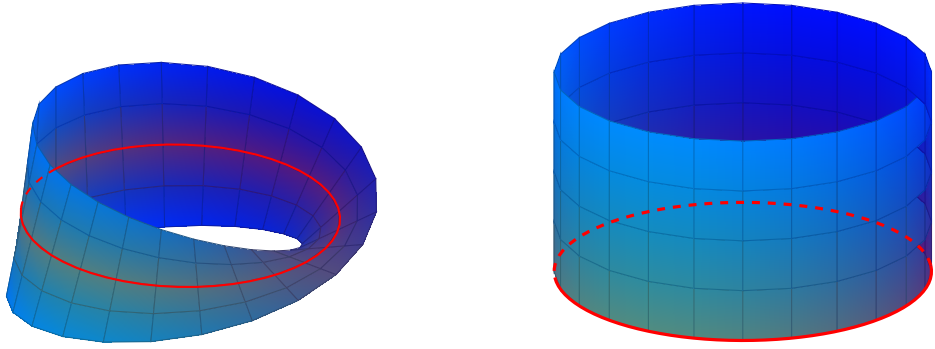
\begin{figure}
\centering
\begin{tikzpicture}
\begin{axis}[
    hide axis,
    view={40}{40}
]
\addplot3 [
    surf, shader=faceted interp,
    mesh/color input=explicit mathparse,
    samples y=5,
    domain=0:360,
    y domain=-0.5:0.5,
    point meta={symbolic={
    		1.3*(0.5-2*abs(z))*(0.5-2*abs(z)), 
	   		0.3-0.2*y, 
    		0.5+2*abs(z) 
    	}
    }
] (
    {(1+0.5*y*cos(x/2)))*cos(x)},
    {(1+0.5*y*cos(x/2)))*sin(x)},
    {0.5*y*sin(x/2)});
\addplot3 [
    samples=50,
    domain=-145:180, 
    samples y=0,
    thick, red
] (
    {cos(x)},
    {sin(x)},
    {0});
\addplot3 [
	dashed,
    samples=20,
    domain=-180:-145, 
    samples y=0,
    thick, red
] (
    {cos(x)},
    {sin(x)},
    {0});
\end{axis}

\end{tikzpicture}
\hspace{60 pt}
\begin{tikzpicture}
\begin{axis}[
    hide axis,
    view={30}{30}
]
\addplot3 [
    surf, shader=faceted interp,
    mesh/color input=explicit mathparse,
    samples y=5,
    domain=0:360,
    y domain=0:0.5,
    point meta={symbolic={
    		1.3*(0.5-2*abs(z))*(0.5-2*abs(z)), 
	   		0.3-0.2*y, 
    		0.5+2*abs(z) 
    	}
    }
] (
    {1.25*cos(x)},
    {1.25*sin(x)},
    {0.5*y});
\addplot3 [
    samples=50,
    domain=-145:35, 
    samples y=0,
    very thick, red
] (
    {1.25*cos(x)},
    {1.25*sin(x)},
    {0});
\addplot3 [
	dashed,
    samples=50,
    domain=35:215, 
    samples y=0,
    very thick, red
] (
    {1.25*cos(x)},
    {1.25*sin(x)},
    {0});
\end{axis}
\end{tikzpicture}
\caption{Les espaces filtrés $M$ et $C$ au dessus de l'ensemble ordonné 
$P=\{\textcolor{red}{p_0}<\textcolor{blue}{p_1}\}$
}\label{FigureMobiusCylindre}
\end{figure}

\begin{exemple}\label{ExempleMobiusCylindre}
Soit $S^1\subset \mathbb{C}$ le cercle unité. On considère le cylindre $C=S^1\times [0,1]$ et le ruban de Möbius $M=S^1\times [-1,1]/\sim$, avec $(x,t)\sim (-x,-t)$. On munit $C$ et $M$ des filtrations données par
\begin{align*}
\varphi_C\colon S^1\times [0,1]&\to \{p_0<p_1\}\\
(x,t)&\mapsto \left\{\begin{array}{cl}
p_0& \text{ si $t=0$}\\
p_1& \text{ si $t>0$}
\end{array}\right.
\end{align*}
et
\begin{align*}
\varphi_M\colon S^1\times [-1,1]&\to \{p_0<p_1\}\\
(x,t)&\mapsto \left\{\begin{array}{cl}
p_0& \text{ si $t=0$}\\
p_1& \text{ si $t\not=0$}
\end{array}\right.
\end{align*}
On vérifie facilement que $\varphi_M$ passe au quotient pour induire une filtration $\varphi_M\colon M\to P$. Les espaces filtrés $M$ et $C$ sont représentés Figure \ref{FigureMobiusCylindre}. 
On calcule leurs groupes d'homotopie filtrés. Par la remarque \ref{GroupesHomotopiesFiltresReduits}, il suffit de calculer $s\pi_n(-)(\Delta^{\varphi})$, pour $\Delta^{\varphi}=[p_0],[p_1]$ et $[p_0,p_1]$.
On constate d'abord qu'on a les homéomorphismes suivants
\begin{eqnarray*}
C^0_P(\RealP{[p_0]},M)  \simeq & S^1& \simeq  C^0_P(\RealP{[p_0]},C)\\
C^0_P(\RealP{[p_1]},M)\simeq & S^1\times ]0,1]&\simeq  C^0_P(\RealP{[p_1]},C)\\
\end{eqnarray*}
Puis, en utilisant la définition de $M$ comme un quotient de $S^1\times[-1,1]$, on calcule $C^0_P(\RealP{[p_0,p_1]},M)$.
\begin{equation*}
\{f\ |\ f\in C^0([0,1], S^1\times [-1,1]),\  f(0)\in S^1\times\{0\}, \text{ et } f(t)\in S^1\times([-1,0[\cup]0,1]), \forall t >0\}/\sim
\end{equation*}
Par symétrie, cet espace de fonction est homéomorphe à 
\begin{align*}
\{f\ |\ f\in C^0([0,1],S^1\times[0,1]),\ f(0)\in S^1\times\{0\}, \text{ et } f(t)\in S^1\times]0,1], \forall t >0\}\\
\simeq C^0_P(\RealP{[p_0,p_1]},C)
\end{align*}
On remarque de plus que cet espace est homéomorphe au produit
\begin{equation}\label{HolinkProduit}
C^0([0,1],S^1)\times \{g\ |\ g\in C^0([0,1],[0,1]), \ g(0)=0\text{ et } g(t)>0,\ \forall t>0\}
\end{equation}
On en déduit que $s\pi_0(M)\simeq s\pi_0(C)\simeq N(P)$. En particulier, pour chacun de ces espaces filtrés, il existe un unique pointage maximal à homotopie filtrée près. Ils sont connexes au sens filtré. (Pour un exemple d'espaces non-connexes au sens filtré, voir figure \ref{FigureExempleSPi0}). En utilisant l'identification $\RealP{N(P)}\simeq [0,1]$, on fixe les pointages suivants.
L'application
\begin{align*}
\widetilde{\phi_M}\colon \RealP{N(P)}&\to S^1\times[-1,1]\\
t&\mapsto (0,t)
\end{align*}
passe au quotient pour donner un pointage de $M$
\begin{equation*}
\phi_M\colon \RealP{N(P)}\to M.
\end{equation*}
Et on définit 
\begin{align*}
\phi_C\colon \RealP{N(P)}&\to C\\
t&\mapsto(0,t).
\end{align*} D'après les calculs précédents, et en utilisant les conventions de la remarque \ref{ConventionsExemples}, on a les groupes d'homotopies filtrés de $M$ et $C$ :
\begin{equation*}
s\pi_1(M,\phi_M)= 
\begin{tikzcd}
&\mathbb{Z}
\arrow[swap]{dl}{f_M}
\arrow{dr}{g_M}
\\
\mathbb{Z}
&&
\mathbb{Z}
\end{tikzcd}
\end{equation*}
et 
\begin{equation*}
s\pi_1(C,\phi_C)= 
\begin{tikzcd}
&\mathbb{Z}
\arrow[swap]{dl}{f_C}
\arrow{dr}{g_C}
\\
\mathbb{Z}
&&
\mathbb{Z}
\end{tikzcd}
\end{equation*}
De plus, pour $n\geq 2$, on a
\begin{equation*}
s\pi_n(M,\phi_M)= 
\begin{tikzcd}
&0
\arrow{dl}
\arrow{dr}
\\
0
&&
0
\end{tikzcd}
=s\pi_n(C,\phi_C)
\end{equation*}
Il nous reste donc à calculer les applications $f_M$ et $f_C$ induites par l'inclusion $[p_0]\to [p_0,p_1]$ ainsi que les applications $g_M$ et $g_C$ induites par l'inclusion $[p_1]\to [p_0,p_1]$.
On commence par $C$. On observe que comme $C$ est le produit $S^1\otimes\RealP{N(P)}$, les restrictions
\begin{equation*}
C^0_P(\RealP{[p_0,p_1]},C)\to C^0_P(\RealP{[p_i]},C)
\end{equation*}
où $i=0,1$, se factorisent comme
\begin{equation*}
C^0_P(\RealP{[p_0,p_1]},C)\to C^0([0,1],S^1)\xrightarrow{\ev_i} S^1
\end{equation*}
où la première application est la projection sur le premier terme du produit (\ref{HolinkProduit}), et la seconde est l'évaluation en $i$. En particulier, on obtient que $f_C=g_C=\Id_{\mathbb{Z}}$. Dans le cas de $M$, on considère l'application suivante 
\begin{align*}
\widetilde{\gamma}\colon S^1\times [0,1]&\to S^1\times [-1,1]\\
(s,t)&\mapsto (s,t).
\end{align*}
Cette application passe au quotient et induit une application
\begin{equation*}
\gamma\colon S^1\to C^0_P(\RealP{[p_0,p_1]},M).
\end{equation*}
De plus, en reprenant la série d'homéomorphismes aboutissant à (\ref{HolinkProduit}), on constate que (la classe de) $\gamma$ génère $\pi_1(C^0_P(\RealP{[p_0,p_1]},M),\phi_M)$. Il suffit donc de calculer $f_M(\gamma)$ et $g_M(\gamma)$. On obtient 
\begin{align*}
f_M(\gamma)\colon S^1&\to S^1\times\{0\}/\sim\\
s&\mapsto [(s,0)].
\end{align*}
En particulier, $f_M(\gamma)$ est le double d'un générateur de $\pi_1(S^1\times\{0\}/\sim,[(0,0)])$. 
D'autre part en utilisant l'identification $S^1\times([-1,0[\cup]0,1])/{\sim} \simeq S^1\times ]0,1]$, on a
\begin{align*}
g_M(\gamma)\colon S^1&\to S^1\times]0,1]\\
s&\mapsto (s,1)
\end{align*}
et on en déduit que $g_M(\gamma)$ est un générateur de $\pi_1(C^0_P(\RealP{[p_1]},M),[(0,1)])$. Finalement, on a

\begin{equation*}
s\pi_1(M,\phi_M)= 
\begin{tikzcd}
&\mathbb{Z}
\arrow[swap]{dl}{\times 2}
\arrow{dr}{\Id_{\Z}}
\\
\mathbb{Z}
&&
\mathbb{Z}
\end{tikzcd}
\hspace{30pt}
s\pi_1(C,\phi_C)= 
\begin{tikzcd}
&\mathbb{Z}
\arrow[swap]{dl}{\Id_{\Z}}
\arrow{dr}{\Id_{\Z}}
\\
\mathbb{Z}
&&
\mathbb{Z}
\end{tikzcd}
\end{equation*}
En particulier, il n'existe pas d'équivalence d'homotopie filtrée entre $M$ et $C$. Cependant, on vérifie aisément que en tant qu'espace topologique, $M$ et $C$ sont homotopes. Ainsi, le type d'homotopie filtré contient plus d'information que le type d'homotopie non filtré, et cette information est détectée par les groupes d'homotopies filtrés.
\end{exemple}

\begin{figure}[h]
\centering
\includegraphics[width= 200pt]{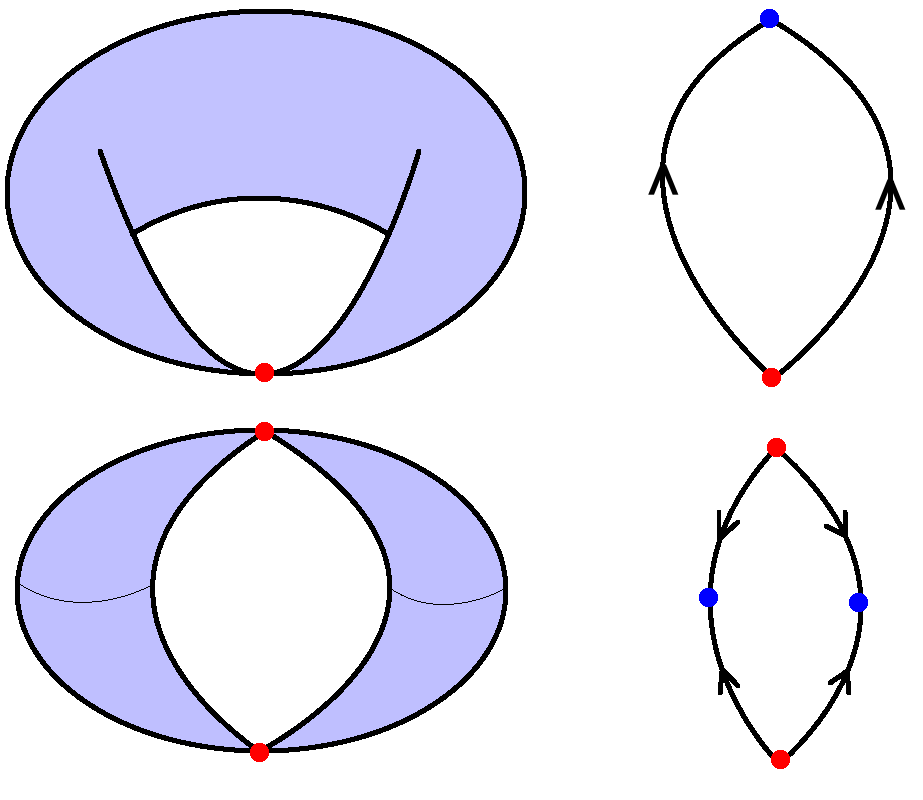}
\caption{Deux espaces filtrés non connexes au sens filtré, et leur ensemble de composantes connexes filtrés}
\label{FigureExempleSPi0}
\end{figure}

\begin{remarque}\label{ConventionsExemples}
Dans l'exemple précédent, et dans la première section de ce chapitre, on s'intéressera particulièrement au cas où $P=\{p_0,p_1\}$. En vertu de la remarque \ref{GroupesHomotopiesFiltresReduits}, pour tout espace filtré $\fil{X}$ et tout pointage $\phi\colon \RealP{N(P)}\to \fil{X}$, il suffit de calculer la restriction à $R(P)$ $j^*s\pi_n(\fil{X},\phi)$ pour déterminer entièrement $s\pi_n(\fil{X},\phi)$. Dans le cas où $P=\{p_0<p_1\}$, la catégorie $R(P)$ (voir la définition \ref{DefinitionRP}) a pour objets $[p_0],[p_1]$ et $[p_0,p_1]$ et pour morphismes non identiques :
\begin{equation*}
\begin{tikzcd}
& {[}p_0,p_1]
\\
{[}p_0]
\arrow{ur}
&&
{[}p_1]
\arrow{ul}
\end{tikzcd}
\end{equation*}
Pour cette raison, on identifiera $s\pi_n(\fil{X},\phi)$ et $j^*s\pi_n(\fil{X},\phi)$ et on écrira
\begin{equation*}
s\pi_n(\fil{X},\phi)= 
\begin{tikzcd}
&\pi_n(\C^0_P(\RealP{[p_0,p_1]},\fil{X}),\phi)
\arrow{dl}
\arrow{dr}
\\
\pi_n(X_{p_0},\phi(p_0))
&&
\pi_n(X_{p_1},\phi(p_1))
\end{tikzcd}
\end{equation*}
avec $X_{p_i}=C^0_P(\RealP{[p_i]},\fil{X})$, $i=0,1$. De plus, on laissera parfois implicite le calcul des morphismes apparaissant dans le diagramme bien que ceux ci soient essentiels à la distinction des groupes d'homotopie filtrés.
Finalement, lorsque ce sera pertinent, on fera les identifications 
\begin{equation*}
\RealP{N(P)}\simeq \RealP{[p_0,p_1]}\simeq \textcolor{red}{\{0\}}\cup\textcolor{blue}{]0,1]}\simeq [0,1].
\end{equation*}
\end{remarque}

\section{D'un fibré localement trivial à un espace filtré}

\subsection{Une construction générale}\label{ConstructionFibre}
\label{SectionConstructionFibreNonFiltre}
Une façon élémentaire de produire un espace filtré à partir d'un espace topologique non filtré $X$ est de prendre son cône $c(X)$. (voir Définition \ref{DefinitionCone}). Si $X$ est une variété topologique de dimension $n$, on obtient ainsi une pseudo variété de dimension $n+1$ à deux strates : la strate singulière est le sommet du cône, et la strate régulière est homéomorphe à $X\times \mathbb{R}$. Si $X$ est une pseudo variété, on obtiendra toujours une pseudo variété $c(X)$, ayant pour strate la plus singulière le sommet du cône et ayant une strate de la forme $S\times \mathbb{R}$ pour chaque strate $S\subseteq X$. Ces constructions donnent des espaces filtrés dont on sait calculer les groupes d'homotopies filtrés. Ou plus exactement, on sait exprimer les groupes d'homotopies filtrés de $c(X)$ en fonction des groupes d'homotopies (filtrés) de $X$ (voir les propositions \ref{GroupesHomotopiesConeOuvertFibre} et \ref{PropositionGroupesHomotopiesFiltreFibreFiltre}, le cas des cônes correspond au cas particulier où $B=\{*\}$). Cependant, toutes les pseudo variétés que l'on peut obtenir ainsi contiennent une strate de dimension $0$. Pour étendre cette classe d'exemple, on considère la construction suivante.

Fixons un fibré localement trivial
\begin{equation}\label{DefinitionFibre}
F\hookrightarrow E\xrightarrow{\pi}B
\end{equation}
Nous allons construire un fibré localement trivial 
\begin{equation}\label{DefinitionConeFibre}
c(F)\hookrightarrow c_{\pi}(E)\xrightarrow{c_{\pi}(\pi)}B
\end{equation}
tel que l'inclusion du sommet dans $c(F)$ induise une section $B\hookrightarrow c_{\pi}(E)$. En particulier, on obtiendra une filtration $\varphi_{\pi}\colon c_{\pi}(E)\to P$ en posant $\varphi_{\pi}^{-1}(p_0)=i_{B}(B)$. Commençons par considérer la somme amalgamée suivante
\begin{equation*}
\begin{tikzcd}
E
\arrow{r}{\pi}
\arrow{d}{i_0}
&B
\arrow[swap]{d}{i_B}
\arrow[bend left=18]{ddr}{\Id_B}
\\
E\times [0,1[
\arrow{r}
\arrow[swap, bend right=18]{drr}{\pi\circ\pr_E}
&c_{\pi}(E)
\arrow[swap, near start]{dr}{c_{\pi}(\pi)}
\\
&&B
\end{tikzcd}
\end{equation*}
Ceci définit le fibré (\ref{DefinitionConeFibre}). Montrons qu'il s'agit bien d'un fibré localement trivial de fibre $c(F)$. Soit $U\subset B$ un ouvert trivialisant pour $\pi$.  Alors, $c_{\pi}(\pi)^{-1}(U)$ est donné comme la somme amalgamée
\begin{equation*}
\begin{tikzcd}
\pi^{-1}(U)
\arrow{r}{\pi}
\arrow{d}{i_0}
&U
\arrow[swap]{d}{i_B}
\\
\pi^{-1}(U)\times [0,1[
\arrow{r}
&c_{\pi}(\pi)^{-1}(U)
\end{tikzcd}
\end{equation*}
En utilisant la trivialisation $\pi^{-1}(U)\simeq F\times U$, il vient :
\begin{equation*}
\begin{tikzcd}
U\times F
\arrow{r}{\pr_U}
\arrow{d}{i_0}
&U
\arrow[swap]{d}{i_B}
\arrow[bend left=18]{ddr}{i_0}
\\
U\times F\times [0,1[
\arrow{r}
\arrow[bend right=18]{drr}
&c_{\pi}(\pi)^{-1}(U)
\arrow{dr}{{\simeq}}
\\
&&U\times c(F)
\end{tikzcd}
\end{equation*}
On en déduit que le fibré (\ref{DefinitionConeFibre}) est un fibré localement trivial, et que $i_B$ induit une section $i_B\colon B\to c_{\pi}(E)$. On remarque aussi que par construction, on a un morphisme $h\colon c_{\pi}(E)\to [0,1[$, induit par la projection $F\times [0,1[\to [0,1[$. Ce morphisme vérifie $h^{-1}(0)=i_B(B)$ et permet de définir une filtration $\varphi_{\pi}\colon c_{\pi}(E)\to P$ comme la composition
\begin{equation*}
c_{\pi}(E)\xrightarrow{h} [0,1[\xrightarrow{\varphi_P}P
\end{equation*}
Avant de procéder au calcul des groupes d'homotopies filtrés de $c_{\pi}(\pi)$, on fait les observations suivantes.

\begin{prop}\label{FibrePseudoVariete}
L'espace filtré $(c_{\pi}(E),\varphi_{\pi})$ est coniquement stratifié. De plus, si $B$ et $F$ sont des variétés topologiques de dimensions $m$ et $n$ respectivement, alors $c_{\pi}(E)$ est une pseudo variété de dimension $m+n+1$.
\end{prop}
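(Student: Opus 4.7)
Mon plan est d'exploiter la structure de fibré localement trivial de $c_{\pi}(\pi)\colon c_{\pi}(E)\to B$ construite juste avant l'énoncé. Puisque les trivialisations locales envoient la section $i_B$ sur les sommets des cônes $c(F)$, toute la structure filtrée de $c_{\pi}(E)$ sera lisible fibre par fibre.

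Pour la première affirmation (conicité au sens de la définition \ref{DefinitionConiquementStratifie}), je procède par disjonction sur $\varphi_{\pi}(x)$. Si $\varphi_{\pi}(x)=p_1$, alors $x\in c_{\pi}(E)\setminus i_B(B)\simeq E\times\ ]0,1[$ et il suffit de prendre $\fil{U}$ un voisinage ouvert de $x$ entièrement contenu dans cette strate, filtré trivialement sur $\{p_1\}$, et d'écrire $\fil{U}\simeq U\otimes c(\emptyset)$ avec $L=\emptyset$ filtré sur $(P_{>p_1})=\emptyset$. Si $\varphi_{\pi}(x)=p_0$, alors $x=i_B(b)$ pour un $b\in B$ ; je choisis un ouvert trivialisant $U\ni b$ dans $B$, ce qui fournit un homéomorphisme $c_{\pi}(\pi)^{-1}(U)\simeq U\times c(F)$. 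Cet homéomorphisme est obtenu directement à partir de la construction par somme amalgamée donnée avant l'énoncé, et envoie $i_B(U)$ sur $U\times\{v\}$. En munissant $F$ de la filtration triviale sur $P_{>p_0}=\{p_1\}$, le cône filtré $c(\fil{F})$ est filtré sur $c(\{p_1\})=\{-\infty<p_1\}$ que l'on identifie à $P$ via $-\infty\mapsto p_0$. L'homéomorphisme précédent devient alors un homéomorphisme filtré $c_{\pi}(\pi)^{-1}(U)\simeq U\otimes c(\fil{F})$, ce qui conclut.

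Pour la seconde affirmation, je suppose $B$ et $F$ des variétés topologiques de dimensions $m$ et $n$. L'espace total $E$ est alors une variété topologique de dimension $m+n$ et $c_{\pi}(E)\setminus i_B(B)\simeq E\times\ ]0,1[$ est une variété topologique de dimension $m+n+1$. Je stratifie $c_{\pi}(E)$ par dimension formelle en posant $X^i=\emptyset$ pour $i<m$, $X^i=i_B(B)$ pour $m\le i\le m+n$ et $X^{m+n+1}=c_{\pi}(E)$ (la codimension $n+1\ge 2$ entre les deux strates étant assurée dès que $n\ge 1$). Je vérifie ensuite la condition de la définition \ref{DefinitionAlternativePseudoVarTop} en deux cas. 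Pour $x\in X^{m+n+1}\setminus X^{m+n}$, un voisinage dans la variété ambiante $E\times\ ]0,1[$ fournit $\mathbb{R}^{m+n+1}\simeq\mathbb{R}^{m+n+1}\times c(\emptyset)$. Pour $x\in i_B(B)$, je combine la trivialisation locale précédente avec une carte $U\simeq\mathbb{R}^m$ de $B$ en $\pi(x)$ et j'obtiens un voisinage $N\simeq\mathbb{R}^m\times c(F)$ ; il reste à remarquer que $F$, vue comme variété topologique de dimension $n$ munie de la filtration triviale, est bien une pseudo-variété de dimension $n$, ce qui est immédiat puisque toute variété est une pseudo-variété sans singularité.

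L'étape la plus délicate me semble être la vérification que l'homéomorphisme $N\simeq\mathbb{R}^m\times c(F)$ préserve la filtration par dimension formelle de façon compatible, c'est-à-dire que pour tout $0\le k\le n$, l'image de $\mathbb{R}^m\times c(F^{k-1})$ coïncide avec $X^{m+k}\cap N$. Comme $F$ est munie de la filtration triviale (toutes les $F^k$ pour $k<n$ étant vides) et comme $c(\emptyset)=\{v\}$, cela revient à constater que $\mathbb{R}^m\times\{v\}$ correspond exactement à $i_B(U)$ dans la trivialisation, ce qui est précisément assuré par la construction de $c_{\pi}(E)$ comme somme amalgamée le long de $i_0\colon E\to E\times[0,1[$. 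Le reste n'est que transport de structure.
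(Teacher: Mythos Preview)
Your proof is correct and follows essentially the same approach as the paper: a case distinction on $\varphi_{\pi}(x)$ combined with the local triviality of $c_{\pi}(\pi)$, and for the pseudo-variety assertion, combining a trivializing open with a chart $U\simeq\mathbb{R}^m$ of $B$. Your account is somewhat more detailed than the paper's (you spell out the dimension filtration and the compatibility of the homeomorphism with it, whereas the paper simply notes that $c_{\pi}(\pi)^{-1}(U)\simeq \mathbb{R}^m\otimes c(L)$ with $L$ a manifold), but the strategy is identical.
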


\begin{proof}
Montrons d'abord que $c_{\pi}(E)$ est coniquement stratifié. Soit $x\in c_{\pi}(E)$. Si $\varphi_{\pi}(x)=p_1$, il n'y a rien à montrer. Si $\varphi_{\pi}(x)=p_0$, soit $U\subset B$ un ouvert trivialisant de $\pi$ contenant $c_{\pi}(\pi)(x)$. Alors, $c_{\pi}(\pi)^{-1}(U)$ contient $x$ et est filtré homéomorphe à $U\otimes c(F)$.

Supposons maintenant que $B$ et $F$ sont des variétés de dimension $m$ et $n$ respectivement. Ceci implique que $E$ est une variété de dimension $m$+$n$. Soit $x\in c_{\pi}(E)$. Si $\varphi_{\pi}(x)=p_1$, alors $x\in c_{\pi}(\pi)^{-1}(p_1)\simeq E\times]0,1[$. En particulier $x$ possède un voisinage homéomorphe à $\R^{m+n+1}$. Si $\varphi_{\pi}(x)=p_0$, soit $U\subset B$ un ouvert trivialisant de $\pi$ contenant $c_{\pi}(\pi)(x)$. Quitte à remplacer $U$ par un voisinage de $c_{\pi}(\pi)(x)$ plus petit, on peut supposer que $U\simeq \R^{m}$. Finalement, on a un homéomorphisme filtré $c_{\pi}(\pi)^{-1}(U)\simeq \R^m\otimes c(L)$, avec $L$ une variété topologique. On en déduit que $c_{\pi}(E)$ est une pseudo-variété. 
\end{proof}

\begin{prop}\label{GroupesHomotopiesConeOuvertFibre}
On a 
\begin{equation*}
s\pi_0(c_{\pi}(E))\simeq 
\begin{tikzcd}
&\pi_0(E)
\arrow[swap]{dl}{\pi_*}
\arrow{dr}{\Id}
\\
\pi_0(B)
&&
\pi_0(E)
\end{tikzcd}
\end{equation*}
Et, pour tout pointage $\phi\colon \RealP{N(P)}\to c_{\pi}(E)$, on a
\begin{equation*}
s\pi_n(c_{\pi}(E),\phi)\simeq 
\begin{tikzcd}
&\pi_n(E,\phi(1))
\arrow[swap]{dl}{\pi_*}
\arrow{dr}{\Id}
\\
\pi_n(B,\phi(0))
&&
\pi_n(E,\phi(1))
\end{tikzcd}
\end{equation*}
\end{prop}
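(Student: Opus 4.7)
L'approche est de calculer les trois espaces de morphismes pertinents en exploitant la structure de fibré de $c_\pi(\pi)$. Par la Proposition \ref{FibrePseudoVariete}, l'espace filtré $c_\pi(E)$ est coniquement stratifié, donc la Proposition \ref{ConiquementStratifieImpliqueFibrant} garantit que $\Sing_P(c_\pi(E))$ est fibrant dans $\sS_P$, et la Proposition \ref{IsomorphismeGroupesHomotopieSingP} identifie les groupes d'homotopie filtrés de $c_\pi(E)$ avec ceux de son ensemble simplicial filtré singulier. D'après les conventions de la Remarque \ref{ConventionsExemples} et la Proposition \ref{SimplexesNonDegeneresGroupesHomotopiesFiltres}, il suffit alors de calculer les espaces $C^0_P(\RealP{\Delta^\varphi},c_\pi(E))$ pour les trois simplexes non dégénérés $[p_0]$, $[p_1]$ et $[p_0,p_1]$ de $N(P)$, ainsi que les deux morphismes induits par les inclusions de face.

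Deux de ces trois calculs sont immédiats : la $p_0$-strate est $\varphi_\pi^{-1}(p_0)=i_B(B)\simeq B$, et la $p_1$-strate est $\varphi_\pi^{-1}(p_1)=E\times {]0,1[}$, qui se rétracte par déformation sur $E$. Le cœur de la preuve est la description de l'entrelacs homotopique
\begin{equation*}
\Hol_{[p_0,p_1]}(c_\pi(E)) = C^0_P(\RealP{[p_0,p_1]},c_\pi(E)),
\end{equation*}
constitué des chemins continus $\gamma\colon[0,1]\to c_\pi(E)$ vérifiant $\gamma(0)\in i_B(B)$ et $\gamma(t)\in E\times{]0,1[}$ pour $t>0$. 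L'étape clé est de montrer que l'application
\begin{equation*}
s\colon E \longrightarrow \Hol_{[p_0,p_1]}(c_\pi(E)), \qquad x \longmapsto \bigl(t\mapsto[x,t]\bigr),
\end{equation*}
induite par l'application canonique $E\times[0,1[\to c_\pi(E)$, est une équivalence faible.

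Pour prouver ce fait, je composerais $s$ avec l'évaluation $\operatorname{ev}_0\colon \Hol_{[p_0,p_1]}(c_\pi(E)) \to B$, $\gamma\mapsto c_\pi(\pi)(\gamma(0))$, de façon à obtenir un carré commutatif au-dessus de l'identité de $B$. Comme $\pi$ est localement trivial, il en va de même de $c_\pi(\pi)$ ; une trivialisation $\pi^{-1}(U)\simeq U\times F$ induit un homéomorphisme filtré $c_\pi(\pi)^{-1}(U)\simeq U\otimes c(F)$ sous lequel $\operatorname{ev}_0^{-1}(U)$ s'identifie au produit d'un espace de chemins à valeurs dans $U$ avec $\Hol_{[p_0,p_1]}(c(F))$, d'où l'on déduit que $\operatorname{ev}_0$ est une fibration de Serre. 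Sur la fibre au-dessus d'un point $b\in B$, l'application $s$ se réduit alors au cas conique $F\to\Hol_{[p_0,p_1]}(c(F))$ qui, comme dans l'Exemple \ref{ExempleMobiusCylindre}, admet un inverse à homotopie près grâce à la décomposition en produit
\begin{equation*}
\Hol_{[p_0,p_1]}(c(F)) \simeq C^0([0,1],F) \times \{h\colon[0,1]\to[0,1]\mid h(0)=0,\ h(t)>0\ \forall t>0\},
\end{equation*}
le second facteur étant convexe donc contractile. La suite exacte longue d'homotopie des deux fibrations, jointe au lemme des cinq, permet alors de conclure que $s$ induit des isomorphismes sur tous les $\pi_n$.

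Enfin, il reste à vérifier que les deux morphismes de restriction correspondent bien à $\pi_*$ et $\Id$ sous l'équivalence $s$. Par construction, la restriction au $[p_0]$ envoie $s(x)$ sur $i_B(\pi(x))$, d'où l'identification avec $\pi_*$ ; la restriction au $[p_1]$ envoie $s(x)$ sur $[x,1]\in E\times{]0,1[}$, qui correspond à $x$ sous la rétraction $E\times{]0,1[}\simeq E$, d'où l'identité. Le calcul pour un pointage général $\phi$ est analogue, en choisissant des points base compatibles le long du chemin $\phi$. L'obstacle principal est la vérification soigneuse que $\operatorname{ev}_0$ est effectivement une fibration de Serre ; tout le reste se ramène, via la trivialité locale de $\pi$, au cas conique $B=\{*\}$, dans lequel la décomposition explicite ci-dessus donne immédiatement le résultat.
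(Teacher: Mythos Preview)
Ta démarche et celle du papier partagent les réductions initiales et l'application $s$, qui correspond à $i_E$ dans le papier (à ceci près qu'il faut $t\mapsto[x,t/2]$ car le cône ouvert n'est pas défini en $t=1$). Elles divergent ensuite~: le papier établit directement que $\pr\circ\ev_1\colon\Hol\to E$ est une équivalence d'homotopie en écrivant deux homotopies globales explicites entre $i_E\circ\pr\circ\ev_1$ et l'identité, en exploitant l'identification $c_\pi(E)=B\cup (E\times{]0,1[})$~; tu proposes au contraire un argument fibrationnel au-dessus de $B$ via $\ev_0$ et le lemme des cinq.

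Il y a cependant une vraie lacune dans ton esquisse. L'affirmation selon laquelle $\ev_0^{-1}(U)$ s'identifie au produit d'un espace de chemins dans $U$ par $\Hol_{[p_0,p_1]}(c(F))$ est incorrecte~: un chemin $\gamma\in\Hol$ avec $\gamma(0)\in U$ peut, pour $t>0$, sortir de $c_\pi(\pi)^{-1}(U)$, de sorte qu'il n'y a aucune structure de produit sur $\ev_0^{-1}(U)$. Pour la même raison, la fibre $\ev_0^{-1}(b)$ contient strictement $\Hol_{[p_0,p_1]}(c(F_b))$, et ton identification de la fibre réclame un argument supplémentaire~--- typiquement une rétraction des chemins vers un voisinage conique, dans l'esprit de \cite[Proposition~A.7.9]{HigherAlgebra} invoquée ailleurs dans le texte. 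L'approche directe du papier contourne entièrement cette difficulté~: elle ne fait jamais appel à une propriété de fibration pour $\ev_0$ et donne des formules explicites valables globalement sur $c_\pi(E)$.
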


\begin{proof}
Par construction, $\varphi_{\pi}^{-1}(p_0)\simeq B$ et $\varphi_{\pi}^{-1}(p_1)\simeq E\times ]0,1[$. On en déduit les termes de gauche et de droite de $s\pi_n$. Montrons que la restriction :
\begin{equation*}
C^0_P(\RealP{\ [p_0,p_1]\ },c_{\pi}(E))\to C^0_P(\RealP{\ [p_1]\ },c_{\pi}(E))
\end{equation*}
induit une équivalence d'homotopie.
On commence par constater qu'on a des homéomorphismes
\begin{equation*}
C^0_P(\RealP{\ [p_1]\ },c_{\pi}(E))\simeq \varphi_{\pi}^{-1}(p_1)\simeq E\times ]0,1[
\end{equation*}
et la projection $\pr_E\colon E\times ]0,1[\to E$ est une équivalence d'homotopie.
On exhibe une section pour la composition
\begin{equation*}
\pr\circ\ev_1\colon C^0_P(\RealP{\ [p_0,p_1]\ },c_{\pi}(E))\to E
\end{equation*}
donnée comme suit :
\begin{align*}
i_E\colon E&\to C^0_P(\RealP{\ [p_0,p_1]\ },c_{\pi}(E))\\
x&\mapsto \left\{\begin{array}{ccc}
[0,1]&\to &c_{\pi}(E)\\
t&\mapsto &\left\{ \begin{array}{cl}
(x,t/2) & \text{si $t>0$}\\
\pi(x) &\text{si $t=0$}
\end{array}\right.
\end{array}\right.
\end{align*}
où on utilise l'identification $c_{\pi}(E)=B\cup E\times]0,1[$. Par construction, on a $\pr\circ\ev_1\circ i_E=\Id_E$. On définit une homotopie entre $i_E\circ \pr\circ\ev_1$ et $\Id$ en deux étapes. Considérons d'abord l'homotopie suivante
\begin{align*}C^0_P(\RealP{\ [p_0,p_1]\ },c_{\pi}(E))\times [0,1] &\to C^0_P(\RealP{\ [p_0,p_1]\ },c_{\pi}(E))\\
(f,s)&\mapsto \left\{ \begin{array}{ccc}
[0,1]&\to & c_{\pi}(E)\\
t&\mapsto & \left\{
\begin{array}{cl}
(f_E(t),(1-s) f_{]0,1[}(t)+s t/2)& \text{ si $t>0$}\\
f(0) &\text{ si $t=0$}
\end{array}\right.
\end{array}\right.
\end{align*}
où $f(t)=(f_{E}(t),f_{]0,1[}(t)\in E\times ]0,1[$ pour $t>0$. C'est une homotopie entre $\Id$ et une application $g$. On a maintenant l'homotopie suivante entre $i_E\circ\pr\circ\ev_1$ et $g$ donnée par la formule suivante :
\begin{align*}
C^0_P(\RealP{\ [p_0,p_1]\ },c_{\pi}(E))\times [0,1] &\to C^0_P(\RealP{\ [p_0,p_1]\ },c_{\pi}(E))\\
(f,s)&\mapsto \left\{ \begin{array}{ccc}
[0,1]&\to & c_{\pi}(E)\\
t&\mapsto & \left\{
\begin{array}{cl}
(f_{E}(t),t/2)& \text{ si $t\geq s, t>0$}\\
(f_{E}(s),t/2)& \text{ si $0<t\leq s$}\\
\pi(f_{E}(s))& \text{ si $0=t$}
\end{array}\right.
\end{array}\right.
\end{align*}
On en déduit que $\pr\circ\ev_1$ est une équivalence d'homotopie. Par deux sur trois, $\ev_1$ est une équivalence d'homotopie, et on obtient le terme du haut de $s\pi_n$ ainsi que le morphisme de droite ($\Id$). En composant $c_{\pi}(\pi)$ avec les homotopies précédentes, on obtient que le morphisme de gauche est $\pi_*$.
\end{proof}
Les pseudo variétés obtenues par application de la proposition \ref{FibrePseudoVariete} ne peuvent pas être compacts. En particulier, ce sont des cônes ouverts dans le cas où $B$ est un point. De façon à obtenir des pseudo variétés compactes, on remplace les cônes ouverts de la construction précédente par des cônes fermés. On rappelle que si $F$ est un espace topologique, le cône fermé de $F$ est le quotient
\begin{equation*}
\bar{c}(F)=F\times [0,1]/F\times\{0\}.
\end{equation*}
En particulier, on a $\partial(\bar{c}(F))\simeq F\times\{1\}$. Ainsi, en effectuant ce remplacement, on disposera d'un bord le long duquel recoller un autre espace topologique de façon à obtenir une pseudo variété compacte. Cette considération donne lieu à la définition suivante.

\begin{defin}
On définit l'espace filtré $\bar{c}_{\pi}(E)$ comme la somme amalgamée
\begin{equation*}
\begin{tikzcd}
E
\arrow{r}{\pi}
\arrow{d}{i_0}
&B
\arrow[swap]{d}{i_B}
\arrow[bend left=18]{ddr}{\Id_B}
\\
E\times [0,1]
\arrow{r}
\arrow[swap, bend right=18]{drr}{\pi\circ\pr_E}
&\bar{c}_{\pi}(E)
\arrow[swap, near start]{dr}{\bar{c}_{\pi}(\pi)}
\\
&&B
\end{tikzcd}
\end{equation*}
Où la filtration $\varphi_{\pi}\colon \bar{c}_{\pi}(E)$ est construite de la même façon que pour $c_{\pi}(E)$.
Soit $f\colon E\to E'$ une application continue. On définit l'espace filtré $X(\pi,f)$ comme la somme amalgamée
\begin{equation*}
\begin{tikzcd}
E
\arrow{d}{i_1}
\arrow{r}{f}
&E'
\arrow{d}
\\
\bar{c}_{\pi}(E)
\arrow{r}
&X(\pi,f)
\end{tikzcd}
\end{equation*}
où $i_1$ est induit par l'inclusion en $1$, $E\hookrightarrow E\times [0,1]$.
\end{defin}

\begin{prop}\label{GroupesHomotopiesFibre}
L'espace filtré $X(\pi,f)$ est coniquement stratifié. De plus, si $B,F, E$ sont des variétés, $E'$ est une variété à bord et $f\colon E\to E'$ est l'inclusion du bord de $E$, alors $X(\pi,f)$ est une pseudo variété. 
Dans tous les cas, on a
\begin{equation*}
s\pi_0(X(\pi,f))\simeq 
\begin{tikzcd}
&\pi_0(E)
\arrow[swap]{dl}{\pi_*}
\arrow{dr}{f}
\\
\pi_0(B)
&&
\pi_0(E')
\end{tikzcd}
\end{equation*}
Et, pour tout pointage $\phi\colon \RealP{N(P)}\to X(\pi,f)$, on a
\begin{equation*}
s\pi_n(X(\pi,f),\phi)\simeq 
\begin{tikzcd}
&\pi_n(E,\phi(1))
\arrow[swap]{dl}{\pi_*}
\arrow{dr}{f_*}
\\
\pi_n(B,\phi(0))
&&
\pi_n(E',f\circ\phi(1))
\end{tikzcd}
\end{equation*}
\end{prop}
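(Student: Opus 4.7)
The proof will closely follow the pattern of Propositions \ref{FibrePseudoVariete} and \ref{GroupesHomotopiesConeOuvertFibre}, since $X(\pi,f)$ is built from $\bar{c}_\pi(E)$, which differs from $c_\pi(E)$ only by using a closed cone, with $E'$ glued in along the top $E\times\{1\}$ of the cylinder. The crucial point is that this gluing happens entirely in the regular stratum $\varphi_{\pi}^{-1}(p_1)$, so the local structure near the singular stratum $B\subset X(\pi,f)$ is identical to that of $c_\pi(E)$.

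For the first assertion I would treat the three classes of points separately. If $x\in E'\setminus f(E)$, then a small enough open neighborhood of $x$ in $E'$ sits entirely in the regular stratum, so $x$ admits a filtered Euclidean neighborhood. If $x$ lies in the open cylinder $E\times{]0,1[}$ or on the gluing $E\times\{1\}\simeq f(E)$, a filtered neighborhood comes from a neighborhood of the corresponding point in $E$ crossed with an interval (near the gluing, one uses a collar of $f(E)$ in $E'$ to glue $E\times{]1-\varepsilon,1]}$ to a neighborhood in $E'$, producing an open set contained in the regular stratum). If $\varphi_\pi(x)=p_0$, then $x\in i_B(B)$ and the local trivialization of $\pi$ over an open $U\subset B$ containing $c_\pi(\pi)(x)$ gives a filtered homeomorphism between a neighborhood of $x$ in $X(\pi,f)$ and $U\otimes c(F)$, exactly as in the proof of Proposition \ref{FibrePseudoVariete}. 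Under the manifold hypothesis, shrinking $U$ so that $U\simeq\R^m$ and noting that $F$ is a manifold shows that the singular points have conical neighborhoods modeled on $\R^m\otimes c(L)$ with $L$ a manifold; at points of $E'$, including the gluing $\partial E'\simeq E$, one uses a collar of $\partial E'$ in $E'$ to get genuine Euclidean neighborhoods of dimension $m+n+1$. This gives a pseudo-manifold structure.

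For the filtered homotopy groups, the identification of the left vertex follows from $\varphi_\pi^{-1}(p_0)=i_B(B)\simeq B$. For the right vertex I would exhibit a strong deformation retraction of the regular stratum $X(\pi,f)_{p_1}=(E\times{]0,1]})\cup_f E'$ onto $E'$ by pushing the cylinder toward $E\times\{1\}$ via the homotopy $(x,t,s)\mapsto(x,(1-s)t+s)$ and then identifying $(x,1)$ with $f(x)\in E'$; this shows $X(\pi,f)_{p_1}\simeq E'$ and identifies the right-hand map of $s\pi_n$ with $f_*$. For the top vertex and left map, I would adapt the two-step homotopy from Proposition \ref{GroupesHomotopiesConeOuvertFibre}: define a section $i_E\colon E\to C^0_P(\RealP{[p_0,p_1]},X(\pi,f))$ of $\pr\circ\ev_{1/2}$ by $x\mapsto(t\mapsto(x,t/2)\text{ if }t>0,\ \pi(x)\text{ if }t=0)$ (this path lands in $\bar{c}_\pi(E)\subset X(\pi,f)$ and so makes sense), and use the two successive homotopies of Proposition \ref{GroupesHomotopiesConeOuvertFibre} — first straightening the ``cone coordinate'' of a general filtered path into $t/2$, then sliding the $E$-component to its value at the parameter $s$ — which can be carried out unchanged because both homotopies stay inside the cone part $\bar{c}_\pi(E)$ where the argument lives. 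Composing with $c_\pi(\pi)$ as in the earlier proof shows that the left-hand map of $s\pi_n$ is $\pi_*$.

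The main obstacle is verifying that the holink retraction really takes values in $\bar{c}_\pi(E)$ and not in the $E'$-part of $X(\pi,f)$, so that the earlier argument transports without modification; this is essentially automatic because the two homotopies only rescale the cone parameter toward $0$, but one should check that the intermediate path $g$ in the proof of Proposition \ref{GroupesHomotopiesConeOuvertFibre} can be defined for an arbitrary filtered path in $X(\pi,f)$ — not only one with image in $\bar{c}_\pi(E)$ — by interpolating the cone coordinate to $t/2\in{]0,1/2]}$, which keeps the path in the open cylinder away from $E'$. The $\pi_0$ computation then drops out of the $n=1$ argument after choosing basepoints, and the display of $s\pi_n(X(\pi,f),\phi)$ follows.
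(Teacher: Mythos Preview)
Your treatment of the conical stratification, the pseudo-manifold structure, and the identification of the right vertex via the mapping-cylinder retraction onto $E'$ are fine and match the paper's approach (the paper just says the proof of Proposition~\ref{FibrePseudoVariete} adapts directly, and computes $\varphi_\pi^{-1}(p_1)\simeq E\times{]0,1]}\cup_{E\times\{1\}}E'\sim E'$).

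The gap is in the holink computation. You claim the explicit homotopies of Proposition~\ref{GroupesHomotopiesConeOuvertFibre} ``can be carried out unchanged because both homotopies stay inside the cone part $\bar c_\pi(E)$ where the argument lives'', but a general filtered path $\gamma\colon[0,1]\to X(\pi,f)$ need not live there: one can have $\gamma(t)\in E'\setminus f(E)$ for $t$ close to $1$, and at such points there is no decomposition $\gamma(t)=(\gamma_E(t),\gamma_{]0,1[}(t))$, hence no cone coordinate to interpolate. For the same reason your map $\pr\circ\ev_{1/2}$ is not defined on all of $C^0_P(\RealP{[p_0,p_1]},X(\pi,f))$, since $\gamma(1/2)$ may already lie in $E'$. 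The obstacle you correctly flag in your last paragraph is therefore not ``essentially automatic''; it is exactly the missing step, and the resolution you sketch (interpolating the cone coordinate) presupposes precisely what is unavailable on the $E'$-part.

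The paper resolves this differently: instead of extending the homotopies, it invokes \cite[Proposition~A.7.9]{HigherAlgebra} applied to the open subset $U=c_\pi(E)\subset X(\pi,f)$, which contains the entire singular stratum $B$. That result yields that the inclusion induces a weak equivalence
\[
C^0_P(\RealP{[p_0,p_1]},c_\pi(E))\;\longrightarrow\; C^0_P(\RealP{[p_0,p_1]},X(\pi,f)),
\]
after which Proposition~\ref{GroupesHomotopiesConeOuvertFibre} applies verbatim to the source. If you want an elementary argument avoiding the citation, you must first produce a deformation of the holink of $X(\pi,f)$ into the holink of $c_\pi(E)$ --- for instance by reparametrising each path by a factor depending continuously on the path so that its image is pushed back into the open cone --- and only then do your two homotopies make sense. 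That reparametrisation argument is essentially what Lurie's proposition packages.
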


\begin{proof}
La preuve de la proposition \ref{FibrePseudoVariete} s'adapte directement pour montrer que $X(\pi,f)$ est coniquement stratifié, et une pseudo variété sous les hypothèses de la proposition. Pour calculer les groupes d'homotopies, on constate d'abord qu'on a
\begin{equation*}
\varphi_{\pi}^{-1}(p_1)\simeq E\times]0,1]\cup_{E\times\{1\}} E'\sim E'
\end{equation*}
Ce qui donne le terme de droite. Ensuite, en appliquant \cite[Proposition A.7.9]{HigherAlgebra} à l'ouvert $U=c_{\pi}(E)\subset X(\pi,f)$, on obtient que le morphisme induit par l'inclusion
\begin{equation*}
C^0_P(\RealP{\ [p_0,p_1]\ },c_{\pi}(E))\to C^0_P(\RealP{\ [p_0,p_1]\ },X(\pi,f))
\end{equation*}
est une équivalence faible. On conclue de la même façon que dans la preuve de la proposition \ref{GroupesHomotopiesConeOuvertFibre}.
\end{proof}

\subsection{Quelques pseudo variétés}
\label{SectionPseudoVarietesFibres}
\begin{exemple}
Considérons les deux fibrés localement triviaux
\begin{align*}
\pi\colon S^1&\to S^1 \\
e^{i\theta}&\mapsto e^{2i\theta},
\end{align*}
et 
\begin{equation*}
\Id\colon S^1\to S^1.
\end{equation*}
En appliquant la construction de la section précédente, on a $X(\pi,\Id)=\fil{M}$ et $X(\Id,\Id)=\fil{C}$, les espaces filtrés de l'exemple \ref{ExempleMobiusCylindre}. En particulier, on retrouve les résultats des calculs fait dans l'exemple \ref{ExempleMobiusCylindre} en appliquant la proposition \ref{GroupesHomotopiesFibre}.
\end{exemple}

\begin{exemple}
Considérons maintenant les deux fibrés suivants
\begin{align*}
\pi\colon S^1\times S^1&\to S^1 \\
(e^{i\theta},e^{i\mu})&\mapsto e^{2i\theta},
\end{align*}
et
\begin{align*}
\pr_1\colon S^1\times S^1&\to S^1 \\
(e^{i\theta},e^{i\mu})&\mapsto e^{i\theta},
\end{align*}
et notons $f\colon S^1\times S^1\to S^1\times D^2$ l'inclusion du tore comme le bord du tore plein. Alors, par la proposition \ref{GroupesHomotopiesFibre}, les espaces filtrés $X(\pi,f)$ et $X(\pr_1,f)$ sont des pseudo-variétés dont on sait calculer les groupes d'homotopies filtrés. Elles sont connexes au sens filtré. fixons $\phi_1\colon \RealP{\ N(P)\ }\to X(\pr_1,f)$ et $\phi_2\colon \RealP{\ N(P)\ }\to X(\pi,f)$ deux pointages, on a 
\begin{equation*}
s\pi_1(X(\pr_1,f),\phi_1)= 
\begin{tikzcd}
&\mathbb{Z}\oplus\Z
\arrow[swap]{dl}{\pr_1}
\arrow{dr}{\pr_1}
\\
\mathbb{Z}
&&
\mathbb{Z}
\end{tikzcd}
\hspace{30pt}
s\pi_1(X(\pi,f),\phi_2)= 
\begin{tikzcd}
&\mathbb{Z}\oplus\Z
\arrow[swap]{dl}{(\times 2,0)}
\arrow{dr}{\pr_1}
\\
\mathbb{Z}
&&
\mathbb{Z}
\end{tikzcd}
\end{equation*}
et
\begin{equation*}
s\pi_n(X(\pr_1,f),\phi_1)= 
\begin{tikzcd}
&0
\arrow[swap]{dl}
\arrow{dr}
\\
0
&&
0
\end{tikzcd}
=
s\pi_n(X(\pi,f),\phi_2) 
\end{equation*}
pour tout $n\geq 2$. En particulier, on en déduit que $X(\pi,f)$ et $X(\pr_1,f)$ ne sont pas homotopes au sens filtré. Cependant, $X(\pi,f)$ et $X(\pr_1,f)$ ne sont pas distingué par l'homologie d'intersection. Calculons leur homologie d'intersection respectives. On commence par constater que $X(\pr_1,f)$ est homéomorphe au produit $S^1\times S^2$. En effet, comme le fibré $\pr_1\colon S^1\times S^1\to S^1$ est trivial, on a:
\begin{equation*}
c_{\pr_1}(S^1\times S^1)\simeq S^1\times c(S^1)\simeq S^1\times D^2
\end{equation*}
Puis, 
\begin{equation*}
X(\pr_1,f)\simeq S^1\times D^2\cup_{S^1\times S^1}S^1\times D^2\simeq S^1\times S^2
\end{equation*}
En particulier, $X(\pr_1,f)$ est une variété, et la stratification sur $X(\pr_1,f)$ est celle d'une pseudo variété. On en déduit que pour toute perversité (de Goresky et MacPherson), l'homologie d'intersection de $X(\pr_1,f)$ est isomorphe à son homologie singulière. Dans notre contexte, il y a une unique strate singulière qui est de codimension $2$. En particulier la seule perversité de Goresky et MacPherson est celle qui associe à la strate singulière la valeur $0$. On a donc
\begin{equation*}
I^0H_k(X(\pr_1,f))\simeq H_k(S^1\times S^2)\simeq \left\{\begin{array}{cl}
\Z &\text{ si $0\leq k\leq 3$}\\
0 &\text{ si $k<0$ ou $k>3$}
\end{array}\right.
\end{equation*}
D'autre part, pour calculer l'homologie d'intersection de $X(\pi,f)$, on considère l'union 
\begin{equation*}
X(\pi,f)\simeq \bar{c}_{\pi}(S^1\times S^1)\cup_{S^1\times S^1}S^1\times D^2
\end{equation*}
On pose $U=c_{\pi}(S^1\times S^1)$ et $V=S^1\times S^1\times]1/2,1]\cup_{S^1\times S^1}S^1\times D^2$. Alors, on a
\begin{equation*}
X(\pi,f)=U\cup V
\end{equation*}
et
\begin{equation*}
U\cap V\simeq S^1\times S^1\times ]1/2,1[.
\end{equation*}
On peut donc appliquer le théorème de Mayer-Vietoris en homologie d'intersection pour calculer l'homologie d'intersection de $X(\pi,f)$. On a la suite exacte longue
\begin{equation*}
I^0H_n(U\cap V)\to I^0H_n(U)\oplus I^0H_n(V)\to I^0H_n(X(\pi,f)) \to I^0H_{n-1}(U\cap V)\to I^0H_{n-1}(U)\oplus I^0H_{n-1}(V)
\end{equation*}
On calcule
\begin{equation*}
I^0H_n(U\cap V)\simeq H_n(S^1\times S^1\times ]1/2,1[)\simeq \left\{\begin{array}{cl}
\Z &\text{si $n=0,2$}\\
\Z\oplus\Z &\text{si $n=1$}\\
0 &\text{si $n<0$ ou $n>2$}
\end{array}
\right.
\end{equation*}
car $U\cap V$ est inclus dans la strate régulière. De même 
\begin{equation*}
I^0H_n(V)\simeq H_n(V)\simeq H_n(S^1\times D^2)\simeq \left\{\begin{array}{cl}
\Z &\text{si $n=0,1$}\\
0 &\text{si $n<0$ ou $n>1$}
\end{array}
\right.
\end{equation*}
D'autre part, le morphisme $I^0H_0(U\cap V)\to I^0H_0(V)$ est l'identité, et le morphisme $I^0H_1(U\cap V)\to I^0H_1(V)$ correspond à la projection $\Z\oplus \Z\xrightarrow{\pr_1} \Z$ induite par l'inclusion $S^1\times S^1\to S^1\times D^2$.
Il reste à calculer $I^0H_n(U)$ et l'application $I^0H_n(U\cap V)\to I^0H_n(U)$. On identifie $S^1$ à $\R\mod 2\pi$, et on constate que $S^1\simeq U_1\cup U_2$ avec $U_1=]-\pi,\pi[$ et $U_2=]0,2\pi[$. De plus, $U_1$ et $U_2$ sont des ouverts trivialisants pour $c_{\pi}(\pi)$. En particulier, on a
\begin{equation*}
c_{\pi}(S^1\times S^1)\simeq ]-\pi,\pi[\times c(S^1\coprod S^1)\cup ]0,2\pi[\times c(S^1\coprod S^1)
\end{equation*}
Où les deux composantes sont recollées par l'identité le long de $]0,\pi[$ et par l'homéomorphisme échangeant les composantes de $c(S^1\coprod S^1)$ le long de $]\pi,2\pi[$. 
On calcule 
\begin{equation*}
I^0H_k(U_1)\simeq I^0H_k(U_2)\simeq I^0H_k(c(S^1\coprod S^1))\simeq \left\{\begin{array}{cl}
\Z\oplus \Z &\text{ si $k=0$}\\
0 &\text{ si $k\not= 0$}
\end{array}\right.
\end{equation*}
De même, 
\begin{equation*}
I^0H_k(U_1\cap U_2)\simeq I^0H_k(c(S^1\coprod S^1)\coprod c(S^1\coprod S^1))\simeq \left\{\begin{array}{cl}
(\Z\oplus \Z)\oplus (\Z\oplus \Z) &\text{ si $k=0$}\\
0 &\text{ si $k\not= 0$}
\end{array}\right.
\end{equation*}
En appliquant Mayer Vietoris à $U_1,U_2$, on a la suite exacte longue
\begin{equation*}
0\to I^0H_1(U)\to I^0H_0(U_1\cap U_2)\xrightarrow{\alpha} I^0H_0(U_1)\oplus I^0H_0(U_2)\to I^0H_0(U)\to 0
\end{equation*}
En particulier, on a $I^0H_1(U)=\ker(\alpha)$ et $I^0H_0(U)=\coker(\alpha)$.
On calcule
\begin{align*}
(\Z\oplus \Z)\oplus (\Z\oplus \Z)&\xrightarrow{\alpha}(\Z\oplus \Z)\oplus (\Z\oplus \Z)\\
((a,b),(c,d))&\mapsto ((a+c,b+d),(a+d,b+c))
\end{align*}
En particulier, on a $\coker(\alpha)\simeq\ker(\alpha)\simeq \Z$.
On en déduit
\begin{equation*}
I^0H_n(U)\simeq \left\{\begin{array}{cl}
\Z &\text{si $n=0,1$}\\
0 &\text{si $n<0$ ou $n>1$}
\end{array}\right.
\end{equation*}
De plus, on voit que le $I^0H_1(U)$ provient de l'homologie de la base du fibré $c_{\pi}(\pi)$. En particulier, le morphisme
\begin{equation*}
I^0H_1(U\cap V)\to I^0H_1(U)
\end{equation*}
correspond à la projection $\Z\oplus \Z\xrightarrow{\pr_1}\Z$. On en déduit l'homologie d'intersection de $X(\pi,f)$.
On a $I^0H_n(X(\pi,f))=0$ pour $n>3$ ou $n<0$. Puis, on a 
\begin{equation*}
I^0H_3(X(\pi,f))\simeq I^0H_2(U\cap V)\simeq \Z
\end{equation*}
\begin{equation*}
I^0H_2(X(\pi,f))\simeq \ker\left(I^0H_1(U\cap V)\to I^0H_1(U)\oplus I^0H_1(V)\right)\simeq 0\oplus \Z\simeq \Z
\end{equation*}
\begin{equation*}
I^0H_1(X(\pi,f))\simeq \coker\left(I^0H_1(U\cap V)\to I^0H_1(U)\oplus I^0H_1(V)\right)\simeq \Z
\end{equation*}
et
\begin{equation*}
I^0H_1(X(\pi,f))\simeq \coker\left(I^0H_0(U\cap V)\to I^0H_0(U)\oplus I^0H_0(V)\right)\simeq \Z
\end{equation*}
En particulier, les groupes d'homotopies filtrés sont un invariants des pseudo variétés plus fin que l'homologie d'intersection. Et dans des contextes appropriés tels que celui-ci, ils sont même plus facile à calculer.
\end{exemple}

\subsection{Espaces D'Eilenberg-Mac Lane Filtrés}
\label{SectionEilenbergMaclane}
On peut faire la définition suivante d'espaces D'Eilenberg-Mac Lane dans le cas filtré.

\begin{defin}
Soient $\fil{X}$ un espace filtré sur $P=\{p_0,p_1\}$, 
\begin{equation*}
\bar{G}= 
\begin{tikzcd}
&G_{[p_0,p_1]}
\arrow[swap]{dl}
\arrow{dr}
\\
G_{p_0}
&&
G_{p_1}
\end{tikzcd}
\end{equation*}
un diagramme de groupe et $n\geq 1$ un entier. Alors, $\fil{X}$ est un $(\bar{G},n)$ espace d'Eilenberg-Mac Lane au sens filtré si $\fil{X}$ est connexe au sens filtré, et pour tout pointage $\phi\colon\RealP{N(P)}\to \fil{X}$, on a
\begin{equation*}
s\pi_k(\fil{X},\phi)= 
\begin{tikzcd}
&0
\arrow[swap]{dl}
\arrow{dr}
\\
0
&&
0
\end{tikzcd}
\end{equation*}
pour $k\not =n$ et
\begin{equation*}
s\pi_n(\fil{X},\phi)\simeq \bar{G}=
\begin{tikzcd}
&G_{[p_0,p_1]}
\arrow[swap]{dl}
\arrow{dr}
\\
G_{p_0}
&&
G_{p_1}
\end{tikzcd}
\end{equation*}
\end{defin} 
Cependant, on ne sait pas à priori si il existe un $(\bar{G},n)$ espace d'Eilenberg-Mac Lane pour tout $\bar{G}$ et $n$. On ne sait pas non plus si ils sont uniques lorsqu'ils existent.
D'autre part, dans le cas non filtré, on sait que $K(G,1)\simeq B(G)$ correspond au classifiant du groupe $G$ et permet de classifier les $G$-fibrés. Qu'en est-il dans le cas des espaces filtrés?

L'exemple suivant permet de répondre partiellement à la première question, en construisant des $(\bar{G},1)$ espaces D'Eilenberg-Mac Lane pour une large famille de diagrammes de groupes $\bar{G}$.

\begin{exemple}
Soient $G$ un groupe et $\pi\colon H\hookrightarrow G$ une inclusion de sous groupe. A partir de $\pi$, on construit un fibré localement trivial
\begin{equation*}
G/H\hookrightarrow EG\times_{G}G/H\xrightarrow{\pi} BG
\end{equation*}
où, $BG$ est un classifiant de $G$ et $EG$ est le revêtement universel de $BG$. De plus, on a
\begin{equation*}
EG\times_{G}G/H\simeq (EG\times_{G}G)/H\simeq (EG/H)\simeq BH
\end{equation*}
On obtient donc un fibré localement trivial de la forme
\begin{equation*}
G/H\hookrightarrow BH\xrightarrow{\pi} BG.
\end{equation*}
Soit $f\colon H\to H'$ un morphisme de groupe, on choisit un représentant de $f$ :
\begin{equation*}
f\colon BH\to BH'.
\end{equation*}
Alors, en appliquant la proposition \ref{GroupesHomotopiesFibre}, on obtient que l'espace $X(\pi,f)$ est coniquement stratifié, et connexe au sens filtré. De plus, pour tout pointage $\phi\colon \RealP{N(P)}\to X(\pi,f)$, on a
\begin{equation*}
s\pi_1(X(\pi,f),\phi)=
\begin{tikzcd}
&H
\arrow[swap]{dl}{\pi}
\arrow{dr}{f}
\\
G
&&
H'
\end{tikzcd}
\end{equation*}
et
\begin{equation*}
s\pi_n(X(\pi,f),\phi)=
\begin{tikzcd}
&0
\arrow[swap]{dl}
\arrow{dr}
\\
0
&&
0
\end{tikzcd}
\end{equation*}
pour tout $n\geq 2$.
En particulier, en notant
\begin{equation*}
\bar{G}=\begin{tikzcd}
&H
\arrow[swap]{dl}{\pi}
\arrow{dr}{f}
\\
G
&&
H'
\end{tikzcd}
\end{equation*}
L'espace filtré $X(\pi,f)$ est un $(\bar{G},1)$-espace d'Eilenberg-Mac Lane.
\end{exemple}

\begin{prop}\label{PropositionCaracterisationEilenbergMaclane}
Soient 
\begin{equation*}
\bar{G_1}=\begin{tikzcd}
&H_1
\arrow[swap]{dl}{\pi_1}
\arrow{dr}{f_1}
\\
G_1
&&
H'_1
\end{tikzcd}
\hfill
\bar{G_2}=\begin{tikzcd}
&H_2
\arrow[swap]{dl}{\pi_2}
\arrow{dr}{f_2}
\\
G_2
&&
H'_2
\end{tikzcd}
\end{equation*}
deux diagrammes de groupes avec $\pi_i\colon H_i\to G_i$ des monomorphismes. Si $X(\pi_1,f_1)$ et $X(\pi_2,f_2)$ sont homotopiquement équivalents au sens filtré, alors il existe un isomorphisme de diagramme entre $\bar{G_1}$ et $\bar{G_2}$. C'est à dire trois isomorphismes $h_{G},h_{H}$ et $h_{H'}$ tel que le diagramme suivant commute.
\begin{equation*}
\begin{tikzcd}
G_1
\arrow{d}{h_G}
&H_1
\arrow[swap]{l}{\pi_1}
\arrow{r}{f_1}
\arrow{d}{h_H}
&H'_1
\arrow{d}{h_{H'}}
\\
G_2
&H_2
\arrow{l}{\pi_2}
\arrow[swap]{r}{f_2}
&H'_2
\end{tikzcd}
\end{equation*}
Réciproquement, si il existe un tel isomorphisme de diagrammes et deux ensembles simpliciaux filtrés $\fil{A}$ et $\fil{B}$ tels que  $X(\pi_1,f_1)$ et $X(\pi_2,f_2)$ soit homotopiquement équivalents au sens filtré à $\RealP{\fil{A}}$ et $\RealP{\fil{B}}$ respectivement, alors $X(\pi_1,f_1)$ et $X(\pi_2,f_2)$ sont homotopiquements équivalents au sens filtré.
\end{prop}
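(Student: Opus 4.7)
Pour le sens direct, je pars d'une équivalence d'homotopie filtrée $F\colon X(\pi_1,f_1)\to X(\pi_2,f_2)$. Par le Corollaire \ref{EquivalenceHomotopieIsoTop}, pour tout pointage $\phi\colon\RealP{N(P)}\to X(\pi_1,f_1)$ et tout $n\geq 0$, $F$ induit un isomorphisme d'ensembles simpliciaux filtrés $s\pi_n(F)\colon s\pi_n(X(\pi_1,f_1),\phi)\to s\pi_n(X(\pi_2,f_2),F\circ\phi)$. D'après la Proposition \ref{GroupesHomotopiesFibre}, les calculs de $s\pi_1$ pour les deux membres redonnent précisément les diagrammes $\bar{G_1}$ et $\bar{G_2}$ (c'est ici qu'on utilise que $\pi_i$ est un monomorphisme et que les groupes supérieurs de $BH_i$, $BG_i$, $BH'_i$ sont nuls, de sorte qu'aucune ambiguïté sur les pointages n'intervient). Évaluer l'isomorphisme $s\pi_1(F)$ sur les trois objets $[p_0]$, $[p_1]$, $[p_0,p_1]$ de $R(P)$ donne exactement le triplet d'isomorphismes $(h_G,h_{H'},h_H)$, et la naturalité par rapport aux morphismes de $R(P)$ est exactement la commutativité du diagramme carré à montrer.

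Pour la réciproque, je fixe un isomorphisme de diagrammes $(h_G,h_H,h_{H'})$. L'étape cruciale est la construction d'un morphisme filtré strict $F\colon X(\pi_1,f_1)\to X(\pi_2,f_2)$ induisant cet isomorphisme au niveau de $s\pi_1$. Je choisis une version fonctorielle de la construction classifiante $B\colon \Grp\to\Top$ (par exemple via le nerf). La commutativité stricte de $h_G\circ\pi_1=\pi_2\circ h_H$ et $h_{H'}\circ f_1=f_2\circ h_H$ se transporte alors en une commutativité stricte des carrés analogues entre $BG_i$, $BH_i$, $BH'_i$. Par la définition de $c_{\pi_i}(BH_i)$ et $X(\pi_i,f_i)$ comme sommes amalgamées, les propriétés universelles fournissent un unique morphisme filtré
\begin{equation*}
F\colon X(\pi_1,f_1)\to X(\pi_2,f_2)
\end{equation*}
compatible avec $Bh_G$, $Bh_H$ et $Bh_{H'}$.

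Il reste à voir que $F$ induit un isomorphisme sur tous les groupes d'homotopie filtrés. Pour $s\pi_0$ et $s\pi_n$ avec $n\geq 2$, c'est automatique puisque les deux membres sont connexes au sens filtré et que les groupes concernés sont nuls. Pour $s\pi_1$, la fonctorialité de la construction \ref{ConstructionFibre} par rapport aux morphismes de fibrés et la Proposition \ref{GroupesHomotopiesFibre} montrent que $s\pi_1(F)$ se lit précisément comme le triplet $(h_G,h_H,h_{H'})$. Les hypothèses de réalisabilité sur $X(\pi_1,f_1)$ et $X(\pi_2,f_2)$ étant satisfaites par hypothèse, et la Proposition \ref{FibrePseudoVariete} garantissant que ces espaces sont coniquement stratifiés (donc $\Sing_P$ y est fibrant, par la Proposition \ref{ConiquementStratifieImpliqueFibrant}), le Théorème \ref{PremierTheoremeWhitehead} (combiné avec la Remarque \ref{PremierTheoremeWhiteheadAHomotopiePres}) permet de conclure que $F$ est une équivalence d'homotopie filtrée.

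Le principal obstacle est le choix d'un modèle fonctoriel strict pour $B(-)$ permettant de produire un morphisme filtré strict entre les deux constructions en sommes amalgamées : si on ne dispose que d'une commutativité des carrés $\pi_i, f_i$ à homotopie près, il faudrait passer par un remplacement cofibrant pour obtenir un représentant strict, ce qui alourdirait la démonstration. L'emploi d'une construction classifiante fonctorielle (nerf de la catégorie à un objet associée au groupe) contourne cette difficulté et rend la construction de $F$ immédiate.
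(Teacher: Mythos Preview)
Your proof is correct and follows the same strategy as the paper: the forward direction comes from invariance of filtered homotopy groups together with the computation of $s\pi_1$ done in the preceding example, and the converse constructs a filtered map from classifying-space data and then applies the filtered Whitehead theorem (Théorème \ref{PremierTheoremeWhitehead} with Remarque \ref{PremierTheoremeWhiteheadAHomotopiePres}). Your treatment is in fact slightly more careful than the paper's on one point: the paper simply invokes the universal property of $BG$ to get maps ``défini[e]s à homotopie près'' and asserts they assemble into a filtered map on the pushouts, whereas you explicitly secure strict commutativity via a functorial nerve construction before using the universal property of the pushout.
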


\begin{proof}
Le sens direct provient des calculs précédents, et de l'invariance des groupes d'homotopies filtrés par équivalences d'homotopies filtrées. Réciproquement, par la propriété universelle des $BG$, on obtient des morphismes définis à homotopie près
\begin{equation*}
h_G\colon BG_1\to BG_2,
\end{equation*}
\begin{equation*}
h_H\colon EG_1\times_{G_1}G_1/H_1\to EG_2\times_{G_2}G_2/H_2,
\end{equation*}
et
\begin{equation*}
h_{H'}\colon BH'_1\to BH'_2.
\end{equation*}
Ceux ci induisent une application filtrée
\begin{equation*}
h\colon X(\pi_1,f_1)\to X(\pi_2,f_2).
\end{equation*}
Par construction, $s\pi_1(h)$ est l'isomorphisme de diagramme dont on est parti. Par application du théorème \ref{PremierTheoremeWhitehead} et de la remarque \ref{PremierTheoremeWhiteheadAHomotopiePres}, on déduit que $h$ est une équivalence d'homotopie filtrée.
\end{proof}

\begin{remarque}
La proposition précédente ne garantit pas l'unicité des espaces filtrés ayant la propriété d'Eilenberg-Mac Lane. Elle garantit seulement que deux espaces ayant cette propriété et issus de la construction \ref{GroupesHomotopiesFibre}, sont homotopiquement équivalents au sens filtré.
\end{remarque}

\section{Un plongement est un espace filtré}
\label{SectionPlongementEspaceFiltre}
\subsection{Construction générale}

Soit $X$ un espace topologique et $\{X_i\}_{i\in S}$ un ensemble localement fini de sous espaces fermés de $X$. Alors, on peut munir $X$ d'une filtration
\begin{equation*}
\varphi_S\colon X\to \mathcal{P}(S)
\end{equation*}
où $\mathcal{P}(S)$ est l'ensemble des parties de $S$ ordonné par l'inverse de l'inclusion. C'est à dire 
\begin{equation*}
I\subseteq J\subseteq S\Rightarrow I\geq J.
\end{equation*}
On définit la filtration comme suit. Soit $x\in X$, on pose
\begin{equation*}
\varphi_X(x)=\{X_i\ |\ x\in X_i\}.
\end{equation*}
On vérifie que la filtration est continue. Une base d'ouvert de $\mathcal{P}(S)$ est donnée par les
\begin{equation*}
U_{I}=\{J\ |\ J\subseteq I\}
\end{equation*}
Soit $I\subseteq S$, on a 
\begin{equation*}
\varphi^{-1}(U_{I})=(\cup_{i\in S\setminus I}X_i)^{c}
\end{equation*}
Comme l'ensemble des $X_i$ est localement fini, l'union
\begin{equation*}
\cup_{i\in S\setminus I}X_i
\end{equation*}
est fermée, et donc $\varphi^{-1}(U_I)$ est ouvert. 

\begin{prop}\label{PropositionPlongementPseudoVariete}
Soit $X$ une variété de dimension $n$ et $\{X_i\}_{i\in S}$ un ensemble localement fini de sous variétés de dimensions respectives $n_i$. On suppose que pour tout $I\subseteq S$, et pour tout $x\in\cap_{I}X_i$, il existe un ouvert $U\subseteq X$ contenant $x$ et des homéomorphismes
\begin{equation*}
h\colon U\to \R^n
\end{equation*}
et 
\begin{equation*}
h_i\colon U\cap X_i\to \R^{n_i}
\end{equation*}
pour $i\in I$, ainsi que des applications linéaires
\begin{equation*}
\alpha_i\colon \R^{n_i}\to \R^n
\end{equation*}
tels que, pour tout $i\in I$, la restriction $h_{|U\cap X_i}$ est égale à la composition
\begin{equation*}
U\cap X_i\xrightarrow{h_i}\R^{n_i}\xrightarrow{\alpha_i}\R^n.
\end{equation*}
Alors, l'espace filtré $(X,\varphi_S)$ est une pseudo variété.
\end{prop}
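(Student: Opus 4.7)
The plan is to verify, at every point of $X$, the defining local structure of a stratified pseudo-variety, by induction on the ambient dimension $n$. The base case $n=0$ is trivial (no proper submanifolds are possible), so I fix $n \geq 1$ and a point $x \in X$ with $\varphi_S(x) = I$, and use the hypothesis to obtain a chart $h\colon U \to \R^n$ sending $x$ to $0$ and each $X_i$ ($i \in I$) to a linear subspace $\alpha_i(\R^{n_i})$ of $\R^n$. Shrinking $U$ using local finiteness, I can arrange that no other $X_j$ meets $U$, so the stratification of $U$ is controlled entirely by the collection $\{\alpha_i(\R^{n_i})\}_{i\in I}$.

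Next I would decompose $\R^n$ as a product adapted to this linear arrangement. Set $W = \bigcap_{i \in I} \alpha_i(\R^{n_i})$, a linear subspace of dimension $k$ containing $0$; this is precisely the trace of the stratum through $x$. Pick a linear complement $V$ so that $\R^n = W \oplus V$, and write $V_i = V \cap \alpha_i(\R^{n_i})$. Because each $\alpha_i(\R^{n_i})$ contains $W$, one checks $\alpha_i(\R^{n_i}) = W \oplus V_i$, and therefore the whole stratified arrangement in $\R^n$ is a product: it equals $W \times (\text{arrangement of the } V_i \text{ in } V)$. Restricting to a ball $B \times \Omega$ around $0$ then gives a filtered homeomorphism between a neighborhood of $x$ and $B \otimes \Omega$, which will be the product factor $\R^i$ in the definition of a pseudo-variety (with $i = k$).

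The remaining step is to put a conical structure on the transverse piece $\Omega$. Since every $V_i$ is a linear subspace of $V$, the stratification on $V$ is invariant under the radial $\R_{>0}$-action. Choosing $\Omega$ to be an open ball, the complement of $0$ in $\Omega$ deformation retracts filtered-equivariantly onto the unit sphere $L \subset V$, and $\Omega$ is filtered-homeomorphic to $c(L)$, with the cone point corresponding to $0$. The link $L$ is a manifold of dimension $n-k-1$ stratified by the submanifolds $L \cap V_i$, and the linear model shows directly that $L$ together with this family satisfies the hypothesis of the proposition in ambient dimension $n-k-1 < n$. By the induction hypothesis, $L$ is a stratified pseudo-variety of dimension $n-k-1$, and is empty exactly when $k = n$, i.e.\ when $x$ lies in a stratum of top dimension. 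Combining the two steps yields the required filtered homeomorphism $\strat{N} \simeq \R^k \otimes c(\strat{L})$.

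The main obstacle will be bookkeeping the posets: one has to check that under the homeomorphisms $U \simeq B \times \Omega \simeq B \otimes c(L)$, the filtration inherited from $\varphi_S\colon X \to \mathcal{P}(S)$ agrees with the product-and-cone filtration built from $L$. This amounts to verifying that for $(b,(\ell,t)) \in B \otimes c(L)$ with $t > 0$, the set $\{i \in I : (b,(\ell,t)) \in h(X_i)\}$ equals $\{i \in I : \ell \in L \cap V_i\}$, which is immediate from the product decomposition $\alpha_i(\R^{n_i}) = W \oplus V_i$ and radial invariance. Hausdorffness of $X$ is inherited from its manifold structure, and density of the top stratum follows because in each chart the top stratum is the complement of a finite union of proper linear subspaces, hence dense, and this is a local property.
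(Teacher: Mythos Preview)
Your argument is correct and follows essentially the same route as the paper: intersect the linear subspaces to find the tangential factor, pass to a complement, and use radial invariance to identify the transverse slice with a cone on the unit sphere, which becomes the link. The only organizational difference is that you induct on the ambient dimension $n$ whereas the paper recurses on the cardinality $|I|$; both quantities strictly decrease when passing to the link, and your choice is arguably cleaner since the link is literally a lower-dimensional manifold satisfying the same hypothesis. Note also that your notation for the two factors is swapped relative to the paper (your $W$ is the paper's $V_x$ and vice versa), and that you spell out several details (shrinking $U$ by local finiteness, the poset compatibility, density of the top stratum) that the paper leaves implicit.
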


\begin{proof}
Soit $x\in X$. Notons $I=\varphi_S(x)$. Si $I=\emptyset$, $x$ appartient à la sous variété ouverte 
\begin{equation*}
X\setminus (\cup_{i\in S}X_i)
\end{equation*}
et donc, il existe un voisinage de $x$, $U$ tel que $U\simeq \R^n$, avec $\varphi_S(U)=\emptyset\in \mathcal{P}(S)$. Si $I\not=\emptyset$, soit $U$ un voisinage de $x$ vérifiant les hypothèses de la proposition. Notons $V_x$ le sous espace vectoriel de $\R^n$ obtenu comme
\begin{equation*}
V_x=\cap_{i\in I}\alpha_i(\R^{n_i})
\end{equation*}
Et notons $W_x$ son complément orthogonal dans $\R^n$.
On a un isomorphisme canonique
\begin{equation*}
V_x\times W_x\simeq \R^n
\end{equation*}
On a une filtration naturelle sur $W_x$ donnée par
\begin{align*}
\varphi_x\colon W_x &\to \mathcal{P}(S)_{\geq I}\\
y&\mapsto \{i\in I\ |\ y\in \alpha_i(\R^{n_i})\cap W_x\}.
\end{align*}
De plus, cette filtration vérifie $\varphi_x^{-1}(I)=\{0\}$ et pour tout $y\in W_x$, et pour tout $\lambda\in \R\setminus\{0\}$, $\varphi_x(y)=\varphi_x(\lambda y)\in \mathcal{P}(S)_{>I}$. On en déduit que si la sphère unité $S_x\subseteq W_x$ est munie de la filtration induite
\begin{equation*}
\varphi_x\colon S_x\to \mathcal{P}(S)_{>I},
\end{equation*}
on a un homéomorphisme filtré canonique
\begin{equation*}
c(S_x)\simeq W_x
\end{equation*}
Finalement, on obtient que l'homéomorphisme $h$ est un homéomorphisme filtré de la forme
\begin{equation*}
h\colon U\to V_x\times c(S_x)
\end{equation*}
avec $V_x\simeq \R^{n_I}$ pour un certain $n_I$ et $S_x$ est une pseudo-variété, par récurrence sur le cardinal de $I$.
\end{proof}

\begin{exemple}
Soit $X=\mathbb{C}^n$ et $\{X_i\}_{i\in S}$ une collection d'hyperplan de $\mathbb{C}^n$. Alors $(X,\varphi_S)$ est une pseudo-variété. En effet, soit $x\in \mathbb{C}^n$, on peut définir $h$ comme
\begin{align*}
h\colon B(x,\epsilon)&\to \mathbb{C}^n\\
z&\mapsto \frac{z-x}{\epsilon-|z-x|}
\end{align*}
pour $\epsilon$ suffisamment petit. En considérant le sous ensemble ordonné $\varphi_S(X)\subseteq \mathcal{P}(S)$, on obtient ainsi l'ensemble ordonné des intersections de l'arrangement d'hyperplan. 
\end{exemple}

On rappelle que si $N$ est une variété topologique de dimension $n$ et $M\subseteq N$ est une sous variété de dimension $m$, $M$ est localement plate dans $N$ si pour tout point $x\in M$, il existe un voisinage de $x$, $U\subseteq N$ , et un homéomorphisme de paire
\begin{equation*}
h\colon (U,U\cap M)\to (\R^n,\R^m)
\end{equation*}
où $\R^m$ est vu comme le sous espace $\R^m\times\{0_{\R^{n-m}}\}\subseteq \R^n$. Avec cette définition, on a l'exemple suivant.

\begin{exemple}
Si $X$ est une variété topologique et $\{X_i\}_{i\in S}$ est un ensemble localement fini de sous variétés localement plates de $X$ deux à deux disjointes, alors $(X,\varphi_S)$ est une pseudo variété. On s'intéresse dans la section suivante au cas où $X=S^3$ et où les $X_i$ sont homéomorphes à $S^1$. On obtient ainsi les noeuds et les entrelacs comme des structures de pseudo variété sur $S^3$.
\end{exemple}

\subsection{Nœuds et entrelacs}

Soit $\gamma=\coprod_{i\in S}\gamma_i\colon \coprod_{i\in S} S^1\to S^3$ un entrelacs lisse. Notons $X_i=\gamma_i(S^1)\subseteq S^3$, et considérons $(X,\varphi_S)$ la pseudo variété obtenue en appliquant la construction précédente. On remarque que le sous ensemble ordonné $\varphi_S(X)\subseteq \mathcal{P}(S)$ est de la forme
\begin{equation*}
P=\varphi_S(X)=\{\{i\}\ | i\in S\}\cup\{\emptyset\}
\end{equation*}
Et les seuls relations sont de la forme $\{i\}\leq \emptyset$. Pour simplifier les notations à venir, notons $n=|S|$ et identifions $S$ à l'ensemble $\{-n+1,\dots,0\}$. On a alors :
\begin{equation*}
P=\{p_i\ | -n+1\leq i\leq 1\}
\end{equation*}
avec pour seuls relations $p_i\leq p_1$ pour tout $-n+1\leq i\leq 1$.
Chaque composante de l'entrelacs admet un voisinage tubulaire homéomorphe à un tore plein. Autrement dit, on a des voisinages de la forme
\begin{equation*}
c(\pr_1)(S^1\times S^1)\to S^1.
\end{equation*}
On fixe de tels voisinages disjoints deux à deux, (quitte à ajouter des hypothèses sur l'entrelacs pour que de tels voisinages existent). En travaillant composante par composante, on se retrouve dans la situation décrite à la section \ref{SectionConstructionFibreNonFiltre}, et on peut donc appliquer les résultats de la proposition \ref{GroupesHomotopiesFibre} pour calculer les groupes d'homotopies filtrés de $(X,\varphi_S)$. (Voir aussi la proposition \ref{PropositionGroupesHomotopiesFiltreFibreFiltre} pour un calcul plus direct.)
La catégorie $R(P)^{\op}$ est de la forme
\begin{equation*}
\begin{tikzcd}
&p_1
\\
{[}p_{-n+1},p_1]
\arrow{d}
\arrow{ur}
&\dots
&{[}p_{-1},p_1]
\arrow{d}
\arrow{ul}
&{[}p_0,p_1]
\arrow{d}
\arrow{ull}
\\
p_{-n+1}
&
\dots
&
p_{-1}
&
p_0
\end{tikzcd}
\end{equation*} 
et $(X,\varphi_S)$ est connexe au sens filtré. On choisit un pointage $\phi\colon \RealP{N(P)}\to (X,\varphi_S)$. (c'est à dire une collection d'applications continues $[\phi_i\colon [0,1]\to X$, vérifiant $\phi_i(0)\in X_i$, et $\phi_i(t)\in X\setminus(\cup X_i)$ pour tout $t>0$ et $\phi_i(1)=\phi_j(1)$ pour tout $-n+1\leq i,j\leq 0$). On obtient
\begin{equation}\label{Pi1Entrelacs}
s\pi_1((X,\varphi_S),\phi)=\hspace{15pt}
\begin{tikzcd}
&G_{\gamma}
\\
\Z\oplus \Z
\arrow{d}{\pr_1}
\arrow{ur}{\alpha_{-n+1}}
&\dots
&\Z\oplus \Z
\arrow{d}{\pr_1}
\arrow{ul}{\alpha_{-1}}
&\Z\oplus\Z
\arrow{d}{\pr_1}
\arrow[swap]{ull}{\alpha_0}
\\
\Z
&
\dots
&
\Z
&
\Z
\end{tikzcd}
\end{equation}
où $G_{\gamma}=\pi_1(S^3\setminus \cup_{i\in S}X_i,*)$ est le groupe de l'entrelacs (ou du nœud si $n=1$). On a aussi
\begin{equation*}
s\pi_n((X,\varphi_S),\phi)\simeq\hspace{15pt}
\begin{tikzcd}
&G_{\gamma,n}
\\
0
\arrow{d}
\arrow{ur}
&\dots
&0
\arrow{d}
\arrow{ul}
&0
\arrow{d}
\arrow{ull}
\\
0
&
\dots
&
0
&
0
\end{tikzcd}
\end{equation*}
pour tout $n\geq 2$, où $G_{\gamma,n}=\pi_n(S^3\setminus\cup(X_i),*)$. Pour tout $-n+1\leq i\leq 0$, l'application $\alpha_i\colon \Z\oplus \Z\to G_{\gamma}$ est induite par l'inclusion du bord d'un voisinage tubulaire de $X_i$ dans $S^3\setminus (\cup X_i)$. On note que si il n'existe pas de plongement $S^2\to S^3$ n'intersectant aucun des $X_i$ et séparant l'ensemble des $X_i$ en deux sous ensembles non vides, alors $G_{\gamma,n}=0$ \cite{LinkComplementAspherical}, en particulier, c'est le cas si $\gamma$ est un nœud.

On rappelle la notions usuelle d'équivalences entre entrelacs.
\begin{defin}
Deux entrelacs $\gamma,\gamma'\colon \coprod S^1\to S^3$ sont équivalents si il existe un homéomorphisme préservant l'orientation
\begin{equation*}
f\colon S^3\to S^3
\end{equation*}
tel que $\gamma'=f\circ \gamma$.
\end{defin}

On en déduit le résultat suivant

\begin{prop}
Soit $\gamma,\gamma'\colon \coprod S^1\to S^3$ deux entrelacs lisses. On note $(X,\varphi_S)$ et $(X',\varphi'_S)$ les pseudo-variétés associées. Si $\gamma$ et $\gamma'$ sont équivalents, alors $(X,\varphi_S)$ et $(X',\varphi'_S)$ sont homotopiquement équivalents au sens filtrés. En particulier, pour tout pointages $\phi\RealP{N(P)}\to (X,\varphi_S)$ et $\phi'\colon \RealP{N(P)}\to (X',\varphi'_S)$ et pour tout $n\geq 1$, il existe des isomorphismes de diagrammes
\begin{equation*}
s\pi_n((X,\varphi_S),\phi)\simeq s\pi_n((X',\varphi'_S),\phi')
\end{equation*}
\end{prop}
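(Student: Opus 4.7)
Mon approche serait très directe : extraire de l'équivalence $f\colon S^3\to S^3$ entre $\gamma$ et $\gamma'$ une équivalence d'homotopie filtrée au niveau des pseudo-variétés associées, puis invoquer l'invariance des groupes d'homotopie filtrés.

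D'abord, par définition de l'équivalence d'entrelacs, il existe un homéomorphisme préservant l'orientation $f\colon S^3\to S^3$ avec $\gamma'=f\circ \gamma$. Comme $\gamma=\coprod \gamma_i$ et $\gamma'=\coprod \gamma'_i$, cela implique $f(X_i)=X'_i$ pour tout $i\in S$ (après identification des ensembles d'indexation, qui sont canoniquement les mêmes puisque $f\circ\gamma_i=\gamma'_i$ composante par composante). Les ensembles ordonnés $P$ et $P'$ construits à partir des collections $\{X_i\}$ et $\{X'_i\}$ sont alors canoniquement isomorphes via l'identité sur $S$, et par la construction explicite de $\varphi_S$ et $\varphi'_S$ donnée dans la section \ref{SectionPlongementEspaceFiltre}, on a immédiatement $\varphi'_S\circ f=\varphi_S$. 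Ainsi $f$ définit un morphisme dans $\Top_P$, et comme c'est un homéomorphisme son inverse $f^{-1}$ est aussi filtré (car $f^{-1}(X'_i)=X_i$), donnant un homéomorphisme filtré $f\colon (X,\varphi_S)\xrightarrow{\simeq}(X',\varphi'_S)$.

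Un homéomorphisme filtré est en particulier une équivalence d'homotopie filtrée (prendre l'homotopie constante $\pr_X\colon \Delta^1\otimes (X,\varphi_S)\to (X,\varphi_S)$ pour témoigner que $f^{-1}\circ f \sim_P \Id$ et symétriquement). Par le corollaire \ref{EquivalenceHomotopieIsoTop}, on obtient l'isomorphisme $s\pi_0(f)\colon s\pi_0(X,\varphi_S)\to s\pi_0(X',\varphi'_S)$, et pour tout pointage $\phi\colon \RealP{N(P)}\to (X,\varphi_S)$ et tout $n\geq 1$, un isomorphisme de diagrammes $s\pi_n(f)\colon s\pi_n((X,\varphi_S),\phi)\to s\pi_n((X',\varphi'_S),f\circ\phi)$.

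Il reste à relier ce résultat à un pointage \emph{arbitraire} $\phi'$ de $(X',\varphi'_S)$. Par la propositon \ref{PointagesHomotopesIsomorphismes}, deux pointages homotopes induisent des groupes d'homotopie filtrés isomorphes. De plus, les pseudo-variétés $(X,\varphi_S)$ et $(X',\varphi'_S)$ sont connexes au sens filtré (comme constaté juste avant l'équation \ref{Pi1Entrelacs}), donc $f\circ\phi$ et $\phi'$ représentent le même simplexe maximal de $s\pi_0(X',\varphi'_S)$, et sont donc homotopes au sens filtré. La composition de l'isomorphisme fourni par $f$ avec celui fourni par cette homotopie donne l'isomorphisme annoncé. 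Aucune étape n'est un obstacle sérieux : l'ingrédient clé est simplement l'observation que la construction $\{X_i\}_{i\in S}\mapsto \varphi_S$ est naturelle par rapport aux homéomorphismes respectant la collection de sous-espaces, et que la section \ref{SectionTheoremesWhitehead} fournit l'invariance requise.
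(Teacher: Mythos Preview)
Your proposal is correct and follows exactly the same approach as the paper: the homeomorphism $f$ realizing the link equivalence is a filtered homeomorphism, hence a filtered homotopy equivalence, and one invokes the invariance of the filtered homotopy groups (Corollaire~\ref{EquivalenceHomotopieIsoTop}). You simply spell out two details the paper leaves implicit---verifying that $f$ respects the filtrations and handling the passage from the transported pointing $f\circ\phi$ to an arbitrary pointing $\phi'$ via filtered connectedness---but the structure is identical.
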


On en déduit que le type d'homotopie filtré de $(X,\varphi_S)$, est un invariant de l'entrelacs.

\begin{proof}
Un homéomorphisme $f\colon S^3\to S^3$ réalisant l'équivalence entre $\gamma$ et $\gamma'$ est en particulier une équivalence d'homotopie filtrée. Comme les groupes d'homotopies filtrés sont invariants par équivalences d'homotopies filtrés, on obtient le résultat voulu. 
\end{proof}

Dans le cas des nœuds, on a la réciproque 

\begin{theo}[\cite{Waldhausen}]
\label{TheoremeGroupesHomotopieFiltresNoeuds}
Soit $\gamma,\gamma'\colon S^1\to S^3$ deux nœuds lisses. Les assertions suivantes sont équivalentes
\begin{enumerate}
\item Il existe un homéomorphisme $f\colon S^3\to S^3$ tel que $f(\gamma(S^1))=\gamma'(S^1)$
\item Il existe un isomorphisme de diagrammes 
\begin{equation*}
g\colon s\pi_1((X,\varphi_S),\phi)\to s\pi_1((X',\varphi'_S),\phi')
\end{equation*}
pour des pointages $\phi$ et $\phi'$.
\item Il existe une équivalence d'homotopie filtrée
\begin{equation*}
h\colon (X,\varphi_S)\to (X',\varphi'_S).
\end{equation*}
\end{enumerate}
\end{theo}

\begin{proof}
Par la proposition précédente, on a $1\Rightarrow 3\Rightarrow 2$. Pour montrer l'implication $2\Rightarrow 1$, on considère $Y$ et $Y'$ les sous variétés à bord de $S^3$ obtenue en enlevant des voisinages tubulaire de $\gamma(S^1)$ et $\gamma'(S^1)$ respectivement. Alors, l'isomorphisme $g$ fournit un isomorphisme $\pi_1(Y,*)\to \pi_1(Y',*)$ envoyant $\pi_1(\partial(Y),*)$ sur $\pi_1(\partial(Y'),*)$. Par
\cite{Waldhausen}, il existe un homéomorphisme $\widetilde{f}\colon Y\to Y'$. Par \cite{GordonLuecke} ceci implique qu'il existe un homéomorphisme $f\colon S^3\to S^3$ tel que $f(\gamma(S^1))=\gamma'(S^1)$.
\end{proof}

\begin{remarque}
L'assertion $1$ du théorème précédent n'implique pas que les nœuds $\gamma$ et $\gamma'$ sont équivalents. En effet, pour cela, il faudrait supposer que $f$ préserve l'orientation de $S^3$ ainsi que celle de $S^1$. Cependant, on remarque que la donnée de ces deux orientations peut se lire sur l'identification (\ref{Pi1Entrelacs}). En effet, le choix du générateur de $\pi_1(\gamma(S^1),*)$ correspond à une orientation de $S^1$. L'orientation de $S^3$ impose des contions supplémentaires sur l'élément de $\pi_1(S^1\times S^1,*)$ envoyé sur $(0,1)\in \Z\oplus \Z$. Finalement, on obtient une équivalence entre les assertions
\begin{itemize}
\item Les noeuds $\gamma$ et $\gamma'$ sont équivalents.
\item Il existe un isomorphisme de diagrammes entre $s\pi_1((X,\varphi_s),*)$ et $s\pi_1((X',\varphi'_S),*)$, se factorisant par un isomorphisme de la forme :
\begin{equation*}
\begin{tikzcd}
\Z
\arrow[swap]{d}{\Id}
&\Z\oplus\Z
\arrow[swap]{l}{\pr_1}
\arrow{d}{g_1}
\arrow{r}{\alpha}
&G_{\gamma}
\arrow{d}{g_2}
\\
\Z
&\Z\oplus\Z
\arrow{l}{\pr_1}
\arrow[swap]{r}{\alpha'}
&G_{\gamma'}
\end{tikzcd}
\end{equation*}
\end{itemize}
où $g_1(0,1)=(0,k)$ avec $k>1$.
après deux identifications (\ref{Pi1Entrelacs}) respectant l'orientation.
\end{remarque}

\begin{remarque}
Nous avons vu qu'à chaque nœud ou entrelacs (lisse) correspondait une structure de pseudo variété sur $S^3$. Réciproquement, toute structure de pseudo variété sur $S^3$ ne contenant que des strates de codimension $2$ provient d'un entrelacs. En effet, dans ce cas, les strates sont des sous variétés plongées de dimension $1$, c'est à dire des copies de $S^1$. Il n'est donc pas surprenant que le type d'homotopie de la pseudo variété associée contienne de l'information sur l'entrelacs.
\end{remarque}

\section{Des fibrés filtrés}
\label{SectionFibresFiltres}

La construction de la section \ref{ConstructionFibre} met en jeu des objets non-filtrés. Pour cette raison, elle ne permet d'obtenir que des objets filtrés simples. Cependant, elle s'adapte sans grande difficulté au cas d'un fibré localement trivial de fibre filtré. On reprend cette construction pour inclure ce cas-ci.
Soit
\begin{equation}\label{DefinitionFibreFiltre}
F\hookrightarrow E\xrightarrow{\pi} B
\end{equation}
Un fibré localement trivial, avec $\varphi_F\colon  F\to P$ une filtration de $F$. Soient $U$ et $V$ deux ouverts trivialisant pour $\pi$. Comme $\pi$ est un fibré localement trivial, on a un diagramme commutatif de la forme suivante
\begin{equation*}
\begin{tikzcd}[column sep=large]
U\cap V\otimes \fil{F}
\arrow{r}{(h_U)_{|U\cap V}}
\arrow[swap]{dr}{\pr_{U\cap V}}
&\pi^{-1}(U\cap V)
\arrow{d}{\pi}
&U\cap V\otimes \fil{F}
\arrow[swap]{l}{(h_V)_{|U\cap V}}
\arrow{dl}{\pr_{U\cap V}}
\\
&U\cap V
\end{tikzcd}
\end{equation*}
où $h_{U}$ et $h_{V}$ sont des homéomorphismes. On suppose que pour tout ouvert trivialisant $U,V$, la composée $((h_V)_{|U\cap V})^{-1}\circ (h_U)_{|U\cap V}$ est un homéomorphisme filtré. Autrement dit, on suppose que les fonctions de transitions préservent la filtration de $F$. Dans ce cas, on a une filtration sur $E$, définie localement par 
\begin{equation*}
(\varphi_E)_{|\pi^{-1}(U)}\colon \pi^{-1}(U)\xrightarrow{h_U^{-1}} U\otimes \fil{F}\xrightarrow{\varphi_F}P
\end{equation*}
Finalement, de même que dans la section \ref{ConstructionFibre} on peut définir le cône de $E$ le long de $\pi$ comme l'espace filtré obtenu comme la somme amalgammée suivante
\begin{equation*}
\begin{tikzcd}
E
\arrow{r}{\pi}
\arrow{d}{i_0}
&B
\arrow[swap]{d}{i_B}
\arrow[bend left=18]{ddr}{\Id_B}
\\
E\times [0,1[
\arrow{r}
\arrow[swap, bend right=18]{drr}{\pi\circ\pr_E}
&c_{\pi}(E)
\arrow[swap, near start]{dr}{c_{\pi}(\pi)}
\\
&&B
\end{tikzcd}
\end{equation*}
On rappelle que $c(P)$ désigne l'ensemble ordonné obtenu à partir de $P$ en ajoutant un élément minimal $-\infty$ (Définition \ref{DefinitionCone}). On considère la filtration :
\begin{align*}
c_{\pi}(\varphi_E)\colon E\times [0,1[&\to c(P)\\
(x,t)&\mapsto \left\{\begin{array}{cl}
\varphi(x) &\text{ si $t>0$}\\
-\infty &\text{ si $t=0$}
\end{array}\right.
\end{align*}
Ainsi que la filtration triviale sur $B$
\begin{equation*}
\varphi_B\colon B\to \{-\infty\}\to c(P)
\end{equation*}
Par propriété universelle de la somme amalgamée, ceci définit une fibration
\begin{equation*}
c_{\pi}(\varphi_E)\colon c_{\pi}(E)\to P
\end{equation*}
On a maintenant le résultat suivant, dont la proposition \ref{FibrePseudoVariete} est un corollaire immédiat dans le cas où $P=\{*\}$.
\begin{prop}
Si l'espace filtré $\fil{F}$ est coniquement stratifié, alors $(c_{\pi}(E),c_{\pi}(\varphi_E))$ l'est aussi. De plus, si $B$ est une variété (de dimension $m$) et $\fil{F}$ est une pseudo variété (de dimension $n$), alors $(c_{\pi}(E),c_{\pi}(\varphi_E))$ est une pseudo variété de dimension $m+n+1$.
\end{prop}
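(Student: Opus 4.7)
The plan is to adapt the proof of Proposition \ref{FibrePseudoVariete} to the filtered situation, proceeding point by point and exploiting the local triviality of the fibration $\pi$ to reduce the verification of the conical (resp.\ pseudo-variety) structure of $c_\pi(E)$ to the same property for $\fil{F}$.

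First, I would fix $x\in c_\pi(E)$ and split into two cases according to the value of $c_\pi(\varphi_E)(x)$. In the case $c_\pi(\varphi_E)(x)=p\in P$, the point $x$ lies in the open subset $E\times ]0,1[\subseteq c_\pi(E)$, whose filtration coincides with the product filtration coming from $\varphi_E\colon E\to P$. Using a trivializing open set $U\subseteq B$ for $\pi$ containing $\pi(\pr_E(x))$, I obtain a filtered homeomorphism $\pi^{-1}(U)\times ]0,1[\simeq U\otimes\fil{F}\otimes ]0,1[$, and thus a filtered neighborhood of $x$ of the form $(U\times ]0,1[)\otimes\fil{F}$. Since $\fil{F}$ is conically stratified at the corresponding point of $F$, I transport a conical chart from $\fil{F}$ to obtain a conical chart of $c_\pi(E)$ at $x$, the base of the product being $U\times ]0,1[\times V$ where $V$ is the auxiliary topological space coming from the conical chart on $\fil{F}$.

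In the case $c_\pi(\varphi_E)(x)=-\infty$, the point $x$ lies in $i_B(B)$. I pick a trivializing open set $U\subseteq B$ of $\pi$ containing $\pi_c(x):=c_\pi(\pi)(x)$. The argument already present in the construction of $c_\pi(E)$ then gives a filtered homeomorphism $c_\pi(\pi)^{-1}(U)\simeq U\otimes c(\fil{F})$, which is already a conical chart at $x$ with link $\fil{F}$ and base $U$. This verifies conditions of Definition \ref{DefinitionConiquementStratifie}, so $c_\pi(E)$ is conically stratified.

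For the second part, assume in addition that $B$ is a topological manifold of dimension $m$ and $\fil{F}$ is a pseudo-variety of dimension $n$. I proceed in the same two cases. If $c_\pi(\varphi_E)(x)\neq -\infty$, shrinking $U$ to a chart $U\simeq\R^m$ and applying the pseudo-variety property of $\fil{F}$ at the relevant point, I produce a filtered homeomorphism of a neighborhood of $x$ with $\R^m\times\R\times \R^i\times c(\fil{L})=\R^{m+1+i}\times c(\fil{L})$, where $\fil{L}$ is a pseudo-variety of dimension $n-i-1$; this is a local model as required by Definition \ref{DefinitionAlternativePseudoVarTop} of dimension $(m+1+i)+(n-i-1)+1=m+n+1$. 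If $c_\pi(\varphi_E)(x)=-\infty$, choosing again $U\simeq\R^m$, the filtered neighborhood $U\otimes c(\fil{F})\simeq \R^m\otimes c(\fil{F})$ is exactly a local pseudo-variety model with link $\fil{F}$, of dimension $m+n+1$.

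The main (and essentially only) obstacle is a bookkeeping one: in the regular case $c_\pi(\varphi_E)(x)\neq -\infty$, one has to verify that the pseudo-variety chart obtained from $\fil{F}$ at a point of the fiber combines correctly with the trivialization of $\pi$ and with the $]0,1[$-factor, so that the resulting chart is truly of the form $\R^{m+1+i}\times c(\fil{L})$ with the correct filtration; this amounts to keeping track of the fact that the filtrations involved all come from $\varphi_F$ and are preserved by the transition functions by hypothesis, so that products with $U$ and with $]0,1[$ do not interact with the stratification. Once this is checked, both assertions follow directly from the construction.
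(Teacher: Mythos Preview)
Your proposal is correct and follows essentially the same approach as the paper: split into the two cases $c_\pi(\varphi_E)(x)\in P$ and $c_\pi(\varphi_E)(x)=-\infty$, use a trivializing open set for $\pi$ in each case, and transport a conical chart from $\fil{F}$ in the first case while using $U\otimes c(\fil{F})$ directly in the second. Your treatment of the pseudo-variety part is in fact more detailed than the paper's (which dispatches it in one sentence after noting that $U$ can be shrunk to $\R^m$), but the strategy is identical.
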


\begin{proof}
Montrons que $(c_{\pi}(E),c_{\pi}(\varphi_E))$ est coniquement stratifié. Soit $x\in c_{\pi}(E)$. Si $c_{\pi}(\varphi_E)(x)\in P\subset c(P)$, alors $x\in E\times ]0,1[$. Soit $U$ un ouvert trivialisant pour $\pi$ contenant $c_{\pi}(\pi)(x)$. Alors, on a un voisinage de $x$ de la forme $U\times ]0,1[\times F\subset E\times ]0,1[$. Comme $F$ est coniquement stratifié, il existe un espace filtré $\fil{L}$ et un voisinage de $\pr_F\circ h_U(x)$ homéomorphe au sens filtré à $V\otimes c\fil{L}$. Mais alors, on a un voisinage de $x$ homéomorphe au sens filtré à 
\begin{equation*}
(U\times ]0,1[\times V)\otimes c\fil{L}.
\end{equation*}
Si $c_{\pi}(\varphi_E)(x)=-\infty$, soit $U\subset B$ un ouvert trivialisant pour $\pi$ contenant $c_{\pi}(\pi)(x)$. Alors, $x$ a un voisinage de la forme
\begin{equation*}
U\otimes c\fil{F}.
\end{equation*}
Dans le cas où $B$ est une variété, quitte à restreindre $U$, on peut supposer que $U\simeq \R^m$. Si de plus $F$ est une pseudo variété, on obtient ainsi que $(c_{\pi}(E),c_{\pi}(\varphi_E))$ est une pseudo variété.
\end{proof}

On souhaite calculer les groupes d'homotopie filtrés de $(c_{\pi}(E),c_{\pi}(\varphi_E))$. Dans ce but, explicitons d'abord la catégorie $R(c(P))$ des simplexes non dégénérés de $N(c(P))$ (voir la définition \ref{DefinitionRP}). Les objets de $R(c(P))$ sont d'une des deux formes suivantes :
\begin{itemize}
\item $[p_0,\dots,p_n]$, où $p_0<\dots<p_n\in P$ est une suite strictement croissante, non vide, d'éléments de $P$
\item $[-\infty, p_1,\dots,p_n]$ où $p_1<\dots<p_n\in P$ est une suite strictement croissante, possiblement vide, d'éléments de $P$.
\end{itemize}
Les ensemble de morphismes de $R(c(P))$ contiennent au plus un élément, et on a
\begin{align*}
\Hom_{R(c(P))}([p_0,\dots,p_n],[q_0,\dots,q_m])\not= \emptyset &\Leftrightarrow \{p_0,\dots,p_n\}\subseteq \{q_0,\dots,q_m\}\\
\Hom_{R(c(P))}([p_0,\dots,p_n],[-\infty,q_1,\dots,q_m])\not= \emptyset &\Leftrightarrow \{p_0,\dots,p_n\}\subseteq \{q_1,\dots,q_m\}\\
\Hom_{R(c(P))}([-\infty,p_1,\dots,p_n],[-\infty,q_1,\dots,q_m])\not= \emptyset &\Leftrightarrow \{p_1,\dots,p_n\}\subseteq \{q_1,\dots,q_m\}\\
\Hom_{R(c(P))}([-\infty,p_1,\dots,p_n],[q_0,\dots,q_m])= \emptyset
\end{align*}
On remarque que l'inclusion canonique $P\subset c(P)$ induit un foncteur pleinement fidèle
\begin{align*}
R(P)&\to R(c(P))\\
[p_0,\dots,p_n]&\mapsto [p_0,\dots,p_n].
\end{align*}
Ce qui permet de considérer $R(P)$ comme une sous catégorie pleine de $R(c(P))$. De plus, on dispose d'un second foncteur pleinement fidèle
\begin{align*}
R(P)&\to R(c(P))\\
[p_0,\dots,p_n]&\mapsto [-\infty,p_0,\dots,p_n].
\end{align*}
Ce second foncteur fournit une seconde copie de $R(P)$ dans $R(c(P))$, disjointe de la première. On note cette seconde copie $[-\infty]R(P)$, car elle est obtenue en adjoignant $-\infty$ aux objets de $R(P)$. Finalement, on observe que à l'exception de $[-\infty]$, tout les objets de $R(c(P))$ sont contenus dans une des deux sous catégories $R(P)$ ou $[-\infty]R(P)$. On se tourne ensuite vers les morphismes. Par construction, il n'existe aucun morphisme depuis les objets de $[-\infty]R(P)$ vers les objets de $R(P)$ ou vers $[-\infty]$. Réciproquement, si $[-\infty,p_0,\dots,p_n]$ est un objet de $[-\infty]R(P)$, on a les morphismes suivants dans $R(c(P))$.
\begin{equation*}
[-\infty]\to[-\infty,p_0,\dots,p_n]
\end{equation*}
et
\begin{equation*}
[p_0,\dots,p_n]\to [-\infty,p_0,\dots,p_n]
\end{equation*}
Finalement, on peut représenter symboliquement la catégorie $R(c(P))$ comme suit.
\begin{equation*}
\begin{tikzcd}
\{-\infty\}
\arrow{r}
&{[}-\infty]R(P)
&R(P)
\arrow{l}
\end{tikzcd}
\end{equation*}
Cette représentation est justifiée par la proposition suivante
\begin{prop}\label{PropositionFoncteurRcP}
Soit $\C$ une catégorie. Un foncteur $F\colon R(c(P))^{\op}\to \C$ correspond de façon unique à la donnée de
\begin{itemize}
\item deux foncteurs $F_{[-\infty]R(P)},F_{R(P)}\colon R(P)^{\op}\to\C$,
\item un objet $F_{[-\infty]}\in \C$,
\item deux transformations naturelles $\alpha\colon F_{[-\infty]R(P)}\to F_{R(P)}$ et $\beta\colon F_{[-\infty]R(P)}\to F_{[-\infty]}$, où l'objet $F_{[-\infty]}$ est identifié avec le foncteur constant.
\end{itemize}
\end{prop}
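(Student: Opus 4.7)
Le plan de la preuve consiste à exploiter la description très rigide de la catégorie $R(c(P))$ donnée juste avant l'énoncé, et à traduire les morphismes entre les deux copies de $R(P)$ (c'est-à-dire $R(P)$ lui-même et $[-\infty]R(P)$) ainsi que vers $[-\infty]$ en transformations naturelles. L'observation clé est que tout ensemble $\Hom$ de $R(c(P))$ contient au plus un élément : il n'y a donc aucune donnée compositionnelle à retenir en dehors de celle portée par les deux sous-catégories pleines et par les transformations naturelles $\alpha$ et $\beta$.

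Dans un premier temps, étant donné un foncteur $F\colon R(c(P))^{\op}\to \C$, je poserais $F_{R(P)} = F\circ j_{R(P)}^{\op}$ et $F_{[-\infty]R(P)} = F\circ j_{[-\infty]R(P)}^{\op}$, où $j_{R(P)}$ et $j_{[-\infty]R(P)}$ désignent les deux inclusions pleinement fidèles $R(P)\hookrightarrow R(c(P))$ mentionnées dans l'excerpt. On pose $F_{[-\infty]} = F([-\infty])$. Pour obtenir $\alpha$ (resp. $\beta$), on utilise les morphismes canoniques $[p_0,\dots,p_n]\to [-\infty,p_0,\dots,p_n]$ (resp. $[-\infty]\to [-\infty,p_0,\dots,p_n]$) : leurs images par $F$ fournissent, pour chaque objet $\Delta^{\varphi}\in R(P)$, une flèche $\alpha_{\Delta^{\varphi}}\colon F_{[-\infty]R(P)}(\Delta^{\varphi})\to F_{R(P)}(\Delta^{\varphi})$, resp. $\beta_{\Delta^{\varphi}}\colon F_{[-\infty]R(P)}(\Delta^{\varphi})\to F_{[-\infty]}$. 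La naturalité de $\alpha$ se déduit de la commutativité du carré évident dans $R(c(P))$ associé à une inclusion $[p_0,\dots,p_n]\subseteq [q_0,\dots,q_m]$ ; cette commutativité est automatique puisque les $\Hom$ concernés sont des singletons. La naturalité de $\beta$ se prouve de la même façon, en considérant le triangle de morphismes partant de $[-\infty]$.

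Réciproquement, étant donnée la donnée $(F_{R(P)}, F_{[-\infty]R(P)}, F_{[-\infty]}, \alpha, \beta)$, je définirais $F\colon R(c(P))^{\op}\to\C$ sur les objets par recollement suivant la partition $R(c(P)) = R(P)\sqcup [-\infty]R(P)\sqcup\{[-\infty]\}$. Sur les morphismes, chaque flèche non-identité de $R(c(P))^{\op}$ se range dans un des cinq types suivants : morphisme intérieur à $R(P)$, morphisme intérieur à $[-\infty]R(P)$, composée d'un morphisme de $[-\infty]R(P)$ avec la flèche de $[-\infty]$ vers $[-\infty,p_0,\dots,p_n]$ (dans $R(c(P))^{\op}$), composée d'un morphisme de $R(P)$ avec la flèche $[p_0,\dots,p_n]\to [-\infty,p_0,\dots,p_n]$, ou enfin composée des deux précédents. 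Il suffit alors de définir $F$ séparément sur ces cinq types en utilisant $\alpha$ ou $\beta$ de façon évidente, et de vérifier la fonctorialité, qui se réduit encore une fois à des égalités entre flèches dans des $\Hom$ de cardinal $1$.

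Le seul point non complètement trivial est de vérifier que toute composition de morphismes dans $R(c(P))$ se décompose de manière compatible avec les cinq types ci-dessus. Mais comme les $\Hom$ sont des singletons (ou vides), cette vérification se ramène à un argument ensembliste sur les sous-ensembles de $P$ indexant les objets, et les deux constructions sont clairement inverses l'une de l'autre par construction. L'obstacle principal n'en est donc pas un : il s'agit essentiellement d'organiser la bijection explicitement, la preuve étant une application directe de la structure combinatoire de $R(c(P))$ déjà établie.
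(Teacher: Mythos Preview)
Your proposal is correct and follows essentially the same approach as the paper: define $F_{R(P)}$, $F_{[-\infty]R(P)}$ and $F_{[-\infty]}$ by restriction, extract $\alpha$ and $\beta$ from the canonical inclusions $[p_0,\dots,p_n]\to[-\infty,p_0,\dots,p_n]$ and $[-\infty]\to[-\infty,p_0,\dots,p_n]$, and observe that naturality and the reconstruction of $F$ are forced because all $\Hom$-sets in $R(c(P))$ are empty or singletons. Your treatment of the converse is in fact more detailed than the paper's, which simply states that the data suffices to define $F$; one small remark is that your ``fifth type'' of morphism does not actually occur, since there is no morphism in $R(c(P))$ between an object of $R(P)$ and $[-\infty]$, but this does not affect the argument.
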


\begin{proof}
Soit $F\colon R(c(P))^{\op}\to \C$ un foncteur. Comme les notations le suggèrent, on pose
\begin{align*}
F_{[-\infty]R(P)}\colon R(P)^{\op}&\to\C\\
[p_0,\dots,p_n]&\mapsto F([-\infty,p_0,\dots,p_n]),
\end{align*}
\begin{align*}
F_{R(P)}\colon R(P)^{op}&\to\C\\
[p_0,\dots,p_n]&\mapsto F([p_0,\dots,p_n]),
\end{align*}
et
\begin{equation*}
F_{[-\infty]}=F([-\infty])\in\C.
\end{equation*}
De plus, pour $[p_0,\dots,p_n]$ un objet de $R(P)$, l'image par $F$ du morphisme
\begin{equation*}
[p_0,\dots,p_n]\to [-\infty,p_0,\dots,p_n]
\end{equation*}
fournit le morphisme 
\begin{equation*}
\alpha_{[p_0,\dots,p_n]}\colon F_{[-\infty]R(P)}([p_0,\dots,p_n])\to F_{R(P)}([p_0,\dots,p_n])
\end{equation*}
De même, l'image par $F$ du morphisme
\begin{equation*}
[-\infty]\to [-\infty,p_0,\dots,p_n]
\end{equation*}
fournit le morphisme
\begin{equation*}
\beta_{[p_0,\dots,p_n]}\colon F_{[-\infty]R(P)}([p_0,\dots,p_n])\to F_{[-\infty]}.
\end{equation*}
Comme $F$ est un foncteur, $\alpha$ et $\beta$ vérifient les conditions de naturalités nécessaires et sont donc des transformations naturelles. Réciproquement, la donnée de foncteurs $F_{[-\infty]R(P)}$ et $F_{R(P)}$, d'un objet $F_{[-\infty]}$ et de transformations naturelles $\alpha$ et $\beta$ permet de définir un foncteur $F\colon R(c(P))^{\op}\to\C$.
\end{proof}

\begin{remarque}\label{RemarqueExpressionSPiKFibreFiltre}
En vertu de la proposition \ref{PropositionFoncteurRcP}, les groupes d'homotopie filtrés de $(c_{\pi}(E),c_{\pi}(\varphi_E))$ peuvent donc être exprimés comme la donnée de deux foncteurs, un ensemble et deux transformations naturelles. Par commodité, et pour mettre en lumière les similarités avec la section \ref{ConstructionFibre}, on représentera cette donnée sous la forme suivante.
\begin{equation*}
\begin{tikzcd}[column sep = -20pt]
&s\pi_k((c_{\pi}(E),c_{\pi}(\varphi_E)),\phi)_{[-\infty]R(P)}
\arrow[swap]{dl}{\beta}
\arrow{dr}{\alpha}
\\
s\pi_k((c_{\pi}(E),c_{\pi}(\varphi_E)),\phi)_{[-\infty]}
&&
s\pi_k((c_{\pi}(E),c_{\pi}(\varphi_E)),\phi)_{R(P)}
\end{tikzcd}
\end{equation*}
\end{remarque}

\begin{remarque}\label{RemarqueMorphismeInduitFibreFiltre}
On remarque que pour tout objet de $R(P)$, $[p_0,\dots,p_n]$  le morphisme $\pi$ induit un morphisme
\begin{equation*}
\pi_*\colon s\pi_0\fil{E}([p_0,\dots,p_n])\to\pi_0(B)
\end{equation*}
En effet, un élément de $s\pi_0\fil{E}$ est une classe d'homotopie filtrée d'applications filtrées de la forme
\begin{equation*}
\phi\colon\RealP{[p_0,\dots,p_n]}\to \fil{E}.
\end{equation*}
Comme $\RealP{[p_0,\dots,p_n]}$ est connexe $\pi(\phi(\RealP{[p_0,\dots,p_n]}))$ est un sous espace connexe de $B$, et correspond donc à une composante connexe de $B$. Cette composante connexe ne dépend pas du choix de représentant dans la classe d'homotopie filtrée, et l'application $\pi_*$ est donc bien définie. Fixons maintenant un pointage 
\begin{equation*}
\phi\colon\RealP{[p_0,\dots,p_n]}\to \fil{E}
\end{equation*}
tel que la composition $\pi\circ\phi\colon \Real{[p_0,\dots,p_n]}\to B$ soit constante, et notons $\phi_B\in B$ son image. On a alors comme précédemment un morphisme bien défini
\begin{equation*}
\pi_*\colon s\pi_n(\fil{E},\phi)([p_0,\dots,p_n])\to\pi_n(B)
\end{equation*}
\end{remarque}

avec les remarques précédentes, on peut finalement procéder au calcul des groupes d'homotopie filtrés de $(c_{\pi}(E),c_{\pi}(\varphi_E))$.

\begin{prop}\label{PropositionGroupesHomotopiesFiltreFibreFiltre}
Avec les conventions de la remarque \ref{RemarqueExpressionSPiKFibreFiltre}, on a
\begin{equation}\label{Pi0FibreFiltre}
s\pi_0(c_{\pi}(E),c_{\pi}(\varphi_E))\simeq 
\begin{tikzcd}
&s\pi_0\fil{E}
\arrow[swap]{dl}{\pi_*}
\arrow{dr}{\Id}
\\
\pi_0(B)
&&
s\pi_0\fil{E}
\end{tikzcd}
\end{equation}
De plus, si $\phi\colon \RealP{V}\to (c_{\pi}(E),c_{\pi}(\varphi_E))$ est un pointage tel que la composition $c_{\pi}(\pi)\circ\phi\colon \RealP{V}\to B$ soit constante, on a, pour tout $k\geq 1$,
\begin{equation}\label{PiKFibreFiltre}
s\pi_k((c_{\pi}(E),c_{\pi}(\varphi_E)),\phi)\simeq 
\begin{tikzcd}
&s\pi_k(\fil{E},\phi_E)
\arrow[swap]{dl}{\pi_*}
\arrow{dr}{\Id}
\\
\pi_k(B,\phi_B)
&&
s\pi_k(\fil{E},\phi_E)
\end{tikzcd}
\end{equation}
où $\phi_B$ est l'image de $c_{\pi}(\pi)\circ\phi$, et $\phi_E$ est la restriction de $\phi$ à $\Real{V}\cap \Real{N(P)}\subset \Real{N(c(P))}$.
\end{prop}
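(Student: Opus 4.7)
The plan is to apply Proposition \ref{PropositionFoncteurRcP} to decompose the functor $s\pi_k((c_\pi(E), c_\pi(\varphi_E)), \phi) : R(c(P))^{\op} \to \Set$ into the triple of data $(F_{R(P)}, F_{[-\infty]R(P)}, F_{[-\infty]})$ equipped with natural transformations $\alpha : F_{[-\infty]R(P)} \to F_{R(P)}$ and $\beta : F_{[-\infty]R(P)} \to F_{[-\infty]}$. The claim then reduces to identifying each of these three functors with the corresponding piece of (\ref{PiKFibreFiltre}), together with identifying $\alpha$ with the identity and $\beta$ with $\pi_*$.

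The two easy pieces are computed directly. Since $\RealP{[-\infty]}$ is a single point stratified at $-\infty$, one has a homeomorphism $C^0_P(\RealP{[-\infty]}, c_\pi(E)) \simeq c_\pi(\varphi_E)^{-1}(-\infty) = i_B(B) \simeq B$, giving $F_{[-\infty]} = \pi_k(B, \phi_B)$. For a simplex $[p_0, \ldots, p_n] \in R(P)$, any filtered map $\RealP{[p_0,\ldots,p_n]} \to c_\pi(E)$ lands in $c_\pi(\varphi_E)^{-1}(P) = E \times \,]0,1[$, which is filtered homeomorphic to $\fil{E} \otimes \,]0,1[$, and the projection $\fil{E} \otimes \,]0,1[ \to \fil{E}$ is a filtered homotopy equivalence. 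This identifies $F_{R(P)}$ naturally with $s\pi_k(\fil{E}, \phi_E)$.

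The main technical step is identifying $F_{[-\infty]R(P)}$. Observe that $\RealP{[-\infty, p_0, \ldots, p_n]}$ is naturally the filtered cone of $\RealP{[p_0, \ldots, p_n]}$, with cone point the $-\infty$ stratum. For each such simplex $\Delta$ we build an explicit filtered section
\begin{equation*}
i_\Delta : C^0_P(\Delta, \fil{E}) \to C^0_P(c(\Delta), c_\pi(E)), \qquad f \mapsto \bigl( (x,t) \mapsto (f(x), t/2) \text{ if } t>0,\ (x,0) \mapsto \pi(f(x)) \bigr),
\end{equation*}
of the restriction map $\mathrm{ev}_\Delta : C^0_P(c(\Delta), c_\pi(E)) \to C^0_P(\Delta, \fil{E})$ obtained by restricting to the non-cone face. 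A two-step filtered homotopy between $i_\Delta \circ \mathrm{ev}_\Delta$ and the identity is produced by transcribing verbatim the explicit formulas from the proof of Proposition \ref{GroupesHomotopiesConeOuvertFibre}: first deform the $[0,1[$-coordinate of a map to the canonical shape $t \mapsto t/2$ while fixing the $E$-coordinate, then slide the $E$-coordinate toward its value at $t = 1$. Naturality in $\Delta$ then yields a natural weak equivalence $F_{[-\infty]R(P)} \simeq s\pi_k(\fil{E}, \phi_E)$.

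Finally, $\beta$ is induced by the face inclusion $[-\infty] \hookrightarrow [-\infty, p_0, \ldots, p_n]$, i.e.\ evaluation at the cone point, which under the identification above becomes post-composition with $\pi : \fil{E} \to B$ and yields $\pi_*$; whereas $\alpha$ is induced by the face inclusion $[p_0, \ldots, p_n] \hookrightarrow [-\infty, p_0, \ldots, p_n]$ deleting the cone point, which under both identifications becomes the identity on $s\pi_k(\fil{E}, \phi_E)$. The main obstacle will be to verify that the homotopies in the previous paragraph are genuinely filtered with respect to the possibly nontrivial stratification on $F$: since the formulas only translate the $[0,1[$-coordinate (which moves the $-\infty$ stratum to itself) or move the $E$-coordinate along the image of a single filtered map, the filtration on $c_\pi(E)$ is preserved throughout, because it is pulled back from $\varphi_E$ via $\pr_E$ on $E \times \,]0,1[$.
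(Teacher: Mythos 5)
Your overall strategy coincides with the paper's: decompose the functor on $R(c(P))^{\op}$ via Proposition \ref{PropositionFoncteurRcP}, identify $F_{[-\infty]}$ with $\pi_k(B,\phi_B)$ and $F_{R(P)}$ with $s\pi_k(\fil{E},\phi_E)$ by the same direct computations, and read off $\alpha=\Id$ and $\beta=\pi_*$ from the two face inclusions. Those parts are correct.

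The gap is in the main technical step, the identification of $F_{[-\infty]R(P)}$. Your section $i_\Delta(f)(x,t)=(f(x),t/2)$ for $t>0$, $(x,0)\mapsto\pi(f(x))$, is not a well-defined continuous map out of the cone $c(\Delta)$ once $n\geq 1$: the whole fiber $\Delta\times\{0\}$ is collapsed to the single $-\infty$ vertex of $\Real{[-\infty,p_0,\dots,p_n]}$, so the value there must be one point of $B$, whereas $\pi(f(x))$ genuinely depends on $x$; and since the limit of $(f(x),t/2)$ in $c_{\pi}(E)$ as $t\to 0$ is $\pi(f(x))$, continuity fails on the quotient. The formulas of Proposition \ref{GroupesHomotopiesConeOuvertFibre} transcribe verbatim only in the case $n=0$, where $\RealP{[p_0,p_1]}$ is already the cone on a point. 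For $n\geq 1$ one must exploit the further cone structure $\RealP{[p_0,\dots,p_n]}\simeq\bar{c}(\Real{[p_1,\dots,p_n]}_{P_{>p_0}})$, writing points as $(y,s)$ with cone point the $p_0$-vertex, and set $s(\sigma)((y,s),t)=(\sigma(y,st),t)$ for $t>0$ and $\pi(\sigma(y,0))$ for $t=0$: the reparametrization $s\mapsto st$ forces $\sigma$ to converge to its single value at the $p_0$-vertex as one approaches the $-\infty$ cone point, which is what makes the section well defined. The same correction is needed in your second homotopy (the one that slides the $E$-coordinate): it must slide toward the value at the $p_0$-vertex via a factor $st/u$, not merely toward the face $t=1$. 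Your closing paragraph identifies filteredness as the main obstacle, but that is not where the difficulty lies; it is the well-definedness at the cone point.
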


\begin{proof}
Par construction, $c_{\pi}(\varphi)^{-1}(-\infty)=B$. On en déduit les termes de la forme $\pi_k(B,\phi_B)$ dans les diagrammes (\ref{Pi0FibreFiltre}) et (\ref{PiKFibreFiltre}). Soit $[p_0,\dots,p_n]\in R(P)$. Par construction de $c_{\pi}(E)$, on a
\begin{align*}
C^0_{c(P)}(\Real{[p_0,\dots,p_n]}_{c(P)},(c_{\pi}(E),c_{\pi}(\varphi_E)))&\simeq C^0_P(\RealP{[p_0,\dots,p_n]},]0,1[\otimes\fil{E})\\
&\sim C^0_P(\RealP{[p_0,\dots,p_n]},\fil{E})
\end{align*}
On en déduit les termes de droites dans les diagrammes (\ref{Pi0FibreFiltre}) et (\ref{PiKFibreFiltre}). 
Par ailleurs, on a une application de restriction bien définie
\begin{align*}
\text{r}\colon C^0_{c(P)}(\Real{[-\infty,p_0,\dots,p_n]}_{c(P)},(c_{\pi}(E),c_{\pi}(\varphi_E)))&\to C^0_P(\RealP{[p_0,\dots,p_n]},]0,1[\otimes\fil{E})\\
&\sim C^0_P(\RealP{[p_0,\dots,p_n]},\fil{E})
\end{align*}
Montrons que c'est une équivalence d'homotopie. Si $n=0$ la preuve de la proposition \ref{GroupesHomotopiesConeOuvertFibre} s'applique. Si $n\geq 1$, on constate qu'on a un homéomorphisme filtré $\Real{[-\infty,p_0,\dots,p_n}_{c(P)}\simeq \bar{c}(\RealP{[p_0,\dots,p_n]})$.
De plus, on fait aussi l'identification $\RealP{[p_0,\dots,p_n]}\simeq \bar{c}(\Real{[p_1,\dots,p_n]}_{P_{>p_0}})$. En particulier, les points de $\Real{[-\infty,p_0,\dots,p_n]}$ sont représentés sous la forme $((y,s),t)\in \bar{c}(\bar{c}(\Real{ [p_1,\dots,p_n]}))$, avec $y\in \Real{[p_1,\dots,p_n]}$, $s,t\in [0,1]$. Avec ces notations, on définit une section de $r$.
\begin{align*}
s\colon C^0_P(\RealP{[p_0,\dots,p_n]},\fil{E})&\to  C^0_{c(P)}(\Real{[-\infty,p_0,\dots,p_n]}_{c(P)},(c_{\pi}(E),c_{\pi}(\varphi_E)))\\
\sigma\colon \RealP{[p_0,\dots,p_n]}\to\fil{E}&\mapsto 
\left\{
\begin{array}{ccc}
\Real{[-\infty,p_0,\dots,p_n]}&\to&(c_{\pi}(E),c_{\pi}(\varphi_E)))\\
((y,s),t)&\mapsto& \left\{\begin{array}{cl}
(\sigma(y,st),t)\in  E\times ]0,1[ & \text{ si $t>0$}\\
\pi(\sigma(y,0)) \in B &\text{ si $t=0$}
\end{array}\right.
\end{array}\right.
\end{align*}
On vérifie que $r\circ s=\Id$. Soit $\sigma\colon \RealP{[p_0,\dots,p_n]}\to \fil{E}$ un élément de $C^0_P(\RealP{[p_0,\dots,p_n]},\fil{E})$. On a 
\begin{equation*}
s(\sigma)((y,s),t)=\left\{\begin{array}{cl}
(\sigma(y,st),t) & \text{ si $t>0$}\\
\pi(\sigma(y,0)) &\text{ si $t=0$}
\end{array}\right.
\end{equation*}
De plus, on a $r(s(\sigma))=\pr_E\circ s(\sigma)_{|\RealP{[p_0,\dots,p_n]}}=\pr_E\circ s(\sigma)_{|t=1}$.
On en déduit que $r\circ s(\sigma)(y,s)=\sigma(y,s)$ pour tout $(y,s)\in \Real{[p_0,\dots,p_n]}$ et donc que $r\circ s= \Id$. On construit l'homotopie entre $s\circ r$ et $\Id$ en deux étapes. Pour $\tau\colon \Real{[-\infty,p_0,\dots,p_n]}_{c(P)}\to (c_{\pi}(E),c_{\pi}(\varphi_E))$, on a la restriction 
\begin{align*}
\Real{[-\infty,p_0,\dots,p_n]}_{c(P)}\setminus\{-\infty\}&\to (c_{\pi}(E),c_{\pi}(\varphi_E))\setminus B\simeq (E\times ]0,1[,\varphi_E\circ \pr_E)\\
((y,s),t)&\mapsto (\tau_E((y,s),t),\tau_{]0,1[}((y,s)t))
\end{align*}
Avec ces notations, et en notant $C=C^0_{c(P)}(\Real{[-\infty,p_0,\dots,p_n]}_{c(P)},(c_{\pi}(E),c_{\pi}(\varphi_E)))$, on considère l'homotopie
\begin{align*}
H_1\colon C\times [0,1]&\to C\\
(\tau,u)&\mapsto \left\{\begin{array}{ccc}
\Real{[-\infty,p_0,\dots,p_n]}_{c(P)}&\to &(c_{\pi}(E),c_{\pi}(\varphi_E))\\
((y,s),t)&\mapsto &\left\{\begin{array}{cl}
(\tau_E((y,s),t),u\tau_{]0,1[}((y,s),t)+(1-u)t) &\text{ si $t>0$}\\
\tau((y,s),0) &\text{ si $t=0$}
\end{array}\right.
\end{array}\right.
\end{align*}
Ainsi que l'homotopie
\begin{align*}
H_2\colon C\times [0,1]&\to C\\
(\tau,u)&\mapsto \left\{\begin{array}{ccc}
\Real{[-\infty,p_0,\dots,p_n]}_{c(P)}&\to &(c_{\pi}(E),c_{\pi}(\varphi_E))\\
((y,s),t)&\mapsto &\left\{\begin{array}{cl}
\tau((y,s),t) &\text{ si $t\geq u$}\\
(\tau_E((y,st/u),u),t) &\text{ si $0<t\leq u$}\\
\pi\tau_E((y,0),u) &\text{ si $0=t<u$}
\end{array}\right.
\end{array}\right.
\end{align*}
Alors, en composant l'homotopie $H_1$ et $H_2$, on obtient l'homotopie souhaitée entre $s\circ r$ et $\Id$. Finalement, on en déduit les terme du milieu dans les diagrammes (\ref{Pi0FibreFiltre}) et (\ref{PiKFibreFiltre}). Par les calculs précédents, la transformation naturelle de droite est $\Id$,  et celle de gauche est donnée par les morphismes $\pi_*$ considérés dans la remarque \ref{RemarqueMorphismeInduitFibreFiltre}.
\end{proof}

\begin{remarque}\label{RemarqueGroupesHomotopiesFiltreFibreFiltre}
De même que dans le cas de la proposition \ref{GroupesHomotopiesFibre}, on peut considérer $X(\pi,f)$ obtenu en considérant la somme amalgamée
\begin{equation*}
\begin{tikzcd}
\fil{E}
\arrow{d}{i_1}
\arrow{r}{f}
&\fil{E'}
\arrow{d}
\\
(\bar{c}_{\pi}(E),\bar{c}_{\pi}(\varphi_E))
\arrow{r}
&X(\pi,f)
\end{tikzcd}
\end{equation*}
On obtient alors, un résultat similaire à celui de la proposition \ref{GroupesHomotopiesFibre}. Avec les pointages appropriés, on obtient
\begin{equation*}
s\pi_0(X(\pi,f))\simeq 
\begin{tikzcd}
&s\pi_0(\fil{E}
\arrow[swap]{dl}{\pi_*}
\arrow{dr}{f_*}
\\
\pi_0(B)
&&
s\pi_0(\fil{E'}
\end{tikzcd}
\end{equation*}
et,
\begin{equation*}
s\pi_n(X(\pi,f),\phi)\simeq 
\begin{tikzcd}
&s\pi_n(\fil{E},\phi_{E})
\arrow[swap]{dl}{\pi_*}
\arrow{dr}{f_*}
\\
\pi_n(B,\phi_B)
&&
s\pi_n(\fil{E'},f\circ\phi_{E})
\end{tikzcd}
\end{equation*}
\end{remarque}

\begin{remarque}
Les propositions \ref{GroupesHomotopiesConeOuvertFibre} et \ref{GroupesHomotopiesFibre} sont des cas particuliers de la proposition \ref{PropositionGroupesHomotopiesFiltreFibreFiltre} et de la remarque \ref{RemarqueGroupesHomotopiesFiltreFibreFiltre} dans le cas où $P=\{*\}$, et donc $c(P)\simeq \{p_0<p_1\}$.
\end{remarque}

\chapter{Une catégorie modèle pour les espaces filtrés}
\label{ChapitreCMFTopP}
L'objet de ce chapitre est de construire et d'étudier une structure de modèle sur la catégorie des espaces stratifiés. D'après les résultats du chapitre \ref{ChapitreGroupesHomotopiesFiltresEspaces} et les exemples du chapitre \ref{ChapitreExemples}, il parait raisonnable de demander que les équivalences faibles soient caractérisées par les groupes d'homotopie filtrés. D'autre part, l'approche naturelle est de transporter la structure de modèle obtenue sur $\sS_P$ le long de l'adjonction $\RealP{-}\colon \sS_P\to \Top_P\colon \Sing_P$. Cependant, plusieurs difficultés se présentent immédiatement. Si $\fil{X}$ est un espace filtré, on s'attendrait à ce que $\Sing_P\fil{X}$ soit un ensemble simplicial filtré fibrant, comme dans le cas non filtré. Cependant, comme on l'a vu dans l'exemple \ref{ExempleEspaceNonFibrant}, ce n'est pas toujours le cas. D'autre part, les constructions classiques permettant de factoriser tout morphisme $f\colon X\to Y$ en une cofibration (triviale) $i\colon X\to Z$ suivie d'une fibration (triviale) $p\colon Z\to Y$, se généralisent mal. En effet, dans le cas filtré, $\Sing_P(p)$ n'est en général pas une fibration. 

Toutefois, on a vu avec la proposition \ref{DiagrammesTopologiquesIsomorphes} que les foncteurs $D, D^{\sS}\circ\Sing_P$ et $\Sing\circ D^{\Top}$ sont isomorphes comme foncteurs $\Top_P\to \Diag_P$. En particulier, ceci implique que pour tout espace filtré $\fil{X}$, le diagramme $D\fil{X}$ est un objet fibrant de $\Diag_P$ pour la structure de modèle de la proposition \ref{CategorieModeleDiagramme}. De plus, par définition des groupes d'homotopies filtrés, une application filtrée $f\colon \fil{X}\to\fil{Y}$ induit un isomorphisme sur tous les groupes d'homotopie filtrés si et seulement si $D(f)$ est une équivalence faible de $\Diag_P$. Ceci suggère de construire une structure de modèle sur $\Top_P$ par transport de la structure de modèle sur $\Diag_P$. Cependant, on a vu avec la remarque \ref{GroupesHomotopiesFiltresReduits} que le foncteur $s\pi_n$ était entièrement déterminé par sa restriction à la sous catégorie pleine $R(P)\subset\Delta(P)$ (voir la définition \ref{DefinitionRP}). Ainsi, on choisit plutôt de travailler avec la catégorie $\DiagR_P$ des diagrammes simpliciaux réduits.

Dans la première section de ce chapitre, on définit la catégorie des diagrammes réduits $\DiagR_P$. On décrit ensuite une adjonction entre les catégories $\DiagR_P$ et $\Top_P$ (Proposition \ref{PropositionColim}), et on montre que la catégorie $\DiagR_P$ est une catégorie modèle à engendrement cofibrant (Proposition \ref{CategorieModeleDiagrammesReduits}).

Dans la section \ref{SectionCMFTopP}, on applique \cite{Hess} pour construire une structure de modèle sur la catégorie $\Top_P$ par transport de la structure sur $\DiagR_P$. C'est le théorème \ref{CategorieModeleTopP}.

Finalement, dans la section \ref{SectionTopNP}, on définit la catégorie des espaces fortement filtrés $\Top_{N(P)}$. La catégorie des espaces fortement filtrés est similaire à celle des espaces filtrés, mais les filtrations sont de la forme $\varphi_X\colon X\to \Real{N(P)}$. Dans la section \ref{SectionCMFTopNP}, après avoir défini cette catégorie, on montre qu'elle vérifie des propriétés similaires à celle des espaces filtrés. C'est notamment une catégorie modèle (Voir théorème \ref{CategorieModeleTopNP}). Dans la section \ref{SectionQETopNPTopP}, on montre que les catégories modèles $\Top_P$ et $\Top_{N(P)}$ sont Quillen équivalentes (Théorème \ref{EquivalenceQuillenTopPTopNP}). 
Enfin, dans la section \ref{SectionQETopNPDiagRP}, on montre qu'on a une équivalence de Quillen entre $\Top_{N(P)}$ et $\DiagR_P$, (Théorème \ref{EquivalenceQuillenTopNPDiagRP}) ce qui implique immédiatement que les catégories de modèle $\Top_{P}$ et $\DiagR_P$ sont Quillen équivalentes.

\section{La catégorie des diagrammes simpliciaux réduits}

\subsection{Adjonction avec les espaces filtrés}
On a vu dans la remarque \ref{GroupesHomotopiesFiltresReduits} que les groupes d'homotopie filtrés étaient complètement déterminés par leur restriction à la sous catégorie des simplexes non dégénérés $R(P)\subset \Delta(P)$. Ceci suggère de travailler avec la catégorie des diagrammes sur $R(P)$, et non sur $\Delta(P)$, pour définir une structure de modèle sur $\Top_P$.

\begin{defin}
On note $\DiagR_P$ la catégorie des diagrammes simpliciaux réduits. Elle est définie comme
\begin{equation*}
\DiagR_P=\Fun(R(P)^{\op},\sS)
\end{equation*}
L'inclusion $R(P)\to \Delta(P)$ induit un foncteur de restriction
\begin{equation*}
\Diag_P\to\DiagR_P.
\end{equation*}
Dans ce chapitre, on considérera tous les diagrammes sous leur forme réduite, c'est à dire qu'on identifiera les objets de $\Diag_P$ avec leurs images dans $\DiagR_P$. En particulier, on notera aussi $D$ le foncteur diagramme réduit
\begin{equation*}
D\colon\Top_{P}\to \DiagR_P,
\end{equation*}
obtenu en composant le foncteur $D$ de la définition \ref{DefinitionFoncteurDTopologique} avec la restriction.
\end{defin}


\begin{prop}\label{PropositionColim}
Le foncteur $D$ admet un adjoint à gauche
\begin{equation*}
\Colim\colon \DiagR_P\to\Top_P.
\end{equation*}
\end{prop}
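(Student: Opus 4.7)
L'approche sera d'adapter la construction explicite de l'adjoint à gauche $\Colim\colon \Diag_P \to \sS_P$ apparaissant dans la preuve du corollaire \ref{DiagAdjonctionQuillen} au contexte actuel, où la source est la catégorie réduite $\DiagR_P$ et le but est $\Top_P$ plutôt que $\sS_P$. Concrètement, je commencerai par définir un candidat pour le foncteur $\Colim$ par la formule de coend
\begin{equation*}
\Colim(F) = \colim_{(\Delta^\psi \to \Delta^\varphi) \in \Ar(R(P))} F(\Delta^\varphi) \otimes \RealP{\Delta^\psi},
\end{equation*}
où la colimite est prise dans $\Top_P$ (qui est cocomplète, notamment comme catégorie de tranche au dessus de $P$) et où $F(\Delta^\varphi) \otimes \RealP{\Delta^\psi}$ désigne le produit filtré de la définition \ref{DefinTenseurTopPSimplicial}.

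Pour établir l'adjonction, il faudra construire des bijections naturelles mutuellement inverses entre $\Hom_{\Top_P}(\Colim F, \fil{X})$ et $\Hom_{\DiagR_P}(F, D\fil{X})$. L'outil technique central sera l'identité
\begin{equation*}
\Hom_{\Top_P}(K \otimes \fil{Y}, \fil{X}) \simeq \Hom_{\sS}(K, \Map(\fil{Y}, \fil{X})),
\end{equation*}
valide pour tout ensemble simplicial $K$ et tous espaces filtrés $\fil{X}, \fil{Y}$, qui découle formellement de la structure simpliciale de $\Top_P$ établie à la proposition \ref{PropositionTopPCategorieSimpliciale} en écrivant $K$ comme une colimite de simplexes. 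Appliquée avec $K = F(\Delta^\varphi)$ et $\fil{Y} = \RealP{\Delta^\psi}$, et combinée à la restriction $\Map(\RealP{\Delta^\varphi}, \fil{X}) \to \Map(\RealP{\Delta^\psi}, \fil{X})$ induite par le morphisme $\Delta^\psi \to \Delta^\varphi$, elle permet de convertir les données d'une famille compatible de morphismes $f_{\varphi,\psi}\colon F(\Delta^\varphi) \otimes \RealP{\Delta^\psi} \to \fil{X}$ en les composantes d'une transformation naturelle $F \to D\fil{X}$, et réciproquement.

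Le point le plus délicat, qui constitue le principal obstacle, sera de vérifier que l'indexation sur la sous-catégorie réduite $R(P) \subset \Delta(P)$ ne compromet pas l'adjonction. Il faudra s'assurer que toute transformation naturelle dans $\DiagR_P$ à valeurs dans un diagramme de la forme $D\fil{X}$ détermine bien, via notre formule, un unique morphisme depuis $\Colim(F)$, et que les deux opérations sont inverses l'une de l'autre. Ceci repose sur le fait que $D\fil{X}$ est par construction un foncteur sur $\Delta(P)^{\op}$ dont la restriction à $R(P)^{\op}$ retient toute l'information pertinente, combiné à la propriété universelle de la colimite. Une fois cette réduction effectuée, la vérification que les deux constructions sont réciproques est routinière et découle directement de la commutation de $\Map(-,\fil{X})$ avec les colimites en premier argument.
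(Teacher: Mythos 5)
Votre démonstration est correcte et suit essentiellement la même stratégie que celle du texte : on y définit $\Colim(F)$ comme la colimite, sur les paires $(\Delta^{\varphi},\Delta^{\psi})$ avec $\Delta^{\psi}\subseteq\Delta^{\varphi}$, des objets $F(\Delta^{\varphi})\otimes\RealP{\Delta^{\psi}}$ (le texte forme cette colimite dans $\sS_P$ avant d'appliquer $\RealP{-}$, ce qui revient au même puisque la réalisation préserve les colimites), puis on établit la bijection d'adjonction via l'adjonction $(-\otimes\RealP{\Delta^{\psi}},\Map(\RealP{\Delta^{\psi}},-))$ issue de la structure simpliciale de $\Top_P$. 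Votre inquiétude concernant la restriction à $R(P)$ est sans objet : le foncteur $D$ considéré ici est déjà le foncteur diagramme réduit, de sorte que $D\fil{X}$ est un objet de $\DiagR_P$ et qu'aucune compatibilité supplémentaire avec $\Delta(P)$ n'est à vérifier.
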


\begin{proof}
On considère la sous catégorie pleine $\mathcal{C}\subset R(P)^{\op}\times R(P)$ contenant les objets de la forme $(\Delta^{\varphi},\Delta^{\psi})$ avec $\Delta^{\psi}\subseteq \Delta^{\varphi}$. 
On définit un foncteur
\begin{align*}
-\otimes R(P)\colon\DiagR_P&\to\Fun(\mathcal{C},\sS_P)\\
F&\mapsto \left\{\begin{array}{ccc}
C&\to&\sS_P\\
(\Delta^{\varphi},\Delta^{\psi})&\mapsto &F(\Delta^{\varphi})\otimes\Delta^{\psi}
\end{array}\right.
\end{align*}
Ceci permet de définir le foncteur $\Colim$ comme 
\begin{align*}
\Colim\colon \DiagR_P&\to \Top_P\\
F&\mapsto \RealP{\colim_{\mathcal{C}}F\otimes R(P)}=\RealP{\colim_{(\Delta^{\varphi},\Delta^{\psi})\in \mathcal{C}}F(\Delta^{\varphi})\otimes\Delta^{\psi}}
\end{align*}

Montrons que $\Colim$ est bien un adjoint à gauche pour $D$. Soient $F\colon R(P)^{\op}\to \sS$ un foncteur, et $\fil{X}$ un espace filtré. Un élément de $\Hom_{\DiagR_P}(F,D(\fil{X}))$ est une collection de morphismes $\{f_{\Delta^{\varphi}}\}_{\Delta^{\varphi}\in R(P)}$ où pour tout $\Delta^{\varphi}\in R(P)$
\begin{equation*}
f_{\Delta^{\varphi}}\colon F(\Delta^{\varphi})\to\Map(\RealP{\Delta^{\varphi}},\fil{X})
\end{equation*}
est un morphisme de $\sS$, et telle que pour tout morphisme de $R(P)$, $i\colon\Delta^{\psi}\to \Delta^{\varphi}$, on a un diagramme commutatif
\begin{equation*}
\begin{tikzcd}
F(\Delta^{\varphi})
\arrow{r}{f_{\Delta^{\varphi}}}
\arrow{d}{F(i)}
&\Map(\RealP{\Delta^{\varphi}},\fil{X})
\arrow{d}{i^*}
\\
F(\Delta^{\psi})
\arrow{r}{f_{\Delta^{\psi}}}
&\Map(\RealP{\Delta^{\psi}},\fil{X})
\end{tikzcd}
\end{equation*}
D'autre part, par propriété universelle de la colimite, un élément de $\Hom_{\Top_P}(\Colim F,\fil{X})$ correspond à une collection de morphismes $\{g_{(\Delta^{\varphi},\Delta^{\psi})}\}_{(\Delta^{\varphi},\Delta^{\psi})\in \mathcal{C}}$, où pour tout $(\Delta^{\varphi},\Delta^{\psi})\in \mathcal{C}$, 
\begin{equation*}
g_{(\Delta^{\varphi},\Delta^{\psi})}\colon F(\Delta^{\varphi})\otimes\RealP{\Delta^{\psi}}\to \fil{X}
\end{equation*}
est un morphisme de $\Top_P$, telle que pour tout diagramme commutatif dans $R(P)$ 
\begin{equation*}
\begin{tikzcd}
\Delta^{\psi_1}
\arrow{r}
\arrow{d}
&\Delta^{\psi_2}
\arrow{d}
\\
\Delta^{\varphi_1}
&\Delta^{\varphi_2}
\arrow{l}
\end{tikzcd}
\end{equation*}
on a un diagramme commutatif 
\begin{equation*}
\begin{tikzcd}[column sep=huge]
 F(\Delta^{\varphi_1})\otimes\RealP{\Delta^{\psi_1}}
\arrow{r}{g_{(\Delta^{\varphi_1},\Delta^{\psi_1})}}
\arrow{d}
&\fil{X}
\arrow{d}{\Id}
\\
 F(\Delta^{\varphi_2})\otimes\RealP{\Delta^{\psi_2}}
\arrow{r}{g_{(\Delta^{\varphi_2},\Delta^{\psi_2})}}
&\fil{X}
\end{tikzcd}
\end{equation*}
Avec ces observations, on a la bijection suivante.
\begin{align*}
\Hom_{\DiagR_P}(F,D\fil{X})&\to \Hom_{\Top_P}(\Colim F,\fil{X})\\
\{\widehat{f}_{\Delta^{\varphi}}\}&\mapsto \{\widehat{f}_{(\Delta^{\varphi},\Delta^{\psi}})\}
\end{align*}
où on définit $\widehat{f}_{(\Delta^{\varphi},\Delta^{\psi})}$ comme la composition
\begin{equation*}
F(\Delta^{\varphi})\otimes\RealP{\Delta^{\psi}}\to F(\Delta^{\psi})\otimes\RealP{\Delta^{\psi}}\xrightarrow{\widehat{f_{\Delta^{\psi}}}}\fil{X}
\end{equation*}
où $\widehat{f_{\Delta^{\psi}}}$ est obtenu à partir de $f_{\Delta^{\psi}}$ par l'adjonction entre $\Map(\RealP{\Delta^{\psi}},-)$ et $-\otimes\RealP{\Delta^{\psi}}$, provenant de la structure simpliciale de $\Top_P$. (Voir proposition \ref{PropositionTopPCategorieSimpliciale})
La bijection réciproque est donnée par
\begin{align*}
\Hom_{\Top_P}(\Colim F,\fil{X})&\to \Hom_{\DiagR_P}(F,D\fil{X})\\
\{g_{(\Delta^{\varphi},\Delta^{\psi})}\}&\mapsto \{(g_{(\Delta^{\varphi},\Delta^{\varphi})})^{\#}\}\end{align*}
où $(g_{(\Delta^{\varphi},\Delta^{\varphi})})^{\#}$ est obtenu à partir de $g_{(\Delta^{\varphi},\Delta^{\varphi})}$ par l'adjonction entre $-\otimes\RealP{\Delta^{\varphi}}$ et $\Map(\RealP{\Delta^{\varphi}},-)$.
On en déduit que $\Colim$ est bien un adjoint à gauche de $D$.
\end{proof}

\subsection{Structure de modèle sur la catégorie des diagrammes réduits}
On décrit maintenant une structure de modèle sur la catégorie $\DiagR_P$. Pour ce faire, on utilisera la définition suivante. Soit $K$ un ensemble simplicial, et $\Delta^{\varphi}\in R(P)$ un simplexe non dégénéré. On considère le diagramme réduit 
\begin{align*}
K^{\Delta^{\varphi}}\colon R(P)^{\op}&\to \sS\\
\Delta^{\psi}&\mapsto \left\{\begin{array}{cl}
K &\text{ si $\Delta^{\psi}\subseteq \Delta^{\varphi}$}\\
\emptyset &\text{ sinon}
\end{array}\right.
\end{align*}
où, pour tout $\Delta^{\psi_1}\subseteq \Delta^{\psi_2}\subseteq \Delta^{\varphi}$ on a
\begin{equation*}
K^{\Delta^{\varphi}}(\Delta^{\psi_1}\to\Delta^{\psi_2})=\Id_K
\end{equation*}
On remarque que $(-)^{\Delta^{\varphi}}$ induit un foncteur $(-)^{\Delta^{\varphi}}\colon \sS\to \DiagR_P$. On note aussi la propriété du foncteur $(-)^{\Delta^{\varphi}}$ suivante. 
\begin{lemme}\label{ColimCofibrationsGeneratrices}
Soient $\Delta^{\varphi}\in R(P)$ un simplexe non dégénéré et $K\in \sS$ un ensemble simplicial. Alors, on a un isomorphisme canonique 
\begin{equation*}
\Colim (K^{\Delta^{\varphi}})\simeq K\otimes\RealP{\Delta^{\varphi}}
\end{equation*}
\end{lemme} 

\begin{proof}
On reprend les calculs de la preuve de la proposition \ref{PropositionColim}. On a 
\begin{align*}
K^{\Delta^{\varphi}}\otimes R(P)\colon \mathcal{C}&\to \Top_P\\
(\Delta^{\psi_1},\Delta^{\psi_2})&\mapsto \left\{\begin{array}{cl}
K\otimes \RealP{\Delta^{\psi_2}} &\text{ si $\Delta^{\psi_1}\subseteq \Delta^{\varphi}$}\\
\emptyset &\text{ sinon}
\end{array}\right.
\end{align*}
On en déduit 
\begin{equation*}
\Colim\left(K^{\Delta^{\varphi}}\right)=\colim \left(\left(K^{\Delta^{\varphi}}\right)\otimes R(P)\right)\simeq \colim_{\Delta^{\psi_2}\subseteq \Delta^{\varphi}}K\otimes\RealP{\Delta^{\psi_2}}\simeq K\otimes \RealP{\Delta^{\varphi}}
\end{equation*}
\end{proof}
Nous pouvons maintenant énoncer le résultat suivant, qui est une application de \cite[Théorème 11.6.1]{Hirschhorn}
(voir aussi la proposition \ref{CategorieModeleDiagramme})

\begin{prop}\label{CategorieModeleDiagrammesReduits}
La catégorie $\DiagR_P$, munie des classes de morphismes suivantes, est une catégorie modèle à engendrement cofibrant :
\begin{itemize}
\item un morphisme $f\colon F\to G$ est une fibration si pour tout $\Delta^{\varphi}\in R(P)$, $f_{\Delta^{\varphi}}\colon F(\Delta^{\varphi})\to G(\Delta^{\varphi})$ est une fibration de Kan,
\item un morphisme $f\colon F\to G$ est une équivalence faible si pour tout $\Delta^{\varphi}\in R(P)$, $f_{\Delta^{\varphi}}\colon F(\Delta^{\varphi})\to G(\Delta^{\varphi})$ est une équivalence faible pour la structure de Kan Quillen sur $\sS$.
\end{itemize}
Les cofibrations (triviales) génératrices sont données par les ensembles :
\begin{itemize}
\item $I=\{(\partial(\Delta^n)\to\Delta^n)^{\Delta^{\varphi}}\ |\ n\geq 0, \ \Delta^{\varphi}\in R(P) \}$,
\item $J=\{(\Lambda^n_k\to \Delta^n)^{\Delta^{\varphi}} \ |\ n\geq 0,\ 0\leq k\leq n,\ \Delta^\varphi\in R(P)\}$.
\end{itemize}
\end{prop}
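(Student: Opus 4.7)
The plan is to recognize this as an instance of the standard projective model structure on a diagram category valued in a cofibrantly generated model category, and to invoke \cite[Théorème 11.6.1]{Hirschhorn} as the statement already indicates. The proof is essentially a matter of identifying the generating sets correctly and verifying the hypotheses of the transfer/recognition theorem.

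First, I would observe that for each $\Delta^{\varphi}\in R(P)$, the evaluation functor $\ev_{\Delta^{\varphi}}\colon \DiagR_P\to\sS$, $F\mapsto F(\Delta^{\varphi})$, admits a left adjoint. I claim this left adjoint sends $K\in\sS$ to the diagram $K^{\Delta^{\varphi}}$ introduced just before the statement. Indeed, since $R(P)$ has at most one morphism between any two objects and $\Hom_{R(P)}(\Delta^{\psi},\Delta^{\varphi})$ is a singleton precisely when $\Delta^{\psi}\subseteq \Delta^{\varphi}$, the construction $K\mapsto K^{\Delta^{\varphi}}$ is naturally isomorphic to $K\otimes \Hom_{R(P)}(-,\Delta^{\varphi})$, which is the free diagram generated by $K$ at $\Delta^{\varphi}$ in the sense of the co-Yoneda lemma. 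The adjunction $\Hom_{\DiagR_P}(K^{\Delta^{\varphi}},F)\simeq \Hom_{\sS}(K,F(\Delta^{\varphi}))$ then follows by a direct computation, and this is the key formal input.

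Next I would apply the projective model structure recognition result. The category $\DiagR_P$ is complete and cocomplete, with limits and colimits computed objectwise in $\sS$, so axiom MC1 is free. The category $\sS$ (with the Kan-Quillen structure) is cofibrantly generated with generating cofibrations $I_{\sS}=\{\partial(\Delta^n)\to\Delta^n\}$ and generating trivial cofibrations $J_{\sS}=\{\Lambda^n_k\to\Delta^n\}$, and all domains are small. The sets $I$ and $J$ of the statement are exactly the images of $I_{\sS}$ and $J_{\sS}$ under the various left adjoints $(-)^{\Delta^{\varphi}}$, as $\Delta^{\varphi}$ ranges over $R(P)$. By adjunction, a map $f\colon F\to G$ in $\DiagR_P$ has the right lifting property against $(K\to L)^{\Delta^{\varphi}}$ if and only if $f_{\Delta^{\varphi}}\colon F(\Delta^{\varphi})\to G(\Delta^{\varphi})$ has the right lifting property against $K\to L$ in $\sS$. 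Hence right lifting against $J$ (resp.\ $I$) characterizes maps that are objectwise Kan fibrations (resp.\ objectwise trivial Kan fibrations), which is precisely the class of fibrations (resp.\ trivial fibrations) in the desired structure.

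The remaining hypotheses of the recognition theorem are then routine: domains of $I$ and $J$ are small (since smallness is detected objectwise and the generators of $\sS$ have small domains), and one must check that relative $J$-cell complexes are weak equivalences. This last point is the only nontrivial verification, but it reduces immediately to the corresponding statement in $\sS$, since colimits and weak equivalences in $\DiagR_P$ are both objectwise: a pushout of a coproduct of maps $(\Lambda^n_k\to\Delta^n)^{\Delta^{\varphi}}$ is, evaluated at any $\Delta^{\psi}$, a pushout in $\sS$ of a coproduct of horn inclusions (or the identity when $\Delta^{\psi}\not\subseteq\Delta^{\varphi}$), hence a trivial cofibration in $\sS$. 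Transfinite composition preserves this objectwise, so relative $J$-cell complexes are objectwise trivial cofibrations, in particular weak equivalences. The cofibrant generation, together with the characterization of fibrations, cofibrations and weak equivalences, then follows directly from \cite[Théorème 11.6.1]{Hirschhorn}.
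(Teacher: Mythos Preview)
Your proposal is correct and follows the same approach as the paper, which simply records the statement as an application of \cite[Théorème 11.6.1]{Hirschhorn} without further detail. Your identification of $(-)^{\Delta^{\varphi}}$ with the free-diagram (left adjoint to evaluation) functor, which relies on $R(P)$ being a poset, is precisely what makes the generating sets match Hirschhorn's, and the objectwise verification of the acyclicity condition for $J$-cell complexes is the standard argument.
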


\section{Catégorie modèle des espaces filtrés}
\label{SectionCMFTopP}
L'objet de cette section est de prouver le théorème suivant
\begin{theo}\label{CategorieModeleTopP}
Il existe une structure de modèle sur $\Top_P$ obtenue par transport à partir de la structure de modèle sur $\DiagR_P$. En particulier, les fibrations et les équivalences faibles sont définies comme suit :
\begin{itemize}
\item une application filtrée $f\colon \fil{X}\to\fil{Y}$ est une fibration si et seulement si 
\begin{equation*}
D(f)\colon D(\fil{X})\to D(\fil{Y})
\end{equation*}
est une fibration de $\DiagR_P$,
\item une application filtrée $f\colon \fil{X}\to\fil{Y}$ est une équivalence faible si et seulement si 
\begin{equation*}
D(f)\colon D(\fil{X})\to D(\fil{Y})
\end{equation*}
est une équivalence faible de $\DiagR_P$.
\end{itemize}
De plus, cette structure de modèle est engendrée de façon cofibrante par les ensembles de cofibrations (triviales) génératrices suivants 
\begin{itemize}
\item $I=\{\partial(\Delta^n)\otimes\RealP{\Delta^{\varphi}}\to\Delta^n\otimes\RealP{\Delta^{\varphi}}\ |\ n\geq 0,\ \Delta^{\varphi}\in R(P)\}$
\item $J=\{\Lambda^n_k\otimes \RealP{\Delta^{\varphi}}\to\Delta^n\otimes \RealP{\Delta^{\varphi}}\ |\ n\geq 0,\ 0\leq k\leq n,\ \Delta^{\varphi}\in R(P)\}$
\end{itemize}
\end{theo}
En particulier, dans cette structure de modèle, les équivalences faibles sont exactement les morphismes induisant des isomorphismes sur tous les groupes d'homotopie filtrés.

\begin{proof}
Nous allons prouver le théorème \ref{CategorieModeleTopP} en appliquant \cite[Corollary 3.3.4]{Hess}. Par le lemme \ref{AccessibleLocalementPresentable} les hypothèses sur $\Top_P$ et $\DiagR_P$ sont vérifiées. Il suffit donc de montrer que tout morphisme admettant la propriété de relèvement à droite par rapport aux fibrations définies dans le théorème \ref{CategorieModeleTopP} est aussi une équivalence faible. Soit $f\colon \fil{X}\to\fil{Y}$ un tel morphisme. Par le lemme \ref{LemmeFactorisationFibration}, $f$ peut être factorisé sous la forme $f=q\circ i$, avec $i$ une équivalence d'homotopie filtrée, et $q$ une fibration. Considérons maintenant le problème de relèvement suivant :
\begin{equation*}
\begin{tikzcd}
\fil{X}
\arrow{r}{i}
\arrow[swap]{d}{f}
& \fil{Z}
\arrow{d}{q}
\\
\fil{Y}
\arrow[swap]{r}{\Id_Y}
\arrow[dashrightarrow]{ur}{h}
&\fil{Y}
\end{tikzcd}
\end{equation*}
Par hypothèse, $f$ admet la propriété de relèvement à gauche par rapport aux fibrations, et $q$ est une fibration. Il existe donc un relèvement $h$ faisant commuter le diagramme. 
On fixe un pointage de $\fil{X}$, $\phi\colon \RealP{V}\to\fil{X}$, et on calcule les groupes d'homotopie filtrés. On a le diagramme commutatif suivant :
\begin{equation*}
\begin{tikzcd}
s\pi_n(\fil{X},\phi)
\arrow{r}{s\pi_n(i)}
\arrow[swap]{d}{s\pi_n(f)}
& s\pi_n(\fil{Z},i\circ\phi)
\arrow{d}{s\pi_n(q)}
\\
s\pi_n(\fil{Y},f\circ\phi)
\arrow[swap]{r}{\Id}
\arrow{ur}{s\pi_n(h)}
&s\pi_n(\fil{Y},f\circ\phi)
\end{tikzcd}
\end{equation*}
Par construction, les deux morphismes horizontaux sont des isomorphismes. On en déduit que $s\pi_n(h)$ admet un inverse à gauche ($s\pi_n(f)\circ s\pi_n(i)^{-1}$), et à droite ($s\pi_n(q)$), c'est donc un isomorphisme. Finalement, par deux sur trois, $s\pi_n(f)$ est un isomorphisme. On en déduit que $f$ est une équivalence faible.
La génération cofibrante est maintenant une conséquence de la définition des classes de fibrations et de fibrations triviales, et du lemme \ref{ColimCofibrationsGeneratrices}.
\end{proof}

\begin{lemme}\label{AccessibleLocalementPresentable}
La catégorie $\DiagR_P$ est une catégorie modèle accessible, et la catégorie $\Top_P$ est localement présentable.
\end{lemme}

\begin{proof}
La catégorie $\DiagR_P$ est une catégorie modèle à engendrement cofibrant (Proposition \ref{CategorieModeleDiagrammesReduits}), et elle est localement présentable (c'est une catégorie de foncteurs depuis une petite catégorie vers une la catégorie localement présentable $\sS$) elle est donc accessible (voir l'introduction de \cite{Hess}). On travaille avec la catégorie $\Top$ des espaces topologiques $\Delta$ engendrés (voir la remarque \ref{RemarqueCategorieDeltaEngendre}). Cette catégorie est localement présentable \cite{Dugger}. Comme $\Top_P=\Top/P$ est la catégorie des espaces au dessus de $P$, cette dernière est aussi localement présentable.
\end{proof}

\begin{lemme}\label{LemmeFactorisationFibration}
Soit $f\colon\fil{X}\to\fil{Y}$ une application filtrée. Il existe un espace filtré $\fil{Z}$, une fibration $q\colon \fil{Z}\to\fil{Y}$ et l'inclusion d'un rétracte par déformation filtré $i\colon \fil{X}\to\fil{Z}$ tels que $f=q\circ i$.
\end{lemme}

\begin{proof}
Soit $f\colon \fil{X}\to \fil{Y}$ une application filtrée. On définit l'espace topologique $Z$ comme suit :
\begin{equation*}
Z=\{(x,\gamma)\in X\times Y^{[0,1]}\ |\ \gamma(0)=f(x),\ \varphi_Y(\gamma(t))=\varphi_X(x)\ \forall t\}\subseteq X\times Y^{[0,1]}
\end{equation*}
La filtration $\varphi_Z\colon Z\to P$ est induite par l'inclusion $Z\subseteq \fil{X}\times_P\fil{Y}^{\Delta^1}$ (voir La définition \ref{DefinitionEspacesDeCheminsStratifies} pour la définition de $Y^{\Delta^1}$). Plus explicitement, $\varphi_Z$ est donnée par la composition
\begin{equation*}
Z\to X\times Y^{[0,1]}\xrightarrow{\pr_X} X\xrightarrow{\varphi_X}P
\end{equation*}
On définit les applications filtrées suivantes 
\begin{align*}
i\colon \fil{X}&\to\fil{Z}\\
x &\mapsto (x,(t\mapsto f(x)))
\end{align*}
\begin{align*}
q\colon \fil{Z}&\to\fil{Y}\\
(x,\gamma)&\mapsto \gamma(1)
\end{align*}
et
\begin{align*}
r\colon \fil{Z}&\to\fil{X}\\
(x,\gamma)&\mapsto x
\end{align*}
Par construction, $f=q\circ i$ et $r\circ i=\Id_X$. De plus, l'application filtrée
\begin{align*}
[0,1]\otimes\fil{Z}&\to\fil{Z}\\
(s,(x,\gamma))&\mapsto (x,(t\mapsto \gamma(st)))
\end{align*}
fournit une homotopie entre $i\circ r$ et $\Id_Z$.
Il suffit donc de vérifier que $q$ est une fibration. Cela revient à vérifier que pour toute inclusion de cornet $\Lambda^n_k\to \Delta^n$, tout simplexe non dégénéré $\Delta^{\varphi}\in R(P)$, et tout problème de relèvement de la forme
\begin{equation*}
\begin{tikzcd}
\Real{\Lambda^n_k}\otimes\RealP{\Delta^{\varphi}}
\arrow{r}{g}
\arrow{d}{j}
&\fil{Z}
\arrow{d}{q}
\\
\Real{\Delta^n}\otimes\RealP{\Delta^{\varphi}}
\arrow[swap]{r}{G}
\arrow[dashrightarrow]{ur}{\widetilde{G}}
&\fil{Y}
\end{tikzcd}
\end{equation*}
il existe une application filtrée $\widetilde{G}$ faisant commuter le diagramme. On remarque que l'inclusion $j\colon\Real{\Lambda^n_k}\to \Real{\Delta^n}$ admet un rétracte $s\colon\Real{\Delta^n}\to \Real{\Lambda^n_k}$ tel qu'il existe une homotopie $H\colon \Real{\Delta^n}\times [0,1]\to \Real{\Delta^n}$ entre $j\circ s$ et $\Id_{\Real{\Delta^n}}$ fixant $\Real{\Lambda^n_k}$. On considère de plus une fonction continue $h\colon \Real{\Delta^n}\to \R_{+}$ telle que $h^{-1}(0)=\Real{\Lambda^n_k}$ (la fonction distance à $\Real{\Lambda^n_k}$ convient). Toutes ces applications induisent des applications filtrées, 
\begin{equation*}
j\colon \Real{\Lambda^n_k}\otimes\RealP{\Delta^{\varphi}}\to
\Real{\Delta^n}\otimes\RealP{\Delta^{\varphi}},
\end{equation*}
\begin{equation*}
s\colon \Real{\Delta^n}\otimes\RealP{\Delta^{\varphi}}\to 
\Real{\Lambda^n_k}\otimes\RealP{\Delta^{\varphi}},
\end{equation*}
\begin{equation*}
H\colon [0,1]\otimes(\Real{\Delta^n}\otimes\RealP{\Delta^{\varphi}})\to \Real{\Delta^n}\otimes\RealP{\Delta^{\varphi}}
\end{equation*}
et
\begin{equation*}
h\colon \Real{\Delta^n}\otimes\RealP{\Delta^{\varphi}}\to \R_{+}.
\end{equation*}
Et on a $s\circ j=\Id$, $H$ est une homotopie filtrée entre $j\circ s$ et $\Id$ fixant $\Real{\Lambda^n_k}\otimes\RealP{\Delta^{\varphi}}$, et $h^{-1}(0)=\Real{\Lambda^n_k}\otimes\RealP{\Delta^{\varphi}}$. Soit $a\in \Real{\Delta^n}\otimes\RealP{\Delta^{\varphi}}$, on note $g(a)=(g_X(a),g_Y(a))\in Z\subset X\times Y^{[0,1]}$. Avec ces notations, on définit explicitement le relèvement $\widetilde{G}=(\widetilde{G}_X,\widetilde{G}_Y)$.
\begin{align*}
\widetilde{G}_X\colon \Real{\Delta^n}\otimes\RealP{\Delta^{\varphi}}&\to \fil{X}\\
a&\mapsto g_X(r(a))
\end{align*}
\begin{align*}
\widetilde{G}_Y\colon \Real{\Delta^n}\otimes\RealP{\Delta^{\varphi}}&\to \fil{Y}^{[0,1]}\\
a&\mapsto \left\{\begin{array}{ccc}
[0,1]&\to& Y\\
t&\mapsto& \left\{\begin{array}{cl}
g_Y(r(a))(t(1+h(a)) & 0\leq t\leq \frac{1}{1+h(a)}\\
G\left(H\left(a,\frac{1+h(a)}{h(a)}(t-\frac{1}{1+h(a)})\right)\right) & \frac{1}{1+h(a)}<t\leq 1
\end{array}\right.
\end{array}\right.
\end{align*}
Vérifions que $\widetilde{G}_Y$ est bien définie. Soit $a\in \Real{\Delta^n}\otimes\RealP{\Delta^{\varphi}}$. Si $a\in \Real{\Lambda^n_k}\otimes\RealP{\Delta^{\varphi}}$, alors $h(a)=0$, et $\widetilde{G}_Y(a)=g_Y(r(a)$ donc $\widetilde{G}_Y(a)$ est continue. (Ceci montre aussi que le triangle supérieur commute) Sinon, pour $t=\frac{1}{1+h(a)}<1$, on a
\begin{align*}
\widetilde{G}_Y(a)(t)&=g_Y(r(a))(1)\\
&=q\circ g(r(a))\\
&=G\circ j(r(a))\\
&= G(H(a,0))
\end{align*}
On en déduit que $\widetilde{G}_Y(a)$ est continue pour tout $a\in \Real{\Delta^n}\otimes\RealP{\Delta^{\varphi}}$, et donc $\widetilde{G}_Y$ est bien définie.
Vérifions que $\widetilde{G}$ est bien définie. Soit $a\in \Real{\Delta^n}\otimes\RealP{\Delta^{\varphi}}$. On a 
\begin{equation*}
\widetilde{G}_Y(a)(0)=g_Y(r(a))(0)=f(g_X(r(a))=f(\widetilde{G}_X(a))
\end{equation*} 
Donc, $\widetilde{G}$ est bien définie. Comme $H$ fixe $\Real{\Lambda^n_k}\otimes\RealP{\Delta^{\varphi}}$, $\widetilde{G}$ est continue. Par construction, $\widetilde{G}$ fait commuter le diagramme, et donc est une solution au problème de relèvement. On en déduit que $q$ est une fibration.
\end{proof}

\section{Espaces fortement filtrés}
\label{SectionTopNP}

La notion de pointage d'un espace filtré utilisé jusqu'ici peut paraitre surprenante. En effet, le plus souvent, un pointage d'un objet $X$ dans une catégorie $\mathcal{C}$ est la donnée d'un morphisme $*\to X$ où $*$ est l'objet terminal de $\mathcal{C}$. Une généralisation immédiate est de définir des pointages comme des morphismes $V\to X$ où $V\subseteq *$ est un sous-objet de l'objet terminal. Cependant, la définition de pointage que nous considérons ici (Définition \ref{DefinitionPointageEspaceFiltre}) n'est pas de ce type là. En effet, on définit les pointages comme des morphismes depuis des sous objets de $\RealP{N(P)}$ qui n'est pas l'objet terminal de $\Top_P$, et non pas comme des morphismes depuis (les sous objets de) $P$. La raison derrière cette définition vient du fait que presque toujours, il n'existe aucun morphisme filtré $\phi\colon P\to \fil{X}$
lorsque $\fil{X}$ est une pseudo variété (en fait, dès que $X$ est séparable). En effet, dès que $P$ contient au moins deux éléments comparables $p_0<p_1\in P$, tout voisinage de $\phi(p_0)$ devrait contenir $\phi(p_1)$. Et si cette application est filtrée, on a nécessairement que $\phi(p_0)\not=\phi(p_1)$ ce qui implique que $X$ ne peut pas être séparé. Pour cette raison, et pour simplifier les preuves à venir, on considère maintenant la catégorie des espaces fortement filtrés, pour laquelle la notion de pointage redevient naturelle.

\subsection{La catégorie modèle des espaces fortement filtrés}
\label{SectionCMFTopNP}
\begin{defin}
Un espace fortement filtré au dessus de $P$ est la donnée
\begin{itemize}
\item d'un espace topologique $X$,
\item d'une application continue $\varphi_X\colon X\to \Real{N(P)}$.
\end{itemize}
Une application fortement filtrée $f\colon \fil{X}\to\fil{Y}$ est la donnée d'une application continue $f\colon X\to Y$ telle que le triangle suivant commute
\begin{equation*}
\begin{tikzcd}
X
\arrow{rr}{f}
\arrow[swap]{dr}{\varphi_X}
&&
Y
\arrow{dl}{\varphi_Y}
\\
&\Real{N(P)}
\end{tikzcd}
\end{equation*}
On note $\Top_{N(P)}$ la catégorie des espaces fortement filtrés au dessus de $P$.
\end{defin}

On rappelle que $\varphi_P$ désigne l'application continue $\varphi_P\colon \Real{N(P)}\to P$ de la définition \ref{DefinitionVarphiP}
\begin{prop}
Les foncteurs
\begin{align*}
\varphi_P\circ -\colon \Top_{N(P)}&\to \Top_P\\
\fil{X}&\mapsto (X,\varphi_P\circ\varphi_X)
\end{align*}
et
\begin{align*}
-\times_{P}\Real{N(P)}\colon \Top_P &\to\Top_{N(P)}\\
\fil{Y}&\mapsto (Y\times_{P}\Real{N(P)},\pr_{\Real{N(P)}})
\end{align*}
forment une paire de foncteurs adjoints, où $\varphi_P\circ -$ est l'adjoint à gauche.
\end{prop}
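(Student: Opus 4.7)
Le plan est de construire explicitement la bijection naturelle entre les deux ensembles de morphismes en exploitant la propriété universelle du produit fibré. Soient $\fil{X}=(X,\varphi_X\colon X\to \Real{N(P)})$ un objet de $\Top_{N(P)}$ et $\fil{Y}=(Y,\varphi_Y\colon Y\to P)$ un objet de $\Top_P$. Je déplie d'abord chaque côté de l'isomorphisme d'adjonction attendu :
\begin{equation*}
\Hom_{\Top_P}(\varphi_P\circ \fil{X},\fil{Y})\simeq\Hom_{\Top_{N(P)}}(\fil{X},\fil{Y}\times_P\Real{N(P)}).
\end{equation*}
Un élément du membre de gauche est une application continue $f\colon X\to Y$ telle que $\varphi_Y\circ f=\varphi_P\circ\varphi_X$. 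Un élément du membre de droite est une application continue $g\colon X\to Y\times_P\Real{N(P)}$ telle que $\pr_{\Real{N(P)}}\circ g=\varphi_X$.

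Premièrement, j'utilise la propriété universelle du produit fibré $Y\times_P\Real{N(P)}$, construit dans $\Top$, pour exprimer un morphisme $g$ vers ce produit comme un couple $(g_Y\colon X\to Y,\ g_{N(P)}\colon X\to\Real{N(P)})$ vérifiant l'égalité $\varphi_Y\circ g_Y=\varphi_P\circ g_{N(P)}$. La condition $\pr_{\Real{N(P)}}\circ g=\varphi_X$ impose $g_{N(P)}=\varphi_X$, et la donnée d'un tel $g$ se réduit donc à celle d'une application continue $g_Y\colon X\to Y$ telle que $\varphi_Y\circ g_Y=\varphi_P\circ\varphi_X$. C'est exactement la donnée d'un morphisme de $\Top_P$ entre $\varphi_P\circ\fil{X}$ et $\fil{Y}$, ce qui fournit la bijection cherchée : dans un sens, $f\mapsto (f,\varphi_X)$ (qui se factorise par le produit fibré grâce à la condition de compatibilité), dans l'autre sens $g\mapsto \pr_Y\circ g$.

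Deuxièmement, je vérifie la naturalité. Soient $u\colon \fil{X'}\to\fil{X}$ un morphisme de $\Top_{N(P)}$ et $v\colon \fil{Y}\to\fil{Y'}$ un morphisme de $\Top_P$. Il s'agit de vérifier que le carré obtenu en pré/post composant par $u$ et $v$ des deux côtés de la bijection commute. Ceci se vérifie directement sur la formule $f\mapsto (f,\varphi_X)$ : la précomposition avec $\varphi_{P}\circ u=u$ (en tant qu'application continue) et la postcomposition avec $v$ commutent avec la deuxième coordonnée, qui est une application de filtration et est conservée par fonctorialité de la construction $\fil{Y}\times_P\Real{N(P)}$. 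Cette partie est purement formelle.

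Je ne vois pas d'obstacle sérieux dans cette preuve : il s'agit essentiellement de l'application directe de la propriété universelle du produit fibré, observée dans le bon contexte. La seule subtilité à ne pas oublier est de noter que le produit fibré $Y\times_P\Real{N(P)}$, calculé dans la catégorie $\Top$ des espaces $\Delta$-engendrés, coïncide bien avec l'objet recherché et porte la filtration par $\pr_{\Real{N(P)}}$ qui en fait un objet canonique de $\Top_{N(P)}$ — ce qui est assuré par la remarque \ref{RemarqueCategorieDeltaEngendre}.
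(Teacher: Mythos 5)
Ta preuve est correcte et suit exactement la même démarche que celle du texte, qui se contente d'invoquer la propriété universelle du produit fibré ; tu ne fais qu'expliciter cette bijection et sa naturalité. Rien à redire.
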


\begin{proof}
C'est une conséquence de la propriété universelle du produit fibré.
\end{proof}

\begin{defin}
On définit le foncteur de réalisation fortement filtrée
\begin{align*}
\Real{-}_{N(P)}\colon \sS_P&\to \Top_{N(P)}\\
\fil{X}&\mapsto (\Real{X},\Real{\varphi_X}).
\end{align*}
Il admet un adjoint à droite 
\begin{align*}
\Sing_{N(P)}\colon \Top_{N(P)}&\to \sS_P\\
\fil{X}&\mapsto \left\{\begin{array}{ccc}
\Delta(P)^{\op}&\to &\Set\\
\Delta^{\varphi}&\mapsto &\Hom_{\Top_{N(P)}}(\Real{\Delta^{\varphi}}_{N(P)},\fil{X})
\end{array}\right.
\end{align*}
\end{defin}

\begin{prop}
On a les isomorphismes de foncteurs
\begin{equation*}
\RealP{-}\simeq (\varphi_P\circ-)\circ\Real{-}_{N(P)}
\end{equation*}
et
\begin{equation*}
\Sing_P\simeq \Sing_{N(P)}\circ (-\times_{P}\Real{N(P)}).
\end{equation*}
\end{prop}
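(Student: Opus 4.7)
The plan is to verify both isomorphisms directly from the definitions, the first by immediate unfolding and the second by applying the universal property of the pullback defining $-\times_P\Real{N(P)}$.

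For the first isomorphism, I would start from an ensemble simplicial filtré $\fil{X}=(X,\varphi_X\colon X\to N(P))$ and simply chase through the two definitions. On the one hand, by Définition \ref{DefinitionRealP}, $\RealP{\fil{X}}$ is the espace filtré $(\Real{X},\varphi_P\circ\Real{\varphi_X})$. On the other hand, $\Real{\fil{X}}_{N(P)}=(\Real{X},\Real{\varphi_X})$ by the definition of $\Real{-}_{N(P)}$, and then applying $\varphi_P\circ -$ produces precisely $(\Real{X},\varphi_P\circ\Real{\varphi_X})$. The two constructions agree on objects, and the same unfolding on morphisms shows they agree on morphisms as well, so the isomorphism is in fact an equality of functors.

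For the second isomorphism, the heart of the argument is the universal property of the pullback. Fix an espace filtré $\fil{Y}=(Y,\varphi_Y\colon Y\to P)$ and a simplexe filtré $\Delta^{\varphi}\in\Delta(P)$. A morphism $\Real{\Delta^{\varphi}}_{N(P)}\to (Y\times_P\Real{N(P)},\pr_{\Real{N(P)}})$ in $\Top_{N(P)}$ is the data of a continuous map $h\colon\Real{\Delta^{\varphi}}\to Y\times_P\Real{N(P)}$ whose second component equals $\Real{\varphi}$. By the pullback property, such an $h$ is equivalent to the data of a continuous map $f\colon\Real{\Delta^{\varphi}}\to Y$ such that $\varphi_Y\circ f=\varphi_P\circ\Real{\varphi}$, which is exactly the datum of a morphism $\RealP{\Delta^{\varphi}}\to\fil{Y}$ in $\Top_P$. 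This bijection is natural in $\Delta^{\varphi}$ (the identification is functorial in $\Delta^{\varphi}$ via precomposition) and in $\fil{Y}$ (the pullback is functorial in $\fil{Y}$), so it assembles into the desired natural isomorphism of functors $\Top_P\to\sS_P$.

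The argument is essentially formal, so there is no genuine obstacle; the only care required is to make sure the naturality squares commute on both variables, which again reduces to the universal property of the pullback. Alternatively, one can deduce the second isomorphism directly from the first by adjunction: since $\RealP{-}=(\varphi_P\circ-)\circ\Real{-}_{N(P)}$, the right adjoint of $\RealP{-}$ must be the composite of the right adjoints of these two functors, which gives exactly $\Sing_{N(P)}\circ(-\times_P\Real{N(P)})\simeq \Sing_P$ by uniqueness of adjoints.
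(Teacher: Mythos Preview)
Your proof is correct and follows essentially the same approach as the paper: the first isomorphism is immediate from the definitions, and for the second you unfold the universal property of the pullback, which is exactly the content of the adjunction $(\varphi_P\circ-,\,-\times_P\Real{N(P)})$ that the paper invokes. Your alternative argument via uniqueness of adjoints is a nice packaging of the same idea.
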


\begin{proof}
Le premier isomorphisme provient de la définition de $\RealP{-}$, voir la définition \ref{DefinitionRealP}. Pour le second isomorphisme, soient $\Delta^{\varphi}\in \Delta(P)$ et $\fil{X}\in \Top_P$. On calcule
\begin{align*}
\Sing_P\fil{X}(\Delta^{\varphi})&=\Hom(\RealP{\Delta^{\varphi}},\fil{X})\\
&\simeq \Hom((\varphi\circ-)\Real{\Delta^{\varphi}}_{N(P)},\fil{X})\\
&\simeq \Hom(\Real{\Delta^{\varphi}}_{N(P)},\fil{X}\times_{P}\Real{N(P)})\\
&\simeq \Sing_{N(P)}\circ (-\times_P\Real{N(P)})(\fil{X})(\Delta^{\varphi})
\end{align*}
Et tous les isomorphismes proviennent d'adjonctions. Ils sont donc naturels en $\fil{X}$ et en $\Delta^{\varphi}$.
\end{proof}

\begin{defin}
Comme pour les espaces filtrés, on définit le foncteur diagramme réduit 
\begin{align*}
D_{N(P)}\colon \Top_{N(P)}&\to\DiagR_P\\
\fil{X}&\mapsto \left\{\begin{array}{ccc}
R(P)^{\op}&\to& \sS\\
\Delta^{\varphi}&\mapsto & \Map(\Delta^{\varphi},\Sing_{N(P)}(\fil{X}))
\end{array}\right.
\end{align*}
\end{defin}
Par la proposition précédente, ainsi que la proposition \ref{DiagrammesTopologiquesIsomorphes}, on a immédiatement le résultat suivant.
\begin{prop}\label{PropositionDiagrammesTopNP}
Les foncteurs $D\colon \Top_P\to \DiagR_P$ et $D_{N(P)}\circ (-\times_P\RealP{(N(P)})\colon \Top_P\to \DiagR_P$ sont isomorphes.
\end{prop}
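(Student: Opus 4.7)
The plan is to assemble the desired natural isomorphism by chaining together two isomorphisms already established in the excerpt, so the proof should be a short verification rather than a construction of anything new. Specifically, I would unwind both sides on a test object $\fil{X}\in\Top_P$ and a test simplex $\Delta^{\varphi}\in R(P)$, and show that the resulting simplicial sets agree term-by-term in a way that is natural in both $\fil{X}$ and $\Delta^{\varphi}$.

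First I would apply Proposition \ref{DiagrammesTopologiquesIsomorphes} (in its reduced version, i.e.\ restricted along the inclusion $R(P)\hookrightarrow\Delta(P)$) to rewrite
\[
D(\fil{X})(\Delta^{\varphi})\;\simeq\;\Map_{\sS_P}\bigl(\Delta^{\varphi},\Sing_P(\fil{X})\bigr).
\]
Next I would invoke the isomorphism of functors $\Sing_P\simeq \Sing_{N(P)}\circ(-\times_P\Real{N(P)})$ proved immediately before the statement, to obtain
\[
\Map_{\sS_P}\bigl(\Delta^{\varphi},\Sing_P(\fil{X})\bigr)\;\simeq\;\Map_{\sS_P}\bigl(\Delta^{\varphi},\Sing_{N(P)}(\fil{X}\times_P\Real{N(P)})\bigr),
\]
and finally I would recognize the right-hand side as the definition of $D_{N(P)}(\fil{X}\times_P\Real{N(P)})(\Delta^{\varphi})$.

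The composite of these three identifications is natural in $\fil{X}$ (each step is built from an adjunction or a natural transformation) and natural in $\Delta^{\varphi}\in R(P)$ (since $\Map(-,Z)$ and the face/inclusion maps of simplices are respected throughout). Packaging these naturalities together yields the claimed isomorphism of functors $\Top_P\to\DiagR_P$. There is no serious obstacle here: the only thing to be careful about is to check that the naturality squares for the two intermediate isomorphisms glue correctly, so that the resulting transformation is natural as a morphism of functors valued in $\DiagR_P=\Fun(R(P)^{\op},\sS)$ and not merely a pointwise isomorphism.
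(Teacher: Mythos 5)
Votre démonstration est correcte et suit exactement la même démarche que le texte : le résultat s'obtient en combinant la proposition \ref{DiagrammesTopologiquesIsomorphes} (sous sa forme réduite) avec l'isomorphisme $\Sing_P\simeq\Sing_{N(P)}\circ(-\times_P\Real{N(P)})$ établi juste avant, puis en reconnaissant la définition de $D_{N(P)}$. La vérification de naturalité que vous signalez est bien le seul point à contrôler, et elle découle comme vous le dites du fait que chaque identification intermédiaire provient d'une adjonction ou d'une transformation naturelle.
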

Pour cette raison, on notera aussi $D$ pour le foncteur $D_{N(P)}\colon \Top_{N(P)}\to\DiagR_P$.

La preuve du théorème \ref{CategorieModeleTopP} s'adapte directement, pour donner une preuve du résultat suivant :

\begin{theo}\label{CategorieModeleTopNP}
Il existe une structure de modèle sur $\Top_{N(P)}$ obtenue par transport à partir de la structure de modèle sur $\DiagR_P$. En particulier, les fibrations et les équivalences faibles sont définies comme suit :
\begin{itemize}
\item une application fortement filtrée $f\colon \fil{X}\to\fil{Y}$ est une fibration si et seulement si 
\begin{equation*}
D(f)\colon D(\fil{X})\to D(\fil{Y})
\end{equation*}
est une fibration de $\DiagR_P$,
\item une application fortement filtrée $f\colon \fil{X}\to\fil{Y}$ est une équivalence faible si et seulement si 
\begin{equation*}
D(f)\colon D(\fil{X})\to D(\fil{Y})
\end{equation*}
est une équivalence faible de $\DiagR_P$.
\end{itemize}
De plus, cette structure de modèle est engendrée de façon cofibrante par les ensembles de cofibrations (triviales) génératrices suivants 
\begin{itemize}
\item $I=\{\partial(\Delta^n)\otimes\RealNP{\Delta^{\varphi}}\to\Delta^n\otimes\RealNP{\Delta^{\varphi}}\ |\ n\geq 0,\ \Delta^{\varphi}\in R(P)\}$
\item $J=\{\Lambda^n_k\otimes \RealNP{\Delta^{\varphi}}\to\Delta^n\otimes \RealNP{\Delta^{\varphi}}\ |\ n\geq 0,\ 0\leq k\leq n,\ \Delta^{\varphi}\in R(P)\}$
\end{itemize}
\end{theo}

\subsection{Equivalence de Quillen avec la catégorie des espaces filtrés}
\label{SectionQETopNPTopP}
\begin{theo}\label{EquivalenceQuillenTopPTopNP}
L'adjonction $(\varphi_P\circ -, -\times_P \RealP{N(P)})$ induit une équivalence de Quillen entre les catégories de modèle $\Top_{N(P)}$ et $\Top_P$.
\end{theo}

La preuve repose sur l'observation suivante

\begin{lemme}\label{LemmeVarphiPFibrationTriviale}
L'application filtrée $\varphi_P\colon \RealP{N(P)}\to P$ est une fibration triviale de $\Top_P$.
\end{lemme}

\begin{proof}
Par construction de la structure de modèle sur $\Top_P$, il suffit de montrer que pour tout $\Delta^{\varphi}\in R(P)$, le morphisme induit par $\varphi_P$
\begin{equation*}
\Map(\RealP{\Delta^{\varphi}},\RealP{N(P)})\to\Map(\RealP{\Delta^{\varphi}},P)
\end{equation*}
est une fibration de Kan triviale. On observe immédiatement que 
\begin{equation*}
\Map(\RealP{\Delta^{\varphi}},P)\simeq \{*\}.
\end{equation*}
Aussi, comme $\Map(\RealP{\Delta^{\varphi}},\RealP{N(P)})\simeq \Sing(C^0_P(\RealP{\Delta^{\varphi}},\RealP{N(P)})$ est un complexe de Kan, le morphisme induit par $\varphi_P$ est nécessairement une fibration de Kan. Montrons que c'est aussi une équivalence faible. Notons $\Delta^{\varphi}=[p_0,\dots,p_n]\subseteq N(P)$. Alors, par définition de $\varphi_P$, tout morphisme filtré $\Delta^n\otimes\RealP{\Delta^{\varphi}}\to \RealP{N(P)}$ se factorise par $\varphi_P^{-1}(\{p_0,\dots,p_n\})\subseteq \RealP{N(P)}$. En particulier, l'inclusion induit un isomorphisme :
\begin{equation*}
\Map(\RealP{\Delta^{\varphi}},\varphi_P^{-1}(\{p_0,\dots,p_n\}))\simeq \Map(\RealP{\Delta^{\varphi}},\RealP{N(P)}.
\end{equation*}
D'autre part, on observe que tout point de $\varphi_P^{-1}(\{p_0,\dots,p_n\})$ est de la forme $((t_0,\dots,t_m),\Delta^{\psi})$ avec $\psi\colon \Delta^m\to N(P)$, et pour $i=\sup\{j\ |\ t_j\not =0\}$, $\psi(e_i)\in \{p_0,\dots,p_n\}$. (Voir la Définition \ref{DefinitionVarphiP} pour la définition de $\varphi_P$). Soit $\psi\colon \Delta^m\to N(P)$ un simplexe de $N(P)$, on note $I=\{i\ |\ i\in \{0,\dots,m\},\  \psi(e_i)\in \{p_0,\dots,p_n\}\}$. On définit l'homotopie suivante
\begin{align*}
H^{\psi}\colon\RealP{\Delta^{\psi}}\cap \varphi_P^{-1}(\{p_0,\dots,p_n\})\times [0,1]&\to \RealP{\Delta^{\psi}}\cap \varphi_P^{-1}(\{p_0,\dots,p_n\})\\
((t_0,\dots,t_m),s)&\mapsto (H_0^{\psi}(t_0,\dots,t_m,s),\dots,H_m^{\psi}(t_0,\dots,t_m,s))
\end{align*}
où $H_i^{\psi}$ est définie comme
\begin{align*}
H_i^{\psi}\colon \RealP{\Delta^{\psi}}\cap \varphi_P^{-1}(\{p_0,\dots,p_n\})\times [0,1]&\to [0,1]\\
((t_0,\dots,t_m),s)&\mapsto \left\{\begin{array}{cl}
(1-s)t_i &\text{ si $i\not\in I$}\\
t_i\left(1+s\frac{\sum_{j\not\in I}t_j}{\sum_{j\in I}t_j}\right) &\text{ si $i\in I$}
\end{array}\right.
\end{align*}
Les applications $H_i^{\psi}$ sont bien définies car, si $(t_0,\dots,t_n)\in \RealP{\Delta^{\psi}}\cap \varphi_P^{-1}(\{p_0,\dots,p_n\})$, alors $\sum_{i\in I}t_i\not = 0$. On vérifie que $H^{\psi}$ est bien définie. Soient $(t_0,\dots,t_m)\in \RealP{\Delta^{\psi}}\cap \varphi_P^{-1}(\{p_0,\dots,p_n\})$ et $s\in [0,1]$. 
\begin{align*}
\sum_i H^{\psi}_i(t_0,\dots,t_n,s)&=\sum_{i\not\in I}(1-s)t_i +\sum_{i\in I}t_i\left(1+s\frac{\sum_{j\not\in I}t_j}{\sum_{j\in I}t_j}\right)\\
&= \sum_{i} t_i-s\sum_{i\not\in I}t_i+s(\sum_{i\in I}t_i)\frac{\sum_{i\not\in I}t_i}{\sum_{i\in I}t_i}\\
&= 1-s\sum_{i\not\in I}t_i+s(\sum_{i\not\in I}t_i)\\
&= 1
\end{align*}
De plus, par construction, pour tout $(t_0,\dots,t_n)\in \RealP{\Delta^{\psi}}\cap \varphi_P^{-1}(\{p_0,\dots,p_n\})$, si $i=\sup\{j\ |\ t_j\not = 0\}$, alors $\psi(e_i)\in \{p_0,\dots,p_n\}$ et donc $i\in I$. Ainsi, pour tout $s\in [0,1]$, 
\begin{equation*}
\varphi_P(H^{\psi}(t_0,\dots,t_m),s))=\varphi_P(t_0,\dots,t_m).
\end{equation*}
On en déduit que $H^{\psi}$ est une application filtrée. Finalement, en considérant les restrictions en $0$ et $1$, on a que $H^{\psi}$ est une homotopie filtrée entre l'identité de $\Delta^{\psi}\cap \varphi_P^{-1}(\{p_0,\dots,p_n\})$ et une rétraction 
\begin{equation*}
\RealP{\Delta^{\psi}}\cap \varphi_P^{-1}(\{p_0,\dots,p_n\})\to \RealP{\Delta^{\psi}}\cap\RealP{\Delta^{\varphi}}\subseteq \RealP{\Delta^{\psi}}\cap \varphi_P^{-1}(\{p_0,\dots,p_n\})
\end{equation*}
Finalement, on obtient un ensemble d'homotopies filtrées $H^{\psi}$ pour tous les simplexes de $N(P)$, dont on vérifie facilement qu'elles sont compatibles aux faces. On obtient ainsi une homotopie filtrée
\begin{equation*}
H\colon \varphi_P^{-1}(\{p_0,\dots,p_n\})\times[0,1]\to
\varphi_P^{-1}(\{p_0,\dots,p_n\})
\end{equation*}
entre l'identité, et une rétraction
\begin{equation*}
\varphi_P^{-1}(\{p_0,\dots,p_n\})\to \RealP{\Delta^{\varphi}}\subseteq \varphi_P^{-1}(\{p_0,\dots,p_n\})
\end{equation*}
On en déduit que l'inclusion $\RealP{\Delta^{\varphi}}\to \varphi_P^{-1}(\{p_0,\dots,p_n\})$ est une équivalence d'homotopie filtrée. Elle induit donc une équivalence faible
\begin{equation*}
\Map(\RealP{\Delta^{\varphi}},\RealP{\Delta^{\varphi}})\to\Map(\RealP{\Delta^{\varphi}},\RealP{N(P)}).
\end{equation*}
Montrons maintenant que $\Map(\RealP{\Delta^{\varphi}},\RealP{\Delta^{\varphi}})$ est contractile. Soit $\sigma\colon \Real{\Delta^m}\otimes\RealP{\Delta^{\varphi}}\to\RealP{\Delta^{\varphi}}$ un simplexe de $\Map(\RealP{\Delta^{\varphi}},\RealP{\Delta^{\varphi}})$. On définit l'application
\begin{align*}
G(\sigma)\colon \Delta^1\otimes(\Real{\Delta^m}\otimes\RealP{\Delta^{\varphi}})&\to\RealP{\Delta^{\varphi}}\\
(s,u,t)&\mapsto \sigma(u,t)s+t(1-s)
\end{align*}
On peut maintenant définir l'homotopie contractante 
\begin{align*}
\Delta^1\times\Map(\RealP{\Delta^{\varphi}},\RealP{\Delta^{\varphi}})&\to\Map(\RealP{\Delta^{\varphi}},\RealP{\Delta^{\varphi}})\\
(\tau\colon \Delta^n\to \Delta^1,\sigma)&\mapsto G(\sigma)\circ\left(\Real{\tau}\times\Id_{\Real{\Delta^n}}\times\Id_{\RealP{\Delta^{\varphi}}}\right)\circ(\delta_{\Real{\Delta^n}}\times\Id_{\RealP{\Delta^{\varphi}}})
\end{align*}
où $\delta_{\Real{\Delta^n}}\colon \Real{\Delta^n}\to\Real{\Delta^n}\times\Real{\Delta^n}$ est l'application diagonale. On conclut que $\Map(\RealP{\Delta^{\varphi}},\RealP{\Delta^{\varphi}})$ est contractile, ce qui implique que $\varphi_P\colon \RealP{N(P)}\to P$ est une fibration triviale de $\Top_P$.
\end{proof}

On peut maintenant prouver le théorème \ref{EquivalenceQuillenTopPTopNP}

\begin{proof}
Soit $f\colon \fil{X}\to\fil{Y}$ un morphisme de $\Top_P$. C'est une fibration (triviale) si et seulement si 
\begin{equation*}
D(f)\colon D(\fil{X})\to D(\fil{Y})
\end{equation*}
est une fibration (triviale) de $\DiagR_P$. Par la proposition \ref{PropositionDiagrammesTopNP}, cette dernière condition est vérifiée si et seulement si
\begin{equation*}
D(f\times_P\Real{N(P)})\colon D(\fil{X}\times_P\Real{N(P)})\to D(\fil{Y}\times_P\Real{N(P)})
\end{equation*}
c'est à dire si et seulement si $f\times_P\Real{N(P)}$ est une fibration (triviale) de $\Top_{N(P)}$. En particulier, on en déduit que $-\times_P\Real{N(P)}$ préserve les fibrations et les fibrations triviales. C'est donc un foncteur de Quillen à droite. 

Soient $\fil{X}\in \Top_{N(P)}$ un espace fortement filtré, $\fil{Y}\in \Top_P$ un espace filtré et $f\colon \fil{X}\to\fil{Y}\times_P\Real{N(P)}$ une application fortement filtrée. Montrons que $f$ est une équivalence faible de $\Top_{N(P)}$ si et seulement si son image par l'adjonction
\begin{equation*}
\widehat{f}\colon(X,\varphi_P\circ\varphi_X)\to \fil{Y}
\end{equation*}
est une équivalence faible de $\Top_P$. On remarque que $f$ peut se factoriser sous la forme
\begin{equation*}
\fil{X}\xrightarrow{\epsilon_X}\fil{X}\times_P\Real{N(P)}\xrightarrow{\widehat{f}\times_P\Real{N(P)}}Y\times_P\Real{N(P)}
\end{equation*}
où $\epsilon_X$ est l'unité de l'adjonction $(\varphi_P\circ-,-\times_P\Real{N(P)})$. Mais par définition $\widehat{f}$ est une équivalence faible de $\Top_P$ si et seulement si $\widehat{f}\times_P\Real{N(P)}$ est une équivalence faible de $\Top_{N(P)}$. Il suffit donc de prouver que $\epsilon_X$ est une équivalence faible pour tout espace fortement filtré $\fil{X}$. On considère le diagramme commutatif suivant
\begin{equation*}
\begin{tikzcd}[column sep = huge]
X
\arrow[bend left= 18]{drr}{\Id_X}
\arrow{dr}{\epsilon_X}
\arrow[swap, bend right = 18]{dddr}{\varphi_X}
\\
&X\times_P\Real{N(P)}
\arrow{r}{\pr_X}
\arrow{d}{\varphi_X\times_P\Real{N(P)}}
&X
\arrow{d}{\varphi_X}
\\
&\Real{N(P)}\times_P\Real{N(P)}
\arrow[swap]{r}{\varphi_P\times_P\Real{N(P)}}
\arrow{d}
&\Real{N(P)}
\arrow{d}{\varphi_P}
\\
&\Real{N(P)}
\arrow[swap]{r}{\varphi_P}
&P
\end{tikzcd}
\end{equation*}
Dans ce diagramme, les trois carrés sont cartésiens. De plus, par le lemme \ref{LemmeVarphiPFibrationTriviale}, $\varphi_P$ est une fibration triviale de $\Top_P$. Ceci implique que $\varphi_P\times_P\Real{N(P)}$ est une fibration triviale de $\Top_N(P)$, car $-\times_P\Real{N(P)}$ préserve les fibrations triviales. Finalement, $\pr_X\colon (X,\varphi_P\circ\varphi_X)\times_P\Real{N(P)}\to \fil{X}$ est une fibration triviale de $\Top_{N(P)}$ car c'est l'image d'une fibration triviale par un produit fibré. Comme $\Id_X=\pr_X\circ\epsilon_X$, on en déduit que $\epsilon_X$ est une équivalence faible de $\Top_{N(P)}$ par deux sur trois.
\end{proof}
%

\subsection{Equivalence de Quillen avec la catégorie des diagrammes simpliciaux réduits}
\label{SectionQETopNPDiagRP}
L'objet de cette sous section est de montrer le résultat suivant.

\begin{theo}\label{EquivalenceQuillenTopNPDiagRP}
L'adjonction $\Colim\colon \DiagR_P\leftrightarrow \Top_{N(P)}\colon D$ induit une équivalence de Quillen.
\end{theo}

On utilisera les lemmes suivants.

%
%
%

\begin{lemme}\label{LemmeDiagrammesCofibrants}
Soit $F\colon R(P)^{\op}\to \sS$ un diagramme simplicial réduit cofibrant. Alors, pour tout $\Delta^{\psi}\to\Delta^{\varphi}\in R(P)$, le morphisme 
\begin{equation*}
F(\Delta^{\psi}\to\Delta^{\varphi})\colon F(\Delta^{\varphi})\to F(\Delta^{\psi})
\end{equation*}
est un monomorphisme.
\end{lemme}

\begin{proof}
Soient $F$ un objet cofibrant de $\DiagR_P$ et $\Delta^{\psi}\to\Delta^{\varphi}$ un morphisme de $R(P)$. Montrons que $f\colon F(\Delta^{\varphi})\to F(\Delta^{\psi})$ est un monomorphisme. Par définition de la structure de Kan Quillen sur $\sS$, cela revient à montrer que $f$ admet la propriété de relèvement à gauche par rapport à toutes les fibrations de Kan triviales.
Soit $p\colon X\to Y$ une fibration de Kan triviale, on considère le problème de relèvement suivant.
\begin{equation}\label{ProblemeCofibrationsMonomorphismes}
\begin{tikzcd}
F(\Delta^{\varphi})
\arrow{r}{\alpha}
\arrow[swap]{d}{f}
&X
\arrow{d}{p}
\\
F(\Delta^{\psi})
\arrow[swap]{r}{\beta}
\arrow[dashrightarrow]{ur}{h}
&Y
\end{tikzcd}
\end{equation}
Pour construire le relèvement $h$, on considère d'abord les diagrammes simpliciaux réduits suivants.
\begin{align*}
H\colon R(P)^{\op}&\to \sS\\
\Delta^{\mu}&\mapsto \left\{\begin{array}{cl}
X &\text{ si $\Delta^{\varphi}\subseteq \Delta^{\mu}$}\\
Y &\text{ si $\Delta^{\psi}\subseteq\Delta^{\mu}$ et $\Delta^{\varphi}\not\subseteq \Delta^{\mu}$}\\
* &\text{ si $\Delta^{\psi}\not\subseteq \Delta^{\mu}$}
\end{array}\right.
\end{align*}
Et, pour $\Delta^{\mu_1}\to \Delta^{\mu_2}$, \begin{equation*}
H(\Delta^{\mu_2})\to H(\Delta^{\mu_1})=\left\{\begin{array}{cl}
\Id_X & \text{si $\Delta^{\varphi}\subseteq \Delta^{\mu_1}$}\\
\Id_Y &\text{ si $\Delta^{\varphi}\not\subseteq \Delta^{\mu_2}$ et $\Delta^{\psi}\subseteq \Delta^{\mu_1}$}\\
p & \text{ si $\Delta^{\varphi}\subseteq \Delta^{\mu_2}$, $\Delta^{\varphi}\not \subseteq \Delta^{\mu_1}$ et $\Delta^{\psi}\subseteq\Delta^{\mu_1}$}\\
* &\text{ sinon}
\end{array}\right. .
\end{equation*} 
Où $*$ est l'unique application vers l'ensemble simplicial terminal $*$. Le diagramme est bien défini car si $\Delta^{\mu_1}\subseteq \Delta^{\mu_2}$, alors $\Delta^{\varphi}\subseteq \Delta^{\mu_1}\Rightarrow \Delta^{\varphi}\subseteq \Delta^{\mu_2}$. 
\begin{align*}
G\colon R(P)^{\op}&\to \sS\\
\Delta^{\mu}&\mapsto \left\{\begin{array}{cl}
X &\text{ si $\Delta^{\psi}\subseteq \Delta^{\mu}$}\\
* &\text{ sinon}
\end{array}\right.
\end{align*}
On définit un morphisme $g\colon G\to H$ par \begin{equation*}
G(\Delta^{\mu})\to H(\Delta^{\mu})= \left\{\begin{array}{cl}p &\text{ si $H(\Delta^{\mu})=Y$}\\
\Id_X &\text{ si $H(\Delta^{\mu})=X$}\\
* &\text{ si $H(\Delta^{\mu})=*$}
\end{array}\right. 
\end{equation*}
Le morphisme $g$ est une fibration triviale de $\DiagR_P$. En effet, objet par objet, il est de la forme $p,\Id_X$ ou $*$, qui sont tous des fibrations triviales.
D'autre part, $\alpha\colon F(\Delta^{\varphi})\to X$ et $\beta\colon F(\Delta^{\psi})\to Y$ induisent un morphisme de diagramme 
\begin{equation*}
\gamma\colon F\to H
\end{equation*}
défini par 
\begin{equation*}
\gamma_{\Delta^{\mu}}=\left\{\begin{array}{cl}
\alpha\circ F(\Delta^{\varphi}\to \Delta^{\mu}) &\text{ si $\Delta^{\varphi}\subseteq \Delta^{\mu}$}\\
\beta \circ F(\Delta^{\psi}\to\Delta^{\mu}) &\text{ si $\Delta^{\varphi}\not \subseteq \Delta^{\mu}$ et $\Delta^{\psi}\subseteq \Delta^{\mu}$}\\
* &\text{ sinon}
\end{array}\right.
\end{equation*}
En particulier, on a le problème de relèvement suivant
\begin{equation*}
\begin{tikzcd}
&G
\arrow{d}{g}
\\
F
\arrow{r}{\gamma}
\arrow[dashrightarrow]{ur}{\delta}
&H
\end{tikzcd}
\end{equation*}
Par hypothèse, $F$ est un objet cofibrant, et par construction $g$ est une fibration triviale de $\DiagR_P$. On en déduit qu'il existe un relèvement $\delta$. Finalement, $h=\delta_{\Delta^{\psi}}\colon F(\Delta^{\psi})\to G(\Delta^{\psi})=X$ fournit un relèvement au problème initial (Diagramme \ref{ProblemeCofibrationsMonomorphismes}). 
\end{proof}
On rappelle que si $F\colon R(P)^{\op}\to \sS$ un diagramme simplicial réduit, le foncteur $F\otimes R(P)$ est défini par
\begin{align*}
F\otimes R(P)\colon \mathcal{C}&\to \sS\\
(\Delta^{\varphi},\Delta^{\psi})&\mapsto F(\Delta^{\varphi})\otimes\Delta^{\psi}
\end{align*}
où $\mathcal{C}$ est la sous catégorie pleine de $R(P)^{\op}\times R(P)$ contenant les objets de la forme $(\Delta^{\varphi},\Delta^{\psi})$ avec $\Delta^{\psi}\subseteq \Delta^{\varphi}$.

\begin{lemme}\label{LemmeMonomorphismeColimite}
Soit $F\colon R(P)^{\op}\to \sS$ un diagramme simplicial réduit. Si pour tout morphisme de $R(P)$, $\Delta^{\psi}\to\Delta^{\varphi}$, le morphisme
\begin{equation*}
F(\Delta^{\varphi})\to F(\Delta^{\psi})
\end{equation*}
est un monomorphisme.
Alors, pour tout $\Delta^{\varphi}\in R(P)$, le morphisme canonique
\begin{equation*}
F(\Delta^{\varphi})\otimes\Delta^{\varphi}\to \colim F\otimes R(P)
\end{equation*}
est un monomorphisme.
\end{lemme}

\begin{proof}
L'ensemble des $N$-simplexes de $\colim F\otimes R(P)$ est donné par
\begin{equation*}
\left(\colim F\otimes R(P)\right)_N=
\frac{\left(\coprod_{(\Delta^{\varphi},\Delta^{\psi})}\left(F(\Delta^{\varphi})\otimes \Delta^{\psi}\right)_N\right)}{\sim}
\end{equation*}
où $\sim$ est la relation d'équivalence engendrée par 
\begin{equation*}
(x,(\Delta^{\varphi_1},\Delta^{\psi_1}))\sim (F\otimes R(P)(\alpha)(x),(\Delta^{\varphi_2},\Delta^{\psi_2}))
\end{equation*}
pour tout $x\in \left(F(\Delta^{\varphi_1})\otimes\Delta^{\psi_1}\right)_N$ et tout morphisme de $\mathcal{C}$ $\alpha\colon (\Delta^{\varphi_1},\Delta^{\psi_1})\to (\Delta^{\varphi_2},\Delta^{\psi_2})$. Il suffit de montrer que si $x,x'\in\left(F(\Delta^{\varphi})\otimes\Delta^{\psi}\right)_N$ sont deux simplexes tels que 
\begin{equation*}
(x,(\Delta^{\varphi},\Delta^{\psi}))\sim (x',(\Delta^{\varphi},\Delta^{\psi})),
\end{equation*}
alors $x=x'$. Soient $x$ et $x'$ deux tels simplexes. Alors, l'équivalence entre $x$ et $x'$ provient d'un zigzag de morphismes dans $\mathcal{C}$. Quitte à composer des morphismes composables, un tel zigzag est de l'une des formes suivantes
\begin{equation}\label{ZigZag1}
\begin{tikzcd}
&(\Delta^{\varphi_1},\Delta^{\psi_1})
\arrow{dl}{\alpha_0}
\arrow[swap]{dr}{\alpha_1}
&\dots
&(\Delta^{\varphi_{n-1}},\Delta^{\psi_{n-1}})
\arrow{dl}{\alpha_{n-2}}
\arrow[swap]{dr}{\alpha_{n-1}}
\\
(\Delta^{\varphi_0},\Delta^{\psi_0})
&
&\phantom{X}\dots\phantom{X}
&&(\Delta^{\varphi_n},\Delta^{\psi_n})
\end{tikzcd}
\end{equation}
\begin{equation}\label{ZigZag2}
\begin{tikzcd}
&(\Delta^{\varphi_1},\Delta^{\psi_1})
\arrow{dl}{\beta_0}
\arrow[swap]{dr}{\beta_1}
&\dots
&(\Delta^{\varphi_n},\Delta^{\psi_n})
\arrow{dl}{\beta_{n-1}}
\\
(\Delta^{\varphi_0},\Delta^{\psi_0})
&
&\phantom{X}\dots\phantom{X}
\end{tikzcd}
\end{equation}
\begin{equation}\label{ZigZag3}
\begin{tikzcd}
(\Delta^{\varphi_0},\Delta^{\psi_0})
\arrow[swap]{dr}{\gamma_0}
&
&(\Delta^{\varphi_2},\Delta^{\psi_2})
\arrow{dl}{\gamma_1}
\arrow[swap]{dr}{\gamma_2}
&\dots
&(\Delta^{\varphi_n},\Delta^{\psi_n})
\arrow{dl}{\gamma_{n-1}}
\\
&
(\Delta^{\varphi_1},\Delta^{\psi_1})
&
&\phantom{X}\dots\phantom{X}
\end{tikzcd}
\end{equation}
Avec $(\Delta^{\varphi_0},\Delta^{\psi_0})=(\Delta^{\varphi_n},\Delta^{\psi_n})=(\Delta^{\varphi},\Delta^{\psi})$. De plus, on peut toujours se ramener à la situation \ref{ZigZag1}. En effet, dans le cas \ref{ZigZag2}, en remplaçant $\beta_0$ par $\beta_{n-1}\circ \beta_0$, on obtient un zigzag de la forme \ref{ZigZag1} commençant et terminant en $(\Delta^{\varphi_{n-1}},\Delta^{\psi_{n-1}})$. Comme par construction $F\otimes R(P)(\beta_{n-1})$ est un monomorphisme, on a \begin{equation*}
x=x'\Leftrightarrow F\otimes R(P)(\beta_{n-1})(x)=F\otimes\Delta(P)(\beta_{n-1})(x').
\end{equation*}
De la même façon, en posant $\widetilde{\alpha}_i=\gamma_{i-1}$ pour $1\leq i\leq n-1$ et $\widetilde{\alpha}_0=\gamma_{n-1}$, on obtient un zigzag de la forme \ref{ZigZag1} à partir d'un zigzag de la forme \ref{ZigZag3}. On remarque par ailleurs que pour un tel zigzag, $n=2k$ est pair. On va prouver que $x=x'$ par récurrence sur $k$. Le cas $k=0$ est vide. Le cas $k=1$ correspond à 
\begin{equation*}
\begin{tikzcd}
&(\Delta^{\varphi_1},\Delta^{\psi_1})
\arrow{dl}{\alpha_0}
\arrow{dr}{\alpha_1}
\\
(\Delta^{\varphi},\Delta^{\psi})
&&(\Delta^{\varphi},\Delta^{\psi})
\end{tikzcd}
\end{equation*}
de plus, par construction de $\mathcal{C}$ on a nécessairement $\alpha_0=\alpha_1$. Finalement, il existe $y\in \left((F(\Delta^{\varphi})\otimes (\Delta^{\psi})\right)_N$ tel que $x'=\alpha_1(y)=\alpha_0(y)=x$. Si $k=2$, on a la situation suivante
\begin{equation}\label{ZigZagKEgal2}
\begin{tikzcd}
&(\Delta^{\varphi_1},\Delta^{\psi_1})
\arrow{dl}{\alpha_0}
\arrow[swap]{dr}{\alpha_1}
&
&(\Delta^{\varphi_{3}},\Delta^{\psi_{3}})
\arrow{dl}{\alpha_2}
\arrow[swap]{dr}{\alpha_3}
\\
(\Delta^{\varphi},\Delta^{\psi})
&
&(\Delta^{\varphi_2},\Delta^{\psi_2})
&&(\Delta^{\varphi},\Delta^{\psi})
\end{tikzcd}
\end{equation}
Soient $\Delta^{\varphi},\Delta^{\varphi'}$ deux objets de $R(P)$. S'il existe $\Delta^{\mu}\in R(P)$ tel que
$\Delta^{\varphi}\subseteq \Delta^{\mu}$ et $\Delta^{\varphi'}\subseteq \Delta^{\mu}$, alors on note $\Delta^{\varphi'}\cup \Delta^{\varphi}$ pour le plus petit objet de $R(P)$ ayant cette propriété. (Si on identifie les éléments de $R(P)$ avec leurs ensembles de sommets, l'existence d'un tel $\mu$ revient à imposer que l'ensemble $\Delta^{\varphi}\cup \Delta^{\varphi'}$ est totalement ordonné, dans ce cas leur union appartient bien à $R(P)$).
De même soient $\Delta^{\psi}, \Delta^{\psi'}$ deux objets de $R(P)$. S'il existe  $\Delta^{\nu}$ tel que $\Delta^{\nu}\subseteq \Delta^{\psi}$ et $\Delta^{\nu}\subseteq \Delta^{\psi'}$, on note $\Delta^{\psi}\cap\Delta^{\psi'}$ pour le plus grand objet de $R(P)$ ayant cette propriété. Avec ces notations, on constate que le diagramme \ref{ZigZagKEgal2} peut se factoriser comme suit

\begin{equation*}
\begin{tikzcd}[column sep= 15pt]
&(\Delta^{\varphi_1},\Delta^{\psi_1})
\arrow{dl}{\alpha_0}
\arrow{dd}{\delta_1}
\arrow[swap]{dr}{\alpha_1}
&
&(\Delta^{\varphi_{3}},\Delta^{\psi_{3}})
\arrow{dl}{\alpha_2}
\arrow[swap]{dr}{\alpha_3}
\arrow{dd}{\delta_3}
\\
(\Delta^{\varphi},\Delta^{\psi})
&
&(\Delta^{\varphi_2},\Delta^{\psi_2})
&&(\Delta^{\varphi},\Delta^{\psi})
\\
&(\Delta^{\varphi}\cup\Delta^{\varphi_2},\Delta^{\psi}\cap\Delta^{\psi_2})
\arrow{ul}{\alpha '_0}
\arrow[swap]{ur}{\alpha '_1}
\arrow[swap, leftrightarrow]{rr}{\Id}
&
&(\Delta^{\varphi_{2}}\cup\Delta^{\varphi},\Delta^{\psi_{2}}\cap\Delta^{\psi})
\arrow{ul}{\alpha '_2}
\arrow[swap]{ur}{\alpha '_3}
\end{tikzcd}
\end{equation*}
Finalement, si $x_i\in \left(F(\Delta^{\varphi_i})\otimes\Delta^{\psi_i}\right)_N$ pour $i=1,2,3$, avec $\alpha_0(x_1)=x, \alpha_1(x_1)=x_2, \alpha_2(x_3)=x_2$ et $\alpha_3(x_3)=x'$, alors $\alpha'_1\circ\delta_1(x_1)=\alpha'_2\circ\delta_3(x_3)=x_2$. 
De plus, par construction de $\mathcal{C}$, $\alpha'_1=\alpha'_2$, et $\alpha'_1$ est un monomorphisme par hypothèse. 
On en déduit que $\delta_1(x_1)=\delta_3(x_3)$. 
Finalement, on s'est ramené au cas où $k=1$. Supposons maintenant $k\geq 3$. 
Soient $x_i\in \left(F(\Delta^{\varphi_i})\otimes\Delta^{\psi_i}\right)_N$, pour $0\leq i\leq 2k$, 
avec $\alpha_{2j}(x_{2j+1})=x_{2j}$ et $\alpha_{2j+1}(x_{2j+1})=x_{2j+2}$ 
pour $0\leq j\leq k-1$, et $x_0=x, x_{2k}=x'$. 
Pour $x_i\in (F(\Delta^{\varphi_i}\otimes\Delta^{\psi_i})_N$, on a $x_i=(x_i^F,x_i^P)$ avec $x_i^F\in F(\Delta^{\varphi_i})_N$ et $x_i^P\in (\Delta^{\psi})_N$. 
De plus, en identifiant $\Delta^{\psi_i}$ avec son image dans $N(P)$, on a pour tout $i$, $x_i^P\in (N(P))_N$. 
Comme les $\alpha_i$ induisent les inclusions naturelles sur les sous ensembles de simplexes de $N(P)$, on  a pour tout $i$, $x_i^P=x^P$. En particulier, on en déduit que le sous ensemble simplicial de $N(P)$,
\begin{equation*}
\Delta^{\bar{\psi}}=\cap_{i}\Delta^{\psi_i}
\end{equation*}
est non vide (car il contient $x^P$). De plus, on a
\begin{equation*}
\Delta^{\bar{\psi}}\subseteq\Delta^{\bar{\varphi}}=\cap_i\Delta^{\varphi_i}
\end{equation*}
D'où on déduit que ce dernier est aussi non vide.
Finalement, on a ramené le zigzag \ref{ZigZag1} à 
\begin{equation*}
\begin{tikzcd}[column sep = -5pt]
&&&(\Delta^{\bar{\varphi}},\Delta^{\bar{\psi}})
&\phantom{(\Delta^{\varphi_2},\Delta^{\bar{\psi}}}
\\
&(\Delta^{\varphi_1},\Delta^{\bar{\psi}})
\arrow[swap]{dl}{\alpha_0}
\arrow{dr}{\alpha_1}
\arrow{urr}{\epsilon_1}
&&(\Delta^{\varphi_3},\Delta^{\bar{\psi}})
\arrow{dr}{\alpha_2}
\arrow{dl}{\alpha_3}
\arrow{u}{\epsilon_3}
&\dots
&(\Delta^{\varphi_{n-1}},\Delta^{\bar{\psi}})
\arrow{dr}{\alpha_{n-2}}
\arrow{dl}{\alpha_{n-1}}
\arrow[swap]{ull}{\epsilon_{n-1}}
\\
(\Delta^{\varphi},\Delta^{\psi})
&&
(\Delta^{\varphi_2},\Delta^{\bar{\psi}})
&&
\dots
&&
(\Delta^{\varphi},\Delta^{\psi})
\end{tikzcd}
\end{equation*}
Et en posant $\bar{x}=\epsilon_1(x_1)=\dots=\epsilon_{n-1}(x_{n-1})$, on obtient un zigzag de la forme
\begin{equation*}
\begin{tikzcd}
&(\Delta^{\varphi_1},\Delta^{\bar{\psi}})
\arrow[swap]{dl}{\alpha_0}
\arrow{dr}{\epsilon_1}
&&
(\Delta^{\varphi_{n-1}},\Delta^{\bar{\psi}})
\arrow[swap]{dl}{\epsilon_{n-1}}
\arrow{dr}{\alpha_{n-1}}
\\
(\Delta^{\varphi},\Delta^{\psi})
&&
(\Delta^{\bar{\varphi}},\Delta^{\bar{\psi}})
&&
(\Delta^{\varphi},\Delta^{\bar{\psi}})
\end{tikzcd}
\end{equation*}
avec $\alpha_0(x_1)=x$, $\epsilon_1(x_1)=\bar{x}=\epsilon_{n-1}(x_{n-1})$ et $\alpha_{n-1}(x_{n-1})=x'$
Finalement, on s'est ramené au cas $n=2k=4$. On en déduit que pour tout $(\Delta^{\varphi},\Delta^{\psi})\in \mathcal{C}$, le morphisme canonique
\begin{equation*}
F(\Delta^{\varphi})\otimes\Delta^{\psi}\to \colim F\otimes R(P)
\end{equation*}
est un monomorphisme ce qui implique le résultat voulu.
\end{proof}

On peut maintenant prouver le théorème \ref{EquivalenceQuillenTopNPDiagRP}.

\begin{proof}[Démonstration du théorème \ref{EquivalenceQuillenTopNPDiagRP}]
Par construction de la structure de modèle sur $\Top_{N(P)}$, $D$ préserve les fibrations et le fibrations triviales. On en déduit que l'adjonction $(\Colim,D)$ est une adjonction de Quillen. Soient $F\in \DiagR_P$ un diagramme cofibrant et $\fil{X}\in \Top_N(P)$ un espace fortement filtré, et $f\colon F\to \fil{X}$ un morphisme de $\DiagR_P$. Il suffit de montrer que $f$ est une équivalence faible de $\DiagR_P$ si et seulement si son image par l'adjonction $\widehat{f}\colon \Colim(F)\to \fil{X}$ est une équivalence faible de $\Top_{N(P)}$. Or, par définition des équivalences faibles sur $\Top_{N(P)}$, $\widehat{f}$ est une équivalence faible si et seulement si $D(\widehat{f})\colon D(\Colim(F))\to D(X)$ est une équivalence faible de $\DiagR_P$. De plus, on constate que $f$ est égale à la composée
\begin{equation*}
F\xrightarrow{\eta_F}D(\Colim(F))\xrightarrow{D(\widehat{f})}D(X),
\end{equation*}
où $\eta$ est l'unité de l'adjonction $(\Colim,D)$.
Ainsi, par deux sur trois, il suffit de montrer que pour tout diagramme cofibrant $F\in \DiagR_P$, $\eta_F$ est une équivalence faible. Par définition de la structure de modèle sur $\DiagR_P$, cela revient à montrer que pour tout $\Delta^{\mu}\in R(P)$, le morphisme
\begin{equation}\label{MorphismeEquivalenceQuillenDiagramme}
F(\Delta^{\mu})\to\Map(\RealNP{\Delta^{\mu}},\Colim(F))
\end{equation}
est une équivalence faible pour la structure de Kan Quillen sur $\sS$. Montrons d'abord que lorsque $F$ est cofibrant, le morphisme canonique $\Real{F(\Delta^{\mu})}\otimes\RealNP{\Delta^{\mu}}\to \Colim(F)$ induit un isomorphisme d'ensembles simpliciaux
\begin{equation*}
\Map(\RealNP{\Delta^{\mu}},\Real{F(\Delta^{\mu})}\otimes\RealNP{\Delta^{\mu}})\xrightarrow{\simeq}\Map(\RealNP{\Delta^{\mu}},\Colim(F)).
\end{equation*}
Pour $(\Delta^{\varphi},\Delta^{\psi})\in \mathcal{C}$, on note $i_{(\Delta^{\varphi},\Delta^{\psi})}$ le morphisme canonique
\begin{equation*}
i_{(\Delta^{\varphi},\Delta^{\psi})}\colon \Real{F(\Delta^{\varphi})}\otimes\RealNP{\Delta^{\psi}}\to \Colim(F)
\end{equation*}
Alors, par construction de $\Colim(F)$, on a
\begin{equation*}
\Colim(F)=\bigcup_{(\Delta^{\varphi},\Delta^{\psi})\in \mathcal{C}}\Im(i_{(\Delta^{\varphi},\Delta^{\psi})})=\bigcup_{\Delta^{\varphi}\in R(P)}\Im(i_{(\Delta^{\varphi},\Delta^{\varphi})}).
\end{equation*}
Ceci implique que pour tout $\Delta^{\varphi}\in R(P)$, le carré suivant est cartésien
\begin{equation}\label{DiagrammePullbackInterieur}
\begin{tikzcd}
i_{(\Delta^{\varphi},\Delta^{\varphi})}(\Real{F(\Delta^{\varphi})}\otimes\Int(\RealNP{\Delta^{\varphi})})
\arrow[hookrightarrow]{r}
\arrow{d}
&\Colim(F)
\arrow{d}
\\
\Int(\RealNP{\Delta^{\varphi}})
\arrow{r}
&\Real{N(P)}
\end{tikzcd}
\end{equation}
où $\Int(\RealNP{\Delta^{\varphi}})$ désigne l'intérieur de la réalisation du simplexe. Plus précisément, $\Int(\RealNP{\Delta^{\varphi}})$ désigne le simplexe entier si ce dernier est de dimension $0$, et le sous espace $\{(t_0,\dots,t_m\ \sum_{i}t_i=1,0<t_i\leq 1\}\subseteq \RealNP{\Delta^{\varphi}}$ si $\Delta^{\varphi}$ est de dimension $m\geq 1$).  
Soit $\sigma\colon \Real{\Delta^n}\otimes\RealNP{\Delta^{\mu}}\to \Colim(F)$ un simplexe de $\Map(\RealNP{\Delta^{\mu}},\Colim(F))$. On note $\widetilde{\sigma}$ sa restriction à $\Real{\Delta^n}\otimes\Int(\RealNP{\Delta^{\mu}})$. Comme le carré \ref{DiagrammePullbackInterieur} est cocartésien, $\widetilde{\sigma}$ se factorise par l'inclusion
\begin{equation*}
i_{(\Delta^{\mu},\Delta^{\mu})}(\Real{F(\Delta^{\mu})}\otimes\Int(\RealNP{\Delta^{\mu})})
\hookrightarrow
\Colim(F)
\end{equation*}
De plus, comme l'image de $F(\Delta^{\mu})\otimes\Delta^{\mu}$ est un sous ensemble simplicial de $\colim F\otimes R(P)$, on a
\begin{equation*}
\overline{i_{(\Delta^{\mu},\Delta^{\mu})}(\Real{F(\Delta^{\mu})}\otimes\Int(\RealNP{\Delta^{\mu})})}
=i_{(\Delta^{\mu},\Delta^{\mu})}(\Real{F(\Delta^{\mu})}\otimes(\RealNP{\Delta^{\mu})})
\end{equation*}
Finalement, $\sigma$ se factorise par
\begin{equation*}
i_{(\Delta^{\mu},\Delta^{\mu})}(\Real{F(\Delta^{\mu})}\otimes(\RealNP{\Delta^{\mu})})\hookrightarrow \Colim(F).
\end{equation*}
D'autre part, comme $F$ est un objet cofibrant, le lemme \ref{LemmeDiagrammesCofibrants} nous permet d'appliquer le lemme \ref{LemmeMonomorphismeColimite}. On en déduit que le morphisme canonique
\begin{equation*}
F(\Delta^{\mu})\otimes\Delta^{\mu}\hookrightarrow \colim F\otimes R(P)
\end{equation*}
est un monomorphisme, et donc que
\begin{equation*}
i_{(\Delta^{\mu},\Delta^{\mu})}\colon \Real{F(\Delta^{\mu})}\otimes\RealNP{\Delta^{\mu}} \hookrightarrow \Colim(F)
\end{equation*}
est une cofibration. Ainsi on a une factorisation de $\sigma$ sous la forme
\begin{equation*}
\begin{tikzcd}
&\Real{F(\Delta^{\mu})}\otimes\RealNP{\Delta^{\mu}}
\arrow[hookrightarrow]{d}{i_{(\Delta^{\mu},\Delta^{\mu})}}
\\
\Real{\Delta^n}\otimes\RealNP{\Delta^{\mu}}
\arrow{r}{\sigma}
\arrow[dashrightarrow]{ur}{\tau}
&\Colim(F)
\end{tikzcd}
\end{equation*}
On en déduit que le morphisme canonique
\begin{equation*}
\Map(\RealNP{\Delta^{\mu}},\Real{F(\Delta^{\mu})}\otimes\RealNP{\Delta^{\mu}})\to \Map(\RealNP{\Delta^{\mu}},\Colim(F))
\end{equation*}
est une surjection. De plus, comme on vient de montrer que $\Real{F(\Delta^{\mu})}\otimes\RealNP{\Delta^{\mu}}$ est un sous espace de $\Colim(F)$, c'est aussi une injection. Il reste à montrer que le morphisme
\begin{equation*}
F(\Delta^{\mu})\to\Map(\RealNP{\Delta^{\mu}},\Real{F(\Delta^{\mu})}\otimes\RealNP{\Delta^{\mu}})
\end{equation*}
est une équivalence faible. Soit $\tau$ un simplexe de $\Map(\RealNP{\Delta^{\mu}},\Real{F(\Delta^{\mu})}\otimes\RealNP{\Delta^{\mu}})$ on décompose $\tau$ sous la forme
\begin{align*}
\tau\colon \Real{\Delta^n}\otimes\RealNP{\Delta^{\mu}}&\to \Real{F(\Delta^{\mu})}\otimes\RealNP{\Delta^{\mu}}\\
(x,y)&\mapsto (\tau_F(x,y),\tau_{\Delta^{\mu}}(x,y))
\end{align*}
Comme $\tau$ est filtré, on doit avoir $\tau_{\Delta^{\mu}}(x,y)=y$. Ainsi, $\tau$ est complétement détérminé par l'application non filtrée
\begin{equation*}
\tau_F\colon \Real{\Delta^n}\times\Real{\Delta^{\mu}}\to\Real{F(\Delta^{\mu})}
\end{equation*}
On en déduit l'isomorphisme
\begin{equation*}
\Map(\RealNP{\Delta^{\mu}},\Real{F(\Delta^{\mu})}\otimes\RealNP{\Delta^{\mu}})\simeq \Map_{\Top}(\Real{\Delta^{m}},\Real{F(\Delta^{\mu})})
\end{equation*}
où $\Delta^{\mu}=(\Delta^m,\mu)$. Finalement, on a les équivalences faibles suivantes :
\begin{align*}
F(\Delta^{\mu})&\simeq  \Map(\Delta^0,F(\Delta^{\mu}))\\
&\sim \Map(\Delta^0,\Sing(\Real{F(\Delta^{\mu})})\\
& \sim \Map(\Delta^m,\Sing(\Real{F(\Delta^{\mu})})\\
& \simeq \Map(\Real{\Delta^m},\Real{F(\Delta^{\mu})})\\
& \simeq \Map(\RealNP{\Delta^{\mu}},\Real{F(\Delta^{\mu})}\otimes\RealNP{\Delta^{\mu}})\\
& \simeq \Map(\RealNP{\Delta^{\mu}},\Colim(F))
\end{align*}
dont la composée est le morphisme \ref{MorphismeEquivalenceQuillenDiagramme}. 
\end{proof}

\begin{remarque}\label{RemarqueInteretsTopNP}
Cette preuve fait apparaitre deux avantages majeurs des espaces fortement filtrés. D'une part, pour tout $\Delta^{\varphi}\in R(P)$, on a 
\begin{equation*}
\Map(\RealNP{\Delta^{\varphi}},\RealP{N(P)})\simeq \Map(\RealNP{\Delta^{\varphi}},\Real{\Delta^{\varphi}})\simeq \{*\},
\end{equation*}
ce qui n'est pas le cas pour les espaces filtrés (voir la preuve du lemme \ref{LemmeVarphiPFibrationTriviale} où on montre que $\Map(\RealP{\Delta^{\varphi}},\RealP{N(P)})$ est contractile).
D'autre part, on a le carré cartésien \ref{DiagrammePullbackInterieur}, que l'on peut généraliser comme suit. Soient $\fil{X}$ un ensemble simplicial filtré et $\Delta^{\varphi}\in R(P)$ un simplexe non dégénéré. Notons $X_{\Delta^{\varphi}}$ l'ensemble des simplexes de $X$ de la forme $\sigma\colon\Delta^{\epsilon}\to \fil{X}$, avec $\Delta^{\epsilon}$ une dégénérescence de $\Delta^{\varphi}$. Alors, on a un carré cartésien
\begin{equation*}
\begin{tikzcd}
\bigcup_{\sigma\in X_{\Delta^{\varphi}}}\sigma(\Int(\RealNP{\Delta^{\epsilon}})
\arrow[hookrightarrow]{r}
\arrow{d}
&\RealNP{\fil{X}}
\arrow{d}{\Real{\varphi_X}}
\\
\Int(\RealNP{\Delta^{\varphi}})
\arrow{r}
&\Real{N(P)}
\end{tikzcd}
\end{equation*}
Dans le cas où $\Delta^{\varphi}=[p]$, la pré-image ainsi obtenue est la $p$-ième strate de $\RealNP{\fil{X}}$. Si $\Delta^{\varphi}=[p_0,p_1]$, on obtient un voisinage tubulaire de la strate $p_0$ dans la strate $p_1$ (privé de la strate $p_0$). En général, on obtient une généralisation de la notions de voisinage tubulaire. Le fait que ces "voisinages tubulaires" apparaissent naturellement permet de trivialiser un certain nombre de preuves pour les espaces fortement filtrés. (Voir par exemple la proposition \ref{GroupesHomotopiesFibre} dont la preuve repose sur le résultat technique \cite[Proposition A.7.9]{HigherAlgebra}).
\end{remarque}

%

 \chapter{Adjonction de Kan-Quillen filtrée}
\label{ChapitreKanQuillen}
Dans le chapitre \ref{ConstructionCMFSSetP}, nous avons muni $\sS_P$ d'une structure de modèle. Ensuite, au chapitre \ref{ChapitreGroupesHomotopiesFiltresEspaces} on a utilisé la catégorie modèle $\sS_P$  pour obtenir des résultats sur les espaces filtrés. D'autre part, dans le chapitre précédent nous avons construit une structure modèle sur les catégories des espaces filtrés et fortement filtrés. L'objet de se chapitre est d'éclaircir le lien entre ces différentes catégories modèles.

On commence ce chapitre par identifier une classe d'objets cofibrants de la catégorie modèle $\Top_{N(P)}$. Dans la section \ref{SectionRealisationSubdivisionCofibrant}, on montre que la réalisation de la subdivision d'un ensemble simplicial filtré est cofibrante (c'est la proposition \ref{PropositionSubdivisionCofibrante}). Puis, dans la section \ref{SectionLvPEquivalenceFaible} on montre que pour tout ensemble simplicial filtré $\fil{A}$, l'application de dernier sommet se réalise en une équivalence faible de $\Top_{N(P)}$
\begin{equation*}
\RealNP{\lv_P}\colon\RealNP{\sd_P\fil{A}}\to\RealNP{\fil{A}},
\end{equation*}
c'est la proposition \ref{LastVertexEquivalenceFaible}. On en déduit ensuite dans la section \ref{SectionRemplacementsCofibrantsPL}, une méthode pour calculer de façon élémentaire des remplacements cofibrants pour les espaces filtrés PL (voir Proposition \ref{PropositionRemplacementCofibrantEfficace}). 
Finalement, dans la section \ref{SectionRemplacementsCofibrantsFibre}, on modifie la construction de la section \ref{ConstructionFibre} pour obtenir des espaces fortement filtrés, et on explicite
des remplacements cofibrants pour les espaces issus de cette construction. Ceci permet en particulier d'identifier la sous catégorie pleine de $\Ho(\Top_{N(P)})$ contenant ces espaces fortement filtrés avec la catégorie homotopique naïve correspondante. (voir la proposition \ref{PropositionClasseHomotopieNaiveFibre}). On conjecture que la construction généralisée de la section \ref{SectionFibresFiltres} permet d'étendre ce résultat à la classe des espaces coniquement stratifiés PL. Voir la remarque \ref{RemarqueWhiteheadFibre} ainsi que la conjecture \ref{ConjectureSousCategorieFibre}.

Dans la section \ref{SectionSSTopP}, on considère une seconde structure modèle sur la catégorie $\sS_P$, transportée depuis $\Top_{N(P)}$ le long de l'adjonction $(\RealNP{-},\Sing_{N(P)})$. On la note $\sSTop_P$ pour la distinguer de la structure $\sSU_P$ du théorème \ref{TheoDescriptionExpliciteSSetP}. Dans la section \ref{SectionConstructionCMFSSTopP}, on montre qu'une telle structure transportée est bien définie, c'est le théorème \ref{TheoremeCategorieModeleSSTop}. La preuve repose essentiellement sur les propositions \ref{PropositionSubdivisionCofibrante} et \ref{LastVertexEquivalenceFaible} de la section \ref{SectionEspaceFiltreCofibrants}. Dans la section \ref{SectionAdjonctionQuillenSSUSSTop}, on montre que la catégorie modèle est liée aux catégories modèles $\sSU_P$ et $\Top_{N(P)}$ par deux adjonctions de Quillen. Ceci implique que les catégories $\sSU_P$ et $\Top_{N(P)}$ sont reliés par l'adjonction de Quillen composée. C'est le corollaire \ref{CorollaireAdjonctionKanQuillen}.

Dans la section \ref{SectionKanQuillen}, on tente de déduire des constructions précédentes qu'il existe une équivalence de Quillen entre $\sSU_P$ et $\Top_{N(P)}$. Comme l'adjonction de Quillen entre ces deux catégories est la composée de deux adjonctions de Quillen, on les examine séparément. Dans la section \ref{SectionRealNPSingNP}, on étudie l'adjonction entre $\sSTop_P$ et $\Top_{N(P)}$. On conjecture que c'est une équivalence de Quillen (voir Conjecture \ref{ConjectureRealNPSingNP}), et on ramène cette conjecture à l'existence d'homotopies non filtrées satisfaisant certaines propriétés (voir les conditions \ref{EnumerateHXHomotopie}, \ref{EnumerateHPHomotopie}, \ref{EnumerateHPRestrictsToProjection} et \ref{EnumerateHomotopiesCommute}).
Dans la section \ref{SectionSdPExP} on s'intéresse ensuite à l'adjonction entre $\sSU_P$ et $\sSTop_P$. On montre notamment que l'unité et la counité de ces adjonctions sont des équivalences faibles objet par objet (proposition \ref{PropositionUniteCouniteEquivalenceFaible}), mais ceci ne suffit pas à conclure que l'adjonction est une équivalence de Quillen (Conjecture \ref{ConjectureSdPExP}). On montre cependant qu'il suffit de prouver que les cofibrations triviales de $\sSTop_P$ sont des cofibrations triviales de $\sSU_P$ pour en déduire la conjecture \ref{ConjectureSdPExP}.
Finalement, dans la section \ref{SectionSingNPRealNPPreserventEquivalences}, on montre que bien que l'adjonction $(\RealNP{-},\Sing_{N(P)})$ entre $\sSU_P$ et $\Top_{N(P)}$ n'est pas une adjonction de Quillen, les foncteurs $\RealNP{-}$ et $\Sing_{N(P)}$ préservent les équivalences faibles. C'est le théorème \ref{RealisationSingPreserventEquivalencesFaibles}.

\section{Espaces (fortement) filtrés cofibrants}
\label{SectionEspaceFiltreCofibrants}
Dans cette section et pour le reste de ce chapitre, on travaille avec la catégorie des espaces fortement filtrés, notamment pour les raisons mentionnées dans la remarque \ref{RemarqueInteretsTopNP}. Cependant, par construction des structures modèles sur $\Top_P$ et $\Top_{N(P)}$ et en vertu du théorème \ref{EquivalenceQuillenTopPTopNP}, les conclusions qu'on obtient ici sont aussi valables pour $\Top_P$.

\subsection{Réalisation d'un ensemble simplicial filtré}
\label{SectionRealisationSubdivisionCofibrant}
Les foncteurs adjoints $\RealNP{-}$ et $\Sing_{N(P)}$ ne forment pas une adjonction de Quillen entre les structures de modèles des théorèmes \ref{TheoDescriptionExpliciteSSetP} et \ref{CategorieModeleTopNP}. En effet, comme on l'a vu dans l'exemple \ref{ExempleEspaceNonFibrant}, si $\fil{X}$ est un espace (fortement) filtré, $\Sing_{N(P)}\fil{X}$ n'est pas fibrant en général. Ceci implique en particulier que si $i\colon\fil{A}\to\fil{B}$ est un monomorphisme entre ensembles simpliciaux filtrés, $\RealNP{i}\colon \RealNP{A}\to\RealNP{B}$ n'est pas toujours une cofibration de $\Top_{N(P)}$. Par exemple, on vérifie facilement que la réalisation de l'inclusion de cornet de l'exemple \ref{ExempleEspaceNonFibrant} est une équivalence faible. Comme celle ci n'a pas la propriété de relèvement par rapport à tous les morphismes de la forme $\fil{X}\to \Real{N(P)}$ (qui sont tous des fibrations de $\Top_{N(P)}$), ceci implique qu'elle ne peut pas être une cofibration triviale. En particulier ce n'est pas une cofibration. Cependant, le résultat suivant nous permettra d'obtenir une vaste classe d'ensembles simpliciaux filtrés dont la réalisation fortement filtrée est cofibrante.

\begin{prop}\label{PropositionSubdivisionCofibrante}
Soit $\fil{A}$ un ensemble simplicial filtré, $\RealNP{\sd_P\fil{A}}$ est un espace fortement filtré cofibrant.
\end{prop}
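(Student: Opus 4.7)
La stratégie est de construire $\RealNP{\sd_P\fil{A}}$ à partir de $\emptyset$ en attachant successivement les cofibrations génératrices de $\Top_{N(P)}$ données par le théorème \ref{CategorieModeleTopNP}. Par fonctorialité et commutation aux colimites de $\sd_P$ et $\RealNP{-}$ (ce sont des adjoints à gauche, voir la section \ref{SectionSubdivisionFiltree}), on filtre $\fil{A}$ selon son squelette simplicial sous-jacent et l'on se ramène à montrer, pour tout simplexe filtré $\Delta^{\varphi}\in\Delta(P)$, que l'inclusion $\RealNP{\sd_P\partial\Delta^{\varphi}}\to\RealNP{\sd_P\Delta^{\varphi}}$ est une cofibration de $\Top_{N(P)}$.

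L'étape centrale est la construction d'une décomposition explicite de $\RealNP{\sd_P\Delta^{\varphi}}$ en prismes de la forme $\Real{\Delta^k}\otimes\RealNP{\Delta^{\bar\mu}}$ avec $\Delta^{\bar\mu}\in R(P)$. D'après la proposition \ref{SubdivisionSimplexe}, chaque simplexe non-dégénéré de $\sd_P\Delta^{\varphi}$ détermine une \emph{donnée de support} constituée d'une chaîne strictement croissante $\widehat\sigma_0\subsetneq\dots\subsetneq\widehat\sigma_k$ de faces non-dégénérées de $\Delta^{\varphi}$ et d'une chaîne strictement croissante $q_0<\dots<q_m$ d'éléments de $\varphi(\widehat\sigma_0)$. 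Les simplexes partageant une même donnée $(\widehat\sigma_{\bullet},q_{\bullet})$ sont exactement les chemins monotones dans le produit $[k]\times[m]$ et leur union réalise la triangulation usuelle du prisme $\Real{\Delta^k}\otimes\RealNP{[q_0,\dots,q_m]}$. Les cellules que l'on attache correspondent aux données \emph{extrémales}, c'est-à-dire celles pour lesquelles $q_{\bullet}=\varphi(\widehat\sigma_0)$ est maximale à $\widehat\sigma_0$ fixé ; les données non-extrémales apparaissent automatiquement comme des sous-prismes $\Real{\Delta^k}\otimes\RealNP{\Delta^{\bar\mu''}}$ inclus dans la cellule extrémale correspondante, sans avoir à être attachées séparément.

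L'attachement se fait dans l'ordre : $m$ décroissant d'abord, puis $k$ croissant. Pour une cellule extrémale $\Real{\Delta^k}\otimes\RealNP{\Delta^{\bar\mu}}$, le bord d'attachement $\partial\Delta^k\otimes\RealNP{\Delta^{\bar\mu}}$ se décompose selon les faces $d_i\Delta^k$. Pour $i>0$, retirer $\widehat\sigma_i$ conserve $\widehat\sigma_0$ et donne la cellule extrémale de même $m$ et de $k$ inférieur, attachée plus tôt dans la même passe. Pour $i=0$, la sous-chaîne $\widehat\sigma_1\subsetneq\dots\subsetneq\widehat\sigma_k$ vérifie $\varphi(\widehat\sigma_1)\supseteq\varphi(\widehat\sigma_0)$, de sorte que la cellule extrémale associée $\Real{\Delta^{k-1}}\otimes\RealNP{\Delta^{\bar\mu'}}$ a $m'\geq m$ et est attachée dans une passe antérieure ; la face $d_0\Delta^k\otimes\RealNP{\Delta^{\bar\mu}}$ s'y incorpore via le sous-espace $\RealNP{\Delta^{\bar\mu}}\subseteq\RealNP{\Delta^{\bar\mu'}}$. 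Le même argument, restreint aux cellules extrémales dont la chaîne $\widehat\sigma_{\bullet}$ ne contient pas le simplexe maximal de $\Delta^{\varphi}$, fournit l'énoncé relatif et donc la cofibrance recherchée.

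La difficulté principale est combinatoire : il faut vérifier que cet ordre d'attachement assure bien à chaque étape que le bord se factorise par la partie déjà construite, et traiter soigneusement le cas des simplexes filtrés $\Delta^{\varphi}$ dont la filtration $\varphi$ est dégénérée dans $N(P)$, où l'on doit distinguer la structure simpliciale sous-jacente de la filtration effectivement portée par les prismes.
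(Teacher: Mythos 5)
Votre preuve est correcte et suit pour l'essentiel la même stratégie que celle du texte : réduction au squelette pour se ramener à l'inclusion $\RealNP{\sd_P(\partial(\Delta^{\varphi}))}\to\RealNP{\sd_P(\Delta^{\varphi})}$, puis regroupement des simplexes de $\sd_P(\Delta^{\varphi})$ selon le simplexe non dégénéré de $N(P)$ qui porte leur filtration, avec attachement par dimension décroissante de ce simplexe. La seule différence réelle est la granularité de la dernière étape. Le texte regroupe d'un coup tous les simplexes de type de filtration $\Delta^{\psi}\in R(P)$ fixé en une couche $X^{\psi}\simeq L^{\psi}\otimes\Delta^{\psi}$, où $L^{\psi}\subseteq\sd(\Delta^N)$ est un ensemble simplicial \emph{non filtré}, et identifie l'inclusion de la partie déjà attachée comme l'image par $\Colim$ d'une cofibration $(K^{\psi})^{\Delta^{\psi}}\to(L^{\psi})^{\Delta^{\psi}}$ de $\DiagR_P$ ; aucun ordre d'attachement interne à une couche ni triangulation explicite de $L^{\psi}$ n'est alors nécessaire, puisque tout monomorphisme $K\to L$ de $\sS$ donne une cofibration de ce type. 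Votre décomposition plus fine en prismes $\Real{\Delta^k}\otimes\RealNP{\Delta^{\bar\mu}}$ indexés par les chaînes de faces, avec l'ordre « $m$ décroissant puis $k$ croissant », fait le même travail à la main : elle fournit une structure cellulaire relative entièrement explicite par cofibrations génératrices, au prix de la vérification combinatoire que vous signalez (le fait que l'intersection d'un prisme extrémal avec la partie déjà construite soit exactement $\Real{\partial\Delta^k}\otimes\RealNP{\Delta^{\bar\mu}}$, ni plus ni moins). Cette vérification passe, essentiellement parce qu'un simplexe dont la chaîne de supports contient $\widehat\sigma_0$ force $\varphi(\widehat\tau_0)\subseteq\varphi(\widehat\sigma_0)$ pour tout prisme le contenant, donc $m'\leq m$. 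Les deux arguments sont valables ; celui du texte évite simplement cette combinatoire en s'arrêtant au niveau des couches.
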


\begin{proof}
On rappelle que $\sk_n(A)$ désigne le $n$-squelette de $A$, c'est à dire le sous ensemble simplicial de $A$ généré par les simplexes de dimension inférieure à $n$. En notant $\Sigma_n$ l'ensemble des simplexes non dégénérés de $A$, on a les carrés cocartésiens suivants pour $n\geq 0$ (avec $\sk_{-1}(A)=\emptyset$),
\begin{equation*}
\begin{tikzcd}
\Coprod\limits_{\sigma\in \Sigma_n}\partial(\Delta^n)
\arrow{r}{\coprod\sigma}
\arrow{d}
&
\sk_{n-1}(A)
\arrow{d}
\\
\Coprod\limits_{\sigma\in \Sigma_n}\Delta^n
\arrow{r}{\coprod\sigma}
&\sk_n(A)
\end{tikzcd}
\end{equation*}
et on peut écrire $A$ comme la colimite
\begin{equation*}
A\simeq \colim_{n\in \N}\sk_{n}(A).
\end{equation*}
De même, on a les carrés cocartésiens dans $\sS_P$
\begin{equation*}
\begin{tikzcd}
\Coprod\limits_{\sigma\in \Sigma_n}(\partial(\Delta^n),\varphi_A\circ\sigma)
\arrow{r}{\coprod\sigma}
\arrow{d}
&
\sk_{n-1}\fil{A}
\arrow{d}
\\
\Coprod\limits_{\sigma\in \Sigma_n}(\Delta^n,\varphi_A\circ\sigma)
\arrow{r}{\coprod\sigma}
&\sk_n\fil{A}
\end{tikzcd}
\end{equation*}
et
\begin{equation*}
\fil{A}\simeq \colim_{n\in \N}\sk_{n}\fil{A}.
\end{equation*}
Comme les foncteurs de subdivisions filtrées et de réalisations fortement filtrées sont des adjoints à gauche, ils préservent les colimites. On a donc aussi les carrés cocartésiens dans $\Top_{N(P)}$,
\begin{equation*}
\begin{tikzcd}[column sep = huge]
\Coprod\limits_{\sigma\in \Sigma_n}\RealNP{sd_P(\partial(\Delta^n),\varphi_A\circ\sigma)}
\arrow{r}{\coprod\RealNP{\sd_P(\sigma)}}
\arrow{d}
&
\RealNP{\sd_P(\sk_{n-1}\fil{A})}
\arrow{d}
\\
\Coprod\limits_{\sigma\in \Sigma_n}\RealNP{\sd_P(\Delta^n,\varphi_A\circ\sigma)}
\arrow{r}{\coprod\RealNP{\sd_P(\sigma)}}
&\RealNP{\sd_P(\sk_n\fil{A})}
\end{tikzcd}
\end{equation*}
et
\begin{equation*}
\RealNP{\sd_P\fil{A})}\simeq \colim_{n\in \N}\RealNP{\sd_P(\sk_{n}\fil{A})}.
\end{equation*}
Comme la classe des cofibrations est stable par unions disjointes, sommes amalgamées et compositions transfinies, il suffit de montrer que pour tout $\Delta^{\varphi}\in \Delta(P)$ l'inclusion
\begin{equation*}
\RealNP{\sd_P(\partial(\Delta^{\varphi}))}\to \RealNP{\sd_P(\Delta^{\varphi})}
\end{equation*}
est une cofibration de $\Top_{N(P)}$. Fixons $\Delta^{\varphi}=(\Delta^n,\varphi)\in \Delta(P)$. On note $\Delta^{\bar{\varphi}}\in R(P)$ le simplexe non dégénéré de $N(P)$ dont $\Delta^{\varphi}$ est une dégénérescence. Pour $\Delta^{\psi}\subseteq \Delta^{\bar{\varphi}}\in R(P)$, on note $X^{\psi}$ le sous ensemble simplicial filtré de  $\sd_P(\Delta^{\varphi})$ engendré par les simplexes filtrés de la forme
\begin{equation*}
\sigma\colon \Delta^{\mu}\to\sd_P(\Delta^{\varphi})
\end{equation*} 
avec $\Delta^{\bar{\mu}}=\Delta^{\psi}$. D'autre part, on pose 
\begin{equation*}
X^{n+1}=\sd_P(\partial(\Delta^{\varphi}))\subset \sd_P(\Delta^{\varphi}),
\end{equation*}
et pour tout $0\leq k\leq n$, on définit $X^k$ comme la somme amalgamée suivante
\begin{equation*}
\begin{tikzcd}
\Coprod\limits_{\substack{\Delta^{\psi}\subseteq\Delta^{\bar{\varphi}}\\
\dim(\Delta^{\psi})=k}}X^{\psi}\cap X^{k+1}
\arrow{r}
\arrow{d}
&X^{k+1}
\arrow{d}
\\
\Coprod\limits_{\substack{\Delta^{\psi}\subseteq\Delta^{\bar{\varphi}}\\
\dim(\Delta^{\psi})=k}}X^{\psi}\phantom{\cap X^{\mu+1}}
\arrow{r}
&X^k
\end{tikzcd}
\end{equation*}
Montrons qu'on a un isomorphisme $X^0\simeq \sd_P(\Delta^{\varphi})$. Par construction, on a un morphisme surjectif $X^0\to \sd_P(\Delta^{\varphi})$ induit par les inclusions $X^{\psi}\subset \sd_P(\Delta^{\varphi})$. Montrons que c'est une injection. Soient $\Delta^{\psi_1}\not=\Delta^{\psi_2}$ deux simplexes non dégénérés de $N(P)$ avec $\dim(\Delta^{\psi_1})=\dim(\Delta^{\psi_2})=k$ et $\Delta^{\psi_1},\Delta^{\psi_2}\subseteq \Delta^{\bar{\varphi}}$, et soient $\sigma\in X^{\psi_1}, \tau\in X^{\psi_2}$ tels que $\sigma\cap \tau\not =\emptyset$. On note
\begin{equation*}
\sigma=[(\sigma_0,p_0),\dots,(\sigma_m,p_m)]
\end{equation*}
et
\begin{equation*}
\tau=[(\tau_0,q_0),\dots,(\tau_{m'},q_{m'})].
\end{equation*}
(On renvoie à la proposition \ref{SubdivisionSimplexe} pour une description des simplexes de $\sd_P(\Delta^{\varphi})$.)
Notons leur intersection
\begin{equation*}
\nu=[(\nu_0,r_0),\dots,(\nu_l,r_l)]
\end{equation*}
on remarque que l'ensemble $Q=\{p_0,\dots,p_m\}\cup\{q_0,\dots,q_{m'}\}\subset P$ est complètement ordonné (car c'est un sous ensemble de $\Delta^{\varphi}_0$). On énumère ses éléments comme suit. On note $s^{-1}_i$ le $i+1$-ème plus grand élément de $Q$ vérifiant $r_0\leq s^{-1}_i<r_1$, on note $s^{l}_i$ le $i+1$-ème plus grand élément de $Q$ vérifiant $r_l\leq s^l_i$ et pour $0\leq j<l$, on note $s^j_i$ le $i+1$-ème plus grand élément de $Q$ vérifiant $r_i\leq s^j_i<r_{i+1}$. On définit alors $\nu'\in \sd_P(\Delta^{\varphi})$ comme suit :
\begin{equation}\label{EquationRajouterCouleurs}
\nu'=[(\nu_0,s^{-1}_0),(\nu_0,s^{-1}_1),\dots, (\nu_0,s_0)(\nu_0,s^{0}_0),(\nu_0,s^0_1),\dots,(\nu_1,s_1),(\nu_1,s^1_0),\dots,(\nu_l,s_l),(\nu_l,s^l_0),\dots]
\end{equation}
Le simplexe $\nu'$ est bien dans $\sd_P(\Delta^{\varphi})$ car par hypothèse $\sigma_0\subseteq \nu_0$ et $\tau_0\subseteq\nu_0$, et par construction $\nu'$ contient $\nu$. D'autre part, $\nu'$ est de la forme $\Delta^{\mu}\to \sd_P(\Delta^{\varphi})$ avec 
\begin{equation*}
\Delta^{\mu}=[s^{-1}_0,s^{-1}_1,\dots, s_0,s^{0}_0,s^0_1,\dots,s_1,s^1_0\dots,s_l,s^l_0,\dots]
\end{equation*}
et par construction $\Delta^{\psi_1}\not=\Delta^{\psi_2}\subseteq \Delta^{\bar{\mu}}$. On en déduit que $\dim(\Delta^{\bar{\mu}})>k$, et donc, $\sigma\cap\tau\in X^{k+1}$, et finalement on a $X^0\simeq \sd_P(\Delta^{\varphi})$.
Comme les cofibrations sont stables par compositions transfinies, unions disjointes et sommes amalgamées, il suffit de montrer que pour tout $\Delta^{\psi}\in R(P)$ avec $\dim(\Delta^{\psi})=k$, le morphisme
\begin{equation}\label{MorphismeRealXPsi}
\RealNP{X^{\psi}\cap X^{k+1}}\to \RealNP{X^{\psi}}
\end{equation}
est une cofibration de $\Top_{N(P)}$. Dans ce but, on décrit explicitement les simplexes de $X^{\psi}$. 
Soit 
\begin{equation*}
\sigma= [(\sigma_0,p_0),\dots,(\sigma_n,p_n)]
\end{equation*} 
un simplexe de $\sd_P(\Delta^{\varphi})$. On a $\sigma_0\subseteq\dots\subseteq\sigma_n\subseteq \Delta^{\varphi}$, $p_0\leq\dots\leq p_n\in P$ et $p_0,\dots,p_n\in \varphi(\sigma_0)$. De plus, $\sigma$ est dans $X^{\psi}$ si et seulement si, on a aussi $\Delta^{\psi}\subseteq\varphi(\sigma_0)$. En effet, si $\sigma$ est dans $X^{\psi}$, il existe $\tau\colon \Delta^{\mu}\to \sd_P(\Delta^{\varphi})$ tel que $\sigma\subseteq \tau$ et $\Delta^{\bar{\mu}}=\Delta^{\psi}$. Mais alors, en notant 
\begin{equation*}
\tau=[(\tau_0,q_0),\dots,(\tau_m,q_m)],
\end{equation*}
on a $\Delta^{\psi}\subseteq {[q_0,\dots,q_n]}\subseteq\varphi(\tau_0)\subseteq\varphi(\sigma_0)$. Réciproquement, si $\Delta^{\psi}\subseteq\varphi(\sigma_0)$, on exhibe un $\tau\colon \Delta^{\mu}\to \sd_P(\Delta^{\varphi})$ par une construction similaire à la construction \ref{EquationRajouterCouleurs}. Finalement, on observe que $X^{\psi}$ est de la forme $L^{\psi}\otimes\Delta^{\psi}$ où $L^{\psi}$ est le sous ensemble simplicial de $\sd(\Delta^{\varphi})$ donné par
\begin{equation*}
L^{\psi}=\{[\sigma_0,\dots,\sigma_n]\ |\ \Delta^{\psi}\subseteq\varphi(\sigma_0)\}\subseteq\sd(\Delta^{\psi}).
\end{equation*}
D'autre part, un simplexe $\sigma\in \sd_P(\Delta^{\varphi})$ est dans $X^{\psi}\cap X^{k+1}$ si et seulement si il existe $\Delta^{\psi'}\in R(P)$ avec $\Delta^{\psi}\subsetneq\Delta^{\psi'}$ tel que $\sigma\in X^{\psi'}$, ou si $\sigma\in \sd_P(\partial(\Delta^{\varphi}))$. On en déduit que $X^{\psi}\cap X^{k+1}\simeq K^{\psi}\otimes\Delta^{\psi}$ avec 
\begin{equation*}
K^{\psi}=\{[\sigma_0,\dots,\sigma_n]\ |\ \Delta^{\psi'}\subseteq\varphi(\sigma_0) \text{ pour un certain $\Delta^{\psi}\subsetneq\Delta^{\psi'}$, ou } \Delta^{\psi}\subseteq \varphi(\sigma_0) \text{ et $\sigma_n\not=\Delta^{\varphi}$}
\end{equation*}
Finalement, le morphisme \ref{MorphismeRealXPsi} est de la forme
\begin{equation*}
\RealNP{K^{\psi}\otimes\Delta^{\psi}}\to\RealNP{L^{\psi}\otimes\Delta^{\psi}}.
\end{equation*}
En particulier, c'est l'image d'une cofibration de $\DiagR_P$. En effet, par le lemme \ref{ColimCofibrationsGeneratrices} $\RealNP{K^{\psi}\otimes\Delta^{\psi}}\simeq \Colim((K^{\psi})^{\Delta^{\psi}})$, et par construction de la strucuture de modèle sur $\DiagR_P$, comme $K^{\psi}\to L^{\psi}$ est un monomorphisme, $(K^{\psi})^{\Delta^{\psi}}\to (L^{\psi})^{\Delta^{\psi}}$ est une cofibration de $\DiagR_P$ (voir Proposition \ref{CategorieModeleDiagrammesReduits}). On en déduit que le morphisme \ref{MorphismeRealXPsi} est une cofibration de $\Top_{N(P)}$, ce qui conclut la preuve.
\end{proof}

\begin{remarque}\label{RemarqueSdPEnvoieMonoSurCofibration}
En reprenant la preuve de la proposition \ref{PropositionSubdivisionCofibrante}, on obtient aussi que pour tout monomorphisme d'ensembles simpliciaux filtrés $f\colon\fil{X}\to\fil{Y}$, le morphisme 
\begin{equation*}
\RealNP{\sd_P(f)}\colon\RealNP{\sd_P\fil{X}}\to\RealNP{\sd_P\fil{Y}}
\end{equation*} est une cofibration de $\Top_{N(P)}$.
\end{remarque}

\subsection{Remplacement cofibrant}
\label{SectionLvPEquivalenceFaible}
On rappelle que pour $\fil{A}$ un ensemble simplicial filtré, $\lv_P\colon \sd_P\fil{A}\to \fil{A}$ désigne le morphisme de dernier sommet filtré. Voir la définition \ref{DernierSommetFiltre}. L'objet de cette sous-section est de montrer que $\lv_P\colon \sd_P\fil{A}\to\fil{A}$ correspond à un foncteur de remplacement cofibrant. Ceci revient à montrer la proposition suivante.

\begin{prop}\label{LastVertexEquivalenceFaible}
Soit $\fil{A}$ un ensemble simplicial filtré. L'application $\lv_P\colon \sd_P\fil{A}\to\fil{A}$ induit une équivalence faible dans $\Top_{N(P)}$ :
\begin{equation*}
\RealNP{\lv_P}\colon \RealNP{\sd_P\fil{A}}\to\RealNP{\fil{A}}.
\end{equation*}
\end{prop}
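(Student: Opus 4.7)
The plan is to proceed by skeletal induction on $\fil{A}$, after first treating the case of filtered simplices.

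For the base case, let $\fil{A} = \Delta^{\varphi}$ be a filtered simplex. By Lemma~\ref{SDSimplexeEquivalenceFaibleAbsolue}, the last vertex map $\lv_P\colon \sd_P(\Delta^{\varphi}) \to \Delta^{\varphi}$ is a filtered homotopy equivalence in $\sS_P$, with an explicit filtered homotopy inverse constructed there. Since $\RealNP{-}$ sends filtered homotopies in $\sS_P$ to stratified homotopies in $\Top_{N(P)}$---the simplicial tensor on $\Top_{N(P)}$ being compatible with the realization of the one on $\sS_P$---the induced map $\RealNP{\lv_P}$ is a stratified homotopy equivalence in $\Top_{N(P)}$. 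Any such equivalence induces a simplicial homotopy equivalence on every mapping space $\Map(\RealNP{\Delta^{\psi}}, -)$, hence a levelwise weak equivalence on $D$, and is therefore a weak equivalence in the transferred model structure of Theorem~\ref{CategorieModeleTopNP}.

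For the inductive step, use the skeletal presentation $\fil{A} = \colim_n \sk_n \fil{A}$. Since $\sd_P$ and $\RealNP{-}$ are left adjoints, both preserve this colimit and its attaching pushouts. Naturality of $\lv_P$ arranges the subdivided and non-subdivided attaching pushouts into a cube whose back (subdivided) face has its left vertical leg a cofibration by Remark~\ref{RemarqueSdPEnvoieMonoSurCofibration}. Assuming inductively that $\RealNP{\lv_P}\colon \RealNP{\sd_P \sk_{n-1}\fil{A}} \to \RealNP{\sk_{n-1}\fil{A}}$ is a weak equivalence, and combining with the base case applied to each attaching simplex, three of the four back-to-front comparison arrows of the cube are weak equivalences. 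A cube (gluing) argument would then yield that the fourth arrow, namely $\RealNP{\lv_P}\colon \RealNP{\sd_P \sk_n \fil{A}} \to \RealNP{\sk_n \fil{A}}$, is also a weak equivalence; the resulting weak equivalences assemble, upon passing to the sequential colimit in $n$, into $\RealNP{\lv_P}\colon \RealNP{\sd_P \fil{A}} \to \RealNP{\fil{A}}$, using that the subdivided side is a transfinite composition of cofibrations.

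The main obstacle is justifying the cube step, which in its standard form requires left properness of $\Top_{N(P)}$. Since the model structure on $\Top_{N(P)}$ is transferred from $\DiagR_P$ via \cite[Corollary 3.3.4]{Hess}, and transferred structures do not automatically inherit left properness, an explicit verification is needed. A possibly cleaner alternative is to test the weak equivalence levelwise through $D$: for each $\Delta^{\psi} \in R(P)$, the ``tubular neighborhood'' decomposition of Remark~\ref{RemarqueInteretsTopNP} provides a combinatorial description of the preimages $\Real{\varphi_A}^{-1}(\Int(\RealNP{\Delta^{\psi}}))$ in terms of the simplices of $\fil{A}$ lying over degeneracies of $\Delta^{\psi}$, so the comparison of $\Map(\RealNP{\Delta^{\psi}}, \RealNP{\sd_P \fil{A}})$ with $\Map(\RealNP{\Delta^{\psi}}, \RealNP{\fil{A}})$ can be reduced to the classical statement that the realization $|\sd K| \to |K|$ of the non-filtered last vertex map is a weak homotopy equivalence.
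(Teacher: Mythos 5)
Your base case is correct: Lemma~\ref{SDSimplexeEquivalenceFaibleAbsolue} does give that $\lv_P\colon \sd_P(\Delta^{\varphi})\to\Delta^{\varphi}$ is a filtered homotopy equivalence, $\RealNP{-}$ preserves filtered homotopies, and a stratified homotopy equivalence is a weak equivalence of $\Top_{N(P)}$. The problem is the inductive step, and the gap is more serious than the missing left properness you flag. The gluing argument for a cube of pushouts requires \emph{both} the back and the front squares to be homotopy pushouts, i.e.\ pushouts along cofibrations. On the subdivided side this holds by Remark~\ref{RemarqueSdPEnvoieMonoSurCofibration}, but on the target side the attaching legs are the maps $\RealNP{\partial(\Delta^{\varphi})}\to\RealNP{\Delta^{\varphi}}$, and these are precisely the maps that fail to be cofibrations of $\Top_{N(P)}$ --- that failure is the reason Section~\ref{SectionRealisationSubdivisionCofibrant} exists at all (cf.\ the discussion around Example~\ref{ExempleEspaceNonFibrant}). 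So the front pushout $\RealNP{\sk_{n-1}\fil{A}}\to\RealNP{\sk_n\fil{A}}$ has no reason to be a homotopy pushout, and even in a left proper model category the conclusion ``three comparison maps are weak equivalences, hence so is the fourth'' does not follow. The same obstruction hits the passage to the sequential colimit, whose transition maps on the target side are again not cofibrations. In short, the skeletal induction cannot be repaired by a properness argument; it founders on exactly the pathology the proposition is meant to circumvent.

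Your closing ``cleaner alternative'' is in substance the route the paper takes: test the map levelwise through $D$, i.e.\ on each $\Map(\RealNP{\Delta^{\varphi}},-)$ for $\Delta^{\varphi}\in R(P)$. But as sketched it omits the actual work. The paper first shows (Lemmas~\ref{LemmeMapTroncature} and~\ref{LemmeFactorisationLastVertex}) that these mapping spaces only see the $\varphi$-truncation, and that $\lv_P$ factors through $\tr_{\varphi}(\sd_P(\tr_{\varphi}\fil{A}))$ up to a filtered deformation retraction; it then identifies this truncated subdivision with a product $K\otimes\Delta^{\varphi}$ (Lemma~\ref{LemmeTroncatureSubdivisionEgalProduit}); and the decisive ingredient is Lemma~\ref{HomeomorphismeTroncatureSubdivision}, a filtered adaptation of the Fritsch--Piccinini homeomorphism $\Real{\sd K}\to\Real{K}$ producing a filtered map homotopic to $\RealNP{\lv_P}$ that restricts to a \emph{homeomorphism} over the interior of $\RealNP{\Delta^{\varphi}}$. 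The classical fact that $\Real{\sd K}\to\Real{K}$ is a weak homotopy equivalence is not sufficient here: one needs this stronger, filtration-compatible homeomorphism over the open part in order to compare $\Map(\RealNP{\Delta^{\varphi}},-)$ with $\Map(\Int(\RealNP{\Delta^{\varphi}}),-)$ and conclude. So the alternative points in the right direction but, as written, does not yet constitute a proof.
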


\begin{defin}
Soit $\Delta^{\varphi}\in R(P)$ un simplexe non dégénéré et $\fil{A}$ un ensemble simplicial filtré. On définit la $\varphi$-troncature de $\fil{A}$, $\tr_{\varphi}\fil{A}$, comme le sous ensemble simplicial filtré de $\fil{A}$ engendré par les simplexes de la forme $\sigma\colon \Delta^{\psi}\to \fil{A}$ avec $\Delta^{\bar{\psi}}=\Delta^{\varphi}$.
\end{defin}

%
%
%
%

Par construction, la $\varphi$-troncature vérifie la propriété suivante :

\begin{lemme}\label{LemmeMapTroncature}
Soient $\fil{A}$ un ensemble simplicial filtré, $\Delta^{\varphi}\in R(P)$ et $K$ un ensemble simplicial. Alors, l'inclusion $\tr_{\varphi}\fil{A}\to\fil{A}$ induit un isomorphisme
\begin{equation*}
\Map(K\otimes\Delta^{\varphi},\tr_{\varphi}\fil{A})\simeq \Map(K\otimes\Delta^{\varphi},\fil{A}).
\end{equation*}
De même, pour tout espace topologique $X$, l'inclusion induit un isomorphisme
\begin{equation*}
\Map(X\otimes\RealNP{\Delta^{\varphi}},\RealNP{\tr_{\varphi}\fil{A}})\simeq \Map(X\otimes\RealNP{\Delta^{\varphi}},\RealNP{\fil{A}})
\end{equation*}
\end{lemme}

\begin{proof}
Le cas simplicial est une conséquence directe de la définition. Dans le cas topologique, comme $\tr_{\varphi}\fil{A}$ est un sous ensemble simplicial (filtré), le morphisme considéré est nécessairement un monomorphisme. Montrons que c'est un épimorphisme. Soient $f\colon (\Real{\Delta^n}\times X)\otimes\RealNP{\Delta^{\varphi}}\to \RealNP{\fil{A}}$ un simplexe de $\Map(X\otimes\RealNP{\Delta^{\varphi}},\RealNP{\fil{A}})$ et $t\in \Int(\RealNP{\Delta^{\varphi}})$. Alors, comme $f$ est une application filtrée, pour tout $x\in (\Real{\Delta^n}\times X)$, on a $\RealNP{\varphi_A}(f(x,t))=t\in \Int(\Real{\Delta^{\varphi}})\subset \Real{N(P)}$. Ceci implique que $f(x,t)$ est de la forme $(\sigma,(y_0,\dots,y_m))\in A_m\times \Delta^m$ avec $\sigma\colon \Delta^{\psi}\to \fil{A}$, $\Delta^{\bar{\psi}}=\Delta^{\varphi}$, et $y_i>0$ pour tout $i$. En particulier, $f(x,t)\in \RealNP{\tr_{\varphi}\fil{A}}$. Comme $\RealNP{\tr_{\varphi}\fil{A}}$ est fermé dans $\RealNP{\fil{A}}$, on en déduit que pour tout $(x,t)$, $f(x,t)\in \RealNP{\tr_{\varphi}\fil{A}}$, il existe donc $g\colon (\Real{\Delta^n}\times X)\otimes\RealNP{\Delta^{\varphi}}\to \RealNP{\tr_{\varphi}\fil{A}}$ tel que $f$ est l'image de $g$.
\end{proof}

\begin{lemme}\label{LemmeFactorisationLastVertex}
Soient $\fil{A}$ un ensemble simplicial filtré et $\Delta^{\varphi}\in R(P)$. Il existe une factorisation de l'application $\tr_{\varphi}(\lv_P)\colon \tr_{\varphi}(\sd_P\fil{A})\to\tr_{\varphi}\fil{A}$ sous la forme
\begin{equation*}
\tr_{\varphi}(\sd_P\fil{A})\xrightarrow{r}\tr_{\varphi}(\sd_P(\tr_{\varphi}\fil{A}))\xrightarrow{\lv_P}\tr_{\varphi}\fil{A}
\end{equation*}
où $r$ est un rétracte de l'inclusion $\tr_{\varphi}(\sd_P(\tr_{\varphi}\fil{A}))\subset \tr_{\varphi}(\sd_P\fil{A})$ et une équivalence d'homotopie filtrée, et $\lv_P$ est la restriction à $\tr_{\varphi}(\sd_P(\tr_{\varphi}\fil{A}))$ de l'application de dernier sommet filtré appliquée à $\tr_{\varphi}\fil{A}$ :
\begin{equation*}
\lv_P\colon \sd_P(\tr_{\varphi}\fil{A})\to\tr_{\varphi}\fil{A}.
\end{equation*}
\end{lemme}

\begin{proof}
On rappelle que la subdivision filtrée $\sd_P\fil{A}$ est obtenue comme la colimite
\begin{equation*}
\sd_P\fil{A}=\colim_{\sigma\colon \Delta^{\psi}\to\fil{A}}\sd_P(\Delta^{\psi})
\end{equation*}
Ainsi, un simplexe $\tau\colon\Delta^{\mu}\to \sd_P\fil{A}$ est décrit par une paire $(\sigma,[(\tau_0,p_0),\dots,(\tau_n,p_n)])$, où $\sigma\colon \Delta^{\psi}\to \fil{A}$ est un simplexe de $\fil{A}$ et $[(\tau_0,p_0),\dots,(\tau_n,p_n)]$ est un simplexe de $\sd_P(\Delta^{\psi})$. On a dans ce cas $\Delta^{\mu}=[p_0,\dots,p_n]$. Un tel simplexe $\tau$ est dans la $\varphi$-troncature si et seulement si il existe $\tau'\colon \Delta^{\mu'}\to\sd_P\fil{A}$ tel que $\Delta^{\mu}\subseteq\Delta^{\mu'}$ et $\tau$ est égal à la composition
\begin{equation*}
\Delta^{\mu}\to\Delta^{\mu'}\xrightarrow{\tau'}\sd_P\fil{A}.
\end{equation*}
Dans ce cas, on doit avoir 
\begin{equation*}
\tau'=(\sigma',[(\tau'_0,p'_0),\dots,(\tau'_m,p'_m)]),
\end{equation*}
avec $\sigma'\colon \Delta^{\psi'}\to\fil{A}$, tel que $\Delta^{\varphi}\subseteq\Delta^{\bar{\psi'}}$. Finalement, on voit qu'il suffit de définir $r_{\sigma}\colon \tr_{\varphi}(\sd_P(\Delta^{\psi}))\to\tr_{\varphi}(\sd_P(\tr_{\varphi}(\Delta^{\psi})))$ pour $\sigma\colon \Delta^{\psi}\to \fil{A}$ avec $\Delta^{\varphi}\subseteq\Delta^{\bar{\psi}}$.
On pose :
\begin{align*}
r_{\sigma}\colon \tr_{\varphi}(\sd_P(\Delta^{\psi}))&\to\tr_{\varphi}(\sd_P(\tr_{\varphi}(\Delta^{\psi})))\\
[(\tau_0,p_0),\dots,(\tau_n,p_n)]&\mapsto [(\tau_0\cap\tr_{\varphi}(\Delta^{\psi}),p_0),\dots,(\tau_n\cap\tr_{\varphi}(\Delta^{\psi}),p_n)].
\end{align*}
Montrons qu'elle est bien définie. Si $[(\tau_0,p_0),\dots,(\tau_n,p_n)]$ est un simplexe de $\tr_{\varphi}(\sd_P(\Delta^{\psi}))$, on doit avoir $p_0,\dots,p_n\in \Delta^{\varphi}$ et $\Delta^{\varphi}\subseteq \psi(\tau_0)$. Comme par hypothèse, $\Delta^{\varphi}\subseteq\Delta^{\bar{\psi}}$, on a $\Delta^{\varphi}\subseteq\psi(\tau_0\cap \tr_{\varphi}(\Delta^{\psi}))$. On en déduit que $[(\tau_0\cap\tr_{\varphi}(\Delta^{\psi}),p_0),\dots,(\tau_n\cap\tr_{\varphi}(\Delta^{\psi}),p_n)]$ est bien un simplexe de $\tr_{\varphi}(\sd_P(\tr_{\varphi}(\Delta^{\psi})))$. De plus, $r_{\sigma}$ est clairement compatible aux faces et aux dégénérescences, donc c'est une application simpliciale. Les applications $r_{\sigma}$ se recollent en une application
\begin{equation*}
r\colon \tr_{\varphi}(\sd_P\fil{A})\to\tr_{\varphi}(\sd_P(\tr_{\varphi}\fil{A})).
\end{equation*}
Par construction cette application est une section de l'inclusion 
\begin{equation*}
i\colon\tr_{\varphi}(\sd_P(\tr_{\varphi}\fil{A}))\to \tr_{\varphi}(\sd_P\fil{A}).
\end{equation*}
De plus, comme $\lv_P(\tau_0\cap\tr_{\varphi}(\Delta^{\psi}),p_0)=\lv_P(\tau_0,p_0)$, on a la factorisation voulue. Il reste à montrer que $r$ est une équivalence d'homotopie filtrée. Soit $\sigma\colon \Delta^{\psi}\to\fil{A}$ un simplexe avec $\Delta^{\varphi}\subseteq\Delta^{\bar{\psi}}$, on définit l'homotopie
\begin{align*}
H_{\sigma}\colon \Delta^1\otimes\tr_{\varphi}(\sd_P(\Delta^{\psi}))&\to \tr_{\varphi}(\sd_P(\Delta^{\psi}))\\
([0,\dots,0,1,\dots,1],[(\tau_0,p_0),\dots,(\tau_n,p_n)])&\mapsto [(\tau_0\cap\tr_{\varphi}(\Delta^{\psi}),p_0),\dots,(\tau_k\cap\tr_{\varphi}(\Delta^{\psi}),p_k),(\tau_{k+1},p_{k+1}),\dots,(\tau_n,p_n)]
\end{align*}
où le premier $1$ apparait en $k+1$-ème position. $H_{\sigma}$ fournit une homotopie entre $i\circ r_{\sigma}$ et $\Id$, et les $H_{\sigma}$ se recollent pour fournir une homotopie entre $i\circ r$ et $\Id$.
\end{proof}

\begin{lemme}\label{LemmeTroncatureSubdivisionEgalProduit}
Soient $\fil{A}$ un ensemble simplicial filtré et $\Delta^{\varphi}\in R(P)$ un simplexe non dégénéré de $N(P)$. Il existe un isomorphisme naturel
\begin{equation*}
\tr_{\varphi}(\sd_P(\tr_{\varphi}\fil{A}))\simeq K\otimes\Delta^{\varphi},
\end{equation*}
où $K$ est l'ensemble simplicial $K=\sd(\varphi_A)^{-1}([\Delta^{\varphi}])$.
\end{lemme}

\begin{proof}
Soient $\Delta^{\varphi}=[p_0,\dots,p_n]$ un simplexe non dégénéré de $R(P)$, et $\fil{A}$ un ensemble simplicial filtré. On définit l'application suivante :
\begin{align*}
\mu\colon\Delta^{\varphi}&\mapsto \sd_P(N(P))\\
p_i&\mapsto (\Delta^{\varphi},p_i)
\end{align*}
On a alors le diagramme commutatif suivant dans la catégorie $\sS$. 
\begin{equation}\label{DiagrammeSubdivisonProduit}
\begin{tikzcd}
\tr_{\varphi}(\sd_P(\tr_{\varphi}\fil{A}))
\arrow{d}
\arrow{r}
&\sd_P\fil{A}
\arrow{d}{\sd_P(\varphi_A)}
\arrow{r}
&\sd(A)
\arrow{d}{\sd(\varphi_A)}
\\
\Delta^{\varphi}
\arrow{r}{\mu}
&\sd_P(N(P))
\arrow{r}
&\sd(N(P))
\end{tikzcd}
\end{equation}
Par définition de la subdivision filtrée $\sd_P$, le carré de droite est cocartésien. Montrons que le carré de gauche l'est aussi. Notons $X$ le produit fibré
\begin{equation*}
\begin{tikzcd}
X
\arrow{d}
\arrow{r}
&\sd_P\fil{A}
\arrow{d}
\\
\Delta^{\varphi}
\arrow{r}{\mu}
&\sd_P(N(P))
\end{tikzcd}
\end{equation*}
Comme le morphisme
\begin{equation*}
\tr_{\varphi}(\sd_P(\tr_{\varphi}\fil{A}))\to\sd_P\fil{A}
\end{equation*}
est une inclusion, il suffit de montrer que le morphisme canonique
\begin{equation*}
\tr_{\varphi}(\sd_P(\tr_{\varphi}\fil{A}))\to X
\end{equation*}
est une surjection. Considérons un simplexe de $X$. Un tel simplexe est de la forme
\begin{equation*}
((\sigma,[(\sigma_0,q_0),\dots,(\sigma_m,q_m)]),\tau)
\end{equation*}
avec 
\begin{itemize}
\item $\sigma\colon \Delta^{\psi}\to \fil{A}$ est un simplexe filtré
\item $\sigma_0\subseteq\dots\subseteq\sigma_m$ sont des faces de $\Delta^{\psi}$
\item $q_0,\dots,q_m\in\psi(\sigma_0)$
\item $\tau\colon \Delta^l\to\Delta^{\varphi}$ est un simplexe (non-filtré) de $\Delta^{\varphi}$.
\item  $ \mu\circ\tau=\sd_P(\varphi_A)(\sigma,[(\sigma_0,q_0),\dots,(\sigma_m,q_m)])
$
\end{itemize}
Cette dernière condition implique que
\begin{itemize}
\item $\Delta^{\psi}=(\Delta^l,\psi)$, en particulier, la donnée de $\tau$ est redondante.
\item $\sd_P(\varphi_A)(\sigma,[(\sigma_0,q_0),\dots,(\sigma_m,q_m)])=[(\Delta^{\varphi},q_0),\dots,(\Delta^{\varphi},q_m)]$. En particulier, pour tout $0\leq i\leq m$, $\Delta^{\bar{\psi}}=\psi(\sigma_i)=\Delta^{\varphi}$.
\end{itemize}
Finalement, on a qu'un simplexe de la somme amalgamée est un simplexe de $\sd_P\fil{A}$
\begin{equation*}
(\sigma,[(\sigma_0,q_0),\dots,(\sigma_m,q_m)])
\end{equation*}
tel que
\begin{itemize}
\item $\sigma\colon \Delta^{\psi}\to \fil{A}$ avec $\Delta^{\bar{\psi}}=\Delta^{\varphi}$
\item $\sigma_0\colon \Delta^{\psi'}\to\Delta^{\psi}$ avec $\Delta^{\bar{\psi'}}=\Delta^{\varphi}$.
\end{itemize}
Ces conditions sont vérifiées par les simplexes de $\tr_{\varphi}(\sd_P(\tr_{\varphi}\fil{A}))$. On en déduit que le carré de gauche du diagramme \ref{DiagrammeSubdivisonProduit} est cocartésien, et donc que le carré extérieur est cocartésien. Or la composition
\begin{equation*}
\Delta^{\varphi}\xrightarrow{\mu}\sd_P(N(P))\to\sd(N(P))
\end{equation*}
a pour image le sous ensemble simplicial de dimension $0$, engendré par $[\Delta^{\varphi}]\in\sd(N(P))_0$. On en déduit que 
\begin{equation*}
\tr_{\varphi}(\sd_P(\tr_{\varphi}\fil{A}))\simeq\sd(\varphi_A)^{-1}([\Delta^{\varphi}])\otimes\Delta^{\varphi}.
\end{equation*}
\end{proof}

On rappelle que si $\Delta^{\varphi}=(\Delta^n,\varphi)\in R(P)$ est un simplexe filtré, son intérieur $\Real{\Int(\Delta^{\varphi})}$ est le sous espace topologique 
\begin{equation*}
\Int(\Real{\Delta^{\varphi}})=\{(t_0,\dots,t_n)\ |\ \sum t_i=1,\  0<t_i\leq 1 \forall i\}\subseteq\Real{\Delta^{\varphi}}
\end{equation*}

\begin{lemme}\label{HomeomorphismeTroncatureSubdivision}
Soient $\fil{A}$ un ensemble simplicial filtré et $\Delta^{\varphi}$ un simplexe de $N(P)$ non dégénéré. Il existe une application continue filtrée surjective
\begin{equation*}
g\colon \RealNP{\tr_{\varphi}(\sd_P(\tr_{\varphi}\fil{A}))}\to\RealNP{\tr_{\varphi}\fil{A}}
\end{equation*}
homotope au sens filtré à (la réalisation de) l'application de dernier sommet filtré $\lv_P$ et
telle que sa restriction au-dessus de l'intérieur de $\Delta^{\varphi}$ 
\begin{equation*}
\begin{tikzcd}
\Int(\RealNP{\tr_{\varphi}(\sd_P(\tr_{\varphi}\fil{A}))})
\arrow{dr}
\arrow{rr}{g_{\Int}}
&&\Int(\RealNP{\tr_{\varphi}\fil{A}})
\arrow{dl}
\\
&\Int\Real{\Delta^{\varphi}}
\end{tikzcd}
\end{equation*}
est un homéomorphisme filtré. 
\end{lemme}

\begin{remarque}
Un tel homéomorphisme ne peut pas être naturel en $\fil{A}$, car pour $\fil{A}$ trivialement filtré (ou pour $\Delta^{\varphi}$ de dimension $0$), $g$ fournit un homéomorphisme (non filtré)
$\Real{\sd(A)}\to\Real{A}$, et on sait qu'un tel homéomorphisme ne peut pas être naturel (voir par exemple \cite[Section 4.6]{CellularStructure}).
\end{remarque}

La preuve de ce lemme est une adaptation de la preuve de \cite[Theorem 4.6.4]{CellularStructure}. De façon à adapter cette stratégie de preuve, nous aurons besoin de la définition ad-hoc suivante ainsi que du lemme \ref{LemmeAdHocProjection}.

\begin{defin}
Soient $\Delta^{\psi}=(\Delta^m,\psi)$ un simplexe filtré, et $p\in \psi(\Delta^{m})$ un élément de $p$. En numérotant de $0$ à $m$ les sommets de $\Delta^m$, on note $I_p=\{i\ |\ \psi(i)=p\}$. On définit la projection sur la $p$-ème strate dans $\Delta^{\psi}$ comme l'application non filtrée (partiellement définie) suivante
\begin{align*}
\pr_p^{\psi}\colon \Real{\Delta^{\psi}}&\to\Real{\Delta^{\psi}}\\
(t_0,\dots,t_m)&\mapsto \frac{1}{\sum\limits_{i\in I_p}{t_i}}\sum\limits_{i\in I_p}(0,\dots,0,t_i,0,\dots,0)
\end{align*}
Cette application n'est bien définie que s'il existe $i\in I_p$ tel que $t_i\not =0$.
\end{defin}

\begin{lemme}\label{LemmeAdHocProjection}
Soient $\alpha\colon \Delta^{\psi}\to\Delta^{\psi'}$ une application simpliciale filtrée et $p\in P$. Alors les compositions $\pr^{\psi'}_p\circ \Real{\alpha}$ et $\Real{\alpha}\circ\pr^{\psi}_p$ sont égales dès qu'elles sont définies, et elles sont définies sur le même sous espace de $\Real{\Delta^{\psi}}$.
\end{lemme}

\begin{proof}
Soient $\alpha\colon \Delta^{\psi}\to\Delta^{\psi'}$ et $p\in P$.
Soit $(t_0,\dots,t_m)\in \Real{\Delta^{\psi}}$, on a $\Real{\alpha}(t_0,\dots,t_m)=(t'_0,\dots,t'_{m'})$ avec $t'_{j}=\sum_{i\in \alpha^{-1}(j)}t_i$. Comme $\alpha$ est filtré on doit avoir $\alpha^{-1}(I'_p)=I_p$. En particulier, on a $t_i=0$, pour tout $i\in I_p$, si et seulement si $t'_j=0$ pour tout $j\in I'_p$, et dans ce cas aucune des deux compositions n'est définie. Sinon, les compositions sont bien définies, et on calcule
\begin{align*}
\Real{\alpha}\circ\pr^{\psi}_p(t_0,\dots,t_m)&=\Real{\alpha}\left(\frac{1}{\sum\limits_{i\in I_p}{t_i}}\sum\limits_{i\in I_p}(0,\dots,0,t_i,0,\dots,0)\right)\\
&=\frac{1}{\sum\limits_{i\in I_p}{t_i}}\sum\limits_{j\in I'_p}(0,\dots,0,t'_j,0,\dots,0)\\
&=\frac{1}{\sum\limits_{j\in I'_p}{t'_j}}\sum\limits_{j\in I'_p}(0,\dots,0,t'_j,0,\dots,0)\\
&=\pr^{\psi'}_p(t'_0,\dots,t'_m)\\
&=\pr^{\psi'}_p\circ\Real{\alpha}(t_0,\dots,t_m)
\end{align*} 
\end{proof}

\begin{proof}[Démonstration du lemme \ref{HomeomorphismeTroncatureSubdivision}]
On va définir $g$ simplexe par simplexe. Soit $\sigma\colon \Delta^{\psi}\to \fil{A}$ un simplexe filtré, avec $\Delta^{\bar{\psi}}=\Delta^{\varphi}$. Un point dans la réalisation $\Real\tr_{\varphi}{\sd_P(\Delta^{\psi})}$ est décrit (pas de façon unique) par
la donnée de 
\begin{itemize}
\item un simplexe non dégénéré $[(\mu_0,q_0),\dots,(\mu_m,q_m)]\in \sd_P(\Delta^{\varphi})$, avec $\Delta^{\varphi}=\psi(\mu_0)$, et $m=\dim(\Delta^{\psi})$, (l'hypothèse de non dégénérescence implique que $(\mu_i,q_i)\not=(\mu_{i+1},q_{i+1})$ pour tout $0\leq i\leq m-1$, et la maximalité de la dimension implique que pour tout $0\leq i\leq m-1$, ou bien $\mu_i=\mu_{i+1}$ ou $q_i=q_{i+1}$),
\item un point $(u_0,\dots,u_m)\in \Real{[q_0,\dots,q_m]}$, c'est à dire un $m+1$-uplet de réels vérifiant $0\leq u_i\leq 1$ et $\sum u_i=1$.
\end{itemize}
Par ailleurs, dans la preuve de \cite[Theorem 4.6.4]{CellularStructure}, Fritsch et Piccinini construisent une application (non-filtrée) 
\begin{equation*}
h_{\sigma}\colon \Real{\sd(\Delta^{\psi})}\to\Real{\Delta^{\psi}}
\end{equation*}
définie par
\begin{align*}
h_{\sigma}\colon \Real{\sd(\Delta^{\psi})}&\to\Real{\Delta^{\psi}}\\
([\mu_0,\dots,\mu_m],(u_0,\dots,u_m))&\mapsto \sum\limits_{k=0}^{m}\sum\limits_{j=0}^k u_ku_jb_{k,k}+\sum\limits_{k=0}^m\sum\limits_{j=0}^{k-1}u_ku_jb_{k,j}
\end{align*}
où les $b_{k,j}$ sont des barycentres de faces dans $\Delta^{\psi}$ détérminés par les $\mu_i$.
(Ces faces ne sont pas nécessairement égales à l'un des $\mu_i$). 
On remarque que si $\mu_0$ est une face de $\Delta^{\psi}$ vérifiant $\psi(\mu_0)=\Delta^{\varphi}$, alors pour tout $p\in \Delta^{\varphi}$, $\pr_p(b_{k,j})$ est bien défini. Ceci provient de la construction des $b_{k,j}$ et du fait que si $\Delta^{\epsilon}$ est un simplexe filtré avec $\Delta^{\bar{\epsilon}}=\Delta^{\varphi}$, tout $\Delta^{\epsilon'}$ qui est une dégénérescence de $\Delta^{\epsilon}$ vérifie $\Delta^{\bar{\epsilon'}}=\Delta^{\varphi}$. Ainsi, pour $p\in \Delta^{\varphi}$, on a les applications (non filtrées) bien définies suivantes :
\begin{align*}
h_{\sigma}^p\colon \Real{\tr_{\varphi}(\sd_P(\Delta^{\psi}))}&\to\Real{\Delta^{\psi}}\\
([(\mu_0,q_0),\dots,(\mu_m,q_m)],(u_0,\dots,u_m))&\mapsto \sum\limits_{k=0}^{m}\sum\limits_{j=0}^k u_ku_j\pr_p(b_{k,k})+\sum\limits_{k=0}^m\sum\limits_{j=0}^{k-1}u_ku_j\pr_p(b_{k,j})
\end{align*}
Pour $[(\mu_0,q_0),\dots,(\mu_m,q_m)],(u_0,\dots,u_m))$ un point de $\Real{\tr_{\varphi}(\sd_P(\Delta^{\psi}))}$, et $p\in \Delta^{\varphi}$, on pose $u_p=\sum\limits_{q_i=p}u_i$. On définit ainsi
\begin{align*}
g_{\sigma}\colon \RealNP{\tr_{\varphi}\sd_P(\Delta^{\psi})}&\to \RealNP{\Delta^{\psi}}\\
([(\mu_0,q_0),\dots,(\mu_m,q_m)],(u_0,\dots,u_m))&\mapsto \sum_{p}u_ph_{\sigma}^p([(\mu_0,q_0),\dots,(\mu_m,q_m)],(u_0,\dots,u_m))
\end{align*}
On remarque que par construction, l'application $g_{\sigma}$ est filtrée. En effet, l'image par la filtration d'un point $([(\mu_0,q_0),\dots,(\mu_m,q_m)],(u_0,\dots,u_m))\in \Real{\sd_P(\Delta^{\psi})}$ est 
\begin{equation*}
([p_0,\dots,p_n],(u_{p_0},\dots,u_{p_n}))=\Real{\psi}\left(\sum_{p\in P}u_ph_{\sigma}^p([(\mu_0,q_0),\dots,(\mu_m,q_m)],(u_0,\dots,u_m))\right)
\end{equation*}
car $\psi(h_\sigma^p)=p$, par construction. Pour voir que les $g_{\sigma}$ sont compatibles aux faces et aux dégénérescences, il suffit de vérifier que pour toute application simpliciale filtrée $\alpha\colon \Delta^{\psi}\to\Delta^{\psi'}$, et tout $[(\mu_0,q_0),\dots,(\mu_m,q_m)]\in \tr_{\varphi}(\sd_P(\Delta^{\psi}))$, l'application $\Real{\alpha}$ envoie les $\pr_p(b_{k,j})$ associé aux $\mu_i$ sur les $\pr_p(b'_{k,j})$ associés aux $\alpha(\mu_i)$. Dans la preuve de \cite[Theorem 4.6.4]{CellularStructure} , les auteurs prouvent que $\Real{\alpha}$ envoie les $b_{k,j}$ sur les $b'_{k,j}$, par construction des $b_{k,j}$, et le résultat voulu est donc une conséquence du lemme \ref{LemmeAdHocProjection}. On peut donc recoller les $g_{\sigma}$ pour obtenir une application filtrée
\begin{equation*}
g\colon \RealNP{\tr_{\varphi}(\sd_P(\tr_{\varphi}\fil{A}))}\to \RealNP{\fil{A}}
\end{equation*}
Montrons qu'elle vérifie les hypothèses du lemme \ref{HomeomorphismeTroncatureSubdivision}. 
Pour $\sigma\colon \Delta^{\psi}\to\fil{A}$ comme plus tôt, on définit
\begin{align*}
H_{\sigma}\colon \Delta^1\otimes\Real{\sd_P(\Delta^{\psi})}&\to\Real{\Delta^{\psi}}\\
(t,([\mu,q],u))&\mapsto t\RealNP{\lv_P}([\mu,q],u)+(1-t)g_{\sigma}([\mu,q],u)
\end{align*}
Comme les applications $\RealNP{\lv_P}$ et $g_{\sigma}$ sont compatibles aux faces et aux dégénérescences ($\RealNP{\lv_P}$ est la réalisation d'une application simpliciale), $H_{\sigma}$ l'est aussi, et on peut donc recoller les $H_{\sigma}$ pour obtenir une homotopie
\begin{equation*}
H\colon \Delta^1\otimes\RealNP{\tr_{\varphi}(\sd_P(\tr_{\varphi}\fil{A}))}\to\RealNP{\fil{A}}.
\end{equation*}
Il reste à montrer que pour tout simplexe filtré non dégénéré $\sigma\colon \Delta^{\psi}\to \fil{A}$, avec $\Delta^{\bar{\psi}}=\Delta^{\varphi}$, la restriction de $g_{\sigma}$ aux intérieurs
\begin{equation}\label{EquationHomeoInterieur}
g_{\sigma}\colon\mathring{\RealNP{\tr_{\varphi}(\sd_P(\Delta^{\psi}))}}\to\mathring{\RealNP{\Delta^{\psi}}}
\end{equation}
est bijective.
Par construction, un point 
\begin{equation*}
([\mu,q],u)=([(\mu_0,q_0),\dots,(\mu_m,q_m)],(u_0,\dots,u_m))\in \RealNP{\sd_P(\Delta^{\psi})}
\end{equation*}
 est dans l'intérieur de $\RealNP{\tr_{\varphi}(\sd_P(\Delta^{\psi}))}$ s'il existe $0\leq i\leq m$ tel que $u_i\not =0$ et $\mu_i=\Delta^{\psi}$, et pour tout $p\in \Delta^{\varphi}$, $u_p\geq 0$.
Soit $([\mu,q],u)$ un tel point. Notons $g_{\sigma}([\mu,q],u)=(t_0,\dots,t_m)\in \Real{\Delta^{\psi}}$ et montrons que $t_j\geq 0$ pour tout $0\leq j\leq m$. Fixons un tel $j$. On doit avoir $b_{i,i}$ est le barycentre de $\Real{\Delta^{\psi}}$ car $\mu_i=\Delta^{\psi}$ et $\sigma$ est non dégénéré. On a donc $(b_{i,i})_j\geq 0$, et en notant $p=\psi(e_i)$ on a aussi $(\pr_p(b_{i,i}))_j\geq 0$, mais alors $t_j=(g_{\sigma}([\mu,q],u))_j=\left(u_p h^p_{\sigma}([\mu,q],u)\right)_j\geq u_pu_ju_j(b_{i,i})_j>0$. Ainsi, $g_{\sigma}$ envoie des points intérieurs sur des points intérieurs, et la restriction \ref{EquationHomeoInterieur} est bien définie.

Montrons maintenant que l'application $g_{\sigma}$ est injective. Soient 
\begin{align*}
([\mu,q],u)&=([(\mu_0,q_0),\dots,(\mu_m,q_m)],(u_0,\dots,u_m))\\
([\mu',q'],u')&=([(\mu'_0,q'_0),\dots,(\mu'_{m},q'_{m})],(u'_0,\dots,u_{m}))
\end{align*}
 deux points de $\RealNP{\tr_{\varphi}(\sd_P(\Delta^{\psi}))}$ tels que 
$g_{\sigma}([\mu,q],u)=g_{\sigma}([\mu',q'],u')$. Alors, on a 
\begin{equation*}
\sum_{p\in P}u_ph^p_{\sigma}([\mu,q],u)=\sum_{p\in P}u'_ph^p_{\sigma}([\mu',q'],u'))
\end{equation*}
En raisonnant sur la filtration, ceci implique immédiatement que $h^p_{\sigma}([\mu,q],u)=h_{\sigma}^p([\mu',q']u')$ et que $u_p=u'_p$ pour tout $p\in P$. Si $\dim{\Delta^{\psi}}=\dim{\Delta^{\varphi}}=n$, on a nécessairement $[(\mu_0,q_0),\dots,(\mu_n,q_n)]=[(\Delta^{\psi},p_0),\dots,(\Delta^{\psi},p_n)]$, et $g_{\sigma}([\mu,q],u)$ est entièrement déterminé par les $u_i=u_{p_i}=u'_{p_i}=u'_i$, ce qui implique que $([\mu,q],u)=([\mu',q'],u')$.
Sinon, notons $(\tau,v)=([\tau_0,\dots,\tau_{m-n}],(v_0,\dots,v_{m-n})$ le point de $\Real{\sd(\Delta^{\psi}}$ obtenu à partir de $([(\mu_0,q_0),\dots,(\mu_m,q_m)],(u_0,\dots,u_m)]$ en oubliant les $q_i$. On obtient les $\tau_i$ en supprimant les répétitions dans les $\mu_i$ (il y en a nécessairement $\dim(\Delta^{\psi})-\dim(\Delta^{\varphi})$), et les $v_i$ sont obtenus en regroupant les $u_i$ de façon correspondante. De même pour $([\mu',q'],u')$ on définit $(\tau',v')\in \Real{\sd(\Delta^{\psi})}$. Avec ces notations, on peut réécrire les $h^p_{\sigma}$ (sans changer leur définition) comme 
\begin{equation*}
h^p_{\sigma}([\mu,q],u)=\sum\limits_{k=0}^{m-n}\sum\limits_{j=0}^k v_kv_j\pr_p(b_{k,k})+\sum\limits_{k=0}^{m-n}\sum\limits_{j=0}^{k-1}v_kv_j\pr_p(b_{k,j})=h^p_{\sigma}(\tau,v)
\end{equation*}
où les $b_{k,j}$ sont déterminés à partir des $\tau_j$.
Nous allons montrer par récurrence qu'on a pour tout $0\leq i\leq m$, $u_i=u'_i$ et, si $u_i\not =0$, $\mu_i=\mu'_i$. Par construction on a $\tau_{m-n}=\Delta^{\psi}$ et $\tau_{m-n-1}=d_i\Delta^{\psi}$ pour un certain $0\leq i\leq m$. Ceci implique que le seul $b_{k,j}$ avec $(b_{k,j})_i>0$ est $b_{m-n,m-n}$. En notant $p=\psi(e_i)$, ceci implique que
\begin{equation*}
\left(h^p_{\sigma}(\tau,v)\right)_i=v_{m-n}(\pr_p(b_{m-n,m-n}))_i=\frac{v_{m-n}}{\dim(\psi^{-1}(p))+1},
\end{equation*} 
et donc,
\begin{equation*}
\left(g_{\sigma}([\mu,q],u)\right)_i=u_p\left(h^p_{\sigma}(\tau,v)\right)_i=u_p\frac{v_{m-n}}{\dim(\psi^{-1}(p))+1},
\end{equation*} 
d'autre part, comme $e_i\in \tau'_{m-n}=\Delta^{\psi}$, on doit aussi avoir
\begin{equation*}
\left(h^p_{\sigma}(\tau',v')\right)_i\geq \frac{v'_{m-n}}{\dim(\psi^{-1}(p))+1}
\end{equation*}
On a donc
\begin{equation*}
\left(g_{\sigma}([\mu',q'],u')\right)_i=u'_p\left(h^p_{\sigma}(\tau',v')\right)_i=u'_p\frac{v'_{m-n}}{\dim(\psi^{-1}(p))+1},
\end{equation*} 
Et donc, on obtient
\begin{equation*}
u_p\frac{v_{m-n}}{\dim(\psi^{-1}(p))+1}\geq u'_p\frac{v'_{m-n}}{\dim(\psi^{-1}(p))+1}
\end{equation*}
On a vu précédemment que $u_p=u'_p$, et comme $([\mu,q],u)$ est un point intérieur, on a $u_p>0$, on peut donc simplifier l'inégalité en $v_{m-n}\geq v'_{m-n}$. Puis, par symétrie de l'argument (on peut le répéter pour $\tau'_{m-n-1}=d_{i'}\Delta^{\psi}$), on a l'égalité $v_{m-n}=v'_{m-n}$.
Or, par construction, on a
\begin{equation*}
v_{m-n}=\sum\limits_{\mu_j=\Delta^{\psi}}u_j=\sum\limits_{j=m-l}^mu_j
\end{equation*}
pour un certain $l\geq 0$. On a aussi, $q_j<q_{j+1}$ pour tout $m-l\leq j<m$. Pour $m-l<j\leq m$, ceci implique que $u_j=u_{q_j}=u'_{q_j}=u'_j$. D'autre part, on a 
\begin{equation*}
u_{m-l}=v_{m-n}-\sum\limits_{j=m-l+1}^mu_j=u'_{m-l}
\end{equation*}
Finalement, on a trouvé $l\geq 0$ tel que :
\begin{itemize}
\item pour tout $m-l\leq j\leq m$, $u_j=u_j'$,
\item pour tout $m-l\leq j\leq m$, si $u_j>0$, $\mu_j=\mu_j'$,
\item $m-l-1<0$, ou ($\mu_{m-l-1}\not=\mu_{m-l}$ et  $\mu'_{m-l-1}\not=\mu'_{m-l}$).
\end{itemize}
Ceci complète l'initialisation de la récurrence. Soit $l\geq 0$ vérifiant les conditions si dessus avec $m-l-1\geq 0$, on a alors un $s\in\{0,\dots,m-n\}$, correspondant à $m-l$ tel que pour tout $s\leq j\leq m-n$, $\tau_j=\tau'_j$ et $v_s=v'_s$. On considère alors la combinaison linéaire
\begin{equation*}
(\widetilde{t}_0,\dots,\widetilde{t}_m)=\sum\limits_p u_p\left(\sum\limits_{k=0}^{s-1}\sum\limits_{j=0}^{k}v_kv_j\pr_p(b_{k,k})+\sum\limits_{j=0}^{s-1}\sum\limits_{k=j+1}^{m-n}v_jv_k\pr_p(b_{k,j})\right)\in \R^{m+1}
\end{equation*}
On remarque que
\begin{equation*}
(\widetilde{t}_0,\dots,\widetilde{t}_m)=g_{\sigma}([\mu,q],u)-\sum\limits_p u_p\left(\sum\limits_{k=s}^{m-n}\sum\limits_{j=0}^{k}v_kv_j\pr_p\left(b_{k,k}\right)+\sum\limits_{j=s}^{m-n}\sum\limits_{k=j+1}^{m-n}v_jv_k\pr_p\left(b_{k,j}\right)\right),
\end{equation*}
et que
\begin{equation*}
\sum_{j=0}^{k}v_j=1-\sum_{j=k+1}^{m-n}v_j.
\end{equation*}
Par l'hypothèse de récurrence, on a donc
\begin{equation*}
(\widetilde{t}_0,\dots,\widetilde{t}_m)=(\widetilde{t}'_0,\dots,\widetilde{t}'_m).
\end{equation*}
Supposons que $\tau_{s-1}\not=\tau'_{s-1}$. Comme $\dim{\tau_{s-1}}=\dim{\tau'_{s-1}}$, il existe $e_i\in \tau_{s-1}$ tel que $e_i\not\in \tau'_{s-1}$. On fixe un tel $i$, et on note $p=\psi(e_i)$. On a alors
\begin{equation*}
u_p\frac{v_{s-1}}{\dim(\psi^{-1}(p)+1}=u_pv_{s-1}(\pr_p(b_{m-n,s-1}))_i\leq \widetilde{t}_i=\widetilde{t}'_i=0
\end{equation*}
Comme $u_p\not =0$ (car $([\mu,q],u)$ est un point intérieur), on en déduit que $v_{s-1}=0$. Par symétrie de l'argument, on a aussi $v'_{s-1}=0$. On définit alors $l'$ tel que pour tout $m-l'\leq j\leq m-l-1$, $\mu_j=\tau_{s-1}$ et $m-l'-1<0$ ou $\mu_{m-l'-1}\not=\tau_{s-1}$. Alors, on a
\begin{equation*}
v_{s-1}=\sum\limits_{j=m-l'}^{m-l-1}u_j=0.
\end{equation*} 
On en déduit que pour tout $m-l'\leq j\leq m-l-1$, $u_j=u_j'=0$, ce qui complète le pas de récurrence dans le cas où $\tau_{s-1}\not=\tau'_{s-1}$. Supposons maintenant que $\tau'_{s-1}=\tau_{s-1}$. On considère la combinaison linéaire
\begin{equation*}
(\widehat{t}_0,\dots,\widehat{t}_m)=\sum\limits_pu_p\left(
\sum_{j=0}^{s-1}v_j\pr_p(b_{s-1,s-1})+\sum\limits_{k=s}^{m-n}v_k\pr_p(b_{k,s-1})
\right)\in \R^{m+1}
\end{equation*}
Comme $\tau_j=\tau'_j$ pour tout $s-1\leq j\leq m-n$, on a aussi que $b_{k,j}=b'_{k,j}$ pour tout $s-1\leq j\leq k\leq m-n$. On a donc l'égalité
\begin{equation*}
(\widehat{t}_0,\dots,\widehat{t}_m)=(\widehat{t}'_0,\dots,\widehat{t}'_m).
\end{equation*}
De plus, si $s-1>0$, on doit avoir $e_{i}\in \tau_{s-1}$ tel que $e_{i}\not\in \tau_{s-2}$. On fixe un tel $i$, et on note $p=\psi(e_i)$. On a alors
\begin{equation*}
v_{s-1}\widehat{t}_i=\widetilde{t}_i=\widetilde{t}'_i\geq v'_{s-1}\widehat{t}'_i=v'_{s-1}\widehat{t}_i.
\end{equation*}
Comme $\widehat{t}_i\geq u_pv_{m-n}\left(\pr_p(b_{m-n,s-1})\right)_i=u_p\frac{v_{m-n}}{\dim(\tau_{s-1}+1)}>0$, on en déduit que $v_{s-1}\geq v'_{s-1}$. Une fois encore, par symétrie, on obtient $v_{s-1}=v'_{s-1}$. Par les mêmes arguments que précédemment, on en déduit l'existence d'un $l'>l$ tel que pour tout $m-l'\leq j\leq m-l-1$, $\mu_j=\mu'_j$ et $u_j=u'_j$ et tel que $\mu_{m-l'-1}\not=\mu_{m-l-1}$. Ceci conclut le pas de récurrence dans le cas où $\tau_{s-1}=\tau'_{s-1}$ et $s-1>0$. Supposons maintenant que $s-1=0$ et $\tau_{0}=\tau'_0$. On a alors 
\begin{equation*}
v_0(\widehat{t}_0,\dots,\widehat{t}_m)=(\widetilde{t}_0,\dots,\widetilde{t}_m)=(\widetilde{t}'_0,\dots,\widetilde{t}'_m)=v'_0(\widehat{t}'_0,\dots,\widehat{t}'_m)=v'_0(\widehat{t}_0,\dots,\widehat{t}_m)
\end{equation*}
On en déduit que $v_0=v'_0$, et on conclut, comme précédemment que $u_j=u'_j$ pour tout $0\leq j\leq m$ et $\mu_j=\mu'_j$ dès que $u_j>0$, ce qui conclue la preuve de l'injectivité de $g_{\sigma}$ sur l'intérieur de $\Real{\tr_{\varphi}(\sd_P(\Delta^{\psi}))}$.

Finalement, il reste à montrer que la restriction de $g_{\sigma}$ à l'intérieur de $\Real{\tr_{\varphi}(\sd_P(\Delta^{\psi}))}$ est surjective sur l'intérieur de $\Real{\Delta^{\psi}}$ lorsque $\sigma$ est non dégénéré. Comme la restriction de $g_{\sigma}$ est injective, on a que l'image de la restriction de $g_{\sigma}$ 
\begin{equation*}
g_{\sigma}(\mathring{\Real{\tr_{\varphi}(\sd_P(\Delta^{\psi}))}})\subseteq \mathring{\Real{\Delta^{\psi}}}
\end{equation*}
est ouverte dans $\mathring{\Real{\Delta^{\psi}}}$ (voir par exemple \cite[Theorem A.9.6]{CellularStructure}). Mais on a aussi 
\begin{equation*}
g_{\sigma}(\mathring{\Real{\tr_{\varphi}(\sd_P(\Delta^{\psi}))}})= \mathring{\Real{\Delta^{\psi}}}\cap g_{\sigma}(\Real{\tr_{\varphi}(\sd_P(\Delta^{\psi}))})
\end{equation*}
Et l'image est donc fermée. Puisqu'elle est non vide, elle est égale à $\mathring{\Real{\Delta^{\psi}}}$, et la restriction de $g_{\sigma}$ à l'intérieur de $\Real{\tr_{\varphi}(\sd_P(\Delta^{\psi}))}$ est surjective.
\end{proof}

On peut maintenant passer à la preuve de la proposition \ref{LastVertexEquivalenceFaible}.

\begin{proof}[Démonstration de la proposition \ref{LastVertexEquivalenceFaible}]
Soient $\fil{A}$ un ensemble simplicial filtré et $\Delta^{\varphi}\in R(P)$ un simplexe non dégénéré. Montrons que $\lv_P$ induit une équivalence faible 
\begin{equation*}
\Map(\RealNP{\Delta^{\varphi}},\RealNP{\sd_P\fil{A}})\to\Map(\RealNP{\Delta^{\varphi}},\RealNP{\fil{A}})
\end{equation*}
Par le lemme \ref{LemmeMapTroncature}, il suffit de considérer la restriction
\begin{equation*}
\Map(\RealNP{\Delta^{\varphi}},\RealNP{\tr_{\varphi}(\sd_P\fil{A})})\to\Map(\RealNP{\Delta^{\varphi}},\RealNP{\tr_{\varphi}\fil{A}})
\end{equation*}
Puis, par le lemme \ref{LemmeFactorisationLastVertex}, on se ramène à la restriction
\begin{equation*}
\Map(\RealNP{\Delta^{\varphi}},\RealNP{\tr_{\varphi}(\sd_P(\tr_{\varphi}\fil{A}))})
\to\Map(\RealNP{\Delta^{\varphi}},\RealNP{\tr_{\varphi}\fil{A}}).
\end{equation*}
Cette application est homotope à $g_*$, l'application induite par le morphisme $g$, construit au lemme \ref{HomeomorphismeTroncatureSubdivision}. Nous allons montrer que $g_*$ est une équivalence d'homotopie. Par le lemme \ref{LemmeTroncatureSubdivisionEgalProduit}, la source de $g_*$ est isomorphe à 
\begin{equation*}
\Map(\RealNP{\Delta^{\varphi}},\RealNP{K\otimes\Delta^{\varphi}})\simeq\Map(\RealNP{\Delta^{\varphi}},\Real{K}\otimes\RealNP{\Delta^{\varphi}})
\end{equation*}
Pour un certain ensemble simplicial $K$, notons $\Delta^{\varphi}=(\Delta^n,\varphi)$. On remarque qu'on a un isomorphisme
\begin{equation*}
\Map(\RealNP{\Delta^{\varphi}},\Real{K}\otimes\RealNP{\Delta^{\varphi}})\simeq \Map(\Real{\Delta^n},\Real{K}).
\end{equation*}
 En effet, pour toute application filtrée $\Delta^m\otimes\RealNP{\Delta^{\varphi}}\to \Real{K}\otimes\RealNP{\Delta^{\varphi}}$ la composition 
\begin{equation*}
\Delta^m\otimes\RealNP{\Delta^{\varphi}}\to \Real{K}\otimes\RealNP{\Delta^{\varphi}}\xrightarrow{\pr_{\RealNP{\Delta^{\varphi}}}}\RealNP{\Delta^{\varphi}}
\end{equation*}
doit être égale à la projection sur $\RealNP{\Delta^{\varphi}}$. De plus, par le lemme \ref{HomeomorphismeTroncatureSubdivision}, $g$ induit un isomorphisme
\begin{equation*}
\Map(\Int(\Real{\Delta^n}),\Real{K})\simeq \Map(\Int(\RealNP{\Delta^{\varphi}}),\RealNP{\tr_{\varphi}\fil{A}})
\end{equation*}
Finalement, l'inclusion $\Int(\RealNP{\Delta^{\varphi}})\to\RealNP{\Delta^{\varphi}}$ induit des monomorphismes, et on a le diagramme commutatif suivant
\begin{equation*}
\begin{tikzcd}
\Map(\Real{\Delta^n},\Real{K})
\arrow[hookrightarrow]{r}{i_k}
\arrow[swap]{d}{g_*}
&
\Map(\Int(\Real{\Delta^n}),\Real{K})
\arrow{d}{\simeq}
\\
\Map(\RealNP{\Delta^{\varphi}},\RealNP{\tr_{\varphi}\fil{A}})
\arrow[hookrightarrow,swap]{r}{i_A}
&\Map(\Int(\RealNP{\Delta^{\varphi}}),\RealNP{\tr_{\varphi}\fil{A}})
\end{tikzcd}
\end{equation*}
l'isomorphisme de droite est induit par $g$. Pour éviter les confusions entre les différents morphismes induits par $g$, on note $g_*$ le morphisme apparaissant à gauche du diagramme, et $h$ l'isomorphisme apparaissant à droite. Pour montrer que $g_*$ est une équivalence d'homotopie, il suffit de montrer que $i_K$ et $i_A$ sont des équivalences d'homotopies. On remarque qu'il existe un homéomorphisme (non filtré)
\begin{equation*}
\Real{\Delta^n}\simeq \bar{c}(\Real{\partial(\Delta^n)})=\frac{\Real{\partial(\Delta^n)}\times[0,1]}{\Real{\partial(\Delta^n)}\times\{0\}}
\end{equation*}
dont la restriction à l'intérieur de $\Real{\Delta^n}$ :
\begin{equation*}
\Int(\Real{\Delta^n})\simeq c(\Real{\partial(\Delta^n)})=\frac{\Real{\partial(\Delta^n)}\times[0,1[}{\Real{\partial(\Delta^n)}\times\{0\}}
\end{equation*}
est encore un homéomorphisme. En fixant un tel homéomorphisme, on peut identifier les points de $\Real{\Delta^n}$ avec les points $(y,t)\in \bar{c}(\Real{\partial(\Delta^n)})$. Avec ces notations, on définit la rétraction 
\begin{align*}
r_K\colon \Map(\Int(\Real{\Delta^n}),\Real{K})&\to\Map(\Real{\Delta^n},\Real{K})\\
(\sigma\colon \Int(\Real{\Delta^n})\times\Real{\Delta^k}\to\Real{K})&\mapsto \left\{\begin{array}{ccc}
\Real{\Delta^n}\times\Real{\Delta^k}&\to &\Real{K}\\
((y,t),s)&\mapsto &\sigma((y,0),s)
\end{array}\right.
\end{align*}
Et on considère l'homotopie
\begin{align*}
H\colon \Delta^1\times\Map(\Int(\Real{\Delta^n}),\Real{K})&\to\Map(\Int(\Real{\Delta^n}),\Real{K})\\
(\tau\colon \Delta^k\to\Delta^1,\sigma\colon \Int(\Real{\Delta^n})\times\Real{\Delta^k}\to\Real{K})&\mapsto \left\{\begin{array}{ccc}
\Int(\Real{\Delta^n})\times\Real{\Delta^k}&\to &\Real{K}\\
((y,t),s)&\mapsto &\sigma((y,t\Real{\tau}(s)),s)
\end{array}\right.
\end{align*}
où $\Real{\tau}(s)\in \Real{\Delta^1}$ est vu comme un élément de $[0,1]$ via l'identification $\Real{\Delta^1}\simeq [0,1]$. L'application $H$ fournit une homotopie entre $r_K\circ i_K$ et l'identité. D'autre part, on remarque que la restriction 
\begin{equation*}
H_{| \Delta^1\times\Map(\Real{\Delta^n},\Real{K})}\colon \Delta^1\times\Map(\Real{\Delta^n},\Real{K})\to\Map(\Real{\Delta^n},\Real{K})
\end{equation*}
est bien définie et fournit une homotopie entre $i_K\circ r_K$ et l'identité. On en déduit que $i_K$ est une équivalence d'homotopie. On considère maintenant l'homotopie $H_A=h\circ H\circ (\Id_{\Delta^1}\times h^{-1})$ 
\begin{equation*}
H_A\colon \Delta^1\times\Map(\Int(\RealNP{\Delta^{\varphi}}),\RealNP{\tr_{\varphi}\fil{A}})\to \Map(\Int(\RealNP{\Delta^{\varphi}}),\RealNP{\tr_{\varphi}\fil{A}})
\end{equation*}
Par construction de $H$, la restriction de $H_A$ 
\begin{equation*}
\widetilde{H}_A\colon \Delta^1\times\Map(\RealNP{\Delta^{\varphi}},\RealNP{\tr_{\varphi}\fil{A}})\to \Map(\RealNP{\Delta^{\varphi}},\RealNP{\tr_{\varphi}\fil{A}})
\end{equation*}
est bien définie. On calcule alors
\begin{align*}
i_A\circ(\widetilde{H}_A)_{|\{0\}}&=(H_A)_{|\{0\}}\circ i_A\\
&= h\circ i_K\circ r_K\circ h^{-1}\circ i_A\\
&= i_A\circ g_*\circ r_K\circ h^{-1}\circ i_A
\end{align*}
Comme $i_A$ est un monomorphisme, on en déduit que 
$(\widetilde{H}_A)_{|\{0\}}=g_*\circ r_K\circ h^{-1}\circ i_A$. D'autre part, $(\widetilde{H}_A)_{|\{1\}}$ est l'identité, on en déduit que $g_*$ est une équivalence d'homotopie, d'inverse $r_K\circ h^{-1}\circ i_A$. 
\end{proof}

\subsection{Remplacements cofibrants pour les espaces fortement filtrés PL}
\label{SectionRemplacementsCofibrantsPL}
Une conséquence presque immédiate des propositions \ref{PropositionSubdivisionCofibrante} et \ref{LastVertexEquivalenceFaible} est la proposition \ref{PropositionRemplacementCofibrantEfficace} suivante, qui permet de calculer efficacement un remplacement cofibrant pour les espaces fortement filtrés PL. On rappelle qu'on a un morphisme canonique $\varphi\colon \sd_P(N(P))\to\sd(N(P))$ donné par $\varphi(\Delta^{\psi},p)=\Delta^{\psi}$ (voir la section \ref{SectionSubdivisionFiltree}). D'autre part, le théorème \cite[Theorem 4.6.4]{CellularStructure} fournit un homéomorphisme $h_{N(P)}\colon\Real{\sd(N(P))}\to\Real{N(P)}$. 
\begin{prop}\label{PropositionRemplacementCofibrantEfficace}
Soit $\fil{X}$ un espace fortement filtré. On considère le produit fibré
\begin{equation*}
\begin{tikzcd}[column sep = large]
X^{\text{cof}}
\arrow{r}{r_X}
\arrow{d}
&X
\arrow{d}{\varphi_X}
\\
\Real{\sd_P(N(P))}
\arrow[swap]{r}{h_{N(P)}\circ \varphi}
&\Real{N(P)}
\end{tikzcd}
\end{equation*}
et on munit $X^{\text{cof}}$ de la filtration donnée par la composition
\begin{equation*}
\varphi_{X^{\text{cof}}}\colon X^{\text{cof}}\to\Real{\sd_P(N(P))}\xrightarrow{\RealNP{\lv_P}}\Real{N(P)}.
\end{equation*}
S'il existe $\fil{A}\in \sS$ tel que $\fil{X}\simeq \RealNP{\fil{A}}$, alors $\fil{X^{\text{cof}}}$ est cofibrant dans $\Top_{N(P)}$, et faiblement équivalent à $\fil{X}$.
\end{prop}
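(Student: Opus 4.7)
Le plan est de ramener la démonstration au cas où $\fil{X} = \RealNP{\fil{A}}$, puis d'établir un isomorphisme dans $\Top_{N(P)}$ entre $\fil{X^{\text{cof}}}$ et $\RealNP{\sd_P\fil{A}}$. Une fois cette identification obtenue, la cofibrance de $\fil{X^{\text{cof}}}$ résultera directement de la Proposition \ref{PropositionSubdivisionCofibrante}, tandis que l'équivalence faible avec $\fil{X}$ s'obtiendra en composant avec l'équivalence faible $\RealNP{\lv_P}\colon \RealNP{\sd_P\fil{A}}\to \RealNP{\fil{A}}=\fil{X}$ fournie par la Proposition \ref{LastVertexEquivalenceFaible}.

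Pour construire cette identification, je commencerais par rappeler que, par la définition même de la subdivision filtrée (Section \ref{SectionSubdivisionFiltree}), on a l'isomorphisme d'ensembles simpliciaux
$$\sd_P\fil{A} \simeq \sd(A) \times_{\sd(N(P))} \sd_P(N(P)).$$
Comme le foncteur de réalisation $\Real{-}\colon \sS\to \Top$ préserve les limites finies dans la catégorie des espaces $\Delta$-engendrés (Remarque \ref{RemarqueCategorieDeltaEngendre}), on obtient un homéomorphisme
$$\Real{\sd_P(A)} \simeq \Real{\sd(A)} \times_{\Real{\sd(N(P))}} \Real{\sd_P(N(P))}.$$
En composant avec les homéomorphismes de Fritsch-Piccinini $h_A\colon \Real{\sd(A)}\to \Real{A}$ et $h_{N(P)}\colon \Real{\sd(N(P))}\to \Real{N(P)}$, choisis de façon compatible avec $\Real{\varphi_A}$, ce dernier produit fibré s'identifie à $X^{\text{cof}}=\Real{A}\times_{\Real{N(P)}}\Real{\sd_P(N(P))}$, induisant un homéomorphisme des espaces topologiques sous-jacents.

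Il reste à vérifier que cet homéomorphisme est filtré, en comparant les filtrations sur $\RealNP{\sd_P\fil{A}}$ (passant par $\Real{\varphi_P}$) et sur $\fil{X^{\text{cof}}}$ (passant par $\RealNP{\lv_P}$). Un calcul direct sur les simplexes de $\sd_P(N(P))$ montre que $\varphi_P=\lv_P$ en tant qu'applications simpliciales $\sd_P(N(P))\to N(P)$: toutes deux envoient $[(\sigma_0,p_0),\ldots,(\sigma_k,p_k)]$ sur $[p_0,\ldots,p_k]$, car la filtration $\varphi_{N(P)}$ étant l'identité, on a $\lv_{p_i}(\sigma_i)=p_i$ (voir la discussion suivant la Définition \ref{DernierSommetFiltre}). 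Les deux filtrations coïncident donc, et l'identification a bien lieu dans $\Top_{N(P)}$.

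Le principal obstacle sera d'assurer la compatibilité entre $h_A$ et $h_{N(P)}$ vis-à-vis de $\varphi_A$, car l'homéomorphisme standard de Fritsch-Piccinini n'est pas naturel en général (comme signalé dans la remarque suivant le Lemme \ref{HomeomorphismeTroncatureSubdivision}). Pour contourner cette difficulté, je m'inspirerais de la construction filtrée des applications $g_\sigma$ apparaissant dans la preuve du Lemme \ref{HomeomorphismeTroncatureSubdivision}, qui sont compatibles aux faces et aux dégénérescences et peuvent donc être recollées simplexe par simplexe en un homéomorphisme compatible avec la filtration. Si seule une équivalence faible filtrée entre $\fil{X^{\text{cof}}}$ et $\RealNP{\sd_P\fil{A}}$ pouvait être obtenue par cette voie, la cofibrance de $\fil{X^{\text{cof}}}$ devrait alors être établie indépendamment, en décomposant $X^{\text{cof}}$ en sommes amalgamées le long des squelettes de $\fil{A}$ à la manière de la preuve de la Proposition \ref{PropositionSubdivisionCofibrante}.
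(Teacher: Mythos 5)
Votre stratégie est exactement celle du texte : identifier $\fil{X^{\text{cof}}}$ à $\RealNP{\sd_P\fil{A}}$ en combinant le carré cartésien définissant $\sd_P\fil{A}$, la commutation de $\Real{-}$ aux limites finies, et un homéomorphisme de type Fritsch--Piccinini au-dessus de $h_{N(P)}$, puis conclure par les propositions \ref{PropositionSubdivisionCofibrante} et \ref{LastVertexEquivalenceFaible}. Votre vérification que la filtration de $X^{\text{cof}}$ définie via $\RealNP{\lv_P}$ coïncide avec celle de $\RealNP{\sd_P\fil{A}}$ est correcte et explicite un point que le texte laisse implicite.

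Le point où votre argument reste incomplet est précisément celui que vous signalez vous-même : l'existence d'un homéomorphisme $h^{\varphi_A}\colon\Real{\sd(A)}\to\Real{A}$ rendant commutatif le carré au-dessus de $h_{N(P)}$. Le texte le résout en invoquant \cite[Theorem 2.3.2]{SpacesOfPLManifolds} : comme $N(P)$ est un ensemble simplicial non singulier (il provient d'un complexe simplicial), ce théorème fournit directement un tel homéomorphisme compatible avec $\Real{\sd(\varphi_A)}$ et $\Real{\varphi_A}$ — c'est l'unique ingrédient nouveau de la preuve, et il manque dans votre proposition. Le contournement que vous esquissez, recoller des applications du type des $g_{\sigma}$ du lemme \ref{HomeomorphismeTroncatureSubdivision}, ne peut pas fournir tel quel un homéomorphisme global : ces applications sont construites pour la subdivision filtrée $\sd_P$ et non pour $\sd$, et elles ne sont des bijections que sur les intérieurs des simplexes situés au-dessus d'un simplexe non dégénéré de $N(P)$ fixé; leur recollement donne une surjection filtrée homotope à $\lv_P$, mais rien ne garantit l'injectivité globale. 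Quant au repli annoncé (se contenter d'une équivalence faible et établir la cofibrance de $X^{\text{cof}}$ séparément), il exigerait de refaire pour $X^{\text{cof}}$ l'essentiel du travail de la proposition \ref{PropositionSubdivisionCofibrante}, ce que vous ne faites pas.
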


\begin{remarque}
Attention, l'équivalence faible de la proposition \ref{PropositionRemplacementCofibrantEfficace} n'est pas fournie par $r_X$ (l'application $r_X$ n'est en général pas filtrée). Si l'on veut une équivalence faible explicite entre un remplacement cofibrant de $\fil{X}$ et $\fil{X}$, il faudra travailler directement avec l'ensemble simplicial filtré $\fil{A}$.
\end{remarque}

\begin{proof}
Par les propositions \ref{PropositionSubdivisionCofibrante} et \ref{LastVertexEquivalenceFaible}, l'espace fortement filtré $\RealNP{\sd_P\fil{A}}$ est cofibrant dans $\Top_{N(P)}$ et faiblement équivalent à $\RealNP{\fil{A}}$. Par ailleurs, comme $N(P)$ est un ensemble simplicial non singulier ($N(P)$ provient d'un complexe simplicial) par \cite[Theorem 2.3.2]{SpacesOfPLManifolds}, il existe un homéomorphisme $h^{\varphi_A}\colon \Real{\sd(A)}\to\Real{A}$ tel que le diagramme suivant commute
\begin{equation*}
\begin{tikzcd}[column sep= large]
\Real{\sd(A)}
\arrow{r}{h^{\varphi_A}}
\arrow[swap]{d}{\Real{\sd(\varphi_A)}}
&\Real{A}
\arrow{d}{\Real{\varphi_A}}
\\
\Real{\sd(N(P))}
\arrow[swap]{r}{h_{N(P)}}
&\Real{N(P)}
\end{tikzcd}
\end{equation*}
D'autre part, comme $\Real{-}$ commute avec les limites finies, le carré
\begin{equation*}
\begin{tikzcd}
\Real{\sd_P\fil{A}}
\arrow{d}
\arrow{r}
&\Real{\sd(A)}
\arrow{d}{\Real{\sd(\varphi_A)}}
\\
\Real{\sd_P(N(P))}
\arrow{r}{\Real{\varphi}}
&\Real{\sd(N(P))}
\end{tikzcd}
\end{equation*}
est cartésien. Finalement, en identifiant $\Real{A}$ et $X$, et $\Real{\varphi_A}$ et $\varphi_X$, on obtient le diagramme commutatif suivant
\begin{equation*}
\begin{tikzcd}[column sep = large]
&X^{\text{cof}}
\arrow{rr}
\arrow{dd}
&& X
\arrow{dd}{\varphi_X}
\\
\Real{\sd_P\fil{A}}
\arrow{dd}
\arrow[crossing over]{rr}
\arrow{ur}{\simeq}
&&\Real{\sd(A)}
\arrow{ur}{h^{\varphi_A}}
\\
&\Real{\sd_P{N(P)}}
\arrow[near start]{rr}{h_{N(P)}\circ\Real{\varphi}}
&&\Real{N(P)}
\\
\Real{\sd_P(N(P))}
\arrow{rr}{\Real{\varphi}}
\arrow{ur}{\Id}
&&\Real{\sd(N(P))}
\arrow{ur}{h_{N(P)}}
\arrow[leftarrow, crossing over, near start]{uu}{\Real{\sd(\varphi_A)}}
\end{tikzcd}
\end{equation*}
où les faces avant et arrière sont cartésiennes. On en déduit que le morphisme $\Real{\sd_P\fil{A}}\to X^{\text{cof}}$ est un homéomorphisme. De plus, par construction de $\varphi_{X^{\text{cof}}}$, cet homéomorphisme est filtré, et on en déduit que $\fil{X^{\text{cof}}}$ est un remplacement cofibrant de $X$.
\end{proof}

\subsection{Application aux espaces coniquement stratifiés}
\label{SectionRemplacementsCofibrantsFibre}
Dans la section \ref{ConstructionFibre}, nous avons vu une construction permettant d'obtenir un espace filtré à partir d'un fibré localement trivial. On généralise immédiatement cette construction pour obtenir un espace \textbf{fortement}
filtré. On explicite cette construction dans le cas où $P=\{p_0<p_1\}$. Soit
\begin{equation*}
F\hookrightarrow E\xrightarrow{\pi} B
\end{equation*}
un fibré localement trivial.
Dans la section \ref{ConstructionFibre} on a construit un fibré localement trivial 
\begin{equation*}
\bar{c}(F)\hookrightarrow \bar{c}_{\pi}(E)\xrightarrow{\bar{c}_{\pi}(\pi)}B.
\end{equation*}
On considère la filtration $\varphi_{\pi}\colon \bar{c}_{\pi}(E)\to \Real{N(P)}$ donnée par la composition
\begin{equation*}
\bar{c}_{\pi}(E)\xrightarrow{\pi}[0,1]\xrightarrow{\varphi_{N(P)}} [0,1]\simeq \Real{N(P)}
\end{equation*}
où l'application continue $\varphi_{N(P)}$ est définie comme
\begin{equation*}
\varphi_{N(P)}(t)=\left\{\begin{array}{cl}
\frac{3t}{2} &\text{ si $0\leq t\leq 2/3$}\\
1 &\text{ si $2/3<t$}
\end{array}\right.
\end{equation*}
On a immédiatement que la composée
\begin{equation*}
c_{\pi}(E)\to\Real{N(P)}\to P
\end{equation*}
est égale à la filtration $\varphi_{\pi}$ de la section \ref{ConstructionFibre}. On considère par ailleurs la seconde filtration $\widetilde{\varphi}_{\pi}\colon \bar{c}_{\pi}(E)\to \Real{N(P)}$ définie comme la composition
\begin{equation*}
\bar{c}_{\pi}(E)\xrightarrow{\pi}[0,1]\xrightarrow{\widetilde{\varphi_{N(P)}}} [0,1]\simeq \Real{N(P)},
\end{equation*}
où $\widetilde{\varphi_{N(P)}}$ est définie comme
\begin{equation*}
\widetilde{\varphi_{N(P)}}(t)=\left\{\begin{array}{cl}
0 &\text{ si $0\leq t<1/3$}\\
3t-1 &\text{ si $1/3\leq t\leq 2/3$}\\
1 &\text{ si $2/3<t$}
\end{array}\right.
\end{equation*}
Soit $f\colon E\to E'$ une application continue, on considère la somme amalgamée
\begin{equation*}
\begin{tikzcd}
E
\arrow{r}{f}
\arrow{d}{i_1}
&E'
\arrow{d}
\\
\bar{c}_{\pi}(E)
\arrow{r}
&X(\pi,f)
\end{tikzcd}
\end{equation*}

Alors, $\varphi_{\pi}$ et $\widetilde{\varphi}$ induisent deux filtrations $X(\pi,f)\to \Real{N(P)}$ qu'on notera respectivement $\varphi_X$ et $\widetilde{\varphi}_X$. On peut maintenant énoncer la propriété suivante.

\begin{prop}\label{PropositionRemplacementCofibrantFibre}
Soit $(X(\pi,f),\varphi_X)$ un espace fortement filtré obtenu par la construction précédente. Alors son remplacement cofibrant obtenu par application de la proposition \ref{PropositionRemplacementCofibrantEfficace} est homéomorphe au sens filtré à $(X(\pi,f),\widetilde{\varphi}_X)$. De plus, si $E, B$ et $F$ sont des variétés, et $E\to E'$ est l'inclusion du bord dans une variété à bord, l'espace fortement filtré $(X(\pi,f),\varphi_X)$ est une pseudo variété. 
\end{prop}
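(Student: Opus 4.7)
The proof breaks naturally into two parts, which I would handle in sequence.

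For the second assertion, the statement is essentially a translation of Proposition \ref{GroupesHomotopiesFibre} from the filtered to the strongly filtered setting. The underlying topological space $X(\pi,f)$ is the same as in Section \ref{ConstructionFibre}, and by construction the composition $\varphi_P \circ \varphi_X : X(\pi,f) \to \Real{N(P)} \to P$ recovers the filtration $\varphi_\pi$ of Section \ref{ConstructionFibre}. Since the notion of being a pseudo-variety depends only on the stratification as an object over the poset $P$, the hypotheses of Proposition \ref{GroupesHomotopiesFibre} immediately give the result.

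For the first assertion, the plan is a direct computation of the pullback defining $X^{\text{cof}}$. In the case $P = \{p_0 < p_1\}$, I would first describe $\Real{\sd_P(N(P))}$ explicitly: it has four vertices $v_1 = (([p_0]),(p_0))$, $v_3 = (([p_0,p_1]),(p_0))$, $v_4 = (([p_0,p_1]),(p_1))$, $v_2 = (([p_1]),(p_1))$, and three non-degenerate $1$-simplices assembling into a path $v_1 - v_3 - v_4 - v_2$ homeomorphic to $[0,1]$. Then I compute the composition $h_{N(P)} \circ \Real{\varphi} : \Real{\sd_P(N(P))} \to \Real{N(P)} = [0,1]$: the segment $[v_1, v_3]$ maps linearly onto $[0,1/2]$, the segment $[v_3, v_4]$ is collapsed to $\{1/2\}$, and $[v_4, v_2]$ maps linearly onto $[1/2, 1]$. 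On the other hand, the filtration $\Real{\lv_P}_{N(P)}$ sends $[v_1, v_3]$ to $\{0\}$, $[v_4, v_2]$ to $\{1\}$, and $[v_3, v_4]$ linearly onto $[0,1]$.

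With these descriptions in hand, I would compute the pullback. Since $\varphi_X = \varphi_{N(P)} \circ \pi_{\text{height}}$ and $\varphi_{N(P)}^{-1}(1/2) = \{1/3\}$, the fiber $\varphi_X^{-1}(1/2)$ is a copy of $E$. The pullback then decomposes as
\begin{equation*}
X^{\text{cof}} \simeq \varphi_X^{-1}([0,1/2]) \cup_E \big([v_3,v_4] \times E\big) \cup_E \varphi_X^{-1}([1/2,1]),
\end{equation*}
which, via $\varphi_{N(P)}$, is homeomorphic to $\bar c_\pi(E) \cup_E (E \times [1/3,2/3]) \cup_E (E \times [2/3,1] \cup_E E')$, i.e.\ to $X(\pi,f)$ itself (the inserted cylinder is absorbed). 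Under this homeomorphism, the filtration $\varphi_{X^{\text{cof}}}$ sends the left piece to $\{0\}$, the middle piece linearly to $[0,1]$, and the right piece to $\{1\}$ — which is exactly the description of $\widetilde{\varphi}_X = \widetilde{\varphi_{N(P)}} \circ \pi_{\text{height}}$. The main subtlety here is checking that the homeomorphism I construct is compatible with the change of filtration from $\varphi_X$ to $\varphi_{X^{\text{cof}}}$, and in particular that the reparametrizations induced by $\varphi_{N(P)}$ and $\widetilde{\varphi_{N(P)}}$ glue into a global filtered homeomorphism; this requires nothing more than an explicit affine change of variables on each of the three pieces. The remaining routine verification that the pieces glue continuously along the copies of $E$ finishes the argument.
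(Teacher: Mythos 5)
Votre preuve est correcte et suit essentiellement la même démarche que celle du texte : identification explicite de $\Real{\sd_P(N(P))}$ avec $[0,1]$, description segment par segment de $h_{N(P)}\circ\Real{\varphi}$ et de la filtration induite par $\lv_P$, puis identification du produit fibré avec $X(\pi,f)$ muni de $\widetilde{\varphi}_X$ en exploitant le fait que la fibre au-dessus du point milieu écrasé est une seule copie de $E$. La seule différence est de présentation — le texte exhibe un carré cartésien via une auto-application explicite $\alpha$ de $X(\pi,f)$ définie sur des ouverts trivialisants, tandis que vous calculez le produit fibré globalement en trois morceaux recollés le long de copies de $E$ — mais il s'agit du même calcul, et votre citation de la proposition \ref{GroupesHomotopiesFibre} pour la seconde assertion est la bonne référence.
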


\begin{proof}
Par la proposition \ref{FibrePseudoVariete} l'espace (fortement) filtré $(X(\pi,f),\varphi_X)$ est une pseudo variété. Pour montrer que $(X(\pi,f),\widetilde{\varphi}_X)$ est homéomorphe à $(X(\pi,f)^{\cof},\varphi^{\cof}_X)$, il suffit d'exhiber un carré cartésien
\begin{equation}\label{CarreCartesienRemplacementCofibrantFibre}
\begin{tikzcd}
X(\pi,f)
\arrow{r}{\alpha}
\arrow{d}{\beta}
&X(\pi,f)
\arrow{d}{\varphi_X}
\\
\Real{\sd_P(N(P))}
\arrow{r}{h_{N(P)}\circ\varphi}
&\Real{N(P)}
\end{tikzcd}
\end{equation}
tel que la composition 
\begin{equation*}
X(\pi,f)\xrightarrow{\beta}\Real{\sd_P(N(P))}\xrightarrow{\Real{\varphi_{\sd_P(N(P))}}}\Real{N(P)}
\end{equation*}
est égale à $\widetilde{\varphi}_X$.
On définit un homéomorphisme $g\colon\Real{\sd_P(N(P))}\to [0,1]$ en posant $g([p_0],p_0)=0$, $g([p_0,p_1],p_0)=1/3$, $g([p_0,p_1],p_1)=2/3$ et $g([p_1],p_1)=1$ et en étendant la définition par linéarité. Ainsi le morphisme $\pr_{[0,1]}\colon E\times[0,1]\to [0,1]$ induit un morphisme 
\begin{equation*}
\beta\colon X(\pi,f)\to [0,1]\xrightarrow{g^{-1}}\Real{\sd_P(N(P))}.
\end{equation*}
 De plus, par construction, la filtration $\widetilde{\varphi}_X\colon X(\pi,f)\to \Real{N(P)}$ est égale à la composition $\Real{\varphi_{\sd_P(N(P))}}\circ \beta$.
Soit $U\subseteq B$ un ouvert trivialisant pour le fibré $c_{\pi}(E)\xrightarrow{c_{\pi}(\pi)}B$. Alors, on a 
\begin{equation*}
V=U\times c(L)\simeq c_{\pi}(\pi)^{-1}(U)\subseteq c_{\pi}(E)\subseteq X(\pi,f)
\end{equation*}
On définit $\alpha_{|V}$, la restriction de $\alpha$ à l'ouvert $V$ comme
\begin{align*}
\alpha_{|V}\colon U\times c(L) &\to U\times c(L)\\
(b,(y,t))&\mapsto \left\{\begin{array}{cl}
(b,(y,2t-1)) &\text{ si $2/3\leq t\leq 1$}\\
(b,(y,1/3)) &\text{ si $1/3\leq t\leq 2/3$}\\
(b,(y,t)) &\text{ si $0\leq t\leq 1/3$} 
\end{array}\right.
\end{align*}
Par construction de $c_{\pi}(E)$, les $\alpha_V$ se recollent et fournissent une application continue
\begin{equation*}
\alpha_{|\bar{c}_{\pi}(E)}\colon \bar{c}_{\pi}(E)\to\bar{c}_{\pi}(E)
\end{equation*}
De plus, par construction, la restriction de $\alpha$ à $\partial(\bar{c}_{\pi}(E))=E$ est égale à l'identité de $E$, ainsi, on peut prolonger $\alpha$ en un morphisme
\begin{equation*}
\alpha\colon X(\pi,f)\to X(\pi,f)
\end{equation*}
en imposant $\alpha_{|E'}=\Id_{E'}$.
De plus, avec les identifications précédentes, on peut expliciter le morphisme $h_{N(P)}\circ\varphi$. On a 
\begin{align*}
h_{N(P)}\circ\varphi\colon [0,1]&\to [0,1]\\
t&\mapsto \left\{\begin{array}{cl}
\frac{3t}{2} &\text{ si $0\leq t\leq 1/3$}\\
1/2 &\text{ si $1/3\leq t\leq 2/3$}\\
3t-2  &\text{ si $2/3\leq t\leq 1$}
\end{array}\right.
\end{align*}
Finalement, avec les morphismes $\alpha$ et $\beta$ définis précédemment, le carré \ref{CarreCartesienRemplacementCofibrantFibre} est commutatif. Montrons qu'il est cartésien. On remarque d'abord que la restriction de $\alpha$ à $\alpha^{-1}(\varphi_{X}^{-1}([0,1/2[\cup]1/2,1]))$ est un homéomorphisme sur son image. De même, la restriction de $h_{N(P)}\circ\varphi$ à $(h_{N(P)}\circ\varphi)^{-1}([0,1/2[,]1/2,1])$ est un homéomorphisme sur son image. D'autre part, on a
\begin{equation*}
\alpha^{-1}(\varphi_X^{-1}(\{1/2\})\simeq E\times [1/3,2/3]=(h_{N(P)}\circ\varphi)^{-1}(\{1/2\})\times \varphi_X^{-1}(\{1/2\})
\end{equation*}
On en déduit que le carré est cartésien.
\end{proof}

De façon à exploiter ce résultat, on définit la construction suivante. Soit $g\colon (X(\pi_Y,f_Y),\widetilde{\varphi}_Y)\to (X(\pi_Z,f_Z),\widetilde{\varphi}_Z)$ une application filtrée entre deux espaces issus de la construction précédente. Soit $U\subseteq B_{Y}$ un ouvert trivialisant pour le fibré $\pi_{Y}$. On note $V=\pi_Y^{-1}(U)\simeq U\times c(L_Y)$, quitte à restreindre $U$, on peut supposer que $g(V)$ est inclus dans un ouvert trivialisant pour $\pi_Z$ de la forme $W\times c(L_Z)$. On définit alors l'application filtrée $c(g)$ en restriction à $V$ par
\begin{align*}
c(g)_{|V}\colon U\times c(L_Y)&\to W\times c(L_Z)\\
(b,(l,t))&\mapsto \left\{\begin{array}{cl}
g(b,(l,t)) &\text{ si $1/3\leq t\leq 1$}\\
(\pr_{B_Z}(g(b,(l,1/3))),(\pr_{L_Z}(g(b,(l,1/3))),t) &\text{si $0\leq t\leq 1/3$}
\end{array}\right.
\end{align*}
par construction, les $c(g)_{|V}$ se recollent en une application $c(g)\colon c_{\pi_Y}(E_Y)\to c_{\pi_Z}(E_Z)$ que l'on peut étendre en une application $c(g)\colon X(\pi_Y,f_Y)\to X(\pi_Z,f_Z)$ en posant $c(g)_{|E'}=g_{|E'}$. On remarque que par construction, $c(g)$ est filtrée pour les filtrations $\varphi_Y$ et $\varphi_Z$. (Et donc aussi pour les filtrations $\widetilde{\varphi}_Y$ et $\widetilde{\varphi}_Z$.)

\begin{lemme}\label{LemmeRedresserApplicationsFibre}
Soit $g\colon (X(\pi_Y,f_Y),\widetilde{\varphi}_Y)\to (X(\pi_Z,f_Z),\widetilde{\varphi}_Z)$ une application  filtrée entre deux espaces issus de la construction précédente. L'application filtrée 
\begin{equation*}
c(g)\colon (X(\pi_Y,f_Y),\widetilde{\varphi}_Y)\to (X(\pi_Z,f_Z),\widetilde{\varphi}_Z)
\end{equation*}
est homotope par une homotopie filtrée $H$ à $g$. De plus, si $g$ était aussi filtrée pour $\varphi_Y$ et $\varphi_Z$, on peut supposer que $H$ est une homotopie filtrée pour $\varphi_Y$ et $\varphi_Z$.
\end{lemme}

\begin{proof}
Soit $U\subseteq B_{Y}$ un ouvert trivialisant pour le fibré $\pi_{Y}$. On note $V=\pi_Y^{-1}(U)\simeq U\times c(L_Y)$. Quitte à restreindre $U$, on peut supposer que $g(V)$ est inclus dans un ouvert trivialisant pour $\pi_Z$ de la forme $W\times c(L_Z)$. Par commodité, pour $(b,(l,t))\in U\times c(L_Y)$, on écrit 
\begin{equation*}
g_{|V}(b,(l,t))=(g_{B_Z}(b,(l,t)),g_{L_Z}(b,(l,t)),g_{[0,1]}(b,(l,t)))\in W\times c(L_Z).
\end{equation*}
  On définit alors l'homotopie $H$ en restriction à $V$ comme
\begin{align*}
U\times c(L_Y)\times [0,1]&\xrightarrow{H_{|V} } W\times c(L_Z)\\
(b,(l,t),s)&\mapsto \left\{\begin{array}{cl}
g(b,(l,t)) &\text{ si $1/3\leq t\leq 1$}\\
(g_{B_Z}(b,(l,1/3)),g_{L_Z}(b,(l,1/3)),t) &\text{si $\frac{1-s}{3}\leq t\leq 1/3$}\\
(g_{B_Z}(b,(l,s/3+(1-s)t)),g_{L_Z}(b,l,\frac{t}{1-s}),(1-s)g_{[0,1]}(b,l,\frac{t}{1-s})) &\text{ si $0\leq t<\frac{1-s}{3}$}
\end{array}\right.
\end{align*}
Pour montrer que $H_{|V}$ est continue, il suffit de vérifier composante par composante, on a alors le résultat immédiatement.
Les $H_{|V}$ se recollent, et on peut étendre $H$ à $E'\times[0,1]$ en prenant l'homotopie constante égale à $g_{|E'}$. Par construction, $H$ est filtrée pour $\widetilde{\varphi}_Y$ et $\widetilde{\varphi}_Z$, et on a bien $H_{|s=0}=c(g)$ et $H_{|s=1}=g$. Par ailleurs, si $g$ est filtrée pour les filtrations $\varphi_{Y}$ et $\varphi_Z$, on a pour tout$(b,(l,t))\in U\times c(L_Y)$, $g_{[0,1]}(b,(l,t))=t$. En particulier, dès que $0\leq t<\frac{1-s}{3}$ on a
\begin{equation*}
(1-s)g_{[0,1]}(b,(l,\frac{t}{1-s}))=(1-s)\frac{t}{1-s}=t
\end{equation*} 
Dans ce cas, $H$ est filtrée pour $\varphi_Y$ et $\varphi_Z$.
\end{proof}

Pour deux espaces fortement filtrés $\fil{Y}$ et $\fil{Z}$, on note $[\fil{Y},\fil{Z}]$ l'ensemble des classes d'homotopies filtrées d'applications filtrées de $\fil{Y}$ vers $\fil{Z}$.

\begin{prop}\label{PropositionClasseHomotopieNaiveFibre}
Soient $\fil{Y}=(X(\pi_Y,f_Y),\varphi_Y)$ et $\fil{Z}=(X(\pi_Z,f_Z),\varphi_Z)$ deux espaces issus de la construction précédente tels qu'il existe $\fil{A}$ et $\fil{B}$ deux ensembles simpliciaux filtrés vérifiant $\fil{Y}\simeq\RealNP{\fil{A}}$ et $\fil{Z}\simeq\RealNP{\fil{B}}$. Alors le morphisme naturel
\begin{equation*}
[\fil{Y},\fil{Z}]\to \Hom_{\Ho\Top_{N(P)}}(\fil{Y},\fil{Z})
\end{equation*}
est une bijection. 
\end{prop}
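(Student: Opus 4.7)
The plan is to establish the bijection by identifying both sides with a common middle ground: filtered homotopy classes of maps between cofibrant replacements, and then using Lemma \ref{LemmeRedresserApplicationsFibre} to pass between $\varphi$-filtered and $\widetilde{\varphi}$-filtered representatives.

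First I would observe that every object of $\Top_{N(P)}$ is fibrant by construction of the model structure (Theorem \ref{CategorieModeleTopNP}). Propositions \ref{PropositionRemplacementCofibrantEfficace} and \ref{PropositionRemplacementCofibrantFibre} then exhibit $\widetilde{\fil{Y}} := (X(\pi_Y,f_Y),\widetilde{\varphi}_Y)$ as a cofibrant replacement of $\fil{Y}$, together with a weak equivalence $\alpha_Y\colon\widetilde{\fil{Y}}\to\fil{Y}$; similarly for $\fil{Z}$. A crucial elementary observation is that a $\varphi$-filtered map $f\colon \fil{Y}\to\fil{Z}$ is automatically $\widetilde{\varphi}$-filtered when viewed as a map $\widetilde{\fil{Y}}\to\widetilde{\fil{Z}}$, since both $\varphi$ and $\widetilde{\varphi}$ depend only on the cone-parameter coordinate $\pi_{[0,1]}$, which is preserved by any filtered map.

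Next, standard model-category theory, together with the fact that $\Delta^1\otimes\widetilde{\fil{Y}}$ provides a good cylinder object for the cofibrant object $\widetilde{\fil{Y}}$, identifies
\begin{equation*}
\Hom_{\Ho(\Top_{N(P)})}(\fil{Y},\fil{Z}) \simeq [\widetilde{\fil{Y}},\fil{Z}]/{\sim_P},
\end{equation*}
where ${\sim_P}$ denotes naive filtered homotopy. Post-composition with the weak equivalence $\alpha_Z$ of fibrant objects then produces a further bijection $[\widetilde{\fil{Y}},\widetilde{\fil{Z}}]/{\sim_P}\simeq[\widetilde{\fil{Y}},\fil{Z}]/{\sim_P}$, and under the resulting identification the natural map of the proposition becomes simply $[f]\mapsto[f]$, with $f$ viewed as a $\widetilde{\varphi}$-filtered map on the right.

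For surjectivity, given $[g]\in[\widetilde{\fil{Y}},\widetilde{\fil{Z}}]/{\sim_P}$, Lemma \ref{LemmeRedresserApplicationsFibre} provides a $\widetilde{\varphi}$-filtered homotopy between $g$ and $c(g)$, where $c(g)$ is moreover $\varphi$-filtered; hence $c(g)\colon\fil{Y}\to\fil{Z}$ defines a class in $[\fil{Y},\fil{Z}]$ whose image is $[g]$. For injectivity, suppose $f_1,f_2\in[\fil{Y},\fil{Z}]$ share the same image, so there is a $\widetilde{\varphi}$-filtered homotopy $H\colon\Delta^1\otimes\widetilde{\fil{Y}}\to\widetilde{\fil{Z}}$. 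The source $\Delta^1\otimes X(\pi_Y,f_Y)$ is itself of the form $X(\Id_{[0,1]}\times\pi_Y,\Id_{[0,1]}\times f_Y)$ with the prescribed filtrations, so Lemma \ref{LemmeRedresserApplicationsFibre} applies to $H$ and yields a simultaneously $\varphi$- and $\widetilde{\varphi}$-filtered $c(H)$. Direct inspection of the explicit formula defining $c(-)$ shows that $c$ commutes with restriction to the boundary of the cylinder, so that $c(H)|_{\{s=0\}}=c(f_1)$ and $c(H)|_{\{s=1\}}=c(f_2)$. Since $f_1,f_2$ are already $\varphi$-filtered, the second assertion of Lemma \ref{LemmeRedresserApplicationsFibre} provides $\varphi$-filtered homotopies $f_i\sim_P c(f_i)$. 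Concatenating $f_1\sim_P c(f_1)\sim_P c(f_2)\sim_P f_2$ through $c(H)$ gives the desired $\varphi$-filtered homotopy between $f_1$ and $f_2$.

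The main obstacle will be the identification in Step 2, since $\Top_{N(P)}$ is not explicitly shown to be a simplicial model category in the preceding text; one must either upgrade Proposition \ref{PropositionTopPCategorieSimpliciale} to this setting or verify directly that $\Delta^1\otimes\widetilde{\fil{Y}}$ is a good cylinder. Once that identification is secured, the proof reduces to a clean application of Lemma \ref{LemmeRedresserApplicationsFibre} and the fact that $c$ is compatible with boundary restriction of the cylinder.
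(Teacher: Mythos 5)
Votre preuve suit essentiellement la même démarche que celle du texte : mêmes remplacements cofibrants $(Y,\widetilde{\varphi}_Y)$ et $(Z,\widetilde{\varphi}_Z)$ fournis par les propositions \ref{PropositionRemplacementCofibrantEfficace} et \ref{PropositionRemplacementCofibrantFibre}, même observation que le cylindre $\Delta^1\otimes\fil{Y}$ est encore de la forme $X(\pi,f)$, et même usage du lemme \ref{LemmeRedresserApplicationsFibre} pour la surjectivité et l'injectivité. Notez seulement que l'implication « $\varphi$-filtrée $\Rightarrow$ $\widetilde{\varphi}$-filtrée » se justifie mieux en remarquant que $\widetilde{\varphi_{N(P)}}$ se factorise par $\varphi_{N(P)}$ (la coordonnée conique elle-même n'est pas préservée sur la zone où $\varphi_{N(P)}$ est constante), et que le caractère de bon cylindre de $\Delta^1\otimes\widetilde{\fil{Y}}$ découle du lemme \ref{LemmeCofibrationOtimesCofibration}.
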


\begin{proof}
Par la proposition \ref{PropositionRemplacementCofibrantFibre}, il existe des équivalences faibles filtrés $(Y,\widetilde{\varphi}_Y)\to \fil{Y}$ et $(Z,\widetilde{\varphi}_Z)\to \fil{Z}$. Ainsi, on a une bijection
\begin{equation*}
\Hom_{\Ho\Top_{N(P)}}(\fil{Y},\fil{Z})\simeq \Hom_{\Ho\Top_{N(P)}}((Y,\widetilde{\varphi}_Y),(Z,\widetilde{\varphi}_Z))
\end{equation*}
De plus par les proposition \ref{PropositionRemplacementCofibrantEfficace} et \ref{PropositionRemplacementCofibrantFibre}, les espaces filtrés $(Y,\widetilde{\varphi}_Y)$ et $(Z,\widetilde{\varphi}_Z)$ sont cofibrants. Comme tout les objets de $\Top_{N(P)}$ sont fibrants, on a
\begin{equation*}
\Hom_{\Ho\Top_{N(P)}}((Y,\widetilde{\varphi}_Y),(Z,\widetilde{\varphi}_Z))=[(Y,\widetilde{\varphi}_Y),(Z,\widetilde{\varphi}_Z)]
\end{equation*}
Il reste à montrer qu'on a un isomorphisme
\begin{equation*}
[\fil{Y},\fil{Z}]\simeq [(Y,\widetilde{\varphi}_Y),(Z,\widetilde{\varphi}_Z)].
\end{equation*}
Si $g\colon \fil{Y}\to\fil{Z}$ est une application filtrée, alors $g$ est aussi une application filtrée pour les filtrations $\widetilde{\varphi}_Y$ et $\widetilde{\varphi}_Z$. De plus, comme le cylindre $Y\otimes \Delta^1$ peut être obtenu en appliquant la construction précédente à 
\begin{equation*}
F_Y\hookrightarrow E_Y\times\Delta^1\xrightarrow{\pi_Y\times\Delta^1}B_Y\times \Delta^1,
\end{equation*}
on en déduit qu'il en va de même pour toute homotopie $H\colon \Delta^1\otimes\fil{Y}\to\fil{Z}$. On a donc une application bien définie
\begin{align*}
\alpha\colon[\fil{Y},\fil{Z}]&\to [(Y,\widetilde{\varphi}_Y),(Z,\widetilde{\varphi}_Z)]\\
[g]&\mapsto [g]
\end{align*}
Par le lemme \ref{LemmeRedresserApplicationsFibre}, pour tout application $h\colon (Y,\widetilde{\varphi}_Y)\to (Z,\widetilde{\varphi}_Z)$, il existe une application $c(h)\colon \fil{Y}\to\fil{Z}$ telle que $c(h)$ et $h$ sont homotopes par une homotopie filtrée pour les filtrations $\widetilde{\varphi}_Y$ et $\widetilde{\varphi}_Z$. On en déduit que le morphisme $\alpha$ est surjectif. Soient maintenant $g_1,g_2\colon \fil{Y}\to\fil{Z}$ telles qu'il existe une homotopie filtrée $H\colon \Delta^1\otimes(Y,\widetilde{\varphi}_Y)\to (Z,\widetilde{\varphi}_Z)$ entre $g_1$ et $g_2$. Alors, d'après le lemme \ref{LemmeRedresserApplicationsFibre}, $c(H)$ fournit une homotopie entre $c(g_1)$ et $c(g_2)$, filtrée pour $\varphi_Y$ et $\varphi_Z$. Toujours par le lemme \ref{LemmeRedresserApplicationsFibre}, $c(g_1)$ et $c(g_2)$ sont respectivement homotopes à $g_1$ et $g_2$ par des homotopies filtrées pour $\varphi_Y$ et $\varphi_Z$. Finalement, $g_1$ et $g_2$ sont homotopes par une homotopie filtrée pour les filtrations $\varphi_Y$ et $\varphi_Z$. On en déduit que $\alpha$ est injective. 
\end{proof}

\begin{corollaire}\label{CorollaireWhiteheadFibre}
Soient $\fil{Y}=(X(\pi_Y,f_Y),\varphi_Y)$ et $\fil{Z}=(X(\pi_Z,f_Z),\varphi_Z)$ deux espaces fortement filtrés provenant de la construction précédente, et $g\colon \fil{Y}\to\fil{Z}$ une application filtrée entre eux. On suppose qu'il existe des ensembles simpliciaux filtrés $\fil{A}$ et $\fil{B}$ tels que $\fil{Y}\simeq \RealNP{\fil{A}}$ et $\fil{Z}\simeq \RealNP{\fil{B}}$.
Alors, $g$ est une équivalence d'homotopie (fortement) filtrée si et seulement si $g$ induit des isomorphismes sur tous les groupes d'homotopie filtrés.
\end{corollaire}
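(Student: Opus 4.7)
Le plan est le suivant. Le sens direct est immédiat : si $g$ est une équivalence d'homotopie filtrée, le corollaire \ref{EquivalenceHomotopieIsoTop} garantit que $g$ induit des isomorphismes sur tous les groupes d'homotopie filtrés.

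Pour la réciproque, je vais utiliser la proposition \ref{PropositionClasseHomotopieNaiveFibre} pour remonter une inverse homotopique depuis la catégorie homotopique $\Ho(\Top_{N(P)})$. Supposons que $g$ induit des isomorphismes sur tous les groupes d'homotopie filtrés. Par définition du foncteur $D\colon \Top_{N(P)}\to \DiagR_P$, ceci signifie que pour tout $\Delta^{\varphi}\in R(P)$, l'application induite
\begin{equation*}
D(g)_{\Delta^{\varphi}}\colon \Map(\RealNP{\Delta^{\varphi}},\fil{Y})\to \Map(\RealNP{\Delta^{\varphi}},\fil{Z})
\end{equation*}
est une équivalence faible pour la structure de Kan-Quillen sur $\sS$. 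Autrement dit, $D(g)$ est une équivalence faible de $\DiagR_P$, et donc, par définition de la structure de modèle sur $\Top_{N(P)}$ transportée à travers $D$ (Théorème \ref{CategorieModeleTopNP}), $g$ est une équivalence faible de $\Top_{N(P)}$.

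Tous les objets de $\Top_{N(P)}$ étant fibrants, et les hypothèses sur $\fil{Y}$ et $\fil{Z}$ garantissant l'existence d'équivalences faibles cofibrantes par la proposition \ref{PropositionRemplacementCofibrantFibre}, l'image de $g$ dans $\Ho(\Top_{N(P)})$ est un isomorphisme. En particulier, il existe un morphisme $[h]\in \Hom_{\Ho\Top_{N(P)}}(\fil{Z},\fil{Y})$ tel que $[g]\circ[h]=[\Id_{\fil{Z}}]$ et $[h]\circ[g]=[\Id_{\fil{Y}}]$.

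Il suffit maintenant d'appliquer la proposition \ref{PropositionClasseHomotopieNaiveFibre} aux paires $(\fil{Z},\fil{Y})$, $(\fil{Y},\fil{Y})$ et $(\fil{Z},\fil{Z})$. La bijection entre $[\fil{Z},\fil{Y}]$ et $\Hom_{\Ho\Top_{N(P)}}(\fil{Z},\fil{Y})$ fournit un représentant $h\colon \fil{Z}\to \fil{Y}$ de la classe $[h]$. De même, les bijections pour les paires $(\fil{Y},\fil{Y})$ et $(\fil{Z},\fil{Z})$ étant compatibles à la composition, les égalités $[g\circ h]=[\Id_{\fil{Z}}]$ et $[h\circ g]=[\Id_{\fil{Y}}]$ dans $\Ho(\Top_{N(P)})$ se traduisent en des homotopies filtrées $g\circ h\sim_P \Id_{\fil{Z}}$ et $h\circ g\sim_P \Id_{\fil{Y}}$. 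L'étape clé, et la raison pour laquelle cet énoncé n'est pas une conséquence immédiate des théorèmes de Whitehead du chapitre \ref{ChapitreGroupesHomotopiesFiltresEspaces}, est cette identification entre les classes d'homotopie filtrée et les morphismes dans la catégorie homotopique $\Ho(\Top_{N(P)})$ ; ce pont est précisément fourni par la proposition \ref{PropositionClasseHomotopieNaiveFibre}, laquelle repose à son tour sur le calcul explicite de remplacements cofibrants via $\widetilde{\varphi}$.
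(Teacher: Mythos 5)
Votre preuve est correcte et suit essentiellement la même démarche que celle du texte : on observe que $g$ est une équivalence faible de $\Top_{N(P)}$, donc un isomorphisme dans $\Ho(\Top_{N(P)})$, puis on utilise la proposition \ref{PropositionClasseHomotopieNaiveFibre} pour relever l'inverse en une application filtrée et transformer les égalités dans la catégorie homotopique en homotopies filtrées. Votre rédaction explicite simplement quelques points que le texte laisse implicites (le passage par $D(g)$ et la compatibilité des bijections à la composition), ce qui ne change rien au fond de l'argument.
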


\begin{proof}
Le sens direct provient de la définition des groupes d'homotopie filtrés. Pour la réciproque, soit $g\colon \fil{Y}\to\fil{Z}$ induisant des isomorphismes sur tous les groupes d'homotopie filtrés. Alors, $g$ est un isomorphisme de la catégorie $\Ho\Top_{N(P)}$. Il existe donc $h\in \Hom_{\Ho\Top_{N(P)}}(\fil{Y},\fil{Z})$ un inverse de $g$ dans $\Ho\Top_{N(P)}$. Mais, par la proposition \ref{PropositionClasseHomotopieNaiveFibre}, on a
\begin{equation*}
[\fil{Z},\fil{Y}]\simeq \Hom_{\Ho\Top_{N(P)}}(\fil{Z},\fil{Y}).
\end{equation*}
Il existe donc une application filtrée $h'\colon\fil{Z}\to\fil{Y}$ égale à $h$ dans $\Ho\Top_{N(P)}$. Mais alors, $h'\circ g$ est égal à $\Id_Y$ dans $\Ho\Top_{N(P)}$, et en appliquant de nouveau la proposition \ref{PropositionClasseHomotopieNaiveFibre}, on a que $h'\circ g$ est homotope à $\Id_Y$ par une homotopie filtrée. De même, $g\circ h'$ est homotope au sens filtré à $\Id_Z$, et on en déduit que $h'$ est une équivalence d'homotopie filtrée inverse à $g$.
\end{proof}

\begin{remarque}\label{RemarqueWhiteheadFibre}
Le corollaire \ref{CorollaireWhiteheadFibre} est une reformulation du théorème \ref{PremierTheoremeWhitehead} dans le cas particulier où les espaces coniquement stratifiés considérés sont issus de la construction de ce chapitre. Cependant, on constate que la preuve du corollaire \ref{CorollaireWhiteheadFibre} utilise des techniques différentes. En particulier, on n'a pas besoin ici d'utiliser le théorème \cite[Theorem A.6.4]{HigherAlgebra} impliquant que pour tout espace coniquement stratifié $\fil{X}$, $\Sing_P\fil{X}$ est un objet fibrant de $\sS_P$. Finalement, la preuve du corollaire \ref{CorollaireWhiteheadFibre} permet aussi de comprendre le théorème de Whitehead stratifié comme un résultat sur les objets cofibrants de la catégorie modèle $\Top_{N(P)}$. 
On conjecture que la proposition \ref{PropositionClasseHomotopieNaiveFibre} et son corollaire \ref{CorollaireWhiteheadFibre} sont vrais dans un cas beaucoup plus général. En effet, la construction de la section \ref{ConstructionFibre} est généralisable à un fibré de fibre (fortement) filtré. Les constructions de cette sections peuvent ensuite être généralisées à ces objets là en les appliquant de façon inductive. D'autre part les résultats de D. Stone sur les polyèdres stratifiés \cite{Stone} suggèrent que tout espace coniquement stratifié PL (a fortiori, toute pseudo-variété PL) peut être obtenu par applications successives de la construction vue plus tôt. Si ces affirmations sont correctes, on obtient ainsi le résultat suivant.
\end{remarque}

Soit $\mathcal{C}$ la sous catégorie pleine de $\Top_{N(P)}$ formée des espaces PL coniquement stratifiés. Notons $\mathcal{C}/{\sim}$ la catégorie ayant les mêmes objets que $\mathcal{C}$ et vérifiant
\begin{equation*}
\Hom_{\mathcal{C}/{\sim}}(\fil{X},\fil{Y})=[\fil{X},\fil{Y}]
\end{equation*}

\begin{conjecture}\label{ConjectureSousCategorieFibre}
Le foncteur induit par l'inclusion $C\subset\Top_{N(P)}$
\begin{equation*}
\mathcal{C}/{\sim}\to \Ho\Top_{N(P)}
\end{equation*}
est pleinement fidèle. 
\end{conjecture}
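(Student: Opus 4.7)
The plan is to reduce the conjecture to the same pattern of arguments used in Proposition \ref{PropositionClasseHomotopieNaiveFibre}, by first establishing the generalized cone-bundle description of arbitrary PL conically stratified spaces and then transporting the two key technical lemmas (cofibrant replacement by subdivision, and the straightening Lemma \ref{LemmeRedresserApplicationsFibre}) through that description. The essential input is the observation of Remark \ref{RemarqueWhiteheadFibre}: following Stone \cite{Stone}, any PL conically stratified space $\fil{X}$ over $P$ admits a presentation by iterated application of the generalized filtered fibre construction from Section \ref{SectionFibresFiltres}. More precisely, stratifying $P$ by dimension and proceeding by induction on the height of $P$, one builds $\fil{X}$ as $X(\pi,f)$ where the fibre $\fil{F}$ is itself a PL conically stratified space over a smaller poset, to which the hypothesis applies.

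First I would upgrade the construction of Section \ref{SectionRemplacementsCofibrantsFibre} to the filtered-fibre case, producing for each such $\fil{X}$ two filtrations $\varphi_X,\widetilde{\varphi}_X\colon X\to\Real{N(P)}$ that agree up to reparametrization of the collar direction, with $\widetilde{\varphi}_X$ factoring canonically through $\Real{\sd_P(N(P))}$. The iterated nature of the construction means $\widetilde{\varphi}_X$ is built by iterating the single-step straightening of Proposition \ref{PropositionRemplacementCofibrantFibre}; the pullback square characterizing the cofibrant replacement of Proposition \ref{PropositionRemplacementCofibrantEfficace} then identifies $(X,\widetilde{\varphi}_X)$ with $\RealNP{\sd_P\fil{A}}$ for an underlying filtered simplicial set $\fil{A}$, which by Propositions \ref{PropositionSubdivisionCofibrante} and \ref{LastVertexEquivalenceFaible} gives a cofibrant replacement for $(X,\varphi_X)$.

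Second I would generalize Lemma \ref{LemmeRedresserApplicationsFibre} to this iterated setting. Given a filtered map $g\colon (X,\widetilde{\varphi}_X)\to(Y,\widetilde{\varphi}_Y)$ between two such spaces, one defines a ``straightened'' version $c(g)\colon (X,\varphi_X)\to(Y,\varphi_Y)$ stratum by stratum, starting from the most singular strata and working outward through the layers of the cone-bundle presentation. At each step one uses local triviality to replace $g$ near a cone fibre by a map that is cylindrical with respect to the collar coordinate, and one keeps track of a homotopy to the previous stage. The composition of these homotopies provides a filtered homotopy from $c(g)$ to $g$ which, when $g$ was already filtered for $\varphi$, can be taken filtered for $\varphi$ as well.

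Once these two ingredients are in place, the proof of Proposition \ref{PropositionClasseHomotopieNaiveFibre} applies verbatim: for $\fil{X},\fil{Y}\in\mathcal{C}$, the map
\begin{equation*}
\alpha\colon [\fil{X},\fil{Y}]\to [(X,\widetilde{\varphi}_X),(Y,\widetilde{\varphi}_Y)]\simeq \Hom_{\Ho\Top_{N(P)}}(\fil{X},\fil{Y})
\end{equation*}
is surjective by the straightening $c(h)$ of any $h$, and injective because a homotopy between $c(g_1)$ and $c(g_2)$ straightens to a homotopy filtered for the original $\varphi$. I expect the main obstacle to be the inductive straightening step: controlling the compatibility of local trivializations across different cone strata, so that the piecewise definitions of $c(g)$ and the accompanying homotopies glue into globally continuous filtered maps. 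This is a delicate PL bookkeeping problem, essentially a relative version of the uniqueness of mapping cylinder neighbourhoods, and it is where the PL hypothesis on $\mathcal{C}$ (as opposed to merely conical) will be genuinely used to secure the requisite local-to-global gluing via Stone's structure theorem.
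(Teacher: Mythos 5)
The statement you are addressing is recorded in the paper as a conjecture, and the paper contains no proof of it: Remark \ref{RemarqueWhiteheadFibre} only sketches a strategy — generalize the cone-bundle construction to filtered fibres, iterate it, invoke Stone's structure theory to present every PL conically stratified space in this form, and then rerun the argument of Proposition \ref{PropositionClasseHomotopieNaiveFibre}. Your proposal is a faithful expansion of exactly that sketch, not a proof of the statement: the two steps you describe as "to be upgraded" and "to be generalized" are precisely the missing ingredients that keep the statement at the level of a conjecture.

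Concretely, two things are asserted but not established. First, that Stone's results yield, for an arbitrary PL conically stratified space over $P$, a presentation as an iterated $X(\pi,f)$ with globally compatible collars; this is a nontrivial translation of Stone's abstract stratified polyhedra into the specific pushout form of Section \ref{SectionRemplacementsCofibrantsFibre}, and it is needed in a form strong enough that the two filtrations $\varphi_X$ and $\widetilde{\varphi}_X$ can be defined coherently across strata of all depths simultaneously. Second, the straightening of maps and of homotopies (the analogue of Lemma \ref{LemmeRedresserApplicationsFibre}) in the iterated setting: your own text flags this as "the main obstacle" and calls it "a delicate PL bookkeeping problem" without resolving it. Since both the surjectivity and the injectivity of $\alpha$ in the final step depend on this straightening (for maps and for homotopies respectively), the argument does not close. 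To turn your plan into a proof you would need to actually carry out the induction on the depth of the stratification, with explicit control of how the collar reparametrizations attached to neighbouring strata of different depths interact on the overlaps of their distinguished neighbourhoods.
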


La validité de cette conjecture impliquerait immédiatement le théorème de Whitehead stratifié.
\begin{corollaire}[Théorème \ref{PremierTheoremeWhitehead}]
Soient $\fil{X},\fil{Y}$ deux espaces filtrés et $f\colon \fil{X}\to\fil{Y}$ une application filtrée. On suppose que 
\begin{itemize}
\item il existe $\fil{A},\fil{B}$ deux ensembles simpliciaux filtrés tels que $\fil{X}\simeq\RealP{\fil{A}}$ et $\fil{Y}\simeq\RealP{\fil{B}}$ (on ne suppose pas que $f$ provient d'une application simpliciale),
\item les ensembles simpliciaux filtrés $\Sing_P(X)$ et $\Sing_P(Y)$ sont fibrants.
\end{itemize}
Alors, $f$ est une équivalence d'homotopie filtrée si et seulement si 
\begin{equation*}
s\pi_0(f)\colon s\pi_0\fil{X}\to s\pi_0\fil{Y}
\end{equation*}
est un isomorphisme, et, pour tout pointage $\phi\colon\RealP{V}\to\fil{X}$ et pour tout $n\geq 1$, les morphismes
\begin{equation*}
s\pi_n(f)\colon s\pi_n(\fil{X},\phi)\to s\pi_n(\fil{Y},f\circ \phi)
\end{equation*}
sont des isomorphismes.
\end{corollaire}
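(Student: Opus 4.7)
The plan is to deduce this Whitehead-type statement as a consequence of the conjectured full faithfulness \ref{ConjectureSousCategorieFibre}, together with the transfer tools already built in the chapter.

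For the direct implication, I would appeal to the general invariance results: if $f$ is a filtered homotopy equivalence, then by Corollary \ref{EquivalenceHomotopieIsoTop} (which itself combines Propositions \ref{PointagesHomotopesIsomorphismes} and \ref{ApplicationsHomotopesMorphismesEgaux}), $f$ automatically induces isomorphisms on $s\pi_0$ and on every $s\pi_n$ for every pointing $\phi$. This part needs no assumption on $\fil{X}$, $\fil{Y}$ beyond being filtered spaces.

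For the converse, suppose $s\pi_n(f)$ is an isomorphism for every $n\geq 0$ and every pointing. By the very definition of the model structure on $\Top_P$ (Théorème \ref{CategorieModeleTopP}), this is exactly saying that $f$ is a weak equivalence in $\Top_P$. I would then transfer to $\Top_{N(P)}$ via the Quillen equivalence $(\varphi_P\circ -, -\times_P \Real{N(P)})$ of Théorème \ref{EquivalenceQuillenTopPTopNP}: setting $\fil{X}' = \fil{X}\times_P \Real{N(P)}$ and $\fil{Y}' = \fil{Y}\times_P \Real{N(P)}$, the morphism $f' = f\times_P \Real{N(P)}$ is a weak equivalence in $\Top_{N(P)}$, and in particular an isomorphism in $\Ho(\Top_{N(P)})$.

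The next step is to argue that the strongly filtered spaces $\fil{X}'$ and $\fil{Y}'$ lie in the subcategory $\Pvar$ of PL conically stratified spaces on which the conjecture applies. This uses both assumptions on $\fil{X}$ and $\fil{Y}$: the existence of the PL presentations $\fil{X}\simeq \RealP{\fil{A}}$, $\fil{Y}\simeq \RealP{\fil{B}}$ gives PL structure (the product with $\Real{N(P)}$ preserves this via the canonical map $\RealNP{\fil{A}}\to \fil{X}'$ and similarly for $\fil{B}$), and the fibrancy of $\Sing_P(X)$, $\Sing_P(Y)$ ensures, via the characterisation underlying the proof of Proposition \ref{ConiquementStratifieImpliqueFibrant}, that the associated stratifications are conical. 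Admitting Conjecture \ref{ConjectureSousCategorieFibre}, the functor $\Pvar/{\sim}\to \Ho(\Top_{N(P)})$ is fully faithful, hence $f'$, being an isomorphism in $\Ho(\Top_{N(P)})$ between two objects of $\Pvar$, lifts to an isomorphism in $\Pvar/{\sim}$. Concretely this means $f'$ admits a filtered-homotopy inverse $g'\colon \fil{Y}'\to \fil{X}'$ in $\Top_{N(P)}$. Composing $g'$ with the canonical projection to $\fil{Y}$ (respectively $\fil{X}$) and using that $\varphi_P\circ -$ preserves homotopies and sends the filtered homotopy inverse to one in $\Top_P$, one recovers an inverse to $f$ up to filtered homotopy in $\Top_P$, concluding the converse.

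The main obstacle, beyond invoking the open conjecture itself, is the verification that the spaces produced by the product with $\Real{N(P)}$ still fall into the PL conical class $\Pvar$. The assumptions in the statement are phrased in $\Top_P$ (existence of a PL presentation up to filtered homotopy equivalence and fibrancy of $\Sing_P$), and one must check that these conditions are preserved under the right adjoint $-\times_P \Real{N(P)}$ without losing either the PL structure or the conical nature of the strata. The cleanest route is to use that the Quillen equivalence $(\varphi_P\circ -, -\times_P\Real{N(P)})$ preserves weak equivalences in both directions on suitable classes, so that one may reason entirely inside $\Pvar$ once a PL model is fixed, and then descend back via the argument of Remarque \ref{PremierTheoremeWhiteheadAHomotopiePres} to remove the "up to filtered homotopy equivalence" slack in the original hypotheses.
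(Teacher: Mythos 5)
Your direct implication is fine and is exactly the paper's (Corollaire \ref{EquivalenceHomotopieIsoTop}), and your overall strategy for the converse — lift the inverse of $f$ in the homotopy category to an honest filtered map using the conjectured full faithfulness, then use full faithfulness again to see that the two composites are filtered-homotopic to identities — is indeed the template the text has in mind (it is the proof of Corollaire \ref{CorollaireWhiteheadFibre} via Proposition \ref{PropositionClasseHomotopieNaiveFibre}). The gap is in your transfer step. You apply Conjecture \ref{ConjectureSousCategorieFibre} to the objects $\fil{X}'=\fil{X}\times_P\Real{N(P)}$ and $\fil{Y}'=\fil{Y}\times_P\Real{N(P)}$, but these pullbacks over the Alexandroff space $P$ are not PL conically stratified spaces: the unit of the adjunction only provides a \emph{weak equivalence} $\RealNP{\fil{A}}\to\fil{X}'$ (because $\pr_X$ is a trivial fibration with section), and a weak equivalence to an object of $\mathcal{C}$ does not put $\fil{X}'$ itself in $\mathcal{C}$. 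Full faithfulness of $\mathcal{C}/{\sim}\to\Ho(\Top_{N(P)})$ says nothing about $[\fil{X}',\fil{Y}']$, so the inverse of $f'$ cannot be lifted this way. The conjecture must be applied directly to $\RealNP{\fil{A}}$ and $\RealNP{\fil{B}}$, which do lie in the relevant subcategory. (Note also that your justification for conicality is reversed: Proposition \ref{ConiquementStratifieImpliqueFibrant} shows that conically stratified implies $\Sing_P$ fibrant, not the converse, so fibrancy alone does not place $\fil{X}$ in the class of conically stratified spaces.)

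The descent back to $\Top_P$ also needs more care than "compose with the canonical projection". Since $f$ is only filtered over $P$, it does not commute with the canonical sections $s_X=(\Id,\Real{\varphi_A})$ and $s_Y=(\Id,\Real{\varphi_B})$: one has $s_Y\circ f\neq f'\circ s_X$ whenever $\Real{\varphi_B}\circ f\neq\Real{\varphi_A}$, so the map $\pr_X\circ g'\circ s_Y$ is not on the nose an inverse of $f$ and must be compared to one through filtered homotopies, exactly as in the zigzag of Remarque \ref{PremierTheoremeWhiteheadAHomotopiePres}. Finally, be aware that the paper's actual, unconditional proof of Théorème \ref{PremierTheoremeWhitehead} proceeds by an entirely different route (the $\RealP{\Sing_P(-)}$ sandwich and the simplicial Whitehead theorem in $\sS_P$); the corollary you are proving is only the conditional re-derivation from the conjecture, and as written your version of that derivation does not go through.
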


\section{Une autre structure de modèle sur $\sS_P$}
\label{SectionSSTopP}
Dans cette section, on considère une deuxième structure de modèle sur la catégorie $\sS_P$. Pour éviter tout confusion on notera $\sSU_P$ la structure de modèle du théorème \ref{TheoDescriptionExpliciteSSetP}, et $\sSTop_P$ la nouvelle structure de modèle. 
On a vu précédement que l'adjonction $\RealNP{-},\Sing_{N(P)}$ n'était pas une adjonction de Quillen entre $\sSU_P$ et $\Top_{N(P)}$. De façon à exhiber une telle adjonction de Quillen, on construit une catégorie modèle intermédiaire, $\sSTop_P$, obtenue par transport de la structure sur $\Top_P$ le long de l'adjonction $\RealNP{-},\Sing_{N(P)}$. On comparera ensuite les catégories de modèles $\sSU_P$ et $\sSTop_P$ de façon à avoir une comparaison entre $\sSU_P$ et $\Top_{N(P)}$.
\subsection{Une structure de modèle transporté depuis $\Top_{N(P)}$}
\label{SectionConstructionCMFSSTopP}

L'objet de cette sous-section est de prouver le résultat suivant.
\begin{theo}\label{TheoremeCategorieModeleSSTop}
Il existe une structure de modèle sur $\sS_P$ telle que 
\begin{itemize}
\item un morphisme $f\colon \fil{A}\to\fil{B}$ est une cofibration si et seulement si 
\begin{equation*}
\RealNP{f}\colon \RealNP{\fil{A}}\to\RealNP{\fil{B}}
\end{equation*}
est une cofibration de $\Top_{N(P)}$,
\item un morphisme $f\colon \fil{A}\to\fil{B}$ est une équivalence faible si et seulement si 
\begin{equation*}
\RealNP{f}\colon \RealNP{\fil{A}}\to\RealNP{\fil{B}}
\end{equation*}
est une équivalence faible de $\Top_{N(P)}$.
\end{itemize}
\end{theo}

\begin{proof}
Comme la catégorie modèle $\Top_{N(P)}$ est engendrée de façon cofibrante, et que $\sS_P$ est localement présentable, par \cite[Corollary 3.3.4]{Hess}, il suffit de montrer que tout morphisme $f\colon \fil{X}\to\fil{Y}\in \sS_P$ ayant la propriété de relèvement à droite par rapport aux cofibrations est une équivalence faible. Soit $f$ un tel morphisme. On suppose dans un premier temps que $\fil{X}$ et $\fil{Y}$ sont cofibrants (c'est à dire que $\RealNP{\fil{X}}$ et $\RealNP{\fil{Y}}$ sont cofibrants dans $\Top_{N(P)}$).
On considère le cylindre de $f$, $\fil{C(f)}$, défini comme la somme amalgamée suivante
\begin{equation}\label{DiagrammeCylindreFiltre}
\begin{tikzcd}[column sep = huge]
\fil{X}\coprod\fil{X}\coprod\fil{Y}
\arrow[swap]{d}{i_0\coprod i_1\coprod \Id_Y}
\arrow{r}{\Id_X\coprod f\coprod\Id_Y}
&\fil{X}\coprod\fil{Y}
\arrow[swap]{d}{i_C}
\arrow{ddr}{f\coprod \Id_Y}
\\
\left(\Delta^1\otimes \fil{X}\right)\coprod\fil{Y}
\arrow{r}
\arrow[swap]{drr}{f\circ\pr_X\coprod\Id_Y}
&\fil{C(f)}
\arrow{dr}{q}
\\
&&\fil{Y}
\end{tikzcd}
\end{equation}
Comme dans le cas non filtré, le morphisme $q\colon \fil{C(f)}\to \fil{Y}$ est une équivalence d'homotopie filtrée, d'inverse $\fil{Y}\xrightarrow{i_Y}\fil{X}\coprod\fil{Y}\xrightarrow{i_C}C(f)$.
D'autre part, comme $\fil{X}$ est cofibrant par hypothèse, par le lemme \ref{LemmeCofibrationOtimesCofibration}, le morphisme 
\begin{equation*}
\left(\partial(\Delta^1)\otimes \fil{X}\right)\coprod\fil{Y} \simeq
\fil{X}\coprod\fil{X}\coprod\fil{Y}\xrightarrow{i_0\coprod i_1\coprod \Id_Y} \left(\Delta^1\otimes\fil{X}\right)\coprod\fil{Y}
\end{equation*}
est une cofibration. Comme $\RealNP{-}$ préserve les colimites et que la classe des cofibrations dans $\Top_{N(P)}$ est stable par somme amalgamée, on en déduit que $i_C$ est une cofibration. Finalement, comme $\fil{Y}$ est cofibrant, l'inclusion $i_X\colon\fil{X}\to\fil{X}\coprod\fil{Y}$ est une cofibration, et donc la composition $i$ 
\begin{equation*}
\fil{X}\xrightarrow{i_X}\fil{X}\coprod\fil{Y}\xrightarrow{i_C}\fil{C(f)}
\end{equation*}
est une cofibration. Finalement, on a le diagramme commutatif suivant
\begin{equation*}
\begin{tikzcd}
\fil{X}
\arrow{d}{i}
\arrow{r}{\Id_X}
&\fil{X}
\arrow{d}{f}
\\
\fil{C(f)}
\arrow{r}{q}
\arrow[dashrightarrow]{ur}{g}
&\fil{Y}
\end{tikzcd}
\end{equation*}
Comme $i$ est une cofibration et $f$ a la propriété de relèvement à droite par rapport aux cofibrations, on en déduit qu'il doit exister un morphisme $g\colon \fil{C(f)}\to\fil{X}$ faisant commuter le diagramme. Fixons $\phi\colon V\to \RealNP{\fil{X}}$ un pointage de $\RealNP{\fil{X}}$, $n\geq 0$ et $\Delta^{\varphi}\subseteq V$ un simplexe non dégénéré. On calcule les groupes d'homotopies filtrés :
\begin{equation*}
\begin{tikzcd}
s\pi_n(\RealNP{\fil{X}},\phi)(\Delta^{\varphi})
\arrow{d}{i_*}
\arrow{r}{\Id}
&s\pi_n(\RealNP{\fil{X}},\phi)(\Delta^{\varphi})
\arrow{d}{f_*}
\\
s\pi_n(\RealNP{\fil{C(f)}},i\circ \phi)(\Delta^{\varphi})
\arrow{r}{q_*}
\arrow{ur}{g_*}
&s\pi_n(\RealNP{\fil{Y}},f\circ \phi)(\Delta^{\varphi})
\end{tikzcd}
\end{equation*}
Comme $q$ est une équivalence d'homotopie filtrée, $\RealNP{q}$ aussi, et $q_*$ est donc un isomorphisme. Ainsi, $g_*$ admet un inverse à gauche et à droite, c'est donc aussi un isomorphisme. Mais alors on en déduit que $f_*$ est un isomorphisme. Finalement $\RealNP{f}$ induit des isomorphismes sur tous les groupes d'homotopie filtrés, c'est donc une équivalence faible de $\Top_{N(P)}$, d'où $f$ est une équivalence faible de $\sSTop_P$. 

Si $\fil{X}$ et $\fil{Y}$ ne sont pas cofibrants, on considère le morphisme 
\begin{equation*}
\sd_P(f)\colon \sd_P\fil{X}\to\sd_P\fil{Y}
\end{equation*}
En appliquant la construction \ref{DiagrammeCylindreFiltre} à $\sd_P(f)$ on obtient le diagramme commutatif suivant
\begin{equation*}
\begin{tikzcd}
\sd_P\fil{X}
\arrow{d}{i}
\arrow{r}{\Id}
&\sd_P\fil{X}
\arrow{r}{\lv_P}
&\fil{X}
\arrow{d}{f}
\\
\fil{C(\sd_P(f))}
\arrow[dashrightarrow, near start]{urr}{g}
\arrow[swap]{r}{q}
&\sd_P\fil{Y}
\arrow[leftarrow, crossing over, swap, near start]{u}{\sd_P(f)}
\arrow[swap]{r}{\lv_P}
&\fil{Y}
\end{tikzcd}
\end{equation*}
Or, par la proposition \ref{PropositionSubdivisionCofibrante}, $\sd_P\fil{X}$ et $\sd_P\fil{Y}$ sont cofibrants, le morphisme $i$ est donc une cofibration. Ainsi, il existe un morphisme $g$ faisant commuter le diagramme. Par ailleurs, par la proposition \ref{LastVertexEquivalenceFaible} les morphismes $\lv_P\colon\sd_P\fil{X}\to\fil{X}$ et $\lv_P\colon \sd_P\fil{Y}\to\fil{Y}$ sont des équivalences faibles de $\sSTop_P$. Ainsi, par les mêmes arguments que précédemment, on en déduit que $g$ est une équivalence faible de $\sSTop_P$ et donc que $f$ aussi.
\end{proof}

\begin{lemme}\label{LemmeCofibrationOtimesCofibration}
Soient $i\colon\fil{X}\to\fil{Y}$ une cofibration de $\Top_{N(P)}$ et $j\colon K\to L$ une cofibration de $\sS$. Le morphisme
\begin{equation*}
L\otimes\fil{X}\cup K\otimes\fil{Y}\to L\otimes\fil{Y}
\end{equation*}
est une cofibration qui est triviale dès que $i$ ou $j$ l'est.
\end{lemme}

\begin{proof}
On rappelle qu'un ensemble de cofibrations génératrices de $\Top_{N(P)}$ est donné par 
\begin{equation*}
I=\{\RealNP{\partial(\Delta^n)\otimes\Delta^\varphi\to\Delta^n\otimes\Delta^{\varphi}}\ |\ n\geq 0,\  \Delta^{\varphi}\in R(P)\}.
\end{equation*}
Comme pour tout $n\geq 0$, $\Delta^{\varphi}\in R(P)$, $\RealNP{\partial(\Delta^n)\otimes\Delta^{\varphi}}$ est compact, par l'argument du petit objet $i\colon \fil{X}\to\fil{Y}$ est le rétracte d' une application 
\begin{equation}\label{EquationPetitObjet}
\fil{Z_0}\to\colim_{n\in \N}\fil{Z_n}=\fil{Z_{\infty}}
\end{equation}
où, pour tout $k\geq 0$, $\fil{Z_k}\to \fil{Z_{k+1}}$ est obtenu comme la somme amalgamée
\begin{equation*}
\begin{tikzcd}
\coprod_{\alpha\in S_k}\RealNP{\partial(\Delta^n)\otimes\Delta^{\varphi}}
\arrow{r}{\coprod\alpha}
\arrow{d}
&\fil{Z_k}
\arrow{d}
\\
\coprod_{\alpha\in S_k}\RealNP{\Delta^n\otimes\Delta^{\varphi}}
\arrow{r}
&\fil{Z_{k+1}}
\end{tikzcd}
\end{equation*}
où $S_k$ est un certain ensemble.
Comme la classe des cofibrations est stable par rétractes, on peut supposer que $X=Z_0$, $Y=Z_{\infty}$ et $i$ est égale à la cofibration \ref{EquationPetitObjet}. On pose $\fil{V_0}=\fil{W_0}=L\otimes\fil{X}$, et $f_0=\Id\colon V_0\to W_0$. Puis, pour tout $k\geq 0$, on définit $\fil{V_{k+1}}$ et $\fil{W_{k+1}}$ comme les sommes amalgamées suivantes
\begin{equation*}
\begin{tikzcd}
K\otimes\left(\coprod_{\alpha\in S_k}\RealNP{\partial(\Delta^n)\otimes\Delta^{\varphi}}\right)
\arrow{r}{\Id_K\otimes\left(\coprod\alpha\right)}
\arrow{d}
&\fil{V_k}
\arrow{d}
\\
K\otimes\left(\coprod_{\alpha\in S_k}\RealNP{\Delta^n\otimes\Delta^{\varphi}}\right)
\arrow{r}
&\fil{V_{k+1}}
\end{tikzcd}
\end{equation*}
et
\begin{equation*}
\begin{tikzcd}
L\otimes\left(\coprod_{\alpha\in S_k}\RealNP{\partial(\Delta^n)\otimes\Delta^{\varphi}}\right)
\arrow{r}{\Id_L\otimes\left(\coprod\alpha\right)}
\arrow{d}
&\fil{W_k}
\arrow{d}
\\
L\otimes\left(\coprod_{\alpha\in S_k}\RealNP{\Delta^n\otimes\Delta^{\varphi}}\right)
\arrow{r}
&\fil{W_{k+1}}
\end{tikzcd}
\end{equation*}
et on définit $f_{k+1}\colon \fil{V_{k+1}}\to\fil{W_{k+1}}$ comme l'application induite par $f_{k}$ et l'inclusion $K\to L$. 
On note alors
\begin{equation*}
V_{\infty}=\colim_{k\in \N}V_k, W_{\infty}=\colim_{k\in \N}W_k \text{ et } f=\colim_{k\in \N}f_k\colon V_{\infty}\to W_{\infty}
\end{equation*}
Alors par construction, on a $V_{\infty}\simeq L\otimes\fil{X}\cup K\otimes\fil{Y}$ et $W_{\infty}\simeq L\otimes\fil{Y}$, et $f$ est l'inclusion. Par \cite[Lemme 1.1.10]{Cisinski} il suffit de montrer que pour tout $k\geq 0$, le morphisme
\begin{equation*}
\fil{V_{k+1}}\cup_{\fil{V_k}}\fil{W_k}\to\fil{W_{k+1}}
\end{equation*}
est une cofibration.
On considère le diagramme commutatif suivant (ou les filtrations et le foncteur $\RealNP{-}$ sont omis par soucis de lisibilité)
\begin{equation*}
\begin{tikzcd}
&\coprod \left(K\times\partial(\Delta^n)\right)\otimes\Delta^{\varphi}
\arrow{rr}
\arrow{dd} 
\arrow{dl}
&&V_k
\arrow{dd}
\arrow{dl}
\\
\coprod\left(L\times \partial(\Delta^n)\right)\otimes\Delta^{\varphi}
\arrow{rr}
\arrow[crossing over]{dd}
&&W_{k}
\arrow{dd}
\\
&\coprod\left(K\times\Delta^n\right)\otimes\Delta^{\varphi}
\arrow{rr}
\arrow{dl}
&&
V_{k+1}
\arrow{dl}
\\
\coprod\left(L\times\partial(\Delta^n)\cup K\times \Delta^n\right)\otimes\Delta^{\varphi}
\arrow{rr}
\arrow{d}
&&V_{k+1}\cup W_k
\arrow{d}
\\
\coprod\left(L\times\Delta^n\right)\otimes\Delta^{\varphi}
\arrow{rr}
&& W_{k+1}
\end{tikzcd}
\end{equation*}
Par définition de $\fil{V_{k+1}}$, la face arrière de ce cube est cartésienne. De plus, les faces de gauche et de droite sont cartésiennes, et donc la face avant est cartésienne. Comme le rectangle avant est cartésien, on en déduit que le carré en bas du diagramme est cartésien. Finalement, comme les cofibrations sont stables par unions disjointes et somme amalgamée, il suffit de montrer que les morphismes de la forme
\begin{equation*}
\Real{L\otimes\partial(\Delta^n)\cup K\times\Delta^n}\otimes\RealNP{\Delta^{\varphi}}\to \Real{L\times\Delta^n}\otimes\RealNP{\Delta^{\varphi}}
\end{equation*}
sont des cofibrations de $\Top_{N(P)}$. Ces morphismes sont images par le foncteur $\Colim$ des morphisme de $\DiagR_P$
\begin{equation*}
(L\times\partial(\Delta^n)\cup K\times\Delta^n)^{\Delta^{\varphi}}\to (L\times \Delta^n)^{\Delta^{\varphi}}
\end{equation*}
Et ces derniers sont des cofibrations de $\DiagR_P$ car pour tout $n\geq 0$, le morphisme
\begin{equation}\label{EquationPreuveCofibrationOtimesCofibration}
L\times\partial(\Delta^n)\cup K\times\Delta^n\to L\times \Delta^n
\end{equation}
est une cofibration. On en déduit le résultat voulu.
Si $j$ est une cofibration triviale, le morphisme \ref{EquationPreuveCofibrationOtimesCofibration} est une cofibration triviale et on en déduit le résultat voulu. Si $i$ est une cofibration triviale, on remplace $I$ par 
\begin{equation*}
J=\{\RealNP{\Lambda^n_k\otimes\Delta^\varphi\to\Delta^n\otimes\Delta^{\varphi}}\ |\ n\geq k\geq 0,\  \Delta^{\varphi}\in R(P)\}.
\end{equation*}
pour obtenir le résultat voulu.
\end{proof}

\subsection{Adjonction de Quillen entre les catégories de modèles $\sSU_P$ et $\sSTop_P$.}
\label{SectionAdjonctionQuillenSSUSSTop}

On a construit la catégorie modèle $\sSTop_P$ comme une catégorie modèle intermédiaire pour comparer $\Top_{N(P)}$ et $\sSU_P$. Par construction de $\sSTop_P$, on a une adjonction de Quillen 
\begin{equation*}
\RealNP{-}\colon\sSTop_P\leftrightarrow \Top_{N(P)}\colon \Sing_{N(P)}
\end{equation*}
Celle-ci fournit la première étape en direction d'une comparaison.
La proposition suivante fournit la seconde étape.

\begin{prop}\label{PropositionAdjonctionQuillenSSUSSTop}
On a une adjonction de Quillen 
\begin{equation*}
\sd_P\colon \sSU_P\leftrightarrow\sSTop_P\colon\Ex_P
\end{equation*}
\end{prop}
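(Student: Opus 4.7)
The plan is to check the standard criterion for a left Quillen functor on generators. Since $\sSU_P$ is cofibrantly generated by Theorem~\ref{TheoDescriptionExpliciteSSetP}, with generating cofibrations the boundary inclusions $\partial(\Delta^{\varphi})\to\Delta^{\varphi}$ and generating trivial cofibrations the admissible horn inclusions $\Lambda^{\varphi}_{k}\to\Delta^{\varphi}$, it suffices to show that $\sd_P$ sends each of these to a cofibration (resp.\ a trivial cofibration) of $\sSTop_P$. The adjunction itself is already constructed in Section~\ref{SectionSubdivisionFiltree}, so nothing formal remains beyond this verification.

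Preservation of generating cofibrations is immediate from Remark~\ref{RemarqueSdPEnvoieMonoSurCofibration}: if $f$ is a monomorphism of $\sS_P$, then $\RealNP{\sd_P(f)}$ is a cofibration of $\Top_{N(P)}$, which by definition of $\sSTop_P$ (Theorem~\ref{TheoremeCategorieModeleSSTop}) says exactly that $\sd_P(f)$ is a cofibration of $\sSTop_P$. In particular this holds for $f=(\partial(\Delta^{\varphi})\hookrightarrow\Delta^{\varphi})$, and also for any admissible horn inclusion $i\colon\Lambda^{\varphi}_{k}\to\Delta^{\varphi}$.

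For the generating trivial cofibrations we combine the preceding step with the naturality square of the last-vertex map
\begin{equation*}
\begin{tikzcd}
\sd_P(\Lambda^{\varphi}_k)\arrow{r}{\sd_P(i)}\arrow[swap]{d}{\lv_P}
&\sd_P(\Delta^{\varphi})\arrow{d}{\lv_P}\\
\Lambda^{\varphi}_k\arrow[swap]{r}{i}
&\Delta^{\varphi}
\end{tikzcd}
\end{equation*}
By Proposition~\ref{PropCornetAdmissible} the bottom arrow $i$ is a filtered homotopy equivalence of $\sS_P$, so applying $\RealNP{-}$ yields a filtered homotopy equivalence in $\Top_{N(P)}$, which is in particular a weak equivalence of $\sSTop_P$. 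By Proposition~\ref{LastVertexEquivalenceFaible} the two vertical arrows $\lv_P$ are weak equivalences of $\sSTop_P$. The two-out-of-three axiom then forces $\sd_P(i)$ to be a weak equivalence of $\sSTop_P$. Since it is also a cofibration by the previous paragraph, it is a trivial cofibration, as desired.

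I expect no real obstacle: the content has essentially been done, with the two serious inputs being Remark~\ref{RemarqueSdPEnvoieMonoSurCofibration} (which refines Proposition~\ref{PropositionSubdivisionCofibrante} to non-trivial monomorphisms of ensembles simpliciaux filtrés) and Proposition~\ref{LastVertexEquivalenceFaible} (the last-vertex map realizes to a weak equivalence). With those in hand the verification above is entirely formal, and the proof amounts to organizing the generators-on-generators check together with a single two-out-of-three argument.
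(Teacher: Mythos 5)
Your proof is correct and follows essentially the same route as the paper's: the paper likewise reduces to checking that $\sd_P$ sends monomorphisms to cofibrations of $\sSTop_P$ (via Remark~\ref{RemarqueSdPEnvoieMonoSurCofibration}) and admissible horn inclusions to trivial cofibrations, using the same naturality square for $\lv_P$, Propositions~\ref{LastVertexEquivalenceFaible} and~\ref{PropCornetAdmissible}, and two-out-of-three. The only cosmetic difference is that the paper invokes \cite[Proposition 2.4.40]{Cisinski2} for the generators-on-generators criterion rather than the generic cofibrantly-generated argument, but the content is identical.
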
 

\begin{proof}
Par \cite[Proposition 2.4.40]{Cisinski2}, il suffit de vérifier que $\sd_P$ envoie les monomorphismes sur des cofibrations de $\sSTop_P$, et que pour toute inclusion de cornet admissible, $\Lambda^{\varphi}_k\to\Delta^{\varphi}$, le morphisme $\sd_P(\Lambda^{\varphi}_k)\to\sd_P(\Delta^{\varphi})$ est une cofibration triviale de $\sSTop_P$. Par la remarque \ref{RemarqueSdPEnvoieMonoSurCofibration}, $\sd_P$ envoie les monomorphismes sur des cofibrations de $\sSTop_P$. Soit $\Lambda^{\varphi}_k\to\Delta^{\varphi}$ un cornet admissible. On a le diagramme commutatif suivant
\begin{equation*}
\begin{tikzcd}
\sd_P(\Lambda^{\varphi}_k)
\arrow{r}
\arrow[swap]{d}{\lv_P}
&\sd_P(\Delta^{\varphi})
\arrow{d}{\lv_P}
\\
\Lambda^{\varphi}_k
\arrow{r}
&\Delta^{\varphi}
\end{tikzcd}
\end{equation*}
Par la proposition \ref{LastVertexEquivalenceFaible}, les morphismes verticaux sont des équivalences faibles de $\sSTop_P$. De plus, par la proposition \ref{PropCornetAdmissible}, le morphisme $\Lambda^{\varphi}_k\to \Delta^{\varphi}$ est une équivalence d'homotopie filtrée. C'est donc une équivalence faible de $\sSTop_P$ car $\RealNP{-}$ préserve les équivalences d'homotopies filtrées.  Finalement, par deux sur trois on en déduit que $\sd_P(\Lambda^{\varphi}_k)\to\sd_P(\Delta^{\varphi})$ est une équivalence faible. Comme $\sd_P$ envoie monomorphisme sur cofibrations de $\sSTop_P$, c'est une cofibration triviale.
\end{proof}

\begin{corollaire}\label{CorollaireAdjonctionKanQuillen}
On a une adjonction de Quillen
\begin{equation*}
\RealNP{\sd_P(-)}\colon \sSU_P\leftrightarrow\Top_{N(P)}\colon \Ex_P\Sing_{N(P)}
\end{equation*}
\end{corollaire}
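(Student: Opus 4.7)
Le plan est très court : il s'agit simplement de composer deux adjonctions de Quillen déjà établies dans les sections précédentes pour obtenir l'adjonction de Kan--Quillen filtrée annoncée.

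Plus précisément, je commencerais par rappeler que, par construction de la structure de modèle $\sSTop_P$ via le théorème de transport \cite[Corollary 3.3.4]{Hess} appliqué à l'adjonction $\RealNP{-}\colon \sS_P\leftrightarrow \Top_{N(P)}\colon \Sing_{N(P)}$ (théorème \ref{TheoremeCategorieModeleSSTop}), la paire
\begin{equation*}
\RealNP{-}\colon \sSTop_P\leftrightarrow \Top_{N(P)}\colon \Sing_{N(P)}
\end{equation*}
est une adjonction de Quillen : en effet, un morphisme $f$ de $\sSTop_P$ est une cofibration (resp.\ équivalence faible) si et seulement si $\RealNP{f}$ l'est dans $\Top_{N(P)}$, ce qui garantit immédiatement que $\RealNP{-}$ préserve les cofibrations et les cofibrations triviales.

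Ensuite, la proposition \ref{PropositionAdjonctionQuillenSSUSSTop} fournit l'adjonction de Quillen
\begin{equation*}
\sd_P\colon \sSU_P\leftrightarrow \sSTop_P\colon \Ex_P.
\end{equation*}

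Il ne reste alors plus qu'à invoquer le fait général que la composée de deux adjonctions de Quillen est une adjonction de Quillen (si deux foncteurs à droite préservent fibrations et fibrations triviales, leur composée aussi), ce qui donne immédiatement l'énoncé. L'adjonction entre foncteurs se lit via la composée standard : pour $\fil{X}\in \sSU_P$ et $\fil{Y}\in \Top_{N(P)}$,
\begin{equation*}
\Hom_{\Top_{N(P)}}(\RealNP{\sd_P\fil{X}},\fil{Y})\simeq \Hom_{\sSTop_P}(\sd_P\fil{X},\Sing_{N(P)}\fil{Y})\simeq \Hom_{\sSU_P}(\fil{X},\Ex_P\Sing_{N(P)}\fil{Y}).
\end{equation*}

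Il n'y a donc pas d'obstacle réel à cette étape : tout le travail difficile a déjà été effectué en amont, notamment la construction de la catégorie modèle intermédiaire $\sSTop_P$ (théorème \ref{TheoremeCategorieModeleSSTop}, qui repose de manière essentielle sur les propositions \ref{PropositionSubdivisionCofibrante} et \ref{LastVertexEquivalenceFaible} garantissant que $\RealNP{\sd_P\fil{A}}$ est cofibrant et que $\lv_P$ fournit un remplacement cofibrant) ainsi que la vérification de la proposition \ref{PropositionAdjonctionQuillenSSUSSTop} via l'admissibilité des cornets et la préservation des monomorphismes par $\sd_P$.
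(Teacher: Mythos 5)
Votre démonstration est correcte et suit exactement la même démarche que le texte : le corollaire y est présenté comme la composée immédiate de l'adjonction de Quillen $(\RealNP{-},\Sing_{N(P)})$ entre $\sSTop_P$ et $\Top_{N(P)}$ (issue de la construction par transport du théorème \ref{TheoremeCategorieModeleSSTop}) et de l'adjonction de Quillen $(\sd_P,\Ex_P)$ de la proposition \ref{PropositionAdjonctionQuillenSSUSSTop}. Rien à redire.
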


\section{Vers une équivalence de Kan-Quillen filtrée}
\label{SectionKanQuillen}
Après les résultats de la section précédentes, on aimerait montrer que l'adjonction de Quillen du corollaire \ref{CorollaireAdjonctionKanQuillen} entre $\Top_{N(P)}$ et $\sSU_P$ est une équivalence de Quillen. Ceci fournirait un contexte pour l'étude de la théorie de l'homotopie des espaces filtrés analogue à la situation non filtrée. Pour montrer ce résultat, il est naturel d'essayer de montrer que les adjonctions de Quillen de la proposition \ref{PropositionAdjonctionQuillenSSUSSTop} et du théorème \ref{TheoremeCategorieModeleSSTop} sont toutes les deux des équivalences de Quillen. On conjecture que c'est le cas. Dans la fin de ce chapitre, on présente certains arguments motivant ces conjectures.

\subsection{L'adjonction $(\RealNP{-},\Sing_{N(P)})$}
\label{SectionRealNPSingNP}
Par définition, un morphisme entre ensemble simpliciaux filtrés $f\colon\fil{A}\to\fil{B}$ est une équivalence faible de $\sSTop_P$ si et seulement si sa réalisation filtrée 
\begin{equation*}
\RealNP{f}\colon\RealNP{\fil{A}}\to\RealNP{\fil{B}}
\end{equation*}
est une équivalence faible de $\Top_{N(P)}$. Ainsi, pour montrer que l'adjonction 
\begin{equation*}
\RealNP{-}\colon\sSTop_P\leftrightarrow\Top_{N(P)}\colon\Sing_{N(P)}
\end{equation*}
est une équivalence de Quillen, il suffit de montrer que pour tout espace fortement filtré cofibrant $\fil{X}$, la counité de l'adjonction
\begin{equation*}
\epsilon_X\colon\RealNP{\Sing_{N(P)}\fil{X}}\to\fil{X}
\end{equation*}
est une équivalence faible de $\Top_{N(P)}$. 
Fixons un espace fortement filtré fibrant $\fil{X}$. On considère les morphismes
\begin{equation*}
\Sing_{N(P)}(\epsilon_X)\colon \Sing_{(N(P)}(\RealNP{\Sing_{N(P)}\fil{X}})\to\Sing_{N(P)}\fil{X}
\end{equation*}
et
\begin{equation*}
\eta_{\Sing_{N(P)}(X)}\colon\Sing_{N(P)}\fil{X}\to \Sing_{(N(P)}(\RealNP{\Sing_{N(P)}\fil{X}}).
\end{equation*}
Alors, par définition de l'unité et de la counité d'une adjonction, on a $\Sing_{N(P)}(\epsilon_X)\circ\eta_{\Sing_{N(P)}(X)}=\Id$. Ainsi, il suffit d'exhiber une homotopie filtrée
\begin{equation}\label{EquationHomotopieFiltreeSingRealSing}
H\colon \Delta^1\otimes \Sing_{(N(P)}(\RealNP{\Sing_{N(P)}\fil{X}})\to \Sing_{(N(P)}(\RealNP{\Sing_{N(P)}\fil{X}})
\end{equation}
entre $\eta_{\Sing_{N(P)}(X)}\circ\Sing_{N(P)}(\epsilon_X)$ et $\Id$ pour montrer que $\Sing_{N(P)}(\epsilon_X)$ est une équivalence d'homotopie filtrée.  De plus, si $\Sing_{N(P)}(\epsilon_X)$ est une équivalence d'homotopie filtrée, $D(\Sing_P(\epsilon_X))$ est une équivalence faible de $\DiagR_P$, et donc $\epsilon_X$ est une équivalence faible de $\Top_{N(P)}$. 
Par ailleurs, on remarque que le carré 
\begin{equation}\label{EquationSingNPPullback}
\begin{tikzcd}
\Sing_{N(P)}\fil{X}
\arrow{r}
\arrow{d}
&\Sing(X)
\arrow{d}{\Sing(\varphi_X)}
\\
N(P)
\arrow{r}{i}
&\Sing(\Real{N(P)})
\end{tikzcd}
\end{equation}
est cartésien (voir la proposition \ref{SingPPullback}, la preuve pour $\Sing_{N(P)}$ est similaire). On considère ensuite le diagramme commutatif suivant.
\begin{equation*}
\begin{tikzcd}
\Sing_{N(P)}(\RealNP{\Sing_{N(P)}\fil{X}})
\arrow{r}
\arrow{d}
&\Sing(\RealNP{\Sing_{N(P)}\fil{X}})
\arrow{d}
\arrow{r}
&\Sing\left(\Real{\Sing(X)}\right)
\arrow[swap]{d}{\Sing\left(\Real{\Sing(\varphi_X)}\right)}
\\
N(P)
\arrow{r}{i}
&\Sing(\Real{N(P)})
\arrow[swap]{r}{\Sing(\Real{i})}
&\Sing(\Real{\Sing(\Real{N(P)})})
\end{tikzcd}
\end{equation*}
Le carré de gauche est le carré \ref{EquationSingNPPullback} appliqué à $\RealNP{\Sing_{N(P)}\fil{X}}$, il est donc cartésien. Le carré de droite est l'image du carré \ref{EquationSingNPPullback} par le foncteur $\Sing(\Real{-})$. Comme les foncteurs $\Real{-}$ et $\Sing$ préservent les limites finies, il est lui aussi cartésien. On en déduit que le rectangle extérieur est cartésien.
On notera aussi $\eta$ l'unité, et $\epsilon$ la counité, de l'adjonction $(\Real{-},\Sing)$.
Supposons maintenant qu'il existe deux homotopies 
\begin{equation*}
H_X\colon \Sing(\Real{\Sing(X)})\times\Delta^1\to \Sing(\Real{\Sing(X)})
\end{equation*}
et
\begin{equation*}
H_P\colon \Sing(\Real{\Sing(\Real{N(P)})})\times\Delta^1\to \Sing(\Real{\Sing(\Real{N(P)})})
\end{equation*}
telles que
\begin{enumerate}
\item \label{EnumerateHXHomotopie} $H_X$ est une homotopie entre $\eta_{\Sing(X)}\circ \Sing(\epsilon_X)$ et $\Id$.
\item \label{EnumerateHPHomotopie} $H_P$ est une homotopie entre $\eta_{\Sing(\Real{N(P)})}\circ \Sing(\epsilon_{\Real{N(P)}})$ et $\Id$.
\item \label{EnumerateHPRestrictsToProjection} $(H_P)_{|N(P)}=\pr_{N(P)}\colon N(P)\times\Delta^1\to N(P)$
\item \label{EnumerateHomotopiesCommute} On a un diagramme commutatif
\begin{equation*}
\begin{tikzcd}
\Sing(\Real{\Sing(X)})\times\Delta^1
\arrow{r}{H_X}
\arrow[swap]{d}{\Sing\left(\Real{\Sing(\varphi_X)}\right)\times\Delta^1}
&\Sing(\Real{\Sing(X)})
\arrow{d}{\Sing\left(\Real{\Sing(\varphi_X)}\right)}
\\
\Sing(\Real{\Sing(\Real{N(P)})})\times\Delta^1
\arrow{r}{H_P} 
&\Sing(\Real{\Sing(\Real{N(P)})})
\end{tikzcd}
\end{equation*}
\end{enumerate}

A partir des morphismes $H_X$, $H_P$ et $\pr_{N(P)}$ on peut définir l'homotopie filtrée \ref{EquationHomotopieFiltreeSingRealSing}, en considérant le diagramme commutatif suivant
\begin{equation*}
\begin{tikzcd}[column sep = -30pt]
&\Sing_{N(P)}(\RealNP{\Sing_{N(P)}\fil{X}})
\arrow{rr}
\arrow{dd}
&&\Sing\left(\Real{\Sing(X)}\right)
\arrow[swap]{dd}
\\
\Sing_{N(P)}(\RealNP{\Sing_{N(P)}\fil{X}})\times\Delta^1
\arrow{rr}
\arrow[dashrightarrow]{ur}{H}
\arrow{dd}
&&\Sing\left(\Real{\Sing(X)}\right)\times\Delta^1
\arrow{ur}{H_X}
\arrow{dd}
\\
&N(P)
\arrow{rr}
&&\Sing(\Real{\Sing(\Real{N(P)})})
\\
N(P)\times\Delta^1
\arrow{rr}
\arrow{ur}{\pr_{N(P)}}
&&\Sing(\Real{\Sing(\Real{N(P)})})\times{\Delta^1}
\arrow{ur}{H_P}
\end{tikzcd}
\end{equation*}
En effet, les conditions \ref{EnumerateHPRestrictsToProjection} et \ref{EnumerateHomotopiesCommute} garantissent que le diagramme commute, et comme les faces avant et arrière de ce cubes sont cartésiennes, il existe une unique application $H$ faisant commuter le diagramme. Les conditions \ref{EnumerateHXHomotopie} et \ref{EnumerateHPHomotopie} garantissent ensuite que le morphisme $H$ ainsi obtenu est une homotopie entre $\eta_{\Sing_{N(P)}(X)}\circ\Sing_{N(P)}(\epsilon_X)$ et $\Id$.

Finalement, il suffit d'exhiber $H_X$ et $H_P$ vérifiant les conditions précédentes. D'après \cite[Proposition 4.5.29]{CellularStructure} il existe $H_X$ et $H_P$ vérifiant les hypothèses \ref{EnumerateHXHomotopie}, \ref{EnumerateHPHomotopie} et \ref{EnumerateHPRestrictsToProjection}. Il suffirait de montrer qu'on peut choisir $H_X$ de façon à aussi avoir \ref{EnumerateHomotopiesCommute} pour conclure, et obtenir le résultat suivant :

\begin{conjecture}\label{ConjectureRealNPSingNP}
L'adjonction de Quillen
\begin{equation*}
\RealNP{-}\colon\sSTop_P\leftrightarrow\Top_{N(P)}\colon\Sing_{N(P)}
\end{equation*}
est une équivalence de Quillen.
\end{conjecture}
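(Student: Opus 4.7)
La stratégie consiste à poursuivre la réduction déjà esquissée. Comme les équivalences faibles de $\sSTop_P$ sont par définition détectées par $\RealNP{-}$ et que tout objet de $\Top_{N(P)}$ est fibrant, il suffit par les critères usuels d'équivalence de Quillen de montrer que la counité $\epsilon_X\colon\RealNP{\Sing_{N(P)}\fil{X}}\to\fil{X}$ est une équivalence faible de $\Top_{N(P)}$ pour tout espace fortement filtré cofibrant $\fil{X}$. Comme cela a été observé dans le texte, ceci se ramène à l'existence d'une homotopie filtrée entre $\eta_{\Sing_{N(P)}\fil{X}}\circ\Sing_{N(P)}(\epsilon_X)$ et l'identité, et le carré cartésien décrit plus haut réduit cette construction à la donnée d'un couple $(H_X,H_P)$ vérifiant les quatre conditions \ref{EnumerateHXHomotopie}, \ref{EnumerateHPHomotopie}, \ref{EnumerateHPRestrictsToProjection} et \ref{EnumerateHomotopiesCommute}.

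Mon projet est de construire une transformation naturelle unique
\begin{equation*}
H_{(-)}\colon\Sing(\Real{\Sing(-)})\times\Delta^1\to\Sing(\Real{\Sing(-)})
\end{equation*}
définie sur la catégorie $\Top$ tout entière, fournissant pour chaque espace $Y$ une homotopie entre $\eta_{\Sing(Y)}\circ\Sing(\epsilon_Y)$ et l'identité. Si une telle transformation peut être construite en ne faisant intervenir que la structure affine des simplexes standards, alors en prenant pour $H_X$ et $H_P$ les évaluations de $H_{(-)}$ en $X$ et en $\Real{N(P)}$ respectivement, la condition \ref{EnumerateHomotopiesCommute} de compatibilité découle immédiatement de la naturalité appliquée à $\varphi_X\colon X\to \Real{N(P)}$. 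Je procèderai en adaptant la déformation classique de Milnor, qui envoie un simplexe singulier $\sigma\colon\Real{\Delta^n}\to\Real{\Sing(Y)}$ sur sa version redressée $\Real{\epsilon_Y\circ\Sing(\sigma)}$, l'interpolation entre les deux se faisant à l'intérieur de chaque simplexe affine de $\Real{\Sing(Y)}$ par combinaison convexe barycentrique. Le caractère combinatoire et affine de cette déformation la rend manifestement naturelle en $Y$, ce qui fournirait les conditions \ref{EnumerateHXHomotopie}, \ref{EnumerateHPHomotopie} et \ref{EnumerateHomotopiesCommute} simultanément.

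L'étape véritablement délicate est la vérification de la condition \ref{EnumerateHPRestrictsToProjection}, à savoir que la restriction $(H_P)_{|N(P)}$ est égale à la projection $N(P)\times\Delta^1\to N(P)$. L'inclusion $N(P)\hookrightarrow\Sing(\Real{\Sing(\Real{N(P)})})$ se factorise par itération des unités $\eta$, et envoie donc un simplexe $[p_0,\dots,p_n]\in N(P)$ sur un simplexe doublement canonique dans $\Sing(\Real{\Sing(\Real{N(P)})})$. On peut espérer qu'une construction barycentrique suffisamment soignée fixe point par point de tels simplexes déjà redressés, puisque pour eux l'opération $\sigma\mapsto\Real{\epsilon\circ\Sing(\sigma)}$ agit trivialement. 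Rendre ceci précis exigera une analyse fine de la formule d'interpolation sur les simplexes qui sont eux-mêmes les réalisations de morphismes entre nerfs d'ensembles ordonnés ; la rigidité combinatoire du nerf $N(P)$, dont tous les simplexes non dégénérés sont des chaines strictement croissantes, devrait être décisive. C'est néanmoins l'étape où l'argument est le plus susceptible d'échouer, et où il pourrait être nécessaire de modifier la construction de $H_{(-)}$ pour imposer des conditions au bord supplémentaires sans perdre la naturalité.
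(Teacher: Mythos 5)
Votre réduction est exactement celle du texte : l'énoncé est une conjecture, et ni le texte ni votre proposition ne la démontrent ; tous deux la ramènent à l'existence d'un couple $(H_X,H_P)$ vérifiant les conditions \ref{EnumerateHXHomotopie}, \ref{EnumerateHPHomotopie}, \ref{EnumerateHPRestrictsToProjection} et \ref{EnumerateHomotopiesCommute}. La différence tient à la répartition du travail restant : le texte obtient \ref{EnumerateHXHomotopie}, \ref{EnumerateHPHomotopie} et \ref{EnumerateHPRestrictsToProjection} par \cite[Proposition 4.5.29]{CellularStructure} et laisse ouverte la compatibilité \ref{EnumerateHomotopiesCommute}, tandis que vous proposez d'obtenir \ref{EnumerateHomotopiesCommute} par naturalité et laissez ouverte la condition \ref{EnumerateHPRestrictsToProjection}. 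Vous n'avez donc pas refermé la boucle, seulement déplacé le trou.

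Deux points précis font obstacle à votre stratégie. D'abord, l'interpolation barycentrique que vous décrivez n'est pas définie telle quelle : pour un simplexe singulier arbitraire $\sigma\colon\Real{\Delta^n}\to\Real{\Sing(Y)}$, le point $\sigma(t)$ et le point correspondant de l'application caractéristique du simplexe redressé $\epsilon_Y\circ\sigma\in\Sing(Y)_n$ n'appartiennent en général à aucune cellule commune de $\Real{\Sing(Y)}$, de sorte que leur combinaison convexe n'a pas de sens ; la construction effective de \cite{CellularStructure} est une induction squelettique nettement plus délicate, et elle n'est pas naturelle. Ensuite, il y a de bonnes raisons de douter qu'une transformation naturelle $H_{(-)}$ définie sur $\Top$ tout entière existe : c'est le même type d'obstruction que pour l'homéomorphisme $\Real{\sd K}\simeq\Real{K}$, dont la non-naturalité est rappelée dans ce texte (voir la remarque qui suit l'énoncé du lemme \ref{HomeomorphismeTroncatureSubdivision}). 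Enfin, même en admettant une version naturelle de $H$, la condition \ref{EnumerateHPRestrictsToProjection} --- que vous identifiez vous-même comme le point fragile --- resterait entièrement à établir, alors que c'est précisément elle que la référence \cite{CellularStructure} fournit gratuitement dans l'approche retenue ici. En l'état, votre proposition ne constitue donc pas une démonstration de la conjecture, mais une réduction alternative dont les deux étapes clés (naturalité de $H_{(-)}$ et condition au bord \ref{EnumerateHPRestrictsToProjection}) demeurent non justifiées.
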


\begin{reduction}
Par la discussion précédente, il suffit d'exhiber des homotopies non filtrées $H_X$ et $H_P$ vérifiant les propriétés \ref{EnumerateHXHomotopie}, \ref{EnumerateHPHomotopie}, \ref{EnumerateHPRestrictsToProjection} et \ref{EnumerateHomotopiesCommute}.
\end{reduction}

\subsection{L'adjonction $(\sd_P,\Ex_P)$}
\label{SectionSdPExP}
De façon à distinguer les équivalences faibles de $\sSTop_P$ et $\sSU_P$, on parlera respectivement de $\Top$-équivalence faible et de $\U$-équivalence faible. On rappelle le résultat suivant :

\begin{prop}\label{PropositionUEquivalenceFaibleLVBeta}
Soit $\fil{X}\in \sS_P$, les morphismes
\begin{equation*}
\lv_P\colon \sd_P\fil{X}\to \fil{X}
\end{equation*}
et
\begin{equation*}
\beta_X\colon \fil{X}\to\Ex_P\fil{X}
\end{equation*}
sont des $\U$-équivalences faibles.
\end{prop}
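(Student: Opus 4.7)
The plan is to deduce both statements by reduction to the case of simplices (already handled by Lemme \ref{SDSimplexeEquivalenceFaibleAbsolue}) and then to deduce the statement about $\beta_X$ from the statement about $\lv_P$ via the triangle identities of the adjunction $(\sd_P,\Ex_P)$.

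First I would treat $\lv_P\colon\sd_P\fil{X}\to\fil{X}$. Since $\sd_P$ is a left adjoint (to $\Ex_P$), it commutes with colimites, and any filtered simplicial set is canonically the colimite of its simplexes via its skeletal filtration $\fil{X}=\colim_n\sk_n\fil{X}$, with pushouts
\begin{equation*}
\begin{tikzcd}
\coprod\partial(\Delta^{\varphi})\arrow{r}\arrow{d}&\sk_n\fil{X}\arrow{d}\\
\coprod\Delta^{\varphi}\arrow{r}&\sk_{n+1}\fil{X}
\end{tikzcd}
\end{equation*}
indexed by the non-degenerate $(n{+}1)$-simplexes of $\fil{X}$. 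I would proceed by induction on $n$: Lemme \ref{SDSimplexeEquivalenceFaibleAbsolue} gives that $\lv_P$ is an absolute weak equivalence (in particular, a $\U$-weak equivalence) on each simplex $\Delta^{\varphi}$, and on each $\partial(\Delta^{\varphi})$ by a further sub-induction on the dimension using the same pushouts. Applying $\sd_P$ to the pushout square and invoking naturality of $\lv_P$ yields a ``cube'' whose vertical edges are $\lv_P$; left properness of $\sSU_P$ (Théorème \ref{TheoDescriptionExpliciteSSetP}) together with the fact that cofibrations in $\sSU_P$ are monomorphismes (hence preserved by $\sd_P$, cf.\ Remarque \ref{RemarqueSdPEnvoieMonoSurCofibration} adapted to $\sd_P$ as an endofunctor) allows one to conclude that $\lv_P$ is a $\U$-weak equivalence at the level of $\sk_{n+1}$. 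Passing to the transfinite composition preserves the conclusion since weak equivalences are stable under compositions transfinies along cofibrations in a left proper, cofibrantly generated model category.

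Second, for $\beta_X\colon\fil{X}\to\Ex_P\fil{X}$, I would invoke the triangle identities for the adjunction $(\sd_P,\Ex_P)$, in which $\beta$ is the unit and $\lv_P$ the counit: one of the triangle identities yields
\begin{equation*}
\lv_P\circ\sd_P(\beta_X)=\Id_{\sd_P\fil{X}}.
\end{equation*}
Since $\lv_P$ is a $\U$-weak equivalence by the first step, the axiom of two-out-of-three forces $\sd_P(\beta_X)$ to be a $\U$-weak equivalence. Finally, consider the commutative square expressing naturality of $\lv_P$ with respect to $\beta_X$:
\begin{equation*}
\begin{tikzcd}
\sd_P\fil{X}\arrow{r}{\sd_P(\beta_X)}\arrow[swap]{d}{\lv_P}&\sd_P\Ex_P\fil{X}\arrow{d}{\lv_P}\\
\fil{X}\arrow[swap]{r}{\beta_X}&\Ex_P\fil{X}
\end{tikzcd}
\end{equation*}
Both vertical arrows and the top horizontal arrow are $\U$-weak equivalences, so by two-out-of-three $\beta_X$ is a $\U$-weak equivalence, completing the proof.

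The main obstacle will be the skeletal induction for $\lv_P$: one must carefully handle the interaction between $\sd_P$ and the skeletal pushouts of $\fil{X}$ (noting that $\sd_P$ does not respect the skeletal filtration strictly) and verify that the pushouts of $\lv_P$ indeed fall within the scope of left properness. Everything else is formal, relying on the already-established Lemme \ref{SDSimplexeEquivalenceFaibleAbsolue} and on the triangle identities of the adjunction $(\sd_P,\Ex_P)$.
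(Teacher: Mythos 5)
Your first half, on $\lv_P$, is sound and is essentially the route the paper takes: the paper's Lemme \ref{LemmeLastVertexUEquivalenceFaible} obtains the statement for all $\fil{X}$ from the case of representables (Lemme \ref{SDSimplexeEquivalenceFaibleAbsolue}) via Lemme \ref{EquivalenceFaibleFoncteurEilenbergZilber}, i.e.\ by saturation by monomorphisms, which is exactly your skeletal induction with pushouts, left properness and transfinite composition written out by hand instead of cited from Cisinski. The technical points you flag there are real but manageable.

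The second half has a genuine gap. The transformations $\beta\colon\Id\to\Ex_P$ and $\lv_P\colon\sd_P\to\Id$ are \emph{not} the unit and counit of the adjunction $(\sd_P,\Ex_P)$: the unit is $\eta\colon\Id\to\Ex_P\sd_P$ and the counit is $\epsilon\colon\sd_P\Ex_P\to\Id$, while $\beta_X$ is merely the adjoint transpose of $\lv_{P,X}$, that is $\beta_X=\Ex_P(\lv_{P,X})\circ\eta_X$. Consequently your claimed identity $\lv_P\circ\sd_P(\beta_X)=\Id_{\sd_P\fil{X}}$ cannot hold: the composite $\sd_P\fil{X}\xrightarrow{\sd_P(\beta_X)}\sd_P\Ex_P\fil{X}\xrightarrow{\lv_P}\Ex_P\fil{X}$ does not even have the right target. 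The correct relation is $\epsilon_X\circ\sd_P(\beta_X)=\lv_{P,X}$, but using it with two-out-of-three only trades the problem for showing that $\epsilon_X$ is a $\U$-équivalence faible — and in the paper that is Proposition \ref{PropositionUniteCouniteEquivalenceFaible}, whose proof invokes the present proposition, so your argument would be circular. Your final naturality square is correct but only shows that $\beta_X$ is a weak equivalence if and only if $\sd_P(\beta_X)$ is, without establishing either. The claim about $\beta_X$ is genuinely the hard half of this proposition: the paper gets it from Théorème \ref{TheoremeExXFaiblementEquivalentaX}, which treats fibrant $\fil{X}$ via Lemme \ref{ExExtensionAnodine} and the general case by proving that $\Ex_P$ preserves weak equivalences through the bisimplicial diagonal argument (Théorème \ref{TheoremeDiagonal}); none of that follows formally from the statement about $\lv_P$.
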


\begin{proof}
Par les lemmes \ref{LemmeLastVertexUEquivalenceFaible} et \ref{SDSimplexeEquivalenceFaibleAbsolue}, $\lv_P$ est une équivalence faible, et par le théorème \ref{TheoremeExXFaiblementEquivalentaX} $\beta_X$ est une équivalence faible.
\end{proof}

On a aussi les propositions suivantes, qui sont des conséquences de l'adjonction $(\sd_P,\Ex_P)$ :

\begin{prop}\label{PropositionUequivalenceTopEquivalence}
Toute $\U$-équivalence faible est une $\Top$-équivalence faible.
\end{prop}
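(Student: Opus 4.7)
The plan is to exploit the Quillen adjunction $(\sd_P, \Ex_P) : \sSU_P \leftrightarrow \sSTop_P$ established in Proposition \ref{PropositionAdjonctionQuillenSSUSSTop}, together with the fact that the last-vertex map provides a natural $\Top$-équivalence faible between $\sd_P \fil{X}$ and $\fil{X}$.

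First I would observe that every object of $\sSU_P$ is cofibrant. Indeed, the cofibrations of $\sSU_P$ are exactly the monomorphismes (Théorème \ref{TheoDescriptionExpliciteSSetP}), and the morphism $\emptyset \to \fil{X}$ is always a monomorphisme. Consequently, by Proposition \ref{PropositionAdjonctionQuillenPreserveEquivalencesFaibles}, the left Quillen functor $\sd_P$ preserves weak equivalences between cofibrant objects, and hence preserves all $\U$-équivalences faibles. In particular, for any $\U$-équivalence faible $f : \fil{X} \to \fil{Y}$, the induced morphism $\sd_P(f) : \sd_P \fil{X} \to \sd_P \fil{Y}$ is a $\Top$-équivalence faible.

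Next I would invoke the naturality of the transformation $\lv_P : \sd_P \to \Id$ to form the commutative square
\begin{equation*}
\begin{tikzcd}
\sd_P \fil{X} \arrow{r}{\sd_P(f)} \arrow[swap]{d}{\lv_P} & \sd_P \fil{Y} \arrow{d}{\lv_P} \\
\fil{X} \arrow[swap]{r}{f} & \fil{Y}
\end{tikzcd}
\end{equation*}
By Proposition \ref{LastVertexEquivalenceFaible}, for every ensemble simplicial filtré $\fil{A}$ the morphism $\RealNP{\lv_P} : \RealNP{\sd_P \fil{A}} \to \RealNP{\fil{A}}$ is a weak equivalence in $\Top_{N(P)}$; by the very definition of the weak equivalences of $\sSTop_P$, this means the two vertical arrows in the square are $\Top$-équivalences faibles. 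Combined with the preceding step showing that $\sd_P(f)$ is a $\Top$-équivalence faible, the two-out-of-three axiom for $\sSTop_P$ forces $f$ itself to be a $\Top$-équivalence faible, which is the desired conclusion.

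The argument is essentially formal and I do not anticipate any serious obstacle: the two non-trivial inputs (the Quillen adjunction of Proposition \ref{PropositionAdjonctionQuillenSSUSSTop} and the $\Top$-weak equivalence statement for $\lv_P$ of Proposition \ref{LastVertexEquivalenceFaible}) have already been established, and the rest amounts to observing that $\sSU_P$ has all objects cofibrant so that $\sd_P$ preserves all $\U$-équivalences faibles.
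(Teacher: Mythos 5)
Your proof is correct and follows exactly the same route as the paper: the naturality square for $\lv_P$, Proposition \ref{LastVertexEquivalenceFaible} for the vertical arrows, preservation of $\U$-équivalences faibles by the left Quillen functor $\sd_P$ (all objects being cofibrant), and two-out-of-three. The only difference is that you spell out the cofibrancy observation explicitly, which the paper leaves implicit.
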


\begin{proof}
Soit $f\colon \fil{X}\to\fil{Y}$ une $\U$-équivalence faible.
On a le diagramme commutatif suivant
\begin{equation*}
\begin{tikzcd}
\sd_P\fil{X}
\arrow{r}{\sd_P(f)}
\arrow[swap]{d}{\lv_P}
&\sd_P\fil{Y}
\arrow{d}{\lv_P}
\\
\fil{X}
\arrow[swap]{r}{f}
&\fil{Y}
\end{tikzcd}
\end{equation*}
Par la proposition \ref{LastVertexEquivalenceFaible}, les morphismes de gauche et de droite sont des $\Top$-équivalences faibles. Comme l'adjonction $(\sd_P,\Ex_P)$ est une adjonction de Quillen, $\sd_P$ envoie les $\U$-équivalences faibles entre objets cofibrants sur des $\Top$-équivalences faibles. On en déduit que $\sd_P(f)$ est une $\Top$-équivalence faible. Par deux sur trois, on a donc que $f$ est une $\Top$-équivalence faible.
\end{proof}

\begin{prop}
Une $\Top$-équivalence faible entre objets fibrants de $\sSTop_P$ est une $\U$-équivalence faible.
\end{prop}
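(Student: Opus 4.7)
The plan is to exploit the Quillen adjunction $(\sd_P, \Ex_P)$ together with the fact that the unit $\beta_X \colon X \to \Ex_P X$ is always a $\U$-equivalence, to transfer information from $\Ex_P f$ back to $f$ via two-out-of-three.

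More precisely, let $f \colon \fil{X} \to \fil{Y}$ be a $\Top$-équivalence faible between fibrant objects of $\sSTop_P$. The first step is to apply the functor $\Ex_P$. By Proposition \ref{PropositionAdjonctionQuillenSSUSSTop}, the adjunction $\sd_P \colon \sSU_P \leftrightarrow \sSTop_P \colon \Ex_P$ is a Quillen adjunction, so $\Ex_P$ is a right Quillen functor. By the analogue of Proposition \ref{PropositionAdjonctionQuillenPreserveEquivalencesFaibles} (Ken Brown), $\Ex_P$ preserves weak equivalences between fibrant objects. Since $\fil{X}$ and $\fil{Y}$ are fibrant in $\sSTop_P$ and $f$ is a $\Top$-équivalence faible between them, one obtains that $\Ex_P(f) \colon \Ex_P\fil{X} \to \Ex_P\fil{Y}$ is a $\U$-équivalence faible.

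The second step is to compare $f$ with $\Ex_P(f)$ via the naturality of $\beta$. The commutative square
\begin{equation*}
\begin{tikzcd}
\fil{X} \arrow{r}{f} \arrow[swap]{d}{\beta_X} & \fil{Y} \arrow{d}{\beta_Y} \\
\Ex_P\fil{X} \arrow{r}{\Ex_P(f)} & \Ex_P\fil{Y}
\end{tikzcd}
\end{equation*}
has three of its four edges known to be $\U$-équivalences faibles: the bottom edge by the first step, and both vertical edges by Proposition \ref{PropositionUEquivalenceFaibleLVBeta}. Applying the two-out-of-three axiom in $\sSU_P$ to the composites $\Ex_P(f) \circ \beta_X = \beta_Y \circ f$, one concludes that $f$ is a $\U$-équivalence faible, as required.

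There is no substantial obstacle here; the proof is essentially formal, and the two ingredients one uses are already available in the preceding sections (the Quillen adjunction of Proposition \ref{PropositionAdjonctionQuillenSSUSSTop} and the universal $\U$-equivalence $\beta_X$ of Proposition \ref{PropositionUEquivalenceFaibleLVBeta}). The only mild subtlety is to make sure that the version of Ken Brown's lemma invoked applies in the form needed, namely that a right Quillen functor sends weak equivalences between fibrant objects of its source to weak equivalences of its target; this is standard and was already used implicitly in the proof of Proposition \ref{fEquivalenceFaibleSSIDfEquivalenceFaible}.
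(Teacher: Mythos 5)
Votre preuve est correcte et suit exactement la même stratégie que celle du texte : on applique $\Ex_P$ (foncteur de Quillen à droite, donc préservant les équivalences faibles entre fibrants) pour obtenir que $\Ex_P(f)$ est une $\U$-équivalence faible, puis on conclut par deux sur trois dans le carré de naturalité de $\beta$, dont les flèches verticales sont des $\U$-équivalences faibles par la proposition \ref{PropositionUEquivalenceFaibleLVBeta}. Rien à redire.
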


\begin{proof}
Soient $\fil{X}$ et $\fil{Y}$ deux objets fibrants de $\sSTop_P$ et $f\colon \fil{X}\to\fil{Y}$ une $\Top$-équivalence faible. Comme $\Ex_P$ est un foncteur de Quillen  à droite, il préserve les équivalences faibles entre fibrants. Ainsi, $\Ex_P(f)\colon \Ex_P\fil{X}\to\Ex_P\fil{Y}$ est une $\U$-équivalence faible. Or, on a le diagramme commutatif
\begin{equation*}
\begin{tikzcd}
\fil{X}
\arrow{r}{f}
\arrow[swap]{d}{\beta_X}
&\fil{Y}
\arrow{d}{\beta_Y}
\\
\Ex_P\fil{X}
\arrow[swap]{r}{\Ex_P(f)}
&\Ex_P\fil{Y}
\end{tikzcd}
\end{equation*}
Comme $\beta_X$ et $\beta_Y$ sont des $\U$-équivalences faibles, on en déduit par deux sur trois que $f$ est une $\U$-équivalence faible.
\end{proof}
Notons $\eta$ et $\epsilon$ l'unité et la counité de l'adjonction $(\sd_P,\Ex_P)$. Les observations précédentes nous permettent de montrer la proposition suivante.
\begin{prop}\label{PropositionUniteCouniteEquivalenceFaible}
Soient $\fil{X}$ et $\fil{Y}$ deux ensembles simpliciaux filtrés. Les morphismes
\begin{equation*}
\eta_X\colon\fil{X}\to\Ex_P(\sd_P\fil{X})
\end{equation*}
et
\begin{equation*}
\epsilon_Y\colon \sd_P(\Ex_P\fil{Y})\to\fil{Y}
\end{equation*}
sont des $\U$-équivalences faibles.
\end{prop}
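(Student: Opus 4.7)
Le plan est de ramener l'énoncé à la proposition \ref{PropositionUEquivalenceFaibleLVBeta} via les identités triangulaires de l'adjonction $(\sd_P, \Ex_P)$. La transformation naturelle $\beta\colon \Id \to \Ex_P$ étant définie comme l'adjointe de $\lv_P\colon \sd_P \to \Id$, les identités triangulaires fournissent les factorisations
\begin{equation*}
\beta_X = \Ex_P(\lv_P) \circ \eta_X \quad \text{et} \quad \lv_P = \epsilon_Y \circ \sd_P(\beta_Y).
\end{equation*}
Par la proposition \ref{PropositionUEquivalenceFaibleLVBeta}, en tout objet $\beta$ et $\lv_P$ sont des $\U$-équivalences faibles, donc combinée avec deux sur trois, il suffira de montrer que $\Ex_P(\lv_P)$ et $\sd_P(\beta_Y)$ sont elles-mêmes des $\U$-équivalences faibles.

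Pour $\eta_X$, on considère le carré de naturalité de $\beta$ appliqué à $\lv_P\colon \sd_P X \to X$:
\begin{equation*}
\begin{tikzcd}
\sd_P X \arrow{r}{\lv_P} \arrow[swap]{d}{\beta_{\sd_P X}} & X \arrow{d}{\beta_X} \\
\Ex_P \sd_P X \arrow{r}{\Ex_P(\lv_P)} & \Ex_P X
\end{tikzcd}
\end{equation*}
Trois des quatre flèches, à savoir $\lv_P$, $\beta_{\sd_P X}$ et $\beta_X$, sont des $\U$-équivalences faibles. Deux sur trois impose alors que $\Ex_P(\lv_P)$ l'est aussi. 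La factorisation $\beta_X = \Ex_P(\lv_P) \circ \eta_X$ et une nouvelle application de deux sur trois donnent que $\eta_X$ est une $\U$-équivalence faible.

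L'argument pour $\epsilon_Y$ est dual. La naturalité de $\lv_P$ appliquée à $\beta_Y\colon Y \to \Ex_P Y$ fournit le carré commutatif
\begin{equation*}
\begin{tikzcd}
\sd_P Y \arrow{r}{\sd_P(\beta_Y)} \arrow[swap]{d}{\lv_P} & \sd_P \Ex_P Y \arrow{d}{\lv_P} \\
Y \arrow{r}{\beta_Y} & \Ex_P Y
\end{tikzcd}
\end{equation*}
dont trois flèches ($\lv_P$ à gauche, $\lv_P$ à droite, et $\beta_Y$) sont des $\U$-équivalences faibles, ce qui force $\sd_P(\beta_Y)$ à en être une. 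La factorisation $\lv_P = \epsilon_Y \circ \sd_P(\beta_Y)$ et deux sur trois donnent alors que $\epsilon_Y$ est une $\U$-équivalence faible. Il n'y a pas d'obstacle réel: la preuve n'est qu'une succession bien ordonnée d'applications de deux sur trois, reposant sur le fait que $\beta$ ayant été définie par adjonction à partir de $\lv_P$, les identités triangulaires fournissent gratuitement les deux factorisations clés.
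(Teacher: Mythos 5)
Votre démonstration est correcte et suit essentiellement la même voie que celle du texte : mêmes factorisations $\beta_X=\Ex_P(\lv_P)\circ\eta_X$ et $\lv_P=\epsilon_Y\circ\sd_P(\beta_Y)$ (qui sont en fait les formules de transposition de l'adjonction plutôt que les identités triangulaires proprement dites), même recours à la proposition \ref{PropositionUEquivalenceFaibleLVBeta} et à l'axiome de deux sur trois. La seule variante est que vous établissez que $\Ex_P(\lv_P)$ est une $\U$-équivalence faible via le carré de naturalité de $\beta$ au lieu d'invoquer, comme le texte, le fait que $\Ex_P$ préserve les $\U$-équivalences faibles, et que vous explicitez pour $\sd_P(\beta_Y)$ le carré de naturalité de $\lv_P$ que le texte laisse implicite --- deux choix tout aussi valables.
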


\begin{proof}
Soit $\fil{X}$ un ensemble simplicial filtré. On considère le diagramme commutatif 
\begin{equation*}
\begin{tikzcd}
\fil{X}
\arrow{rr}{\beta_X}
\arrow[swap]{dr}{\eta_X}
&&\Ex_P\fil{X}
\\
&\Ex_P(\sd_P\fil{X})
\arrow[swap]{ur}{\Ex_P(\lv_P)}
\end{tikzcd}
\end{equation*}
Par la proposition \ref{PropositionUEquivalenceFaibleLVBeta}, les morphismes $\beta_X$ et $\lv_P$ sont des $\U$-équivalences faibles. De plus, comme $\Ex_P$-préserve les $\U$-équivalences faibles on en déduit que $\Ex_P(\lv_P)$ est une $\U$-équivalence faible, et donc que $\eta_X$ est une $\U$-équivalence faible par deux sur trois.
Soit $\fil{Y}$ un ensemble simplicial filtré, on considère le diagramme commutatif
\begin{equation*}
\begin{tikzcd}
\sd_P\fil{Y}
\arrow{rr}{\lv_P}
\arrow[swap]{dr}{\sd_P(\beta_Y)}
&&\fil{Y}
\\
&\sd_P(\Ex_P\fil{Y})
\arrow[swap]{ur}{\epsilon_Y}
\end{tikzcd}
\end{equation*}
Par la proposition \ref{PropositionUEquivalenceFaibleLVBeta}, les morphismes $\beta_Y$ et $\lv_P$ sont des $\U$-équivalences faibles. Comme $\lv_P$ est une $\U$-équivalence faible, par deux sur trois $\sd_P(\beta_Y)$ est une $\U$-équivalence faible.
on en déduit par deux sur trois que $\epsilon_Y$ est une $\U$-équivalence faible.
\end{proof}

On a montré que l'unité et la counité de l'adjonction $(\sd_P,\Ex_P)$ étaient des équivalences faibles. Cependant, cela ne permet pas de déduire directement que l'adjonction $(\sd_P,\Ex_P)$ est une équivalence de Quillen (Il faudrait montrer le même résultat pour l'unité et la counité de l'adjonction dérivée). Examinons cependant ce que la proposition \ref{PropositionUniteCouniteEquivalenceFaible} permet de dire quant à l'adjonction entre $\sSU_P$ et $\sSTop_P$.
Pour montrer que $(\sd_P,\Ex_P)$ est une équivalence de Quillen entre $\sSU_P$ et $\sSTop_P$, il suffit de montrer que pour toute paire  d'ensembles simpliciaux filtrés $(\fil{X},\fil{Y})$ où $\fil{Y}$ est un objet fibrant de $\sSTop_P$, et tout morphisme $f\colon \fil{X}\to\Ex_P\fil{Y}$, $f$ est une $\U$ équivalence faible si et seulement si son image par l'adjonction $\widehat{f}\colon \sd_P\fil{X}\to\fil{Y}$ est une $\Top$-équivalence faible.
Le sens direct se montre sans difficulté. Si $f$ est une $\U$-équivalence faible,  on considère le diagramme commutatif suivant
\begin{equation*}
\begin{tikzcd}
\sd_P\fil{X}
\arrow{rr}{\widehat{f}}
\arrow[swap]{dr}{\sd_P(f)}
&&\fil{Y}
\\
&\sd_P(\Ex_P\fil{Y})
\arrow[swap]{ur}{\epsilon_Y}
\end{tikzcd}
\end{equation*}
Par la proposition \ref{PropositionUniteCouniteEquivalenceFaible} $\epsilon_Y$ est une $\U$-équivalence faible. C'est donc une $\Top$-équivalence faible par la proposition \ref{PropositionUequivalenceTopEquivalence}. D'autre part, $\sd_P(f)$ est l'image d'une $\U$-équivalence faible par le foncteur de Quillen à gauche $\sd_P$, c'est donc une $\Top$-équivalence faible. Par deux sur trois, on en déduit que $\widehat{f}$ 
est une $\Top$-équivalence faible.

Réciproquement, supposons que $\widehat{f}$ est une $\Top$-équivalence faible. On a le diagramme commutatif
\begin{equation}\label{DiagrammeAdjointUEquivalence}
\begin{tikzcd}
\fil{X}
\arrow{rr}{f}
\arrow[swap]{dr}{\eta_X}
&&\Ex_P\fil{Y}
\\
&\Ex_P(\sd_P\fil{X})
\arrow[swap]{ur}{\Ex_P(\widehat{f})}
\end{tikzcd}
\end{equation}
Par la proposition \ref{PropositionUniteCouniteEquivalenceFaible} $\eta_X$ est une $\U$-équivalence faible. Par deux sur trois, il suffit donc de montrer que $\Ex_P(\widehat{f})$ est une $\U$-équivalence faible pour obtenir que $f$ est une $\U$-équivalence faible. Or, par le corollaire \ref{CorollaireBetaInfiniEquivalenceFaible}, $\Ex_P(\widehat{f})$ est une $\U$-équivalence faible si et seulement si 
\begin{equation*}
\Exi_P(\Ex_P(\widehat{f}))\colon\Exi_P(\Ex_P(\sd_P\fil{X}))\to\Exi_P(\Ex_P\fil{Y})
\end{equation*}
est une $\U$-équivalence faible, et on remarque que $\Exi_P\Ex_P=\Exi_P=\Ex_P\Exi_P$. En particulier, $\Exi_P(\Ex_P(\widehat{f}))=\Ex_P(\Exi_P(\widehat{f}))$. De plus, par le corollaire \ref{CorollaireBetaInfiniEquivalenceFaible} $\beta^{\infty}$ est une $\U$-équivalence faible objet par objet, c'est aussi une $\Top$-équivalence objet par objet. On considère le diagramme commutatif suivant.
\begin{equation*}
\begin{tikzcd}
\Ex_P(\sd_P\fil{X})
\arrow{r}{\Ex_P(\widehat{f})}
\arrow[swap]{d}{\beta_{\Ex_P(\sd_P(X))}}
&\Ex_P\fil{Y}
\arrow{d}{\beta_Y}
\\
\Exi_P(\sd_P\fil{X})
\arrow[swap]{r}{\Exi_P(\widehat{f})}
&\Exi_P\fil{Y}
\end{tikzcd}
\end{equation*}
Par deux sur trois, on a que $\Exi_P(\widehat{f})$ est une $\Top$-équivalence faible. Finalement, il reste à montrer que $\Ex_P(\Exi_P(\widehat{f}))$ est une $\U$-équivalence faible. Or on sait que $\Ex_P$ envoie $\Top$-équivalences faibles sur $\U$-équivalences faibles à condition que les domaines et codomaines soient fibrants dans $\sSTop_P$. Ainsi, il suffit de montrer que pour tout $X$, $\Exi_P\fil{X}$ est fibrant dans $\sSTop_P$. Comme on sait que $\Exi_P\fil{X}$ est fibrant dans $\sSU_P$, il suffirait en particulier de montrer que tout objet fibrant dans $\sSU_P$ est fibrant dans $\sSTop_P$. Cette dernière assertion est équivalente à ce que la classe des cofibrations triviales de $\sSTop_P$ soit contenue dans la classe des cofibrations triviales de $\sSU_P$. On note que c'est le cas des cofibrations triviales provenant de $\DiagR_P$. En effet, par le théorème \ref{CategorieModelSimpliciale} la catégorie $\sSU_P$ est simpliciale. Ceci implique, par \cite[Proposition II.3.4]{GoerssJardine} que $\Colim$ envoie les cofibrations triviales génératrices de $\DiagR_P$ sur des cofibrations triviales de $\sSU_P$. Cependant, on ne sait pas si la classe des cofibrations triviales de $\DiagR_P$ génère les cofibrations triviales de $\sSTop_P$. Si c'est le cas, on a immédiatement le résultat suivant :

\begin{conjecture}\label{ConjectureSdPExP}
L'adjonction
\begin{equation*}
\sd_P\colon\sSU_P\leftrightarrow\sSTop_P\colon\Ex_P
\end{equation*}
est une équivalence de Quillen.
\end{conjecture}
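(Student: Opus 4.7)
Le plan est de suivre la réduction déjà esquissée dans la discussion qui précède la conjecture. Plus précisément, pour établir que l'adjonction $(\sd_P,\Ex_P)$ entre $\sSU_P$ et $\sSTop_P$ est une équivalence de Quillen, il suffit de prouver que pour tout morphisme $f\colon \fil{X}\to \Ex_P\fil{Y}$, avec $\fil{Y}$ fibrant dans $\sSTop_P$, $f$ est une $\U$-équivalence faible dès que son image $\widehat{f}\colon \sd_P\fil{X}\to \fil{Y}$ par l'adjonction est une $\Top$-équivalence faible. L'argument présenté dans le corps du texte ramène cette affirmation, via le diagramme \eqref{DiagrammeAdjointUEquivalence}, le corollaire \ref{CorollaireBetaInfiniEquivalenceFaible} et la proposition \ref{PropositionUniteCouniteEquivalenceFaible}, à l'affirmation suivante : pour tout ensemble simplicial filtré $\fil{Z}$, $\Exi_P\fil{Z}$ est fibrant dans $\sSTop_P$.

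Comme le lemme \ref{ExInfiniFibrant} garantit déjà que $\Exi_P\fil{Z}$ est fibrant dans $\sSU_P$, il suffirait en fait de prouver l'inclusion plus générale : tout objet fibrant de $\sSU_P$ est fibrant dans $\sSTop_P$. Par adjonction, cela équivaut à l'inclusion duale entre les classes de cofibrations triviales : toute cofibration triviale de $\sSTop_P$ est une cofibration triviale de $\sSU_P$. Mon approche serait donc de chercher un ensemble engendrant (de façon cofibrante) les cofibrations triviales de $\sSTop_P$, et de vérifier que chaque générateur est une extension anodine au sens de $\sSU_P$. La structure de $\sSTop_P$ a été obtenue par transport depuis $\Top_{N(P)}$ au théorème \ref{TheoremeCategorieModeleSSTop}, et la catégorie $\Top_{N(P)}$ est elle-même engendrée par transport à partir de $\DiagR_P$ (théorème \ref{CategorieModeleTopNP}). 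On a déjà observé dans le corps du texte que les images par $\Colim$ des cofibrations triviales génératrices de $\DiagR_P$ donnent, via $\RealNP{-}$ puis par transport, des cofibrations triviales de $\sSTop_P$ qui sont bien des cofibrations triviales de $\sSU_P$ : cela résulte du théorème \ref{CategorieModelSimpliciale} affirmant que $\sSU_P$ est simpliciale, combiné à \cite[Proposition II.3.4]{GoerssJardine}.

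L'étape décisive consistera donc à exhiber un ensemble engendrant $J^{\Top}$ de cofibrations triviales pour $\sSTop_P$ contenu dans la classe des cofibrations triviales de $\sSU_P$. Un candidat naturel est l'ensemble des morphismes de la forme $\Lambda^n_k\otimes\Delta^{\varphi}\to \Delta^n\otimes\Delta^{\varphi}$ (pour $n\geq 1$, $0\leq k\leq n$, $\Delta^{\varphi}\in R(P)$), qui sont effectivement des cofibrations triviales dans $\sSTop_P$ : leur réalisation est une cofibration triviale de $\Top_{N(P)}$ par construction. Ces morphismes sont en outre des cofibrations de $\sSU_P$ car ce sont des monomorphismes (proposition \ref{PropositionCofibrationsSSetMonomorphismes} adaptée au cas filtré, ou plus directement car les cofibrations de $\sSU_P$ sont les monomorphismes par le théorème \ref{TheoDescriptionExpliciteSSetP}), et le produit $\Lambda^n_k\to \Delta^n$ avec un simplexe filtré fournit typiquement une union de cornets admissibles via la décomposition du lemme \ref{SatureeABC} et ses variantes. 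Le travail technique sera de montrer que pour tout $\Delta^{\varphi}\in R(P)$ le morphisme $\Lambda^n_k\otimes\Delta^{\varphi}\to \Delta^n\otimes\Delta^{\varphi}$ appartient à la classe $\Sat(\A)$ des extensions anodines engendrée par les inclusions de cornets admissibles, et d'établir que l'ensemble ainsi obtenu engendre effectivement les cofibrations triviales de $\sSTop_P$ au sens de l'argument du petit objet.

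Le principal obstacle attendu est précisément ce dernier point : la structure $\sSTop_P$ étant obtenue par transport, on ne dispose pas d'une description aussi explicite de ses cofibrations triviales génératrices que dans le cas de $\sSU_P$. L'identification précise de $J^{\Top}$, ou d'une classe génératrice équivalente, est l'étape délicate. Une alternative serait de contourner cette difficulté en exhibant \emph{directement}, pour tout $\fil{Z}$ et toute inclusion de cornet admissible $\Lambda^{\varphi}_k\to\Delta^{\varphi}$, un relèvement de $\Lambda^{\varphi}_k\to\Exi_P\fil{Z}$ le long de $\Lambda^{\varphi}_k\to\Delta^{\varphi}$ dans $\Exi_P\fil{Z}$ qui soit compatible avec la structure topologique, en adaptant la construction explicite de la preuve du lemme \ref{ExInfiniFibrant}. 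Cela démontrerait directement que $\Exi_P\fil{Z}$ est $\sSTop_P$-fibrant sans passer par une caractérisation des cofibrations triviales, et permettrait de conclure la conjecture \ref{ConjectureSdPExP}.
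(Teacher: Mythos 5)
Attention : l'énoncé que vous traitez est une \emph{conjecture}, et le texte ne la démontre pas. Ce que le texte fournit est uniquement une \emph{réduction} : par la discussion qui précède (diagramme \ref{DiagrammeAdjointUEquivalence}, corollaire \ref{CorollaireBetaInfiniEquivalenceFaible}, proposition \ref{PropositionUniteCouniteEquivalenceFaible}), la conjecture \ref{ConjectureSdPExP} se ramène à montrer que toute cofibration triviale de $\sSTop_P$ est une cofibration triviale de $\sSU_P$ (de façon équivalente, que tout objet fibrant de $\sSU_P$, en particulier $\Exi_P\fil{Z}$, est fibrant dans $\sSTop_P$). Votre proposition reproduit fidèlement cette même réduction, ce qui est correct, mais elle ne franchit pas l'étape restante : elle se contente d'esquisser deux stratégies possibles sans les mener à bien.

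Le point précis où votre argument ne se referme pas est celui que vous identifiez vous-même comme « l'étape délicate ». Les morphismes $\Lambda^n_k\otimes\Delta^{\varphi}\to\Delta^n\otimes\Delta^{\varphi}$ sont bien des cofibrations triviales de $\sSTop_P$ et des extensions anodines de $\sSU_P$ (c'est exactement l'observation faite dans le texte via le théorème \ref{CategorieModelSimpliciale} et \cite[Proposition II.3.4]{GoerssJardine}), mais rien ne garantit qu'ils \emph{engendrent} toutes les cofibrations triviales de $\sSTop_P$ : la structure transportée du théorème \ref{TheoremeCategorieModeleSSTop} est obtenue par \cite[Corollary 3.3.4]{Hess}, qui ne fournit pas de description explicite d'un ensemble générateur des cofibrations triviales, et le texte signale explicitement qu'on ignore si les cofibrations triviales issues de $\DiagR_P$ suffisent. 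Votre stratégie alternative (construire directement des relèvements topologiquement compatibles dans $\Exi_P\fil{Z}$ en adaptant le lemme \ref{ExInfiniFibrant}) est une piste raisonnable mais reste à l'état d'intention : en l'état, vous n'avez pas démontré la conjecture, vous avez redérivé la réduction déjà présente dans le texte.
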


\begin{reduction}
Par la discussion précédente, il suffit de montrer que toute  cofibration triviale de $\sSTop_P$ est une cofibration triviale de $\sSU_P$. 
\end{reduction}

\subsection{Comparaison entre $\sSU_P$ et $\Top_{N(P)}$}
\label{SectionSingNPRealNPPreserventEquivalences}
Bien qu'on ne sache pas (encore) montrer qu'il existe une équivalence de Quillen entre $\Top_{N(P)}$ et $\sSU_P$, on sait tout de même comparer les catégories homotopiques correspondantes. En effet, si un morphisme entre ensembles simpliciaux filtrés
\begin{equation*}
f\colon\fil{A}\to\fil{B}
\end{equation*}
est une $\U$-équivalence faible, alors, par la proposition \ref{PropositionUequivalenceTopEquivalence}, c'est aussi une $\Top$-équivalence faible. Mais par définition des équivalences faibles de $\sSTop_P$ ceci implique que la réalisation fortement filtrée
\begin{equation*}
\RealNP{f}\colon\RealNP{\fil{A}}\to\RealNP{\fil{B}},
\end{equation*}
est une équivalence faible de $\Top_{N(P)}$. Dans l'autre sens, si une application filtrée
\begin{equation*}
g\colon\fil{X}\to\fil{Y}
\end{equation*}
est une équivalence faible dans $\Top_{N(P)}$, alors le morphisme d'ensembles simpliciaux filtrés
\begin{equation*}
\Sing_{N(P)}(g)\colon\Sing_{N(P)}\fil{X}\to\Sing_{N(P)}\fil{Y}
\end{equation*}
est une équivalence faible de $\sSU_P$.
En effet, par construction de la catégorie modèle $\sSTop_P$ , $\Sing_{N(P)}$ est un foncteur de Quillen à droite (voir le théorème \ref{TheoremeCategorieModeleSSTop}). Comme tout les objets de $\Top_{N(P)}$ sont fibrants, ceci implique que  $\Sing_{N(P)}(g)$ est une équivalence faible entre objets fibrants de $\sSTop_P$. Ainsi, $\Ex_P(\Sing_{N(P)}(g))$ est une $\U$-équivalence faible. Or on a le diagramme commutatif suivant
\begin{equation*}
\begin{tikzcd}[column sep= 65pt]
\Sing_{N(P)}\fil{X}
\arrow[swap]{d}{\beta_{\Sing_{N(P)}(X)}}
\arrow{r}{\Sing_{N(P)}(g)}
&\Sing_{N(P)}\fil{Y}
\arrow{d}{\beta_{\Sing_{N(P)}(Y)}}
\\
\Ex_P(\Sing_{N(P)}\fil{X})
\arrow[swap]{r}{\Ex_P(\Sing_{N(P)}(g))}
&\Ex_P(\Sing_{N(P)}\fil{Y})
\end{tikzcd}
\end{equation*}
où les morphismes verticaux sont des $\U$-équivalences faibles. On en déduit que $\Sing_{N(P)}(g)$ est une $\U$-équivalence faible. Finalement, on a

\begin{theo}\label{RealisationSingPreserventEquivalencesFaibles}
L'adjonction
\begin{equation*}
\RealNP{-}\colon\sSU_P\leftrightarrow\Top_{N(P)}\colon\Sing_{N(P)}
\end{equation*}
préserve les équivalences faibles.
\end{theo}
 \chapter{Retour aux espaces stratifiés}
\label{ChapitreCMFStrat}
Jusqu'ici, nous avons fixé un ensemble ordonné $P$, et considéré des objets filtrés au-dessus de $P$. L'objet de ce chapitre est d'étudier comment décrire une catégorie modèle d'objets stratifiés à l'aide des résultats obtenus sur les catégories modèles d'objets filtrés. 

Dans la première section de ce chapitre, on examine comment la donnée d'un morphisme d'ensembles ordonnés $\alpha\colon P\to Q$ permet de comparer les catégories d'objets (fortement) filtrés au dessus de $P$ et de $Q$. On montre que $\alpha$ permet de définir une adjonction de Quillen $(\alpha_*,\alpha^*)$ entre $\sSU_P$ et $\sSU_Q$, (Proposition \ref{AlphaAdjonctionQuillen}), ainsi que entre $\Top_{N(P)}$ et $\Top_{N(Q)}$ et entre $\Top_P$ et $\Top_Q$ (Proposition \ref{PropositionAlphaAdjonctionTop} et Remarque \ref{RemarqueAlphaStarTopP}). Finalement, dans la section \ref{SectionAlphaSingReal}, on examine les relations de compatibilités entre les foncteurs induits par $\alpha$ et les foncteurs $\Sing_{N(P)}$ et $\RealNP{-}$.

Dans la section \ref{SectionStrat}, on montre que la donnée des catégories modèles $\Top_P$, $\Top_{N(P)}$ et $\sS_P$ permet de définir des catégories modèles d'objets stratifiés : $\Strat$, $\FStrat$ et $\sStrat$. C'est le contenu du théorème \ref{TheoremeCMFStrat} qui est une application de \cite[Theorem 4.2]{Cagne}. On montre aussi (proposition \ref{PropositionAdjonctionRealStratSingStrat}) que les relations de compatibilités de la proposition \ref{PropositionCompatibiliteSingAlpha} permettent d'obtenir une adjonction :
\begin{equation*}
\Real{-}_{\FStrat}\colon\sStrat\leftrightarrow \FStrat\colon\Sing_{\FStrat}.
\end{equation*}
Bien que ce ne soit pas une adjonction de Quillen, on montre que cette adjonction préserve les équivalences faibles, c'est le théorème \ref{TheoremeRealStratSingStratPreserventEquivalenceFaible}.

Finalement, dans la section \ref{SectionAutresCMF}, on présente deux autres structures de modèles pour les espaces stratifiés. Dans la section \ref{SectionNandLal}, on considère la structure de modèle sur $\Strat$ introduit par Nand-Lal dans \cite{NandLal}, et dans la section \ref{SectionHaine}, on s'intéresse à la catégorie modèle $\sSJK_P$ de \cite{Haine}.
\section{Changer l'ensemble ordonné}
Pour toute cette section, on fixe un morphisme d'ensembles ordonnés $\alpha\colon P\to Q$. Demander que $\alpha$ soit un morphisme d'ensembles ordonnés revient à imposer que \begin{equation*}
p_0\leq p_1\in P\Rightarrow \alpha(p_0)\leq \alpha(p_1)\in Q.
\end{equation*}
Nous allons étudier comment $\alpha$ permet de comparer les catégories de modèle d'objets filtrés sur $P$ et sur $Q$.

\subsection{Les ensembles simpliciaux filtrés}

On remarque que si $(X,\varphi_X\colon X\to N(P))$ est un ensemble simplicial filtré sur $P$, $(X,N(\alpha)\circ\varphi_X\colon X\to N(Q))$ est un ensemble simplicial filtré sur $Q$. Ceci permet de définir un foncteur.
\begin{defin}
On définit le foncteur
\begin{align*}
\alpha_*\colon \sSU_P&\to\sSU_Q\\
\fil{X}&\mapsto (X,N(\alpha)\circ\varphi_X)\\
f\colon\fil{X}\to\fil{Y}&\mapsto f
\end{align*}
\end{defin}
De même, si $(Y,\varphi_Y\colon Y\to N(Q))$ est un ensemble simplicial filtré sur $Q$, alors en considérant le produit fibré
\begin{equation*}
\begin{tikzcd}
Y\times_{N(Q)}N(P)
\arrow{r}
\arrow{d}{\alpha^*\varphi_Y}
&
Y
\arrow{d}{\varphi_Y}
\\
N(P)
\arrow{r}{N(\alpha)}
&N(Q)
\end{tikzcd}
\end{equation*}
On obtient un ensemble simplicial filtré sur $N(P)$ $(Y\times_{N(Q)}N(P),\alpha^*\varphi_Y\colon Y\times_{N(Q)}N(P)\to N(P))$.
On obtient ainsi un foncteur
\begin{defin}\label{DefinitionAlphaUpperStarSimplicial}
On définit le foncteur
\begin{align*}
\alpha^*\colon \sSU_Q&\to\sSU_P\\
\fil{Y}&\mapsto (Y\times_{N(Q)}N(P),\alpha^*\varphi_Y)
\end{align*}
en définissant $\alpha^*$ sur les morphismes par la propriété universelle du produit fibré.
\end{defin}
On a immédiatement la propriété suivante
\begin{prop}\label{AlphaAdjonction}
La paire $(\alpha_*,\alpha^*)$ est une paire de foncteurs adjoints.
\end{prop}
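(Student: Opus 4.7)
La stratégie est d'exhiber explicitement des bijections naturelles entre les ensembles de morphismes, à partir de la propriété universelle du produit fibré utilisé pour définir $\alpha^*$.

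Plus précisément, étant donnés $\fil{X}=(X,\varphi_X)\in\sSU_P$ et $\fil{Y}=(Y,\varphi_Y)\in\sSU_Q$, je commencerais par remarquer qu'un morphisme filtré $f\colon\alpha_*\fil{X}\to\fil{Y}$ est exactement la donnée d'un morphisme d'ensembles simpliciaux $f\colon X\to Y$ vérifiant $\varphi_Y\circ f = N(\alpha)\circ \varphi_X$. Par la propriété universelle du produit fibré définissant $\alpha^*\fil{Y}$, la donnée d'un tel morphisme $f$ est équivalente à la donnée d'un unique morphisme
\begin{equation*}
\widetilde{f}\colon X\to Y\times_{N(Q)}N(P)
\end{equation*}
vérifiant $\pr_Y\circ\widetilde{f}=f$ et $\pr_{N(P)}\circ\widetilde{f}=\varphi_X$. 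Cette dernière équation signifie exactement que $\widetilde{f}$ est un morphisme filtré de $\fil{X}$ vers $\alpha^*\fil{Y}$.

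Dans l'autre sens, à partir d'un morphisme filtré $g\colon\fil{X}\to\alpha^*\fil{Y}$, on pose $\widehat{g}=\pr_Y\circ g\colon X\to Y$, et on vérifie que la commutativité du carré cartésien définissant $\alpha^*\varphi_Y$ impose $\varphi_Y\circ\widehat{g}=N(\alpha)\circ\varphi_X$, ce qui fait de $\widehat{g}$ un morphisme filtré de $\alpha_*\fil{X}$ vers $\fil{Y}$.

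Les deux associations $f\mapsto \widetilde{f}$ et $g\mapsto\widehat{g}$ sont inverses l'une de l'autre par l'unicité dans la propriété universelle du produit fibré, ce qui fournit la bijection
\begin{equation*}
\Hom_{\sSU_Q}(\alpha_*\fil{X},\fil{Y})\simeq\Hom_{\sSU_P}(\fil{X},\alpha^*\fil{Y}).
\end{equation*}
Il reste à vérifier que cette bijection est naturelle en $\fil{X}$ et en $\fil{Y}$, ce qui découle directement de la fonctorialité du produit fibré. Aucune difficulté substantielle n'est à anticiper : il s'agit d'une application formelle de la propriété universelle du produit fibré, analogue au fait standard que tout foncteur ``changement de base'' entre catégories tranches admet comme adjoint à gauche le foncteur de post-composition.
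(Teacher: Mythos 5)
Votre preuve est correcte et suit essentiellement la même démarche que celle du texte : identifier un morphisme $\alpha_*\fil{X}\to\fil{Y}$ à un carré commutatif, puis invoquer la propriété universelle du produit fibré pour obtenir la bijection avec $\Hom(\fil{X},\alpha^*\fil{Y})$. Vous explicitez simplement les deux directions et la naturalité, que le texte laisse implicites.
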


\begin{proof}
Soient $\fil{X}$ un ensemble simplicial filtré sur $P$ et $\fil{Y}$ un ensemble simplicial filtré sur $Q$. Un morphisme $f\colon\alpha_*\fil{X}\to\fil{Y}$ correspond à la donnée d'un diagramme commutatif
\begin{equation}\label{DiagrammeAlphaEtoile}
\begin{tikzcd}
X
\arrow{r}{f}
\arrow[swap]{d}{\varphi_X}
&Y
\arrow{d}{\varphi_Y}
\\
N(P)
\arrow[swap]{r}{N(\alpha)}
&N(Q)
\end{tikzcd}
\end{equation}
Par la propriété universelle du produit fibré, les carrés commutatifs de la forme \ref{DiagrammeAlphaEtoile} sont en bijections avec les morphismes $\fil{X}\to\alpha^*\fil{Y}$.
\end{proof}

\begin{prop}\label{AlphaAdjonctionQuillen}
L'adjonction $(\alpha_*,\alpha^*)$ est une adjonction de Quillen. De plus, c'est une équivalence de Quillen si et seulement si $\alpha$ est un isomorphisme d'ensemble ordonné.
\end{prop}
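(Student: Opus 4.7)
The plan is to verify the Quillen adjunction condition on the generating sets of $\sSU_P$ given by Théorème \ref{TheoDescriptionExpliciteSSetP}, and then to prove the equivalence statement by finding explicit test objects whose image detects when $\alpha$ fails to be an isomorphism.

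For the Quillen adjunction part, I would check that $\alpha_{*}$ preserves cofibrations and trivial cofibrations. Since $\alpha_{*}$ leaves the underlying simplicial set unchanged and only post-composes the filtration with $N(\alpha)$, it sends monomorphisms to monomorphisms, hence cofibrations to cofibrations. By Théorème \ref{TheoDescriptionExpliciteSSetP}, it suffices then to check that $\alpha_{*}$ sends admissible horn inclusions to trivial cofibrations. But if $\Lambda^{\varphi}_k \to \Delta^{\varphi}$ is admissible with $\varphi(\Delta^N)=[p_0,\dots,p_N]$ satisfying $p_k=p_{k\pm 1}$, then $N(\alpha)\circ\varphi$ has $\alpha(p_k)=\alpha(p_{k\pm 1})$, so $\Lambda^{N(\alpha)\circ\varphi}_k \to \Delta^{N(\alpha)\circ\varphi}$ is again admissible (even if its image simplex is degenerate in $N(Q)$, the admissibility criterion of Proposition \ref{PropCornetAdmissible} only concerns consecutive vertices). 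Hence $\alpha_{*}$ preserves trivial cofibrations, and $(\alpha_{*},\alpha^{*})$ is a Quillen adjunction.

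The direct implication in the equivalence statement is immediate: if $\alpha\colon P\to Q$ is an isomorphism of ordered sets, then $N(\alpha)$ is an isomorphism of simplicial sets, the product $Y\times_{N(Q)}N(P)$ is canonically isomorphic to $Y$, and both $\alpha_{*}$ and $\alpha^{*}$ are equivalences of categories inverse to each other. In particular they induce equivalences of the homotopy categories, so the adjunction is a Quillen equivalence.

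For the converse, the plan is to exhibit test objects showing that an equivalence of homotopy categories forces $\alpha$ to be bijective and order-reflecting. Suppose $(\alpha_{*},\alpha^{*})$ is a Quillen equivalence. If $\alpha$ fails to be surjective, pick $q\in Q\setminus\alpha(P)$; then for the cofibrant–fibrant object $\Delta^{[q]}\in\sSU_Q$ one has $\alpha^{*}\Delta^{[q]}=\emptyset$, so the derived counit $\mathbf{L}\alpha_{*}\mathbf{R}\alpha^{*}\Delta^{[q]}\to\Delta^{[q]}$ is the morphism $\emptyset\to\Delta^{[q]}$, which is not a weak equivalence. If $\alpha$ is not injective, pick $p_0\neq p_1$ with $\alpha(p_0)=\alpha(p_1)=q$; then $\Delta^{[p_0]}$ and $\Delta^{[p_1]}$ are not isomorphic in $\Ho(\sSU_P)$ but become isomorphic to $\Delta^{[q]}$ in $\Ho(\sSU_Q)$ under $\alpha_{*}$, contradicting essential injectivity. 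Finally, if $\alpha$ is a bijection of sets that is not an order-isomorphism, there exist $p_0,p_1\in P$ with $\alpha(p_0)<\alpha(p_1)$ in $Q$ but $p_0,p_1$ incomparable in $P$; then $\alpha^{*}\Delta^{[\alpha(p_0),\alpha(p_1)]}$ is computed as the pullback, and since no $1$-simplex of $N(P)$ maps to $[\alpha(p_0),\alpha(p_1)]$, this pullback reduces to the discrete set of vertices $\{p_0\}\sqcup\{p_1\}$, whose $s\pi_0$ differs from that of the contractible $\Delta^{[\alpha(p_0),\alpha(p_1)]}$. In all three cases, the derived unit or counit fails to be a weak equivalence. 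The expected main technical point is the last case: one needs the explicit pullback computation and the invariance of $s\pi_0$ on non-degenerate simplices (Proposition \ref{SimplexesNonDegeneresGroupesHomotopiesFiltres}) to detect the order-theoretic obstruction, whereas the first two cases only use $\pi_0$-level invariants.
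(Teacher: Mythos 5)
Your proof is correct, and the two halves compare differently with the paper's. For the Quillen-adjunction part you do essentially what the paper does: the paper checks that $\alpha^*$ preserves fibrations by transposing a lifting problem against an admissible horn and observing that $\alpha_*$ of an admissible horn is again admissible, which is exactly the adjoint formulation of your check that $\alpha_*$ sends the generating trivial cofibrations of Théorème \ref{TheoDescriptionExpliciteSSetP} to trivial cofibrations. For the converse, however, your route is genuinely different. The paper uses a single global test object: it notes that $\alpha^*(N(Q),\Id)\simeq (N(P),\Id)$, deduces that the adjoint map $N(\alpha)\colon (N(P),N(\alpha))\to (N(Q),\Id)$ must be a weak equivalence, then passes to $\Top_{N(Q)}$ via Théorème \ref{RealisationSingPreserventEquivalencesFaibles} and reads off bijectivity and order-reflection from $s\pi_0$ evaluated at $[q]$ and $[q_0,q_1]$; the detour through realization is needed because $(N(P),N(\alpha))$ has no reason to be fibrant in $\sSU_Q$, whereas $s\pi_0$ of a filtered space is always defined. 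You instead use local test objects ($\Delta^{[q]}$, the pair $\Delta^{[p_0]},\Delta^{[p_1]}$, and $\Delta^{[\alpha(p_0),\alpha(p_1)]}$), all of which are fibrant and cofibrant, so every computation stays inside $\sSU_P$ and $\sSU_Q$ and only uses the explicit pullback description of $\alpha^*$ together with Proposition \ref{SimplexesNonDegeneresGroupesHomotopiesFiltres} and Théorème \ref{EquivalenceFaibleIsoGroupeHomotopie}. This buys self-containedness: your argument does not depend on the machinery of chapter \ref{ChapitreKanQuillen}, at the modest cost of a three-case analysis where the paper has a single uniform computation. One small point worth making explicit in your third case: the existence of incomparable $p_0,p_1$ with $\alpha(p_0)<\alpha(p_1)$ does follow from "$\alpha$ bijective, order-preserving, not an order-isomorphism", since $p_1<p_0$ would force $\alpha(p_1)\leq\alpha(p_0)$ and contradict $\alpha(p_0)<\alpha(p_1)$; stating this removes any ambiguity about the exhaustiveness of your case split.
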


\begin{proof}
Le foncteur $\alpha_*$ préserve les monomorphismes par constructions, il préserve donc les cofibrations. Soit 
$f\colon\fil{Y}\to\fil{Z}$ une fibration de $\sSU_Q$.
Montrons que $\alpha^*f$ est une fibration de $\sSU_P$. Soit $j\colon \Lambda^{\varphi}_k\to\Delta^{\varphi}$ une inclusion de cornet admissible de $\sS_P$. On considère le problème de relèvement
\begin{equation*}
\begin{tikzcd}
\Lambda^{\varphi}_k
\arrow[swap]{d}{j}
\arrow{r}
&\alpha^*\fil{Y}
\arrow{d}{\alpha^*f}
\\
\Delta^{\varphi}
\arrow{r}
\arrow[dashrightarrow]{ur}{h}
&\alpha^*\fil{Z}
\end{tikzcd}
\end{equation*}
Par adjonction, ce problème de relèvement est équivalent à
\begin{equation*}
\begin{tikzcd}
\alpha_*\Lambda^{\varphi}_k
\arrow[swap]{d}{\alpha_*j}
\arrow{r}
&\fil{Y}
\arrow{d}{f}
\\
\alpha_*\Delta^{\varphi}
\arrow{r}
\arrow[dashrightarrow]{ur}{\widehat{h}}
&\fil{Z}
\end{tikzcd}
\end{equation*}
Or, $\alpha_*\Delta^{\varphi}=\Delta^{N(\alpha)\circ\varphi}$. De plus, comme l'inclusion de cornet $j$ est admissible, on a $\varphi(e_k)=\varphi(e_{k-1})$ ou $\varphi(e_{k+1})$. En particulier, $N(\alpha)\circ\varphi(e_k)=N(\alpha)\circ\varphi(e_{k-1})$ ou $N(\alpha)\circ\varphi(e_{k+1})$ et l'inclusion de cornet $\alpha_*j$ est admissible. Comme $f$ est une fibration de $\sSU_Q$, on en déduit qu'il existe un relèvement $\widehat{h}$, et donc par adjonction, il existe un relèvement $h$. Ainsi, $\alpha_*$ préserve les cofibrations et $\alpha^*$ préserve les fibrations, l'adjonction $(\alpha_*,\alpha^*)$ est donc une adjonction de Quillen.

Si $\alpha$ est un isomorphisme, on a $\alpha_*\circ\alpha^*\simeq\alpha^*\circ\alpha_*\simeq\Id$ et l'adjonction de Quillen est une équivalence de Quillen. Réciproquement, supposons que $\alpha\colon P\to Q$ induit une équivalence de Quillen $(\alpha_*,\alpha^*)$. On considère $(N(P),\Id)$ l'ensemble simplicial filtré sur $P$ et $(N(Q),\Id)$ l'ensemble simplicial filtré sur $Q$. On remarque que la propriété universelle du produit fibré fournit un isomorphisme canonique $(N(P),\Id)\simeq \alpha^*(N(Q),\Id)$. Par adjonction, ce morphisme devient
\begin{equation*}
N(\alpha)\colon(N(P),N(\alpha))\to (N(Q),\Id).
\end{equation*}
Comme $(\alpha_*,\alpha^*)$ est une équivalence de Quillen par hypothèse, $N(\alpha)$ est une équivalence faible de $\sSU_Q$. Par la proposition \ref{RealisationSingPreserventEquivalencesFaibles}, le morphisme
\begin{equation*}
\Real{N(\alpha)}_{N(Q)}\colon \Real{(N(P),N(\alpha))}_{N(Q)}\to\Real{(N(Q),\Id)}_{N(Q)}
\end{equation*}
est une équivalence faible de $\Top_{N(Q)}$. En particulier, pour tout $q\in Q$, le morphisme
\begin{equation*}
s\pi_0(\Real{N(\alpha)}_{N(Q)})([q])\colon s\pi_0(\Real{(N(P),N(\alpha))}_{N(Q)})([q])\to s\pi_0(\Real{(N(Q),\Id)}_{N(Q)})([q]).
\end{equation*}
est un isomorphisme. Or $s\pi_0(\Real{(N(Q),\Id)}_{N(Q)})([q])=\{[q]\}$. On en déduit qu'il existe un unique $p\in P$ tel que $\alpha(p)=q$. Donc, $\alpha$ est une bijection. Soit maintenant $q_0<q_1\in Q$. comme précédement, on a un isomorphisme
\begin{equation*}
s\pi_0(\Real{N(\alpha)}_{N(Q)})([q_0,q_1])\colon s\pi_0(\Real{(N(P),N(\alpha))}_{N(Q)})([q_0,q_1])\to s\pi_0(\Real{(N(Q),\Id)}_{N(Q)})([q_0,q_1]).
\end{equation*}
On déduit de même que si $p_0$ et $p_1$ sont les préimages par $\alpha$ de $q_0$ et $q_1$ respectivement, on a $p_0<p_1$. Finalement, $\alpha$ est un isomorphisme d'ensembles ordonnés.
\end{proof}

\subsection{Les espaces fortement filtrés}
Comme précédemment, on peut définir les foncteurs $\alpha_*$ et $\alpha^*$ entre les catégories $\Top_{N(P)}$ et $\Top_{N(Q)}$.

\begin{defin}
on définit le foncteur
\begin{align*}
\alpha_*\colon \Top_{N(P)}&\to\Top_{N(Q)}\\
\fil{X}&\mapsto (X,\Real{N(\alpha)}\circ\varphi_X)\\
f\colon\fil{X}\to\fil{Y}&\mapsto f
\end{align*}
\end{defin}
\begin{defin}\label{DefinitionAlphaUpperStarTopologique}
On définit le foncteur
\begin{align*}
\alpha^*\colon \Top_{N(Q)}&\to\Top_{N(P)}\\
\fil{Y}&\mapsto (Y\times_{\Real{N(Q)}}\Real{N(P)},\alpha^*\varphi_Y)
\end{align*}
en définissant $\alpha^*$ sur les morphismes par la propriété universelle du produit fibré.
\end{defin}

On a encore le résultat suivant.
\begin{prop}\label{PropositionAlphaAdjonctionTop}
La paire $(\alpha_*,\alpha^*)$ forme une adjonction de Quillen. C'est une équivalence de Quillen si et seulement si $\alpha$ est un isomorphisme d'ensembles ordonnés.
\end{prop}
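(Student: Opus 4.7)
The plan is to mirror the argument of Proposition \ref{AlphaAdjonctionQuillen}, adapting it to the topological setting where the model structure on $\Top_{N(-)}$ has been defined by transfer from $\DiagR_{(-)}$ via the functor $D$. First I would establish the adjunction $(\alpha_*,\alpha^*)$ purely formally: for $\fil{X}\in\Top_{N(P)}$ and $\fil{Y}\in\Top_{N(Q)}$, a morphism $\alpha_*\fil{X}\to\fil{Y}$ is a commutative square
\begin{equation*}
\begin{tikzcd}
X\arrow{r}{f}\arrow[swap]{d}{\varphi_X}&Y\arrow{d}{\varphi_Y}\\
\Real{N(P)}\arrow[swap]{r}{\Real{N(\alpha)}}&\Real{N(Q)}
\end{tikzcd}
\end{equation*}
which by the universal property of the pullback is the same datum as a morphism $\fil{X}\to\alpha^*\fil{Y}$.

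Next I would prove that the adjunction is Quillen. Rather than checking on the generating set $J$ directly (which would require tracking what $\alpha_*$ does to possibly degenerate simplices $\alpha_*\Delta^{\varphi}$ over $Q$), I would argue at the level of the diagram functor: for any $\Delta^{\varphi}\in R(P)$, the adjunction together with the identity $\alpha_*\Real{\Delta^{\varphi}}_{N(P)}\simeq\Real{\alpha_*\Delta^{\varphi}}_{N(Q)}$ yields an isomorphism
\begin{equation*}
\Map(\Real{\Delta^{\varphi}}_{N(P)},\alpha^*\fil{X})\simeq \Map(\Real{\alpha_*\Delta^{\varphi}}_{N(Q)},\fil{X}).
\end{equation*}
Factoring $\alpha_*\Delta^{\varphi}$ through its non-degenerate model $\Delta^{\bar{\psi}}\in R(Q)$, Lemma \ref{EquivalenceFaibleSimplexeNonDegenere} (or its analog for $\sSTop_P$) shows that this simplicial set is weakly equivalent to $\Map(\Real{\Delta^{\bar{\psi}}}_{N(Q)},\fil{X})$ and the restriction is a Kan fibration whenever $f$ is a fibration of $\Top_{N(Q)}$. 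Hence $D(\alpha^*f)$ is a (trivial) fibration of $\DiagR_P$ whenever $D(f)$ is, i.e.\ $\alpha^*$ preserves fibrations and trivial fibrations.

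The ``only if'' direction of the equivalence claim is immediate: when $\alpha$ is an order-iso, $\alpha^{-1}$ induces a quasi-inverse and one has $\alpha_*\circ\alpha^*\simeq\Id\simeq\alpha^*\circ\alpha_*$. For the converse, I would imitate the simplicial proof. The object $(\Real{N(P)},\Id)$ is canonically isomorphic to $\alpha^*(\Real{N(Q)},\Id)$, so under the adjunction it corresponds to the unit morphism
\begin{equation*}
\Real{N(\alpha)}\colon(\Real{N(P)},\Real{N(\alpha)})\longrightarrow(\Real{N(Q)},\Id)
\end{equation*}
which must be a weak equivalence of $\Top_{N(Q)}$. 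Looking at $s\pi_0$ over each non-degenerate simplex $[q]\in R(Q)$ forces $\alpha^{-1}(q)$ to be a single point, so $\alpha$ is a bijection; testing against $[q_0<q_1]\in R(Q)$ and unpacking the resulting mapping space (whose target is a point) forces the unique preimages to satisfy $p_0<p_1$ in $P$, so $\alpha$ is an order-isomorphism.

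The step requiring the most care is the Quillen part: one must make sure the adjunction-based identification of mapping spaces is compatible with degeneracies in $N(Q)$, i.e.\ that replacing $\alpha_*\Delta^{\varphi}$ by its non-degenerate core preserves fibration status. This is the topological analog of the admissibility check for horns $\Lambda^{\varphi}_k\to\Delta^{\varphi}$ in the simplicial proof, and it is where the hypothesis ``$\alpha$ preserves the order'' is actually used; the homotopy-invariance of $\Map(\Real{\Delta^{\varphi}}_{N(Q)},-)$ in the filtration-degree direction, together with the transfer of the model structure along $D$, is what makes the argument go through cleanly.
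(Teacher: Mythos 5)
Your overall outline is the right one, and both the adjunction itself and the ``equivalence of Quillen iff $\alpha$ is an isomorphism'' part match the paper's argument (the paper simply re-runs the proof of Proposition \ref{AlphaAdjonctionQuillen} for the latter, exactly as you sketch). The gap is in the Quillen step. You work on the right-adjoint side and want $D(\alpha^* f)$ to be a (trivial) fibration of $\DiagR_P$ whenever $f$ is one in $\Top_{N(Q)}$; via the identification $\Map(\RealNP{\Delta^{\varphi}},\alpha^*\fil{X})\simeq\Map(\Real{\alpha_*\Delta^{\varphi}}_{N(Q)},\fil{X})$ this reduces to controlling $\Map(\Real{\Delta^{\psi}}_{N(Q)},-)$ for the possibly \emph{degenerate} simplex $\Delta^{\psi}=\alpha_*\Delta^{\varphi}$ of $N(Q)$. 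At that point you invoke Lemma \ref{EquivalenceFaibleSimplexeNonDegenere} to pass to the non-degenerate core $\Delta^{\bar{\psi}}$, but that lemma only provides a \emph{weak equivalence} of mapping spaces, and a map weakly equivalent to a Kan fibration need not be a Kan fibration. Since the fibrations of $\Top_{N(Q)}$ are by definition only tested against the non-degenerate simplices of $R(Q)$, your sentence ``the restriction is a Kan fibration whenever $f$ is a fibration'' is precisely the statement that still has to be proved.

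The paper closes this gap with a retract argument on the left-adjoint side: writing $\Delta^{\psi}=(\Delta^m,\psi)$, the filtered simplex $\Delta^{\psi}$ is a retract of $\Delta^m\otimes\Delta^{\bar{\psi}}$, so each morphism $\Real{K}\otimes\Real{\Delta^{\psi}}_{N(Q)}\to\Real{L}\otimes\Real{\Delta^{\psi}}_{N(Q)}$ is a retract of $\Real{K\times\Delta^m}\otimes\Real{\Delta^{\bar{\psi}}}_{N(Q)}\to\Real{L\times\Delta^m}\otimes\Real{\Delta^{\bar{\psi}}}_{N(Q)}$, which is a generating-type (trivial) cofibration because $K\times\Delta^m\to L\times\Delta^m$ is a (trivial) cofibration of $\sS$ and $\Delta^{\bar{\psi}}\in R(Q)$. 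The same retract, read through $\Map(-,\fil{X})$, would also repair your version: $\Map(\Real{\Delta^{\psi}}_{N(Q)},-)$ is a retract of $\Map(\Real{\Delta^{\bar{\psi}}}_{N(Q)},-)^{\Delta^m}$, and Kan fibrations are stable under exponentials and retracts. Finally, your closing remark that the order-preservation of $\alpha$ is ``actually used'' in this degeneracy check is misleading: it is only needed for $N(\alpha)$, hence $\alpha_*$, to be defined at all; no analogue of the admissibility check for horns arises here, precisely because the generators of $\Top_{N(P)}$ are indexed by arbitrary simplicial (trivial) cofibrations $K\to L$ together with non-degenerate $\Delta^{\varphi}$, rather than by admissible horn inclusions.
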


\begin{proof}
La paire $(\alpha_*,\alpha^*)$ est une paire de foncteurs adjoints par les mêmes arguments que pour la proposition \ref{AlphaAdjonction}. Montrons que c'est une adjonction de Quillen. Il suffit de vérifier que les cofibrations (triviales) génératrices de $\Top_{N(P)}$ sont envoyées sur des cofibrations (triviales) par $\alpha_*$. Les cofibrations triviales génératrices de $\Top_{N(P)}$ sont de la forme
\begin{equation*}
j\colon\Real{K}\otimes\RealNP{\Delta^{\varphi}}\to\Real{L}\otimes\RealNP{\Delta^{\varphi}}
\end{equation*}
où $K\to L$ est une cofibration (triviale) entre ensembles simpliciaux et $\Delta^{\varphi}\in R(P)$ est un simplexe non dégénéré de $N(P)$. On remarque que 
\begin{equation*}
\alpha_*\RealNP{\Delta^{\varphi}}\simeq\Real{\alpha_*\Delta^{\varphi}}_{N(Q)}\simeq\Real{\Delta^{N(\alpha)\circ\varphi}}
\end{equation*}
Ainsi, il suffit de montrer que pour tout $\Delta^{\psi}\in \Delta(Q)$, et pour toute cofibration (triviale) de $\sS$, $K\to L$ le morphisme
\begin{equation}\label{EquationMorphismeTopNQCofibrationTriviale}
\Real{K}\otimes\Real{\Delta^{\psi}}_{N(Q)}\to\Real{L}\otimes\Real{\Delta^{\psi}}_{N(Q)}
\end{equation}
est une cofibration (triviale) de $\Top_{N(Q)}$. Or, on remarque que pour tout $\Delta^{\psi}=(\Delta^m,\psi)\in \Delta(Q)$, $\Delta^{\psi}$ est un rétracte de $\Delta^m\otimes\Delta^{bar(\psi)}$.
Ainsi, le morphisme \ref{EquationMorphismeTopNQCofibrationTriviale} est un rétracte de
\begin{equation*}
\Real{K\times\Delta^m}\otimes\Real{\Delta^{\bar{\psi}}}_{N(Q)}\to\Real{L\times\Delta^m}\otimes\Real{\Delta^{\bar{\psi}}}_{N(Q)}
\end{equation*}
c'est donc une cofibration (triviale), et l'adjonction $(\alpha_*,\alpha^*)$ est donc une adjonction de Quillen. On prouve que l'adjonction de Quillen est une équivalence de Quillen si et seulement si $\alpha$ est un isomorphisme d'ensembles ordonnés par les mêmes arguments qu'à la proposition \ref{AlphaAdjonctionQuillen}
\end{proof}

\begin{remarque}\label{RemarqueAlphaStarTopP}
La preuve précédente s'adapte immédiatement pour montrer que $\alpha\colon P\to Q$ induit une adjonction de Quillen
\begin{equation*}
\alpha_*\colon \Top_P\leftrightarrow\Top_Q\colon\alpha^*.
\end{equation*}
\end{remarque}

\subsection{Compatibilité avec les foncteurs $\Sing_{N(P)}$ et $\RealNP{-}$}
\label{SectionAlphaSingReal}
On a le résultat de compatibilité suivant.

\begin{prop}\label{PropositionCompatibiliteSingAlpha}
On a des isomorphismes de foncteurs, naturels en $\alpha\colon P\to Q$
\begin{align*}
\Sing_{N(P)}\circ\alpha^*\simeq \alpha^*\circ\Sing_{N(Q)}&\colon \Top_{N(Q)}\to\sS_P\\
\alpha_*\RealNP{-}\simeq \Real{\alpha_*(-)}_{N(Q)}&\colon \sS_P\to\Top_{N(Q)}\\
\RealNP{\alpha^*(-)}\simeq \alpha^*\Real{-}_{N(Q)}&\colon \sS_Q\to\Top_{N(P)}
\end{align*}
\end{prop}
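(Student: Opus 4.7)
The proof is a formal matter of unwinding the definitions, with one non-formal input. The plan is to establish the three isomorphisms in the order (2), (1), (3), deriving (1) from (2) by mate calculus and treating (3) via a preservation-of-pullbacks argument.

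First I will prove (2), which is essentially a tautology. Unwinding definitions, $\alpha_*\RealNP{\fil{X}}$ has underlying space $\Real{X}$ equipped with the filtration $\Real{N(\alpha)}\circ\Real{\varphi_X}$; but $\Real{-}$ is a functor and so $\Real{N(\alpha)}\circ\Real{\varphi_X}=\Real{N(\alpha)\circ\varphi_X}$, which is by definition the filtration of $\Real{\alpha_*\fil{X}}_{N(Q)}$. Naturality in $\alpha$ is immediate from functoriality of $N$ and $\Real{-}$.

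Next, (1) will be derived from (2) by taking mates. Indeed, we have the adjunctions $(\alpha_*,\alpha^*)$ on both the simplicial and the topological side (Propositions \ref{AlphaAdjonction} and \ref{PropositionAlphaAdjonctionTop}) and the adjunction $(\RealNP{-},\Sing_{N(P)})$ for each $P$. The isomorphism (2) between the composites of the three left adjoints $\alpha_*\circ\RealNP{-}$ and $\Real{-}_{N(Q)}\circ\alpha_*$ transports, via these adjunctions, to an isomorphism between the composites of their right adjoints $\Sing_{N(P)}\circ\alpha^*$ and $\alpha^*\circ\Sing_{N(Q)}$, which is exactly (1). This is a purely formal manipulation of units and counits.

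Finally, for (3), the point is to compare $\RealNP{Y\times_{N(Q)}N(P)}$ with $\Real{Y}_{N(Q)}\times_{\Real{N(Q)}}\Real{N(P)}$, both filtered by projection onto $\Real{N(P)}$. Since we work in the category of $\Delta$-generated spaces (Remark \ref{RemarqueCategorieDeltaEngendre}), the geometric realization functor $\Real{-}\colon\sS\to\Top$ preserves finite limits, so the canonical comparison map $\Real{Y\times_{N(Q)}N(P)}\to\Real{Y}\times_{\Real{N(Q)}}\Real{N(P)}$ is a homeomorphism. The two filtrations coincide by construction, yielding the desired filtered isomorphism, naturally in $\alpha$ and in $Y$. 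The main (only non-formal) obstacle is precisely this appeal to finite-limit preservation by $\Real{-}$ in $\Delta$-generated spaces; once granted, everything else is bookkeeping.
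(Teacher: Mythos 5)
Your proof is correct. Parts (2) and (3) coincide with the paper's argument: (2) is the same tautological identification via functoriality of $\Real{-}$, and (3) is the same appeal to preservation of finite limits by geometric realization (valid in $\Delta$-generated spaces) followed by the universal property of the pullback. Where you genuinely diverge is in (1): the paper proves it directly, by pasting two cartesian squares --- it exhibits both $\alpha^*\Sing_{N(Q)}\fil{Y}$ and $\Sing_{N(P)}(\alpha^*\fil{Y})$ as pullbacks of $\Sing(\varphi_Y)$ along the same composite $N(P)\to\Sing(\Real{N(Q)})$, using that $\Sing$ preserves limits, and concludes by uniqueness of pullbacks. You instead obtain (1) as the mate of (2): since $\alpha_*\circ\RealNP{-}$ and $\Real{-}_{N(Q)}\circ\alpha_*$ are isomorphic left adjoints (by Propositions \ref{AlphaAdjonction}, \ref{PropositionAlphaAdjonctionTop} and the adjunction $(\RealNP{-},\Sing_{N(P)})$), their right adjoints $\Sing_{N(P)}\circ\alpha^*$ and $\alpha^*\circ\Sing_{N(Q)}$ are canonically isomorphic. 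This is a legitimate and arguably cleaner route --- it avoids any diagram chase and makes the naturality in $\alpha$ automatic --- at the cost of being less explicit about what the isomorphism does on simplices, which the paper's pullback description makes transparent. Both arguments are complete; no gap.
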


\begin{proof}
Soit $\fil{Y}$ un espace fortement filtré au-dessus de $Q$. On considère les deux diagrammes commutatifs suivants
\begin{equation}\label{Diagramme1CommutationAlphaSing}
\begin{tikzcd}
\alpha^*\Sing_{N(Q)}\fil{Y}
\arrow{d}
\arrow{r}
&\Sing_{N(Q)}\fil{Y}
\arrow{r}
\arrow{d}
&\Sing(Y)
\arrow{d}{\Sing(\varphi_Y)}
\\
N(P)
\arrow[swap]{r}{N(\alpha)}
&N(Q)
\arrow{r}
&\Sing(\Real{N(Q)})
\end{tikzcd}
\end{equation}
et
\begin{equation}\label{Diagramme2CommutationAlphaSing}
\begin{tikzcd}[column sep = 45pt]
\Sing_{N(P)}(\alpha^*\fil{Y})
\arrow{d}
\arrow{r}
&\Sing(\alpha^*\fil{Y})
\arrow{r}
\arrow{d}
&\Sing(Y)
\arrow{d}{\Sing(\varphi_Y)}
\\
N(P)
\arrow{r}
&\Sing(\Real{N(P)})
\arrow[swap]{r}{\Sing(\Real{N(\alpha)})}
&\Sing(\Real{N(Q)})
\end{tikzcd}
\end{equation}
Par définition de $\Sing_{N(Q)}$, le carré de droite du diagramme \ref{Diagramme1CommutationAlphaSing} est cartésien. De plus, le carré de gauche du même diagramme est cartésien par définition de $\alpha^*$. Ainsi, le carré extérieur du diagramme \ref{Diagramme1CommutationAlphaSing} est cartésien. D'autre part, comme $\Sing$ préserve les limites, et par définition de $\alpha^*$, le carré de droite du diagramme \ref{Diagramme2CommutationAlphaSing} est cartésien. De plus, par définition de $\Sing_{N(P)}$ le carré de gauche du même diagramme est cartésien, et donc le rectangle extérieur est cartésien. Finalement, par la propriété universelle du produit fibré, on en déduit qu'il existe un isomorphisme naturel 
\begin{equation*}
\alpha^*\Sing_{N(Q)}\fil{Y}\simeq\Sing_{N(P)}(\alpha^*\fil{Y})
\end{equation*}
Soit $\fil{A}$ un ensemble simplicial filtré sur $P$. Par définition de $\RealNP{-}$ et $\Real{-}_{N(Q)}$, $\Id_{\Real{A}}$ fournit un isomorphisme 
\begin{equation*}
\alpha_*\RealNP{\fil{A}}\simeq (\Real{A},\Real{N(\alpha)})\simeq\Real{\alpha_*\fil{A}}_{N(Q)}
\end{equation*}
Soit $\fil{B}$ un ensemble simplicial filtré sur $Q$. Comme le foncteur $\Real{-}$ préserve les limites finies, le carré
\begin{equation*}
\begin{tikzcd}
\Real{\alpha^*\fil{B}}
\arrow{d}
\arrow{r}
&\Real{B}
\arrow{d}{\Real{\varphi_B}}
\\
\Real{N(P)}
\arrow{r}{\Real{N(\alpha)}}
&\Real{N(Q)}
\end{tikzcd}
\end{equation*}
est cartésien.
Mais alors, par définition de $\alpha^*$ et par la propriété universelle du produit fibré, on a un isomorphisme naturel
\begin{equation*}
\RealNP{\alpha^*\fil{B}}\simeq \alpha^*\Real{\fil{B}}_{N(Q)}.
\end{equation*}
\end{proof}

\begin{remarque}
Attention, la quatrième relation de compatibilité n'est pas vérifiée. En effet considérons le cas où $P\not=\{*\}$ et $\alpha\colon P\to\{*\}$ est l'unique morphisme de $P$ vers l'ensemble ordonné terminal. Dans ce cas, on peut identifier $\sS_{\{*\}}$ et $\sS$. Soit $\fil{X}$ un espace fortement filtré au-dessus de $P$, alors, $\alpha_*(\Sing_{N(P)}\fil{X})$ est égal à l'ensemble simplicial sous jacent à $\Sing_{N(P)}\fil{X}$. Par ailleurs, $\Sing_{N(\{*\})}(\alpha_*\fil{X})$ est isomorphe à $\Sing(X)$. Ces deux ensembles simpliciaux ne sont pas isomorphes en général.
Cependant, si $\alpha\colon P\to Q$ est un monomorphisme (c'est une condition plus faible que de demander que $P$ soit un sous ensemble ordonné de $Q$) alors on a la quatrième relation de compatibilité. Soit $\fil{X}$ un espace fortement filtré sur $P$. On considère le diagramme commutatif suivant
\begin{equation*}
\begin{tikzcd}
\Sing_{N(P)}\fil{X}
\arrow{r}
\arrow{d}
&\Sing(X)
\arrow{d}
\\
N(P)
\arrow[swap]{d}{N(\alpha)}
\arrow{r}
&\Sing(\Real{N(P)})
\arrow{d}{\Sing(\Real{N(\alpha)})}
\\
N(Q)\arrow{r}
&\Sing(\Real{N(Q)})
\end{tikzcd}
\end{equation*}
Si $\alpha$ est un monomorphisme, le carré inférieur est cartésien. Dans tout les cas, par définition de $\Sing_{N(P)}$, le carré supérieur est cartésien. Ainsi, si $\alpha$ est un monomorphisme, le rectangle extérieur est cartésien, et par définition de $\Sing_{N(Q)}$, on a un isomorphisme naturel
\begin{equation*}
\Sing_{N(Q)}(\alpha_*\fil{X})\simeq\alpha_*\Sing_{N(P)}\fil{X}
\end{equation*}
\end{remarque}

\section{Les catégories d'ensembles simpliciaux et d'espaces stratifiés}
\label{SectionStrat}
Dans cette section, on montre comment construire des catégories de modèles d'objets stratifiés à partir des catégories de modèles d'objets filtrés étudiées précédemment. Pour ce faire, on appliquera \cite[Theorem 4.2]{Cagne} On commence par définir les différentes catégories d'objets stratifiés auxquels on s'intéressera.
\begin{defin}
Un espace stratifié est la donnée de
\begin{itemize}
\item un espace topologique $X\in \Top$,
\item un ensemble ordonné $P$,
\item une application continue $\varphi_X\colon X\to P$, où $P$ est muni de la topologie d'Alexandrov.
\end{itemize}
Un morphisme entre espaces stratifiés est la donnée d'une paire $(f\colon X\to Y,\alpha\colon P\to Q)$ telle que le diagramme suivant commute.
\begin{equation*}
\begin{tikzcd}
X
\arrow[swap]{d}{\varphi_X}
\arrow{r}{f}
&Y
\arrow{d}{\varphi_Y}
\\
P
\arrow{r}{\alpha}
&Q
\end{tikzcd}
\end{equation*}
La catégorie des espaces stratifiés est noté $\Strat$.
\end{defin}

\begin{defin}
Un espace fortement stratifié est la donnée de
\begin{itemize}
\item un espace topologique $X\in \Top$,
\item un ensemble ordonné $P$,
\item une application continue $\varphi_X\colon X\to \Real{N(P)}$, où $N(P)$ est le nerf de $P$.
\end{itemize}
Un morphisme entre espaces fortement stratifiés est la donnée d'une paire de morphismes $(f\colon X\to Y,\alpha\colon P\to Q)$ telle que le diagramme suivant commute.
\begin{equation*}
\begin{tikzcd}
X
\arrow[swap]{d}{\varphi_X}
\arrow{r}{f}
&Y
\arrow{d}{\varphi_Y}
\\
\Real{N(P)}
\arrow{r}{\Real{N(\alpha)}}
&\Real{N(Q)}
\end{tikzcd}
\end{equation*}
La catégorie des espaces fortement stratifiés est noté $\FStrat$.
\end{defin}

\begin{defin}
Un ensemble simplicial stratifié est la donnée de
\begin{itemize}
\item un ensemble simplicial $X\in \sS$,
\item un ensemble ordonné $P$,
\item une application simpliciale $\varphi_X\colon X\to N(P)$, où $N(P)$ est le nerf de $P$.
\end{itemize}
Un morphisme entre ensembles simpliciaux stratifiés est la donnée d'une paire de morphismes $(f\colon X\to Y,\alpha\colon P\to Q)$ telle que le diagramme suivant commute.
\begin{equation*}
\begin{tikzcd}
X
\arrow[swap]{d}{\varphi_X}
\arrow{r}{f}
&Y
\arrow{d}{\varphi_Y}
\\
N(P)
\arrow{r}{N(\alpha)}
&N(Q)
\end{tikzcd}
\end{equation*}
La catégorie des ensembles simpliciaux stratifiés est noté $\sStrat$.
\end{defin}

Dans la suite de cette section $\E$ désignera l'une des trois catégories, $\Strat$, $\FStrat$ ou $\sStrat$.
Par commodité, $\E(P)$ dénotera respectivement $P$ munie de la topologie d'alexandrov, $\Real{N(P)}$ ou $N(P)$. 
On considérera aussi la catégorie des ensembles ordonnés $\Poset$ dont les objets sont les ensembles ordonnés, et les morphismes sont les applications croissantes.
On a un foncteur 
\begin{align*}
F\colon \E&\to \Poset \\
(X,P,\varphi_X)&\mapsto P\\
(f,\alpha)&\mapsto \alpha
\end{align*}
Pour tout ensemble ordonné $P$, $\E_P$ désignera la sous catégorie de $\E$ dont les objets sont de la forme $(X,P,\varphi_X)$ et les morphismes sont de la forme $(f,\Id_P)\colon (X,P,\varphi_X)\to (Y,P,\varphi_Y)$. Elle correspondra respectivement à $\Top_P$, $\Top_{N(P)}$ ou $\sS_P$ suivant que $\E$ est la catégorie $\Strat$, $\FStrat$ ou $\sStrat$.
On a maintenant la proposition suivante (Voir \cite[Definition 2.1]{Cagne})
\begin{prop}
Le foncteur $F$ est une bifibration de Grothendieck.
\end{prop}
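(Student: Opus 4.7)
Mon plan pour prouver cette proposition est de construire explicitement les relèvements cartésiens et co-cartésiens à l'aide des foncteurs $\alpha_*$ et $\alpha^*$ déjà définis dans ce chapitre (pour $\sStrat$ et $\FStrat$) et dans le chapitre précédent (pour $\Strat$, via la remarque \ref{RemarqueAlphaStarTopP}). L'idée clé est que les constructions par image directe et image réciproque le long d'un morphisme $\alpha\colon P\to Q$ fournissent précisément les relèvements cherchés.

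D'abord, je montrerais que $F$ est une fibration de Grothendieck. Pour cela, étant donnés un morphisme $\alpha\colon P\to Q$ dans $\Poset$ et un objet $(Y,Q,\varphi_Y)\in \E$, je considèrerais le morphisme canonique
\begin{equation*}
(\pr_Y,\alpha)\colon \alpha^*(Y,Q,\varphi_Y)=(Y\times_{\E(Q)}\E(P),P,\pr_{\E(P)})\to (Y,Q,\varphi_Y),
\end{equation*}
où la projection $\pr_Y$ provient du carré cartésien définissant $\alpha^*$. Je vérifierais ensuite que ce morphisme est cartésien au sens de Grothendieck. Ceci revient à montrer que pour tout objet $(X,P',\varphi_X)\in \E$, toute paire $(\beta\colon P'\to P,(g,\alpha\circ\beta)\colon (X,P',\varphi_X)\to (Y,Q,\varphi_Y))$ se factorise de façon unique à travers $(\pr_Y,\alpha)$. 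Ceci découle directement de la propriété universelle du produit fibré $Y\times_{\E(Q)}\E(P)$ appliquée dans la catégorie ambiante $\Top$ ou $\sS$ selon le cas.

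Ensuite, je montrerais que $F$ est une opfibration. Étant donnés $\alpha\colon P\to Q$ et $(X,P,\varphi_X)\in \E$, je considèrerais le morphisme
\begin{equation*}
(\Id_X,\alpha)\colon (X,P,\varphi_X)\to \alpha_*(X,P,\varphi_X)=(X,Q,\E(\alpha)\circ\varphi_X).
\end{equation*}
La vérification que ce morphisme est co-cartésien est essentiellement formelle : étant donné $(Z,Q',\varphi_Z)\in \E$, $\gamma\colon Q\to Q'$ et $(h,\gamma\circ\alpha)\colon (X,P,\varphi_X)\to (Z,Q',\varphi_Z)$, l'unique factorisation est fournie par $(h,\gamma)\colon \alpha_*(X,P,\varphi_X)\to (Z,Q',\varphi_Z)$, qui est bien un morphisme de $\E$ puisque $\varphi_Z\circ h=\E(\gamma)\circ\E(\alpha)\circ\varphi_X=\E(\gamma)\circ\varphi_{\alpha_*X}$.

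La partie non triviale consiste surtout à bien articuler les vérifications dans chacun des trois cas $\Strat$, $\FStrat$ et $\sStrat$, mais les arguments sont uniformes : les foncteurs $\E(-)$ considérés ($P\mapsto P$ avec topologie d'Alexandrov, $P\mapsto \Real{N(P)}$, ou $P\mapsto N(P)$) préservent les limites et colimites pertinentes (en particulier, $\Real{-}$ commute aux produits fibrés finis d'après \cite{MilnorGeometricRealization}, ce qui est essentiel pour $\FStrat$), ce qui garantit que les produits fibrés définissant $\alpha^*$ existent bien et donnent la propriété universelle attendue. Aucune difficulté technique profonde n'est à prévoir ici, la proposition étant essentiellement une reformulation structurée des constructions $(\alpha_*,\alpha^*)$ déjà établies.
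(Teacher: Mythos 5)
Votre démonstration suit essentiellement la même démarche que celle du texte : les relèvements cartésiens sont fournis par le produit fibré $\alpha^*(Y,Q,\varphi_Y)\to (Y,Q,\varphi_Y)$ et les relèvements cocartésiens par $(\Id_X,\alpha)\colon (X,P,\varphi_X)\to (X,Q,\E(\alpha)\circ\varphi_X)$, les vérifications se réduisant aux propriétés universelles correspondantes. Vous explicitez simplement davantage ces vérifications que le texte, qui se contente de les affirmer.
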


\begin{proof}
Soit $\filstrat{Y}{Q}\in \E$ un objet stratifié, et $\alpha\colon P\to Q$ un morphisme de $\Poset$. On considère l'objet stratifié $\alpha^*\filstrat{Y}{Q}=(\alpha^*Y,P,\alpha^*\varphi_Y)$. Alors, le morphisme 
\begin{equation*}
\begin{tikzcd}
\alpha^*Y
\arrow{r}
\arrow[swap]{d}{\alpha^*\varphi_Y}
&Y
\arrow{d}{\varphi_Y}
\\
\E(P)
\arrow{r}{\E(\alpha)}
&\E(Q)
\end{tikzcd}
\end{equation*}
est un morphisme cartésien de $\E$. (voir les définitions \ref{DefinitionAlphaUpperStarSimplicial}, \ref{DefinitionAlphaUpperStarTopologique} et la remarque \ref{RemarqueAlphaStarTopP})
De même, si $\filstrat{X}{P}\in \E$ est un objet stratifié et $\alpha\colon P\to Q$ un morphisme de $\Poset$, l'objet stratifié $(X,Q,\E(\alpha)\circ\varphi_X)$ fournit un morphisme cocartésien de $\E$ :
\begin{equation*}
\begin{tikzcd}
X
\arrow{r}{\Id}
\arrow[swap]{d}{\varphi_X}
&X
\arrow{d}{\E(\alpha)\circ\varphi_X}
\\
\E(P)
\arrow{r}{\E(\alpha)}
&\E(Q)
\end{tikzcd}
\end{equation*}
\end{proof}

\begin{defin}
Soit $(f,\alpha)\colon \filstrat{X}{P}\to\filstrat{Y}{Q}$ un morphisme stratifié. On peut factoriser le morphisme $f$ de deux façons différentes comme suit :
\begin{equation*}
\begin{tikzcd}
\filstrat{X}{P}
\arrow{r}
\arrow[swap]{d}{f^{\triangleleft}}
\arrow[swap, near start]{dr}{(f,\alpha)}
&(X,Q,\E(\alpha)\circ\varphi_X)
\arrow{d}{f_{\triangleright}}
\\
(\alpha^*Y,P,\alpha^*\varphi_Y)
\arrow{r}
&\filstrat{Y}{Q}
\end{tikzcd}
\end{equation*}
Le morphisme $f^{\triangleleft}$ est un morphisme de $\E_P$ et le morphisme $f_{\triangleright}$ est dans $\E_Q$.
\end{defin}

On peut maintenant définir les classes de cofibrations et de fibrations sur $\E$. On considérera sur $\sS_P$ la structure de modèle $\sSU_P$ du théorème \ref{TheoDescriptionExpliciteSSetP}. Les structures de modèles de $\Top_P$ et $\Top_{N(P)}$ proviennent des théorèmes \ref{CategorieModeleTopP} et \ref{CategorieModeleTopNP} respectivement.

\begin{defin}
Un morphisme de $\E$, $(f,\alpha)\colon \filstrat{X}{P}\to\filstrat{Y}{Q}$ est 
\begin{itemize}
\item une cofibration si $f_{\triangleright}$ est une cofibration de $\E_P$,
\item une fibration si $f^{\triangleleft}$ est une fibration de $\E_Q$,
\item une cofibration triviale si $\alpha$ est un isomorphisme et $f_{\triangleright}$ est une cofibration triviale de $\E_P$,
\item une fibration triviale si $\alpha$ est un isomorphisme et $f^{\triangleleft}$ est une fibration triviale de $\E_Q$.
\end{itemize}
\end{defin}

On a alors le théorème suivant \cite[Theorem 4.2]{Cagne}

\begin{theo}\label{TheoremeCMFStrat}
Il existe une structure de modèle sur les catégories $\Strat$, $\FStrat$ et $\sStrat$, où les classes de cofibrations et de fibrations (triviales) sont celles de la définition précédente.
\end{theo}

\begin{proof}
C'est une application de \cite[Theorem 4.2]{Cagne} à la bifibration de Grothendieck $F\colon \E\to\Poset$, où $\Poset$ est munie de la structure de modèle triviale : tout les morphismes sont des cofibrations et des fibrations, et les équivalences faibles sont les isomorphismes. Les hypothèses (hBC) et (hCon) sont automatiquement vérifiées, et l'hypothèse (Q) correspond aux propositions \ref{AlphaAdjonctionQuillen}, \ref{PropositionAlphaAdjonctionTop} et à la remarque \ref{RemarqueAlphaStarTopP}.
\end{proof}

\begin{remarque}\label{RemarqueEquivalenceFaibleFiberwise}
Pour ces structures de modèles, un morphismes entre objet stratifiés 
\begin{equation*}
(f,\alpha)\colon \filstrat{X}{P}\to\filstrat{Y}{Q}
\end{equation*}
est une équivalence faible si et seulement si $\alpha$ est un isomorphisme d'ensembles ordonnés et $f_{\triangleright}$ est une équivalence faible de $\E_P$. (Dans ce cas, on a aussi que $f^{\triangleleft}$ est une équivalence faible de $\E_Q$.) En effet, $(f,\alpha)$ est une équivalence faible si et seulement si c'est la composée d'une cofibration triviale $(i\beta)$ et d'une fibration triviale $(p,\gamma)$. Or, par définitions des classes de cofibrations triviales et de fibrations triviales, $\beta$ et $\gamma$ doivent être des isomorphismes. Ainsi, les paires $(\beta_*,\beta^*)$ et $(\gamma_*,\gamma^*)$ sont des équivalences de Quillen, et $f$ est une équivalence faible si et seulement si les morphismes $p_{\triangleright}$, $p^{\triangleleft}$, $i_{\triangleright}$ et $i^{\triangleleft}$ sont des équivalences faibles.
\end{remarque}

\begin{defin}
On définit un foncteur
$\Sing_{\FStrat}\colon \FStrat\to\sStrat$
sur les objets par, 
\begin{equation*}
\Sing_{\FStrat}\filstrat{X}{P}=\Sing_{N(P)}\fil{X}.
\end{equation*} 
Pour $(f,\alpha)\colon\filstrat{X}{P}\to\filstrat{Y}{Q}$, on définit $\Sing_{\FStrat}(f,\alpha)$ comme la composition
\begin{equation*}
\begin{tikzcd}
\Sing_{N(P)}\fil{X}
\arrow{d}
\arrow{r}{\Sing_{N(P)}(f^{\triangleleft})}
&\Sing_{N(P)}(\alpha^*\fil{Y})
\arrow{d}
\arrow{r}{\simeq}
&\alpha^*\Sing_{N(Q)}\fil{Y}
\arrow{r}
\arrow{d}
&\Sing_{N(Q)}\fil{Y}
\arrow{d}
\\
N(P)
\arrow[swap]{r}{\Id}
&N(P)
\arrow[swap]{r}{\Id}
&N(P)
\arrow[swap]{r}{N(\alpha)}
&N(Q)
\end{tikzcd}
\end{equation*}
\end{defin}

\begin{defin}
On définit un foncteur $\RealFStrat{-}\colon \sStrat\to\FStrat$ sur les objets par
\begin{equation*}
\RealFStrat{\filstrat{X}{P}}=\RealNP{\fil{X}}
\end{equation*}
Pour $(f,\alpha)\colon \filstrat{X}{P}\to\filstrat{Y}{Q}$, on définit $\RealFStrat{(f,\alpha)}$ comme la composition
\begin{equation*}
\begin{tikzcd}[column sep = 60pt]
\Real{X}
\arrow[swap]{d}{\Real{\varphi_X}}
\arrow{r}{\Id_{\Real{X}}}
&\Real{X}
\arrow{d}{\Real{N(\alpha)}\circ\Real{\varphi_X}}
\arrow{r}{\Real{f}}
&\Real{Y}
\arrow{d}{\Real{\varphi_Y}}
\\
\Real{N(P)}
\arrow[swap]{r}{\Real{N(\alpha)}}
&\Real{N(Q)}
\arrow[swap]{r}{\Id}
&\Real{N(Q)}
\end{tikzcd}
\end{equation*}
\end{defin}

\begin{prop}\label{PropositionAdjonctionRealStratSingStrat}
La paire de foncteurs $(\RealFStrat{-},\Sing_{\FStrat})$ forme une paire de foncteurs adjoints.
\end{prop}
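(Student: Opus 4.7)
L'idée est de ramener l'adjonction globale à l'adjonction fibre par fibre entre $\sS_P$ et $\Top_{N(P)}$ déjà connue, en exploitant la bifibration de Grothendieck $F\colon \FStrat\to\Poset$ et les relations de compatibilité de la proposition \ref{PropositionCompatibiliteSingAlpha}. Plus précisément, je fixerai deux objets stratifiés $\filstrat{X}{P}\in \sStrat$ et $\filstrat{Y}{Q}\in \FStrat$ et je construirai une bijection naturelle
\begin{equation*}
\Hom_{\FStrat}\left(\RealFStrat{\filstrat{X}{P}},\filstrat{Y}{Q}\right)\simeq \Hom_{\sStrat}\left(\filstrat{X}{P},\Sing_{\FStrat}\filstrat{Y}{Q}\right).
\end{equation*}

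La première étape est la décomposition via la structure bifibrée. Un morphisme $(g,\alpha)$ au-dessus de $\alpha\colon P\to Q$ dans $\FStrat$ entre $\RealFStrat{\filstrat{X}{P}}=(\RealNP{\fil{X}})$ et $\filstrat{Y}{Q}$ est déterminé de façon unique par le morphisme cartésien $g^{\triangleleft}\colon \RealNP{\fil{X}}\to \alpha^{*}\fil{Y}$ dans $\Top_{N(P)}$. De même, un morphisme $(h,\alpha)$ dans $\sStrat$ entre $\filstrat{X}{P}$ et $\Sing_{\FStrat}\filstrat{Y}{Q}=(\Sing_{N(Q)}\fil{Y})$ est déterminé par $h^{\triangleleft}\colon \fil{X}\to \alpha^{*}\Sing_{N(Q)}\fil{Y}$ dans $\sS_{P}$. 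Il suffit donc, pour chaque $\alpha$, d'établir une bijection naturelle
\begin{equation*}
\Hom_{\Top_{N(P)}}\left(\RealNP{\fil{X}},\alpha^{*}\fil{Y}\right)\simeq \Hom_{\sS_P}\left(\fil{X},\alpha^{*}\Sing_{N(Q)}\fil{Y}\right),
\end{equation*}
et de sommer sur $\alpha$ pour conclure.

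La deuxième étape consiste à appliquer l'adjonction fibre $(\RealNP{-},\Sing_{N(P)})$ (Proposition \ref{AdjonctionSingReal} appliquée à $\Top_{N(P)}$), qui donne
\begin{equation*}
\Hom_{\Top_{N(P)}}\left(\RealNP{\fil{X}},\alpha^{*}\fil{Y}\right)\simeq \Hom_{\sS_P}\left(\fil{X},\Sing_{N(P)}(\alpha^{*}\fil{Y})\right),
\end{equation*}
puis l'isomorphisme naturel $\Sing_{N(P)}\circ\alpha^{*}\simeq \alpha^{*}\circ\Sing_{N(Q)}$ fourni par la proposition \ref{PropositionCompatibiliteSingAlpha} pour obtenir la bijection voulue. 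L'adjonction fibre par fibre étant naturelle, et l'isomorphisme de compatibilité étant naturel en $\alpha$ ainsi qu'en $\fil{Y}$, la bijection ainsi obtenue est naturelle en $\filstrat{X}{P}$ et $\filstrat{Y}{Q}$.

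La principale difficulté n'est pas conceptuelle mais résiduelle : il faut vérifier que la bijection respecte la composition et que les choix faits (décomposition $g=g^{\triangleright}\circ g^{\triangleleft}$, isomorphisme $\Sing_{N(P)}\circ\alpha^{*}\simeq \alpha^{*}\circ\Sing_{N(Q)}$) sont bien compatibles lorsque l'on compose deux morphismes stratifiés $(g_1,\alpha_1)$ et $(g_2,\alpha_2)$ au-dessus de $\alpha_2\circ\alpha_1$. Ce point est essentiellement formel et se réduit à l'équation de cocycle $(\alpha_2\circ\alpha_1)^{*}\simeq \alpha_1^{*}\circ\alpha_2^{*}$, combinée aux isomorphismes de compatibilité. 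Une alternative consisterait à écrire $\Hom_{\FStrat}(\RealFStrat{A},B)$ directement comme un coend indexé par $\alpha$ et à invoquer directement la compatibilité avec les colimites ; dans les deux cas l'étape conceptuelle clé est la proposition \ref{PropositionCompatibiliteSingAlpha}.
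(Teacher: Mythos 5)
Votre démonstration suit essentiellement la même route que celle du texte : on décompose un morphisme stratifié au-dessus de $\alpha$ via sa composante $(-)^{\triangleleft}$ dans la fibre, on applique l'adjonction fibre à fibre $(\RealNP{-},\Sing_{N(P)})$, puis l'isomorphisme de compatibilité $\Sing_{N(P)}\circ\alpha^{*}\simeq\alpha^{*}\circ\Sing_{N(Q)}$ de la proposition \ref{PropositionCompatibiliteSingAlpha}. Les remarques supplémentaires sur la naturalité et la compatibilité avec la composition sont correctes mais ne changent pas la substance de l'argument.
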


\begin{proof}
Soient $\filstrat{A}{P}$ un ensemble simplicial stratifié et $\filstrat{Y}{Q}$ un espace fortement stratifié. Un morphisme 
\begin{equation*}
(f,\alpha)\colon \filstrat{A}{P}\to\Sing_{\FStrat}\filstrat{Y}{Q}
\end{equation*}
est entièrement déterminé par la factorisation
\begin{equation*}
\begin{tikzcd}
A
\arrow[swap]{d}{\varphi_A}
\arrow{r}{f^{\triangleleft}}
&\alpha^*\Sing_{N(Q)}\fil{Y}
\arrow{d}
\arrow{r}
&\Sing_{N(Q)}\fil{Y}
\arrow{d}
\\
N(P)
\arrow[swap]{r}{\Id}
&N(P)
\arrow[swap]{r}{N(\alpha)}
&N(Q)
\end{tikzcd}
\end{equation*}
Par la proposition \ref{PropositionCompatibiliteSingAlpha}, on a $\alpha^*\Sing_{N(Q)}\simeq \Sing_{N(P)}\alpha^*$. En utilisant l'adjonction $(\RealNP{-},\Sing_{N(P)})$, on obtient une factorisation
\begin{equation}\label{CompositionAdjonctionSingStrat}
\begin{tikzcd}[column sep= 40pt]
\Real{A}
\arrow[swap]{d}{\Real{\varphi_A}}
\arrow{r}{g^{\triangleleft}}
&\alpha^*Y
\arrow{d}{\varphi_Y}
\arrow{r}
&Y
\arrow{d}{\varphi_Y}
\\
\Real{N(P)}
\arrow[swap]{r}{\Id}
&\Real{N(P)}
\arrow[swap]{r}{\Real{N(\alpha)}}
&\Real{N(Q)}
\end{tikzcd}
\end{equation}
où $g^{\triangleleft}$ est l'image de $f^{\triangleleft}$ par l'adjonction $(\RealNP{-},\Sing_{N(P)})$. Notons $(g,\alpha)$ la composition \ref{CompositionAdjonctionSingStrat}. Alors, l'association $(f,\alpha)\mapsto (g,\alpha)$ définit la bijection voulue.
\end{proof}

\begin{theo}\label{TheoremeRealStratSingStratPreserventEquivalenceFaible}
Les foncteurs $\RealFStrat{-}$ et $\Sing_{\Strat}$ préservent les équivalences faibles.
\end{theo}

\begin{proof}
Soit $(f,\alpha)\colon \filstrat{A}{P}\to\filstrat{B}{Q}$ une équivalence faible de $\sStrat$. Par la remarque \ref{RemarqueEquivalenceFaibleFiberwise}, $\alpha$ est un isomorphisme d'ensembles ordonnés, on peut donc supposer que $P=Q$ et $\alpha=\Id$. Mais alors 
\begin{equation*}
\RealFStrat{(f,\alpha)}=\RealNP{f}\colon \RealNP{\fil{A}}\to\RealNP{\fil{B}}
\end{equation*}
est une équivalence faible de $\Top_{N(P)}$ par la proposition \ref{RealisationSingPreserventEquivalencesFaibles}. C'est donc une équivalence faible de $\FStrat$. De même, soit $(g,\beta)\colon \filstrat{X,P}\to\filstrat{Y}{Q}$ une équivalence faible de $\FStrat$. Par la remarque \ref{RemarqueEquivalenceFaibleFiberwise}, on peut supposer que $P=Q$ et $\alpha=\Id$. Mais alors 
\begin{equation*}
\Sing_{\Strat}(g,\beta)=\Sing_{N(P)}(g)\colon \Sing_{N(P)}\fil{X}\to\Sing_{N(P)}\fil{Y}
\end{equation*}
est une équivalence faible de $\sSU_P$ par la proposition \ref{RealisationSingPreserventEquivalencesFaibles}. C'est donc une équivalence faible de $\sStrat$.
\end{proof}

\section{D'autres structures de modèles pour les espaces stratifiés}
\label{SectionAutresCMF}
\subsection{Transport depuis la structure de Joyal}
\label{SectionNandLal}
Motivé par l'étude des espaces homotopiquement stratifiés introduits par Quinn \cite{Quinn}, Nand-Lal \cite{NandLal} a construit une structure de modèle sur une sous-catégorie de $\Strat$ d'objets fibrants. La stratégie suivie par Nand-Lal est d'exploiter une adjonction entre $\Strat$ et $\sS$ :
\begin{equation*}
\text{SS}\colon \Strat\leftrightarrow \sS \colon |-|
\end{equation*}
où le foncteur $\text{SS}$ (\cite[Definition 7.1.0.3]{NandLal}) est obtenu comme la composition
\begin{equation*}
\Strat\xrightarrow{\Sing_{\Strat}}\sStrat\xrightarrow{\text{Oubli}}\sS
\end{equation*}
Nand-Lal transporte ensuite la structure de Joyal sur $\sS$ le long de cette adjonction. En se restreignant à la sous catégorie des espaces stratifiés fibrants, c'est à dire les espaces stratifiés $\fil{X}$ tels que $\text{SS}\fil{X}$ est une quasi-catégorie, il obtient ainsi une structure de modèle sur la sous-catégorie pleine correspondante. Les équivalences faibles y sont les morphismes $f\colon\fil{X}\to\fil{Y}$ tels que le morphisme d'ensembles simpliciaux
\begin{equation}\label{MorphismeSS}
\text{SS}(f)\colon \text{SS}\fil{X}\to\text{SS}\fil{Y}
\end{equation}
est une équivalence faible dans la structure de Joyal, et les fibrations y sont les morphismes tels que \ref{MorphismeSS} est une fibration dans la structure de Joyal. Nand-Lal montre que les objets fibrants-cofibrants pour cette structure transportée sont tous des espaces homotopiquement stratifiés au sens de Quinn \cite{Quinn}. Comme réciproque partielle, il prouve que tout espace métrique, homotopiquement stratifié, de stratification finie est fibrant. D'autre part, on a le résultat suivant.
\begin{prop}\label{PropositionQuasiCategorieImpliqueFibrant}
Soit $(X,\varphi_X\colon X\to P)$ un espace filtré au dessus de $P$. Si $SS\fil{X}$ est une quasi-catégorie, alors $\Sing_P\fil{X}$ est un objet fibrant de $\sSU_P$.
\end{prop}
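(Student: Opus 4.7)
The plan is to follow the proof of Proposition \ref{ConiquementStratifieImpliqueFibrant}, replacing Lurie's Theorem \ref{LurieSingQuasiCat} by the hypothesis that $SS\fil{X}$ is a quasi-category combined with the elementary fact that $N(P)$, as the nerve of a poset, has its simplices determined by their vertices. Fix an admissible horn inclusion $\Lambda^{\varphi}_k \hookrightarrow \Delta^{\varphi}$ with $\Delta^{\varphi}=(\Delta^n,\varphi)$; by symmetry I may assume $\varphi(e_k)=\varphi(e_{k+1})$. Three cases need to be handled: $n=1$; inner admissible horns with $0 < k < n$ and $n\geq 2$; and outer admissible horns with $k\in\{0,n\}$ and $n\geq 2$. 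The case $n=1$ is immediate because admissibility forces $\varphi$ to be constant, so $\Delta^{\varphi}$ is a degenerate $1$-simplex of $N(P)$ and the inclusion $\Lambda^{\varphi}_k\hookrightarrow\Delta^{\varphi}$ admits a filtered simplicial section.

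The central case is the inner one, and this is where the quasi-category hypothesis enters. Starting from a lifting problem with top arrow $a\colon \Lambda^{\varphi}_k\to\Sing_P\fil{X}$, I would forget the filtration to regard $a$ as a map $\Lambda^n_k\to SS\fil{X}$. Since $SS\fil{X}$ is a quasi-category and $\Lambda^n_k\hookrightarrow\Delta^n$ is an inner horn, there is a filler $\sigma\colon\Delta^n\to SS\fil{X}$. To show that $\sigma$ defines a map in $\sSU_P$, one must verify that the composition $\bar\sigma\colon\Delta^n\xrightarrow{\sigma} SS\fil{X}=\Sing_P\fil{X}\to N(P)$ equals the prescribed filtration $\varphi$. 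The restriction of $\bar\sigma$ to $\Lambda^n_k$ coincides with $\varphi|_{\Lambda^n_k}$ because $a$ was filtered; for $n\geq 2$ the horn $\Lambda^n_k$ already contains all vertices of $\Delta^n$, so $\bar\sigma$ and $\varphi$ agree on the $0$-skeleton. Since $P$ is a poset, an $n$-simplex of $N(P)$ is the datum of a non-decreasing $(n+1)$-tuple in $P$ and is therefore uniquely determined by its sequence of vertices; consequently $\bar\sigma=\varphi$ and $\sigma$ is the sought-after filtered filler.

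For the outer case I would reuse the swap argument from Proposition \ref{ConiquementStratifieImpliqueFibrant}. Treating $k=0$ (the case $k=n$ is symmetric), the admissibility $\varphi(e_0)=\varphi(e_1)$ makes the affine involution $f$ of $\Real{\Delta^n}$ swapping $e_0$ and $e_1$ into a filtered self-homeomorphism of $\RealP{\Delta^{\varphi}}$ which restricts to a filtered homeomorphism $\RealP{\Lambda^{\varphi}_1}\simeq\RealP{\Lambda^{\varphi}_0}$. Passing through the adjunction $(\RealP{-},\Sing_P)$ and pre-composing with $f$ transforms the outer horn lifting problem into a filtered lifting problem for the admissible \emph{inner} horn $\Lambda^{\varphi}_1\hookrightarrow\Delta^{\varphi}$, which is legitimate since $0<1<n$; the inner case from the previous paragraph provides the lift, and conjugating by $f$ produces a filtered lift of the original outer horn problem.

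The main obstacle in this plan is the inner case, and specifically the verification that the unfiltered filler supplied by the quasi-category hypothesis actually lies over the prescribed simplex $\varphi$ of $N(P)$, rather than over some other simplex with the same boundary data. This is precisely the step in which the proof of Proposition \ref{ConiquementStratifieImpliqueFibrant} invokes the inner-fibration half of Lurie's theorem. Here the substitute is the vertex-determination of $N(P)$, which makes the step essentially cost-free but requires the mild dimensional hypothesis $n\geq 2$; it is exactly this dimensional restriction that forces the separate treatment of $n=1$. Once this is in place, the remainder of the proof reduces to adjunction manipulations identical to those of Proposition \ref{ConiquementStratifieImpliqueFibrant}.
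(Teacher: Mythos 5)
Your proof is correct and is exactly the adaptation the paper has in mind: the paper's proof simply defers to the argument of Proposition \ref{ConiquementStratifieImpliqueFibrant}, and you spell out the one point it leaves implicit, namely that the inner-fibration half of Théorème \ref{LurieSingQuasiCat} can be replaced by the quasi-category filler together with the fact that a simplex of $N(P)$ is determined by its vertices (all of which lie in $\Lambda^n_k$ once $n\geq 2$). The $n=1$ and outer-horn cases are handled exactly as in the paper, so nothing further is needed.
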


\begin{proof}
La preuve de la proposition \ref{ConiquementStratifieImpliqueFibrant} montre que si l'ensemble simplicial sous-jacent $\Sing_P\fil{X}$ est une quasi-catégorie, alors $\Sing_P\fil{X}$ est fibrant dans $\sSU_P$.
\end{proof}
\begin{remarque}
Si $\fil{A}$ est un ensemble simplicial filtré, $A$ peut être une quasi-catégorie sans que $\fil{A}$ soit un objet fibrant de $\sSU_P$. La proposition \ref{PropositionQuasiCategorieImpliqueFibrant} ne donne cette implication que dans le cas où $\fil{A}=\Sing_P\fil{X}$ pour un certain espace filtré $\fil{X}$.
\end{remarque}

Ainsi, la proposition \ref{PropositionQuasiCategorieImpliqueFibrant} permet d'étendre la classe des objets vérifiants les théorèmes de Whitehead filtré (Théorèmes \ref{PremierTheoremeWhitehead} et \ref{DeuxiemeTheoremeWhitehead}) pour inclure les espaces métriques, homotopiquement stratifiés, de stratification finie.

D'autre part, pour les espaces stratifiés fibrants pointés (c'est à dire munis d'un morphisme $\phi\colon\Real{N(P)}\to\fil{X}$), Nand-Lal définit des catégories homotopiques, $\pi_k(\fil{X},\phi)$. Les catégories homotopiques sont des invariants du type d'homotopie stratifiés, et sont un analogue stratifié des groupoïdes fondamentaux. Elles sont construites à partir des groupes d'homotopie des strates et des entrelacs homotopiques. Comme les groupes d'homotopie filtrés, les catégories homotopiques caractérisent les équivalences d'homotopies stratifiées entre espaces cofibrants-fibrants. Il apparait alors naturel de se poser les questions suivantes :

\begin{question}
Quelle est la relation entre les catégories homotopiques et les groupes d'homotopie filtrés? Est-il possible de calculer l'un à partir de l'autre?
\end{question}

\subsection{La structure de Joyal-Kan sur $\sS_P$}
\label{SectionHaine}
Dans \cite{Haine}, Haine construit et explore une structure de modèle sur $\sS_P$ différente de celles qu'on considère ici. La construction de la structure de Joyal-Kan fait intervenir la structure suivante.
\begin{defin}
Un morphisme de $\sS_P$, $f\colon\fil{X}\to\fil{Y}$ est 
\begin{itemize}
\item une fibration de $\sSJoyal_P$ si $f\colon X\to Y$ est une fibration dans la structure de Joyal sur $\sS$
\item une équivalence faible de $\sSJoyal_P$ si $f\colon X\to Y$ est une équivalence faible dans la structure de Joyal sur $\sS$.
\end{itemize}
\end{defin}

On définit ensuite l'ensemble de morphismes 
\begin{equation*}
E_P=\{d_i\Delta^{\varphi}\to\Delta^{\varphi}\ |\ i=0\text{ ou }1, \ \varphi\colon \Delta^1\to N(P) \text{ est constante}\}.
\end{equation*}
L'ensemble $E_P$ contient donc les inclusions $\Delta^0\subset\Delta^1$ où $\Delta^{1}$ est filtrée par une filtration constante.
Haine définit ensuite la structure de Joyal-Kan comme suit.
\begin{defin}
La catégorie modèle $\sSJK_P$ est la localisation de Bousfield à gauche de $\sSJoyal_P$ par rapport à l'ensemble $E_P$.
\end{defin}
La catégorie modèle $\sSJK_P$ simpliciale est un modèle pour la $\infty$-catégorie d'espaces $P$-stratifiés définit dans \cite[Definition 2.1]{Exodromy}
On a aussi la propriété suivante
\begin{prop}
La catégorie modèle $\sSJK_P$ est une localisation de la catégorie modèle $\sSU_P$.
\end{prop}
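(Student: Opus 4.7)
Le plan est d'exploiter le caractère universel de la structure $\sSU_P$ mis en évidence par la remarque \ref{RemarqueSSUPUniverselle}. Plus précisément, cette remarque affirme que la classe des équivalences faibles de $\sSU_P$ est le plus petit localisateur de $\sS_P$ contenant les équivalences d'homotopie filtrées, parmi les structures dont les cofibrations sont les monomorphismes. Il suffit donc de vérifier deux choses : d'une part, que les cofibrations de $\sSJK_P$ sont exactement les monomorphismes de $\sS_P$; d'autre part, que toute équivalence d'homotopie filtrée est une équivalence faible de $\sSJK_P$.

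Le premier point est essentiellement gratuit. Les cofibrations de la structure de Joyal sur $\sS$ sont les monomorphismes, et il en est donc de même pour $\sSJoyal_P$ par définition. La structure $\sSJK_P$ étant obtenue à partir de $\sSJoyal_P$ par localisation de Bousfield à gauche, elle a les mêmes cofibrations et donc les mêmes que $\sSU_P$.

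Le deuxième point est le cœur de la preuve. Soit $\varphi\colon\Delta^1\to N(P)$ une application constante; les inclusions $d_0\Delta^{\varphi},d_1\Delta^{\varphi}\to\Delta^{\varphi}$ appartiennent à $E_P$ et sont donc des équivalences faibles de $\sSJK_P$ par construction. Pour tout ensemble simplicial filtré $\fil{X}$, le produit $\Delta^1\otimes\fil{X}=F(\Delta^1)\times_{N(P)}\fil{X}$ s'obtient en tensorisant $\fil{X}$ par l'ensemble simplicial $\Delta^1$ muni d'une filtration constante. Je voudrais d'abord établir que les deux inclusions
\begin{equation*}
i_0,i_1\colon \fil{X}\simeq \{\epsilon\}\otimes\fil{X}\to\Delta^1\otimes\fil{X}
\end{equation*}
sont des équivalences faibles de $\sSJK_P$. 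Il est raisonnable de procéder simplexe par simplexe, en utilisant le fait que $\fil{X}\simeq\colim_{\sigma\in\fil{X}}\Delta^{\varphi_\sigma}$ et que le foncteur $\Delta^1\otimes -$ commute aux colimites, ce qui ramène l'affirmation aux inclusions $\Delta^{\varphi_\sigma}\to \Delta^1\otimes\Delta^{\varphi_{\sigma}}$, lesquelles sont construites par recollement à partir des éléments de $E_P$. Une application standard des propriétés de la localisation de Bousfield (stabilité par produits fibrés, recollements via cofibrations) permet alors de conclure.

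Une fois ceci acquis, la conclusion est immédiate. Si $f,g\colon\fil{X}\to\fil{Y}$ sont deux morphismes filtrés homotopes au sens filtré par $H\colon\Delta^1\otimes\fil{X}\to\fil{Y}$, alors dans $\Ho(\sSJK_P)$ on a $f=H\circ i_0=H\circ i_1=g$ car $i_0$ et $i_1$ deviennent des isomorphismes inverses (puisqu'elles sont deux sections d'une même équivalence faible $\Delta^1\otimes\fil{X}\to\fil{X}$ obtenue par deux-sur-trois). Par suite, toute équivalence d'homotopie filtrée est une équivalence faible de $\sSJK_P$. Par la remarque \ref{RemarqueSSUPUniverselle}, on conclut que la classe des équivalences faibles de $\sSU_P$ est incluse dans celle de $\sSJK_P$, qui est donc une localisation de $\sSU_P$. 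L'obstacle principal est la vérification technique que les morphismes $E_P$ suffisent effectivement à engendrer (via les opérations admises dans une localisation de Bousfield) les équivalences faibles exigées pour les cylindres $\Delta^1\otimes\fil{X}$ dans toute leur généralité; c'est essentiellement une question de combinatoire des simplexes filtrés.
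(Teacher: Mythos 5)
Votre démonstration suit exactement la même stratégie que celle du texte : les cofibrations des deux structures sont les monomorphismes, les équivalences d'homotopie filtrées sont des équivalences faibles de $\sSJK_P$, et l'universalité de $\sSU_P$ (remarque sur le localisateur minimal) permet de conclure. Vous détaillez simplement l'étape intermédiaire — la vérification que les inclusions de cylindre deviennent des équivalences faibles de $\sSJK_P$ à partir de l'ensemble $E_P$ — que le texte se contente d'affirmer, et votre esquisse de cette vérification est correcte.
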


\begin{proof}
Les cofibrations des structures $\sSU_P$ et $\sSJK_P$ coïncident, ce sont les monomorphismes. Il suffit donc de montrer que les équivalences faibles de $\sSU_P$ sont des équivalences faibles de $\sSJK_P$. Mais, les équivalences d'homotopies filtrés sont des équivalences faibles de $\sSJK_P$. Comme la classe des équivalences faibles de $\sSU_P$ est minimale parmi celles qui contiennent les équivalences d'homotopie filtrés, on en déduit le résultat voulu (voir la remarque \ref{RemarqueSSUPUniverselle} sur l'universalité de $\sSU_P$).
\end{proof}
Ainsi, on a un zigzag de localisations
\begin{equation*}
\sSU_P\to\sSJK_P\leftarrow \sSJoyal_P.
\end{equation*}

\appendix
\chapter{Caractérisation des fibrations dans une catégorie de préfaisceaux}
\label{ChapitreCaracterisationFibrationsAnnexe}
\chaptermark{Caractérisation des fibrations}

L'objet de ce chapitre est de démontrer le théorème suivant. La preuve que l'on présente ici est une généralisation de la preuve de \cite[Proposition 2.1.41]{Cisinski}. Cette dernière est formulée pour le cas des ensembles simpliciaux, et on l'étend ici à un contexte légèrement plus général incluant notamment les catégories d'ensembles simpliciaux filtrés.

\begin{theo}\label{IdentificationFibrationsNaives}
Soient $A$ une petite catégorie de Eilenberg-Zilber, et $(I,\An)$ une donnée homotopique sur $\widehat{A}$. On considère la structure de modèle sur $\widehat{A}$ obtenue à partir de la donnée homotopique $(I,\An)$, et on suppose qu'il existe un ensemble de cofibration $\Lambda$ tel que $\An=l(r(\Lambda))$ tel que pour tout morphisme de $\Lambda$, $j\colon K\to L$, $K$ et $L$ sont des objets compacts de $\widehat{A}$.
On suppose de plus qu'on a 
\begin{itemize}
\item un foncteur $A\to \widehat{A}$, on note $S\colon \widehat{A}\to\widehat{A}$ son extension par colimites,
\item un adjoint de $S$ à droite $E\colon \widehat{A}\to\widehat{A}$,
\item une transformation naturelle $\alpha\colon S\to \Id$. 
\end{itemize}
On note $\beta\colon \Id\to E$ la transformation naturelle adjointe à $\alpha$, et on note $E^{\infty}$ le foncteur obtenu comme la colimite
\begin{equation*}
\Id\xrightarrow{\beta} E\xrightarrow{\beta_{E{-}}}E^2\xrightarrow{\beta_{E^2{-}}}\dots \to E^n\to E^{n+1}\to \dots
\end{equation*}  
Si de plus
\begin{itemize}
\item le foncteur $S$ préserve les monomorphismes et les extensions anodines,
\item pour tout préfaisceau représentable $a$ de $\widehat{A}$, $\alpha_X\colon S(a)\to a$ est une équivalence faible absolue,
\item pour tout préfaisceau $X$, $E^{\infty}(X)$ est fibrant,
\end{itemize}
alors, la classe des fibrations naïves et celle des fibrations coïncident et $\Lambda$ est un ensemble générateur des cofibrations triviales.
\end{theo}

\begin{lemme}\label{EquivalenceFaibleFoncteurEilenbergZilber}
Soient $A$ une catégorie d'Eilenberg-Zilber $F,G\colon A\to \mathcal{C}$ deux foncteurs à valeur dans une catégorie de modèle, et soient $F_{!}$ et $G_{!}$ leur extension par colimites. On suppose que $F_{!}$ et $G_{!}$ envoient les monomorphismes sur des cofibrations. Si une transformation naturelle $u\colon F\to G$ induit des équivalences faibles $F(a)\to G(a)$ pour tout préfaisceau représentable $a$, alors pour tout préfaisceau $X$ de $\widehat{A}$, le morphisme de $\mathcal{C}$
\begin{equation*}
u_X\colon F_{!}(X)\to G_{!}(X)
\end{equation*}
est une équivalence faible.
\end{lemme}

\begin{proof}
On considère la classe des objets $X$ de $\widehat{A}$ tel que $u_X$ est une équivalence faible. Par hypothèse, elle contient les préfaisceaux représentables. En appliquant \cite[Corollaires 2.3.16, 2.3.18 et 2.3.29]{Cisinski2}, on obtient que cette classe est saturée par monomorphisme \cite[Definition 1.3.9]{Cisinski2}. On en déduit par \cite[Corollaire 1.3.10]{Cisinski2} que cette classe contient tous les préfaisceaux de $\widehat{A}$.
\end{proof}
On déduit immédiatement de ce lemme le résultat suivant 

\begin{lemme}\label{LemmeLastVertexUEquivalenceFaible}
Pour tout préfaisceau $X$ de $\widehat{A}$, le morphisme $\alpha_X\colon S(X)\to X$ est une équivalence faible. De plus, $S$ préserve les cofibrations triviales.
\end{lemme}
\begin{proof}
La première proposition est une application du lemme précédent avec $u=\alpha$, $F_{!}=S$ et $G_{!}=\Id$. Pour la seconde proposition, on considère une cofibration triviale $f\colon X\to Y$. Alors, on a le diagramme commutatif suivant
\begin{equation*}
\begin{tikzcd}
S(X)
\arrow{r}{S(f)}
\arrow{d}{\alpha_X}
&S(Y)
\arrow{d}{\alpha_Y}
\\
X
\arrow{r}{f}
&Y
\end{tikzcd}
\end{equation*}
Comme $S$ préserve les cofibrations, $S(f)$ est une cofibration. De plus, par deux sur trois, $S(f)$ est une équivalence faible.
\end{proof}

\begin{defin}
Soit $Y$ un préfaisceau de $\widehat{A}$. On définit sur $\widehat{A}/Y$ le foncteur $S^{Y}\colon \widehat{A}/Y\to\widehat{A}/Y$ dont l'image sur les objets est
\begin{equation*}
S^{Y}(X,f\colon X\to Y)=(S(X),f\circ \alpha_X\colon S(X)\to Y)
\end{equation*}
On définit aussi la transformation naturelle $\alpha^{Y}\colon S^{Y}\to \Id$ par $\alpha^{Y}_{(X,f\colon X\to Y)}=\alpha_X\colon (S(X),f\circ \alpha_X)\to (X,f)$.
De même, on définit le foncteur $E^{Y}$ par 
\begin{equation*}
E^{Y}(X,f)=(Y\times_{E(Y)}E(X),\pr_Y)
\end{equation*}
Et la transformation naturelle $\beta^Y\colon \Id\to E^{Y}$, où le morphisme $\beta^{Y}_{(X,f)}$ est induit par le diagramme commutatif suivant
\begin{equation*}
\begin{tikzcd}
X
\arrow[bend left=20]{drr}{\beta_X}
\arrow[swap, bend right = 12]{ddr}{f}
\arrow{dr}{\beta^Y_{(X,f)}}
\\
\phantom{X}
&Y\times_{E(Y)}E(X)
\arrow{r}{\beta_X}
\arrow{d}{f}
&E(X)
\arrow{d}{E(f)}
\\
\phantom{X}
&Y
\arrow{r}{\beta_Y}
&E(Y)
\end{tikzcd}
\end{equation*}
\end{defin}

\begin{lemme}\label{ExExtensionAnodine}
Soit $Y$ un préfaisceau de $\widehat{A}$, alors $E^{Y}$ est un adjoint à droite de $S^Y$ et $\beta^Y$ est l'image de $\alpha^Y$ par l'adjonction. De plus, si $f\colon X\to Y$ est une fibration naïve, alors $\beta^{Y}_{X,f}\colon (X,f)\to E^{Y}(X,f)$ est une extension anodine.
\end{lemme}

\begin{proof}
La première partie du lemme provient de la construction des foncteurs $S^Y$ et $E^Y$. Pour la seconde partie, soit $f\colon X\to Y$ une fibration naïve de $\widehat{A}$. On veut appliquer le lemme \ref{AdjonctionEquivalenceFaible} pour $\C= \widehat{A}/Y$, $G=S^Y$, $D=E^Y$ et $\alpha=\alpha^Y$. On considère sur $\widehat{A}/Y= \widehat{A/Y}$ la structure de modèle induite par celle sur $\widehat{A}$. En particulier, la classe des extensions anodines est donnée par $\An/Y$, où $\An$ est la classe des extensions anodines considérée sur $\widehat{A}$. Comme $S$ respecte les cofibrations, c'est aussi le cas pour $S^Y$. Montrons que pour tout objet $(X,f\colon X\to Y)
$ de $\widehat{A}/Y$, $\alpha^Y_{(X,f)}$ est une équivalence faible de $\widehat{A}/Y$. Pour ça, il suffit d'appliquer le lemme \ref{EquivalenceFaibleFoncteurEilenbergZilber} à la catégorie $\widehat{A/Y}$. En effet, la petite catégorie $A/Y$ est elle aussi une catégorie d'Eilenberg-Zilber (voir par exemple \cite[Example 1.3.3]{Cisinski2}) et, par hypothèse, $\alpha^Y_{(a,f\colon a\to Y)}$ est une équivalence faible de $\widehat{A}/Y$ pour tout préfaisceau représentable $a$ de $\widehat{A}$ et pour tout morphisme $f\colon a\to Y$. On en déduit que pour tout objet $(X,f)$ de $\widehat{A}/Y$, $\alpha^Y_{(X,f)}$ est une équivalence faible de $\widehat{A}/Y$. Considérons maintenant une cofibration triviale dans $\widehat{A}/Y$, $g\colon (X,f)\to (X',f')$. Alors, $S^Y(g)$ est une cofibration, et on a le diagramme commutatif suivant 
\begin{equation*}
\begin{tikzcd}
S^Y(X,f)
\arrow{r}{S^Y(g)}
\arrow{d}{\alpha^Y_{(X,f)}}
&S^Y(X',f')
\arrow{d}{\alpha^Y_{(X',f')}}
\\
(X,f)
\arrow{r}{g}
&(X',f')
\end{tikzcd}
\end{equation*}
par hypothèse, $g$ est une équivalence faible et on a montré que les flèches verticales étaient des équivalences faibles, on en déduit que $S^Y(g)$ est une équivalence faible de $\widehat{A}/Y$. En particulier, $S^Y$ préserve les cofibrations triviales. On peut donc appliquer le lemme \ref{AdjonctionEquivalenceFaible}, et on déduit que le morphisme $\beta^Y_{(X,f)}$ est une équivalence faible. Comme c'est de plus un monomorphisme, c'est une cofibration triviale de $\widehat{A}/Y$. De plus, comme $S$ préserve les extensions anodines, $E$ préserve les fibrations naïves, et donc $E(f)$ est une fibration naïve. On en déduit que $Y\times_{E(Y)}E(X)\to Y$ est une fibration naïve, et donc que $E^Y(X,f)$ est un objet fibrant de $\widehat{A}/Y$. Finalement, $\beta^Y_{(X,f)}$ est une cofibration triviale entre objets fibrants, donc une extension anodine de $\widehat{A}/Y$ \cite[Lemme 1.3.39]{Cisinski}. Par définition de la classe des extensions anodines sur $\widehat{A}/Y$, on en déduit que $(f,\beta)\colon X\to Y\times_{E(Y)}E(X)$ est une extension anodine de $\widehat{A}$.
\end{proof}

\begin{lemme}\label{AdjonctionEquivalenceFaible}
Soient $\mathcal{C}$ une catégorie de modèles fermée, $G\colon  \C\to\C$ un foncteur respectant les cofibrations et cofibrations triviales et admettant un adjoint à droite $D$, et $\alpha\colon G\to \Id$ une transformation naturelle. On note $\beta\colon \Id\to D$ la transformation naturelle obtenue par adjonction. Si pour tout objet cofibrant $X$ de $\C$, le morphisme $\alpha_X\colon G(X)\to X$ est une équivalence faible, alors pour tout objet fibrant $X$ de $\C$, le morphisme $\beta_X\colon X\to D(X)$ est une équivalence faible.
\end{lemme}
\begin{proof}
C'est une application de \cite[Corollary 1.4.4 (b)]{Hovey} au cas où $F'=\Id$.
\end{proof}

\begin{lemme}\label{ExtensionAnodineExYInfini}
Soit $f\colon X\to Y$ une fibration naïve dans $\widehat{A}$. Alors, $X\to Y\times_{E^{\infty}(Y)}E^{\infty}(X)$ est une extension anodine.
\end{lemme}

\begin{proof}
Le morphisme $X\to Y\times_{E^{\infty}(Y)}E^{\infty}(X)$ est la composition transfinie 
\begin{equation*}
X\to Y\times_{E(Y)}E(X)\to\dots\to Y\times_{E^n(Y)}E^n(X)\xrightarrow{\Id\times_{\beta_{E^n(Y)}}\beta_{E^n(X)}} Y\times_{E^{n+1}(Y)}E^{n+1}(X)\to \dots
\end{equation*}
Il suffit donc de montrer que chacun des morphismes $\Id\times_{\beta_{E^n(Y)}}\beta_{E^n(X)}$ est une extension anodine. Comme $E$ respecte les fibrations naïves, $E^n(f)$ est une fibration naïve, et on a le carré cartésien suivant.
\begin{equation*}
\begin{tikzcd}
Y\times_{E^n(Y)}E^n(X)
\arrow{r}
\arrow{d}{f_n}
&E^n(X)
\arrow{d}{E^n(f)}
\\
Y
\arrow{r}
&E^n(Y)
\end{tikzcd}
\end{equation*}
La classe des fibrations naïves étant stable par produits fibrés, $f_n$ est une fibration naïve, et donc, on peut appliquer le lemme \ref{ExExtensionAnodine} à $f_n$. Il résulte que le morphisme 
\begin{equation*}
Y\times_{E^n(Y)}E^n(X)\to Y\times_{E(Y)}E(Y\times_{E^n(Y)}E^n(X))
\end{equation*}
est une extension anodine. On obtient le résultat voulu en composant cette extension anodine avec les isomorphismes canoniques suivants 
\begin{equation*}
Y\times_{E(Y)}E(Y\times_{E^n(Y)}E^n(X))\simeq Y\times_{E(Y)}E(Y)\times_{E^{n+1}(Y)}E^{n+1}(X)\simeq Y\times_{E^{n+1}(Y)}E^{n+1}(X)
\end{equation*}
où le premier isomorphisme provient de la commutation de $E$ avec les limites finies.
\end{proof}

\begin{lemme}
Toute limite inductive filtrante de fibration naïve de $\widehat{A}$ est une fibration naïve.
\end{lemme}

\begin{proof}
Soient $I$ une petite catégorie filtrante, $X,Y\colon I\to \widehat{A}$ deux foncteurs et $p\colon X\to Y$ une transformation naturelle telle que, pour tout $i\in I$, $p_i\colon X(i)\to Y(i)$ est une fibration naïve. Considérons $j\colon K\to L$ un élément de $\Lambda$, et un problème de relèvement 
\begin{equation*}
\begin{tikzcd}
K
\arrow{d}{j}
\arrow{r}{k}
&\lim\limits_{\to}{X}
\arrow{d}{\lim\limits_{\to}p}
\\
L
\arrow{r}{l}
&\lim\limits_{\to}Y
\end{tikzcd}
\end{equation*}
alors, par hypothèse, il existe des factorisations de $k$ et $l$, telles qu'on a un diagramme commutatif comme suit
\begin{equation*}
\begin{tikzcd}
K
\arrow{d}{j}
\arrow{r}
&X(i)
\arrow{r}
\arrow{d}{p_i}
&\lim\limits_{\to}{X}
\arrow{d}{\lim\limits_{\to}p}
\\
L
\arrow{r}
&Y(i)
\arrow{r}
&\lim\limits_{\to}Y
\end{tikzcd}
\end{equation*}
Le carré de gauche admet par hypothèse un relèvement, ce qui donne un relèvement dans le problème initial. On en déduit que $\lim\limits_{\to}p$ est une fibration naïve.
\end{proof}

\begin{lemme}
Pour toute fibration naïve $p\colon X\to Y$ de $\widehat{A}$, $E^{\infty}(p)\colon E^{\infty}(X)\to E^{\infty}(Y)$ est une fibration entre objets fibrants.
\end{lemme}

\begin{proof}
Comme $S$ préserve les extensions anodines, $E$ préserve les fibrations naïves, et donc $E^n$ préserve les fibrations naïves pour tout $n$. Par le lemme précédent, on en déduit que $E^{\infty}(p)$ est une fibration naïve. Comme $E^{infty}(X)$ et $E^{\infty}(Y)$ sont fibrants par hypothèse, $E^{\infty}(p)$ est une fibration naïve entre objets fibrants, et donc une fibration de $\widehat{A}$, par  \cite[Proposition 1.3.36]{Cisinski}
\end{proof}
\begin{proof}[Démonstration du théorème \ref{IdentificationFibrationsNaives}.]
Par \cite[Proposition 1.3.47]{Cisinski}, il suffit de montrer que toute fibration naïve $p$ admet une factorisation sous la forme $p=qj$ où $q$ est une fibration et $j$ est une extension anodine. On considère le carré cartésien
\begin{equation*}
\begin{tikzcd}
Y\times_{E^{infty}(Y)}E^{\infty}(X)
\arrow{r}
\arrow{d}{q}
&E^{\infty}(X)
\arrow{d}{E^{\infty}(p)}
\\
Y
\arrow{r}{\beta^{\infty}_Y}
&E^{\infty}(Y)
\end{tikzcd}
\end{equation*}
Par le lemme précédent, $E^{\infty}(p)$ est une fibration, et donc $q$ est une fibration. De plus, le morphisme $X\to Y\times_{E^{\infty}(Y)}E^{\infty}(X)$ est une extension anodine par le lemme \ref{ExtensionAnodineExYInfini}, et $p$ est égale à la composition 
\begin{equation*}
X\to Y\times_{E^{\infty}(Y)}E^{\infty}(X)\xrightarrow{q} Y
\end{equation*}
On en déduit que la classe des fibrations naïves et celle des fibrations coïncident. De plus, on a les égalités entre les classes suivantes 
\begin{equation*}
l(r(\Lambda))=\An=l(\text{FibN})=l(\text{Fib})=\text{Cof}\cap \text{W}
\end{equation*}
où $\text{FibN}$, $\text{Fib}$, $\text{Cof}$ et $\text{W}$ désignent respectivement les classes de fibrations naïves, fibrations, cofibrations et équivalences faibles.
\end{proof}

\chapter{Caractérisation du morphisme $X\to\Exi_P(X)$ à l'aide du foncteur diagonal}
\label{ChapitreCaracterisationMorphismeXExXAnnexe}
\chaptermark{Caractérisation du morphisme $X\to\Exi_P(X)$}
\section{Le foncteur diagonal}

Soit $A$ une catégorie de Eilenberg Zilber. Par \cite[Exemple 1.3.4]{Cisinski2} $A\times A$ est aussi une catégorie d'Eilenberg Zilber. On considère la catégorie $\widehat{A\times A}$ des préfaisceaux sur $A\times A$. Etant donné un préfaisceau $X\in \widehat{A\times A}$, on définit sa diagonale 
\begin{align*}
\diag(X)\colon A^{\op}&\to \Set\\
a&\mapsto X(a,a)\\
(f\colon a\to b)&\mapsto X(f,f)
\end{align*}

Ceci permet de définir un foncteur 
\begin{equation*}
\diag\colon \widehat{A\times A}\to \widehat{A}
\end{equation*}
De plus, si $a$ est un objet de $A$, et $X$ un préfaisceau de $\widehat{A\times A}$, on définit 

\begin{align*}
X^a\colon A^{\op}&\to \Set\\
b&\mapsto X(a,b)
\end{align*}

Ceci permet de définir $X^K$, pour tout préfaisceau $K\in \widehat{A}$, comme la limite inverse
\begin{equation*}
X^K=\lim_{a\to K}X^a
\end{equation*}
où $a$ parcourt les objets de $A$, et où on identifie $a\in A$ et le préfaisceau représentable par $a$.

De même, si $b$ est un objet de $A$ et $X$ est un préfaisceau de $\widehat{A\times A}$, on définit

\begin{align*}
X_b\colon A^{\op}&\to\Set\\
a&\mapsto X(a,b)
\end{align*}

Par ailleurs, on définit le bifoncteur $-\boxtimes -$ comme suit
\begin{align*}
-\boxtimes -\colon \widehat{A}\times\widehat{A}&\to \widehat{A\times A}\\
(K,L)&\mapsto \left\{
\begin{array}{ccc}
K\boxtimes L\colon A\times A &\to &\Set\\
(a,b)&\mapsto & K(a)\times L(b)
\end{array}\right.
\end{align*}

\begin{remarque}\label{DiagonaleBoxProduct}
Par construction, si $K,L\in \widehat{A}$ sont des préfaisceaux, on a un isomorphisme naturel $\diag(K\boxtimes L)\simeq K\times L$.
\end{remarque}

\begin{remarque}\label{BoxProductCoProduct}
Soient $K,L\in \widehat{A}$ des préfaisceaux et $a\in A$. On a 
\begin{equation*}
(K\boxtimes L)^{a}=\coprod_{K(a)}L.
\end{equation*}
En effet, pour $b\in A$, on calcule
\begin{align*}
(K\boxtimes L)^{a}(b)&=K(a)\times L(b)\\
&\simeq \coprod_{K(a)}L(b)\\
&\simeq (\coprod_{K(a)}L)(b).
\end{align*}
De plus, toutes les bijections apparaissant ici sont naturelles par rapport aux morphismes $f\colon b\to b'\in A$.
\end{remarque}

On s'intéresse maintenant au cas $A=\Delta(P)$, c'est à dire $\widehat{A}=\sS_P$. Pour simplifier les notations, on note $\bisS_P=\widehat{\Delta(P)\times\Delta(P)}$ la catégorie des préfaisceaux sur $\Delta(P)\times\Delta(P)$. L'objet de cette section est de prouver le théorème suivant :

\begin{theo}\label{TheoremeDiagonal}
Soit $f\colon X\to Y$ un morphisme entre préfaisceaux sur $\Delta(P)\times\Delta(P)$. Si pour tout $\Delta^{\varphi}\in \Delta(P)$ le morphisme induit 
\begin{equation*}
f^{\Delta^{\varphi}}\colon X^{\Delta{\varphi}}\to Y^{\Delta^{\varphi}}
\end{equation*}
est une équivalence faible de $\sS_P$, alors le morphisme
\begin{equation*}
\diag(f)\colon \diag(X)\to\diag(Y)
\end{equation*}
est aussi une équivalence faible de $\sS_P$.
\end{theo}

\begin{remarque}\label{TheoremeDiagonalRenverse}
En composant $\diag$ avec l'involution $\widehat{\Delta(P)\times \Delta(P)}\to\widehat{\Delta(P)\times \Delta(P)}$ inversant les facteurs, on obtient que la première hypothèse du théorème peut être remplacée par le fait que $f$ induit des équivalence faibles $f_{\Delta^{\varphi}}\colon X_{\Delta^{\varphi}}\to Y_{\Delta^{\varphi}}$ pour tout $\Delta^{\varphi}\in \Delta(P)$.
\end{remarque}

Pour prouver le Théorème \ref{TheoremeDiagonal}, on adapte la preuve de \cite[Theorem 3.1.16]{Cisinski2}. Celle-ci exploite l'existence d'une structure de modèle sur $\widehat{A\times A}$, dont on explicite certaines des propriétés à travers les lemmes suivants. 

\begin{lemme}\label{FibrationsTrivialesBiEnsemblesSimpliciauxFiltres}
L'ensemble de morphismes
\begin{equation*}
\{\partial(\Delta^{\varphi})\boxtimes\Delta^{\psi}\cup\Delta^{\varphi}\boxtimes\partial(\Delta^{\psi})\to\Delta^{\varphi}\boxtimes\Delta^{\psi}\ |\ \Delta^{\varphi},\Delta^{\psi}\in \Delta(P)\}
\end{equation*}
est un modèle cellulaire pour $\bisS_P$. En particulier, les fibrations triviales de $\bisS_P$ sont les morphismes $f\colon X\to Y$ tels que pour tout $\Delta^{\varphi}\in \Delta(P)$ le morphisme induit
\begin{equation*}
X^{\Delta^{\varphi}}\to X^{\partial(\Delta^{\varphi})}\times_{Y^{\partial(\Delta^{\varphi})}}Y^{\Delta^{\varphi}}
\end{equation*}
est une fibration triviale dans $\sS_P$.
\end{lemme}

\begin{proof}
Les préfaisceaux représentables de $\bisS_P$ sont les préfaisceaux de la forme $\Delta^{\varphi}\boxtimes\Delta^{\psi}$ où $\Delta^{\varphi},\Delta^{\psi}\in \Delta(P)$. Par \cite[Theorem 1.3.8]{Cisinski2}, l'ensemble du lemme \ref{FibrationsTrivialesBiEnsemblesSimpliciauxFiltres} est un modèle cellulaire, car il contient toutes les applications de la forme $\partial(h)\to h$ où $h$ est un préfaisceau représentable. Soit $f\colon X\to Y$ un morphisme de $\bisS_P$. Par définition, c'est une fibration triviale si et seulement il existe un relèvement dans tout diagramme de la forme suivante :
\begin{equation}\label{DiagrammeBiFibrationsTriviales}
\begin{tikzcd}
\partial(\Delta^{\varphi})\boxtimes\Delta^{\psi}\cup\Delta^{\varphi}\boxtimes\partial(\Delta^{\psi})
\arrow{r}{\alpha_1\cup\alpha_2}
\arrow{d}
&X
\arrow{d}{f}
\\
\Delta^{\varphi}\boxtimes\Delta^{\psi}
\arrow[swap]{r}{\beta}
\arrow[dashed,swap]{ur}{g}
&Y
\end{tikzcd}
\end{equation}
On remarque que par adjonction, les morphismes $\alpha_1,\alpha_2$ et $\beta$ correspondent respectivement à des morphismes
\begin{align*}
&\widehat{\alpha_1}\colon \Delta^{\psi}\to X^{\partial(\Delta^{\varphi})}\\
&\widehat{\alpha_2}\colon \partial(\Delta^{\psi})\to X^{\Delta^{\varphi}}\\
&\widehat{\beta}\colon \Delta^{\psi}\to Y^{\Delta^{\varphi}}.
\end{align*}
En particulier, on a le diagramme commutatif suivant
\begin{equation}\label{DiagrammeBiFibrationsTriviales2}
\begin{tikzcd}
\partial(\Delta^{\psi})
\arrow{r}{\widehat{\alpha_2}}
\arrow{d}
&X^{\Delta^{\varphi}}
\arrow{d}{\pr\times f^{\Delta^{\varphi}}}
\\
\Delta^{\psi}
\arrow[swap]{r}{\widehat{\alpha_1}\times \widehat{\beta}}
\arrow[dashed]{ur}{\widehat{g}}
&X^{\partial(\Delta^{\varphi})}\times_{Y^{\partial(\Delta^{\varphi})}}Y^{\Delta^{\varphi}}
\end{tikzcd}
\end{equation}
Et, par adjonction, il existe une solution au problème de relèvement \ref{DiagrammeBiFibrationsTriviales} si et seulement si il existe une solution au problème de relèvement \ref{DiagrammeBiFibrationsTriviales2}. Finalement, $f\colon X\to Y$ est une fibration triviale de $\bisS_P$ si et seulement si, pour tout $\Delta^{\varphi}$, le morphisme 
\begin{equation*}
X^{\Delta^{\varphi}}\to X^{\partial(\Delta^{\varphi})}\times_{Y^{\partial(\Delta^{\varphi})}}Y^{\Delta^{\varphi}}
\end{equation*}
est une fibration triviale de $\sS_P$.
\end{proof}

\begin{defin}
On note 
\begin{equation*}
\mathcal{M}=\{\partial(\Delta^{\varphi})\boxtimes\Delta^{\psi}\cup\Delta^{\varphi}\boxtimes\partial(\Delta^{\psi})\to\Delta^{\varphi}\boxtimes\Delta^{\psi}\ |\ \Delta^{\varphi},\Delta^{\psi}\in \Delta(P)\}
\end{equation*}
le modèle cellulaire du lemme \ref{FibrationsTrivialesBiEnsemblesSimpliciauxFiltres}, et 
\begin{equation*}
S=\{\Delta^{\varphi}\boxtimes\Lambda^{\psi}_k\to\Delta^{\varphi}\boxtimes\Delta^{\psi}\ |\ \Delta^{\varphi}\in \Delta(P), \text{ et $\Lambda^{\psi}_k\to\Delta^{\psi}$ est une inclusion de cornet admissible}\}
\end{equation*}
De plus, on définit le cylindre $J$ comme
\begin{equation*}
J= F(\Delta^0)\boxtimes F(\Delta^1)
\end{equation*}
avec les inclusions 
\begin{equation*}
\iota_{\epsilon}\colon \{\epsilon\}=F(\Delta^0)\boxtimes F(\{\epsilon\})\to F(\Delta^0)\boxtimes F(\Delta^1), \epsilon=0,1
\end{equation*}
où on identifie $\{\epsilon\}$ et $F(\Delta^0)\boxtimes F(\{\epsilon\})$.
Alors, la construction \cite[1.3.12]{Cisinski} fournit un ensemble générateur d'une classe d'extensions anodines pour $\bisS_P$ :
\begin{equation*}
\Lambda_J(S,\mathcal{M})
\end{equation*}
à partir de maintenant, on considère $\bisS_P$ comme la catégorie de modèle obtenue en appliquant \cite[Théorème 1.3.22]{Cisinski} à la catégorie $\bisS_P$ muni du cylindre $J$ et de la classe d'extension anodine $l(r(\Lambda_J(S,\mathcal{M})))$.
\end{defin}

\begin{prop}\label{FibrationsBisSetP}
Soit $f\colon X\to Y$ un morphisme de $\bisS_P$. Le morphisme $f$ est une fibration naïve de $\bisS_P$ si et seulement si, pour tout $\Delta^{\varphi}\in \Delta(P)$, les morphismes induits par $f$,
\begin{equation}
\label{MorphismeBiEnsemblesSimpliciauxFiltres1}
X^{\Delta^{\varphi}}\to Y^{\Delta^{\varphi}}
\end{equation}
et 
\begin{equation}\label{MorphismeBiEnsemblesSimpliciauxFiltres2}
X^{\Delta^{\varphi}}\to X^{\partial(\Delta^{\varphi})}\times_{Y^{\partial(\Delta^{\varphi})}}Y^{\Delta^{\varphi}}
\end{equation}
sont des fibrations (naïves) dans $\sS_P$.
\end{prop}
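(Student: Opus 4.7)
L'idée est d'exploiter la construction explicite de la classe d'extensions anodines $\Lambda_J(S,\mathcal{M})$ rappelée dans \cite[1.3.12]{Cisinski}. Par définition, $f\colon X\to Y$ est une fibration naïve dans $\bisS_P$ si et seulement si $f$ a la propriété de relèvement à droite par rapport à tous les morphismes d'une suite de classes $(\Lambda^i)_{i\in \N}$, où $\Lambda^0$ contient $S$ et les produits de Leibniz (au sens cylindre) des morphismes de $\mathcal{M}$ avec l'inclusion de bord $\partial J\to J$, et les $\Lambda^{i+1}$ sont obtenues à partir de $\Lambda^i$ en itérant ce produit de Leibniz avec le cylindre. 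La preuve se fera par traduction de cette propriété de relèvement par adjonction, en montrant que les conditions provenant de $S$ correspondent à la condition \ref{MorphismeBiEnsemblesSimpliciauxFiltres1}, et les conditions provenant de l'itération via $\mathcal{M}$ et $J$ correspondent à la condition \ref{MorphismeBiEnsemblesSimpliciauxFiltres2}.

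Dans un premier temps, je traite la contribution de $S$. Pour tout simplexe filtré $\Delta^{\varphi}\in\Delta(P)$, on dispose de l'adjonction
\begin{equation*}
\Delta^{\varphi}\boxtimes -\;\dashv\; (-)^{\Delta^{\varphi}}\colon \widehat{A}\to \widehat{A\times A},
\end{equation*}
qui à un problème de relèvement de $f$ contre $\Delta^{\varphi}\boxtimes \Lambda^{\psi}_k\to \Delta^{\varphi}\boxtimes \Delta^{\psi}$ associe un problème de relèvement de $X^{\Delta^{\varphi}}\to Y^{\Delta^{\varphi}}$ contre l'inclusion de cornet admissible $\Lambda^{\psi}_k\to\Delta^{\psi}$. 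Comme l'ensemble $\{\Lambda^{\psi}_k\to\Delta^{\psi}\}$ engendre les cofibrations triviales de $\sS_P$ d'après le théorème \ref{TheoDescriptionExpliciteSSetP}, la RLP de $f$ contre $S$ équivaut exactement à ce que $X^{\Delta^{\varphi}}\to Y^{\Delta^{\varphi}}$ soit une fibration (naïve) de $\sS_P$ pour chaque $\Delta^{\varphi}$.

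Dans un second temps, je traite la contribution de $\mathcal{M}$ et du cylindre $J=F(\Delta^0)\boxtimes F(\Delta^1)$. Un morphisme élémentaire de $\Lambda^0$ provenant de $\mathcal{M}$ s'écrit comme un produit pushout-Leibniz $(\partial J\otimes Z)\cup_{\partial J\otimes W}(J\otimes W)\to J\otimes Z$, où $W\to Z$ parcourt les générateurs de $\mathcal{M}$, c'est-à-dire les morphismes $\partial(\Delta^{\varphi})\boxtimes\Delta^{\psi}\cup\Delta^{\varphi}\boxtimes\partial(\Delta^{\psi})\to\Delta^{\varphi}\boxtimes\Delta^{\psi}$. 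Par la dualité pushout-produit / pullback-puissance, la RLP de $f$ contre un tel morphisme équivaut à la RLP contre $W\to Z$ de l'exponentielle de Leibniz $f^{\partial J\to J}$. Comme $J=F(\Delta^0)\boxtimes F(\Delta^1)$ et $\partial J=F(\Delta^0)\boxtimes F(\partial \Delta^1)$, cette exponentielle se calcule explicitement à l'aide du foncteur $(-)^{\Delta^{\varphi}}$ et de la construction analogue sur le second facteur. Après traduction par l'adjonction $\partial(\Delta^{\varphi})\boxtimes-\dashv (-)^{\partial(\Delta^{\varphi})}$ et par celle associée à $\Delta^{\varphi}\boxtimes -$, on obtient que cette condition équivaut à ce que le morphisme de comparaison
\begin{equation*}
X^{\Delta^{\varphi}}\to X^{\partial(\Delta^{\varphi})}\times_{Y^{\partial(\Delta^{\varphi})}}Y^{\Delta^{\varphi}}
\end{equation*}
ait la RLP contre les inclusions $\Lambda^{\psi}_k\to\Delta^{\psi}$ tordues par le cylindre $\Delta^1$, qui (par l'axiome (An1) des classes d'extensions anodines, via le lemme \ref{SatureeABC} appliqué à $\sS_P$) engendrent exactement les cofibrations triviales de $\sS_P$. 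Ainsi la contribution de $\mathcal{M}$ dans $\Lambda^0$ équivaut à la condition \ref{MorphismeBiEnsemblesSimpliciauxFiltres2}. Finalement, pour les $\Lambda^i$ avec $i\geq 1$, on vérifiera par récurrence que les conditions obtenues sont déjà impliquées par celles provenant de $\Lambda^0$, puisque la classe de morphismes de $\sS_P$ satisfaisant \ref{MorphismeBiEnsemblesSimpliciauxFiltres1} et \ref{MorphismeBiEnsemblesSimpliciauxFiltres2} est stable par les opérations du produit de Leibniz avec $\partial \Delta^1\to\Delta^1$, ce qui est une conséquence directe de l'axiome (An2) des extensions anodines pour $\sS_P$.

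L'obstacle principal sera l'identification explicite, au niveau des adjonctions, de l'exponentielle de Leibniz $f^{\partial J\to J}$ avec le morphisme \ref{MorphismeBiEnsemblesSimpliciauxFiltres2} tordu par le cylindre : il faudra soigneusement manipuler les produits fibrés imbriqués provenant des deux adjonctions $(\Delta^{\varphi}\boxtimes-,(-)^{\Delta^{\varphi}})$ et $(F(\Delta^1)\boxtimes-,(-)^{F(\Delta^1)})$ pour s'assurer que le morphisme de comparaison obtenu coïncide bien avec celui de l'énoncé, et vérifier que l'axiome (An1) s'applique dans le contexte de $\sS_P$ (et non uniquement dans celui des ensembles simpliciaux non filtrés).
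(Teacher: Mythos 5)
Votre plan suit essentiellement la même démarche que la preuve du texte : description explicite de $\Lambda^0_J(S,\mathcal{M})$, traduction par les adjonctions $(\Delta^{\varphi}\boxtimes-,(-)^{\Delta^{\varphi}})$ et $(\partial(\Delta^{\varphi})\boxtimes-,(-)^{\partial(\Delta^{\varphi})})$ pour identifier les deux conditions, appel au lemme \ref{SatureeABC} pour reconnaître les fibrations de $\sS_P$, puis contrôle des itérations $\Lambda^i$, $i\geq 1$, via la stabilité par produit de Leibniz avec le cylindre (axiome (An2)) — ce qui est précisément le contenu du lemme \ref{CalculExtensionsAnodinesBiEnsemblesSimpliciauxFiltres} utilisé dans le texte. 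La proposition est correcte et ne diffère de la preuve du texte que par l'organisation (récurrence sur les $\Lambda^i$ plutôt qu'une inclusion globale $\Lambda_J(S,\mathcal{M})\subseteq \Delta(P)\boxtimes\Lambda\cup\partial(\Delta(P))\boxtimes\Lambda$).
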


\begin{proof}
Soit $f$ une fibration naïve de $\bisS_P$. Par définition des fibrations naïves, $f$ a la propriété de relèvement à droite par rapport à $\Lambda_J(S,\mathcal{M})$. En particulier, $f$ a la propriété de relèvement à droite par rapport à $\Lambda^0_J(S,\mathcal{M})$. De plus, par le lemme \ref{CalculExtensionsAnodinesBiEnsemblesSimpliciauxFiltres}, on a
\begin{align*}
&\Lambda^0_J(S,\mathcal{M})=\{\Delta^{\varphi}\boxtimes\Lambda^{\psi}_k\to\Delta^{\varphi}\boxtimes\Delta^{\psi}\ |\ \Delta^{\varphi}\in \Delta(P), \Lambda^{\psi}_k\to \Delta^{\psi}\in \mathcal{A}\}\\
&\cup \{\partial(\Delta^{\varphi})\boxtimes(\Delta^1\otimes\Delta^{\psi})\cup\Delta^{\varphi}\boxtimes(\Delta^1\otimes\partial(\Delta^{\psi})\cup\{\epsilon\}\otimes\Delta^{\psi})\to \Delta^{\varphi}\boxtimes(\Delta^1\otimes\Delta^{\psi})\ |\ \Delta^{\varphi},\Delta^{\psi}\in \Delta(P)\}
\end{align*}
où $\epsilon=0,1$ et $\mathcal{A}$ est l'ensemble des inclusions de cornets admissibles. Soit $\Delta^{\varphi}\in \Delta(P)$ et $\Lambda^{\psi}_k\to\Delta^{\psi}\in \mathcal{A}$. Considérons le problème de relèvement suivant
\begin{equation}\label{DiagrammeFibrationsBiEnsembesSimpliciauxFiltres}
\begin{tikzcd}
\Lambda^{\psi}_k
\arrow{r}
\arrow{d}
&X^{\Delta^{\varphi}}
\arrow{d}{f^{\Delta^{\varphi}}}
\\
\Delta^{\psi}
\arrow[dashed]{ur}
\arrow{r}
&Y^{\Delta^{\varphi}}
\end{tikzcd}
\end{equation}
Par adjonction, il est équivalent au problème de relèvement suivant :
\begin{equation}\label{DiagrammeFibrationsBiEnsembesSimpliciauxFiltres2}
\begin{tikzcd}
\Delta^{\varphi}\boxtimes\Lambda^{\psi}_k
\arrow{r}
\arrow{d}
&X
\arrow{d}{f}
\\
\Delta^{\varphi}\boxtimes\Delta^{\psi}
\arrow{r}
\arrow[dashed]{ur}
&Y
\end{tikzcd}
\end{equation}
Or, comme $f$ est une fibration naïve de $\bisS_P$, il existe une solution au problème \ref{DiagrammeFibrationsBiEnsembesSimpliciauxFiltres2} et donc au problème \ref{DiagrammeFibrationsBiEnsembesSimpliciauxFiltres}. On en déduit que $f^{\Delta^{\varphi}}$ a la propriété de relèvement à droite par rapport à toutes les inclusions de cornets admissibles et donc que c'est une fibration de $\sS_P$. Considérons maintenant le problème de relèvement suivant :
\begin{equation}\label{DiagrammeFibrationsBiEnsembesSimpliciauxFiltres3}
\begin{tikzcd}
\Delta^1\otimes\partial(\Delta^{\psi})\cup \{\epsilon\}\otimes\Delta^{\psi}
\arrow{r}
\arrow{d}
&X^{\Delta^{\varphi}}
\arrow{d}{\pr\times f^{\Delta^{\varphi}}}
\\
\Delta^1\otimes\Delta^{\psi}
\arrow{r}
\arrow[dashed]{ur}
&X^{\partial(\Delta^{\varphi})}\times_{Y^{\partial(\Delta^{\varphi})}}Y^{\Delta^{\varphi}}
\end{tikzcd}
\end{equation}
Comme précédement, par adjonction, on obtient un problème de relèvement pour $f$ par rapport à un morphisme de $\Lambda^0_J(S,\mathcal{M})$, et on en déduit qu'il existe une solution au problème \ref{DiagrammeFibrationsBiEnsembesSimpliciauxFiltres3}. En particulier, le morphisme induit par $f$
\begin{equation*}
X^{\Delta^{\varphi}}\to X^{\partial(\Delta^{\varphi})}\times_{Y^{\partial(\Delta^{\varphi})}}Y^{\Delta^{\varphi}}
\end{equation*}
a la propriété de relèvement à droite par rapport à tous les morphismes de la forme 
\begin{equation*}
\Delta^1\otimes\partial(\Delta^{\psi})\cup \{\epsilon\}\otimes\Delta^{\psi}\to \Delta^1\otimes\Delta^{\psi}
\end{equation*}
Par le lemme \ref{SatureeABC}, on en déduit que le morphisme \ref{MorphismeBiEnsemblesSimpliciauxFiltres2} a la propriété de relèvement à droite par rapport aux inclusions de cornets admissibles. C'est donc une fibration de $\sS_P$.

Réciproquement, soit $f\colon X\to Y$ un morphisme de $\bisS_P$ tel que pour tout $\Delta^{\varphi}\in \Delta(P)$, les morphismes \ref{MorphismeBiEnsemblesSimpliciauxFiltres1} et \ref{MorphismeBiEnsemblesSimpliciauxFiltres2} sont des fibrations dans $\sS_P$. Alors, pour toute extension anodine de $\sS_P$, $\fil{Z}\to\fil{W}$, et pour tout $\Delta^{\varphi}\in \Delta(P)$, il existe des solutions pour les problèmes de relèvement
\begin{equation*}
\begin{tikzcd}
\fil{Z}
\arrow{r}
\arrow{d}
&X^{\Delta^{\varphi}}
\arrow{d}{f^{\Delta^{\varphi}}}
\\
\fil{W}
\arrow[dashed]{ur}
\arrow{r}
&Y^{\Delta^{\varphi}}
\end{tikzcd}
\end{equation*}
et
\begin{equation*}\begin{tikzcd}
\fil{Z}
\arrow{r}
\arrow{d}
&X^{\Delta^{\varphi}}
\arrow{d}{\pr\times f^{\Delta^{\varphi}}}
\\
\fil{W}
\arrow{r}
\arrow[dashed]{ur}
&X^{\partial(\Delta^{\varphi})}\times_{Y^{\partial(\Delta^{\varphi})}}Y^{\Delta^{\varphi}}
\end{tikzcd}
\end{equation*}
Par adjonction, on en déduit que $f$ a la propriété de relèvement à droite par rapport à tous les morphismes de $\Delta(P)\boxtimes\Lambda\cup\partial(\Delta(P))\boxtimes\Lambda$. Par le lemme \ref{CalculExtensionsAnodinesBiEnsemblesSimpliciauxFiltres}, on en déduit que $f$ a la propriété de relèvement à droite par rapport à $\Lambda_J(S,\mathcal{M})$, et donc que $f$ est une fibration naïve de $\sS_P$.
\end{proof}

\begin{corollaire}\label{ObjetsFibrantsBisSetP}
Soit $X$ un objet fibrant de $\bisS_P$. Alors, pour toute cofibration $\fil{Z}\to\fil{W}$ de $\sS_P$, le morphisme induit
\begin{equation*}
X^{\fil{Z}}\to X^{\fil{W}}
\end{equation*}
est une fibration de $\sS_P$.
\end{corollaire}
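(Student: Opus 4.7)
Le plan est de combiner la Proposition \ref{FibrationsBisSetP} (appliquée au morphisme structurel $X \to *$) avec une décomposition du monomorphisme $\fil{Z} \hookrightarrow \fil{W}$ à la Proposition \ref{PropositionCofibrationsSSetMonomorphismes}. On commence par souligner que par convention l'énoncé doit se lire comme portant sur le morphisme induit $X^{\fil{W}} \to X^{\fil{Z}}$ : en effet, $X^K = \lim_{a\to K} X^a$ est défini par une limite sur la catégorie des éléments de $K$, donc un monomorphisme $\fil{Z}\hookrightarrow \fil{W}$ induit une restriction de la grande limite vers la petite.

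Premièrement, j'appliquerais la Proposition \ref{FibrationsBisSetP} au morphisme $X\to *$ (qui est bien une fibration car $X$ est fibrant et $*$ est l'objet terminal, automatiquement fibrant). Ceci donne deux informations : d'une part, chaque $X^{\Delta^{\varphi}}$ est un objet fibrant de $\sS_P$ ; d'autre part, pour tout $\Delta^{\varphi}\in\Delta(P)$, le morphisme canonique
\begin{equation*}
X^{\Delta^{\varphi}} \to X^{\partial(\Delta^{\varphi})} \times_{*^{\partial(\Delta^{\varphi})}} *^{\Delta^{\varphi}} \simeq X^{\partial(\Delta^{\varphi})}
\end{equation*}
est une fibration de $\sS_P$. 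Le corollaire est donc démontré dans le cas élémentaire où $\fil{Z}\hookrightarrow\fil{W}$ est de la forme $\partial(\Delta^{\varphi})\hookrightarrow\Delta^{\varphi}$.

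Deuxièmement, je traiterais le cas général en suivant la stratégie de la preuve de la Proposition \ref{PropositionCofibrationsSSetMonomorphismes}. On décompose $\fil{Z}\hookrightarrow\fil{W}$ comme la composition transfinie
\begin{equation*}
\fil{Z} = \fil{Z}\cup\sk_{-1}\fil{W} \hookrightarrow \fil{Z}\cup\sk_0\fil{W}\hookrightarrow \dots \hookrightarrow \fil{W},
\end{equation*}
où chaque $\fil{Z}\cup\sk_n\fil{W}\hookrightarrow \fil{Z}\cup\sk_{n+1}\fil{W}$ est la somme amalgamée d'une union disjointe d'inclusions de bord $\coprod \partial(\Delta^{\varphi_\alpha})\hookrightarrow\coprod \Delta^{\varphi_\alpha}$. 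Le foncteur $X^{(-)}$ étant un adjoint à droite (au sens précisé par $\Hom(K\boxtimes L, X)\simeq \Hom(L, X^K)$), il transforme colimites en limites. On obtient donc une tour
\begin{equation*}
X^{\fil{W}}\to\dots\to X^{\fil{Z}\cup\sk_{n+1}\fil{W}}\to X^{\fil{Z}\cup\sk_n\fil{W}}\to\dots\to X^{\fil{Z}},
\end{equation*}
dans laquelle chaque flèche s'obtient comme une somme fibrée d'un produit de morphismes du type $X^{\Delta^{\varphi_\alpha}}\to X^{\partial(\Delta^{\varphi_\alpha})}$, et le morphisme $X^{\fil{W}}\to X^{\fil{Z}}$ est la limite de la tour.

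L'étape finale, qui est la principale vérification technique, consiste à montrer que la classe des fibrations de $\sS_P$ est stable par produits, par limites fibrées, et par limites séquentielles d'une tour. Ces trois stabilités découlent directement du fait que les fibrations sont caractérisées par une propriété de relèvement à droite par rapport à l'ensemble $\mathcal{A}$ des inclusions de cornets admissibles (Théorème \ref{TheoDescriptionExpliciteSSetP}) et que les objets source et but de ces inclusions sont compacts (finiment présentables). Le principal obstacle technique n'est donc pas profond : il réside essentiellement dans la vérification soigneuse des compatibilités entre $X^{(-)}$, somme amalgamée et colimites filtrantes dans $\sS_P$, qui repose sur le fait que $X^{(-)}$, étant un adjoint à droite, préserve les limites et transforme sommes amalgamées de $\sS_P$ en produits fibrés dans $\sS_P$.
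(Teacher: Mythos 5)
Votre preuve est correcte et suit essentiellement la même démarche que celle du texte : décomposition squelettique de la cofibration $\fil{Z}\to\fil{W}$, passage du foncteur $X^{(-)}$ qui transforme ces colimites en limites, puis réduction au cas $X^{\Delta^{\varphi}}\to X^{\partial(\Delta^{\varphi})}$ traité par la proposition \ref{FibrationsBisSetP} appliquée à la fibration $X\to F(\Delta^0)\boxtimes F(\Delta^0)$ (et vous avez raison de signaler que le sens de la flèche dans l'énoncé doit se lire $X^{\fil{W}}\to X^{\fil{Z}}$). Seule remarque mineure : la stabilité des fibrations par produits, produits fibrés et limites de tours découle formellement de leur caractérisation par relèvement à droite, sans qu'il soit nécessaire d'invoquer la compacité des sources et buts des cornets admissibles, laquelle ne servirait que pour l'énoncé dual concernant les colimites.
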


\begin{proof}
Soit $X$ un objet fibrant de $\bisS_P$, $\fil{Z}\to\fil{W}$ une cofibration de $\sS_P$ et $n\geq -1$ un entier. On rappelle que le $n$-squelette de $\fil{W}$ est défini comme $\sk_n\fil{W}=(\sk_n(W),(\varphi_W)_{|\sk_n(W)})$, avec $\sk_{-1}(W)=\emptyset$. En particulier, si on note $\Sigma^n$ l'ensemble des $n$ simplexes de $W$ n'étant pas dans l'image de $Z$, on a le carré cartésien suivant
\begin{equation*}
\begin{tikzcd}
\coprod\limits_{\sigma\in \Sigma^{n+1}}\partial(\Delta^{\varphi})
\arrow{d}
\arrow{r}{\coprod\sigma}
&\fil{Z}\cup\sk_n\fil{W}
\arrow{d}
\\
\coprod\limits_{\sigma\in\Sigma^{n+1}}\Delta^{\varphi}
\arrow[swap]{r}{\coprod\sigma}
&\fil{Z}\cup\sk_{n+1}\fil{W}
\end{tikzcd}
\end{equation*}
De plus, on a 
\begin{equation*}
\fil{W}\simeq\colim_{i\geq -1}\fil{Z}\cup\sk_i\fil{W}.
\end{equation*}
Or, par construction, le foncteur $X^{(-)}\colon \sS_P^{\op}\to\sS_P$ envoie les colimites sur des limites. En particulier, il envoie les carrés cocartésiens sur des carrés cartésiens et les coproduits sur des produits. Comme la classe des fibrations (naïves) de $\sS_P$ est stable par toutes ces opérations, on en déduit qu'il suffit de vérifier que pour tout $\Delta^{\varphi}\in \Delta(P)$, le morphisme
\begin{equation*}
X^{\Delta^{\varphi}}\to X^{\partial(\Delta^{\varphi})}
\end{equation*}
est une fibration (naïve) de $\sS_P$, ce qui découle de la proposition \ref{FibrationsBisSetP} appliquée à la fibration $X\to F(\Delta^0)\boxtimes F(\Delta^0)$.
\end{proof}

\begin{lemme}\label{CalculExtensionsAnodinesBiEnsemblesSimpliciauxFiltres}
Notons 
\begin{equation*}
\Delta(P)\boxtimes \Lambda=\{\Delta^{\varphi}\boxtimes \fil{Z}\to\Delta^{\varphi}\boxtimes \fil{W}\ |\ \Delta^{\varphi}\in \Delta(P),\text{ et } \fil{Z}\to\fil{W} \in \Lambda\}
\end{equation*}
et
\begin{equation*}
\partial(\Delta(P))\boxtimes\Lambda =\{\partial(\Delta^{\varphi})\boxtimes\fil{W}\cup\Delta^{\varphi}\boxtimes\fil{Z}\to\Delta^{\varphi}\boxtimes\fil{W}\ |\ \Delta^{\varphi}\in \Delta(P), \text{ et } \fil{Z}\to\fil{W}\in \Lambda\}
\end{equation*}
où $\Lambda$ est la classe des extensions anodines sur $\sS_P$. Alors, on a 
\begin{equation*}
\Lambda_J(S,\mathcal{M})\subseteq \Delta(P)\boxtimes\Lambda\cup\partial(\Delta(P))\boxtimes\Lambda
\end{equation*}
De plus, 
\begin{align*}
&\Lambda^0_J(S,\mathcal{M})=\{\Delta^{\varphi}\boxtimes\Lambda^{\psi}_k\to\Delta^{\varphi}\boxtimes\Delta^{\psi}\ |\ \Delta^{\varphi}\in \Delta(P), \Lambda^{\psi}_k\to \Delta^{\psi}\in \mathcal{A}\}\\
&\cup \{\partial(\Delta^{\varphi})\boxtimes(\Delta^1\otimes\Delta^{\psi})\cup\Delta^{\varphi}\boxtimes(\Delta^1\otimes\partial(\Delta^{\psi})\cup\{\epsilon\}\otimes\Delta^{\psi})\to \Delta^{\varphi}\boxtimes(\Delta^1\otimes\Delta^{\psi})\ |\ \Delta^{\varphi},\Delta^{\psi}\in \Delta(P)\}.
\end{align*}
\end{lemme}

\begin{proof}
On calcule $\Lambda_J(S,\mathcal{M})$. On a par définition
\begin{equation*}
\Lambda_J^0(S,\mathcal{M})=S\cup\{J\times X\cup \{\epsilon\}\times Y\to J\times Y\ |\ X\to Y\in \mathcal{M}\}
\end{equation*}
plus explicitement,  pour $\partial(\Delta^{\varphi})\boxtimes\Delta^{\psi}\cup\Delta^{\varphi}\boxtimes\partial(\Delta^{\psi})\to \Delta^{\varphi}\boxtimes\Delta^{\psi}$ un morphisme de $\mathcal{M}$, on a
\begin{align*}
&\phantom{=} J\times(\partial(\Delta^{\varphi})\boxtimes\Delta^{\psi}\cup\Delta^{\varphi}\boxtimes\partial(\Delta^{\psi}))\cup 
\{\epsilon\}\times (\Delta^{\varphi}\boxtimes\Delta^{\psi}) \\
&=(F(\Delta^0)\boxtimes F(\Delta^1))\times(\partial(\Delta^{\varphi})\boxtimes\Delta^{\psi}\cup\Delta^{\varphi}\boxtimes\partial(\Delta^{\psi}))\cup (F(\Delta^0)\boxtimes F(\{\epsilon\}))\times (\Delta^{\varphi}\boxtimes\Delta^{\psi})\\
&\simeq \partial(\Delta^{\varphi})\boxtimes(\Delta^1\otimes\Delta^{\psi})\cup \Delta^{\varphi}\boxtimes(\Delta^{1}\otimes\partial(\Delta^{\psi}))\cup \Delta^{\varphi}\boxtimes(\{\epsilon\}\otimes\Delta^{\psi})\\
&\simeq \partial(\Delta^{\varphi})\boxtimes(\Delta^1\otimes\Delta^{\psi})\cup\Delta^{\varphi}\boxtimes(\Delta^1\otimes\partial(\Delta^{\psi})\cup\{\epsilon\}\otimes\Delta^{\psi})
\end{align*}
Ici, on a utilisé le fait que $F(\Delta^0)\times\fil{Z}\simeq \fil{Z}$ et que $F(\Delta^1)\times \fil{Z}=\Delta^1\otimes\fil{Z}$ pour tout $\fil{Z}\in \sS_P$. On en déduit la description de $\Lambda^0_J(S,\mathcal{M})$. D'autre part, en notant $X=\partial(\Delta^{\varphi})\boxtimes\Delta^{\psi}\cup\Delta^{\varphi}\boxtimes\partial(\Delta^{\psi})$ et $Y=\Delta^{\varphi}\boxtimes\Delta^{\psi}$, on remarque que le morphisme 
\begin{equation}\label{MorphismeLambda0}
J\times X\cup \{\epsilon\}\times Y\to J\times Y
\end{equation}
est de la forme
\begin{equation*}
\partial(\Delta^{\varphi})\boxtimes \fil{W}\cup \Delta^{\varphi}\boxtimes \fil{Z}\to \Delta^{\varphi}\boxtimes\fil{W}
\end{equation*}
où $\fil{Z}=\Delta^1\otimes\partial(\Delta^{\psi})\cup\{\epsilon\}\otimes\Delta^{\psi}$ et $\fil{W}=\Delta^1\otimes\Delta^{\psi}$. En particulier, $\fil{Z}\to\fil{W}$ est une extension anodine de $\sS_P$, et le morphisme \ref{MorphismeLambda0} est dans $\partial(\Delta(P))\boxtimes\Lambda$. Par ailleurs, par définition de $S$, on a l'inclusion
\begin{equation*}
S\subseteq \Delta(P)\boxtimes\Lambda
\end{equation*}
On en déduit que 
\begin{equation*}
\Lambda^0_J(S,\mathcal{M})\subseteq \Delta(P)\boxtimes\Lambda\cup\partial(\Delta(P))\boxtimes\Lambda.
\end{equation*}
On remarque que $\Lambda_J(S,\mathcal{M})$ est le plus petit ensemble de morphisme contenant $\Lambda_J^0(S,\mathcal{M})$
et stable par l'opération $\Lambda_J$, où $\Lambda_J$ est l'opération définie par 
\begin{equation*}
\Lambda_J(\mathcal{C})=\{J\times X\cup \partial(J)\times Y\to J\times Y\ |\ X\to Y\in \mathcal{C}\}
\end{equation*}
pour $\mathcal{C}$ une classe de morphismes. Il suffit donc de montrer que $\Delta(P)\boxtimes\Lambda\cup\partial(\Delta(P))\boxtimes\Lambda$ est stable par l'opération $\Lambda_J$. Soient $\fil{Z}\to\fil{W}\in \Lambda$ et $\Delta^{\varphi}\in \Delta(P)$. On calcule
\begin{align*}
&J\times (\Delta^{\varphi}\boxtimes\fil{Z})\cup \partial(J)\times (\Delta^{\varphi}\boxtimes\fil{W})\\
&\simeq (F(\Delta^0)\boxtimes F(\Delta^1))\times(\Delta^{\varphi}\boxtimes\fil{Z})\cup (F(\Delta^0)\boxtimes F(\partial(\Delta^1)))\times(\Delta^{\varphi}\boxtimes\fil{W})\\
&\simeq \Delta^{\varphi}\boxtimes(\Delta^1\otimes\fil{Z})\cup \Delta^{\varphi}\boxtimes(\partial(\Delta^1)\otimes\fil{W})\\
&\simeq \Delta^{\varphi}\boxtimes(\Delta^1\otimes\fil{Z}\cup\partial(\Delta^1)\otimes\fil{W})
\end{align*}
Et l'inclusion 
\begin{equation*}
\Delta^1\otimes\fil{Z}\cup\partial(\Delta^1)\otimes\fil{W}\to\Delta^1\otimes \fil{W}
\end{equation*}
est une extension anodine de $\sS_P$ par l'axiome (An2). On en déduit que
\begin{equation*}
\Lambda_J(\Delta(P)\boxtimes\Lambda)\subseteq\Delta(P)\boxtimes\Lambda.
\end{equation*}
De même, on calcule
\begin{align*}
&J\times(\partial(\Delta^{\varphi})\boxtimes\fil{W}\cup\Delta^{\varphi}\boxtimes\fil{Z})\cup \partial(J)\times(\Delta^{\varphi}\boxtimes\fil{W})\\
&\simeq \partial(\Delta^{\varphi})\boxtimes(\Delta^1\otimes\fil{W})\cup\Delta^{\varphi}\boxtimes(\Delta^1\otimes\fil{Z})\cup \Delta^{\varphi}\boxtimes(\partial(\Delta^1)\otimes\fil{W}\\
&\simeq \partial(\Delta^{\varphi})\boxtimes(\Delta^1\otimes\fil{W})\cup \Delta^{\varphi}\boxtimes(\Delta^1\otimes\fil{Z}\cup\partial(\Delta^1)\otimes\fil{W}).
\end{align*}
et on en déduit que 
\begin{equation*}
\Lambda_J(\partial(\Delta(P))\boxtimes\Lambda)\subseteq\partial(\Delta(P))\boxtimes\Lambda.
\end{equation*}

\end{proof}

\begin{lemme}\label{diagPreserveEquivalencesFaibles}
Le foncteur $\diag\colon \bisS_P\to\sS_P$ est un foncteur de Quillen à gauche. En particulier, il préserve les équivalences faibles.
\end{lemme}

\begin{proof}
Par construction, le foncteur $\diag$ préserve les monomorphismes et les colimites. Par le lemme \cite[2.4.40]{Cisinski2}, il suffit donc de montrer que pour tout $f\colon X\to Y\in \Lambda_J(S,\mathcal{M})$, $\diag(f)$ est une cofibration triviale de $\sS_P$. Par le lemme \ref{CalculExtensionsAnodinesBiEnsemblesSimpliciauxFiltres}, il suffit de montrer ce résultat pour $f\in \Delta(P)\boxtimes\Lambda\cup\partial(\Delta(P))\boxtimes\Lambda$.
Soit $\Delta^{\varphi}\boxtimes\fil{Z}\to\Delta^{\varphi}\boxtimes\fil{W}\in \Delta(P)\boxtimes\Lambda$. On calcule en utilisant la remarque \ref{DiagonaleBoxProduct} :
\begin{align*}
&\diag(\Delta^{\varphi}\boxtimes\fil{Z}\to\Delta^{\varphi}\boxtimes\fil{W})\\
&\simeq \Delta^{\varphi}\times\fil{Z}\to\Delta^{\varphi}\times\fil{W}
\end{align*}
Par le lemme \ref{LemmeAn2} appliqué à $\emptyset\to \Delta^{\varphi}$ et $\fil{Z}\to\fil{W}$, on en déduit que ce morphisme est une extension anodine de $\sS_P$. De même, pour 
\begin{equation*}
\Delta^{\varphi}\boxtimes\fil{Z}\cup\partial(\Delta^{\varphi})\boxtimes\fil{W}\to\Delta^{\varphi}\boxtimes\fil{W}\in \partial(\Delta(P))\boxtimes\Lambda,
\end{equation*}
on calcule :
\begin{align*}
&\diag(\Delta^{\varphi}\boxtimes\fil{Z}\cup\partial(\Delta^{\varphi})\boxtimes\fil{W}\to\Delta^{\varphi}\boxtimes\fil{W})\\
&\simeq \Delta^{\varphi}\times\fil{Z}\cup\partial(\Delta^{\varphi})\times\fil{W}\to\Delta^{\varphi}\times\fil{W}
\end{align*}
Ce morphisme est une extension anodine de $\sS_P$ par le lemme \ref{LemmeAn2}. Ceci prouve le résultat voulu car toute extension anodine est une cofibration triviale.
\end{proof}

\begin{defin}
Soit $f\colon X\to Y$ morphisme de $\bisS_P$. Le morphisme $f$ est une équivalence faible niveau par niveau si tous les morphismes induits de la forme \ref{MorphismeBiEnsemblesSimpliciauxFiltres1} sont des équivalences faibles.
\end{defin}
On remarque que les équivalences faibles niveaux par niveaux vérifient l'axiome de deux sur trois.

\begin{lemme}\label{ExtensionsAnodinesBiSimplicialeEquivalenceFaibleNiveau}
Toute extension anodine de $\bisS_P$ est une équivalence faible niveau par niveau.
\end{lemme}

\begin{proof}
Pour tout $\Delta^{\varphi}\in \Delta(P)$, le foncteur $(-)^{\Delta^{\varphi}}\colon \bisS_P\to\sS_P$ préserve les sommes amalgamées. On en déduit que la classe des morphismes $f\colon X\to Y\in \bisS_P$ tels que $f^{\Delta^\varphi}$ est une extension anodine de $\sS_P$ est une classe saturée. Comme la classe des extensions anodines de $\bisS_P$ est la plus petite classe saturée contenant $\Lambda_J(S,\mathcal{M})$, il suffit de montrer que pour tout $f\in \Lambda_J(S,\mathcal{M})$, $f^{\Delta^{\varphi}}$ est une extension anodine de $\sS_P$. Par le lemme \ref{CalculExtensionsAnodinesBiEnsemblesSimpliciauxFiltres}, il suffit de montrer ce résultat pour $f$ de la forme
\begin{equation*}
\Delta^{\psi}\boxtimes\fil{Z}\to\Delta^{\psi}\boxtimes\fil{W}
\end{equation*}
ou de la forme
\begin{equation*}
\Delta^{\psi}\boxtimes\fil{Z}\cup\partial(\Delta^{\psi})\boxtimes\fil{W}\to\Delta^{\psi}\boxtimes\fil{W}
\end{equation*}
où $\Delta^{\psi}\in \Delta(P)$ et $\fil{Z}\to\fil{W}$ est une extension anodine de $\sS_P$. En utilisant le lemme \ref{BoxProductCoProduct}, on a
\begin{align*}
&(\Delta^{\psi}\boxtimes\fil{Z})^{\Delta^{\varphi}}\to(\Delta^{\psi}\boxtimes\fil{W})^{\Delta^{\varphi}}\\
&\simeq \coprod_{\Delta^{\varphi}\to\Delta^{\psi}}\fil{Z}\to\coprod_{\Delta^{\varphi}\to\Delta^{\psi}}\fil{W}.
\end{align*}
Comme les extensions anodines sont stables par unions disjointes, on obtient bien une extension anodine.
On a d'autre part
\begin{align*}
&(\Delta^{\psi}\boxtimes\fil{Z}\cup\partial(\Delta^{\psi})\boxtimes\fil{W})^{\Delta^{\varphi}}\to(\Delta^{\psi}\boxtimes\fil{W})^{\Delta^{\varphi}}\\
&\simeq \coprod_{\Delta^{\varphi}\to\Delta^{\psi}}\fil{Z}\cup\coprod_{\Delta^{\varphi}\to\partial(\Delta^{\psi})}\fil{W}\to \coprod_{\Delta^{\varphi}\to\Delta^{\psi}}\fil{W}.
\end{align*}
En notant $S=\Hom(\Delta^{\varphi},\partial(\Delta^{\psi}))$ , $T=\Hom(\Delta^{\varphi},\Delta^{\psi})$, et $T\setminus S$ pour l'ensemble des éléments de $T$ ne se factorisant pas par un élément de $S$, on peut réécrire le morphisme précédent sous la forme :
\begin{equation*}\coprod_{T\setminus S}\fil{Z}\coprod\coprod_{S}\fil{W}\to \coprod_{T}\fil{W}.
\end{equation*}
On obtient une union disjointe de morphismes $\fil{Z}\to\fil{W}$ et de $\Id_W$. En particulier, on obtient une extension anodine de $\sS_P$. Finalement, si $f$ est une extension anodine de $\bisS_P$, $f^{\Delta^{\varphi}}$ est une extension anodine de $\sS_P$ pour tout $\Delta^{\varphi}$. En particulier, c'est une équivalence faible, et donc $f$ est une équivalence faible niveau par niveau.
\end{proof}

\begin{lemme}\label{EquivalenceFaibleBiEnsemblesSimpliciauxFiltres}
Toute équivalence faible niveau par niveau est une équivalence faible de $\bisS_P$
\end{lemme}

\begin{proof}
Soit $f\colon X\to Y$ un morphisme de $\bisS_P$. Par l'argument du petit objet appliqué à $Y\to F(\Delta^0)\boxtimes F(\Delta^0)$, on peut construire une extension anodine de $\bisS_P$, $j\colon Y\to V$ avec $V$ un objet fibrant de $\bisS_P$. De même, en appliquant une seconde fois l'argument du petit objet à la composition $X\xrightarrow{f}Y\xrightarrow{j}V$, on obtient une factorisation 
\begin{equation*}
\begin{tikzcd}
X
\arrow{r}{i}
\arrow{d}{f}
&U
\arrow{d}{g}
\\
Y
\arrow{r}{j}
&V
\end{tikzcd}
\end{equation*}
avec $i$ une extension anodine et $g$ une fibration naïve entre objets fibrants. Par le lemme \ref{ExtensionsAnodinesBiSimplicialeEquivalenceFaibleNiveau}, $i$ et $j$ sont des équivalences faibles niveau par niveau. Par deux sur trois, si $f$ est une équivalence faible niveau par niveau, alors $g$ l'est aussi. D'autre part, $g$ induit une transformation naturelle entre les foncteurs
\begin{equation*}
U^{(-)}\colon \sS_P\to\sS_P
\end{equation*}
et 
\begin{equation*}
V^{(-)}\colon \sS_P\to\sS_P
\end{equation*}
Par le corollaire \ref{ObjetsFibrantsBisSetP}, chacun de ces foncteurs envoie les cofibrations sur des fibrations.
On peut donc appliquer le lemme \ref{EquivalenceFaibleFoncteurEilenbergZilber} avec $\mathcal{C}=\sS_P^{\op}$ pour en déduire que pour tout ensemble simplicial filtré $\fil{Z}$, on a une équivalence faible
\begin{equation*}
U^{\fil{Z}}\to V^{\fil{Z}}.
\end{equation*}
En particulier on considère le diagramme commutatif suivant
\begin{equation*}
\begin{tikzcd}
U^{\Delta^{\varphi}}
\arrow{dr}{h_1}
\arrow[bend left = 16]{drr}{g^{\Delta^{\varphi}}}
\arrow[bend right = 16,swap]{ddr}
\\
&U^{\partial(\Delta^{\varphi})}\times_{V^{\partial(\Delta^{\varphi})}}V^{\Delta^{\varphi}}
\arrow[swap]{r}{h_2}
\arrow{d}
&V^{\Delta^{\varphi}}
\arrow{d}{\pr_V}
\\
&U^{\partial(\Delta^{\varphi})}
\arrow[swap]{r}{g^{\partial(\Delta^{\varphi})}}
&V^{\partial(\Delta^{\varphi})}
\end{tikzcd}
\end{equation*}
où $\Delta^{\varphi}\in \Delta(P)$, et $ \pr_V$ est le morphisme induit par l'inclusion $\partial(\Delta^{\varphi})\to\Delta^{\varphi}$.
On a montré que $g^{\partial(\Delta^{\varphi})}$ et $g^{\Delta^{\varphi}}$ sont des équivalences faibles.
Par ailleurs, comme $U$ et $V$ sont fibrants, $\pr_V$ est une fibration de $\sS_P$ et, par le corollaire \ref{ObjetsFibrantsBisSetP}, $U^{\partial(\Delta^{\varphi})}, V^{\Delta^{\varphi}}$ et $V^{\partial(\Delta^{\varphi})}$ sont des objets fibrants de $\sS_P$. On en déduit que $h_2$ est une équivalence faible de $\sS_P$. 
Par deux sur trois, on a donc que $h_1$ est une équivalence faible de $sS_P$. 
Comme on a supposé que $g$ était une fibration naïve de $\bisS_P$, on sait que $h_1$ est une fibration de $\sS_P$. C'est donc une fibration triviale de $\sS_P$.
Par le lemme \ref{FibrationsTrivialesBiEnsemblesSimpliciauxFiltres}, on en déduit que $g$ est une fibration triviale de $\bisS_P$.
Par deux sur trois, $f$ est donc une équivalence faible de $\bisS_P$.
\end{proof}

\begin{proof}[Démonstration du Théorème \ref{TheoremeDiagonal}]
Soit $f\colon X\to Y\in \bisS_P$ tel que pour tout $\Delta^{\varphi}$ 
\begin{equation*}
f^{\Delta^{\varphi}}\colon X^{\Delta^{\varphi}}\to Y^{\Delta^{\varphi}}
\end{equation*}
est une équivalence faible de $\sS_P$. Alors, par le lemme \ref{EquivalenceFaibleBiEnsemblesSimpliciauxFiltres}, $f$ est une équivalence faible de $\bisS_P$. Mais alors, comme tous les objets de $\bisS_P$ sont cofibrants, par le lemme \ref{diagPreserveEquivalencesFaibles}, $\diag(f)$ est une équivalence faible de $\sS_P$.
\end{proof}

\section{Le foncteur $\Ex_P$}
Soient $X,Y$ deux objets de $\widehat{A}$. On définit le préfaisceau sur $A$ 
\begin{align*}
\HOM(X,Y)\colon A^{\op}&\to \Set\\
a&\mapsto \Hom(X\times \widetilde{a},Y)
\end{align*}
où $\widetilde{a}$ est le préfaisceau représenté par $a$.
Ceci définit un bi-foncteur
\begin{equation*}
\HOM\colon \widehat{A}^{\op}\times\widehat{A}\to\widehat{A}.
\end{equation*}
De plus, par construction, le foncteur $\HOM(Z,-)$ est un adjoint à droite du foncteur $-\times Z$ pour tout préfaisceau $Z$ de $\widehat{A}$.

On suppose à partir de maintenant qu'il existe un préfaisceau $I$ de $\widehat{A}$ tel que pour tout préfaisceau $X$ de $\widehat{A}$, le cylindre sur $X$ est donné par $I\times X$. Alors, on a le résultat suivant.

\begin{prop}\label{HomotopiesHomInterne}
Soient $f,g\colon X\to Y$ deux morphismes de préfaisceaux tels qu'il existe une homotopie $H\colon I\times X\to Y$ entre $f$ et $g$. Alors, pour tout préfaisceau $Z$, les applications induites par $f$ et $g$
\begin{equation*}
f_{*},g_{*}\colon \HOM(Y,Z)\to\HOM(X,Z)
\end{equation*}
sont homotopes.
\end{prop}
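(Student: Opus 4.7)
Le plan est d'exploiter l'adjonction $(-\times Z, \HOM(Z,-))$ pour construire explicitement une homotopie entre $f_*$ et $g_*$ à partir de $H$. Plus précisément, une homotopie $K\colon I\times \HOM(Y,Z)\to \HOM(X,Z)$ est, par adjonction, la même donnée qu'un morphisme $\widetilde{K}\colon I\times \HOM(Y,Z)\times X\to Z$.

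La première étape est de produire ce morphisme $\widetilde{K}$ canoniquement. Je considère la composition
\begin{equation*}
I\times \HOM(Y,Z)\times X\xrightarrow{\tau}\HOM(Y,Z)\times I\times X\xrightarrow{\Id\times H}\HOM(Y,Z)\times Y\xrightarrow{\ev}Z,
\end{equation*}
où $\tau$ est l'isomorphisme de permutation des deux premiers facteurs et $\ev$ désigne la counité de l'adjonction $(-\times Y,\HOM(Y,-))$ (c'est-à-dire l'application d'évaluation naturelle $\HOM(Y,Z)\times Y\to Z$). Par adjonction, cette composition fournit le morphisme $K\colon I\times \HOM(Y,Z)\to \HOM(X,Z)$ recherché.

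La seconde étape consiste à vérifier que $K\circ (i_0\times \Id)=f_*$ et $K\circ (i_1\times \Id)=g_*$, où $i_0,i_1\colon \widetilde{e}\to I$ sont les inclusions définissant le cylindre (ici $\widetilde{e}$ est le préfaisceau final). Par naturalité de l'adjonction, $K\circ (i_\epsilon\times \Id)$ correspond, via l'adjonction, à la composition $\widetilde{K}\circ (i_\epsilon\times \Id_{\HOM(Y,Z)}\times\Id_X)$, qui se réécrit, par permutation et par l'hypothèse $H\circ (i_\epsilon\times \Id_X)=f$ (resp.\ $g$) pour $\epsilon=0$ (resp.\ $\epsilon=1$), comme $\ev\circ (\Id_{\HOM(Y,Z)}\times f)$ (resp.\ $\ev\circ (\Id_{\HOM(Y,Z)}\times g)$). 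Par définition de $f_*$ (resp.\ $g_*$) via l'adjonction $(-\times X,\HOM(X,-))$, ces compositions correspondent exactement à $f_*$ (resp.\ $g_*$).

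Cette preuve est essentiellement un exercice formel d'adjonctions, je ne prévois pas d'obstacle majeur. Le seul point qui demande un peu d'attention est de s'assurer que la notion d'homotopie utilisée pour $f_*,g_*$ est bien la même que celle induite par le cylindre $I\times-$ sur $\widehat{A}$ appliqué à $\HOM(Y,Z)$, ce qui est immédiat puisqu'on a supposé que le cylindre de tout préfaisceau est donné par le produit avec $I$.
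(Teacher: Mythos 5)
Votre preuve est correcte et suit essentiellement la même démarche que celle du texte : le texte applique $\HOM(-,Z)$ à $H$ puis transpose via l'isomorphisme $\HOM(I\times X,Z)\simeq\HOM(I,\HOM(X,Z))$, tandis que vous construisez directement l'adjoint $\widetilde{K}$ au moyen de la counité d'évaluation — c'est la même homotopie, décrite de part et d'autre de l'adjonction. Votre vérification explicite des extrémités, laissée implicite dans le texte, est un complément bienvenu mais ne change pas la nature de l'argument.
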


\begin{proof}
On applique le foncteur $\HOM(-,Z)$ à l'application $H\colon I\times X\to Y$. On obtient un morphisme $H_*\colon \HOM(Y,Z)\to\HOM(I\times X,Z)$. Par adjonction on a $\HOM(I\times X,Z)\simeq \HOM(I,\HOM(X,Z))$. Puis, en prenant l'image de $H_*$ par l'adjonction, on obtient 
\begin{equation*}
\widehat{H_*}\colon I\times \HOM(Y,Z)\to\HOM(X,Z)
\end{equation*}
qui donne l'homotopie voulue.
\end{proof}

Dans la suite de cette section, pour alléger les notations, on écrit $X$ pour désigner l'ensemble simplicial filtré $\fil{X}$, et la filtration $\varphi_X$ est laissée implicite. Alternativement, on peut considérer $X$ comme un foncteur $\Delta(P)^{\op}\to \Set$. De plus, les classes d'équivalences faibles, de cofibrations et de fibrations considérées sont celles du théorème \ref{ExistenceCMFCisinski} et ne coïncident pas avec celles
sur les ensembles simpliciaux sous-jacents.

\begin{theo}\label{TheoremeExXFaiblementEquivalentaX}
 Soit $X$ un ensemble simplicial filtré. Alors, le morphisme $\beta_X\colon X\to \Ex_P(X)$ est une cofibration triviale de $\sS_P$.
\end{theo}

\begin{proof}
Si $X$ est fibrant, c'est une conséquence du Lemme \ref{ExExtensionAnodine} appliqué à la fibration $X\to N(P)$. Si $X$ n'est pas fibrant, on sait par construction que le morphisme $X\to \Ex_P(X)$ est une cofibration et il suffit de montrer que c'est une équivalence faible. Pour ce faire, on factorise $X\to N(P)$ en une cofibration triviale suivie d'une fibration $X\xrightarrow{i} Y\to N(P)$. En particulier, on obtient un ensemble simplicial filtré fibrant et faiblement équivalent à $X$. Considérons maintenant le diagramme commutatif suivant
\begin{equation*}
\begin{tikzcd}
X
\arrow{r}{i}
\arrow{d}{\beta_X}
&Y
\arrow{d}{\beta_Y}
\\
\Ex_P(X)
\arrow{r}{\Ex_P(i)}
&\Ex_P(Y)
\end{tikzcd}
\end{equation*}
Par hypothèse $i$ est une équivalence faible, et $\beta_Y$ est une équivalence faible car $Y$ est fibrant. Par deux sur trois, il suffit de montrer que $\Ex_P(i)$ est une équivalence faible. En particulier, il suffit de montrer que le foncteur $\Ex_P$ préserve les équivalences faibles.
Pour ce faire, on considère les préfaisceaux $C$, $D$ et $E$ sur $\Delta(P)\times\Delta(P)$ définis comme suit 

\begin{equation*}
C(X)(\Delta^{\varphi},\Delta^{\psi})=\Hom(\Delta^{\varphi}\times\overline{\Delta^{\psi}},X)
\end{equation*}
\begin{equation*}
D(X)(\Delta^{\varphi},\Delta^{\psi})=\Hom(\Delta^{\varphi}\times\sd_P(\Delta^{\psi}),X).
\end{equation*}
et
\begin{equation*}
E(X)(\Delta^{\varphi},\Delta^{\psi})=\Hom(\overline{\Delta^{\varphi}}\times\sd_P(\Delta^{\psi}),X)
\end{equation*}
où $\overline{\Delta^{\varphi}}\in N(P)$ désigne l'unique simplexe non dégénéré dont $\Delta^{\varphi}$ est une dégénérescence, et où les produits sont fibrés au dessus de $N(P)$. Les morphismes canoniques
\begin{equation*}
\Delta^{\varphi}\times\overline{\Delta^{\psi}}\xleftarrow{}\Delta^{\varphi}\times\sd_P(\Delta^{\psi})\to\overline{\Delta^{\varphi}}\times\sd_P(\Delta^{\psi})
\end{equation*}
induisent les morphismes dans $\widehat{A\times A}$
\begin{equation*}
C(X)\to D(X)\xleftarrow{} E(X)
\end{equation*}
Soit $\Delta^{\psi}\in \Delta(P)$, on calcule 
\begin{align*}
C(X)_{\Delta^{\psi}}(\Delta^{\varphi})&=\Hom(\Delta^{\varphi}\times\overline{\Delta^{\psi}},X)\\
&\simeq \Hom(\Delta^{\varphi},\HOM(\overline{\Delta^{\psi}},X))\\
&\simeq \HOM(\overline{\Delta^{\psi}},X)(\Delta^{\varphi})
\end{align*}
D'autre part, on a 
\begin{align*}
D(X)_{\Delta^{\psi}}(\Delta^{\varphi})&=\Hom(\Delta^{\varphi}\times\sd_P(\Delta^{\psi}),X)\\
&\simeq \Hom(\Delta^{\varphi},\HOM(\sd_P(\Delta^{\psi}),X))\\
&\simeq\HOM(\sd_P(\Delta^{\psi}),X)(\Delta^{\varphi})
\end{align*}
En particulier, le morphisme $C(X)\to D(X)$ est de la forme :
\begin{equation*}
C(X)_{\Delta^{\psi}}\simeq\HOM(\overline{\Delta^{\psi}},X)\to\HOM(\sd_P(\Delta^{\psi}),X)\simeq D(X)_{\Delta^{\psi}}
\end{equation*}
Par le lemme \ref{SDSimplexeEquivalenceFaibleAbsolue}, le morphisme $\sd_P(\Delta^{\psi})\to\overline{\Delta^{\psi}}$ est une équivalence d'homotopie filtrée. On en déduit par le lemme \ref{HomotopiesHomInterne} que les morphismes $C(X)_{\Delta^{\psi}}\to D(X)_{\Delta^{\psi}}$ sont des équivalences faibles. En appliquant la remarque \ref{TheoremeDiagonalRenverse}, on obtient que le morphisme $\diag(C(X))\to\diag(D(X))$ est une équivalence faible. On calcule
\begin{align*}
\diag(C(X))(\Delta^{\varphi})&=\Hom(\Delta^{\varphi}\times\overline{\Delta^{\varphi}},X)\\
&\simeq \Hom(\Delta^{\varphi},X)\\
&\simeq X(\Delta^{\varphi})
\end{align*}
Et finalement, le morphisme $X\to \diag(D(X))$ est une équivalence faible.
De même, pour $\Delta^{\varphi}\in \Delta(P)$, on calcule :
\begin{align*}
E(X)^{\Delta^{\varphi}}(\Delta^{\psi})&=\Hom(\overline{\Delta^{\varphi}}\times\sd_P(\Delta^{\psi}),X)\\
&\simeq \Hom(\sd_P(\Delta^{\psi}),\HOM(\overline{\Delta^{\varphi}},X))\\
&\simeq \Hom(\Delta^{\psi},\Ex_P(\HOM(\overline{\Delta^{\varphi}},X)))\\
&\simeq \Ex_P(\HOM(\overline{\Delta^{\varphi}},X))(\Delta^{\psi})
\end{align*}
D'autre part, on a 
\begin{align*}
D(X)^{\Delta^{\varphi}}(\Delta^{\psi})&=\Hom(\Delta^{\varphi}\times\sd_P(\Delta^{\psi}),X)\\
&\simeq \Hom(\sd_P(\Delta^{\psi}),\HOM(\Delta^{\varphi},X))\\
&\simeq \Hom(\Delta^{\psi},\Ex_P(\HOM(\Delta^{\varphi},X)))\\
&\simeq \Ex_P(\HOM(\Delta^{\varphi},X))(\Delta^{\psi})
\end{align*}
On en déduit que le morphisme $E(X)^{\Delta^{\varphi}}\to D(X)^{\Delta^{\varphi}}$ est de la forme
\begin{equation*}
E(X)^{\Delta^{\varphi}}\simeq \Ex_P(\HOM(\overline{\Delta^{\varphi}},X))\to\Ex_P(\HOM(\Delta^{\varphi},X))\simeq D(X)^{\Delta^{\varphi}}
\end{equation*}
On souhaite montrer que c'est une équivalence faible.
On sait, par la preuve du lemme \ref{SDSimplexeEquivalenceFaibleAbsolue} que $\Delta^{\varphi}\to\overline{\Delta^{\varphi}}$ est une équivalence d'homotopie, on en déduit par le lemme \ref{HomotopiesHomInterne} que $\HOM(\overline{\Delta^{\varphi}},X)\to \HOM(\Delta^{\varphi},X)$ est une équivalence faible. Il suffit donc de montrer que $\Ex_P$ préserve les homotopies. Soit $H\colon \Delta^1\otimes Z\to W$ une homotopie entre $f$ et $g$. Comme $\Ex_P$ préserve les limites, et que $\Delta^1\otimes Z\simeq F(\Delta^1)\times Z$, la composition
\begin{equation*}
\Delta^1\otimes\Ex_P(Z)\xrightarrow{\beta_{F(\Delta^1)}\times \Id}\Ex_P(F(\Delta^1))\times Z\simeq \Ex_P(\Delta^1\otimes Z)\xrightarrow{\Ex_P(H)} \Ex_P(W)
\end{equation*}
donne une homotopie entre $\Ex_P(f)$ et $\Ex_P(g)$. En particulier, on en déduit que le morphisme $E(X)^{\Delta^{\varphi}}\to D(X)^{\Delta^{\varphi}}$ est une équivalence faible pour tout $\Delta^{\varphi}$. On en déduit que $\diag(E(X))\to\diag(D(X))$ est une équivalence faible.
On calcule
\begin{align*}
\diag(E(X))(\Delta^{\psi})&=\Hom(\overline{\Delta^{\psi}}\times\sd_P(\Delta^{\psi}),X)\\
&\simeq\Hom(\sd_P(\Delta^{\psi}),X)\\
&\simeq \Ex_P(X)(\Delta^{\psi})
\end{align*}
On en déduit que $\Ex_P(X)\simeq \diag(E(X))\to\diag(D(X))$ est une équivalence faible. Finalement, si $f\colon X\to Y$ est un morphisme d'ensemble simpliciaux filtrés, on a le diagramme commutatif suivant 
\begin{equation*}
\begin{tikzcd}
X\arrow{r}
\arrow{d}{f}
&\diag(D(X))
\arrow{d}{\diag(D(f))}
&\Ex_P(X)
\arrow{d}{\Ex_P(f)}
\arrow{l}
\\
Y
\arrow{r}
&\diag(D(Y))
&Ex_P(Y)
\arrow{l}
\end{tikzcd}
\end{equation*}
où les flèches horizontales sont des équivalences faibles. Par deux sur trois, on a équivalence entre les assertions
\begin{itemize}
\item $f$ est une équivalence faible,
\item $\diag(D(f))$ est une équivalence faible, 
\item $\Ex_P(f)$ est une équivalence faible.
\end{itemize}
On en déduit que pour tout $X$, $\beta_X\colon X\to \Ex_P(X)$ est une cofibration triviale.
\end{proof}

\begin{corollaire}\label{CorollaireBetaInfiniEquivalenceFaible}
Soit $X$ un ensemble simplicial filtré, $\beta^{\infty}_X \colon X\to \Exi_P(X)$ est une cofibration triviale.
\end{corollaire}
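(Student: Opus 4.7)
Le plan est d'appliquer directement le théorème \ref{TheoremeExXFaiblementEquivalentaX} en exploitant la stabilité de la classe des cofibrations triviales par composition transfinie.

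Tout d'abord, je rappellerais la construction de $\Exi_P(X)$ comme la colimite séquentielle
\begin{equation*}
X \xrightarrow{\beta_X} \Ex_P(X) \xrightarrow{\beta_{\Ex_P(X)}} \Ex_P^2(X) \to \cdots \to \Ex_P^n(X) \xrightarrow{\beta_{\Ex_P^n(X)}} \Ex_P^{n+1}(X) \to \cdots
\end{equation*}
le morphisme $\beta^{\infty}_X\colon X \to \Exi_P(X)$ étant exactement la composition transfinie de cette suite.

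Ensuite, j'appliquerais le théorème \ref{TheoremeExXFaiblementEquivalentaX} à chaque ensemble simplicial filtré $\Ex_P^n(X)$ : chaque morphisme $\beta_{\Ex_P^n(X)}$ est donc une cofibration triviale de $\sS_P$.

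Enfin, par le théorème \ref{TheoDescriptionExpliciteSSetP}, la classe des cofibrations triviales coïncide avec celle des extensions anodines, qui est par définition (voir la définition \ref{DefClassesSSP} et la proposition \ref{SatureLifting}) la classe saturée engendrée par les inclusions de cornets admissibles. En particulier, elle est stable par composition transfinie, et l'on conclut immédiatement que $\beta^{\infty}_X$ est une cofibration triviale. Aucune étape n'est véritablement difficile : toute la substance technique est contenue dans le théorème \ref{TheoremeExXFaiblementEquivalentaX}, ce corollaire n'étant qu'une application directe de ses conséquences combinée à la stabilité de la classe.
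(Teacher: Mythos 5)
Votre preuve est correcte et suit essentiellement le même chemin que celle du texte : $\beta^{\infty}_X$ est la composition transfinie des $\beta_{\Ex_P^n(X)}$, chacun étant une cofibration triviale par le théorème \ref{TheoremeExXFaiblementEquivalentaX}, et la classe des cofibrations triviales est saturée. Votre détour par l'identification avec les extensions anodines n'est pas nécessaire (la stabilité par composition transfinie vaut dans toute catégorie modèle, cf. la proposition \ref{PropositionCofibrationsStablesEverythingChap2}), mais il est tout à fait valide.
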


\begin{proof}
Par construction, $\beta_X^{\infty}$ est la composition transfinie des morphismes $\beta_{\Ex^n(X)}$ qui sont tous des cofibrations triviales par le théorème précédent. Comme la classe des cofibrations triviales est saturée, on en déduit le résultat voulu.
\end{proof}

\bibliographystyle{alpha}
\bibliography{biblio}

\newpage

\begin{center}\bf Stratified homotopy theory\end{center}
\textbf{Keywords} : algebraic topology, homotopical algebra, singular spaces, stratified spaces, stratifications, simplicial sets, homotopy groups.
\\~\\

A stratified space is a topological space cut into strata. In the case of pseudo-manifolds, these strata are manifolds, of varying dimension, corresponding to a partition of the space according to the type of singularity. Examples of such spaces appear everywhere in topology and geometry, as the natural generalization to manifolds.

The study of stratified spaces - and of stratifications - involves invariants such as intersection cohomology or the category of exit paths. Those are not homotopy invariants; in general, they depend on the stratification. Nevertheless, they are invariant by stratified homotopy, a stronger notion of homotopy that preserves the stratification.
\\~\\
In this thesis, we study the homotopy theory of stratified spaces with respect to stratified homotopy. To do so, we construct model categories for stratified spaces and we introduce new invariants to characterize them, the filtered homotopy groups.

A stratified space can be seen as a topological space $X$ together 
with a continuous map to a partially ordered set of strata, $X\to P$. We begin our study by restricting ourselves to the case where the poset of strata is fixed, this is the \textbf{filtered} context. Then we define the model category of filtered simplicial sets - the simplicial analogue to the category of filtered spaces. We show that it admits a description "à la Kan" and we characterize its weak-equivalences using a new invariant : the filtered homotopy groups. Then, using an adjunction between the categories of filtered simplicial sets and of filtered spaces, we prove a filtered version of Whitehead theorem.

We then construct a model category of filtered spaces. We show that it can be described similarly to the classical model category of topological spaces : The weak-equivalences are the morphisms that induce isomorphisms on all filtered homotopy groups, and the fibration satisfy a filtered version of Serre's lifting conditions. Lastly, we show that it is Quillen-equivalent to a category of diagrams of simplicial sets.

We then work toward a comparison between the model categories of filtered simplicial sets and of filtered spaces. They are connected by a Quillen-adjunction, similar to the classical Kan-Quillen adjunction. We conjecture that it is in fact a Quillen equivalence.

Lastly, working with the notion of Quillen bifibration, we show that there are model categories of stratified spaces and of stratified simplicial sets. We show that the two are related by an adjunction that preserve weak equivalences.
\thispagestyle{empty}
\newpage

\begin{center} {\bf Étude homotopique des espaces stratifiés}\end{center}
{\textbf{Mots-clés} : \textit{topologie algébrique, algèbre homotopique, espaces singuliers, espaces stratifiés, stratifications, ensembles simpliciaux, groupes d'homotopie.}
\\~\\

Un espace stratifié est un espace topologique découpé en strates. Dans les cas des pseudo-variétés, ces strates sont des variétés, de dimension variable, et correspondent à une partition de l'espace selon le type des singularités. De tels espaces apparaissent très largement en topologie et en géométrie, où ils constituent la généralisation naturelle de la notion de variété lisse.

L'étude des espaces stratifiés - et plus généralement l'étude des stratifications - passe par le calcul de plusieurs invariants tels que la cohomologie d'intersection, ou la catégorie des chemins sortants. Ceux-ci ne sont pas invariants par homotopies, et dépendent en général de la stratification. Néanmoins, ils sont invariants par homotopies stratifiées - une notion d'homotopie plus forte qui tient compte de la stratification.
\\~\\
Dans cette thèse, on étudie la théorie homotopique des espaces stratifiés relative à cette notion d'homotopie stratifiée. Ceci passe par la construction de catégories modèles pour les espaces stratifiés, et par leur caractérisation par des invariants d'homotopie stratifiée, les groupes d'homotopie filtrés.

Un espace stratifié est un espace topologique $X$ muni d'une application continue vers un ensemble ordonné de strates $X\to P$. On commence notre étude par travailler dans le cas où l'ensemble ordonné $P$ est fixé, c'est le contexte \textbf{filtré}. Dans ce contexte on définit la catégorie modèle des ensembles simpliciaux filtrés, l'analogue simplicial à la catégorie des espaces filtrés. On montre qu'elle admet une description "à la Kan" et on y caractérise les équivalences faibles via un nouvel invariant : les groupes d'homotopie filtrés. En exploitant une adjonction entre la catégorie des ensembles simpliciaux filtrés et celle des espaces filtrés, on montre une version filtrée du théorème de Whitehead.

Dans un second temps, on construit une catégorie modèle pour les espaces filtrés, admettant une description similaire à la catégorie modèle des espaces topologiques. Les équivalences faibles y sont les morphismes induisant des isomorphismes sur tous les groupes d'homotopie filtrés et les fibrations vérifient une version filtrée de la condition de Serre. On montre ensuite que celle-ci est Quillen-équivalente à une catégorie de diagrammes d'ensembles simpliciaux.

On entreprend ensuite de comparer les catégories modèles des ensembles simpliciaux filtrés et des espaces filtrés. Celles-ci sont liées par une adjonction de Quillen dont on conjecture qu'il s'agit d'une équivalence de Quillen.

Finalement, en exploitant la notion de bifibration de Quillen, on construit des structures de modèle sur les catégories des espaces stratifiés et des ensembles simpliciaux stratifiés. On montre que ces catégories sont liées par une adjonction qui préserve les équivalences faibles. 
\thispagestyle{empty}
\end{document}